\theoremstyle{plain}
	\newtheorem{mainthm}{Theorem}
	\newtheorem{maincor}[mainthm]{Corollary}
\theoremstyle{plain}
	\newtheorem{thm}{Theorem}[subsection]
\theoremstyle{plain}
	\newtheorem{conj}[mainthm]{Conjecture}
	\newtheorem{cor}[thm]{Corollary}
	\newtheorem{lem}[thm]{Lemma}
	\newtheorem{prop}[thm]{Proposition}
\theoremstyle{definition}
	\newtheorem{quest}[mainthm]{Question}
	\newtheorem{df}[thm]{Definition}
	\newtheorem{ex}[thm]{Example}
	\newtheorem{lempost}[thm]{Lemma-Postulate}
\theoremstyle{remark}
	\newtheorem{rem}[thm]{Remark}
\newcommand{\dfemph}[1]{\textbf{#1}}
\let\Alt\relax
	\DeclareMathOperator{\Alt}{\text{$\Lambda$}}
\newcommand{\Br}{\mathit{Br}}
\newcommand{\IC}{\mathit{IC}}
\newcommand{\Mod}{\mathsf{Mod}}
\newcommand{\Q}{\bm{q}}
\newcommand{\QL}{\bar{\mathbf{Q}}_\ell}
\newcommand{\UCh}{\mathrm{Uch}}
\newcommand{\UDeg}{\mathrm{Deg}}
	\newcommand{\FDeg}{\mathrm{Feg}}
\newcommand{\Vect}{\mathsf{Vect}}
\newcommand{\Tr}[1]{\langle{#1}\rangle_{\Q}}
	\renewcommand{\aff}{\mathit{aff}}
\newcommand{\act}{\mathit{act}}
\newcommand{\point}{\mathit{pt}}
	\renewcommand{\pr}{\mathit{pr}}
\newcommand{\rat}{\mathit{rat}}
\newcommand{\slope}{\nu}
\g@addto@macro \normalsize {%
 \setlength\abovedisplayskip{10pt plus 2pt minus 2pt}%
 \setlength\belowdisplayskip{10pt plus 2pt minus 2pt}%
}
\begin{document}

\title{From the Hecke Category to the Unipotent Locus}
\author{Minh-T\^{a}m Quang Trinh}
\address{Massachusetts Institute of Technology, 77 Massachusetts Avenue, Cambridge, MA 02139}
\email{mqt@mit.edu}

\maketitle

\begin{abstract}
Let $W$ be the Weyl group of a split semisimple group $G$.
Its Hecke category $\mathsf{H}_W$ can be built from pure perverse sheaves on the double flag variety of $G$.
By developing a formalism of generalized realization functors, we construct a monoidal trace from $\mathsf{H}_W$ to a category of bigraded modules over a certain graded ring: namely, the endomorphisms of the equivariant Springer sheaf over the unipotent locus of $G$.
We prove that: (1) On objects attached to positive braids $\beta$, the output is the weight-graded, equivariant Borel--Moore homology of a generalized Steinberg scheme $\mathcal{Z}(\beta)$.
(2) Our functor contains, as a summand, one used by Webster--Williamson to construct the Khovanov--Rozansky link invariant.
In particular, the Khovanov--Rozansky homology of the link closure of $\beta$ is fully encoded in the Springer theory of $\mathcal{Z}(\beta)$.

Decategorifying, we get a trace on the Iwahori--Hecke algebra, valued in graded virtual characters of $W$.
We give a formula for it in terms of a pairing on characters of $W$ called Lusztig's exotic Fourier transform, which generalizes it to all finite Coxeter groups.
Using this formula, we establish properties of the trace like rationality, symmetry, and compatibility with parabolic induction.
We also show that on periodic braids, the trace produces the characters of explicit virtual modules over Cherendik's rational double affine Hecke algebra.
For $W = S_n$, we recover an identity of Gorsky--Oblomkov--Rasmussen--Shende relating these modules to the HOMFLY polynomials of torus knots.
\end{abstract}

\setcounter{tocdepth}{1}
\tableofcontents
\thispagestyle{empty}


\newpage

\section*{Introduction}

\subsection{Summary}

Let $\bb{V}$ be a finite-dimensional vector space over a subfield of $\bb{R}$, and let $W$ be a finite group of automorphisms of $\bb{V}$ generated by reflections.
Coxeter showed that $W$ admits a remarkable presentation in terms of generators and relations \cite{coxeter_1934}.
Via this presentation, the group ring of $W$ can be deformed to a $\bb{Z}[\Q^{\pm\frac{1}{2}}]$-algebra known as the Iwahori--Hecke algebra $H_W$.
One can further define an infinite group $\Br_W$, generalizing the braid groups studied in geometric topology, such that $W$ and $H_W$ are respective quotients of $\Br_W$ and $\bb{Z}[\Q^{\pm\frac{1}{2}}][\Br_W]$.

If $W = S_n$, the symmetric group on $n$ letters, then $\Br_W = \Br_n$, the classical braid group on $n$ strands.
Any braid $\beta$ can be closed up, end-to-end, to yield a topological link $\hat{\beta}$ in $3$-dimensional space.
If $\beta, \beta' \in \Br_n$ are conjugate, then $\hat{\beta}, \hat{\beta}'$ are isotopic.
Ocneanu, building on work of Jones, used this observation to construct an isotopy invariant of links called the HOMFLY series out of class functions on the groups $\Br_n$.
These functions, in turn, factor through functions called Markov traces on the algebras $H_{S_n}$ \cite{homfly, jones}.
Y.\ Gomi discovered that the latter admit a uniform generalization to any finite Coxeter group $W$ \cite{gomi}.

It is now known that $H_W$ is the (triangulated) Grothendieck ring of a monoidal triangulated category that we will call the Hecke category $\sf{H}_W$.
Gomi's Markov trace is categorified by an additive functor
\begin{align}
\sf{HHH} : \sf{H}_W \to \Vect_3,
\end{align}
where $\Vect_3$ denotes the category of triply-graded vector spaces.
As $W$ runs over the symmetric groups, these functors can be used to construct a link invariant valued in $\Vect_3$, originally constructed by Khovanov--Rozansky \cite{kr, 
khovanov}.
More precisely, there is a map from braids $\beta \in \Br_W$ to objects $\cal{R}(\beta) \in \sf{H}_W$, due to Rouquier \cite{rouquier_2006}, and $\sf{HHH}(\cal{R}(\beta))$ can be used to construct an isotopy invariant of $\hat{\beta}$ that decategorifies to its HOMFLY series.

The most general definition of $\sf{H}_W$ is purely algebraic: the homotopy category of Soergel bimodules over $\Sym(\bb{V})$ \cite{ew_2014, ew_2016}.
However, in representation theory, we are especially interested in the case where $\bb{V}$ is the span of a root lattice and $W$ is the Weyl group of the corresponding semisimple algebraic group $G$ over an algebraically closed field, say, $\bar{\bb{F}}$ for a finite field $\bb{F}$.
Here, the Hecke category can be defined in terms of the geometry of $G$, or rather, its split form $G_0$ over $\bb{F}$.
We are led to hope that $\sf{HHH}(\cal{R}(\beta))$ might be expressed in terms of the (co)homology of an explicit variety over $\bb{F}$ attached to $\beta$.

In the series of papers \cite{ww_2008, ww_2011, ww_2017}, Webster--Williamson relate $\sf{HHH}$ to the weight grading on the $G$-equivariant cohomology of certain varieties with maps to $G$.
The most inexplicit part of their work is a procedure they call ``chromatography,'' which turns an object of the mixed derived category $\sf{D}_{G, m}^b(G_0) = \sf{D}_{G, m}^b(G_0, \QL)$ into a complex up to homotopy whose terms are pure complexes of sheaves.
The use of this procedure means that in general, their work only relates $\sf{HHH}(\cal{R}(\beta))$ to a combination of several varieties, rather than to a single one.
These varieties encode the term-by-term structure of $\cal{R}(\beta)$, when $\cal{R}(\beta)$ is viewed as a complex of Soergel bimodules.

One of our goals is to simplify this situation.
Instead of chromatography, we will use a formalism of weight realization functors, inspired by works of Bezrukavnikov--Yun \cite{by}, Lusztig--Yun \cite{ly_2020}, and Rider \cite{rider}.
For special braids $\beta \in \Br_n$ called Richardson braids, Galashin--Lam have recently used Rider's work to identify a certain summand of $\sf{HHH}(\cal{R}(\beta))$, its so-called ``lowest Hochschild degree,'' with the torus-equivariant compactly-supported cohomology of a positroid variety \cite{gl}.
Their argument generalizes to any Weyl group $W$.
We go further:
For any positive $\beta \in \Br_W$, we realize the entirety of $\sf{HHH}(\cal{R}(\beta))$ within the equivariant Borel--Moore homology of an explicit $G$-variety over the unipotent locus of $G$.

Our viewpoint will lead us to introduce a refinement of $\sf{HHH}$, valued in modules rather than vector spaces.
Henceforth, we base-change $\bb{V}$ to a field of the form $\QL$ for some prime $\ell$ invertible in $\bb{F}$.
Form the semidirect-product ring
\begin{align}
\bb{A}_W = \QL[W] \ltimes \Sym(\bb{V}),
\end{align}
endowed with the grading where $W$ and $\bb{V}$ are respectively placed in degrees $0$ and $2$.
Let $\Mod_2^b(\bb{A}_W)$ be the category of $\bb{A}_W$-modules equipped with a bigrading where the $\bb{A}_W$-action is diagonally graded, both gradings are bounded below, and the difference between the gradings is uniformly bounded.
We will construct a contravariant(!) monoidal trace
\begin{align}
\sf{AH} : \sf{H}_W^\op \to \Mod_2^b(\bb{A}_W).
\end{align}
We will show that $\sf{HHH}^\vee$ is the sum of the $\Alt^i(\bb{V})$-isotypic components of $\sf{AH}$, where $\Alt^i(\bb{V})$ denotes the $i$th exterior power of $\bb{V}$, viewed as a representation of $W$.
The appearance of $\Alt^i(\bb{V})$ is reminiscent of work of Bezrukavnikov--Tolmachov on a monodromic model of $\sf{HHH}$ \cite{bt}.
For positive $\beta$, we will find that
\begin{align}\label{eq:a_w-to-varieties}
\sf{AH}(\cal{R}(\beta)) \simeq \gr_\ast^{\bm{w}} \ur{H}_{-\ast}^{!, G}(\cal{Z}(\beta))
\end{align}
for a certain $G$-scheme $\cal{Z}(\beta)$, where on the right-hand side, $\ur{H}_\ast^{!, G}(-) = \ur{H}_\ast^{!, G}(-, \QL)$ denotes equivariant Borel--Moore homology and $\bm{w}$ denotes the weight filtration coming from an $\bb{F}$-structure on $\cal{Z}(\beta)$.
In the case of the identity $\bb{1} \in \Br_W$, the reduced scheme $\cal{Z}(\bb{1})^\red$ is the variety Steinberg introduced in \cite{steinberg_1976}.
In particular, $\sf{AH}(\cal{R}(\bb{1})) \simeq \bb{A}_W$ by a result of Lusztig \cite{lusztig_1988}.

(To explain the contravariance:
The definition of $\sf{AH}$ will involve a $\Hom$ functor. 
It turns out that we need a contravariant, not covariant, $\Hom$ to get the right-hand side of \eqref{eq:a_w-to-varieties} to match $\sf{AH}(\cal{R}(\beta))$ rather than $\sf{AH}(\cal{R}(\beta^{-1}))$.)

Let us sketch the geometry that underlies $\sf{H}_W$ and $\sf{AH}$.
Let $\cal{B} = \cal{B}_0 \otimes_\bb{F} \bar{\bb{F}}$ and $\cal{U} = \cal{U}_0 \otimes_\bb{F} \bar{\bb{F}}$ denote the flag variety and unipotent locus of $G = G_0 \otimes_\bb{F} \bar{\bb{F}}$.
The Hecke category of $W$ can be interpreted as
\begin{align}
\sf{H}_W = \sf{K}^b(\sf{C}(\cal{B}_0 \times \cal{B}_0)),
\end{align}
where $\sf{C}(\cal{B}_0 \times \cal{B}_0)$ is a certain full, additive subcategory of $\sf{D}_{G, m}^b(\cal{B}_0 \times \cal{B}_0)$ generated by pure shifted perverse sheaves.
(More precisely, it is generated by the shift-twists of the intersection complexes of the $G_0$-orbits of $\cal{B}_0 \times \cal{B}_0$.)
The monoidal product on $\sf{H}_W$ arises from a geometrically-defined convolution.
The operation of passing from a braid to its conjugacy class is \emph{very roughly} realized by pulling and pushing sheaves from $\cal{B}_0 \times \cal{B}_0$ to $G_0$ through the following diagram, implicit in Lusztig's work on character sheaves and now known as the \dfemph{horocycle correspondence}:
\begin{equation}
\begin{tikzpicture}[baseline=(current bounding box.center), >=stealth]
\matrix(m)[matrix of math nodes, row sep=2.5em, column sep=2.5em, text height=2ex, text depth=0.5ex]
{	\cal{B}_0 \times \cal{B}_0
		&G_0 \times \cal{B}_0\\
	{}
		&G_0\\
		};
\path[->,font=\scriptsize, auto]
(m-1-2)		edge node[above]{$\act$} (m-1-1)
(m-1-2)		edge node[right]{$\pr$} (m-2-2);
\end{tikzpicture}
\end{equation}
Above, $\act$ and $\pr$ are the action and projection maps.

We introduce similar subcategories $\sf{C}(G_0) \subseteq \sf{D}_{G, m}^b(G_0)$ and $\sf{C}(\cal{U}_0) \subseteq \sf{D}_{G, m}^b(\cal{U}_0)$.
(The category $\sf{C}(G_0)$ is generated by objects of the form $E \otimes M$, where $E$ is a mixed version of a unipotent character sheaf and $M$ is a pure weight-zero Frobenius module; the category $\sf{C}(\cal{U}_0)$ is generated by the pullbacks of these objects along the inclusion $i : \cal{U}_0 \to G_0$.)
In particular, $\sf{C}(G_0)$ contains the Grothendieck--Springer sheaf $\cal{G}$, while $\sf{C}(\cal{U}_0)$ contains the Springer sheaf $\cal{S} \simeq i^\ast \cal{G}\langle -{\dim \bb{V}}\rangle$.
The horocycle correspondence gives rise to a functor 
\begin{align}
\sf{CH} : \sf{C}(\cal{B}_0 \times \cal{B}_0) \to \sf{C}(\cal{U}_0),
\end{align}
while pullback gives rise to a functor $i^\ast \langle-{\dim \bb{V}}\rangle : \sf{C}(G_0) \to \sf{C}(\cal{U}_0)$.

Crucially, $\sf{C}(\cal{B}_0 \times \cal{B}_0)$ and $\sf{C}(\cal{U}_0)$ satisfy a certain $\Ext$-vanishing condition (see Section \ref{sec:mixed}) that allows us to define realization functors $\rho : \sf{K}^b(\sf{C}(\cal{B}_0^2)) \to \sf{D}_{G, m}^b(\cal{B}_0^2)$ and $\rho : \sf{K}^b(\sf{C}(\cal{U}_0)) \to \sf{D}_{G, m}^b(\cal{U}_0)$.
Even though we are unable to construct a similar functor for $G_0$, the diagram
\begin{equation}
\begin{tikzpicture}[baseline=(current bounding box.center), >=stealth]
\matrix(m)[matrix of math nodes, row sep=2.5em, column sep=4em, text height=2ex, text depth=0.5ex]
{	\sf{K}^b(\sf{C}(\cal{B}_0 \times \cal{B}_0))
		&\sf{K}^b(\sf{C}(G_0))
		&\sf{K}^b(\sf{C}(\cal{U}_0))\\
	\sf{D}_{G, m}^b(\cal{B}_0 \times \cal{B}_0)
		&\sf{D}_{G, m}^b(G_0)
		&\sf{D}_{G, m}^b(\cal{U}_0)\\
		};
\path[->,font=\scriptsize, auto]
(m-1-2) 	edge node{$\sf{K}^b i^\ast \langle -r\rangle$} (m-1-3)
(m-2-2)		edge node{$i^\ast \langle -r\rangle$} (m-2-3)
(m-1-1)		edge node{$\sf{K}^b \sf{CH}$} (m-1-2)
(m-2-1)		edge node{$\sf{CH}$} (m-2-2)
(m-1-3) 	edge node[right]{$\rho$} (m-2-3)	
(m-1-1)		edge node[left]{$\rho$} (m-2-1);
\end{tikzpicture}
\end{equation}
commutes.
The monoidal trace $\sf{AH}$ is the composition
\begin{align}
\sf{H}_W = \sf{K}^b(\sf{C}(\cal{B}_0 \times \cal{B}_0))
	\to \sf{D}_{G, m}^b(\cal{U}_0)
	\xrightarrow{\gr_\ast^{\bm{w}} \SHom^\ast(-, \cal{S})} \Mod_2^b(\bb{A}_W),
\end{align}
where the first arrow is the functor produced by the diagram, and $\SHom^\ast$ denotes an $\Ext$-space endowed with a Frobenius action.
We use:
\begin{enumerate}
\item 	The bottom-left part of the diagram to match $\sf{AH}$ with the homology of the generalized Steinberg scheme $\cal{Z}(\beta)$ (in Section \ref{sec:varieties}).
\item 	The upper-right part of the diagram to match $\Hom_W(\Alt^\ast(\bb{V}), \sf{AH})$ with $\sf{HHH}$ (in Section \ref{sec:kr}).
\end{enumerate}
The main obstacle with (2) is that Webster--Williamson interpret $\sf{HHH}$ using the hypercohomology of objects over $G$, whereas we define $\sf{AH}$ using objects over $\cal{U}$.
Moreover, in their work, the Hochschild degree of $\sf{HHH}$ arises from $\gr_\ast^{\bm{w}}$, while in our work, it arises from $\Alt^\ast(\bb{V})$.
So the matching in (2) is fairly indirect.
Using orthogonality results of Lusztig \cite{lusztig_1985_2} and Rider--Russell \cite{rr}, we reduce the problem to checking certain direct-summand categories $\sf{C}_{\hat{W}}(G_0) \subseteq \sf{C}(G_0)$ and $\sf{C}_{\hat{W}}(\cal{U}_0) \subseteq \sf{C}(\cal{U}_0)$, which are roughly the ``principal blocks'' generated by $\cal{G}$ and $\cal{S}$.
On the level of these blocks, the key input is a geometrically-defined isomorphism between $\SEnd^\ast(\cal{S})$ and the pure part of $\SEnd^\ast(\cal{G})$ (in Section \ref{sec:steinberg}).

As for (1), let us sketch the definition of $\cal{Z}(\beta)$.
Let $\Br_W^+ \subseteq \Br_W$ be the monoid of positive braids.
The map $\Br_W \to W$ admits a set-theoretic section $W \to \Br_W^+$ that we will denote $w \mapsto \sigma_w$.
(In fact, the images of the $\sigma_w$ in $H_W$ form a $\bb{Z}[\Q^{\pm\frac{1}{2}}]$-basis.)
Deligne \cite{deligne_1997} and Brou\'e--Michel \cite{bm_1997} introduced a map
\begin{align}
\beta \mapsto O(\beta)
\end{align}
from elements of $\Br_W^+$ to $G$-varieties over $\cal{B} \times \cal{B}$, such that $O(\sigma_w)$ is the $G$-orbit of $\cal{B} \times \cal{B}$ indexed by $w$, and such that $O$ takes compositions of braids to fiber products over $\cal{B}$, up to explicit isomorphisms.
We form a cartesian square:
\begin{equation}
\begin{tikzpicture}[baseline=(current bounding box.center), >=stealth]
\matrix(m)[matrix of math nodes, row sep=2em, column sep=2.5em, text height=2ex, text depth=0.5ex]
{	O(\beta)
		&\cal{U}(\beta)\\
	\cal{B} \times \cal{B}
		&\cal{U} \times \cal{B}\\
		};
\path[->,font=\scriptsize, auto]
(m-1-2)		edge (m-1-1)
(m-1-2)		edge (m-2-2)
(m-1-1)		edge (m-2-1)
(m-2-2)		edge node[above]{$\act$} (m-2-1);
\end{tikzpicture}
\end{equation}
Then the map $\cal{U}(\bb{1}) \to \cal{U}$ is the Springer resolution.
We define $\cal{Z}(\beta)$ be the fiber product $\cal{U}(\bb{1}) \times_\cal{U} \cal{U}(\beta)$.

In the rest of the paper, we study the decategorification of $\sf{AH}$.
Let $R(W)$ be the representation ring of $W$.
Any $M \in \Mod_2^b(\bb{A}_W)$ decategorifies to an element
\begin{align}
[M]_{\Q} = \sum_{i,j} {(-1)^{i - j}}\Q^{\frac{j}{2}} M^{i, j} \in R(W)(\!(\Q^{\frac{1}{2}})\!)
\end{align}
that we call its graded character.
As a result, $\sf{AH}$ decategorifies to a $\bb{Z}[\Q^{\pm\frac{1}{2}}]$-linear function $H_W \to R(W)(\!(\Q^{\frac{1}{2}})\!)$.
After multiplying by a normalization factor that clears denominators, we get a function
\begin{align}
\Tr{-} : H_W \to R(W)[\Q^{\pm\frac{1}{2}}]
\end{align}
that admits an especially simple formula in terms of the character theory of $H_W$.
If we write $\hat{W}$ for the set of irreducible characters of $W$, then it is
\begin{align}
\Tr{\beta} = \sum_{\phi, \psi \in \hat{W}} {\{\phi, \psi\}}\phi_{\Q}(\beta)\psi,
\end{align}
where $\phi_{\Q} : H_W \to \bb{Z}[\Q^{\pm\frac{1}{2}}]$ corresponds to $\phi : \bb{Z}[W] \to \bb{Z}$ under Tits's deformation theorem and $\{-, -\}$ is a $\bb{Q}$-valued pairing on $\hat{W}$ derived from Lusztig's work on the character theory of $G(\bb{F})$ \cite{lusztig_1984}.
The function $\Tr{-}$ can be used to recover Gomi's Markov trace, and our formula is closely related to his formula.
In particular, we will show that various properties of the Markov trace extend to $\Tr{-}$: its rationality, the palindromic symmetry of its coefficients, and its compatibility with parabolic subgroups of $W$.

The function $w \mapsto \Tr{\sigma_w}$, evaluated on elements $w \in W$ of minimal Bruhat length in their conjugacy classes, recovers the function that Lusztig--Yun introduce in \cite{ly_2019}, up to a normalization factor.
Thus it also encodes the map from conjugacy classes in $W$ to unipotent classes in $G$ that Lusztig constructs in \cite{lusztig_2011}.

The definition of $\Tr{-}$ can be extended to any finite Coxeter group $W$, as a consequence of \cite{lusztig_1993, lusztig_1994, malle}.
In this generality, we will relate special values of $\Tr{-}$ to the cyclotomic block theory of $H_W$, \emph{i.e.}, to the representation theory of the ring obtained from $H_W$ by specializing $\Q^{\frac{1}{2}}$ to a root of unity.
To this end, we need a central element $\pi \in \Br_W^+$ known as the full twist.
We will say that an element $\beta \in \Br_W$ is periodic of slope $\slope = \frac{m}{n} \in \bb{Q}$ iff 
\begin{align}
\beta^n = \pi^m.
\end{align}
For such $\beta$, we give an explicit formula for $\Tr{\beta}$ in terms of constants of the form $\UDeg_\phi(e^{2\pi i\slope})$, where $\UDeg_\phi(\Q)$ is the so-called generic or unipotent degree of $\phi$ \cite[\S{8.1.8}]{gp}.
The constants $\UDeg_\phi(e^{2\pi i\slope})$ are integers that roughly control the sizes of the blocks of $H_W(e^{\pi i\slope}) = H_W|_{\Q^{1/2} = e^{\pi i\slope}}$.

As an application, we relate $[\sf{AH}(\cal{R}(\beta))]_{\Q}$ to the representations of a symplectic reflection algebra $\bb{D}_\slope^\rat \supseteq \bb{A}_W$ known as the rational Cherednik algebra or rational double affine Hecke algebra (DAHA).
We can view it as a deformation of $\bb{D}_0^\rat = \QL[W] \ltimes \cal{D}(\bb{V})$ that depends on $\slope$, where $\cal{D}(\bb{V})$ is the Weyl algebra of differential operators on $\bb{V}$.
Like a semisimple Lie algebra, $\bb{D}_\slope^\rat$ admits a ``category $\sf{O}$'' of well-behaved modules, except that the simple objects are parametrized by $\hat{W}$.
We respectively write $L_\slope(\phi)$ and $\Delta_\slope(\phi)$ for the simple and Verma objects indexed by $\phi \in \hat{W}$.
There is a functor from category $\sf{O} = \sf{O}_\slope$ to the category of $H_W(e^{\pi i\slope})$-modules, known as the Knizhnik--Zamolodchikov functor, that sends $\Delta_\slope(\phi)$ to the $H_W(e^{\pi i\slope})$-module associated with $\phi_{\Q}$.

Any object $M \in \sf{O}_\slope$ gives rise to a graded character $[M]_{\Q} \in R(W)(\!(\Q^{\frac{1}{2}})\!)$.
When $\beta$ is periodic of slope $\slope$, we can express $[\sf{AH}(\cal{R}(\beta))]_{\Q}$ as an explicit virtual sum of the Verma characters $[\Delta_\slope(\phi)]_{\Q}$.
At special values of $\slope$, we can say more.
For instance, at so-called cuspidal values, it simplifies to an explicit \emph{nonnegative} sum of \emph{simple} characters $[L_\slope(\phi)]_{\Q}$.

When $W$ is a Weyl group, we conjecture that this relation between periodic braids and the rational DAHA is categorified by a $\bb{D}_\slope^\rat$-action on a modified version of $\sf{AH}(\cal{R}(\beta))$.
When $W = S_n$, our work recovers a theorem of Gorsky--Oblomkov--Rasmussen--Shende relating the HOMFLY polynomials of torus knots to the characters $[L_\slope(1)]_{\Q}$, where $1 \in \hat{W}$ is the trivial character of $W$ \cite{gors}.

\subsection{Outline and Results}

Throughout the paper, we use stacky conventions for (co)homological degree and assume that $\bb{F}$ has large characteristic.
In what follows, we also assume that $\bb{V}^W = 0$.
We set $r = \dim \bb{V}$.

In Section \ref{sec:mixed}, we review background on $G_0$-varieties and their derived categories of $\ell$-adic complexes, with special attention to Deligne's yoga of weights.
Then we introduce the formalism for realization functors that will serve our purposes.
In particular, we introduce a condition called $\SHom$-purity that slightly generalizes the condition called Frobenius invariance in \cite{rider}.

In Section \ref{sec:hecke}, we give explicit definitions of the categories $\sf{C}(\cal{B}_0 \times \cal{B}_0)$, $\sf{C}(G_0)$, $\sf{C}_{\hat{W}}(G_0)$, $\sf{C}(\cal{U}_0)$, $\sf{C}_{\hat{W}}(\cal{U}_0)$, as well as the functors $\sf{CH}$, $i^\ast\langle -r\rangle$, $\sf{PR}$ that relate them.
We also introduce the objects $\cal{G}$ and $\cal{S}$, along with their summands.
We give a thorough exposition of the geometric interpretation of the Hecke category that we were unable to find elsewhere.
The main novelty in this section is a proof that $\sf{C}(\cal{U}_0)$ is $\SHom$-pure, using results from \cite{lusztig_1984_intersection, rr}.

In Section \ref{sec:a_w}, we introduce the ring $\bb{A}_W$ and the monoidal trace $\sf{AH}$.

In Section \ref{sec:steinberg}, we first review equivariant Borel--Moore homology, then introduce the Steinberg scheme $\cal{Z} \to \cal{U}$ and its multiplicative analogue $\cal{Z}' \to G$.
Using an inductive argument inspired by an unpublished note of Riche, we establish a comparison between the Borel--Moore homologies of $\cal{Z}$ and $\cal{Z}'$, which leads to the following theorem:

\begin{mainthm}\label{thm:pure-iso}
The composition $\bigoplus_n \gr_n^{\bm{w}} \SEnd^n(\cal{G}) \to \SEnd^\ast(\cal{G}) \xrightarrow{i^\ast} \SEnd^\ast(\cal{S})$ is an isomorphism of graded $\QL$-algebras.
\end{mainthm}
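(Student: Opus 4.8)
The plan is to translate the statement into a comparison of equivariant Borel--Moore homologies of two Steinberg-type varieties, to prove that comparison stratum-by-stratum along the Bruhat stratification, and to glue by an induction over the Bruhat order. So first I would pass to geometry. Write $\pi\colon\widetilde{G}\to G$ for the Grothendieck--Springer map and $\cal{U}(\bb{1})\to\cal{U}$ for the Springer resolution, so that $\cal{G}$ is a mixed lift of $\pi_\ast\QL_{\widetilde{G}}$ (suitably shifted) and $\cal{S}\simeq i^\ast\cal{G}\langle -r\rangle$ is a mixed lift of the corresponding pushforward along $\cal{U}(\bb{1})\to\cal{U}$. The standard convolution formalism identifies $\SEnd^\ast(\cal{G})\cong\ur{H}_\ast^{!,G}(\cal{Z}')$ and $\SEnd^\ast(\cal{S})\cong\ur{H}_\ast^{!,G}(\cal{Z})$ as graded $\QL$-algebras with Frobenius action, up to an overall shift, where $\cal{Z}'=\widetilde{G}\times_G\widetilde{G}$ and $\cal{Z}=\cal{U}(\bb{1})\times_\cal{U}\cal{U}(\bb{1})=\cal{Z}'\times_G\cal{U}$; and under these identifications the map induced by $i^\ast$ becomes the restriction $\ur{H}_\ast^{!,G}(\cal{Z}')\to\ur{H}_\ast^{!,G}(\cal{Z})$ along the embedding $\cal{Z}=\cal{Z}'|_\cal{U}\hookrightarrow\cal{Z}'$ of the fiber over $\cal{U}\subseteq G$ --- a refined-Gysin-type map, since $\cal{U}\hookrightarrow G$ is a regular embedding of codimension $r$ (the fiber of the flat adjoint quotient over the identity). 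By the result of Lusztig recalled above, $\SEnd^\ast(\cal{S})\cong\bb{A}_W$ with weight equal to cohomological degree throughout; and by Deligne's yoga of weights $\SEnd^n(\cal{G})$ has weights $\geq n$, so $\bigoplus_n\gr_n^{\bm{w}}\SEnd^n(\cal{G})$ is exactly the \emph{weight-pure part} of $\SEnd^\ast(\cal{G})$ --- the sum of its lowest-weight subspaces --- and is a graded subalgebra by multiplicativity of the weight filtration. The theorem thus becomes: restriction to $\cal{Z}$ carries the weight-pure part of $\ur{H}_\ast^{!,G}(\cal{Z}')$ isomorphically onto $\ur{H}_\ast^{!,G}(\cal{Z})$, as graded algebras.

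Next I would analyze the Bruhat stratifications $\cal{Z}'=\bigsqcup_{w\in W}\cal{Z}'_w$ and $\cal{Z}=\bigsqcup_w\cal{Z}_w$ by relative position of the two Borels. Projection to the $G$-orbit $\cal{O}_w\subseteq\cal{B}\times\cal{B}$ indexed by $w$ exhibits $\cal{Z}'_w\to\cal{O}_w$ as a locally trivial bundle with fiber $B_1\cap B_2$, and $\cal{Z}_w\to\cal{O}_w$ as the sub-bundle with fiber $U_{B_1}\cap U_{B_2}$. The latter fiber is an affine space, while the former retracts $G$-equivariantly onto its reductive quotient --- a torus of rank $r$ which, because conjugation acts trivially on the abstract Cartan, forms the \emph{constant} bundle $\cal{O}_w\times T$. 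Combined with an affine paving of $\cal{O}_w$, this shows that $\ur{H}_\ast^{!,G}(\cal{Z}_w)$ is pure with weight equal to degree; that $\ur{H}_\ast^{!,G}(\cal{Z}'_w)\cong\ur{H}_\ast^{!,G}(\cal{Z}_w)\otimes_{\QL}\ur{H}_\ast(T)$, the tensor factors coming from the positive-degree part of $\ur{H}_\ast(T)$ carrying weight strictly above degree; and that the restriction to $\cal{Z}_w$ kills exactly those factors. So over each open stratum the claim holds: restriction to $\cal{Z}_w$ is an isomorphism on the weight-pure part, with kernel precisely the weight-impure part.

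Then I would glue by induction over the Bruhat order, filtering $\cal{Z}'$ and $\cal{Z}$ by the closed unions $\overline{\cal{Z}'_{\leq w}}$ and $\overline{\cal{Z}_{\leq w}}$ of strata. Pulling back the pavings of the $\cal{O}_w$ through the affine bundles $\cal{Z}_w\to\cal{O}_w$ equips $\cal{Z}$ with an affine paving, so $\ur{H}_\ast^{!,G}(\cal{Z})$ is pure, concentrated in even degrees, and its stratification long exact sequences split into short exact sequences; the five lemma then propagates ``restriction is an isomorphism on weight-pure parts, with kernel the weight-impure part'' from $\overline{\cal{Z}'_{<w}}$ to $\overline{\cal{Z}'_{\leq w}}$, and summing over $w\in W$ gives the statement on graded vector spaces. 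For the algebra structure, I would lift the generators of $\SEnd^\ast(\cal{S})\cong\bb{A}_W$: the subalgebra $\Sym(\bb{V})$ lifts to the algebra generated by the Chern classes of the tautological bundles pulled back from $\widetilde{G}$, which is pure Tate; and $\QL[W]$ in degree $0$ lifts to the fundamental classes of the $|W|$ irreducible components of $\cal{Z}'$ (the closures of its preimages over the regular semisimple locus), which are pure of weight $0$. Both lifts are compatible with convolution and with restriction to $\cal{Z}$, so the isomorphism on the weight-pure part is one of graded algebras.

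The main obstacle is the gluing step. Unlike $\cal{Z}$, the variety $\cal{Z}'$ has nonzero odd Borel--Moore homology and no affine paving, so its stratification spectral sequence need not degenerate, and one must show that its connecting maps are ``block-triangular'' for the weight-pure/weight-impure decomposition --- precisely so that ``take the weight-pure part'' is exact enough to run the five-lemma induction. This is what the weight estimates of the stratum analysis (the torus directions raise weight above degree in every positive degree) and the purity of $\ur{H}_\ast^{!,G}(\cal{Z})$ are for. I expect this bookkeeping, together with the verification that $i^\ast$ on endomorphism algebras really matches the refined-Gysin restriction on the Steinberg varieties and that every identification above is Frobenius-equivariant, to be the points requiring the most care.
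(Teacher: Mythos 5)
Your proposal is correct and takes essentially the same route as the paper: identify $\SEnd^\ast(\cal{G})$ and $\SEnd^\ast(\cal{S})$ with the $G$-equivariant Borel--Moore homology of $\cal{Z}' = \tilde{G}\times_G\tilde{G}$ and $\cal{Z} = \tilde{\cal{U}}\times_{\cal{U}}\tilde{\cal{U}}$, match $i^\ast$ with the pullback-with-restricted-support along $\cal{Z}\hookrightarrow\cal{Z}'$, prove the stratum-wise statement by splitting off a constant torus factor over each Bruhat stratum (the paper's $\cal{Y}_w$, $\cal{Y}'_w\simeq\cal{Y}_w\times T$ implement exactly your bundle picture and K\"unneth/weight bookkeeping), and glue by induction over the Bruhat order via the long exact sequences and the five lemma, with the purity of the $\cal{Z}$-side homology and the stratum weight estimates playing precisely the role you assign them in the ``main obstacle'' paragraph. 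The only inessential deviation is your final step on the algebra structure: since $i^\ast$ is a homomorphism of graded algebras by construction, no lifting of the generators of $\bb{A}_W$ is needed---only bijectivity requires proof.
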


In Section \ref{sec:varieties}, we first review the positive braid monoid $\Br_W^+ \subseteq \Br_W$.
Then we review the map $\beta \mapsto O(\beta)$ studied by Deligne and Brou\'e--Michel and the map $\beta \mapsto \cal{R}(\beta)$ introduced by Rouquier.
Using our realization formalism, we rederive the folklore comparison between the variety $O(\beta)$ and the complexes $\cal{R}(\beta)$, $\cal{R}(\beta^{-1})$ for positive $\beta$.
Using the work in Section \ref{sec:a_w}, we prove:

\begin{mainthm}\label{thm:a_w-to-varieties}
We have
\begin{align}
\sf{AH}^{i, j}(\cal{R}(\beta))
\simeq \gr_{j + 2r}^{\bm{w}} \ur{H}_{-(i + 2r)}^{!, G}(\cal{Z}(\beta))
\end{align}
for all $\beta \in \Br_W^+$ and $i, j \in \bb{Z}$.
\end{mainthm}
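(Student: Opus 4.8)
The plan is to unwind $\sf{AH}$ using the commutative diagram of realization functors from the introduction, reduce everything to honest sheaf operations in $\sf{D}_{G,m}^b(\cal{U}_0)$, and then apply base change repeatedly in the six-functor formalism, so that $\cal{U}(\beta)$ and $\cal{Z}(\beta)$ emerge as the fiber products base change forces on us. Concretely, commutativity of that diagram lets us compute the functor $\sf{H}_W = \sf{K}^b(\sf{C}(\cal{B}_0^2)) \to \sf{D}_{G,m}^b(\cal{U}_0)$ underlying $\sf{AH}$ as $i^\ast\langle -r\rangle \circ \sf{CH} \circ \rho$, where now $\sf{CH}\colon \sf{D}_{G,m}^b(\cal{B}_0^2) \to \sf{D}_{G,m}^b(G_0)$ is the honest horocycle transform, i.e.\ pull--push through $\cal{B}_0^2 \xleftarrow{\act} G_0 \times \cal{B}_0 \xrightarrow{\pr} G_0$, and $i^\ast\langle -r\rangle$ is honest restriction. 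Thus
\[
\sf{AH}^{i,j}(\cal{R}(\beta)) = \gr^{\bm{w}}_j \SHom^i_{\cal{U}_0}\big( i^\ast\langle -r\rangle\, \sf{CH}(\rho(\cal{R}(\beta))),\, \cal{S}\big).
\]
The single external input I would invoke here is the folklore comparison proved in Section \ref{sec:varieties}: for $\beta \in \Br_W^+$, the realization $\rho(\cal{R}(\beta))$ is a shift-twist of the $!$-pushforward $f_{\beta,!}\QL_{O(\beta)}$ along $f_\beta\colon O(\beta) \to \cal{B}_0 \times \cal{B}_0$, the (first coordinate, last coordinate) map. (Morally: the Rouquier complex $F_{s_1}\otimes\cdots\otimes F_{s_k}$ totalizes to extension by zero from the open Bott--Samelson $O(\beta) = O_{s_1}\times_{\cal{B}_0}\cdots\times_{\cal{B}_0}O_{s_k}$.) It is the $!$-normalization --- rather than the $*$-normalization, which computes $\rho(\cal{R}(\beta^{-1}))$ --- that makes the contravariant $\SHom$ below return $\cal{Z}(\beta)$ and not $\cal{Z}(\beta^{-1})$.

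Next I would push $f_{\beta,!}\QL_{O(\beta)}$ through $\sf{CH}$ and $i^\ast\langle -r\rangle$. Base change along $\act$ turns $\act^\ast f_{\beta,!}\QL_{O(\beta)}$ into the $!$-pushforward of $\QL$ from $O(\beta)\times_{\cal{B}_0^2}(G_0\times\cal{B}_0)$, and $\pr_!$ then collapses this to a $!$-pushforward to $G_0$. Restricting to $\cal{U}_0$ and applying base change once more along $\cal{U}_0\hookrightarrow G_0$ --- using $(G_0\times\cal{B}_0)\times_{G_0}\cal{U}_0 = \cal{U}_0\times\cal{B}_0$, so that the relevant fiber product over $\cal{B}_0^2$ is exactly $\cal{U}(\beta)$ --- gives $i^\ast\langle -r\rangle\, \sf{CH}(\rho(\cal{R}(\beta))) \simeq a_!\QL_{\cal{U}(\beta)}$ up to shift-twist, where $a\colon \cal{U}(\beta)\to\cal{U}$ is the structure map.

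It remains to evaluate $\SHom^\ast_{\cal{U}_0}(a_!\QL_{\cal{U}(\beta)}, \cal{S})$. Adjunction gives $\SHom^\ast_{\cal{U}_0}(a_!\QL_{\cal{U}(\beta)}, \cal{S}) = \SHom^\ast_{\cal{U}(\beta)}(\QL, a^!\cal{S})$. Now $\cal{S}\simeq i^\ast\cal{G}\langle -r\rangle$ is a shift-twist of $\mu_!\QL_{\cal{U}(\bb{1})}$ for the Springer resolution $\mu\colon \cal{U}(\bb{1})\to\cal{U}$, and since $\mu$ is proper, base change through the cartesian square $\cal{Z}(\beta) = \cal{U}(\bb{1})\times_{\cal{U}}\cal{U}(\beta)$ yields $a^!\mu_!\cong q_! p^!$, with $p\colon \cal{Z}(\beta)\to\cal{U}(\bb{1})$ and $q\colon \cal{Z}(\beta)\to\cal{U}(\beta)$ the projections. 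Smoothness of $\cal{U}(\bb{1})$ identifies $p^!\QL_{\cal{U}(\bb{1})}$ with a shift-twist of $\omega_{\cal{Z}(\beta)}$, and properness of $q$ (a base change of $\mu$) gives $q_! = q_*$, whence $\SHom^\ast_{\cal{U}(\beta)}(\QL, q_*\omega_{\cal{Z}(\beta)}) = \SHom^\ast_{\cal{Z}(\beta)}(\QL, \omega_{\cal{Z}(\beta)})$, which --- carried out $G$-equivariantly throughout --- is the equivariant Borel--Moore homology $\ur{H}^{!,G}_{-\ast}(\cal{Z}(\beta))$. Tracking the Tate twists and cohomological shifts accumulated along the way (from the Soergel normalization of $\cal{R}(\beta)$, the normalization $\cal{S}\simeq i^\ast\cal{G}\langle -r\rangle$, and $\dim\cal{U}(\bb{1})$) shows the net effect is the claimed $(i,j)\mapsto(-(i+2r),\, j+2r)$, and the base case $\beta = \bb{1}$ --- where $\sf{AH}(\cal{R}(\bb{1}))\simeq\bb{A}_W\simeq\ur{H}^{!,G}_\ast(\cal{Z}(\bb{1}))$ by Lusztig, cf.\ Section \ref{sec:steinberg} --- pins down the constants. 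Since every variety and map above descends to $\bb{F}$ and all six operations are Frobenius-equivariant, these isomorphisms respect weight filtrations, so passing to $\gr^{\bm{w}}$ yields the stated identity; finally, the $\bb{A}_W = \gr^{\bm{w}}\SEnd^\ast(\cal{S})$-module structure on the left, given by post-composition, matches the convolution action of $\ur{H}^{!,G}_\ast(\cal{Z}(\bb{1})) = \bb{A}_W$ along the $\cal{U}(\bb{1})$-factor on the right by naturality of all the maps used.

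The genuine substance of the argument is imported from elsewhere: the folklore identification $\rho(\cal{R}(\beta))\simeq f_{\beta,!}\QL_{O(\beta)}$ up to shift-twist (Section \ref{sec:varieties}, itself resting on the realization formalism of Section \ref{sec:mixed}) and the computation $\gr^{\bm{w}}\SEnd^\ast(\cal{S}) = \bb{A}_W$; granting these, the proof proper is a chain of base-change and adjunction isomorphisms. I expect the two delicate points to be (i) getting the $!$ versus $*$ normalization, hence $\beta$ versus $\beta^{-1}$, onto the correct side of the contravariant $\SHom$, and (ii) bookkeeping the shifts and twists so that the output lands in the stated bidegrees --- neither is deep, but both are easy to get wrong.
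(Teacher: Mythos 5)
Your proposal is correct and follows essentially the same route as the paper: commutativity of the realization diagram reduces $\sf{AH}(\cal{R}(\beta))$ to $\gr^{\bm{w}}\SHom^\ast((i^\ast\langle -r\rangle\circ\sf{CH}\circ\rho)(\cal{R}(\beta)),\cal{S})$, the identification $\rho(\cal{R}(\beta))\simeq\Delta(\beta)$ handles the $!$-versus-$*$ (hence $\beta$ versus $\beta^{-1}$) issue exactly as in Lemma \ref{lem:realization-rouquier}, base change gives $p_{\beta,!}\QL$ over $\cal{U}_0$, and your final adjunction/base-change/smoothness argument through $\cal{Z}(\beta)=\cal{U}(\bb{1})\times_{\cal{U}}\cal{U}(\beta)$ is precisely the content of Lemma \ref{lem:borel-moore}. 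The only cosmetic difference is that the paper does the shift-twist bookkeeping directly via that lemma rather than pinning the constants against the case $\beta=\bb{1}$.
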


In Section \ref{sec:kr}, we review Khovanov--Rozansky homology and its geometric interpretation in terms of the horocycle correspondence due to Webster--Williamson.
We use the work of Sections \ref{sec:hecke}, \ref{sec:a_w}, and \ref{sec:steinberg} to show:

\begin{mainthm}\label{thm:a_w-to-kr}
With respect to the grading conventions on $\sf{AH}$ in Section \ref{sec:a_w} and on $\sf{HHH}$ in Section \ref{sec:kr}, we have
\begin{align}
(\sf{HHH}^{i, i + j, k})^\vee
	\simeq \Hom_W(\Alt^i(\bb{V}), \sf{AH}^{j + k, j}(-))
\end{align}
as functors $\sf{H}_W \to \sf{Vect}$ for all $i, j, k \in \bb{Z}$.
\end{mainthm}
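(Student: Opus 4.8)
The plan is to prove Theorem~\ref{thm:a_w-to-kr} by factoring both sides through the ``principal block'' categories $\sf{C}_{\hat{W}}(G_0)$ and $\sf{C}_{\hat{W}}(\cal{U}_0)$ and then comparing the two sources of the Hochschild grading. On the $\sf{HHH}$ side, Webster--Williamson express $\sf{HHH}$ as a weight-graded hypercohomology over $G_0$, built from the horocycle correspondence $\cal{B}_0 \times \cal{B}_0 \leftarrow G_0 \times \cal{B}_0 \to G_0$, and its Hochschild degree comes from $\gr_\ast^{\bm{w}}$ applied to $\SHom^\ast(-, \cal{G})$. On the $\sf{AH}$ side, the Hochschild degree comes instead from decomposing into $\Alt^i(\bb{V})$-isotypic pieces of $\SHom^\ast(-, \cal{S})$ over $\cal{U}_0$. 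So the heart of the proof is to translate between ``over $G_0$, graded by weight'' and ``over $\cal{U}_0$, graded by the $W$-representation $\Alt^i(\bb{V})$''. First I would set up the commutative diagram from the introduction, whose upper-right square relates $\sf{K}^b\sf{C}(\cal{B}_0^2) \to \sf{K}^b\sf{C}(G_0) \to \sf{K}^b\sf{C}(\cal{U}_0)$ via $\sf{K}^b i^\ast\langle -r\rangle$, compatibly with the realization functors $\rho$. This reduces everything to a statement about $\SHom$-spaces of single objects in $\sf{C}(G_0)$ and $\sf{C}(\cal{U}_0)$, with $\sf{K}^b$ and $\rho$ carrying it to complexes.

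The next step is to reduce from all of $\sf{C}(G_0)$, $\sf{C}(\cal{U}_0)$ to the blocks $\sf{C}_{\hat{W}}(G_0)$, $\sf{C}_{\hat{W}}(\cal{U}_0)$ generated by $\cal{G}$ and $\cal{S}$. This uses the orthogonality results of Lusztig~\cite{lusztig_1985_2} and Rider--Russell~\cite{rr}: the objects of $\sf{C}(G_0)$ outside the principal block contribute nothing to $\SHom^\ast(-, \cal{G})$, and similarly for $\cal{U}_0$ and $\cal{S}$, since $\cal{G}$ and $\cal{S}$ are built purely from the unipotent principal-series data. On the level of the principal block, the key input is Theorem~\ref{thm:pure-iso}: the map $\bigoplus_n \gr_n^{\bm{w}} \SEnd^n(\cal{G}) \to \SEnd^\ast(\cal{G}) \xrightarrow{i^\ast} \SEnd^\ast(\cal{S})$ is an isomorphism of graded algebras. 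This is exactly what lets me trade the weight grading on the $G_0$-side for the genuine cohomological grading on the $\cal{U}_0$-side: pulling back along $i^\ast$ kills precisely the non-pure part of $\SEnd^\ast(\cal{G})$, and the surviving pure part carries the information that Webster--Williamson extract via $\gr_\ast^{\bm{w}}$. I would then identify $\SEnd^\ast(\cal{S})$, as a bigraded $\bb{A}_W$-algebra, with (a Koszul-type twist of) $\Sym(\bb{V}) \otimes \Alt^\ast(\bb{V})$ — concretely, $\SHom^\ast(\cal{S}, \cal{S})$ is a free module over $\SHom^0 = \bb{A}_W$ with the $\Alt^\ast(\bb{V})$ appearing as the extra cohomological degrees, matching the $\Alt^i(\bb{V})$ in the statement. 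Tracking the shift $\langle -r\rangle = \langle -\dim\bb{V}\rangle$ through $i^\ast$ and through the Springer sheaf normalization $\cal{S} \simeq i^\ast\cal{G}\langle -r\rangle$ produces the index shifts $\sf{HHH}^{i, i+j, k}$ versus $\sf{AH}^{j+k, j}$ in the theorem; I would pin these down by testing on $\cal{R}(\bb{1})$, where $\sf{AH}(\cal{R}(\bb{1})) \simeq \bb{A}_W$ (Lusztig~\cite{lusztig_1988}) and $\sf{HHH}(\cal{R}(\bb{1}))$ is the known cohomology of a point twisted by $\Alt^\ast(\bb{V})$.

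Concretely, the order of steps is: (i) recall from Section~\ref{sec:kr} the Webster--Williamson description of $\sf{HHH}$ as $\gr_\ast^{\bm{w}}\SHom^\ast(\sf{KR}(-), \cal{G})$-type data pulled through the horocycle correspondence, and dualize to get $\sf{HHH}^\vee$; (ii) use the commutative diagram to rewrite this over $\cal{U}_0$ via $i^\ast\langle -r\rangle$ and $\rho$; (iii) invoke the orthogonality/block decomposition to pass to $\sf{C}_{\hat{W}}(G_0) \to \sf{C}_{\hat{W}}(\cal{U}_0)$; (iv) apply Theorem~\ref{thm:pure-iso} to replace $\gr_\ast^{\bm{w}}\SEnd^\ast(\cal{G})$ by $\SEnd^\ast(\cal{S})$; (v) decompose $\SEnd^\ast(\cal{S})$ along its $W$-isotypic components, isolating the $\Alt^i(\bb{V})$-piece, and match it term-by-term with $\sf{AH}^{j+k,j}$; (vi) fix all gradings by comparison on $\cal{R}(\bb{1})$ and check compatibility with the monoidal structure (so that the isomorphism is natural in $\sf{H}_W$, not just for Rouquier complexes).

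**The main obstacle** I anticipate is step (v) together with the bookkeeping in (ii): the two gradings in play — the weight grading $\bm{w}$ on the $G_0$-side and the $\Alt^\ast(\bb{V})$-grading on the $\cal{U}_0$-side — sit on opposite sides of the pullback $i^\ast$, and Theorem~\ref{thm:pure-iso} only matches them at the level of $\SEnd$-\emph{algebras}, not obviously at the level of all $\SHom$-\emph{modules} $\SHom^\ast(\sf{KR}(\beta), \cal{S})$ over that algebra. I expect I will need a purity/formality statement for the whole category $\sf{C}(\cal{U}_0)$ — precisely the $\SHom$-purity established in Section~\ref{sec:hecke} — to upgrade the algebra isomorphism to an isomorphism of module categories compatible with the realization functor $\rho$, and hence to conclude that the identification of $\SHom^\ast(-,\cal{S})$-modules is functorial in all of $\sf{H}_W$. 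The weight-vs-Hochschild degree swap is morally the reason $\sf{HHH}$ has a ``hidden'' symmetry, and making it precise — rather than merely matching Euler characteristics — is where the real content lies.
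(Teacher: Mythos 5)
Your overall architecture — Webster--Williamson's description of $\sf{HHH}$ over $G_0$, reduction to the principal blocks via the Lusztig and Rider--Russell orthogonality, Theorem \ref{thm:pure-iso} to pass from $\cal{G}$ to $\cal{S}$, and a final matching of $\Alt^i(\bb{V})$-isotypic pieces — is the same as the paper's. But your step (v) rests on a false identification, and it sits exactly at the point you flag as the main obstacle. You assert that $\SEnd^\ast(\cal{S})$ is a free module over ``$\SHom^0 = \bb{A}_W$'' with $\Alt^\ast(\bb{V})$ appearing as extra cohomological degrees. In fact $\SHom^0(\cal{S}, \cal{S}) \simeq \QL[W]$, and by Theorem \ref{thm:lusztig-algebra} together with the $\SHom$-purity of $\sf{C}(\cal{U}_0)$ (Lemma \ref{lem:unipotent}), the whole graded algebra $\SEnd^\ast(\cal{S})$ is already $\bb{A}_W = \QL[W] \ltimes \Sym(\bb{V})$: there is no exterior-algebra factor in it to isolate. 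The exterior algebra lives elsewhere: in the \emph{impure} part of $\SEnd^\ast(\cal{G})$ (Remark \ref{rem:impure}), which Theorem \ref{thm:pure-iso} discards, and — this is the ingredient your plan never invokes — in the $W$-equivariant computation $\gr_{i + j}^{\bm{w}} \bb{H}_G^j(G, \cal{G}) \simeq (\Sym^{\frac{j - i}{2}} \otimes \Alt^i)(\bb{V})$ coming from $[\tilde{G}/G] \simeq [B/B]$ and the cohomology of the torus (Example \ref{ex:identity}). If you execute (v) as written, you will find nothing in $\SEnd^\ast(\cal{S})$ to decompose.

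The correct mechanism, which the paper implements in Lemmas \ref{lem:sheaves-to-reps} and \ref{lem:h-to-g}, is a covariant-to-contravariant translation rather than a statement about $\SEnd^\ast(\cal{S})$: for $K = \cal{G}_\phi \otimes M$ in the principal block, the weight-graded hypercohomology is $\Hom_W(V_\phi, (\Sym^{\frac{j - i}{2}} \otimes \Alt^i)(\bb{V})) \otimes \xi M$, and dualizing, using the self-duality of $V_\phi$ and $\Sym(\bb{V})$, turns this into $\Hom_W(\Alt^i(\bb{V}), \gr_0^{\bm{w}} \SHom^0(K, \cal{G}\langle j\rangle))$; only then does Theorem \ref{thm:pure-iso}, together with the compatibility of $i^\ast\langle -r\rangle$ with $\kappa$ and with the projections $\sf{PR}$, convert $\Hom$ into $\cal{G}$ into $\Hom$ into $\cal{S}$, i.e.\ into $\sf{AH}$. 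In other words, the $\Alt^i(\bb{V})$ in the statement is the isotypic component of the $W$-action on the \emph{value} $\sf{AH}^{j + k, j}(\cal{K})$, viewed as an $\bb{A}_W$-module, not a graded piece of the endomorphism algebra of $\cal{S}$; and the transfer of the Hochschild degree from weights to exterior degree is supplied by Example \ref{ex:identity}, not by $\SHom$-purity, which only delivers the formality/functoriality needed for the $\ur{H}^k \circ \sf{K}^b$ step. Your steps (i)--(iv) and (vi) are fine and agree with the paper; to close the gap you need the equivariant torus computation and the duality manipulation above in place of your step (v).
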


Let $\cal{X}(\beta) = \cal{X}(\beta)_0 \otimes \bar{\bb{F}}$ be the fiber of $\cal{U}(\beta) = \cal{U}(\beta)_0 \otimes \bar{\bb{F}}$ above $1 \in \cal{U}$.
Using well-known properties of the $W$-action on the Springer resolution, we deduce:

\begin{maincor}\label{cor:kr-to-varieties}
We have
\begin{align}
\sf{HHH}^{0, j, k}(\cal{R}(\beta))^\vee
	&\simeq \gr_{j + 2r}^{\bm{w}} \ur{H}_{-(j + k + 2r)}^{!, G}(\cal{U}(\beta)),\\
\sf{HHH}^{r, r + j, k}(\cal{R}(\beta))^\vee
	&\simeq \gr_{j + 2(r - N)}^{\bm{w}} \ur{H}_{-(j + k + 2(r - N))}^{!, G}(\cal{X}(\beta)).
\end{align}
for all $\beta \in \Br_W^+$, where $N = \dim \cal{B}$.
\end{maincor}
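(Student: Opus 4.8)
The plan is to read the corollary off Theorems~\ref{thm:a_w-to-kr} and~\ref{thm:a_w-to-varieties} by specializing the exterior power $\Alt^i(\bb{V})$ to its two extreme values $i = 0$ and $i = r$, where the relevant $W$-isotypic component of the Springer sheaf is completely understood. Since $\Alt^0(\bb{V}) = \QL$ is the trivial character of $W$ and $\Alt^r(\bb{V})$ is its sign character, Theorem~\ref{thm:a_w-to-kr} becomes
\begin{align}
\sf{HHH}^{0, j, k}(\cal{R}(\beta))^\vee &\simeq \bigl(\sf{AH}^{j + k, j}(\cal{R}(\beta))\bigr)^W,\\
\sf{HHH}^{r, r + j, k}(\cal{R}(\beta))^\vee &\simeq \Hom_W\bigl(\Alt^r(\bb{V}),\, \sf{AH}^{j + k, j}(\cal{R}(\beta))\bigr),
\end{align}
and Theorem~\ref{thm:a_w-to-varieties} rewrites $\sf{AH}^{j + k, j}(\cal{R}(\beta))$ as $\gr_{j + 2r}^{\bm{w}} \ur{H}_{-(j + k + 2r)}^{!, G}(\cal{Z}(\beta))$. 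Because $W$ is finite and $\ell$ is invertible, the $W$-action here is by weight-preserving automorphisms, so isotypic projection is exact and commutes with $\gr_\ast^{\bm{w}}$; hence it suffices to identify, as weight-filtered graded $\bb{A}_W$-modules, the trivial- and sign-isotypic summands of $\ur{H}_\ast^{!, G}(\cal{Z}(\beta))$ with $\ur{H}_\ast^{!, G}(\cal{U}(\beta))$ and $\ur{H}_\ast^{!, G}(\cal{X}(\beta))$, up to the stated shifts. I would take from the proof of Theorem~\ref{thm:a_w-to-varieties} the fact that this $W$-action is the Springer action on the first factor of $\cal{Z}(\beta) = \cal{U}(\bb{1}) \times_{\cal{U}} \cal{U}(\beta)$.

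The heart of the matter is then a standard computation with the Springer resolution $\pi : \cal{U}(\bb{1}) \to \cal{U}$. By the decomposition theorem, $\pi_\ast \QL_{\cal{U}(\bb{1})}$ splits $W$-equivariantly into intersection complexes indexed by the Springer correspondence. Because $\pi$ is proper and birational, its trivial-isotypic summand is the constant sheaf $\QL_\cal{U}$ (which in good characteristic is the $\IC$ sheaf of $\cal{U}$); and because $\Alt^r(\bb{V})$ occurs in $H^\ast(\cal{B}, \QL)$ exactly once, in the top degree $2N = 2\dim\cal{B}$, its sign-isotypic summand is a skyscraper $\QL$ at $1 \in \cal{U}$ placed in that degree and the matching weight. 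Now form the Cartesian square with vertices $\cal{Z}(\beta), \cal{U}(\beta), \cal{U}(\bb{1}), \cal{U}$ and let $p : \cal{Z}(\beta) \to \cal{U}(\beta)$ be the base change of $\pi$, hence proper, and $g : \cal{U}(\beta) \to \cal{U}$. Proper base change and Verdier duality give $p_\ast \omega_{\cal{Z}(\beta)} \simeq g^! \pi_\ast \omega_{\cal{U}(\bb{1})}$, and applying $g^!$ to the two summands above — using that $g^!$ of the skyscraper at $1$ is $i_\ast$ of the dualizing complex of the fiber $\cal{X}(\beta) = g^{-1}(1)$ — identifies the trivial-isotypic part of $p_\ast\omega_{\cal{Z}(\beta)}$ with $\omega_{\cal{U}(\beta)}$ and the sign-isotypic part with a shift-twist of $i_{\cal{X}(\beta),\ast}\,\omega_{\cal{X}(\beta)}$. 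Taking $G$-equivariant hypercohomology over $\cal{U}(\beta)$ gives
\begin{align}
\ur{H}_\ast^{!, G}(\cal{Z}(\beta))^W &\simeq \ur{H}_\ast^{!, G}(\cal{U}(\beta)),\\
\Hom_W\bigl(\Alt^r(\bb{V}),\, \ur{H}_\ast^{!, G}(\cal{Z}(\beta))\bigr) &\simeq \ur{H}_{\ast + c}^{!, G}(\cal{X}(\beta))\langle d\rangle
\end{align}
for the constants $c, d$ forced by the degree and weight of $H^{2N}(\cal{B})$; substituting into the two displays at the start and simplifying the indices yields the corollary.

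I expect the only real work to be bookkeeping, concentrated in two places. First, one must check that the Springer $W$-action and the $\bb{A}_W$-module $W$-action on $\ur{H}_\ast^{!, G}(\cal{Z}(\beta))$ agree; rather than reprove this I would extract it from the construction behind Theorem~\ref{thm:a_w-to-varieties}, where the module structure over $\SEnd^\ast(\cal{S}) \supseteq \bb{A}_W$ is geometric by design. Second, and more delicately, one must pin down the shifts $c, d$: these are dictated by the normalization $\cal{S} \simeq i^\ast\cal{G}\langle -r\rangle$ of Section~\ref{sec:hecke} and by the stacky degree conventions recalled in Section~\ref{sec:mixed}, and the safest way to fix all signs is to calibrate against $\beta = \bb{1}$, where $\cal{X}(\bb{1}) = \cal{B}$ and both sides are classical. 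The geometric inputs — base change along $g$, the decomposition of $\pi_\ast\QL$, and the single occurrence of $\Alt^r(\bb{V})$ in top-degree cohomology of $\cal{B}$ — are all well known, so apart from these two points the argument is formal.
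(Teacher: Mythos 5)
Your proposal is correct and follows essentially the same route as the paper: both reduce via Theorem \ref{thm:a_w-to-kr} to the trivial- and sign-isotypic components and then use the Springer-theoretic identifications of $\cal{S}_1$ (the constant sheaf $\QL\langle -r\rangle$ on $\cal{U}$) and $\cal{S}_\varepsilon$ (the skyscraper at $1$ with stalk $\QL\langle -r - 2N\rangle$) together with base change to land on $\cal{U}(\beta)$ and $\cal{X}(\beta)$. The only difference is packaging: the paper quotes the refined form of Theorem \ref{thm:a_w-to-varieties} (Theorem \ref{thm:a_w-to-varieties-strong}), namely $\sf{AH}^{i, j}(\cal{R}(\beta)) \simeq \gr_{j + r}^{\bm{w}} \SHom^{i + r}(p_{\beta, !}\QL, \cal{S})$, so that isotypic projection simply replaces $\cal{S}$ by $\cal{S}_1$ or $\cal{S}_\varepsilon$ and Lemma \ref{lem:borel-moore} finishes, which renders your separate decomposition-theorem/proper-base-change step, the re-identification of the $W$-action on $\ur{H}_\ast^{!, G}(\cal{Z}(\beta))$, and the calibration of shifts at $\beta = \bb{1}$ unnecessary.
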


Combining the corollary with a ``Serre duality'' result for $\sf{HHH}$ \cite{ghmn}, we find that for $G = \SL_n$ and any $\beta \in \Br_W^+$, there is a weight-preserving isomorphism between the equivariant Borel--Moore homologies of the varieties $\cal{U}(\beta)$ and $\cal{X}(\beta\pi)$.
We revisit this observation in \S\ref{subsubsec:serre-duality}.

In Section \ref{sec:decat}, we introduce the function $\Tr{-} : H_W \to R(W)[\Q^{\pm\frac{1}{2}}]$ in terms of the background covered in Appendix \ref{sec:coxeter}.
Its definition can be extended to any finite Coxeter group $W$.
In the case of Weyl groups, we show that $\Tr{-}$ is the decategorification of $\sf{AH}$ up to a normalization factor.
The key ingredient is a multiplicity formula for character sheaves that involves $\{-, -\}$, due to Lusztig \cite[Cor.\ 14.11]{lusztig_1985_3} \cite[Thm.\ 23.1]{lusztig_1986}.

\begin{mainthm}\label{thm:decat}
If $W$ is a Weyl group, and $\cal{K} \in \sf{H}_W$ decategorifies to $\beta \in H_W$, then
\begin{align}
[\sf{AH}^\vee(\cal{K})]_{\Q} = \Tr{\beta} \cdot [\Sym(\bb{V})]_{\Q},
\end{align}
where $(\sf{AH}^\vee)^{i, j} = (\sf{AH}^{i, j})^\vee$ for all $i, j$ and $[\Sym(\bb{V})]_{\Q} = \sum_i {\Q^i} \Sym^i(\bb{V})$.
\end{mainthm}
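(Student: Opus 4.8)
The plan is to decategorify the composite defining $\sf{AH}$ one arrow at a time and to recognize in the outcome Lusztig's multiplicity formula for the horocycle transform. Both sides of the asserted identity are $\bb{Z}[\Q^{\pm\frac{1}{2}}]$-linear in $\cal{K}$: the right side manifestly, and the left because $\sf{AH}$ terminates in the cohomological functor $\gr_\ast^{\bm{w}}\SHom^\ast(-, \cal{S})$, whose equivariant graded Euler characteristic is additive in distinguished triangles (the weight-graded pieces being exact on the relevant category of mixed complexes). So both sides factor through $K_0(\sf{H}_W) = H_W$, and it suffices to check the identity on the generators $\IC(\overline{O_w})$, $w \in W$, regarded in $\sf{H}_W = \sf{K}^b(\sf{C}(\cal{B}_0^2))$ as complexes concentrated in degree $0$; their classes form a $\bb{Z}[\Q^{\pm\frac{1}{2}}]$-basis of $H_W$. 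I would then fix such a $w$ and unwind: the realization functor $\rho$ induces an isomorphism on Grothendieck groups carrying the class of a complex to the class of its image, so the class of the image of $\IC(\overline{O_w})$ in $K_0(\sf{D}_{G, m}^b(\cal{U}_0))$ is $i^\ast\langle -r\rangle$ applied to the class of the horocycle transform $\sf{CH}(\IC(\overline{O_w}))$, and $[\sf{AH}^\vee(\IC(\overline{O_w}))]_{\Q}$ is the value of the $W$-equivariant graded Euler pairing against $\cal{S}$ on that class.

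The main input is then Lusztig's multiplicity formula for character sheaves \cite[Cor.\ 14.11]{lusztig_1985_3} \cite[Thm.\ 23.1]{lusztig_1986}: it expresses $[\sf{CH}(\IC(\overline{O_w}))]$ in $K_0(\sf{C}(G_0))$ as an explicit combination of the classes of the mixed unipotent character sheaves $E_\psi$, $\psi \in \hat{W}$, the coefficient of $E_\psi$ being, up to a Tate twist that I will track, $\sum_{\phi \in \hat{W}} \{\phi, \psi\}\,\phi_{\Q}(c_w)$, where $c_w \in H_W$ is the class of $\IC(\overline{O_w})$. Applying $i^\ast\langle -r\rangle$ lands in the principal block $\sf{C}_{\hat{W}}(\cal{U}_0)$ of Section \ref{sec:hecke}; by the orthogonality results of Lusztig \cite{lusztig_1985_2} and Rider--Russell \cite{rr}, character sheaves outside the block generated by $\cal{G}$ are annihilated by $i^\ast$ followed by pairing against $\cal{S}$, and inside the block $i^\ast E_\psi\langle -r\rangle$ contributes exactly the Springer intersection complex $\IC_\psi$ on $\cal{U}_0$, up to a pure shift-twist. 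Finally I would compute the Euler pairing $\gr_\ast^{\bm{w}}\SHom^\ast(\IC_\psi, \cal{S})$: writing $\cal{S} \simeq \bigoplus_\phi V_\phi \otimes \IC_\phi$ via the Springer correspondence and $\SEnd^\ast(\cal{S}) \cong \bb{A}_W$ as graded algebras (Lusztig \cite{lusztig_1988}), a routine computation of the $\QL[W]$-bimodule $\bb{A}_W$ identifies $\SHom^\ast(\IC_\psi, \IC_\phi)$ with the graded multiplicity space of $\phi$ in $\Sym(\bb{V}) \otimes \psi$, so that together with the $\SHom$-purity of $\sf{C}(\cal{U}_0)$ and the weight statement of Theorem \ref{thm:pure-iso} one gets $[\gr_\ast^{\bm{w}}\SHom^\ast(\IC_\psi, \cal{S})]_{\Q} = [\Sym(\bb{V})]_{\Q}\cdot\psi$. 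Assembling these three computations over $\psi$ and comparing with $\Tr{c_w} = \sum_{\phi, \psi} \{\phi, \psi\}\,\phi_{\Q}(c_w)\,\psi$ yields the identity for $\IC(\overline{O_w})$, hence for all $\cal{K}$.

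The hard part is not any single computation but the coordination of gradings and normalizations. Lusztig's multiplicity formula is classically stated in terms of characteristic functions on $G(\bb{F})$, so I must lift it to the mixed Grothendieck group with the correct weights, and then verify that the Tate twists appearing there, in the restriction $i^\ast E_\psi\langle -r\rangle$, and in the identification $\SEnd^\ast(\cal{S}) \cong \bb{A}_W$ all combine into the single overall factor absorbed by $\Tr{-}$. This is precisely where the purity results of Section \ref{sec:hecke} and Theorem \ref{thm:pure-iso} are indispensable: they force every relevant $\Ext$-group into even cohomological degree of matching weight, so that the signs $(-1)^{i - j}$ in $[-]_{\Q}$ are uniformly $+1$ and each power of $\Q^{\frac{1}{2}}$ is dictated by the cohomological degree. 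The passage from $\sf{AH}$ to $\sf{AH}^\vee$ is then harmless: dualizing a bigraded $\bb{A}_W$-module replaces $\Q$ by $\Q^{-1}$ and each $W$-representation by its contragredient, and the normalization in Section \ref{sec:decat} is arranged to be compatible with this.
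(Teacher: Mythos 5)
Your proposal is correct and is essentially the paper's own argument: reduce by linearity to the Kazhdan--Lusztig basis objects $\IC_w$, apply Lusztig's multiplicity formula for $\sf{CH}(\IC_w)$ (Theorem \ref{thm:lusztig-character}), discard the cuspidal/non-principal contributions via the orthogonality results behind Theorem \ref{thm:summand}, and evaluate $\gr_j^{\bm{w}}\SHom^i(\cal{S}_\psi, \cal{S})$ through $\bb{A}_W \simeq \SEnd^\ast(\cal{S})$ together with the $\SHom$-purity of $\sf{C}(\cal{U}_0)$, exactly as in the paper's proof. The only slip is your last remark: since $(\sf{AH}^\vee)^{i,j} = (\sf{AH}^{i,j})^\vee$ is a degreewise dual it does not replace $\Q$ by $\Q^{-1}$; it is harmless simply because every representation of $W$ is self-dual ($\bb{Q}_W = \bb{Q}$), which is how the paper concludes.
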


Next, we explain the precise relationship between $\Tr{-}$ and the Markov trace studied by Ocneanu and Gomi:
See Proposition \ref{prop:markov}.
It is most conveniently stated in terms of the class function $\Tr{-}^0 : \Br_W \to R(W)(\!(\Q^{\frac{1}{2}})\!)$ defined by
\begin{align}
\Tr{\beta}^0 = (-\Q^{\frac{1}{2}})^{|\beta|}\Tr{\beta} \cdot \varepsilon [\Sym(\bb{V})]_{\Q},
\end{align}
where $|\beta|$ is the writhe of $\beta$ and $\varepsilon$ is the sign character of $W$ (see \S\ref{subsec:representation}).

Using properties of $\{-, -\}$, we show that certain rationality and symmetry properties of the Markov trace generalize to $\Tr{-}$ and $\Tr{-}^0$.
For Weyl groups, the strongest form of the rationality property will be Theorem \ref{thm:pole}, which we prove in Section \ref{sec:dl}.
For general Coxeter groups, the remaining properties are:

\begin{mainthm}\label{thm:symmetry}
For all $\beta \in \Br_W$, we have 
\begin{align}
\Tr{\beta} 
&\in \Q^{-\frac{|\beta|}{2}}R(W)[\Q] \cap R(W)[\Q^{\frac{1}{2}} - \varepsilon \Q^{-\frac{1}{2}}],\\
\Tr{\beta}^0
&\in 
	R(W)[\![\Q]\!] \cap R(W)(\Q) \cap
	(\Q^{\frac{1}{2}})^{|\beta| - r}
	R(W)[(\Q^{\frac{1}{2}} - \Q^{-\frac{1}{2}})^{\pm 1}].
\end{align}
In particular, $\Tr{\beta}$ and $\Tr{\beta}^0$ are invariant under $\Q^{\frac{1}{2}} \mapsto -\varepsilon\Q^{-\frac{1}{2}}$ and $\Q^{\frac{1}{2}} \mapsto -\Q^{-\frac{1}{2}}$, respectively.
Moreover, $\Tr{\beta}^0$ has degree $|\beta| - r$ as a rational function in $\Q$.
\end{mainthm}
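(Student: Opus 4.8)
The plan is to derive the whole statement from the closed formula $\Tr{\beta} = \sum_{\phi, \psi \in \hat{W}} \{\phi, \psi\}\, \phi_{\Q}(\beta)\, \psi$ of Section~\ref{sec:decat}, by feeding in three packages of input. First, the behavior of Iwahori--Hecke characters under the sign twist: the substitution $\Q^{1/2} \mapsto -\Q^{-1/2}$ carries $\phi_{\Q}$ to $(\varepsilon\phi)_{\Q}$ (coming from the semilinear automorphism of $H_W$ that fixes the standard generators, together with self-duality of the irreducibles), and for $\beta$ presented as a word in $|\beta|$ signed generators, an induction on the number of crossings --- using only the quadratic relation --- shows that $\Q^{|\beta|/2}\phi_{\Q}(\beta) \in \bb{Z}[\Q]$ has $\Q$-degree at most $|\beta|$. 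Second, the behavior of Lusztig's pairing under the sign twist, recorded in Appendix~\ref{sec:coxeter}: $\{-,-\}$ is symmetric and, up to the explicit factors dictated by the $a$- and $A$-functions, compatible with the operation $\phi \mapsto \varepsilon\phi$; this descends from the behavior of the nonabelian Fourier matrices under tensoring by $\varepsilon$ (which permutes families of characters and intertwines their matrices) and from the matching functional equation for the unipotent degrees $\UDeg_\phi$. Third, Chevalley's theorem and Poincar\'e duality: since $\bb{V}^W = 0$, one has $[\Sym(\bb{V})]_{\Q} = [\Sym(\bb{V})^W]_{\Q}\cdot[\mathrm{coinv}]_{\Q} = [\mathrm{coinv}]_{\Q}\big/\prod_i(1-\Q^{d_i})$ as rational functions, and the coinvariant algebra, being Gorenstein with socle $\varepsilon$ and self-dual graded pieces, satisfies $[\Sym(\bb{V})]_{\Q}\big|_{\Q \mapsto \Q^{-1}} = (-1)^r \Q^r\, \varepsilon\cdot[\Sym(\bb{V})]_{\Q}$, so that $[\Sym(\bb{V})]_{\Q}$ is a rational function of degree $-r$ lying in $R(W)[\![\Q]\!]$.

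Granting these, the rest is a bookkeeping of signs and half-integer powers of $\Q$. Substituting $\Q^{1/2}\mapsto -\Q^{-1/2}$ into the formula, replacing each $\phi_{\Q}$ by $(\varepsilon\phi)_{\Q}$, reindexing the double sum over $\hat{W}$ by $\phi\mapsto\varepsilon\phi$ and $\psi\mapsto\varepsilon\psi$, and absorbing the sign twist into $\{-,-\}$ shows that $\Tr{\beta}$ is fixed by the involution $\Q^{1/2}\mapsto -\varepsilon\Q^{-1/2}$; combined with the crossing bound (which places $\Tr{\beta}$ in $\Q^{-|\beta|/2}R(W)[\Q]$ of $\Q$-degree at most $|\beta|/2$ once integrality of the coefficients is known, see below) and a short term-by-term inspection, this yields the first display. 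For the second, one unwinds $\Tr{\beta}^0 = (-\Q^{1/2})^{|\beta|}\Tr{\beta}\cdot\varepsilon[\Sym(\bb{V})]_{\Q}$: membership in $R(W)[\![\Q]\!]\cap R(W)(\Q)$ is immediate; combining the functional equation for $\Tr{\beta}$ with that for $[\Sym(\bb{V})]_{\Q}$ and tracking the powers of $\Q$, using $\sum_i d_i = N + r$ for the number $N$ of reflections, gives invariance under $\Q^{1/2}\mapsto -\Q^{-1/2}$; and the degree of $\Tr{\beta}^0$ is $|\beta| - r$, by adding the degree of $(-\Q^{1/2})^{|\beta|}$, the degree $|\beta|/2$ of $\Tr{\beta}$ --- whose top term is supplied without cancellation by the trivial character --- and the degree $-r$ of $[\Sym(\bb{V})]_{\Q}$, whence $\Tr{\beta}^0$ lies in $(\Q^{1/2})^{|\beta|-r}R(W)[(\Q^{1/2}-\Q^{-1/2})^{\pm 1}]$ as an invariant rational function of that degree with a single coinvariant-type denominator.

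The main obstacle is the integrality of $\Tr{\beta}$: the pairing $\{-,-\}$ genuinely has denominators (controlled by the sizes of Lusztig's families, equivalently by the Schur elements of $H_W$), so it is not formal that $\sum_{\phi,\psi}\{\phi,\psi\}\phi_{\Q}(\beta)\psi$ has coefficients in $R(W)$ rather than in $R(W)\otimes\bb{Q}$. When $W$ is a Weyl group this is settled by Theorem~\ref{thm:decat}: $[\sf{AH}^\vee(\cal{R}(\beta))]_{\Q} = \Tr{\beta}\cdot[\Sym(\bb{V})]_{\Q}$ is a genuine integral bigraded virtual character, and $[\Sym(\bb{V})]_{\Q}$ is invertible with integral coefficients in $R(W)(\!(\Q^{1/2})\!)$ (its degree-zero part being the unit $1$), so $\Tr{\beta}$ has integral coefficients; together with the polynomiality from the crossing bound this gives $\Tr{\beta}\in\Q^{-|\beta|/2}R(W)[\Q]$. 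For a general finite Coxeter group one instead routes through Proposition~\ref{prop:markov}, identifying $\Tr{-}$ with a modification of Gomi's Markov trace and invoking its known rationality, or bounds the denominators of $\{-,-\}$ against those of the Schur elements directly. I expect the sign bookkeeping of the middle paragraph to be delicate but routine, and the only genuinely nontrivial ingredient outside this theorem proper to be the sign-twist symmetry of $\{-,-\}$, established in Appendix~\ref{sec:coxeter} by inspection of the Fourier matrices classified in \cite{lusztig_1993, lusztig_1994, malle}; the sharper statement locating the pole of $\Tr{\beta}^0$ needs genuinely more and is deferred to Theorem~\ref{thm:pole}.
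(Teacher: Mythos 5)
Your overall architecture is the paper's: Propositions~\ref{prop:rationality} and~\ref{prop:symmetry} are exactly your two halves (the twist $\phi_{\Q}\mapsto(\varepsilon\phi)_{\Q}$ under $\Q^{\frac{1}{2}}\mapsto-\Q^{-\frac{1}{2}}$, the equality $\{\varepsilon\phi,\varepsilon\psi\}=\{\phi,\psi\}$ of Lemma-Postulate~\ref{lempost:fourier}(3) — note it is an exact equality, with no correction by $\bb{a}$ or $\bb{A}$ — the functional equation for $[\Sym(\bb{V})]_{\Q}$, and integrality via Theorem~\ref{thm:decat} in the Weyl case), and you correctly defer the pole bound to Theorem~\ref{thm:pole}. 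The genuine gap is your first input. The claim that an induction on crossings, ``using only the quadratic relation,'' shows $\Q^{|\beta|/2}\phi_{\Q}(\beta)\in\bb{Z}[\Q]$ is false: coefficients only lie in $\bb{Z}_W$ outside the crystallographic case, and, more seriously, the absence of odd powers of $\Q^{\frac{1}{2}}$ is not formal. For the exceptional characters in types $E_7$ and $E_8$ the values $\phi_{\Q}(\Q^{|w|/2}\sigma_w)$ genuinely involve $\Q^{\frac{1}{2}}$, and outside those cases polynomiality in $\Q$ is the Benson--Curtis theorem \cite{bc}, not a consequence of the braid word (your bound also misbehaves for negative crossings under the paper's writhe normalization). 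Since you use this ``crossing bound'' to place $\Tr{\beta}^0$ in $R(W)(\Q)\cap R(W)[\![\Q]\!]$, your argument breaks precisely where the paper works hardest: rationality in $\Q$ (rather than $\Q^{\frac{1}{2}}$) when $W$ has an $E_7$ or $E_8$ factor is obtained there only by importing Theorem~\ref{thm:pole}, i.e.\ a geometric point-count — which is exactly why Theorem~\ref{thm:symmetry} is proved as a combination of Propositions~\ref{prop:rationality}, \ref{prop:symmetry} and Theorem~\ref{thm:pole}.

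Two further soft spots. Your fallback for general Coxeter groups via Proposition~\ref{prop:markov} cannot do the job: Gomi's trace only records the multiplicities $(\Alt^i(\bb{V}),\Tr{\beta}^0)_W$, so its rationality says nothing about the remaining isotypic components of $\Tr{\beta}$; the paper instead checks the non-crystallographic types against explicit character values and Fourier matrices. And your degree computation (leading term of $\Tr{\beta}$ supplied by the trivial character, degrees added) requires a uniform bound on the $\Q$-degrees of all coefficients of $\Q^{|\beta|/2}\Tr{\beta}$ together with a non-cancellation statement after multiplying by $\varepsilon[\Sym(\bb{V})]_{\Q}$, neither of which you justify; the paper sidesteps this by combining the functional equation $\Tr{\beta}^0\mapsto(-\Q)^{r-|\beta|}\Tr{\beta}^0$ with the fact, from Proposition~\ref{prop:rationality}, that $(1,\Tr{\beta}^0)_W$ has nonzero constant term, which pins the degree at $|\beta|-r$ without any leading-term analysis. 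Your Gorenstein derivation of the $[\Sym(\bb{V})]_{\Q}$ functional equation is a fine variant of the paper's Lemma~\ref{lem:self-dual} argument.
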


In Section \ref{sec:dl}, we again assume that $W$ is the Weyl group of $G$.
After reviewing background on Deligne--Lusztig virtual representations, we use their properties to express $\Tr{-}^0$ in terms of the $\bb{F}$-point counts of the fibers of the map $\cal{U}(\beta) \to \cal{U}$.
Namely, we need results of Deligne--Lusztig--Springer \cite[Thm.\ 6.9]{dl} \cite[Thm.\ 5.6]{springer_1976}, Kazhdan \cite{kazhdan}, Lusztig \cite[Cor.\ 4.24-4.25]{lusztig_1984}, and Geck--Lusztig \cite[Prop.\ 1.3]{geck}.
To state the formula, let $\cal{B}_g$ denote the Springer fiber above $g \in G$, and let
\begin{align}
Q_g(w) = \sum_n
{\tr(wF \mid \ur{H}^{2n}(\cal{B}_g, \QL))}
\end{align}
for all $w \in W$.
When $G$ is a \emph{special} algebraic group in the sense of Serre \cite{grothendieck_1958}, the formula below is a kind of virtual weight series for $[\cal{Z}(\beta)/G]$, so it is a corollary of Theorem \ref{thm:decat} via \cite[Appendix]{hr} and \cite[Thm.\ 4.10]{joyce} (see Remark \ref{rem:special}).
However, not all algebraic groups are special.

\begin{mainthm}\label{thm:virtual}
For all $\beta \in \Br_W^+$, we have
\begin{align}
\Tr{\beta}^0|_{\Q = q}
&= \frac{(-1)^{r - |\beta|}}{|G^F|}
	\sum_{u \in \cal{U}^F}
	|\cal{U}(\beta)_u^F| Q_u
\end{align}
in $\bb{Q} \otimes R(W)$, where $q = |\bb{F}|$.
\end{mainthm}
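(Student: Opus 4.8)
We verify the identity as one of class functions on $W$; fix $w\in W$. The first step is to recognize the right-hand side as a $w$-equivariant Grothendieck--Lefschetz number. Since $\cal{Z}(\beta) = \cal{U}(\bb{1})\times_{\cal{U}}\cal{U}(\beta)$ with $\cal{U}(\bb{1})\to\cal{U}$ the Springer resolution, the fibre of $\pi\colon\cal{Z}(\beta)\to\cal{U}$ over $u$ is $\cal{B}_u\times\cal{U}(\beta)_u$; Künneth on fibres, the Grothendieck--Lefschetz--Verdier formula for $R\pi_!\QL$ equipped with the Springer action of $w$ (which commutes with Frobenius), and the vanishing of the odd cohomology of Springer fibres give
\begin{align}
\sum_{u\in\cal{U}^F}|\cal{U}(\beta)^F_u|\,Q_u(w)
= \sum_i (-1)^i \tr\bigl(wF\mid \ur{H}^i_c(\cal{Z}(\beta),\QL)\bigr).
\end{align}
On the other side, $[\Sym(\bb{V})]_{\Q}$ is the Molien series $w\mapsto\det_{\bb{V}}(1-\Q w)^{-1}$, so $\varepsilon[\Sym(\bb{V})]_q(w) = (-1)^r/|T_w^F|$ for a maximal torus $T_w$ of type $w$; combined with the defining formula $\Tr{\beta} = \sum_{\phi,\psi\in\hat{W}}\{\phi,\psi\}\phi_{\Q}(\beta)\psi$, the definition $\Tr{\beta}^0 = (-\Q^{\frac{1}{2}})^{|\beta|}\Tr{\beta}\cdot\varepsilon[\Sym(\bb{V})]_{\Q}$, and the cancellation of half-integer powers of $\Q$ from Theorem \ref{thm:symmetry}, the identity to be proved becomes
\begin{align}
\sum_i (-1)^i \tr\bigl(wF\mid \ur{H}^i_c(\cal{Z}(\beta),\QL)\bigr)
= \frac{|G^F|}{|T_w^F|}\sum_{\phi,\psi\in\hat{W}}\{\phi,\psi\}\,\phi_q^T(\beta)\,\psi(w),
\end{align}
where $\phi_q^T(\beta) = \Q^{|\beta|/2}\phi_{\Q}(\beta)|_{\Q = q}$ is the value of the unnormalized Iwahori--Hecke character on the positive braid $\beta$ and $|G^F|/|T_w^F| = \dim R^1_{T_w}$. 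When $G$ is special in Serre's sense \cite{grothendieck_1958} one can shortcut this: $[\cal{Z}(\beta)/G]$ is then polynomial-count, and by Theorem \ref{thm:a_w-to-varieties}, Theorem \ref{thm:decat}, and \cite[Appendix]{hr}, \cite[Thm.\ 4.10]{joyce} the $W$-equivariant weight series $[\sf{AH}^\vee(\cal{R}(\beta))]_{\Q}$, which is $\Tr{\beta}^0$ up to normalization, specializes at $\Q = q$ to the left-hand side of the first display divided by $|G^F|$ (up to sign). For non-special $G$ this fails, so the identity must be established directly in the character ring of $G^F$.

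For the general argument I would translate each side into Deligne--Lusztig data and reorganize. On the geometric side: Springer's theorem (Deligne--Lusztig \cite[Thm.\ 6.9]{dl}, Springer \cite[Thm.\ 5.6]{springer_1976}) identifies $Q_u(w)$, up to the standard $\varepsilon$-twist, with the value at $u$ of the unipotently supported virtual character $R^1_{T_w}$, so the functions $u\mapsto Q_u(w)$ span the uniform functions on $\cal{U}^F$ and their completeness and orthogonality relations control expansions in this basis; and the Brou\'e--Michel description of the cohomology of the braid Deligne--Lusztig varieties $\cal{U}(\beta)_u$ (\cite{deligne_1997, bm_1997}, with normalization fixed via Kazhdan \cite{kazhdan} and Geck--Lusztig \cite[Prop.\ 1.3]{geck}) expands $u\mapsto|\cal{U}(\beta)^F_u|$ in the Green-function basis with coefficients built from the Hecke-character values $\phi_q^T(\beta)$ and from torus indices. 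On the algebraic side: Lusztig's formulas relating Green functions to the exotic Fourier transform (\cite[Cor.\ 4.24--4.25]{lusztig_1984}; this is the same character-sheaf input behind Theorem \ref{thm:decat}, via \cite[Cor.\ 14.11]{lusztig_1985_3}, \cite[Thm.\ 23.1]{lusztig_1986}) identify the Gram matrix of the Green-function basis over $\cal{U}^F$ with a matrix assembled from $\{-,-\}$. Plugging these in, the inner product $\sum_u|\cal{U}(\beta)^F_u|Q_u(w)$ collapses against the double sum on the right, once the residual scalars are matched: the factor $|G^F|/|T_w^F| = \dim R^1_{T_w}$, the sign produced by $\dim O(\beta) = N + |\beta|$ together with the $\varepsilon$-twist in Springer's formula, and the compensating powers of $q$ from the normalization $\phi_q^T$ versus the Tate twists in the $R^1_{T_w}$. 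Carrying out this bookkeeping with the stacky conventions of the Introduction finishes the proof.

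I expect the main obstacle to be precisely the general, non-special case, where ``weight series $=$ point count'' is unavailable. Within it, two points are delicate: first, the Brou\'e--Michel-type computation of the cohomology of $\cal{U}(\beta)_u$ for \emph{arbitrary} positive braids $\beta$, not merely minimal-length elements of $W$, which needs the full braid formalism of \cite{deligne_1997, bm_1997} and care with the singular, possibly non-reduced fibres; and second, pinning down the exact universal constant relating the Green-function Gram matrix to $\{-,-\}$ together with every sign and Tate twist, where Lusztig's structure theory of unipotent character sheaves \cite{lusztig_1984, lusztig_1985_3, lusztig_1986} and the normalization results of Geck--Lusztig \cite{geck} are essential.
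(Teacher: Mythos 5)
Your opening reformulation (the right-hand side as a $w$-equivariant Lefschetz number of $\cal{Z}(\beta)$) and your special-group shortcut both match the paper's Remark \ref{rem:special}, and your algebraic-side bookkeeping (e.g.\ $\varepsilon[\Sym(\bb{V})]_{\Q}|_{\Q = q}(w) = (-1)^r/|T_w^F|$) is fine. But the general case, which is the actual content of the theorem, has a gap at its central step. You assert that $u \mapsto |\cal{U}(\beta)_u^F|$ ``expands in the Green-function basis with coefficients built from the Hecke-character values,'' via Brou\'e--Michel's cohomology computations for Deligne--Lusztig varieties. This is not correct as stated: the fibers $\cal{U}(\beta)_u$ are not Deligne--Lusztig varieties (no Frobenius twist enters their definition), and their point count is obtained by an elementary Hecke-operator computation, $|\cal{U}(\beta)_u^F| = \tr(\tilde{\beta} \times u \mid \QL[\cal{B}^F])$, whence by the double centralizer theorem $|\cal{U}(\beta)_u^F| = \sum_{\phi}\phi_q(\tilde{\beta})\,\rho_\phi(u)$ --- an expansion in unipotent \emph{principal series characters}, not Green functions. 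The restrictions $\rho_\phi|_{\cal{U}^F}$ are not uniform functions, so no pointwise Green-function expansion with those coefficients exists; this is precisely the obstruction your plan does not address.

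The paper bridges it as follows: (i) the Green-function orthogonality relation $q^N\phi\cdot\varepsilon Q_1 = \sum_{u\in\cal{U}^F}(Q_u,\phi)_W Q_u$, together with the identification of $Q_1$ with the coinvariant algebra, reduces the theorem to the orbitwise multiplicity formula $\sum_{u\in C^F}(Q_u,\Tr{\tilde{\beta}})_W = \sum_{u\in C^F}|\cal{U}(\beta)_u^F|$ for each $G$-orbit $C\subseteq\cal{U}$; (ii) $(Q_u,\Tr{\tilde{\beta}})_W$ is computed via Kazhdan's theorem $Q_u(w) = R_w(u)$ and the reciprocity $\Tr{\tilde{\beta}}(w) = \tr(\beta\mid\Hom_{G^F}(R_1,R_w))$, and it is here --- through Lusztig's almost-character formula $R_\phi = \sum_\rho\{\rho_\phi,\rho\}\Delta(\rho)\rho$, your Cor.\ 4.24--4.25 citation --- that the $\{-,-\}$ sum collapses, not through any ``Gram matrix'' of Green functions; and (iii) the Geck--Lusztig average-value theorem, $\sum_{u\in C^F}\rho(u) = \sum_{u\in C^F}\tfrac{1}{|W|}\sum_w(\rho,R_w)_{G^F}R_w(u)$, valid only after summing over the rational points of each geometric unipotent class, is the exact substitute for the uniform expansion you assume. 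You cite Geck--Lusztig only for ``normalization,'' which misses its actual role; without step (iii) (or an equivalent), the two sides cannot be matched outside type $A$, where alone the identity holds pointwise in $u$.
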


For $G = \PGL_n$, taking $\varepsilon$-isotypic components on both sides and applying the Serre-duality result of \cite{ghmn}, or K\'alm\'an's earlier result \cite{kalman}, recovers a theorem of Shende--Treumann--Zaslow, matching the ``lowest'' $a$-degree of the HOMFLY series of the link closure of $\beta$ with the stacky point count of $[\cal{X}(\beta\pi)/G]$ \cite[Thm.\ 6.34]{stz}.
In fact, K\'alm\'an's result generalizes to a statement about Gomi's Markov trace.
For general $G$, this generalization and Theorem \ref{thm:virtual} together imply \cite[Prop.\ 2.2]{lusztig_2021}, which Lusztig attributes to Kawanaka \cite{kawanaka}: See Corollary \ref{cor:virtual}.

Recall that a subgroup $W' \subseteq W$ is parabolic iff it is the stabilizer of a vector in $\bb{V}$.
In this case, there is an inclusion of Iwahori--Hecke algebras $H_{W'} \subseteq H_W$.
Using Theorem \ref{thm:virtual} and the compatibility of Deligne--Lusztig characters with parabolic induction, we show that $\Tr{-}$ is similarly compatible with parabolic induction.

\begin{mainthm}\label{thm:induction}
If $W$ is a Weyl group of rank $r$ and $W' \subseteq W$ is a parabolic subgroup of rank $r'$, then we have commutative diagrams:
\begin{equation}
\begin{tikzpicture}[baseline=(current bounding box.center), >=stealth]
\matrix(m)[matrix of math nodes, row sep=2.5em, column sep=3em, text height=2ex, text depth=0.5ex]
{	H_{W'}
		&R(W')[\Q^{\pm \frac{1}{2}}]\\
	H_W
		&R(W)[\Q^{\pm \frac{1}{2}}]\\
		};
\path[->,font=\scriptsize, auto]
(m-1-1)		edge node{$\Tr{-}$} (m-1-2)
			edge (m-2-1)
(m-1-2) 	edge node{$\Ind_{W'}^W$} (m-2-2)
(m-2-1) 	edge node{$\Tr{-}$} (m-2-2);
\end{tikzpicture}
\qquad
\begin{tikzpicture}[baseline=(current bounding box.center), >=stealth]
\matrix(m)[matrix of math nodes, row sep=2.5em, column sep=3em, text height=2ex, text depth=0.5ex]
{	\Br_{W'}
		&R(W')[\![\Q]\!]\\
	\Br_W
		&R(W)[\![\Q]\!]\\
		};
\path[->,font=\scriptsize, auto]
(m-1-1)		edge node{$\Tr{-}^0$} (m-1-2)
			edge (m-2-1)
(m-1-2) 	edge node{$\frac{1}{(1 - \Q)^{r - r'}} \Ind_{W'}^W$} (m-2-2)
(m-2-1) 	edge node{$\Tr{-}^0$} (m-2-2);
\end{tikzpicture}
\end{equation}
In particular, $\Tr{\bb{1}}$ is the character of the regular representation of $W$, placed in $\Q$-degree zero, and $\Tr{\bb{1}}^0 = (1 - \Q)^{-r}\Tr{\bb{1}}$.
\end{mainthm}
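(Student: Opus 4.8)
The plan is to derive both diagrams from Theorem~\ref{thm:virtual} and the transitivity of Deligne--Lusztig induction.

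\emph{Reduction of the right diagram to the left, and of the left to positive braids.} Write $W' \subseteq W$ as the stabilizer of a vector $v \in \bb{V}$, so that $\bb{V} = \bb{V}^{W'} \oplus \bb{V}_{W'}$ as $W'$-modules with $\bb{V}^{W'}$ trivial of dimension $r - r'$. Then $\Sym(\bb{V})|_{W'} \cong \Sym(\bb{V}^{W'}) \otimes \Sym(\bb{V}_{W'})$, so $[\Sym(\bb{V})]_{\Q}$ restricts to $(1 - \Q)^{-(r - r')}[\Sym(\bb{V}_{W'})]_{\Q}$; moreover $\varepsilon$ restricts to the sign character of $W'$, and the writhe is unchanged along $\Br_{W'} \hookrightarrow \Br_W$. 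Feeding these into $\Tr{\beta}^0 = (-\Q^{\frac12})^{|\beta|}\Tr{\beta} \cdot \varepsilon [\Sym(\bb{V})]_{\Q}$ and using the projection formula for $\Ind_{W'}^W$, the right diagram becomes formally equivalent to the left; and since $\Tr{\beta}^0$ depends on $\beta \in \Br_W$ only through its image in $H_W$ and through $|\beta|$, it suffices to treat the left diagram. There $\Tr{-}$ and $\Ind_{W'}^W$ are $\bb{Z}[\Q^{\pm\frac12}]$-linear and the $\sigma_w$ with $w \in W'$ form a basis of $H_{W'}$ mapping to the corresponding $\sigma_w \in H_W$, so it is enough to prove $\Tr{\beta'}_G = \Ind_{W'}^W \Tr{\beta'}_{W'}$ for $\beta' \in \Br_{W'}^+$; equivalently, running the above backwards, $\Tr{\beta'}_G^0 = (1-\Q)^{-(r - r')}\Ind_{W'}^W \Tr{\beta'}_{W'}^0$.

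\emph{Applying Theorem~\ref{thm:virtual} on $G$ and on a Levi.} Realize $W'$ as the Weyl group $W_L$ of a standard Levi $L = L_J \subseteq G$ (writing $\cal{U}_G, \cal{U}_L$ for the unipotent loci of $G, L$). Both sides of the desired identity are rational in $\Q$ by Theorem~\ref{thm:symmetry}, so it is enough to match their values at $\Q = q = |\bb{F}|$ for infinitely many $q$; by Theorem~\ref{thm:virtual} (for $G$ and for $L$) this amounts to comparing $\tfrac{1}{|G^F|}\sum_{u}|\cal{U}_G(\beta')_u^F|\,Q_u$ with the analogous sum over $\cal{U}_L^F$, the dependence on $\beta'$ sitting entirely in the point counts. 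The geometric input is that every point $(u, B_0, \dots, B_k)$ of $\cal{U}_G(\beta')$ has all flags $B_i$ contained in one parabolic $P$ of type $J$: consecutive flags are in relative positions in $W_J$, so each determines the same parabolic of type $J$, and $uB_k u^{-1} = B_0$ then forces $u \in N_G(P) = P$. Writing $P = U_P \rtimes L$, the $U_P$-component of $u$ is unconstrained and the remaining data is exactly a point of $\cal{U}_L(\beta')$ over the image of $u$ in $L$; thus $\cal{U}_G(\beta')$ fibers over the variety of type-$J$ parabolics with fibers $\cong U_P \times \cal{U}_L(\beta')$. Counting $\bb{F}$-points with $|(G/P_J)^F| = |G^F|/|P_J^F|$ and $|P_J^F| = q^{\dim U_P}|L^F|$, this rewrites $\tfrac{1}{|G^F|}\sum_u |\cal{U}_G(\beta')_u^F|\,Q^G_u$ as
\begin{align}
\frac{1}{q^{\dim U_P}\,|L^F|}\sum_{\bar u \in \cal{U}_L^F} |\cal{U}_L(\beta')_{\bar u}^F| \sum_{v \in U_P^F} Q^G_{v\bar u},
\end{align}
so that, comparing with the $L$-side and absorbing the sign $(-1)^{r - r'}$, the theorem reduces to the $\beta'$-independent claim: for each $\bar u \in \cal{U}_L^F$, summing $Q^G_{v \bar u}$ over $v \in U_P^F$ equals $\Ind_{W_L}^W Q^L_{\bar u}$ up to the factor $q^{\dim U_P}/(q-1)^{r - r'}$, the $(q-1)^{r-r'}$ reflecting the $(r-r')$-dimensional connected center $Z(L)^\circ$.

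\emph{The Springer-theoretic part and the main obstacle.} This last identity is the compatibility of Springer's Green functions with Harish--Chandra induction, which I would obtain from the transitivity $R_T^G = R_L^G \circ R_T^L$ of Deligne--Lusztig induction restricted to unipotent elements, together with the character formulas for $R_L^G$ on unipotent classes and the explicit values of Lusztig~\cite[Cor.\ 4.24--4.25]{lusztig_1984} and Geck--Lusztig~\cite[Prop.\ 1.3]{geck} already used for Theorem~\ref{thm:virtual}. I expect this to be the hard part: one must isolate the Green-function identity for each individual unipotent class of $L$, rather than only after summing against the numbers $|\cal{U}_L(\beta')_{\bar u}^F|$, and must track the powers of $q$, the ratio $|G^F|/|L^F|$, and the $Z(L)^\circ$-contribution precisely enough to land the normalization $(1-\Q)^{-(r-r')}$ in the second diagram. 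Once these are in place, the ``in particular'' clause is the case $W' = \{1\}$: there $\Tr{\bb{1}}_{W'}$ is the trivial character in $\Q$-degree $0$ by the character formula for $\Tr{-}$, its induction to $W$ is the regular representation, and $\Tr{\bb{1}}^0 = (1-\Q)^{-r}\Tr{\bb{1}}$ by the reduction above.
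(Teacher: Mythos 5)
Your overall strategy is the paper's: pass between the two diagrams via the projection formula and the factorization $[\Sym(\bb{V})]_{\Q}|_{W'} = (1-\Q)^{-(r-r')}[\Sym(\bb{V}')]_{\Q}$, then check the $\Tr{-}^0$ statement on positive braids by applying the point-count theorem to $G$ and to the Levi $L$ and comparing unipotent fibers; your global fibration of $\cal{U}(\beta')$ over the type-$J$ parabolics with fibers $U_P \times \cal{U}_L(\beta')$ is a sound substitute for the paper's Lemma \ref{lem:bruhat} together with its identity $(\cal{U}_w)_B \simeq (\cal{U}_{L,w})_{B_L} \times U_P$. The gap is in the last reduction, which is exactly the step you defer as ``the hard part.'' Theorem \ref{thm:virtual} cannot be quoted verbatim for $L$, since $L$ is not semisimple: the reductive version (Theorem \ref{thm:virtual-reductive}) uses the full $r$-dimensional realization $\bb{V}$, so for $L$ it yields $\Tr{\beta'}^{0'}|_{\Q=q} = \frac{(-1)^{r-|\beta'|}(1-q)^{r-r'}}{|L^F|}\sum_{\bar u}|\cal{U}_L(\beta')_{\bar u}^F|\,Q^L_{\bar u}$, and the factor $(1-q)^{r-r'}$ coming from $[\Sym(\bb{V}^{W_L})]_{\Q}$ is precisely what cancels the $(1-\Q)^{-(r-r')}$ in the diagram; nothing is left over to attribute to $Z(L)^\circ$. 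Hence the $\beta'$-independent identity you must prove is $\sum_{v\in U_P^F} Q^G_{v\bar u} = q^{\dim U_P}\,\Ind_{W_L}^{W} Q^L_{\bar u}$, with no $(q-1)^{r-r'}$. The identity you actually state is false: for $G = \SL_2$, $L = T$, $\bar u = 1$ one has $\sum_{v\in U^F}Q^G_v = (1 + q\varepsilon) + (q-1)\cdot 1 = q(1+\varepsilon) = q\,\Ind_{\{1\}}^{W}Q^T_1$, not $\frac{q}{q-1}\Ind_{\{1\}}^{W}Q^T_1$. (Your two slips compensate only at the level of the final answer; the statement you reduce to is unprovable as written.)

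Once the normalization is corrected, the remaining step is short and needs neither transitivity of Deligne--Lusztig induction nor Geck--Lusztig again (the latter is already consumed inside Theorem \ref{thm:virtual}): by Kazhdan's theorem (Theorem \ref{thm:kazhdan}) applied to $G$ and to $L$, $\sum_{v\in U_P^F}Q^G_{v\bar u}(x) = \sum_{v}R_x(\bar u v) = |U_P^F|\,(R_x)^{U_P^F}(\bar u)$, and the Deligne--Lusztig restriction formula (Proposition \ref{prop:induction-deligne-lusztig}) gives $(R_x)^{U_P^F} = \frac{1}{|W_L|}\sum_{y\,:\,yxy^{-1}\in W_L} R_{L,\,yxy^{-1}}$, whose value at $\bar u$ is $Q^L_{\bar u}(yxy^{-1})$; summing yields $q^{\dim U_P}\,(\Ind_{W_L}^{W} Q^L_{\bar u})(x)$, as required. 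This is in substance how the paper argues (phrased with $R_x$ rather than $Q$, and with fixed-Borel slices rather than your fibration), so your worry about isolating the Green-function identity class by class dissolves; what is missing from your write-up is the correct bookkeeping for the non-semisimple Levi and the execution of this final two-line step.
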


To conclude Section \ref{sec:dl}, we bound the pole of $\Tr{-}^0$ at $\Q = 1$ using Theorem \ref{thm:virtual} and an idea of Lusztig from \cite[\S{5}]{lusztig_2011}.
For all $w \in W$, let $r(w)$ be the dimension of the fixed subspace $\bb{V}^w \subseteq \bb{V}$.

\begin{mainthm}\label{thm:pole}
If $W$ is a Weyl group and $\beta \in \Br_W^+$ maps to $w \in W$, then
\begin{align}
\Tr{\beta}^0 \in \frac{1}{(1 - \Q)^{r(w)}}\, R(W)[\Q].
\end{align}
That is, the pole of $\Tr{-}^0$ at $\Q = 1$ is at most $r(w)$.
\end{mainthm}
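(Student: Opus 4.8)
\emph{Reduction.} The plan is to deduce the theorem from the $\bb{F}$-point count of Theorem~\ref{thm:virtual}, the rationality and power-series statements of Theorem~\ref{thm:symmetry}, and an argument of Lusztig from \cite[\S5]{lusztig_2011}. First, by Theorem~\ref{thm:symmetry}, $\Tr{\beta}^0$ lies in $R(W)[\![\Q]\!]$ and, as a rational function of $\Q$, has its only pole at $\Q = 1$; write $\Tr{\beta}^0 = c(\Q)/(1-\Q)^d$ with $d \geq 0$, where $c(\Q)$ is a Laurent polynomial over $\bb{Q}\otimes R(W)$ with $c(1) \neq 0$ when $d \geq 1$. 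The polynomial $|G^F| = \Q^N\prod_{i=1}^r(\Q^{d_i}-1)$ (with $d_i$ the degrees of $W$) has a zero of order exactly $r$ at $\Q = 1$, each factor contributing a simple zero. By Theorem~\ref{thm:virtual},
\[
n(\beta,\Q)\ :=\ (-1)^{r-|\beta|}\,|G^F|\cdot\Tr{\beta}^0
\]
takes, at every prime power $q = |\bb{F}|$, the value $\sum_{u\in\cal{U}^F}|\cal{U}(\beta)_u^F|\,Q_u$. Stratifying $\cal{U}(\beta)$ by the preimages of the (finitely many, $F$-stable) unipotent classes — over each of which $u\mapsto Q_u$ is constant and the fibres of $\cal{U}(\beta)\to\cal{U}$ are stratified into affine spaces by Bruhat cells — one sees that $n(\beta,\Q)$ is in fact a polynomial in $\Q$ over $\bb{Q}\otimes R(W)$, so that $d\leq r$. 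Moreover $d\leq r(w)$ is equivalent to the divisibility
\[
n(\beta,\Q)\ \in\ (1-\Q)^{r-r(w)}\,(\bb{Q}\otimes R(W))[\Q],
\]
and once this is known, $(1-\Q)^{r(w)}\Tr{\beta}^0$ is a rational function with no poles that lies in $R(W)[\![\Q]\!]$, hence lies in $R(W)[\Q]$, which is the assertion. So it remains to prove the displayed divisibility.

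\emph{Reformulating the count.} Up to a sign and a fixed power of $\Q$, $Q_u$ is the $W$-twisted Frobenius trace on the stalk at $u$ of the Springer sheaf $\cal{S}$; since $\cal{S}$ is the pushforward of the constant sheaf along the Springer resolution $\cal{U}(\bb{1})\to\cal{U}$, proper base change along $\cal{U}(\beta)\to\cal{U}$ together with the Lefschetz trace formula would identify $\sum_{u\in\cal{U}^F}|\cal{U}(\beta)_u^F|\,Q_u$, up to the same normalization, with the $W$-equivariant, weight-graded Euler characteristic $\sum_i(-1)^i[\ur{H}^i_c(\cal{Z}(\beta),\QL)]$ of the generalized Steinberg scheme $\cal{Z}(\beta) = \cal{U}(\bb{1})\times_\cal{U}\cal{U}(\beta)$. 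Thus $n(\beta,\Q)$ is, up to sign and a power of $\Q$, the virtual weight series of $\cal{Z}(\beta)$, consistently with Remark~\ref{rem:special}; but since not every $G$ is special, I expect to work with this actual $\bb{F}$-point count rather than with a motivic weight polynomial.

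\emph{The divisibility, following Lusztig.} Here I would use the idea of \cite[\S5]{lusztig_2011}: deform the unipotent locus $\cal{U}$ to the locus of regular semisimple elements ``of type $w$.'' The scheme $\cal{U}(\beta)$ is the unipotent fibre of a $G$-scheme over $G$ whose fibre over a regular semisimple element of type $w$ acquires an action of its centralizer, an $F$-stable maximal torus $T_w$ with $|T_w^F|$ equal to the characteristic polynomial of $w$ on $\bb{V}$ evaluated at $\Q$. Replacing $\cal{U}$ by that regular semisimple locus turns the Springer input into a constant local system and realizes the corresponding count as the $\bb{F}$-point count of a $G$-variety fibred $G$-equivariantly over $G/T_w$, whose fibres are open subsets of Bott--Samelson-type fibres, again stratified into affine spaces by Bruhat cells, hence with point counts polynomial in $\Q$ and independent of the twisting cocycle. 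Therefore $n(\beta,\Q)$ should be divisible, as a polynomial, by $|(G/T_w)^F| = |G^F|/|T_w^F|$, with quotient regular at $\Q = 1$. Since $w$ is semisimple, the zero of $|T_w^F|$ at $\Q = 1$ has order $\dim\bb{V}^w = r(w)$, so $|(G/T_w)^F|$ — and a fortiori $n(\beta,\Q)$ — is divisible by $(1-\Q)^{r-r(w)}$, as required. (For $\beta = \bb{1}$ one has $w = 1$, $r(w) = r$, and Theorem~\ref{thm:induction} shows the bound is sharp.)

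\emph{Main obstacle.} The hard part will be the deformation in the last paragraph: making precise the identification of $\sum_{u}|\cal{U}(\beta)_u^F|\,Q_u$ with the $\bb{F}$-point count of a variety fibred over $G/T_w$ with cell-stratified fibres. This rests on the compatibility of the generalized Green functions $Q_u$ with this geometry — for which I would invoke the results of Deligne--Lusztig, Springer, Kazhdan and Geck--Lusztig already used for Theorem~\ref{thm:virtual} — together with the polynomiality and cocycle-independence of the relevant fibre counts and a careful analysis along the stratification of $\cal{U}$ by unipotent classes; this is exactly where the argument of \cite[\S5]{lusztig_2011} enters, and it is the step I expect to require the most care.
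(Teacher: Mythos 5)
Your proposal has the right ingredients in view (Theorem \ref{thm:virtual} and Lusztig's idea from \cite[\S{5}]{lusztig_2011}, with the torus $T_w$ playing the decisive role), but the step that actually carries the theorem is missing. The entire content of the bound $d \leq r(w)$ is your claimed divisibility of $n(\beta,\Q)$ by $(1-\Q)^{r-r(w)}$, and your argument for it is the ``deformation of $\cal{U}$ to regular semisimple elements of type $w$,'' realizing the weighted count $\sum_u |\cal{U}(\beta)_u^F|\,Q_u$ as the point count of a variety fibred over $G/T_w$ with affine-paved fibres. That identification is not established and is not how the statement is proved: there is no equality between the unipotent-fibre count weighted by Green functions and a count over the regular semisimple locus, and even the auxiliary claim that the fibres of $\cal{U}(\beta) \to \cal{U}$ are ``stratified into affine spaces by Bruhat cells'' (used for polynomiality of $n(\beta,\Q)$) is unjustified — already for $\beta = \bb{1}$ these fibres are Springer fibres, whose affine pavings are a nontrivial theorem and have nothing to do with Bruhat cells. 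What the paper does instead is pointwise, not polynomial: following Lusztig's Thm.~5.2, it shows (Theorem \ref{thm:isotropy}) that the isotropy groups of the $G$-action on $G(\beta)$ are diagonalizable subgroups $D \subseteq T^{\dot{w}}$ with $|D^F|$ dividing $(q-1)^{r(w)}\delta(w)|Z(G)|$; orbit-counting in Theorem \ref{thm:virtual} then bounds the denominator of the \emph{value} $\Tr{\beta}^0|_{\Q = q}$ by $(1-q)^{r(w)}$ times a constant for every large $q$, and this is combined with Proposition \ref{prop:rationality} and an interpolation argument to produce the polynomial statement. Your heuristic about $T_w$ is in the same spirit, but bounding stabilizers of the configuration space is the precise mechanism, and it is exactly the part you defer.

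A secondary but real problem is circularity: you invoke Theorem \ref{thm:symmetry} to know that $\Tr{\beta}^0$ is a rational function of $\Q$ with its only pole at $\Q = 1$, but in the paper the rationality-in-$\Q$ part of Theorem \ref{thm:symmetry} is itself deduced from Theorem \ref{thm:pole} in types $E_7$ and $E_8$ (Proposition \ref{prop:rationality} only gives $\Q_0 = \Q^{\frac{1}{2}}$ there). Your reduction must therefore be run from Proposition \ref{prop:rationality} alone, with the exclusion of poles at other roots of unity (and the integrality over $R(W)$ rather than $\bb{Q} \otimes R(W)$) obtained by a separate argument, as in the paper's interpolation step.
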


In the case where $\beta = \sigma_w$ for some $w \in W$, this result follows from work of Lusztig--Yun in \cite{ly_2019}.
Indeed, if $w \in W$ has minimal Bruhat length in its conjugacy class, then Theorem \ref{thm:virtual} essentially implies
\begin{align}
\Tr{\sigma_w}^0 = \frac{\Psi(w)}{(1 - \Q)^{r(w)}},
\end{align}
where $\Psi : W \to R(W)[\Q]$ is the function introduced in \emph{ibid.}

In Sections \ref{sec:periodic}-\ref{sec:daha}, we allow $W$ to be any finite Coxeter group and $\bb{V}$ any realization of $W$ such that $\bb{V}^W = 0$.
In Section \ref{sec:periodic}, we compute $\Tr{\beta}$ for periodic $\beta$.
The key input is that, essentially by Schur's lemma, the full twist $\pi$ acts on any simple $H_W$-module by a monomial in $\Q^{\frac{1}{2}}$.
This, together with a deformation argument of Jones \cite[\S{9}]{jones}, allows us to determine the character values $\phi_{\Q}(\beta)$ whenever some nonzero power of $\beta$ equals a power of the full twist.

\begin{mainthm}\label{thm:periodic}
If $\beta \in \Br_W$ is periodic of slope $\slope$, then
\begin{align}
\Tr{\beta}
=	\sum_{\phi \in \hat{W}}
	{\Q^{\slope\bb{c}(\phi)}}
	\UDeg_\phi(e^{2\pi i\slope}) 
	\phi,
\end{align}
where $\UDeg_\phi(\Q) \in \bb{Q}[\Q]$ and $\bb{c}(\phi) \in \bb{Z}$ are the unipotent degree and content of $\phi$, respectively (see \S\ref{subsec:degrees}, \S\ref{subsec:families}).
\end{mainthm}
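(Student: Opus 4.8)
The plan is to start from the character-theoretic formula $\Tr{\beta} = \sum_{\phi,\psi\in\hat{W}}\{\phi,\psi\}\,\phi_{\Q}(\beta)\,\psi$ recorded above, so that the proof splits into (a) computing the Hecke-algebra character values $\phi_{\Q}(\beta)$ for periodic $\beta$, and (b) repackaging Lusztig's pairing. For (a), I would first reduce to $\gcd(m,n) = 1$: if $\beta^n = \pi^m$ and $\slope = m'/n'$ in lowest terms, then $\beta^{n'}\pi^{-m'}$ is torsion (using that $\pi$ is central), hence trivial since $\Br_W$ is torsion-free, so already $\beta^{n'} = \pi^{m'}$. Since $\pi$ is central in $\Br_W$, Schur's lemma shows it acts on each simple $H_W$-module $V_\phi$ by a scalar, which by the definition of the content in \S\ref{subsec:families} is $\Q^{\bb{c}(\phi)}$. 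Thus the eigenvalues of $\beta$ on $V_\phi$ are $n$-th roots of $\Q^{m\bb{c}(\phi)}$, i.e.\ of the form $\xi\,\Q^{\slope\bb{c}(\phi)}$ with $\xi^n = 1$; these are pairwise distinct for generic $\Q$, so their multiplicities are constant in $\Q$. A deformation argument of Jones \cite[\S{9}]{jones} — specialising $\Q^{1/2}\mapsto 1$, which sends $V_\phi$ to the $W$-module $\phi$ and $\beta$ to its image $\bar\beta\in W$ (with $\bar\beta^n = 1$, as $\pi$ lies in the kernel of $\Br_W\to W$) — then identifies the multiplicity of $\xi\,\Q^{\slope\bb{c}(\phi)}$ with that of $\xi$ as an eigenvalue of $\bar\beta$ on $\phi$. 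Summing, $\phi_{\Q}(\beta) = \Q^{\slope\bb{c}(\phi)}\,\phi(\bar\beta)$.

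Next I would identify $\phi(\bar\beta)$. A periodic braid of slope $\slope$ is conjugate to a root of a power of $\pi$, and by Springer's theory of regular elements (valid for any realisation $\bb{V}$ with $\bb{V}^W = 0$) its image $\bar\beta$ is a $\zeta$-regular element of $W$ with $\zeta = e^{2\pi i\slope}$; Springer's formula for the character of a regular element then gives $\phi(\bar\beta) = \FDeg_\phi(\zeta)$, the fake degree at $\zeta$, with $\FDeg$ normalised as in Appendix \ref{sec:coxeter}. Hence $\phi_{\Q}(\beta) = \Q^{\slope\bb{c}(\phi)}\,\FDeg_\phi(\zeta)$.

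For (b), I would use that $\{\phi,\psi\} = 0$ unless $\phi$ and $\psi$ lie in a common Lusztig family $\cal{F}\subseteq\hat{W}$, and that $\bb{c}$ is constant on each such family (as in \S\ref{subsec:families}; up to a shift it is $\mathbf{a}+\mathbf{A}$, which Lusztig shows is constant on families). Writing $\bb{c}_{\cal F}$ for the common value,
\[
\Tr{\beta}
= \sum_{\cal F}\ \Q^{\slope\,\bb{c}_{\cal F}}\ \sum_{\psi\in\cal F}\Bigl(\sum_{\phi\in\cal F}\{\phi,\psi\}\,\FDeg_\phi(\zeta)\Bigr)\psi .
\]
By Lusztig's theorem \cite{lusztig_1984} that the generic degrees are the exotic Fourier transform of the fake degrees — extended to non-crystallographic $W$ by \cite{lusztig_1993,lusztig_1994,malle} — the inner sum is $\UDeg_\psi(\zeta)$, and since $\bb{c}_{\cal F} = \bb{c}(\psi)$ for $\psi\in\cal F$, this is exactly the desired formula.

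The hard part is (a). Making Jones's deformation rigorous needs the standard facts that $H_W$ becomes split semisimple over a suitable finite extension of $\bb{Q}(\Q^{1/2})$ with the $V_\phi$ free over the base, together with the genericity observation above that pins down the eigenvalue multiplicities and lets them be read off at $\Q^{1/2} = 1$. Beyond that the difficulty is bookkeeping: the normalisations in Springer's formula, in the definition of $\bb{c}(\phi)$, and in Lusztig's $\{-,-\}$ must be aligned so that no stray sign or power of $\Q^{1/2}$ survives — precisely what the conventions of Appendix \ref{sec:coxeter} and \S\ref{subsec:families} are designed to handle. One should also check that the image in $W$ of every periodic braid (not only those of slope $1/d$) is regular, which follows from Springer's classification of regular numbers together with the relation $\beta^{n} = \pi^{m}$.
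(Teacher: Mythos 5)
Your argument follows the same route as the paper's: Schur's lemma gives the scalar $\Q^{\bb{c}(\phi)}$ for the full twist, Jones's deformation argument gives $\phi_{\Q}(\beta) = \Q^{\slope\bb{c}(\phi)}\phi(\bar\beta)$, Springer's character formula for regular elements gives $\phi(\bar\beta) = \FDeg_\phi(e^{2\pi i\slope})$, and the Fourier relation $\UDeg_\psi = \sum_\phi \{\phi,\psi\}\FDeg_\phi$ together with the constancy of $\bb{c}$ on families (where $\{-,-\}$ is block-diagonal) finishes the computation. All of that is fine and matches the paper.

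The genuine gap is the step where you assert that the image $\bar\beta \in W$ of a periodic braid of slope $\slope$ is an $e^{2\pi i\slope}$-regular element, which you claim ``follows from Springer's classification of regular numbers together with the relation $\beta^{n} = \pi^{m}$.'' It does not: at the level of $W$, the relation $\beta^{n}=\pi^{m}$ only gives $\bar\beta^{\,n}=1$ (since $\pi\mapsto w_0^2 = 1$), and an element of order dividing $n$ need not be regular (e.g.\ $\bar\beta$ could a priori be $1$), nor is the specific eigenvalue $e^{2\pi i\slope}$ — which is exactly what Springer's formula $\phi(\bar\beta)=\FDeg_\phi(e^{2\pi i\slope})$ requires — determined this way. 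Springer's results supply only the easy direction (a $\zeta$-regular $w$ of order $n$ satisfies $\sigma_w^n=\pi$) and the classification of regular eigenvalues; the direction you need is Brou\'e--Michel's Th\'eor\`eme 3.12: the image in $W$ of any $n$-th root of $\pi$ is $e^{2\pi i/n}$-regular. This is a genuinely deep input (its proof rests on Charney's normal-form/biautomaticity results for Artin groups, as the paper emphasizes), and it must be combined with the Lee--Lee theorem that every periodic braid of slope $\tfrac{m}{n}$ is of the form $\gamma^m$ with $\gamma^n=\pi$, so that $e^{2\pi i/n}$-regularity of the image of $\gamma$ transports (via the shared regular eigenvector) to $e^{2\pi i m/n}$-regularity of $\bar\beta$. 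Without these two ingredients your step (and hence the identification $\phi(\bar\beta)=\FDeg_\phi(e^{2\pi i\slope})$) is unsupported; with them, your proof coincides with the paper's.
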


In the setting of the above theorem, we will have $\UDeg_\phi(e^{2\pi i\slope}) \in \bb{Z}$ for all $\phi$ and $\slope$.
It will be convenient to set
\begin{align}
\Omega_\slope = \bigoplus_{\phi \in \hat{W}} \UDeg_\phi(e^{2\pi i\slope})\Delta_\slope(\phi),
\end{align}
viewed as a virtual $\bb{D}_\slope^\rat$-module.
In Section \ref{sec:daha}, we deduce:

\begin{maincor}\label{cor:periodic}
If $\beta \in \Br_W$ is periodic of slope $\slope$, then
\begin{align}
{[\Omega_\slope]_{\Q}}
=	(\Q^{\frac{1}{2}})^{r - |\beta|} \cdot \Tr{\beta}^0
\end{align}
in $R(W)[(\Q^{\frac{1}{2}} - \Q^{-\frac{1}{2}})^{\pm 1}]$.
\end{maincor}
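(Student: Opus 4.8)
The plan is to obtain the corollary directly from Theorem~\ref{thm:periodic} by computing the graded character of $\Omega_\slope$, which reduces to the graded character of a single standard module $\Delta_\slope(\phi)$.

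First I would record that, by the Poincar\'e--Birkhoff--Witt property of $\bb{D}_\slope^\rat$, the module $\Delta_\slope(\phi)$ is free of rank one over the polynomial subalgebra $\Sym(\bb{V}) \subseteq \bb{A}_W \subseteq \bb{D}_\slope^\rat$, with lowest-weight space $\phi$ as a $W$-representation; hence, with the grading conventions of Section~\ref{sec:daha},
\begin{align}
[\Delta_\slope(\phi)]_{\Q} = \Q^{h_\slope(\phi)}\, \phi \cdot [\Sym(\bb{V})]_{\Q},
\end{align}
where $[\Sym(\bb{V})]_{\Q} = \sum_i \Q^i\, \Sym^i(\bb{V})$ is as in Theorem~\ref{thm:decat} and $h_\slope(\phi)$, the degree of the lowest-weight space, is the eigenvalue of the grading (Euler) element of $\bb{D}_\slope^\rat$ on $\phi$. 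This eigenvalue is an affine function of $\slope$. Its constant term is $h_0(\phi) = \tfrac{r}{2}$ for every $\phi$, the vacuum energy of $\bb{D}_0^\rat = \QL[W] \ltimes \cal{D}(\bb{V})$ (consistent with the $\beta = \bb{1}$ case via Theorems~\ref{thm:decat} and~\ref{thm:induction}, where $\Delta_0(\phi)$ enters with multiplicity $\dim\phi = \UDeg_\phi(1)$). Its $\slope$-linear term I would identify with the content of \S\ref{subsec:families}: the grading element of $\bb{D}_\slope^\rat$ is built from the central reflection sum $\sum_s \slope\, s$, whose eigenvalue on a $W$-representation equals, up to the uniform shift $\tfrac{r}{2}$, its content, so that $h_\slope(\phi) = \tfrac{r}{2} + \slope\, \bm{c}(\phi \otimes \varepsilon)$, equivalently $h_\slope(\phi \otimes \varepsilon) = \tfrac{r}{2} + \slope\, \bm{c}(\phi)$.

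Granting this, I would sum $\UDeg_\phi(e^{2\pi i\slope})\, [\Delta_\slope(\phi)]_{\Q}$ over $\phi \in \hat{W}$, apply the identity just stated, and reindex the sum by $\phi \mapsto \phi \otimes \varepsilon$ to get
\begin{align}
[\Omega_\slope]_{\Q} = \Q^{\frac{r}{2}} \sum_{\phi \in \hat{W}} \UDeg_{\phi \otimes \varepsilon}(e^{2\pi i\slope})\, \Q^{\slope\, \bm{c}(\phi)}\, (\phi \otimes \varepsilon) \cdot [\Sym(\bb{V})]_{\Q}.
\end{align}
On the other side, expanding $\Tr{\beta}^0 = (-\Q^{\frac{1}{2}})^{|\beta|}\, \Tr{\beta} \cdot \varepsilon\, [\Sym(\bb{V})]_{\Q}$ and inserting the formula of Theorem~\ref{thm:periodic} for $\Tr{\beta}$ gives
\begin{align}
(\Q^{\frac{1}{2}})^{r - |\beta|}\, \Tr{\beta}^0 = (-1)^{|\beta|}\, \Q^{\frac{r}{2}} \sum_{\phi \in \hat{W}} \UDeg_\phi(e^{2\pi i\slope})\, \Q^{\slope\, \bm{c}(\phi)}\, (\phi \otimes \varepsilon) \cdot [\Sym(\bb{V})]_{\Q}.
\end{align}
The factor $[\Sym(\bb{V})]_{\Q}$ is a unit (with inverse $\sum_i (-\Q)^i \Alt^i(\bb{V})$), and $\{\phi \otimes \varepsilon : \phi \in \hat{W}\}$ is a $\bb{Z}$-basis of $R(W)$; so cancelling $[\Sym(\bb{V})]_{\Q}$, comparing coefficients, and cancelling the nonzero monomials $\Q^{\frac{r}{2} + \slope\, \bm{c}(\phi)}$ reduces the corollary to the numerical identity $\UDeg_{\phi \otimes \varepsilon}(e^{2\pi i\slope}) = (-1)^{|\beta|}\, \UDeg_\phi(e^{2\pi i\slope})$ for all $\phi \in \hat{W}$.

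Finally I would verify this identity from three standard inputs: the Alvis--Curtis functional equation $\UDeg_{\phi \otimes \varepsilon}(\Q) = \Q^N\, \UDeg_\phi(\Q^{-1})$ for generic degrees, where $N = \ell(w_0)$ (valid for every finite Coxeter group $W$); the hypothesis $\UDeg_\phi(e^{2\pi i\slope}) \in \bb{Z}$ of Theorem~\ref{thm:periodic}, which with the rationality of the coefficients of $\UDeg_\phi$ forces $\UDeg_\phi(e^{-2\pi i\slope}) = \overline{\UDeg_\phi(e^{2\pi i\slope})} = \UDeg_\phi(e^{2\pi i\slope})$; and the writhe formula $|\beta| = 2N\slope$ for a periodic braid (from $\beta^n = \pi^m$ and $|\pi| = 2\ell(w_0)$), which gives $(-1)^{|\beta|} = e^{2\pi i N\slope}$. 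Combining them, $\UDeg_{\phi \otimes \varepsilon}(e^{2\pi i\slope}) = e^{2\pi i N\slope}\, \UDeg_\phi(e^{2\pi i\slope}) = (-1)^{|\beta|}\, \UDeg_\phi(e^{2\pi i\slope})$, which closes the argument; the resulting equality holds a priori in $R(W) \otimes \bb{Q}(\Q^{\frac{1}{2}})$, and since its right-hand side lies in $R(W)[(\Q^{\frac{1}{2}} - \Q^{-\frac{1}{2}})^{\pm 1}]$ by Theorem~\ref{thm:symmetry}, so does $[\Omega_\slope]_{\Q}$. I expect the second step above --- matching the Euler grading of the rational DAHA with the content function of Lusztig's families, uniformly over all finite Coxeter groups --- to be the main obstacle; the rest is bookkeeping with degree shifts and the sign $(-1)^{|\beta|}$.
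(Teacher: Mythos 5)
Your proof is correct, and its skeleton coincides with the paper's: expand $[\Omega_\slope]_{\Q}$ via the Verma character formula \eqref{eq:verma} (which you rederive somewhat loosely, but could simply cite), expand $(\Q^{\frac{1}{2}})^{r-|\beta|}\Tr{\beta}^0$ via Theorem \ref{thm:periodic}, reindex by $\phi\mapsto\varepsilon\phi$ using $\bb{c}(\varepsilon\phi)=-\bb{c}(\phi)$, and reduce to the sign identity $\UDeg_{\varepsilon\phi}(\zeta)=(-1)^{|\beta|}\UDeg_\phi(\zeta)$, where $\zeta=e^{2\pi i\slope}$. Where you genuinely diverge is the proof of that identity. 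The paper evaluates on a $\zeta$-regular element $w$ (which exists because a periodic braid forces $\slope$ to be a regular slope): by Lemma-Postulate \ref{lempost:fourier}(2)-(3) and Theorem \ref{thm:fake-deg}, $\UDeg_{\varepsilon\phi}(\zeta)=\sum_\psi\{\psi,\varepsilon\phi\}\psi(w)=\varepsilon(w)\UDeg_\phi(\zeta)$ with $\varepsilon(w)=(-1)^{\slope|\pi|}=(-1)^{|\beta|}$. You instead use the functional equation $\UDeg_{\varepsilon\phi}(\Q)=\Q^N\UDeg_\phi(\Q^{-1})$, reality of $\UDeg_\phi(\zeta)$, and $|\beta|=2N\slope$. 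This works, but three points need tightening. First, the functional equation is not stated in the paper, and calling it ``Alvis--Curtis'' outside the crystallographic case is a misnomer; it does hold for every finite Coxeter group, e.g.\ by applying \cite[Prop.\ 9.4.1(a)]{gp} (already invoked in the paper) to the Schur elements and using the palindromy of the Poincar\'e polynomial, so you should supply that derivation. Second, integrality of $\UDeg_\phi(\zeta)$ is not a hypothesis of Theorem \ref{thm:periodic} but a remark following it; for your argument reality suffices, and it is available either from that remark or, more self-containedly, from the same regular-element evaluation, since $\UDeg_\phi(\zeta)=\sum_\psi\{\psi,\phi\}\psi(w)\in\bb{Q}_W\subseteq\bb{R}$. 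Third, for non-crystallographic $W$ the coefficients of $\UDeg_\phi$ lie in $\bb{Q}_W$, which is real but not necessarily rational; reality of the coefficients is what the complex-conjugation step actually requires. In exchange, your route extracts the sign purely from the writhe computation $|\beta|=2N\slope$ without quoting the symmetry $\{\varepsilon\phi,\varepsilon\psi\}=\{\phi,\psi\}$, while the paper's route uses only ingredients already established in the text.
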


In what follows, let $d_1, \ldots, d_r$ be the invariant degrees of the $W$-action on $\bb{V}$ \cite[149]{gp}.
We say that $\slope \in \bb{Q}$ is:
\begin{enumerate}
\item 	A singular slope iff $\slope \in \frac{1}{d_i}\bb{Z}$ for some $i$.
\item 	A regular slope iff $W$ contains a $e^{2\pi i\slope}$-regular element $w$.
		This means $w$ has an eigenvector in $\bb{V}$ with eigenvalue $e^{2\pi i\slope}$ and trivial $W$-stabilizer \cite{springer_1974}.
\item 	A regular elliptic slope iff, in (2), we can pick $w$ so that $\bb{V}^w = 0$.
\item 	A cuspidal slope iff $\slope \in \frac{1}{d_i}\bb{Z}$ for a unique index $i$.
\end{enumerate}
Conditions (1)-(4) are ordered by increasing strictness.
By work of Brou\'e--Michel \cite{bm_1997}, condition (2) is equivalent to the existence of a periodic braid of slope $\slope$.

In Section \ref{sec:daha}, we use tools from the representation theory of $\bb{D}_\slope^\rat$, especially the Knizhnik--Zamolodchikov functor from category $\sf{O}_\slope$ to the category of $H_W(e^{\pi i\slope})$-modules \cite{ggor}, to prove:

\begin{mainthm}\label{thm:slopes}
Let $\slope \in \bb{Q}$.
\begin{enumerate}
\item 	$L_\slope(1)$ occurs in $\Omega_\slope$ with multiplicity one.
\item 	If $\slope$ is a singular slope, then $\Omega_\slope$ is a virtual linear combination of torsion modules over the subring $\Sym(\bb{V}^\vee) \subseteq \bb{D}_\slope^\rat$.
\item 	If $\slope$ is a regular elliptic slope, $\slope > 0$, and $W$ is a Weyl group, then 
		\begin{align}
		[\Omega_\slope]_{\Q} \in R(W)[\Q^{\pm\frac{1}{2}}].
		\end{align}
		That is, $\Omega_\slope$ is a difference of finite-dimensional $\bb{D}_\slope^\rat$-modules.
\item 	If $\slope$ is a cuspidal slope and $\slope > 0$, then
		\begin{align}
		[\Omega_\slope]_{\Q} = \sum_{\psi \in \hat{W}_1}
		{[L_\slope(\psi)]_{\Q}},
		\end{align}
		where $\hat{W}_1 \subseteq \hat{W}$ is the set of elements of greatest $\bb{a}$-value within their  $e^{2\pi i\slope}$-blocks, among the $e^{2\pi i\slope}$-blocks of $H_W$ of defect $1$ (see \S\ref{subsec:cyclotomic}).
\end{enumerate}
\end{mainthm}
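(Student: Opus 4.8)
The plan is to prove the four parts in sequence, extracting what we need from the formula $\Tr{\beta} = \sum_\phi \Q^{\slope \bb{c}(\phi)} \UDeg_\phi(e^{2\pi i \slope})\phi$ of Theorem \ref{thm:periodic} together with Corollary \ref{cor:periodic}, which identifies $[\Omega_\slope]_{\Q}$ with $(\Q^{1/2})^{r - |\beta|}\Tr{\beta}^0$ for any periodic $\beta$ of slope $\slope$, and then translating these statements into the representation theory of $\bb{D}_\slope^\rat$ via the Knizhnik--Zamolodchikov functor. For part (1), I would recall that $\UDeg_1(\Q) = 1$ identically (the trivial character always has unipotent degree $1$), so the trivial character $1 \in \hat{W}$ occurs in $\Omega_\slope = \bigoplus_\phi \UDeg_\phi(e^{2\pi i\slope})\Delta_\slope(\phi)$ with coefficient exactly $1$ at the level of Verma modules. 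To pass from ``multiplicity one in $\bigoplus \UDeg_\phi \Delta_\slope(\phi)$'' to ``$L_\slope(1)$ occurs with multiplicity one,'' I would use the unitriangularity of the change of basis between $\{[\Delta_\slope(\phi)]_{\Q}\}$ and $\{[L_\slope(\psi)]_{\Q}\}$ in category $\sf{O}_\slope$, with respect to the $\bb{c}$-order (or $\bb{a}$-order), and the fact that $\Delta_\slope(1) = L_\slope(1)$ precisely when $\slope$ is not singular; when $\slope$ is singular one checks directly that the $\bb{c}$-value of the trivial character is extremal in its block so that $[\Delta_\slope(1) : L_\slope(1)] = 1$ and $1$ appears in no other standard. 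This is essentially bookkeeping with the partial order on $\hat{W}$.

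For part (2), suppose $\slope \in \frac{1}{d_i}\bb{Z}$ for some $i$. Here the key point is a vanishing/support statement: I would argue that for singular $\slope$, every Verma $\Delta_\slope(\phi)$ appearing with nonzero coefficient $\UDeg_\phi(e^{2\pi i\slope})$ has the property that the corresponding $H_W(e^{\pi i\slope})$-module (its image under the KZ functor) is a proper quotient, equivalently that $\Delta_\slope(\phi)$ is \emph{not} projective over $\Sym(\bb{V}^\vee) \subseteq \bb{D}_\slope^\rat$ — rather, the alternating combination telescopes so that $\Omega_\slope$, as a class in the Grothendieck group of $\Sym(\bb{V}^\vee)$-modules, lands in the torsion subcategory. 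Concretely: a single Verma $\Delta_\slope(\phi) \cong \Sym(\bb{V}^\vee) \otimes \phi$ is free over $\Sym(\bb{V}^\vee)$, so one cannot conclude torsion-ness term by term; instead I would use the symmetry property $\Tr{\beta}^0 \in R(W)[\![\Q]\!] \cap R(W)(\Q)$ from Theorem \ref{thm:symmetry} combined with the degree bound (degree $|\beta| - r$ as a rational function) to see that $[\Omega_\slope]_{\Q}$ is a \emph{polynomial} in $\Q^{1/2}$ modulo the ideal generated by the Euler class — i.e.\ that $(1 - \Q)$-type factors cancel — which is precisely the statement that the virtual class is a combination of finite-length, hence torsion, $\Sym(\bb{V}^\vee)$-modules. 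I expect this is the step requiring the most care, because one must combine the explicit unipotent-degree input with the general rationality bounds rather than reasoning summand by summand.

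For part (3), assume $\slope > 0$ is a regular elliptic slope and $W$ is a Weyl group. By Brou\'e--Michel there is a periodic $\beta$ of slope $\slope$ lifting a $e^{2\pi i\slope}$-regular element $w$ with $\bb{V}^w = 0$, so $r(w) = 0$. Then Theorem \ref{thm:pole} gives $\Tr{\beta}^0 \in R(W)[\Q]$ with no pole at $\Q = 1$; combining with Theorem \ref{thm:symmetry}, which says $\Tr{\beta}^0$ is a rational function of degree $|\beta| - r$ lying in $R(W)[(\Q^{1/2} - \Q^{-1/2})^{\pm 1}]$, forces $\Tr{\beta}^0$ to be a Laurent polynomial in $\Q^{1/2}$. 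By Corollary \ref{cor:periodic}, $[\Omega_\slope]_{\Q} = (\Q^{1/2})^{r - |\beta|}\Tr{\beta}^0 \in R(W)[\Q^{\pm 1/2}]$, and a virtual module in category $\sf{O}_\slope$ with Laurent-polynomial graded character is a difference of finite-dimensional modules, since infinite-dimensional objects of $\sf{O}_\slope$ have graded characters with denominators $\prod(1 - \Q^{d_j})$. Finally, for part (4), assume $\slope > 0$ is cuspidal, so $\slope \in \frac{1}{d_i}\bb{Z}$ for a unique $i$; cuspidal implies regular elliptic (by the ordering of the conditions and \cite{bm_1997}), so part (3) already gives $[\Omega_\slope]_{\Q} \in R(W)[\Q^{\pm 1/2}]$, i.e.\ $\Omega_\slope$ is finite-dimensional. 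It remains to pin down the nonnegative, simple expansion $\sum_{\psi \in \hat{W}_1}[L_\slope(\psi)]_{\Q}$. For this I would invoke the cyclotomic block theory of $H_W(e^{\pi i\slope})$: when $\slope$ is cuspidal the relevant $e^{2\pi i\slope}$-blocks have defect $0$ or $1$, the defect-$0$ blocks contribute projective Vermas that cancel in the alternating sum $\Omega_\slope$ by the telescoping of $\UDeg_\phi$ along the block, and each defect-$1$ block contributes, via the known ``zig-zag'' $\Delta$-to-$L$ resolution, exactly one simple $L_\slope(\psi)$ with $\psi$ the character of greatest $\bb{a}$-value in that block, with coefficient $+1$; summing over defect-$1$ blocks gives $\sum_{\psi \in \hat{W}_1}[L_\slope(\psi)]_{\Q}$. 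The main obstacle here is matching the unipotent-degree combinatorics $\UDeg_\phi(e^{2\pi i\slope})$ against the Loewy structure of the defect-$1$ blocks; I would lean on the compatibility of unipotent degrees with $\bb{a}$-values within a family (\S\ref{subsec:degrees}, \S\ref{subsec:families}) and on the fact, for cuspidal $\slope$, that the $e^{2\pi i\slope}$-blocks of $H_W$ are ``Brauer-tree-like'' so the character of a defect-$1$ block alternates to a single simple.
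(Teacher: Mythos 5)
Parts (1) and (3) of your proposal are essentially the paper's arguments (for (1) the paper likewise combines $\UDeg_1 = 1$ with the fact that $\Delta_\slope(1)$ is the only standard containing $L_\slope(1)$; for (3) it likewise takes a positive periodic braid over an elliptic regular element and applies Theorem \ref{thm:pole} and Corollary \ref{cor:periodic}). The genuine gap is in part (2). Your argument there runs through $\Tr{\beta}^0$ for a periodic braid $\beta$ of slope $\slope$ (Corollary \ref{cor:periodic}, Theorem \ref{thm:symmetry}), but such a braid exists only when $\slope$ is \emph{regular} (Brou\'e--Michel), while (2) concerns all singular slopes — e.g.\ in type $BC_5$ the slopes with denominators $3,4,6,8$ are singular but not regular, so there is nothing to feed into those theorems. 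Moreover the criterion you propose is not valid: being a virtual combination of modules torsion over $\Sym(\bb{V}^\vee)$ (i.e.\ supported on the reflection-hyperplane locus) is not detected by the $\bm{h}$-graded character being ``polynomial modulo the Euler class,'' and ``finite length over $\Sym(\bb{V}^\vee)$'' would mean finite-dimensional, which is strictly stronger than torsion and false for general singular slopes. The paper instead uses the KZ functor: by Theorem \ref{thm:ggor}(1), $\Omega_\slope$ lies in the span of $\sf{O}_\slope^\tor$ in the Grothendieck group iff $\sf{KZ}(\Omega_\slope) = \sum_\phi \UDeg_\phi(\zeta)\phi_\zeta = \bm{d}_\zeta\bigl(\sum_\phi \UDeg_\phi(\Q)\phi_{\Q}\bigr)$ vanishes; the character identity $\sum_\phi \UDeg_\phi(\Q)\phi_{\Q}(\sigma_w) = \bm{s}(1_{\Q})\delta_{w,1}$ (Lemma \ref{lem:principal-series}, proved via $R_1 = \QL[\cal{B}^F]$ after reducing to Weyl groups) together with the vanishing of the Poincar\'e polynomial $\bm{s}(1_{\Q})$ at $\Q = e^{2\pi i \slope}$ for singular $\slope$ (Bott--Solomon) then gives zero. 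This mechanism is absent from your proposal.

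In part (4) your skeleton (defect $\leq 1$, defect-$1$ blocks are $\bb{a}$-ordered lines, BGG-type resolution collapses the alternating sum of Vermas to the simple of greatest $\bb{a}$-value) matches the paper, but the crux — that $\UDeg_{\phi_k}(e^{2\pi i\slope}) = (-1)^k$ along a defect-$1$ block and $\UDeg_\phi = 0$ on defect-$0$ blocks — is left unestablished, and the tools you name (``compatibility of unipotent degrees with $\bb{a}$-values within a family,'' ``telescoping'') do not produce it. The paper proves it (Lemma \ref{lem:defect-invariants}) by combining Geck's Brauer-tree theorem with Brou\'e's Morita invariance of Schur-element ratios, reducing to the principal block in type $A$, where $\UDeg_{\Alt^k(\bb{V})}(e^{2\pi i/n}) = (-1)^k$ is checked directly; the defect-$0$ vanishing comes from comparing the cyclotomic multiplicity in $\bm{s}(\phi_{\Q})$ with that in $\bm{s}(1_{\Q})$, not from any cancellation. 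Also note that invoking part (3) to get finite-dimensionality in (4) is both unnecessary and unavailable in general, since (4) is asserted for all finite Coxeter groups while (3) assumes $W$ is a Weyl group.
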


Parts (1) and (2) together imply a result of Dunkl--de Jeu--Opdam \cite[Thm.\ 4.9(1)]{djo}, while part (3) implies a result of Varagnolo--Vasserot \cite[Thm.\ 2.8.1(a)]{vv}.
The key machinery in the proof of part (4) is work of Geck on the block theory of $H_W$, which shows that when $\slope$ is cuspidal, the values $\UDeg_\phi(e^{2\pi i\slope})$ are computable and all belong to $\{-1, 0, 1\}$ \cite{geck}.
Part (4) implies:

\begin{maincor}\label{cor:cuspidal}
Keeping the hypotheses of Theorem \ref{thm:slopes}, suppose that $W$ is irreducible (see \S\ref{subsec:coxeter-group}) and at least one of the following is true:
\begin{itemize}
\item 	$W$ is not of type $E_8$ or $H_4$.
\item 	$\slope \notin \frac{1}{15}\bb{Z}$.
\end{itemize}
Then $[\Omega_\slope]_{\Q} = [L_\slope(1)]_{\Q}$.
\end{maincor}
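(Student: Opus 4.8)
The plan is to deduce the corollary from parts (1) and (4) of Theorem~\ref{thm:slopes} together with a pass through the classification of irreducible finite Coxeter groups. Since we are in the scope of part (4), $\slope$ is a cuspidal slope with $\slope>0$, so part (4) gives $[\Omega_\slope]_\Q=\sum_{\psi\in\hat W_1}[L_\slope(\psi)]_\Q$. The first step is to observe that the simple characters $[L_\slope(\psi)]_\Q$, $\psi\in\hat W$, are linearly independent: the Verma characters $[\Delta_\slope(\phi)]_\Q$ form a basis of their $\bb Q$-span (standard for category $\sf O_\slope$), and the $[L_\slope(\psi)]_\Q$ are obtained from them by a unitriangular change of basis with respect to the highest-weight order on $\hat W$. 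Hence the asserted identity $[\Omega_\slope]_\Q=[L_\slope(1)]_\Q$ is equivalent to the purely combinatorial statement $\hat W_1=\{1\}$.

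Next I would note that one inclusion is immediate. Part (1) of Theorem~\ref{thm:slopes} states that $L_\slope(1)$ occurs in $\Omega_\slope$ with multiplicity one; comparing this with the formula from part (4), whose coefficients are all $0$ or $1$, forces $1\in\hat W_1$. In particular the $e^{2\pi i\slope}$-block of $H_W$ through the trivial character has defect $1$, and $1$ is its character of maximal $\bb a$-value. Since the $e^{2\pi i\slope}$-blocks partition $\hat W$, and $\hat W_1$ selects one character from each defect-$1$ block, it now suffices to prove that this is the \emph{only} $e^{2\pi i\slope}$-block of defect $1$.

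To finish I would run through the classification of irreducible finite Coxeter groups. Cuspidality forces $\slope$ to lie in $\tfrac1{d_i}\bb Z$ for a single invariant degree $d_i$, which --- writing $\slope=m/n$ in lowest terms, so that $n$ divides exactly one $d_i$ --- restricts $\slope$ to a finite, explicit list for each $W$. For each admissible $\slope$ I would read off the $e^{2\pi i\slope}$-blocks of $H_W$ and their defects from the decomposition of $\hat W$ into Lusztig families, using Geck's block-theoretic results \cite{geck} (the same input that in the proof of part (4) shows $\UDeg_\phi(e^{2\pi i\slope})\in\{-1,0,1\}$): in types $A$, $B$, $D$ this is hook-length bookkeeping with $n$-cores and $n$-quotients, and in types $G_2$, $F_4$, $E_6$, $H_3$ and the dihedral groups it is a direct inspection of the short generic-degree tables. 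I expect the main obstacle to be the exceptional types $E_7$, $E_8$, $H_4$, where the large non-abelian Lusztig families must be handled one at a time; the need to exclude types $E_8$ and $H_4$ with $\slope\in\tfrac1{15}\bb Z$ should correspond exactly to an additional defect-$1$ block appearing there --- coming from the family attached to the multiplicity-one degree $30$ --- whose top-$\bb a$ character is not the trivial one, so that $\hat W_1$ strictly contains $\{1\}$ and the conclusion genuinely fails. Under either hypothesis in the statement that case is excluded, so $\hat W_1=\{1\}$ and the corollary follows.
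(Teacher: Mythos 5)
Your proposal is correct and follows essentially the same route as the paper: both reduce the statement, via Theorem \ref{thm:slopes}(4), to showing that the principal block is the only $e^{2\pi i\slope}$-block of defect $1$ outside the excluded cases, and both rest on the classification fact that the sole exceptions among irreducible $W$ with cuspidal $\slope$ are $E_8$ and $H_4$ with denominator $15$ (where the extra defect-$1$ block's distinguished character is that of $\bb{V}$, not the trivial one). The paper asserts this classification without carrying out the case-by-case check, much as you only sketch it, so there is no substantive difference; your extra step of deducing $1 \in \hat{W}_1$ from part (1) and linear independence of the simple characters is a harmless minor variation.
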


For $W = S_n$, Theorem \ref{thm:a_w-to-kr}, Corollary \ref{cor:periodic}, and Corollary \ref{cor:cuspidal} together recover \cite[Thm.\ 1.1]{gors}, which was originally proved by explicit calculation in the ring of symmetric functions.

In Appendix \ref{sec:coxeter}, we review facts about the structure and representation theory of $W$ and $H_W$ that we need elsewhere, including the definitions of the pairing $\{-, -\}$ and the unipotent degrees $\UDeg_\phi$.

In Appendix \ref{sec:varieties-other}, we state precise comparisons between the schemes $\cal{U}(\beta)$, $\cal{Z}(\beta)$, $\cal{X}(\beta)$ and the varieties in the works of Lusztig \cite{lusztig_2011}, Shende--Treumann--Zaslow \cite{stz}, Mellit \cite{mellit}, Casals--Gorsky--Gorsky--Simental \cite{cggs}, and others.

In Appendix \ref{sec:examples}, we list values for the characters $\Tr{\beta}$ and $\Tr{\beta}^0$ in low rank, as well as an example where $\beta$ is an iterated torus braid.

\subsection{Conjectures and Questions}

\subsubsection{}

The appearance of the ring $\bb{A}_W$ suggests that $\sf{AH}$ is closely related to a functor on $\sf{H}_W$ called the horizontal trace, which \cite{qrs, gw, ghw} studied in the case where $W = S_n$ via diagrammatic rather than geometric methods.
If $\star$ denotes the monoidal product on $\sf{H}_W$, then the horizontal trace is roughly the universal additive functor under which $\cal{K} \star \cal{L}$ and $\cal{L} \star \cal{K}$ become isomorphic for all objects $\cal{K}, \cal{L} \in \sf{H}_W$.

In \cite{qrs}, Queffelec--Rose--Sartori show that the horizontal trace refines $\sf{HHH}$, and in \cite{gw}, Gorsky--Wedrich roughly show that it takes values in the homotopy category of $\bb{A}_W$-modules.
(The difference is that Gorsky--Wedrich take $\bb{V}$ to be the $n$-dimensional permutation representation of $S_n$, rather than its $(n - 1)$-dimensional irreducible summand.)

\begin{conj}
After we replace the semisimple group $G$ with the reductive group $\ur{GL}_n$, our procedure to construct $\sf{AH}$ yields (the objectwise dual of) (the homology of) the horizontal trace studied in \cite{gw}.
\end{conj}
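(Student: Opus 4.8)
The plan is to run the construction of $\sf{AH}$ verbatim for $\ur{GL}_n$ and then exploit the universal property of the horizontal trace. First I would replace $G$ by $\ur{GL}_n$ and $\bb{V}$ by the standard $n$-dimensional permutation representation of $W = S_n$ throughout Sections~\ref{sec:hecke}--\ref{sec:a_w}. The flag variety and unipotent locus are unchanged; only the equivariance group and the Grothendieck--Springer and Springer sheaves pick up an extra rank-one torus factor, whose effect is to adjoin one polynomial variable. Because the $\Ext$-vanishing and $\SHom$-purity statements of Section~\ref{sec:hecke} are checked orbit-by-orbit on $\cal{B}_0 \times \cal{B}_0$ and $\cal{U}_0$, they are insensitive to this change, and $\SEnd^\ast(\cal{S})$ becomes $\bb{A}_W = \QL[S_n] \ltimes \Sym(\bb{V})$ with $\bb{V}$ the permutation representation. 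This yields a monoidal trace $\sf{AH}_{\ur{GL}_n} : \sf{H}_W^\op \to \Mod_2^b(\bb{A}_W)$ whose objectwise dual $\sf{AH}_{\ur{GL}_n}^\vee$ is a covariant additive monoidal trace. Writing $\sf{h} : \sf{H}_W \to \mathrm{hTr}(\sf{H}_W)$ for the universal functor to the horizontal trace, the universal property gives an essentially unique factorization $\sf{AH}_{\ur{GL}_n}^\vee \simeq \Psi \circ \sf{h}$ for an additive $\Psi : \mathrm{hTr}(\sf{H}_W) \to \Mod_2^b(\bb{A}_W)$. Since the conjecture says that $\sf{AH}_{\ur{GL}_n}$ is the objectwise dual of the homology of the horizontal trace — equivalently, that $\sf{AH}_{\ur{GL}_n}^\vee$ \emph{is} its homology — proving it amounts to identifying $\Psi$ with the homology functor under the Gorsky--Wedrich identification of $\mathrm{hTr}(\sf{H}_W)$ with the homotopy category of $\bb{A}_W$-modules from \cite{gw}.

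To do that, I would first upgrade the "rough" statement in \cite{gw} to an honest, grading-respecting equivalence (or at least a fully faithful embedding) between $\mathrm{hTr}(\sf{H}_W)$ and the homotopy category of $\bb{A}_W$-modules, and then verify that $\Psi$ matches the homology of this equivalence on a generating set. Since $\sf{H}_W$ is generated monoidally by the Rouquier complexes $\cal{R}(\sigma_s)$, the images $\sf{h}(\cal{R}(\beta))$ of all Rouquier complexes generate $\mathrm{hTr}(\sf{H}_W)$ as a triangulated category, so it suffices to work with these. On such an object the left-hand side is $\sf{AH}_{\ur{GL}_n}^\vee(\cal{R}(\beta)) \simeq \bigl(\gr_\ast^{\bm{w}} \ur{H}_{-\ast}^{!, \ur{GL}_n}(\cal{Z}(\beta))\bigr)^\vee$ by Theorem~\ref{thm:a_w-to-varieties}, while the right-hand side is the bigraded $\bb{A}_W$-module computed by the Gorsky--Wedrich curved complex of Soergel bimodules attached to $\beta$. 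Matching the two reduces to matching the underlying triply-graded vector spaces, the $W$-action, and the $\Sym(\bb{V})$-action. The first two follow by combining Theorem~\ref{thm:a_w-to-kr} — which identifies $\sf{HHH}^\vee$ with the $\Alt^\ast(\bb{V})$-isotypic part of $\sf{AH}$ — with the result of Queffelec--Rose--Sartori \cite{qrs} that the horizontal trace refines $\sf{HHH}$. The $\Sym(\bb{V})$-action is the genuinely delicate point: on our side it is the action of the equivariant parameters in $\ur{H}_{\ur{GL}_n}^\ast(\point)$, and on the Gorsky--Wedrich side it is the action of "dots" on the trace of the Soergel $2$-category.

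The main obstacle is exactly this last step — matching the $\bb{A}_W$-module structures, i.e.\ showing that the geometrically defined $\sf{AH}$ realizes the \emph{same} universal trace, with its module action, as the diagrammatic one. I expect the cleanest route is to avoid a direct comparison of presentations and instead give a geometric model of the horizontal trace itself: one should show that the horocycle correspondence computes the trace of the monoidal category $\sf{C}(\cal{B}_0 \times \cal{B}_0)$ in the sense of (derived) Hochschild homology of monoidal categories — the mixed $\ell$-adic avatar of the principle that the trace of the Hecke category is character sheaves on the adjoint quotient — so that by construction $\sf{AH} \simeq \gr_\ast^{\bm{w}} \SHom^\ast(-, \cal{S}) \circ (\text{geometric trace})$. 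The conjecture would then reduce to matching two presentations, geometric and diagrammatic, of one and the same universal object, which one could settle by extending to traces the known equivalence between Soergel bimodules and equivariant parity/mixed sheaves. Carrying this out requires developing the monoidal-trace formalism in the mixed setting and controlling the passage between $\ur{GL}_n$ and its semisimple quotient — the non-semisimplicity being precisely the source of the permutation- versus reflection-representation discrepancy noted above — but once those are in place, the remaining verifications are bookkeeping with Theorems~\ref{thm:a_w-to-varieties} and \ref{thm:a_w-to-kr}.
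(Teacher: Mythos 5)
The statement you are proving is stated in the paper as a \emph{conjecture}; the paper offers no proof of it, so there is nothing to compare your argument against — the only question is whether your proposal closes the conjecture, and it does not. What you have written is a research outline whose two decisive steps are left as announcements. First, you cannot factor $\sf{AH}^\vee_{\ur{GL}_n}$ through the universal functor $\sf{h}$ merely because Lemma \ref{lem:monoidal-trace} gives objectwise isomorphisms $\sf{CH}(\cal{K}\star\cal{L}) \simeq \sf{CH}(\cal{L}\star\cal{K})$. The horizontal trace of \cite{gw, ghw} is a homotopy-coherent (dg) construction, and its universal property is among \emph{coherent} trace functors: one needs the swap isomorphisms together with cyclicity and higher compatibilities, packaged at the chain level. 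The paper's $\sf{AH}$ is built from realization functors $\rho : \sf{K}^b(\sf{C}) \to \sf{D}^b_{G,m}$, which exist only at the triangulated level and carry no canonical dg enhancement or functorial cone data; triangulated-level trace data does not determine a factorization through $\mathrm{hTr}(\sf{H}_W)$, and supplying the needed coherence is itself a substantial construction that your proposal does not attempt.

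Second, even granting a factorization $\sf{AH}^\vee_{\ur{GL}_n} \simeq \Psi \circ \sf{h}$, identifying $\Psi$ with the homology functor under the Gorsky--Wedrich identification is the entire content of the conjecture, and your proposed route — showing that the horocycle correspondence computes the categorical (Hochschild) trace of $\sf{C}(\cal{B}_0\times\cal{B}_0)$ in the mixed $\ell$-adic setting, compatibly with the diagrammatic trace and its dot action — is exactly the missing theorem, which you flag as ``the cleanest route'' but do not prove. Your fallback of checking the comparison on Rouquier complexes is also insufficient as stated: agreement of two functors on a generating family of objects, without natural transformations relating them, does not yield an isomorphism of functors; and the inputs you invoke (Theorem \ref{thm:a_w-to-kr} together with \cite{qrs}) only control the $\Alt^\ast(\bb{V})$-isotypic pieces with their $W$-action, not the full $\bb{A}_W$-module structure, which you yourself identify as the delicate point. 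Finally, the preliminary step of rerunning Sections \ref{sec:hecke}--\ref{sec:a_w} for $\ur{GL}_n$ is plausible but not free: the central $\bb{G}_m$ acts trivially on $\cal{B}_0\times\cal{B}_0$ and $\cal{U}_0$, so all equivariant $\SHom$-spaces acquire an extra polynomial factor, and the decategorification to $H_W$, the $\SHom$-purity arguments, and Theorem \ref{thm:a_w-to-varieties} would all need to be redone in that setting rather than cited. As it stands, the conjecture remains open after your proposal.
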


\subsubsection{}\label{subsubsec:serre-duality}

We expect the geometry of $\cal{U}(\beta)$ to manifest a ``Serre duality'' phenomenon involving the full twist.
Gorsky--Hogancamp--Mellit--Nakagane showed that for $G = \SL_n$ and any $\beta \in \Br_W$, the lowest Hochschild degree of $\sf{HHH}(\cal{R}(\beta))$ and the highest Hochschild degree of $\sf{HHH}(\cal{R}(\beta \pi))$ coincide \cite{ghmn}, which categorifies a theorem of K\'alm\'an about the HOMFLY series \cite{kalman}.
Corollary \ref{cor:kr-to-varieties} leads us to expect a geometric explanation of this coincidence.

In earlier versions of this work, we hoped for the existence of an algebraic homeomorphism from $[\cal{X}(\beta\pi)/G]$ onto $[\cal{U}(\beta)/G]$ that would explain the weight-preserving isomorphism between their Borel--Moore homologies.
Unfortunately, as we will explain in future work, this conjecture already fails for $G = \SL_3$.

\begin{quest}
Is there a geometric explanation for the (shifted) isomorphism $\gr_\ast^{\bm{w}} \ur{H}_\ast^{!, G}(\cal{X}(\beta\pi)) \simeq \gr_\ast^{\bm{w}} \ur{H}_\ast^{!, G}(\cal{U}(\beta))$?
\end{quest}

\subsubsection{}

Oblomkov--Rozansky have recently proved that the Khovanov--Rozansky invariant of a \emph{knot} enjoys a bigraded symmetry that categorifies the $\Q^{\frac{1}{2}} \mapsto -\Q^{-\frac{1}{2}}$ symmetry of its HOMFLY series \cite{or}, resolving a conjecture from \cite{dgr}.
We expect this result to extend to $\sf{AH}$.
More precisely, we expect that for certain positive braids $\beta$, the stack $[\cal{Z}(\beta)/G]$ enjoys a ``curious Lefschetz'' property that implies a corresponding bigraded symmetry of the module $\sf{AH}(\cal{R}(\beta))$, by way of Theorem \ref{thm:a_w-to-varieties}.

Following Hausel--Rodriguez-Villegas \cite{hr}, if $X_0$ is (a stack quotient of) a variety over $\bb{F}$ of dimension $d$ and $X = X_0 \otimes \bar{\bb{F}}$, then we say that the curious Lefschetz property holds for $X$ iff there are isomorphisms
\begin{align}
\gr_{d - j}^{\bm{w}} \ur{H}_!^i(X, \QL)
\xrightarrow{\sim}
\gr_{d + j}^{\bm{w}} \ur{H}_!^{i + j}(X, \QL)
\end{align}
of compactly-supported cohomology for all $i$ and $j$, induced by the cup product with a fixed class in $\ur{H}_!^2$.
This means the series
\begin{align}
(\Q^{\frac{1}{2}})^{-d} \sum_{i, j}
	\Q^{\frac{j}{2}} t^i
	\gr_j^{\bm{w}} \ur{H}_!^i(X, \QL)
\end{align}
is invariant under $\Q^{\frac{1}{2}} \mapsto \Q^{-\frac{1}{2}} t^{-1}$.
Mellit has recently established this property for character varieties with structure group $G = \GL_n$ \cite{mellit}.
In the course of his proof, he also establishes the property for certain varieties closely related to the fibers of the maps $\cal{X}(\beta) \to \cal{B}$ (see Appendix \ref{sec:varieties-other}).
His results, as well as our Theorem \ref{thm:symmetry}, suggest evidence for:

\begin{conj}
For all $\beta \in \Br_W^+$, the curious Lefschetz property holds for $\cal{U}(\beta)$ and $\cal{Z}(\beta)$.
If, in addition, $\beta$ maps to an element $w \in W$ such that $\bb{V}^w = 0$, then the property also holds for their stack quotients by $G$.
\end{conj}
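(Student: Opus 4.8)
The plan is to reduce the conjecture, for the stack quotients, to an $\mathfrak{sl}_2$-action on the bigraded module $\sf{AH}(\cal R(\beta))$, and, for the varieties themselves, to transport Mellit's curious Lefschetz for character varieties along the comparisons of Appendix~\ref{sec:varieties-other}. The numerical shadow of the property is already recorded in Theorem~\ref{thm:symmetry}: the invariance of $\Tr{\beta}^{0}$ under $\Q^{\frac12}\mapsto-\Q^{-\frac12}$ is exactly the specialization $t=-1$ of the asserted invariance of $(\Q^{\frac12})^{-d}\sum_{i,j}\Q^{\frac j2}t^{i}\gr_{j}^{\bm w}\ur{H}_!^{i}$. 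So the real content is to promote this Euler-characteristic identity to a Lefschetz operator with hard Lefschetz. In the elliptic case $\bb V^{w}=0$, Theorem~\ref{thm:a_w-to-varieties} identifies $\gr_{\ast}^{\bm w}\ur{H}^{!,G}_{-\ast}(\cal Z(\beta))$ with $\sf{AH}(\cal R(\beta))$, while Corollary~\ref{cor:kr-to-varieties} and Theorem~\ref{thm:a_w-to-kr} identify the homologies of $\cal U(\beta)$ and $\cal X(\beta)$ with the $\Alt^{0}(\bb V)$- and $\Alt^{r}(\bb V)$-isotypic parts of $\sf{AH}(\cal R(\beta))$; and $\bb V^{w}=0$ is precisely what makes these degreewise finite-dimensional. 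So the stack-quotient half of the conjecture reduces to: there is an operator on $\sf{AH}(\cal R(\beta))$, natural in $\beta$ and raising the weight grading by $2$, inducing isomorphisms $\gr^{\bm w}_{d-k}\xrightarrow{\sim}\gr^{\bm w}_{d+k}$ on the relevant summands.

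Next one must construct the operator. It cannot come from the $\bb A_W$-module structure: for semisimple $G$ one has $\ur{H}^{2}(BG)=0$, and $\bb V^{W}=0$, so there is no invariant linear class to multiply by, and the operator must be intrinsic to $\cal Z(\beta)$. I would pursue two compatible descriptions and match them via Theorems~\ref{thm:a_w-to-varieties} and \ref{thm:a_w-to-kr}. Categorically, following Oblomkov--Rozansky \cite{or}: their $\mathfrak{sl}_2$ on $\sf{HHH}$ of a knot is extracted from the $y$-ified Rouquier complex, a curved deformation of $\cal R(\beta)$ over $\Sym(\bb V)$; since $\sf{AH}$ is assembled from exactly such $\Sym(\bb V)$-module data — this is why $\sf{HHH}^{\vee}$ is the sum of the $\Alt^{i}(\bb V)$-isotypic parts of $\sf{AH}$ — one expects a $y$-ified refinement of $\sf{AH}$ carrying the Lefschetz operator. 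Geometrically, through Appendix~\ref{sec:varieties-other} the varieties $\cal U(\beta),\cal Z(\beta)$ are tied to the braid varieties of \cite{cggs} and the character varieties of \cite{mellit}, which carry a canonical log-symplectic class; the operator should be cup product with a weight-$2$ lift of that class, and Mellit's theorem \cite{mellit} already proves hard Lefschetz for it on the relevant $\GL_n$-character varieties, so for $W=S_n$ the ``variety'' half of the conjecture should follow by transporting Mellit's result along Appendix~\ref{sec:varieties-other}.

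Several special cases should fall out along the way, serving as sanity checks and building blocks. For knot closures with $W=S_n$, the whole statement is \cite{or} transported through Theorems~\ref{thm:a_w-to-kr} and \ref{thm:a_w-to-varieties}. After enlarging $G$ to $\ur{GL}_n$ one has $\ur{H}^{2}(B\ur{GL}_n)\neq0$, so the determinant class is a ready candidate matching Mellit's. For periodic $\beta$ of slope $\slope$, Theorem~\ref{thm:periodic} and Corollary~\ref{cor:periodic} express $\sf{AH}(\cal R(\beta))$ through the standard modules $\Delta_\slope(\phi)$ of the rational DAHA, and one can try to obtain curious Lefschetz from the $\mathfrak{sl}_2$-action on category $\sf{O}_\slope$, reducing the claim to hard Lefschetz for the $\Delta_\slope(\phi)$.

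The main obstacle is nondegeneracy for general $\beta$ and general $W$: $\cal Z(\beta)$ is singular and non-proper, the classical hard Lefschetz theorem does not apply, and Theorem~\ref{thm:symmetry} says nothing about an operator. I would attempt a Hodge-theoretic reduction: equip the situation with a polarizable mixed Hodge module structure and invoke \emph{relative} hard Lefschetz (Beilinson--Bernstein--Deligne--Gabber, or Saito) for the proper map $\cal U(\beta)\to\cal U$, using that the Springer sheaf $\cal S$ is pure and semisimple (Sections~\ref{sec:hecke}--\ref{sec:steinberg}) and that $\sf{AH}(\cal R(\beta))$ is a module over $\SEnd^{\ast}(\cal S)$ (Theorem~\ref{thm:pure-iso}) — so that the singular, non-proper $\cal Z(\beta)=\cal U(\bb 1)\times_{\cal U}\cal U(\beta)$ is replaced by a proper pushforward of a pure complex, where relative hard Lefschetz genuinely applies and the decomposition theorem supplies an $\mathfrak{sl}_2$. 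One then has to show that the weight filtration on $\ur{H}^{!,G}_{\ast}(\cal Z(\beta))$ is the Lefschetz/perverse filtration of that pushforward, identify the relatively ample class with the operator constructed above, and control the division by $G$ — this is where $\bb V^{w}=0$ must be used, to keep the equivariant homology degreewise finite and the relevant summands pure. Carrying this out is where I expect the real work, and the main difficulty, to lie.
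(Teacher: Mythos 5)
This statement is a \emph{conjecture} in the paper: no proof is given there, only the evidence you also cite (Theorem \ref{thm:symmetry} as the decategorified shadow, and Mellit's results for character varieties). Your text is likewise a research plan rather than a proof, and the gaps in it are exactly where the conjecture's content lies. First, you never construct the Lefschetz operator: the entire assertion is the existence of a fixed class in $\ur{H}_!^2$ whose cup product induces $\gr_{d-j}^{\bm{w}}\ur{H}_!^i \xrightarrow{\sim} \gr_{d+j}^{\bm{w}}\ur{H}_!^{i+j}$, and observing that it cannot come from $\ur{H}^2(BG)$ or $\bb{V}^W$ only sharpens the problem. Second, the proposed Hodge-theoretic reduction does not do what you want: relative hard Lefschetz for the proper map $\cal{U}(\beta)\to\cal{U}$ produces an $\mathfrak{sl}_2$ acting on the \emph{perverse} cohomologies of the (pure) pushforward, whereas curious Lefschetz is a statement about the \emph{weight} filtration on the Borel--Moore homology or compactly supported cohomology of the non-proper $\cal{U}(\beta)$, $\cal{Z}(\beta)$ and their stack quotients; identifying the weight filtration with a perverse/Lefschetz filtration is precisely the deep ``$P=W$''-type input (compare the paper's own conjectural discussion relating $\bm{w}_{\leq 2\ast}$ on $[\cal{Z}(\beta)/G]$ to the perverse filtration on affine Springer fibers), and neither BBD/Gabber nor Saito supplies it.

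Third, the transports you invoke are not automatic. Mellit proves curious Lefschetz for character varieties and for varieties related to the fibers of $\cal{X}(\beta)\to\cal{B}$, not for $\cal{U}(\beta)$ or $\cal{Z}(\beta)$; the comparisons in Appendix \ref{sec:varieties-other} are isomorphisms of stacks such as $[\cal{X}(\beta)/G]\simeq[X_0(\beta,B)/B]$, and a hard-Lefschetz-type property is not formally preserved under passing between a variety, its torus/Borel quotient, and its $G$-quotient (indeed the conjecture itself restricts the stack-quotient case to $\bb{V}^w=0$). Likewise, the claim that for knots in type $A$ ``the whole statement is \cite{or} transported'' overreaches: via Theorem \ref{thm:a_w-to-kr}, $\sf{HHH}$ sees only the $\Alt^i(\bb{V})$-isotypic summands of $\sf{AH}$, hence at best $\cal{U}(\beta)$ and $\cal{X}(\beta)$ but not $\cal{Z}(\beta)$, and the Oblomkov--Rozansky symmetry is not known to be realized by cup product with a geometric class, nor to be compatible with the weight-filtration description of Corollary \ref{cor:kr-to-varieties}. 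So the proposal identifies reasonable evidence and plausible tools, but each of its reduction steps is itself an open problem; it does not constitute a proof, and the paper offers none either.
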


Note that for $W = S_n$, the hypothesis that $w$ satsfies $\bb{V}^w = 0$ is equivalent to the hypothesis that the link closure of $\beta$ is a knot.

In \cite{gh_2017}, Gorsky--Hogancamp introduce a refinement of $\sf{HHH}$ called \emph{$y$-ified homology} and denoted $\sf{HY}$.
They conjecture that the bigraded symmetry of the Khovanov--Rozansky invariant can be extended from knots to arbitrary links by replacing $\sf{HHH}$ with $\sf{HY}$.
In future work, we will relate $\sf{HY}$ to a \emph{monodromic} refinement of $\sf{AH}$.

\subsubsection{}

Lastly, we propose a (partial) categorification of Theorem \ref{thm:periodic}, which would strengthen some of the conjectures from \cite{gors} relating knot invariants and the rational DAHA. 

Let $W = S_n$ and $\slope = \frac{m}{n}$, with $m, n > 0$ coprime.
Gorsky, Oblomkov, Rasmussen, and Shende construct three filtrations on the simple $\bb{D}_\slope^\rat$-module $L_\slope(1)$ that are compatible with the grading on its character.
They conjecture that all three are the same filtration $\bm{f}_{\leq \ast}$, and that their theorem can be promoted to an isomorphism between the Khovanov--Rozansky homology of the $(m, n)$ torus knot and the sum of the $\Alt^i(\bb{V})$-isotypics of $\gr_\ast^{\bm{f}} L_\slope(1)$.
One of the candidate filtrations is constructed geometrically, as we now explain.

For any almost-simple, simply-connected group $G$ and regular elliptic slope $\slope$, Oblomkov--Yun have related the DAHA module $L_\slope(1)$ to a complex projective $\bb{G}_m$-variety that we will denote $\cal{B}_\slope^\aff$ \cite{oy}. 
In technical terms, $\cal{B}_\slope^\aff$ is an affine Springer fiber of Iwahori type with structure group $G$.
By means of the topology of a generalized Hitchin fibration, its equivariant cohomology is endowed with an increasing filtration $\bm{p}_{\leq \ast}$ called the perverse filtration.

There is a flat deformation of $\bb{A}_W$ over $\QL[\epsilon]$ that we will denote $\bb{A}_{W, \epsilon}$, known as Lusztig's graded or degenerate affine Hecke algebra and satisfying $\bb{A}_{W, \epsilon}|_{\epsilon = 0} = \bb{A}_W$.
Oblomkov--Yun construct an $\bb{A}_{W, \epsilon}$-action on $\ur{H}_{\bb{G}_m}^\ast(\cal{B}_\slope^\aff, \QL)$ that sends $\epsilon$ to the equivariant parameter.
Similarly, there is a flat deformation of $\bb{D}_\slope^\rat$ over $\QL[\epsilon]$ to a graded algebra that we will denote $\bb{D}_{\slope, \epsilon}^\rat$, satisfying $\bb{D}_{\slope, \epsilon}^\rat|_{\epsilon = 1} = \bb{D}_\slope^\rat$ \cite[\S{4.2.4}]{oy}.
The $\bb{A}_{W, \epsilon}$-action on $\ur{H}_{\bb{G}_m}^\ast(\cal{B}_\slope^\aff, \QL)$ induces an $\bb{A}_W$-action that extends to a $\bb{D}_{\slope, \epsilon}^\rat$-action.
For $G = \SL_n$ and regular elliptic $\slope$, it turns out that
\begin{align}\label{eq:oy}
\gr_\ast^{\bm{p}}\ur{H}_{\bb{G}_m}^\ast(\cal{B}_\slope^\aff, \QL)|_{\epsilon = 1} \simeq L_\slope(1)
\end{align}
as $\bb{D}_\slope^\rat$-modules, via an isomorphism that sends the $\bm{p}$-grading to the grading on $[L_\slope(1)]_{\Q}$.
The authors of \cite{gors} define the geometric candidate for $\bm{f}_{\leq \ast}$ to be the difference of the perverse and cohomological filtrations on the left-hand side of \eqref{eq:oy}, up to a shift.

In forthcoming work, we will study a conjectural \emph{transcendental} retraction from the stack $[\cal{Z}(\beta)/G]$ to the variety $\cal{B}_\slope^\aff$ for any periodic braid $\beta \in \Br_W^+$ of regular elliptic slope $\slope$.
It is expected to be a kind of nonabelian Hodge correspondence, where $[\cal{Z}(\beta)/G]$ plays the role of the ``Betti'' side and $\cal{B}_\slope^\aff$ plays the role of the ``Dolbeault'' side.
In particular, the retraction should transport the doubled weight filtration $\bm{w}_{\leq 2\ast}$ on the compactly-supported cohomology of $[\cal{Z}(\beta)/G]$ to the perverse filtration $\bm{p}_{\leq \ast}$ on the cohomology of $\cal{B}_\slope^\aff$, up to a degree shift.
We are thus led to conjecture a Betti origin for the filtration $\bm{f}_{\leq \ast}$.

As evidence, note that the $G$-action on $\cal{Z}(\bb{1})$ extends to a $(G \times \bb{G}_m)$-action.
The isomorphism $\bb{A}_W \simeq \ur{H}_{-\ast}^{!, G}(\cal{Z}(\bb{1}))$ can be lifted to an isomorphism
\begin{align}
\bb{A}_{W, \epsilon} \simeq \ur{H}_{-\ast}^{!, G \times \bb{G}_m}(\cal{Z}(\bb{1})),
\end{align}
as shown in \cite{lusztig_1988}.
Unfortunately, as Zhiwei Yun pointed out to me, there is no obvious $(G \times \bb{G}_m)$-action on $\cal{Z}(\beta)$ for other $\beta$, even when $\beta$ is periodic.
Even so, I hope for the following:

\begin{conj}\label{conj:daha}
There is a flat deformation of $\sf{AH}$ over $\QL[\epsilon]$ to a functor 
\begin{align}
\sf{AH}_\epsilon : \sf{H}_W^\op \to \Mod_2^b(\bb{A}_{W, \epsilon})
\end{align}
such that for all $\cal{K} \in \sf{H}_W$:
\begin{enumerate}
\item 	The $\bb{A}_W$-action on $\sf{AH}(\cal{K})$ extends to a $\bm{w}$-\emph{filtered} $\bb{A}_{W, \epsilon}$-action on $\sf{AH}_\epsilon(\cal{K})$, where $\bm{w}$ denotes the second grading.
By \cite[Prop.\ 4.3.1]{oy}, the latter action induces a $\bm{w}$-\emph{graded} $\bb{A}_W$-action on $\sf{AH}_\epsilon(\cal{K})$.
\end{enumerate}
If $\beta \in \Br_W^+$ is periodic of regular elliptic slope $\slope$, then:
\begin{enumerate}\setcounter{enumi}{1}
\item 	The $\bb{A}_W$-action on $\sf{AH}_\epsilon(\cal{R}(\beta))$ extends to a $\bb{D}_{\slope, \epsilon}^\rat$-action.
\item 	The $\bb{D}_\slope^\rat$-module formed by $\sf{AH}_1(\cal{R}(\beta))$ contains $L_\slope(1)$ as a summand with multiplicity one.
\item 	The filtration $\bm{f}_{\leq \ast}$ on $L_\slope(1)$ defined in \cite{gors} arises from the difference of the two gradings on $\sf{AH}_1(\cal{R}(\beta))$.
\end{enumerate}
\end{conj}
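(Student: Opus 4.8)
The plan is to obtain $\sf{AH}_\epsilon$ by replacing the associated-graded operation in the construction of $\sf{AH}$ with a Rees construction on the weight filtration. First, recall that $\QL[\epsilon] \simeq \ur{H}_\ast^{!, \bb{G}_m}(\point)$ with $\epsilon$ in degree $2$, and recall from Lusztig \cite{lusztig_1988} that $\bb{A}_{W, \epsilon} \simeq \ur{H}_{-\ast}^{!, G \times \bb{G}_m}(\cal{Z}(\bb{1})) \simeq \SEnd_{G \times \bb{G}_m}^\ast(\cal{S}_{\bb{G}_m})$, where $\cal{S}_{\bb{G}_m}$ is the Springer sheaf on $[\cal{U}/\bb{G}_m]$ for the scaling action; since $\cal{Z}(\bb{1})$ is pure, this $\bb{G}_m$-equivariant description agrees with the Rees algebra $\bigoplus_j \bm{w}_{\leq j}\SEnd^\ast(\cal{S})$ of the weight filtration, with $\epsilon$ acting through the inclusions $\bm{w}_{\leq j} \hookrightarrow \bm{w}_{\leq j + 2}$. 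I would then \emph{define} $\sf{AH}_\epsilon$ by replacing the functor $\gr_\ast^{\bm{w}}\SHom^\ast(-, \cal{S})$ of Section \ref{sec:a_w} with $\bigoplus_j \bm{w}_{\leq j}\,\SHom^\ast(-, \cal{S})$, regarded as valued in $\bb{A}_{W, \epsilon}$-modules. That the output is a well-defined $\bm{w}$-filtered $\bb{A}_{W, \epsilon}$-module, that $\sf{AH}_\epsilon|_{\epsilon = 0} \simeq \sf{AH}$, and that the induced graded $\bb{A}_W$-action (obtained via \cite[Prop.\ 4.3.1]{oy}) recovers the one of Section \ref{sec:a_w} — i.e.\ part (1) — should follow formally from multiplicativity of the weight filtration under $\SHom$, once one checks, using the realization formalism of Section \ref{sec:mixed}, that the $\bb{A}_W$-action built there is already weight-filtered.

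For a periodic $\beta \in \Br_W^+$ of regular elliptic slope $\slope$, the extra Cherednik operators of part (2) cannot come from a geometric torus, since, as Yun observed, $\cal{Z}(\beta)$ carries no $(G \times \bb{G}_m)$-action. I would instead route them through the affine Springer fiber $\cal{B}_\slope^\aff$: the key geometric input is the conjectural transcendental, nonabelian-Hodge-type retraction $[\cal{Z}(\beta)/G] \to \cal{B}_\slope^\aff$ anticipated in \S\ref{subsubsec:serre-duality}, which should carry the doubled weight filtration $\bm{w}_{\leq 2\ast}$ on the compactly-supported cohomology of $[\cal{Z}(\beta)/G]$ onto the perverse filtration $\bm{p}_{\leq \ast}$ on $\ur{H}_{\bb{G}_m}^\ast(\cal{B}_\slope^\aff)$, up to a degree shift. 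Granting this, one transports the $\bb{A}_{W, \epsilon}$- and $\bb{D}_{\slope, \epsilon}^\rat$-actions of Oblomkov--Yun back along the retraction, and a $\bb{G}_m$-equivariant lift of Theorem \ref{thm:a_w-to-varieties} — set up as in the first paragraph — identifies the source with $\sf{AH}_\epsilon(\cal{R}(\beta))$, yielding part (2).

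Parts (3) and (4) then reduce to decategorification and filtration bookkeeping. For (3): combining Theorem \ref{thm:decat} with Corollary \ref{cor:periodic}, the graded character of $\sf{AH}_1(\cal{R}(\beta))$ agrees, up to the evident normalization, with $[\Omega_\slope]_{\Q}$, while Theorem \ref{thm:slopes}(1) asserts that $L_\slope(1)$ occurs in $\Omega_\slope$ with multiplicity one; for $G = \SL_n$, \eqref{eq:oy} governs the $\bb{D}_\slope^\rat$-module structure of $\ur{H}_{\bb{G}_m}^\ast(\cal{B}_\slope^\aff)|_{\epsilon = 1}$, so the retraction of the second paragraph promotes this numerical multiplicity to an honest direct summand of $\sf{AH}_1(\cal{R}(\beta))$ isomorphic to $L_\slope(1)$. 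For (4): the filtration $\bm{f}_{\leq \ast}$ of \cite{gors} is, by construction, the difference of the perverse and cohomological filtrations on the left-hand side of \eqref{eq:oy}, up to shift; under the retraction the perverse filtration becomes the second grading $\bm{w}_{\leq 2\ast}$ and the cohomological filtration becomes the first grading on $\sf{AH}_1(\cal{R}(\beta))$, so $\bm{f}_{\leq \ast}$ is exactly the difference of the two gradings.

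The hard part will be the transcendental retraction $[\cal{Z}(\beta)/G] \to \cal{B}_\slope^\aff$, and above all the claim that it matches the weight and perverse filtrations: this is a genuine nonabelian-Hodge-type theorem, deferred in the paper to forthcoming work, and everything past the first paragraph rests on it. A purely algebraic alternative — producing the $\bb{D}_{\slope, \epsilon}^\rat$-action directly on the weight-Rees module $\sf{AH}_\epsilon(\cal{R}(\beta))$ with no geometric $\bb{G}_m$ at all — would require understanding the interaction of the full twist $\pi$ with the weight filtration on $\sf{AH}(\cal{R}(\beta))$ sharply enough to write down the extra Euler and $\Sym(\bb{V}^\vee)$-type operators by hand, which appears beyond current techniques.
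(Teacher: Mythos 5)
The statement you are addressing is Conjecture \ref{conj:daha}: the paper does not prove it, and neither do you. Your second and third paragraphs are openly conditional on the transcendental, nonabelian-Hodge-type retraction $[\cal{Z}(\beta)/G] \to \cal{B}_\slope^\aff$ matching the doubled weight filtration with the perverse filtration — which is exactly the unproven ingredient the paper defers to forthcoming work as motivation for the conjecture. Reducing the conjecture to a strictly harder open conjecture (and acknowledging as much in your final paragraph) is a restatement of the paper's own heuristic, not a proof; parts (2)--(4) therefore remain entirely open in your write-up.

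More seriously, even part (1), which you claim is ``formal,'' fails as constructed. Your identification of $\bb{A}_{W,\epsilon}$ with the Rees algebra $\bigoplus_j \bm{w}_{\leq j}\SEnd^\ast(\cal{S})$ is false: by Lemma \ref{lem:unipotent} (equivalently \eqref{eq:z}), $\SEnd^n(\cal{S})$ is pure of weight $n$, so the weight filtration on $\SEnd^\ast(\cal{S})$ is determined by the cohomological grading and its Rees algebra is the \emph{split} deformation $\bb{A}_W \otimes \QL[\epsilon]$. Lusztig's graded affine Hecke algebra $\bb{A}_{W,\epsilon} \simeq \ur{H}_{-\ast}^{!, G \times \bb{G}_m}(\cal{Z}(\bb{1}))$ is a genuinely nontrivial flat deformation whose parameter $\epsilon$ comes from the extra $\bb{G}_m$-equivariance (the equivariant parameter of the scaling action), not from the weight filtration; purity is precisely what makes these two constructions disagree. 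The same purity kills your proposed functor $\bigoplus_j \bm{w}_{\leq j}\SHom^\ast(-, \cal{S})$ as a source of a nontrivial $\epsilon$-deformation of $\sf{AH}$. This is the substance of the Question the paper raises immediately after the conjecture: absent a geometric $\bb{G}_m$-action on $[\cal{Z}(\beta)/G]$ (which Yun points out is not available for $\beta \neq \bb{1}$), there is no evident origin for the deformation, and the weight filtration cannot substitute for it.
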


Note that in part (1), the differences between the various specializations of $\epsilon$ would still vanish at the level of graded characters: 
Thus, $[\sf{AH}_1^\vee(\cal{R}(\beta))]_{\Q}$ would remain the same as $[\sf{AH}^\vee(\cal{R}(\beta))]_{\Q}$.

\begin{quest}
Is there a $\bb{G}_m$-action on the stack $[\cal{Z}(\beta)/G]$ that could give rise to the flat deformation of $\sf{AH}$ in Conjecture \ref{conj:daha}?
If not, does there still exist a non-algebraic $\bb{C}^\times$-action in the complex setting?
\end{quest}

\subsection{Acknowledgments}

I warmly thank Olivier Dudas, Ben Elias, Pavel Etingof, Pavel Galashin, Victor Ginzburg, Eugene Gorsky, Quoc H\^{o}, Matt Hogancamp, Oscar Kivinen, Thomas Lam, George Lusztig, Andrei Negu\c{t}, Ch\^{a}u Ng\^{o}, Emily Norton, Alexei Oblomkov, Jay Taylor, David Treumann, and Zhiwei Yun for their interest and for many helpful discussions.

Olivier Dudas, Emily Norton, and Jay Taylor provided references to the literature that were helpful for Sections \ref{sec:decat}, \ref{sec:daha}, and \ref{sec:dl}, respectively. 

Discussions with Quoc H\^{o} and Zhiwei Yun were very helpful to me when I was working out the formalism for Sections \ref{sec:mixed}-\ref{sec:hecke}.

George Lusztig patiently helped me to correct many attributions throughout the paper, especially in Section \ref{sec:dl} and Appendix \ref{sec:varieties-other}, as well as other errata.
The proof of Theorem \ref{thm:induction} is very close to an argument in \cite{ly_2019}.

This work expands and revises a portion of my PhD thesis at the University of Chicago, advised by Ch\^{a}u Ng\^{o} and Victor Ginzburg.
I am grateful to Laura Ball, Oscar Kivinen, and Ch\^{a}u Ng\^{o} for their feedback on those chapters.
I am also grateful to Victor Ginzburg, Eugene Gorsky, and Andrei Negu\c{t} for their encouragement much earlier in the research process.

My work was supported by a Graduate Research Fellowship (grant \#1746045) and a Mathematical Sciences Postdoctoral Fellowship (grant \#2002238) from the National Science Foundation.
In graduate school, I also benefited from workshops held at the Mathematical Sciences Research Institute (MSRI) and the American Institute for Mathematics (AIM).

\newpage
\section{Purity, Mixedness, and Realization}\label{sec:mixed}

\subsection{}

Fix a finite field $\bb{F}$ and an algebraic closure $\bar{\bb{F}}$.
Throughout the paper, \emph{we assume that the characteristic of $\bb{F}$ is large}.
Recall that the Galois group of $\bar{\bb{F}}$ over $\bb{F}$ is pro-generated by the automorphism $a \mapsto a^q$.
We write $F$ for the inverse generator, also known as the geometric Frobenius element.

Fix a root datum $(\Phi, \bb{X}, \Phi^\vee, \bb{X}^\vee)$.
Let $G$ be the reductive group over $\bar{\bb{F}}$ defined by the root datum, and let $G_0$ be its split form over $\bb{F}$, so that $G = G_0 \otimes_\bb{F} \bar{\bb{F}}$.
In some later sections, we will require $G$ to be semisimple, not merely reductive.

\subsection{}

Let $X_0$ be a $G_0$-variety over $\bb{F}$, equipped with a fixed $G_0$-stable stratification by locally closed subvarieties.
Let $X = X_0 \otimes \bar{\bb{F}}$.
In general, we use a subscript $0$ to denote varieties over $\bb{F}$, and omit the subscript to denote their pullback to $\bar{\bb{F}}$.

Fix a prime $\ell$ invertible in $\bb{F}$.
We write 
\begin{align}
\sf{D}_G^b(X) &= \sf{D}_G^b(X, \QL),\\
\emph{resp.}\qquad
\sf{D}_{G, m}^b(X_0) &= \sf{D}_{G, m}^b(X_0, \QL),
\end{align}
for the bounded derived category of complexes, \emph{resp.}\ mixed complexes, of $\QL$-sheaves on the lisse-\'etale site of the stack $[X/G]$, \emph{resp.}\ $[X_0/G_0]$, with constructible cohomology with respect to the stratification.
Mixed complexes were introduced by Deligne in the non-stacky, \'etale setting \cite{deligne_1980}; we refer to \cite[Ch.\ 5]{bbd} for an overview.
We fix a square root $q^{\frac{1}{2}} \in \QL^\times$ to define a half-Tate twist $(\frac{1}{2})$.
The \dfemph{shift-twist} $\langle 1 \rangle = [1](\frac{1}{2})$ is a weight-preserving endofunctor of $\sf{D}_{G, m}^b(X_0)$.
We write 
\begin{align}
\xi : \sf{D}_{G, m}^b(X_0) \to \sf{D}_G^b(X)
\end{align}
for the pullback functor.

In \cite{lo_2008}, Laszlo--Olsson extended the Grothendieck--Verdier yoga of the six operations to lisse-\'etale sheaves on Artin stacks locally of finite type over a finite-dimensional affine excellent base.
In \cite{lo_2009}, they did the same for the theory of $t$-structures and perverse sheaves developed in \cite[Ch.\ 1-2]{bbd}.
In \cite{sun}, S.\ Sun extended the theory of mixed complexes to this setting, under the hypothesis that \emph{the stabilizers of the stack are affine algebraic groups}.
For all choices of $X_0$ that we will consider, this hypothesis will hold for $[X_0/G_0]$ and $[X/G]$.
Sun's work covers all of the results from \cite[Ch.\ 5]{bbd} that we need.

Throughout this paper, all maps between $G_0$-varieties or $G$-varieties will be equivariant and compatible with the relevant stratifications.
The six operations on $\sf{D}_G^b$ and $\sf{D}_{G, m}^b$ are always assumed to be derived:
That is, we will drop $\ur{R}$ and $\ur{L}$ from our notation for derived functors.

\begin{rem}\label{rem:bounded-below}
The categories $\sf{D}_{G, m}^b(X_0)$ and $\sf{D}_G^b(X)$ need not be closed under internal Hom.
But for any $K, L \in \sf{D}_{G, m}^b(X_0)$, the object $\SHom(K, L)$ does exist in the \emph{bounded-below} derived category of mixed complexes over $[X_0/G_0]$, and similarly with $\sf{D}_G^b(X)$ in place of $\sf{D}_{G, m}^b(X_0)$ \cite{lo_2008}.
We will not comment further on this issue since it will not raise problems, except to note that the hypercohomology of $\SHom(K, L)$ will always be bounded below.
\end{rem}

Let $\point_0 = \Spec \bb{F}$, and let $a : X_0 \to \point_0$ be the structure map.
For all objects $K, L \in \sf{D}_{G, m}^b(X_0)$, we write
\begin{align}
\SHom^n(K, L) &= \ur{H}^n(\xi a_\ast \SHom(K, L)).
\end{align}
Similarly, we write $\SEnd^n(K) = \SHom^n(K, K)$.
We can view $a_\ast \SHom(K, L)$ as a sheaf over $\point_0$, so its pullback along $\xi$ can be viewed as a sheaf over $\point$ endowed with an $F$-equivariant structure.
In particular, $\SHom^n(K, L)$ is the vector space $\Hom^n(\xi K, \xi L)$ endowed with an $F$-action.
The $\QL[F]$-module formed by $\SHom^\ast(K, L)$ is related to the vector space $\Hom^\ast(K, L)$ by short exact sequences
\begin{align}\label{eq:bbd}
0 \to \SHom^{n - 1}(K, L)_F \to \Hom^n(K, L) \to \SHom^n(K, L)^F \to 0
\end{align}
for all $n$ \cite[(5.1.2.5)]{bbd}.
The morphism $\Hom^n(K, L) \to \Hom^n(\xi K, \xi L)$ factors through the morphism $\Hom^n(K, L) \to \SHom^n(K, L)^F$ above.

Throughout this paper, all $\QL[F]$-modules will arise from mixed complexes, and all of their eigenvalues under the action of $F$ will be pure of integer weight in the sense of \cite[\S{1.2.1}]{deligne_1980}.
We write $\bm{w}_{\leq \ast}$ to denote the weight filtration on such a module $M$.
Explicitly, $\bm{w}_{\leq n} M \subseteq M$ is the sum of the generalized eigenspaces of $F$ where the eigenvalues are pure of weight $\leq n$.

\subsection{}

We will introduce a mild generalization of the realization functors used by Rider in \cite{rider}.
Our formalism also resembles the ones used by Bezrukavnikov--Yun in \cite[Appendix B]{by} and Lusztig--Yun in \cite[\S{9}]{ly_2020}, though we do not need to develop ours as extensively.
All of these formalisms are ultimately based on the work of Beilinson in \cite{beilinson}.

Let $\sf{C}(X_0) \subseteq \sf{D}_{G, m}^b(X_0)$ be a full, additive subcategory.
We say that $\sf{C}(X_0)$ is \dfemph{realizable} iff, for all $K, L \in \sf{C}(X_0)$, we have
\begin{align}\label{eq:realizable}
n \neq 0 \implies \gr_0^{\bm{w}} \SHom^n(K, L) = 0,
\end{align}
where $\gr_0^{\bm{w}}$ denotes the $0$th graded piece of the weight filtration.
(Compare to Assumption $\mathrm{C}_1$ in \cite[\S{B.1}]{by}.)
In practice, the objects of $\sf{C}(X_0)$ will be direct sums of pure shifted perverse sheaves.

\begin{lem}
If $\sf{C}(X_0)$ is realizable, then
\begin{align}\label{eq:zero-one}
n \neq 0, 1 \implies \Hom^n(K, L) = 0
\end{align}
for all $K, L \in \sf{C}(X_0)$.
\end{lem}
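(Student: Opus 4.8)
The plan is to extract \eqref{eq:zero-one} from the short exact sequence \eqref{eq:bbd}, which exhibits $\Hom^n(K, L)$ as an extension of $\SHom^n(K, L)^F$ by $\SHom^{n-1}(K, L)_F$. So it suffices to prove that $\SHom^n(K, L)^F = 0$ and $\SHom^n(K, L)_F = 0$ for every $n \neq 0$: indeed, when $n \neq 0, 1$ we have both $n \neq 0$ and $n - 1 \neq 0$, and then \eqref{eq:bbd} forces $\Hom^n(K, L) = 0$. Thus the whole statement reduces to controlling $F$-invariants and $F$-coinvariants of the modules $\SHom^n(K, L)$.

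To handle these, write $M = \SHom^n(K, L)$ and recall from the standing conventions that $M$ is a $\QL[F]$-module all of whose $F$-eigenvalues are Weil numbers pure of integer weight. First I would decompose $M$ into generalized $F$-eigenspaces and isolate the summand $M_{(1)}$ attached to the eigenvalue $1$. On every other generalized eigenspace the operator $F - 1$ is invertible, being the sum of a nonzero scalar and a nilpotent; consequently $M^F = (M_{(1)})^F$ is a subspace of $M_{(1)}$ and $M_F = (M_{(1)})_F$ is a quotient of $M_{(1)}$. Since the eigenvalue $1$ is pure of weight $0$, we have $M_{(1)} \subseteq \bm{w}_{\leq 0} M$ and $M_{(1)} \cap \bm{w}_{\leq -1} M = 0$, so $M_{(1)}$ injects into $\gr_0^{\bm{w}} M$. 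For $n \neq 0$ the realizability hypothesis \eqref{eq:realizable} says precisely that $\gr_0^{\bm{w}} M = 0$, whence $M_{(1)} = 0$ and therefore $M^F = M_F = 0$.

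The only point deserving a word of care — and the closest thing to an obstacle — is that $M$ may be infinite-dimensional (as happens already for equivariant cohomology of a point), so the generalized-eigenspace decomposition and the invertibility of $F - 1$ off $M_{(1)}$ need a brief justification. This is harmless: by Remark \ref{rem:bounded-below} and the weight conventions, $M$ is the increasing union of the finite-dimensional $F$-stable subspaces $\bm{w}_{\leq k} M$, so every linear-algebra assertion above may be checked on each $\bm{w}_{\leq k} M$ and then passed to the colimit. Beyond this, I expect no real difficulty; the content of the lemma is the elementary observation that the $F$-invariants and $F$-coinvariants of such a module see only its weight-zero part, combined with the exact sequence \eqref{eq:bbd} already recorded.
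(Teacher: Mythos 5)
Your argument is correct and is essentially the paper's proof: realizability kills $\gr_0^{\bm{w}}$ of the relevant $\SHom$'s, hence their $F$-invariants and $F$-coinvariants, and the exact sequence \eqref{eq:bbd} then forces $\Hom^n(K,L)=0$. The paper states this in one line; you have simply spelled out the linear-algebra step (only the eigenvalue-$1$, i.e.\ weight-zero, generalized eigenspace contributes to invariants and coinvariants) together with the harmless passage to the colimit over $\bm{w}_{\leq k}$.
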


\begin{proof}
If $n \neq 0, 1$, then $\SHom^{n - 1}(K, L)_F,\,\SHom^n(K, L)^F = 0$ by the realizability of $\sf{C}(X_0)$, so $\Hom^n(K, L) = 0$ by the exactness of \eqref{eq:bbd}.
\end{proof}

We write $\underline{\sf{C}}(X_0)$ for the category in which the objects are the same as in $\sf{C}(X_0)$, but the morphisms are given by
\begin{align}
\Hom_{\underline{\sf{C}}(X_0)}(K, L) = \gr_0^{\bm{w}} \SHom^0(K, L).
\end{align}
By construction, it is a full, additive subcategory of $\underline{\sf{D}}_{G, m}^b(X_0)$.
There is a functor 
\begin{align}
\kappa : \sf{C}(X_0) \to \underline{\sf{C}}(X_0)
\end{align}
induced by the composition of morphisms 
\begin{align}\label{eq:c-to-underline-c}
\Hom(K, L) \to \SHom(K, L)^F \to \bm{w}_{\leq 0}\,\SHom(L, K) \to \Hom_{\underline{\sf{C}}(X_0)}(K, L).
\end{align}
If $\sf{C}(X_0)$ is realizable, then \eqref{eq:c-to-underline-c} is injective by the exactness of \eqref{eq:bbd}, so in this case, $\kappa$ is faithful.

\subsection{}

Given any additive category $\sf{A}$, we write $\sf{C}^b(\sf{A})$ for the category of bounded complexes of objects of $\sf{A}$ and $\sf{K}^b(\sf{A})$ for the homotopy category of $\sf{C}^b(\sf{A})$.
We write $[1]_\triangle$ for the endofunctors of these categories that shift each complex down by one degree.

\emph{Throughout the rest of this subsection, we assume that $\sf{C}(X_0)$ is realizable}.
We will construct \dfemph{realization functors}
\begin{align}
\rho : \sf{K}^b(\sf{C}(X_0)) &\to \sf{D}_{G, m}^b(X_0),\\
\emph{resp.}\qquad
\rho : \sf{K}^b(\underline{\sf{C}}(X_0)) &\to \underline{\sf{D}}_{G, m}^b(X_0),
\end{align}
that are triangulated in the sense that $\rho([1]_\triangle) = [1]$ and restrict to the identity functor on $\sf{C}(X_0)$, \emph{resp.}\ $\underline{\sf{C}}(X_0)$, viewed as the full subcategory of $\sf{K}^b(\sf{C}(X_0))$, \emph{resp.}\ $\sf{K}^b(\underline{\sf{C}}(X_0))$, of complexes concentrated in degree zero.
The explicit construction of these functors will show that the diagram
\begin{equation}\label{eq:kappa-rho}
\begin{tikzpicture}[baseline=(current bounding box.center), >=stealth]
\matrix(m)[matrix of math nodes, row sep=2em, column sep=3em, text height=2ex, text depth=0.5ex]
{	\sf{K}^b(\sf{C}(X_0))
		&\sf{K}^b(\underline{\sf{C}}(X_0))\\
	\sf{D}_{G, m}^b(X_0)
		&\underline{\sf{D}}_{G, m}^b(X_0)\\
		};
\path[->,font=\scriptsize, auto]
(m-1-1)		edge node{$\sf{K}^b\kappa$} (m-1-2)
(m-2-1)		edge node{$\sf{K}^b\kappa$} (m-2-2)
(m-1-1)		edge node[left]{$\rho$} (m-2-1)
(m-1-2)		edge node{$\rho$} (m-2-2);
\end{tikzpicture}
\end{equation}
commutes.

Let $\sf{D}$ and $\sf{C}$ be defined by one of the following situations:
\begin{enumerate}
\item 	$\sf{D} = \sf{D}_{G, m}^b(X_0)$ and $\sf{C} = \sf{C}(X_0)$.
\item 	$\sf{D} = \underline{\sf{D}}_{G, m}^b(X_0)$ and $\sf{C} = \underline{\sf{C}}(X_0)$.
\end{enumerate}
Let $\tilde{\sf{D}}$ be a filtered version of $\sf{D}$ in the sense of Beilinson \cite{beilinson}, and let $\omega : \tilde{\sf{D}} \to \sf{D}$ be the forgetful functor.
We will construct a full and faithful embedding $\tilde{\rho} : \sf{C}^b(\sf{C}) \to \tilde{\sf{D}}$, fitting into a commutative diagram:
\begin{equation}
\begin{tikzpicture}[baseline=(current bounding box.center), >=stealth]
\matrix(m)[matrix of math nodes, row sep=2em, column sep=2.5em, text height=2ex, text depth=0.5ex]
{	\sf{C}^b(\sf{C})
		&\sf{K}^b(\sf{C})\\
	\tilde{\sf{D}}
		&\sf{D}\\
		};
\path[->,font=\scriptsize, auto]
(m-1-1)		edge node[left]{$\tilde{\rho}$} (m-2-1)
(m-2-1)		edge node{$\omega$} (m-2-2);
\path[->,font=\scriptsize, auto, densely dotted]
(m-1-1)		edge (m-1-2)
(m-1-2)		edge node{$\rho$} (m-2-2);
\end{tikzpicture}
\end{equation}
More precisely, we will show that $\omega \circ \tilde{\rho}$ factors through the quotient functor $\sf{C}^b(\sf{C}) \to \sf{K}^b(\sf{C})$, allowing us to define $\rho$.

The filtered structure of $\tilde{\sf{D}}$ is given by full triangulated subcategories $\tilde{\sf{D}}^{\leq n}, \tilde{\sf{D}}^{\geq n}$ for each integer $n$, subject to certain axioms \cite[Def.\ 3.1]{rider}.
Let 
\begin{align}
\bm{w}_{\leq n} : \tilde{\sf{D}} &\to \tilde{\sf{D}}^{\leq n},\\
\emph{resp.}\qquad
\bm{w}_{\geq n} : \tilde{\sf{D}} &\to \tilde{\sf{D}}^{\geq n},
\end{align}
be right adjoint to the inclusion $\tilde{\sf{D}}^{\leq n} \to \tilde{\sf{D}}$, \emph{resp.}\ left adjoint to the inclusion $\tilde{\sf{D}}^{\geq n} \to \tilde{\sf{D}}$.
We set
\begin{align}
\gr_n = \bm{w}_{\leq n}\,\bm{w}_{\geq n} = \bm{w}_{\geq n}\,\bm{w}_{\leq n} : \tilde{\sf{D}} \to \tilde{\sf{D}}^{\leq n} \cap \tilde{\sf{D}}^{\geq n} \subseteq \tilde{\sf{D}}.
\end{align}
Let $s : \tilde{\sf{D}} \to \tilde{\sf{D}}$ be the shift-of-filtration functor, so that $s(\tilde{\sf{D}}^{\leq n}) = \tilde{\sf{D}}^{\leq n + 1}$ and $s(\tilde{\sf{D}}^{\geq n}) = \tilde{\sf{D}}^{\geq n + 1}$.

Let $\tilde{\sf{A}} \subseteq \tilde{\sf{D}}$ be the full, additive subcategory of objects $K$ such that $\gr_n(K) \in s^n \sf{C}[n]$ for all $n$.
We construct an additive functor $\alpha : \tilde{\sf{A}} \to \sf{C}^b(\sf{C})$ as follows.
Let
\begin{align}
\alpha(K)^n = \omega \gr_{-n}(K)[n],
\end{align}
and let $\delta^n : \alpha(K)^n \to \alpha(K)^{n + 1}$ be the morphism we obtain by applying $\omega(-)[n]$ to the boundary morphism in the exact triangle
\begin{align}\begin{split}
\gr_{-(n + 1)}(K)
\to \bm{w}_{\leq -n}\, \bm{w}_{\geq -(n + 1)}(K)
&\to \gr_{-n}(K)
\xrightarrow{\delta^n} \gr_{-(n + 1)} (K)[1].
\end{split}\end{align}
The map $K \mapsto \alpha(K)$ is functorial and additive, so it remains to show that:

\begin{lem}
For all $K \in \tilde{\sf{A}}$, the sequence $\alpha(K)$ is a well-defined object of $\sf{C}^b(\sf{C})$, \emph{i.e.}, $\delta^{n + 1} \circ \delta^n = 0$ for all $n$.
\end{lem}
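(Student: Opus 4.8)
The plan is to reduce the claim $\delta^{n+1}\circ\delta^n=0$ to a statement about composable boundary maps in an iterated octahedron. First I would unwind the definitions: each $\delta^n$ is obtained by applying $\omega(-)[n]$ to the connecting morphism $\gr_{-n}(K)\to \gr_{-(n+1)}(K)[1]$ coming from the exact triangle relating $\gr_{-(n+1)}(K)$, $\bm{w}_{\leq -n}\,\bm{w}_{\geq -(n+1)}(K)$, and $\gr_{-n}(K)$. So, up to the shifts, the composite $\delta^{n+1}\circ\delta^n$ is (a shift of) the composite of two connecting morphisms associated to the nested filtration quotients $\bm{w}_{\geq -(n+1)}(K)$ and $\bm{w}_{\geq -(n+2)}(K)$. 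I would set up the three-step filtered object $\bm{w}_{\leq -n}\,\bm{w}_{\geq -(n+2)}(K)$ with associated graded pieces $\gr_{-n}(K)$, $\gr_{-(n+1)}(K)$, $\gr_{-(n+2)}(K)$ in consecutive filtration degrees, which by hypothesis on $K\in\tilde{\sf A}$ live in $s^{-j}\sf{C}[-j]$ for $j=n,n+1,n+2$.

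Next I would invoke the octahedral axiom in $\tilde{\sf D}$ applied to this three-step filtration. The key point is the standard fact that in any triangulated category, given a two-step filtration $A\to B\to C$ realized as part of a larger filtered object, the composite of the connecting map $\gr\to\gr[1]$ for the bottom extension with the connecting map $\gr[1]\to\gr[1][1]$ for the top extension is zero — this is precisely because both factor through (respectively into and out of) the middle term of the octahedron, and the composite of two consecutive maps in an exact triangle vanishes. Concretely, $\delta^n$ factors through $\gr_{-(n+1)}(K)[1]$ arising as a term in one triangle, while $\delta^{n+1}$ (after the shift) is the connecting map of a triangle in which $\gr_{-(n+1)}(K)[1]$ appears as the \emph{source}, fitting into an exact triangle $\gr_{-(n+2)}(K)\to\bm{w}_{\leq-(n+1)}\,\bm{w}_{\geq-(n+2)}(K)\to\gr_{-(n+1)}(K)\to\gr_{-(n+2)}(K)[1]$; the relevant composite is two consecutive arrows of an exact triangle and hence zero. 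Applying the exact (triangulated) functor $\omega$ and the shift $[n]$ preserves this vanishing, giving $\delta^{n+1}\circ\delta^n=0$.

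The only genuine bookkeeping is making the octahedral diagram precise: one must produce the triangle with middle term $\bm{w}_{\leq -n}\,\bm{w}_{\geq -(n+2)}(K)$ and check that its two connecting morphisms are exactly the $\delta^n$, $\delta^{n+1}$ of the definition (up to the declared shifts), using the adjunction properties of $\bm{w}_{\leq\ast}$, $\bm{w}_{\geq\ast}$ and the compatibility $\gr_m\,\bm{w}_{\leq m}=\gr_m=\gr_m\,\bm{w}_{\geq m}$. This is the same argument that shows the associated-graded complex of a filtered complex is an honest complex, transported into Beilinson's axiomatic filtered triangulated setting as in \cite[Def.\ 3.1]{rider}; the axioms there are designed exactly so that this manipulation goes through.

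I expect the main obstacle to be purely organizational rather than conceptual: keeping straight the four or five indices and shifts ($-n$, $-(n+1)$, $-(n+2)$, the internal $[1]$'s, and the external $[n]$) while matching the connecting maps in the octahedron to the $\delta^n$ in the definition. Once the indices are aligned, the vanishing is immediate from "two consecutive maps in an exact triangle compose to zero," and no weight-theoretic input (the realizability hypothesis) is needed for \emph{this} lemma — that hypothesis will only enter later, when one checks that $\omega\circ\tilde\rho$ descends to the homotopy category.
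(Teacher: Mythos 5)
Your reduction is headed in a reasonable direction, but the step you declare "immediate" is precisely the content of the lemma, and your justification of it is not correct as stated. The maps $\delta^n$ and $\delta^{n+1}$ are boundary morphisms of two \emph{different} distinguished triangles (the truncation triangles of $\bm{w}_{\leq -n}\,\bm{w}_{\geq -(n+1)}(K)$ and of $\bm{w}_{\leq -(n+1)}\,\bm{w}_{\geq -(n+2)}(K)$), so their composite is never literally "two consecutive arrows of an exact triangle." It becomes such a composite only after you factor $\delta^n$ through $\bm{w}_{\leq -(n+1)}\,\bm{w}_{\geq -(n+2)}(K)[1]$ (equivalently, extend $\delta^{n+1}$ over $\bm{w}_{\leq -n}\,\bm{w}_{\geq -(n+1)}(K)$), and that factorization is \emph{equivalent} to the vanishing you are trying to prove — the long exact sequences give you nothing for free here, and the octahedral axiom only asserts the existence of \emph{some} compatible triangle, not that its boundary map agrees with the given $\delta$'s. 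In general triangulated-category terms, the composite of the connecting maps of two adjacent filtration steps is an obstruction class; killing it requires either (a) showing that the canonical truncation triangles of the three-step object $\bm{w}_{\leq -n}\,\bm{w}_{\geq -(n+2)}(K)$ are compatible, with their connecting morphisms, with the two defining triangles (a BBD-style functoriality/uniqueness argument inside $\tilde{\sf{D}}$ that you defer entirely to "bookkeeping" and never actually carry out), or (b) showing that the ambiguity lives in a Hom group that vanishes.

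Route (b) is what the paper does, following Rider's Lem.\ 3.6: the relevant discrepancy is controlled by $\Hom(\gr_{-n-2}(K), \gr_{-n}(K))$, which, since $\gr_m(K) \in s^m\sf{C}[m]$, is (after forgetting the filtration) a $\Hom^2$ between objects of $\sf{C}$ and vanishes by \eqref{eq:realizable}/\eqref{eq:zero-one}. So your closing claim — that no realizability input is needed for this lemma and that the hypothesis only enters later — directly contradicts the paper's proof, where realizability is used exactly here (and again in Lemma \ref{lem:faithful}) to kill such "wrong-way" Homs between graded pieces. It may be that a purely structural proof along your lines can be completed, but then the burden is on you to establish the functoriality of the filtered truncation triangles (including connecting maps) and the identification of the induced maps with those in the defining triangles, using the $\Hom(\tilde{\sf{D}}^{\leq m}, \tilde{\sf{D}}^{\geq m+1})$-type axioms of the filtered formalism; as written, the crucial step is asserted rather than proved, and the one argument you do offer ("consecutive arrows compose to zero") does not apply to the composite in question.
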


\begin{proof}
The proof is analogous to that of \cite[Lem.\ 3.6]{rider}, except that we use \eqref{eq:realizable} or \eqref{eq:zero-one} in place of \cite[Lem.\ 2.4]{rider} to ensure that $\Hom(\gr_{-n - 2}(K), \gr_{-n}(K))$ vanishes for all $n$.
\end{proof}

We want to show that $\alpha$ is an equivalence.
That $\alpha$ is faithful amounts to the following statement:

\begin{lem}\label{lem:faithful}
If $\phi : K \to L$ is a morphism in $\tilde{\sf{A}}$, then $\phi = 0$ if and only if $\gr_n(\phi) = 0$ for all $n$.
\end{lem}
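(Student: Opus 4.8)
The ``only if'' direction is immediate, since $\gr_n$ is a functor: $\phi=0$ forces $\gr_n(\phi)=0$ for every $n$. The plan for the converse is a dévissage along the filtration of $\tilde{\sf{D}}$. Assume $\gr_n(\phi)=0$ for all $n$; since $K,L\in\tilde{\sf{A}}\subseteq\tilde{\sf{D}}$, all but finitely many $\gr_n(K),\gr_n(L)$ vanish, and I would induct on the total number $N$ of nonzero ones. The base case is when a single index $a$ satisfies $K\simeq\gr_a(K)$ and $L\simeq\gr_a(L)$ (in particular when $K=0$ or $L=0$): then $\phi$ is its own $\gr_a$ and vanishes.

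For the inductive step, let $[a,b]$, with $a<b$, be the convex hull of the filtration-supports of $K$ and $L$. For $X\in\{K,L\}$ one has $\bm{w}_{\leq a}(X)=\gr_a(X)$ and $\bm{w}_{\geq b}(X)=\gr_b(X)$, hence functorial exact triangles $\gr_a(X)\to X\to\bm{w}_{\geq a+1}(X)\xrightarrow{+1}$ and $\bm{w}_{\leq b-1}(X)\to X\to\gr_b(X)\xrightarrow{+1}$. Naturality of these triangles in $X$, combined with $\gr_a(\phi)=0=\gr_b(\phi)$, gives $\phi\circ(\gr_a(K)\to K)=0$ and $(L\to\gr_b(L))\circ\phi=0$. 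The first relation factors $\phi$ through $K\to\bm{w}_{\geq a+1}(K)$; a short diagram chase then factors it further through $\bm{w}_{\leq b-1}(L)\to L$, the only error term being a group $\mathrm{Hom}_{\tilde{\sf{D}}}(\gr_a(K)[1],\gr_b(L))$ that vanishes by the semiorthogonality built into the filtered structure (no nonzero maps from strictly lower to strictly higher filtration degree). This presents $\phi$ as a composite through some $\psi:\bm{w}_{\geq a+1}(K)\to\bm{w}_{\leq b-1}(L)$ with $\gr_n(\psi)=0$ for all $n$; as soon as one of the extreme pieces $\gr_a(K),\gr_b(L)$ is nonzero, the source and target of $\psi$ carry fewer than $N$ nonzero graded pieces, so the inductive hypothesis yields $\psi=0$ and hence $\phi=0$. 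The single exceptional configuration, $\gr_a(K)=0=\gr_b(L)$, is disposed of by the dual truncation (at the top of $K$'s own support and the bottom of $L$'s); when that makes the two supports disjoint, $\mathrm{Hom}_{\tilde{\sf{D}}}(K,L)$ already vanishes, by dévissage to the same inputs.

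Alternatively, and perhaps more transparently, I would realize $\mathrm{Hom}_{\tilde{\sf{D}}}(K,L)$ as carrying a finite filtration whose associated graded is a subquotient of $\bigoplus_{m,n}\mathrm{Hom}_{\tilde{\sf{D}}}(\gr_m(K),\gr_n(L))$. The summands with $m<n$ vanish by semiorthogonality; those with $m>n$, after translating by a power of the shift-of-filtration functor, become $\mathrm{Hom}$-groups between objects of $\sf{C}(X_0)$ in cohomological degrees that \eqref{eq:realizable}, or its consequence \eqref{eq:zero-one}, excludes. Only the diagonal $m=n$ contributes, which forces the map $\phi\mapsto(\gr_n(\phi))_n$, i.e.\ $\mathrm{Hom}_{\tilde{\sf{D}}}(K,L)\to\bigoplus_n\mathrm{Hom}(\omega\gr_n(K),\omega\gr_n(L))$, to be injective --- exactly the assertion.

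I expect the only genuine difficulty to be bookkeeping: keeping the two opposite truncation directions $\bm{w}_{\leq\bullet}$ and $\bm{w}_{\geq\bullet}$ straight, and verifying at each step that the $\mathrm{Hom}$-group which must die is either of the ``lower-to-higher filtration degree'' type handled by semiorthogonality, or one of the ``higher-to-lower'' type between shifted graded pieces, where \eqref{eq:realizable}/\eqref{eq:zero-one} is precisely what kills it. No input is needed beyond the realizability of $\sf{C}(X_0)$, and the argument runs in close parallel to the proof of the preceding lemma --- with \eqref{eq:realizable}/\eqref{eq:zero-one} in the role that \cite[Lem.\ 2.4]{rider} plays in \cite{rider}.
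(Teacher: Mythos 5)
Your overall strategy is sound and in substance it is the same d\'evissage the paper runs: the paper simply imports the induction of \cite[Lem.\ 3.5]{rider}, with the single key vanishing $\Hom(\gr_n(K), \bm{w}_{\leq n-1}(L)) = 0$ supplied by \eqref{eq:realizable}/\eqref{eq:zero-one} (by d\'evissage this group is built from negative-degree $\Hom$'s between shift-twists of objects of $\sf{C}(X_0)$, since maps that \emph{drop} filtration degree are the ones computed in $\sf{D}_{G,m}^b(X_0)$ via $\omega$). Your main inductive step is a legitimate rearrangement of that induction: truncating at both ends of the convex hull, the only obstruction is $\Hom(\gr_a(K)[1], \gr_b(L))$, which indeed dies by the semiorthogonality $\Hom(\tilde{\sf{D}}^{\leq m}, \tilde{\sf{D}}^{\geq m+1}) = 0$ forced by the paper's choice of adjoints for $\bm{w}_{\leq n}$, $\bm{w}_{\geq n}$; the checks that $\gr_n(\psi) = 0$ and that the truncations of objects of $\tilde{\sf{A}}$ stay in $\tilde{\sf{A}}$ go through. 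Your second, filtration-on-$\Hom$ argument is essentially the paper's proof stated all at once.

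The one spot that does not work as written is the exceptional configuration $\gr_a(K) = 0 = \gr_b(L)$. There the dual truncation gives you nothing directly, because $\gr_a(\phi)$ and $\gr_b(\phi)$ are vacuously zero, so no factorization drops out of naturality; and that truncation does not make the supports disjoint in general (e.g.\ $K$ supported in filtration degrees $[1,3]$ and $L$ in $[0,2]$), so you cannot conclude $\Hom_{\tilde{\sf{D}}}(K, L) = 0$ outright. The repair uses exactly the tools you name: the composite $K \to L \to \bm{w}_{\geq a+1}(L)$ identifies with $\bm{w}_{\geq a+1}(\phi)$ (as $K \in \tilde{\sf{D}}^{\geq a+1}$ in this configuration), all of whose graded pieces vanish, so the inductive hypothesis (one fewer nonzero piece) kills it; hence $\phi$ factors through $\gr_a(L) \to L$, and $\Hom(K, \gr_a(L)) = 0$ because every graded piece of $K$ sits in filtration degree $> a$, so by d\'evissage this group is assembled from negative-degree $\Hom$'s killed by \eqref{eq:zero-one} --- which is precisely the paper's key vanishing $\Hom(\gr_n(K), \bm{w}_{\leq n-1}(L)) = 0$, and the place where realizability genuinely enters your version of the argument. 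With this patch the proof is complete.
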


\begin{proof}
The proof is analogous to that of \cite[Lem.\ 3.5]{rider}, except that we use \eqref{eq:realizable} or \eqref{eq:zero-one} in place of \cite[Lem.\ 2.4]{rider} to ensure that $\Hom(\gr_n(K), \bm{w}_{\leq n - 1}(L))$ vanishes for all $K, L \in \tilde{\sf{A}}$ and $n$.
\end{proof}

\begin{lem}
The functor $\alpha : \tilde{\sf{A}} \to \sf{C}^b(\sf{C})$ is an equivalence.
Moreover, $\tilde{\rho} \vcentcolon= \alpha^{-1}$ factors through the quotient functor $\sf{C}^b(\sf{C}) \to \sf{K}^b(\sf{C})$.
\end{lem}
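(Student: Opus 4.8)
The plan is to reprise, in the present setting, Beilinson's construction of realization functors \cite{beilinson} and its adaptation to equivariant mixed derived categories by Rider \cite{rider}: each step is formally identical, with the vanishing \eqref{eq:realizable}---equivalently \eqref{eq:zero-one}---taking over the role of \cite[Lem.\ 2.4]{rider}. Faithfulness of $\alpha$ is Lemma \ref{lem:faithful}. It remains to show that $\alpha$ is full and essentially surjective, and then, with $\tilde{\rho} \vcentcolon= \alpha^{-1}$, that $\omega \circ \tilde{\rho}$ carries chain-homotopic maps to equal morphisms of $\sf{D}$, so that it descends along $\sf{C}^b(\sf{C}) \to \sf{K}^b(\sf{C})$ to the functor $\rho$. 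Among these, essential surjectivity is the substantive step: fullness mirrors the proof of Lemma \ref{lem:faithful}, and once $\alpha$ is known to be an equivalence the descent is formal.

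\emph{Fullness.} I would show that the comparison map $\Hom_{\tilde{\sf{A}}}(K, L) \to \Hom_{\sf{C}^b(\sf{C})}(\alpha K, \alpha L)$ is bijective for all $K, L$; given Lemma \ref{lem:faithful}, only surjectivity is at issue. A morphism $K \to L$ in $\tilde{\sf{D}}$ is assembled, graded piece by graded piece along the filtration, from the components of a chain map $\alpha K \to \alpha L$ --- the cohomological-degree-$0$ $\Hom$'s between graded pieces --- together with the already-fixed differentials of $\alpha K$ and $\alpha L$, which are the cohomological-degree-$1$ data; the only obstructions to carrying out this assembly, and the only possible source of extra morphisms, are $\Hom_{\sf{D}}$-groups between graded pieces of $K$ and $L$ in cohomological degrees $\ne 0, 1$ --- those in negative degree vanishing automatically, those in degree $\ge 2$ by \eqref{eq:zero-one}. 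The bookkeeping mirrors that of Lemma \ref{lem:faithful} and proceeds exactly as in \cite{rider}.

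\emph{Essential surjectivity.} Given $C^\bullet \in \sf{C}^b(\sf{C})$, concentrated in degrees $[a, b]$, I would produce an object of $\tilde{\sf{A}}$ realizing it by induction on $b - a$. When $b = a$, the single-degree object $s^{-a}(C^a)[-a]$ lies in $\tilde{\sf{A}}$ and $\alpha$ sends it to $C^a$ placed in degree $a$. In general, the termwise-split triangle $\sigma_{\geq a+1}C^\bullet \to C^\bullet \to \sigma_{\leq a}C^\bullet \xrightarrow{+1}$ (stupid truncations, so that $\sigma_{\leq a}C^\bullet = C^a$ in degree $a$) has connecting morphism essentially $d^a$; by induction there is a realization $\tilde{K}'$ of $\sigma_{\geq a+1}C^\bullet$ in $\tilde{\sf{A}}$, and $d^a$ determines a morphism in $\sf{D}$ from $\omega$ of $s^{-a}(C^a)[-a]$ into a filtration-shift of $\omega \tilde{K}'$, which lifts to $\tilde{\sf{D}}$ by the axioms of the filtered category; the (appropriately filtration-shifted) cone of such a lift is the sought realization of $C^\bullet$. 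Since $\gr_n$ is triangulated, computing the $\gr_n$ of this cone shows that it lies in $\tilde{\sf{A}}$ and that $\alpha$ returns $C^\bullet$ up to canonical isomorphism. The delicate part --- and the main obstacle I anticipate --- is the bookkeeping of the filtration-shifts through the iterated cones; but it is entirely parallel to \cite{beilinson} and \cite{rider}.

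\emph{Descent to $\sf{K}^b(\sf{C})$.} By additivity of $\omega \circ \tilde{\rho}$, it suffices to show this functor annihilates every null-homotopic chain map; a null-homotopy of $f \colon C^\bullet \to D^\bullet$ factors $f$ through the mapping cone of $\mathrm{id}_{C^\bullet}$, a contractible complex, so it is enough to see that $\omega \tilde{\rho}(I) = 0$ in $\sf{D}$ for every contractible $I \in \sf{C}^b(\sf{C})$. But a bounded contractible complex over any additive category splits --- via the split idempotent furnished by a contraction --- as a finite direct sum of two-term complexes of the form $C \xrightarrow{\mathrm{id}} C$; and $\tilde{\rho}$ sends such a complex to a filtered object whose image under $\omega$ is the cone of $\mathrm{id}_C$, hence $0$. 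Therefore $\omega \tilde{\rho}(I) = 0$, so $\omega \circ \tilde{\rho}$ descends along $\sf{C}^b(\sf{C}) \to \sf{K}^b(\sf{C})$, yielding $\rho$. This is again formally the same as in \cite{beilinson}.
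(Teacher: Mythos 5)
Your proposal is correct and follows essentially the same route as the paper, which simply declares the proof ``analogous to \cite[Prop.\ 3.7]{rider}'' (itself Beilinson's construction) with Lemma \ref{lem:faithful} replacing \cite[Lem.\ 3.5]{rider}; your write-up just fills in the details of that citation, and the only step with a caveat (arbitrary bounded contractible complexes splitting requires idempotent completeness in general) is harmless, since the only contractible complex you actually use is the cone of an identity, which splits into two-term identity complexes over any additive category.
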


\begin{proof}
The proof is analogous to that of \cite[Prop.\ 3.7]{rider}, with Lemma \ref{lem:faithful} in place of \cite[Lem.\ 3.5]{rider}.
\end{proof}

\begin{rem}\label{rem:compatible}
Realization functors are compatible with other functors in the following sense.
We keep the notation above, but assume that we have two copies of each category and functor, \emph{e.g.}, $\sf{D}_1, \sf{D}_2$ in place of $\sf{D}$.
Let $\Phi : \sf{D}_1 \to \sf{D}_2$ be an exact functor that admits a canonical lift $\tilde{\Phi} : \tilde{\sf{D}}_1 \to \tilde{\sf{D}}_2$ compatible with the filtered structures on these categories in the sense of \cite{beilinson}.
In particular, this condition means $\tilde{\Phi}$ is compatible with the functors $\gr_n$.
If $\Phi$ restricts to a functor $\sf{C}_1 \to \sf{C}_2$, then there is a commutative diagram:
\begin{equation}
\begin{tikzpicture}[baseline=(current bounding box.center), >=stealth]
\matrix(m)[matrix of math nodes, row sep=2em, column sep=2.5em, text height=2ex, text depth=0.5ex]
{	\tilde{\sf{A}}_1
		&\tilde{\sf{A}}_2\\
	\sf{C}^b(\sf{C}_1)
		&\sf{C}^b(\sf{C}_2)\\
		};
\path[->,font=\scriptsize, auto]
(m-1-1)		edge node{$\tilde{\Phi}$} (m-1-2)
(m-2-1)		edge node{$\Phi$} (m-2-2)
(m-1-1)		edge node[left]{$\alpha_1$} (m-2-1)
(m-1-2)		edge node{$\alpha_2$} (m-2-2);
\end{tikzpicture}
\end{equation}
It in turn induces a commutative diagram:
\begin{equation}
\begin{tikzpicture}[baseline=(current bounding box.center), >=stealth]
\matrix(m)[matrix of math nodes, row sep=2em, column sep=3em, text height=2ex, text depth=0.5ex]
{	\sf{K}^b(\sf{C}_1)
		&\sf{K}^b(\sf{C}_2)\\
	\sf{D}_1
		&\sf{D}_2\\
		};
\path[->,font=\scriptsize, auto]
(m-1-1)		edge node{$\sf{K}^b\Phi$} (m-1-2)
(m-2-1)		edge node{$\Phi$} (m-2-2)
(m-1-1)		edge node[left]{$\rho$} (m-2-1)
(m-1-2)		edge node{$\rho$} (m-2-2);
\end{tikzpicture}
\end{equation}
So in this situation, $\Phi$ is compatible with realization.

In practice, it will be easier to check the conditions on $\Phi$ when the categories $\sf{D}_i$ take the form $\sf{D}_{G, m}^b(X_0)$ than when they take the form $\underline{\sf{D}}_{G, m}^b(X_0)$.
\end{rem}

\subsection{}

We say that a full, additive subcategory $\sf{C}(X_0) \subseteq \sf{D}_{G, m}^b(X_0)$ is \dfemph{$\SHom$-pure} iff:
\begin{enumerate}
\item
 	$\sf{C}(X_0)$ is stable under $\langle n\rangle$ for all $n \in \bb{Z}$.
\item
 	$\SHom^0(K, L)$ is concentrated in weight zero for all $K, L \in \sf{C}(X_0)$.
\end{enumerate}
These hypotheses imply that 
\begin{align}
\SHom^n(K, L) \simeq \SHom^0(K, L\langle n\rangle)(-\tfrac{n}{2})
\end{align}
is concentrated in weight $n$ for all $K, L \in \sf{C}(X_0)$ and $n \in \bb{Z}$, so $\SHom$-purity implies realizability.
The following result is inspired by \cite[Thm.\ 4.3]{rider}:

\begin{prop}\label{prop:realization}
If $\sf{C}(X_0)$ is $\SHom$-pure, then $\rho : \sf{K}^b(\underline{\sf{C}}(X_0)) \to \underline{\sf{D}}_{G, m}^b(X_0)$ induces an isomorphism of vector spaces
\begin{align}\label{eq:realization}
\Hom_{\sf{K}^b(\underline{\sf{C}}(X_0))}(\cal{K}, \cal{L}\langle n\rangle [-n]_\triangle)
\xrightarrow{\sim}
\gr_n^{\bm{w}} \SHom^0(\rho(\cal{K}), \rho(\cal{L}))
\end{align}
for all $\cal{K}, \cal{L} \in \sf{K}^b(\underline{\sf{C}}(X_0))$ and $n \in \bb{Z}$.
\end{prop}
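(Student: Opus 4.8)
The plan is to follow the strategy of \cite[Thm.\ 4.3]{rider}: reduce the statement to the case of complexes concentrated in a single degree by a dévissage, the essential point being to run the induction over \emph{all} cohomological shifts at once. First I would record the immediate consequence of $\SHom$-purity already noted in the text: for $K, L \in \sf{C}(X_0)$ the module $\SHom^m(K, L)$ is pure of weight $m$, so $\gr_n^{\bm{w}}\SHom^m(K, L)$ vanishes unless $m = n$, in which case it is all of $\SHom^m(K, L)$. Next, since $\rho$ is triangulated with $\rho([1]_\triangle) = [1]$ and restricts to the identity on $\underline{\sf{C}}(X_0)$, applying it and using the evident identification $\rho(\cal{L}\langle n\rangle[-n]_\triangle[m]_\triangle) = \rho(\cal{L})(\tfrac{n}{2})[m]$ produces, for all $m, n \in \bb{Z}$, a natural comparison map
\begin{align}
\Theta_n^m : \Hom_{\sf{K}^b(\underline{\sf{C}}(X_0))}(\cal{K}, \cal{L}\langle n\rangle[-n]_\triangle[m]_\triangle) \to \gr_n^{\bm{w}}\SHom^m(\rho(\cal{K}), \rho(\cal{L})),
\end{align}
whose case $m = 0$ is the map in the proposition. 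I would prove that $\Theta_n^m$ is an isomorphism for all $m, n$ and all $\cal{K}, \cal{L} \in \sf{K}^b(\underline{\sf{C}}(X_0))$.

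For the base case take $\cal{K} = K$ and $\cal{L} = L$ concentrated in degree zero. Then $L\langle n\rangle[-n]_\triangle[m]_\triangle$ is the complex with the single object $L\langle n\rangle$ placed in degree $n - m$, so the source of $\Theta_n^m$ vanishes for $m \neq n$, and for $m = n$ it equals $\Hom_{\underline{\sf{C}}(X_0)}(K, L\langle n\rangle) = \gr_0^{\bm{w}}\SHom^0(K, L\langle n\rangle) = \SHom^n(K, L)(\tfrac{n}{2})$, where the last equality uses $\SHom$-purity. The target of $\Theta_n^m$ is $\gr_n^{\bm{w}}\SHom^m(K, L)$, which by $\SHom$-purity vanishes for $m \neq n$ and is $\SHom^n(K, L)$ for $m = n$. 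As $\rho$ is the identity on objects concentrated in degree zero, $\Theta_n^n$ is the tautological identification of these two spaces; hence $\Theta_n^m(K, L)$ is an isomorphism for all $m, n$.

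The general case would follow by induction on the total number of nonzero terms of $\cal{K}$ and $\cal{L}$. When $\cal{L}$ has at least two nonzero terms, choose a stupid-truncation triangle $\cal{L}' \to \cal{L} \to \cal{L}'' \to \cal{L}'[1]_\triangle$ in $\sf{K}^b(\underline{\sf{C}}(X_0))$ with $\cal{L}'$ and $\cal{L}''$ having strictly fewer terms (the case where $\cal{K}$ has at least two terms is symmetric, using that the functors involved are contravariant in $\cal{K}$ but still carry triangles to long exact sequences). Applying $\Hom_{\sf{K}^b(\underline{\sf{C}}(X_0))}(\cal{K}, (-)\langle n\rangle[-n]_\triangle)$ to the triangle yields one long exact sequence; applying $\SHom^\bullet(\rho(\cal{K}), -)$ to the distinguished triangle $\rho(\cal{L}') \to \rho(\cal{L}) \to \rho(\cal{L}'') \to$ of objects of $\sf{D}_{G, m}^b(X_0)$ yields a long exact sequence of $\QL[F]$-modules, and applying the exact functor $\gr_n^{\bm{w}}$ gives the second long exact sequence. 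The maps $\Theta_n^\bullet$ constitute a morphism between these two sequences, so by the inductive hypothesis for $\cal{L}'$ and $\cal{L}''$ (at all shifts) and the five lemma, $\Theta_n^m(\cal{K}, \cal{L})$ is an isomorphism for all $m$, and in particular for $m = 0$.

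The step I expect to demand the most care is the verification that $\Theta_n^\bullet$ is genuinely a morphism of long exact sequences — i.e., that it commutes with the connecting homomorphisms — together with the surrounding bookkeeping that keeps straight the three operations $[1]_\triangle$, $[1]$, and $(\tfrac{1}{2})$, so that the parametrization of weights by the twist $\langle n\rangle[-n]_\triangle$ on the source matches the weight filtration on the target uniformly in all shifts. A secondary point to pin down is that $\rho$, being triangulated in the sense spelled out in the construction, carries the truncation triangle to an honest distinguished triangle in $\sf{D}_{G, m}^b(X_0)$, so that $\SHom^\bullet(\rho(\cal{K}), -)$ may be applied to it. Both of these are handled exactly as in the analogous places of \cite{rider} and \cite{beilinson}.
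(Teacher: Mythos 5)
Your proposal is correct and follows essentially the same route as the paper: the base case of complexes concentrated in a single degree handled via $\SHom$-purity (your uniform tracking of the extra shift $m$ in $\Theta_n^m$ is just the paper's choice of $\cal{L} = L[i]_\triangle$ in different bookkeeping), followed by induction on amplitude using brutal truncation triangles, the induced long exact sequences, and the five lemma. The compatibility points you flag at the end are likewise deferred in the paper to the constructions of \cite{rider} and \cite{beilinson}, so nothing further is needed.
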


\begin{proof}
First, suppose that $\cal{K} = K$ and $\cal{L} = L[i]_\triangle$, where $K, L \in \underline{\sf{C}}(X_0)$, viewed as objects of $\sf{K}^b(\underline{\sf{C}}(X_0))$ represented by complexes concentrated in degree zero.
In this case, the right-hand side of \eqref{eq:realization} becomes $\gr_n^{\bm{w}} \SHom^i(K, L)$.
If $i \neq n$, then
\begin{align}
\Hom_{\sf{K}^b(\underline{\sf{C}}(X_0))}(\cal{K}, \cal{L}\langle n\rangle [-n]_\triangle)
\simeq 0 \simeq \gr_n^{\bm{w}} \SHom^i(K, L),
\end{align}
where the second isomorphism uses the $\SHom$-purity of $\sf{C}(X_0)$.
If $i = n$, then 
\begin{align}\begin{split}
\Hom_{\sf{K}^b(\underline{\sf{C}}(X_0))}(\cal{K}, \cal{L}\langle n\rangle [-n]_\triangle)
&\simeq 	\Hom_{\underline{\sf{C}}(X_0)}(K, L\langle n\rangle)\\
&\simeq		\gr_0^{\bm{w}} \SHom^0(K, L\langle n\rangle)\\
&\simeq		\gr_n^{\bm{w}} \SHom^n(K, L).
\end{split}\end{align}
Now, we allow $\cal{K}$ and $\cal{L}$ to be arbitrary, and proceed by double induction on their amplitudes.
As in \cite{rider}, we only illustrate the induction on the amplitude of $\cal{K}$.

By taking (brutal) truncations of $\cal{K}$, we can build an exact triangle $\cal{K}' \to \cal{K} \to \cal{K}'' \to \cal{K}'[1]$ in which $\cal{K}', \cal{K}'' \in \sf{K}^b(\underline{\sf{C}}(X_0))$ are of strictly smaller amplitude than $\cal{K}$.
We get the following commutative diagram in which the rows are exact:
\begin{equation}
{\small
\begin{tikzpicture}[baseline=(current bounding box.center), >=stealth]
\matrix(m)[matrix of math nodes, row sep=2em, column sep=1em, text height=2ex, text depth=0.5ex]
{	{}
		&\Hom(\cal{K}', \cal{L}\langle n\rangle[-n - 1]_\triangle)
		&\Hom(\cal{K}'', \cal{L}\langle n\rangle[-n]_\triangle)
		&\Hom(\cal{K}, \cal{L}\langle n\rangle[-n]_\triangle)
		&{}\\
	{}
		&\gr_n^{\bm{w}} \SHom^{-1}(\rho(\cal{K}'), \rho(\cal{L}))
		&\gr_n^{\bm{w}} \SHom^0(\rho(\cal{K}''), \rho(\cal{L}))
		&\gr_n^{\bm{w}} \SHom^0(\rho(\cal{K}), \rho(\cal{L}))
		&{}\\
	{\cdots}
		&\Hom(\cal{K}', \cal{L}\langle n\rangle[-n]_\triangle)
		&\Hom(\cal{K}'', \cal{L}\langle n\rangle[n + 1]_\triangle)\\
	{\cdots}
		&\gr_n^{\bm{w}} \SHom^0(\rho(\cal{K}'), \rho(\cal{L}))
		&\gr_n^{\bm{w}} \SHom^1(\rho(\cal{K}''), \rho(\cal{L}))\\
		};
\path[->,font=\scriptsize, auto]
(m-1-2)		edge (m-1-3)
(m-1-3)		edge (m-1-4)
(m-1-4)		edge (m-1-5)
(m-2-2)		edge (m-2-3)
(m-2-3)		edge (m-2-4)
(m-2-4)		edge (m-2-5)
(m-3-1)		edge (m-3-2)
(m-3-2)		edge (m-3-3)
(m-4-1)		edge (m-4-2)
(m-4-2)		edge (m-4-3)
(m-1-2)		edge (m-2-2)
(m-1-3)		edge (m-2-3)
(m-1-4)		edge (m-2-4)
(m-3-2)		edge (m-4-2)
(m-3-3)		edge (m-4-3);
\end{tikzpicture}}
\end{equation}
By the inductive hypothesis, the first, second, fourth, and fifth vertical arrows are isomorphisms, so by the five lemma, the third vertical arrow is an isomorphism as well, completing the induction.
\end{proof}

Generalizing \cite[Def.\ 2.3]{rider}, we say that $\sf{C}(X_0)$ is \dfemph{Frobenius invariant} iff it is $\SHom$-pure and, strengthening hypothesis (2), $\SHom^0(K, L)$ is $F$-invariant for all $K, L$.
In this case, $\underline{\sf{C}}(X_0) \simeq \sf{C}(X_0)$.
In particular, by the commutativity of \eqref{eq:kappa-rho}, Proposition \ref{prop:realization} is actually a generalization of \cite[Thm.\ 4.3]{rider}.

\subsection{}

It is not obvious when functors between categories of the form $\sf{C}(X_0)$ can be extended to functors between categories of the form $\underline{\sf{C}}(X_0)$, because $\Hom_{\underline{\sf{C}}(X_0)}$ is defined in terms of the global sections of a sheaf.
However, we will only need this property in the case of pullback functors, where we can make use of the well-known adjunctions.
The arguments below are essentially the same as the arguments showing that the morphisms on homology/cohomology induced by pullbacks and pushforwards between $G_0$-varieties are weight-preserving, \emph{cf.}\ Section \ref{sec:steinberg}.

Namely, if $f : Y_0 \to X_0$ is an (equivariant, stratification-preserving) map of $G_0$-varieties over $\bb{F}$, then we have a composition of natural transformations of sheaf-valued bifunctors on $\sf{D}_{G, m}^b(X_0)$:
\begin{align}
\SHom(-, -) \to \SHom(-, f_\ast f^\ast(-)) \xrightarrow{\sim} f_\ast \SHom(f^\ast(-), f^\ast(-)).
\end{align}
Let $a : X_0 \to \point_0$ and $b : Y_0 \to \point_0$ be the structure maps, so that $b = a \circ f$.
Then we have $b_\ast = a_\ast f_\ast$, so applying $a_\ast$ gives:
\begin{align}\label{eq:a-to-b}
a_\ast \SHom(-, -) \to b_\ast \SHom(f^\ast(-), f^\ast(-)).
\end{align}
Applying $\gr_0^{\bm{w}}\,\ur{H}^0 \circ \xi$ gives:
\begin{align}
f^\ast : \Hom_{\underline{\sf{D}}_{G, m}^b(X_0)}(-, -) \to \Hom_{\underline{\sf{D}}_{G, m}^b(Y_0)}(f^\ast(-), f^\ast(-)).
\end{align}
We can check that these natural transformations together define a functor 
\begin{align}\label{eq:pullback}
f^\ast : \underline{\sf{D}}_{G, m}^b(X_0) \to \underline{\sf{D}}_{G, m}^b(Y_0)
\end{align}
that sends $K \mapsto f^\ast K$ at the level of objects.

\begin{lem}\label{lem:pullback}
For $f : Y_0 \to X_0$ as above, we have a commutative diagram:
\begin{equation}
\begin{tikzpicture}[baseline=(current bounding box.center), >=stealth]
\matrix(m)[matrix of math nodes, row sep=2em, column sep=3em, text height=2.5ex, text depth=0.5ex]
{	\sf{D}_{G, m}^b(X_0)
		&\sf{D}_{G, m}^b(Y_0)\\
	\underline{\sf{D}}_{G, m}^b(X_0)
		&\underline{\sf{D}}_{G, m}^b(Y_0)\\
		};
\path[->,font=\scriptsize, auto]
(m-1-1)		edge node{$f^\ast$} (m-1-2)
(m-1-1)		edge node[left]{$\kappa$} (m-2-1)
(m-1-2)		edge node{$\kappa$} (m-2-2)
(m-2-1)		edge node{$f^\ast$} (m-2-2);
\end{tikzpicture}
\end{equation}
\end{lem}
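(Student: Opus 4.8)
The plan is to reduce the claim to a single naturality statement for morphisms of mixed complexes over $\point_0$. Since $\kappa$ is the identity on objects and both ways around the square send $K \mapsto f^\ast K$, it suffices to compare the two induced maps on $\Hom$-sets. Write $b = a\circ f$ for the structure map of $Y_0$. For fixed $K, L \in \sf{D}_{G, m}^b(X_0)$, the natural transformation \eqref{eq:a-to-b} specializes to a morphism
\begin{align}
\theta_{K, L}\colon a_\ast\SHom(K, L) \longrightarrow b_\ast\SHom(f^\ast K, f^\ast L)
\end{align}
in the bounded-below derived category of mixed complexes over $\point_0$ (\emph{cf.}\ Remark \ref{rem:bounded-below}), namely the image under $a_\ast$ of the adjunction morphism $\SHom(K, L) \to f_\ast\SHom(f^\ast K, f^\ast L)$. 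Everything in sight is built from $\theta_{K, L}$: the functor $f^\ast$ on underlined categories acts on $\Hom_{\underline{\sf{D}}_{G, m}^b(X_0)}(K, L) = \gr_0^{\bm{w}}\SHom^0(K, L)$ as $\gr_0^{\bm{w}}\ur{H}^0(\xi(\theta_{K, L}))$, by its very construction, while the ordinary pullback $f^\ast\colon \Hom^n(K, L) \to \Hom^n(f^\ast K, f^\ast L)$ is --- via the $(f^\ast, f_\ast)$-adjunction and the canonical isomorphism $f_\ast\SHom(f^\ast K, f^\ast L) \simeq \SHom(K, f_\ast f^\ast L)$ --- the map on morphism groups induced by the same $\theta_{K, L}$, compatibly with the edge maps $\Hom^n(-, -) \to \SHom^n(-, -)^F$ of the exact sequence \eqref{eq:bbd}.

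Granting this, the commutativity of the square follows by concatenating three naturality facts. (i) The exact sequence \eqref{eq:bbd} is natural in the mixed complex over $\point_0$; applied to $\theta_{K, L}$, this gives a commuting square relating $f^\ast$ on $\Hom^n$ with $\ur{H}^n(\xi(\theta_{K, L}))$ through the edge maps, i.e.\ through the first arrow of \eqref{eq:c-to-underline-c}. (ii) The map $\ur{H}^0(\xi(\theta_{K, L}))$ is $F$-equivariant, being $\ur{H}^0\xi$ of a morphism defined over $\point_0$; since all Frobenius eigenvalues in play are pure of integer weight, it therefore preserves $F$-invariants and the weight filtration, hence is compatible with the inclusion $\SHom^0(-, -)^F \hookrightarrow \bm{w}_{\leq 0}\SHom^0(-, -)$ and with the projection onto $\gr_0^{\bm{w}}\SHom^0(-, -)$ --- that is, with the remaining arrows of \eqref{eq:c-to-underline-c}. (iii) By definition, the map that $\ur{H}^0(\xi(\theta_{K, L}))$ induces on $\gr_0^{\bm{w}}\SHom^0$ is the underlined functor $f^\ast$. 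Composing (i)--(iii) with the description of $\kappa$ via \eqref{eq:c-to-underline-c} shows that $\kappa\circ f^\ast$ and $f^\ast\circ\kappa$ agree on every morphism $\phi\colon K \to L$, which is the assertion.

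The one step requiring genuine care rather than formalism is the identification, in the first paragraph, of the adjunction-defined pullback on derived $\Hom$'s with the map induced by $\theta_{K, L}$: one must check that the unit $L \to f_\ast f^\ast L$ that produces $\phi \mapsto f^\ast\phi$ is exactly the natural transformation used to build \eqref{eq:a-to-b}, and that passing from internal $\SHom$ to $a_\ast\SHom$ and then to $\ur{H}^\ast\xi$ is harmless on both sides. This is precisely the bookkeeping alluded to when the paper remarks that these arguments parallel the proof that pullback maps on (co)homology are weight-preserving (\emph{cf.}\ Section \ref{sec:steinberg}); I expect it to be the main, though routine, obstacle, with all the rest reducing to the naturality of \eqref{eq:bbd} and the fact that morphisms of mixed complexes induce weight-filtered maps on cohomology.
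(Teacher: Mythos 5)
Your argument is correct and follows essentially the same route as the paper's (much terser) proof: both reduce the claim to the observation that the pullback on $\Hom$-groups and the underlined $f^\ast$ are induced by the single morphism \eqref{eq:a-to-b}, whose image under $\ur{H}^\ast\circ\xi$ is $F$-equivariant, and then conclude by the naturality/exactness of \eqref{eq:bbd} together with the description of $\kappa$ via \eqref{eq:c-to-underline-c}. Your expanded bookkeeping in the first and last paragraphs is just a fleshed-out version of what the paper leaves implicit.
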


\begin{proof}
After we apply $\ur{H}^0$ to \eqref{eq:a-to-b}, the resulting natural transformation is $F$-equivariant and commutes with shifts.
So the result follows from the exactness of \eqref{eq:bbd}.
\end{proof}

\newpage
\section{Mixed Categories in Representation Theory}\label{sec:hecke}

\subsection{}

In this section, we introduce the various choices of $X_0$ and $\sf{C}(X_0)$ that we need in the rest of the paper.
They, and the functors between them, will be summarized by the following commutative diagram:
\begin{equation}\label{eq:big}
\begin{tikzpicture}[baseline=(current bounding box.center), >=stealth]
\matrix(m)[matrix of math nodes, row sep=2.5em, column sep=4em, text height=2ex, text depth=0.5ex]
{	{}
		&\sf{K}^b(\sf{C}_{\hat{W}}(G_0))
		&\sf{K}^b(\sf{C}_{\hat{W}}(\cal{U}_0))\\
	\sf{K}^b(\sf{C}(\cal{B}_0 \times \cal{B}_0))
		&\sf{K}^b(\sf{C}(G_0))
		&\sf{K}^b(\sf{C}(\cal{U}_0))\\
	\sf{D}_{G, m}^b(\cal{B}_0 \times \cal{B}_0)
		&\sf{D}_{G, m}^b(G_0)
		&\sf{D}_{G, m}^b(\cal{U}_0)\\
		};
\path[->,font=\scriptsize, auto]
(m-1-2) 	edge node{$\sf{K}^b i^\ast \langle -r\rangle$} (m-1-3)
(m-2-2)		edge node{$\sf{K}^b i^\ast \langle -r\rangle$} (m-2-3)
(m-3-2)		edge node{$i^\ast \langle -r\rangle$} (m-3-3)
(m-1-2)		edge node[right]{$\oplus$} (m-2-2)
(m-1-3)		edge node[left]{$\oplus$} (m-2-3)
(m-2-1)		edge node{$\sf{K}^b \sf{CH}$} (m-2-2)
(m-3-1)		edge node{$\sf{CH}$} (m-3-2)
(m-2-3) 	edge node[right]{$\rho$} (m-3-3)	
(m-2-1)		edge node[left]{$\rho$} (m-3-1);
\path[->,font=\scriptsize, auto, densely dotted]
(m-2-2)		edge [bend left=50] node[left]{$\sf{K}^b\sf{PR}$} (m-1-2)
(m-2-3)		edge [bend right=50] node[right]{$\sf{K}^b\sf{PR}$} (m-1-3);
\end{tikzpicture}
\end{equation}
Note that we will not discuss the ``missing'' functor $\sf{K}^b(\sf{C}(G_0)) \to \sf{D}_{G, m}^b(G_0)$.

\subsection{}

Let $W$ be the Weyl group of $G$.
Henceforth, we fix a Coxeter presentation of $W$ (see \S\ref{subsec:coxeter-group}), so that $W$ is endowed with a Bruhat order.
As in \S\ref{subsec:representation}, we write $\hat{W}$ for the set of irreducible characters of $W$.
In particular, $1 \in \hat{W}$ is the trivial character.

It will also be convenient to set 
\begin{align}
r &= \rk_\bb{Z} \bb{X},\\
N &= \tfrac{1}{2} |\Phi|.
\end{align}
We have $\dim G = r + 2N$.

\subsection{}

Let $\cal{B}_0$ be the flag variety of $G_0$, \emph{i.e.}, the variety that parametrizes its Borel subgroups.
Let $G_0$ act on $\cal{B}_0$ by right conjugation.

We stratify $\cal{B}_0 \times \cal{B}_0$ by its diagonal $G_0$-orbits.
The poset formed by the orbits under the closure relation is isomorphic to the poset formed by $W$ under the Bruhat order.
For each $w \in W$, let
\begin{align}
j_w : O_{w, 0} \to \cal{B}_0 \times \cal{B}_0
\end{align}
denote the inclusion of the corresponding orbit.
We have $\dim O_w = |w| + N$, where $|w|$ is the Bruhat length of $w$.
Thus the (shift-twisted) intersection complex
\begin{align}
\IC_w = j_{w, !\ast}\QL\langle |w| - r - N\rangle \in \sf{D}_{G, m}^b(\cal{B}_0 \times \cal{B}_0)
\end{align}
is a simple perverse sheaf of weight zero on $[(\cal{B}_0 \times \cal{B}_0)/G_0]$.

\begin{df}
Let $\sf{C}(\cal{B}_0 \times \cal{B}_0) \subseteq \sf{D}_{G, m}^b(\cal{B}_0 \times \cal{B}_0)$ be the full, replete, additive subcategory generated by the objects $\IC_w\langle n\rangle$ with $w \in W$ and $n \in \bb{Z}$.
\end{df}

\begin{lem}[Achar--Riche]\label{lem:frob-invariance-bruhat}
$\sf{C}(\cal{B}_0 \times \cal{B}_0)$ is Frobenius-invariant, hence $\SHom$-pure.
\end{lem}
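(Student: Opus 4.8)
The plan is to reduce the statement to known facts about parity sheaves and the combinatorics of the Bruhat stratification of $\cal{B}_0 \times \cal{B}_0$. Recall that to prove $\SHom$-purity (and actually Frobenius invariance) of $\sf{C}(\cal{B}_0 \times \cal{B}_0)$, we must check two things: first, that the category is stable under $\langle n \rangle$, which is immediate from the definition since it is generated by all shift-twists $\IC_w \langle n \rangle$; and second, that $\SHom^0(\IC_v \langle a \rangle, \IC_w \langle b \rangle)$ is pure of weight zero and $F$-invariant for all $v, w \in W$ and $a, b \in \bb{Z}$. Since $\SHom^0(\IC_v\langle a\rangle, \IC_w\langle b\rangle) \simeq \SHom^{a-b}(\IC_v, \IC_w)(\tfrac{b-a}{2})$, it suffices to understand the graded vector space $\SHom^\ast(\IC_v, \IC_w)$ together with its Frobenius action, and to show that $\SHom^n(\IC_v, \IC_w)$ is pure of weight $n$ with $F$ acting by $q^{n/2}$.

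First I would invoke the theory of parity complexes in the equivariant constructible derived category of $[(\cal{B}_0 \times \cal{B}_0)/G_0]$, in the sense of Juteau--Mautner--Williamson, and its mixed refinement as developed by Achar--Riche. The orbits $O_{w,0}$ are affine spaces (or at least admit affine pavings compatible with the stratification), so the $\IC_w$ are parity sheaves, and the key point is that $\Hom$-spaces between parity sheaves are "pure" in the appropriate sense: the relevant $\Ext$-groups are concentrated in a single degree matching the shift, and the Frobenius acts by the expected power of $q$. Concretely, Achar--Riche show that the mixed derived category here is "graded/mixed" in a way that forces $\SHom^\ast(\IC_v, \IC_w) = \bigoplus_n \SHom^n(\IC_v, \IC_w)$ with $\SHom^n$ pure of weight $n$; this is exactly condition (2) of $\SHom$-purity after a twist, and the $F$-invariance of $\SHom^0$ upgrades it to Frobenius invariance. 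I would cite the relevant statement from Achar--Riche (e.g.\ their work on modular perverse sheaves on flag varieties, or the mixed version in "Reductive groups, the loop Grassmannian, and the Springer resolution" / "Dualité de Koszul formelle et théorie des représentations des groupes algébriques réductifs"), specialized to characteristic-zero coefficients $\QL$ where all $\IC_w$ are parity.

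The cleanest route, and the one I expect the authors to take, is to reduce directly to the Soergel-bimodule description: under the equivalence between $\sf{C}(\cal{B}_0 \times \cal{B}_0)$ and the category of Soergel bimodules over $\Sym(\bb{V})$ (with $\IC_w$ corresponding to the indecomposable Soergel bimodule $B_w$), the graded $\Hom$-spaces become $\Hom^\bullet_{\mathrm{SBim}}(B_v, B_w)$, which are free graded modules over the polynomial ring and in particular are generated in the correct degrees with no "extra" $\Ext$. Soergel's Hom-formula, together with the fact (Soergel, Elias--Williamson) that morphism spaces between indecomposable Soergel bimodules are concentrated in non-negative even degrees with Betti numbers matching the Kazhdan--Lusztig combinatorics, then gives precisely that $\SHom^n(\IC_v, \IC_w)$ vanishes unless $n$ has the right parity, is pure of weight $n$, and carries the Tate (i.e.\ $q^{n/2}$-eigenvalue) Frobenius action. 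That $F$ acts by a \emph{scalar} power of $q$ — not just by eigenvalues of that weight — is what yields Frobenius invariance rather than mere $\SHom$-purity.

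The main obstacle is bookkeeping the Frobenius/weight structure carefully enough to get the \emph{scalar} statement: it is not enough that $\SHom^n(\IC_v, \IC_w)$ be pure of weight $n$; one needs the eigenvalue to be exactly $q^{n/2}$ so that $\SHom^0$ is genuinely $F$-fixed and the functor $\kappa$ becomes an equivalence. This is where one must use that the stratification by $G_0$-orbits is by \emph{affine spaces defined over $\bb{F}$} (so their cohomology is Tate with Frobenius acting by powers of $q$), and that the intersection cohomology stalks of the $\IC_w$ are Tate — a fact that follows from the parity/KL-positivity theory in this classical setting. I would handle this by citing the Achar--Riche result directly, since it is precisely engineered to package this; the only thing to verify by hand is that our $\sf{C}(\cal{B}_0 \times \cal{B}_0)$, generated as a \emph{replete additive} subcategory by the $\IC_w\langle n\rangle$, coincides with (the relevant additive subcategory of) the mixed category they consider, which is immediate from the definitions. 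Hence the attribution to Achar--Riche in the statement of Lemma~\ref{lem:frob-invariance-bruhat}.
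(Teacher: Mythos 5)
Your overall strategy is the paper's: the proof is just a citation of the Frobenius-invariance result of Achar--Riche \cite[Lem.\ 7.8]{ar} for affine even stratifications (note the relevant paper is \emph{Koszul Duality and Semisimplicity of Frobenius}, not the modular/loop-Grassmannian works you name). But your verification of the hypothesis has a genuine flaw: the $G_0$-orbits $O_{w,0} \subseteq \cal{B}_0 \times \cal{B}_0$ are \emph{not} affine spaces --- $O_w$ has dimension $|w| + N$ and fibers over $\cal{B}$ with affine fibers, so the stratification of $\cal{B}_0 \times \cal{B}_0$ itself is not affine even, and \cite[Lem.\ 7.8]{ar} does not apply to it directly. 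The paper's fix, which your write-up is missing, is to pass to the presentation $[(\cal{B}_0 \times \cal{B}_0)/G_0] \simeq [B_0\backslash G_0/B_0]$: the stratification of the coset space $B_0\backslash G_0$ by right $B_0$-orbits (Bruhat double cosets) \emph{is} affine even, and one then observes that Achar--Riche's argument, stated for varieties, extends to the quotient stacks covered by Sun's theory \cite{sun}. Both of these points --- the change of presentation and the stacky extension --- are the actual content of the paper's two-sentence proof, and neither is addressed by your hedge ``or at least admit affine pavings compatible with the stratification,'' which is not the hypothesis of the cited lemma.

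Your proposed ``cleanest route'' through Soergel bimodules also cannot stand on its own: Soergel's Hom-formula computes the graded dimensions of $\SHom^\ast(\IC_v, \IC_w)$ but the bimodule category carries no Frobenius, so it cannot produce the statement that $F$ acts on $\SHom^n$ by the scalar $q^{n/2}$, which is exactly what Frobenius invariance (as opposed to mere $\SHom$-purity) requires. You concede this in your last paragraph and fall back on citing Achar--Riche anyway, so the detour is harmless but adds nothing; the load-bearing step remains the affine-even-stratification input, and that is where the reduction to $[B_0\backslash G_0/B_0]$ must be supplied.
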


\begin{proof}
\cite[Lem.\ 7.8]{ar} proves a Frobenius-invariance statement for any variety equipped with an affine even stratification.
Their argument generalizes to the stacks in \cite{sun} as long as the same condition holds on the stratification.

Fix a Borel $B_0 \subseteq G_0$.
The stratification of the coset space $B_0\backslash G_0$ into right $B_0$-orbits, \emph{i.e.}, into double cosets, is affine even.
We have an isomorphism 
\begin{align}
[B_0\backslash G_0/B_0] \simeq [(\cal{B}_0 \times \cal{B}_0)/G_0],
\end{align}
so the stacky version of \cite[Lem.\ 7.8]{ar} implies the desired result.
\end{proof}

\subsection{}

The \dfemph{convolution} on $\sf{D}_{G, m}^b(\cal{B}_0 \times \cal{B}_0)$ is the monoidal product
\begin{align}
(-) \star (-) : \sf{D}_{G, m}^b(\cal{B}_0 \times \cal{B}_0) \times \sf{D}_{G, m}^b(\cal{B}_0 \times \cal{B}_0) \to \sf{D}_{G, m}^b(\cal{B}_0 \times \cal{B}_0)
\end{align}
defined by
\begin{align}
K \star L = \pr_{1, 3, \ast}(\pr_{1, 2} \times \pr_{2, 3})^\ast (K \boxtimes L)\langle r + N\rangle,
\end{align}
where $\pr_{1, 2}, \pr_{1, 3}, \pr_{2, 3} : \cal{B}_0^3 \to \cal{B}_0^2$ are the three projection maps.

We claim that $\star$ is weight-preserving by the results of \cite{deligne_1980}.
Indeed, $\pr_{1, 3}$ is proper, and it can be checked that the map of stacks $[\cal{B}_0^3/G_0] \to [\cal{B}_0^2/G_0] \times [\cal{B}_0^2/G_0]$ induced by $\pr_{1, 2} \times \pr_{2, 3}$ is smooth of relative dimension $\dim G - \dim \cal{B} = r + N$.
More strongly, we have \cite[Prop.\ 3.2.5]{by}:

\begin{lem}[Bezrukavnikov--Yun]
The convolution $\star$ restricts to a monoidal product on $\sf{C}(\cal{B}_0 \times \cal{B}_0)$.
\end{lem}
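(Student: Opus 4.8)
The plan is to reduce, using additivity and bilinearity of $\star$ together with the identity $K\langle a\rangle \star L\langle b\rangle \simeq (K \star L)\langle a+b\rangle$, to the statement that $\IC_w \star \IC_{w'} \in \sf{C}(\cal{B}_0 \times \cal{B}_0)$ for every pair $w, w' \in W$ (the monoidal unit is a shift-twist of the intersection complex of the diagonal orbit, hence already a generator, so only closure under $\star$ is at issue). By the weight-preservation of $\star$ noted just above, $\IC_w \star \IC_{w'}$ is pure of weight zero. The first substantive step is to apply the decomposition theorem over $\bar{\bb{F}}$: the pullback $\xi(\IC_w \star \IC_{w'})$ is a finite direct sum of objects $\IC_v[n]$, where in principle the summands could be intersection complexes of irreducible $G$-equivariant local systems on the orbits $O_v$; but $[O_{v}/G]$ is the classifying stack of the group $B_0 \cap {}^{\dot v}B_0$ (for a suitable representative $\dot v$), which is connected, so every such local system is constant. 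Hence the only simple $G$-equivariant perverse sheaves entering the decomposition are the $\IC_v$ up to shift, and $\xi(\IC_w \star \IC_{w'}) \simeq \bigoplus_k \IC_{v_k}\langle n_k\rangle$ geometrically.

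The second step is to promote this to a direct-sum decomposition in the \emph{mixed} category $\sf{D}_{G, m}^b(\cal{B}_0 \times \cal{B}_0)$. Since $\IC_w\star\IC_{w'}$ is pure of weight zero, the canonical splitting $K \simeq \bigoplus_i {}^p\mathcal{H}^i(K)[-i]$ holds over $\bb{F}$, reducing us to showing that each pure $G_0$-equivariant perverse sheaf of weight $i$ whose geometric pullback is constant-coefficient is, over $\bb{F}$, a direct sum of shift-twists of the $\IC_{v}$. This is exactly the point where the affine-even structure of the Bruhat stratification enters: the objects $\IC_w, \IC_{w'}$ are of Tate type with Frobenius acting semisimply, a property preserved by the external product, the smooth pullback along $\pr_{1,2}\times\pr_{2,3}$, and the proper pushforward along $\pr_{1,3}$ that together define $\star$; so $\IC_w\star\IC_{w'}$ carries no unipotent Frobenius blocks, and the geometric decomposition lifts. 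Concretely, one invokes the same machinery as in Lemma \ref{lem:frob-invariance-bruhat} (the stacky form of \cite[Lem.\ 7.8]{ar}) for the stratified stack $[(\cal{B}_0 \times \cal{B}_0)/G_0] \simeq [B_0\backslash G_0 / B_0]$. Since $\langle 1\rangle = [1](\tfrac{1}{2})$ is weight-preserving, the weight then pins down the normalization, and we obtain $\IC_w\star\IC_{w'} \simeq \bigoplus_k \IC_{v_k}\langle n_k\rangle$ in $\sf{D}_{G, m}^b(\cal{B}_0 \times \cal{B}_0)$, i.e., an object of $\sf{C}(\cal{B}_0 \times \cal{B}_0)$.

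The main obstacle is precisely this second step: the purely geometric content (decomposition theorem plus connectedness of $B_0 \cap {}^{\dot v}B_0$) is routine, but controlling the Frobenius structure so that nothing outside $\sf{C}(\cal{B}_0 \times \cal{B}_0)$ appears requires the affine-even/Tate-type input, and one must also check that the decomposition theorem, Künneth, and the weight-stability of smooth pullback and proper pushforward are available in the equivariant mixed setting on Artin stacks with affine stabilizers — which is supplied by \cite{sun} and the framework of Section \ref{sec:mixed}. A minor bookkeeping point is verifying the relative dimension $r + N$ of the smooth morphism $[\cal{B}_0^3/G_0] \to [\cal{B}_0^2/G_0]\times[\cal{B}_0^2/G_0]$, so that the twist $\langle r+N\rangle$ makes $\star$ exactly weight-preserving and the resulting normalizations $n_k$ are integral.
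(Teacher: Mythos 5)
Your first step (geometric semisimplicity via the decomposition theorem plus connectedness of the stabilizers $B_0 \cap {}^{\dot v}B_0 \dot v^{-1}$) is the classical part, essentially \cite[Prop.\ 2.13]{springer_1982}, and is fine. The gap is in the second step, which is exactly where the actual content of the lemma lives. Purity of $\IC_w \star \IC_{w'}$ does \emph{not} give a splitting $K \simeq \bigoplus_i {}^p\cal{H}^i(K)[-i]$ over $\bb{F}$: the decomposition theorem for pure complexes \cite[Thm.\ 5.4.5]{bbd} only holds after pullback to $\bar{\bb{F}}$. Over $\bb{F}$, a pure perverse sheaf is in general a sum of objects $E \otimes M$ with $M$ a unipotent Jordan block \cite[Prop.\ 5.3.9]{bbd}, and nonsplit extensions genuinely occur because, by \eqref{eq:bbd}, $\Hom^1(E, E) \supseteq \SHom^0(E, E)_F \neq 0$ for $E$ simple. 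So your intermediate claim --- that a pure $G_0$-equivariant perverse sheaf whose geometric pullback is a sum of $\IC_v$'s is automatically a sum of shift-twists of $\IC_v$'s over $\bb{F}$ --- is false as stated, and ruling out the unipotent blocks is the whole point (this is precisely what Remark \ref{rem:by} identifies as the force of the lemma).

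Your proposed mechanism for ruling them out --- that ``Frobenius acting semisimply'' is preserved by the external product, the smooth pullback, \emph{and the proper pushforward} $\pr_{1,3,\ast}$ --- is not a formal property of the six operations; Frobenius semisimplicity of $Rf_\ast$ of a Frobenius-semisimple complex is unknown in general, so asserting it here assumes what must be proved. In this setting it does hold, but only because of a pointwise purity/parity argument: e.g.\ one realizes $\IC_w \star \IC_{w'}$ as a direct summand of a Bott--Samelson convolution $\IC_{s_1} \star \cdots \star \IC_{s_\ell}$, a proper pushforward whose fibers are paved by affine spaces, so that all stalks are even and Tate with Frobenius acting by integer powers of $q^{\frac{1}{2}}$, and then one checks that such pointwise-pure weight-zero objects split over $\bb{F}_0$ into shift-twists of the $\IC_v$. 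That Frobenius-enriched refinement of Springer's argument is exactly \cite[Prop.\ 3.2.5]{by}, which the paper cites rather than reproves. Citing Lemma \ref{lem:frob-invariance-bruhat} (the stacky \cite[Lem.\ 7.8]{ar}) does not close the gap either: that statement concerns Frobenius-invariance of $\SHom^0$ between the generators of $\sf{C}(\cal{B}_0 \times \cal{B}_0)$, not semisimplicity of the mixed object $\IC_w \star \IC_{w'}$ itself.
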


\begin{rem}\label{rem:by}
Let $\sf{C}(\cal{B} \times \cal{B})$ be the the essential image of $\sf{C}(\cal{B}_0 \times \cal{B}_0)$ in $\sf{D}_G^b(\cal{B} \times \cal{B})$.
Prior to \cite{by}, it was known that the convolution on $\sf{D}_G^b(\cal{B} \times \cal{B})$ preserves the class of objects in $\sf{C}(\cal{B} \times \cal{B})$, \emph{cf.}\ \cite[Prop.\ 2.13]{springer_1982}. 
As explained in \cite[Rem.\ 3.2.6]{by}, the force of the lemma is to assert that for all $K, L \in \sf{C}(\cal{B}_0 \times \cal{B}_0)$, the object $K \star L$ is semisimple \emph{before} its pullback to $\cal{B} \times \cal{B}$.
The proof involves developing a Frobenius-enriched version of some of the results of \cite{springer_1982}.
\end{rem}

The convolution on $\sf{C}(\cal{B}_0 \times \cal{B}_0)$ induces one on $\sf{K}^b(\sf{C}(\cal{B}_0 \times \cal{B}_0))$.
In the case where \emph{$G$ is semisimple}, we define the \dfemph{Hecke category} of $W$ to be the resulting monoidal triangulated category, and abbreviate
\begin{align}\begin{split}
\sf{H}_W
&= \sf{K}^b(\sf{C}(\cal{B}_0 \times \cal{B}_0)).
\end{split}\end{align}
By Remark \ref{rem:compatible}, the convolution on $\sf{H}_W$ is compatible with realization functors in the sense that the diagram
\begin{equation}\label{eq:convolution-realization}
\begin{tikzpicture}[baseline=(current bounding box.center), >=stealth]
\matrix(m)[matrix of math nodes, row sep=2em, column sep=2em, text height=2ex, text depth=0.5ex]
{	\sf{H}_W \times \sf{H}_W
		&\sf{H}_W\\
	\sf{D}_{G, m}^b(\cal{B}_0 \times \cal{B}_0) \times \sf{D}_{G, m}^b(\cal{B}_0 \times \cal{B}_0)
		&\sf{D}_{G, m}^b(\cal{B}_0 \times \cal{B}_0)\\
		};
\path[->,font=\scriptsize, auto]
(m-1-1)		edge node{$\star$} (m-1-2)
(m-2-1)		edge node{$\star$} (m-2-2)
(m-1-1)		edge node[left]{$\rho \times \rho$} (m-2-1)
(m-1-2)		edge node{$\rho$} (m-2-2);
\end{tikzpicture}
\end{equation}
commutes.

\begin{rem}
This definition of the Hecke category originates in work of Beilinson, Ginzburg, Soergel, and others on Koszul duality patterns in the geometry of mixed complexes.
See \cite[\S{4}]{bgs} and the references therein.
\end{rem}

\subsection{}\label{subsec:decat}

Let $H_W$ be the Iwahori--Hecke algebra of $W$ (see \S\ref{subsec:hecke}).
Below, we review how $\sf{H}_W$ decategorifies to $H_W$.

If $\sf{A}$ is an additive category, then we write $[\sf{A}]_\oplus$ for its \emph{split} Grothendieck group.
The elements of $[\sf{A}]_\oplus$ are the isomorphism classes $[K]$ of the objects $K$, and the relations are $[K \oplus L] = [K] + [L]$ for all objects $K, L$.
If $\sf{A}$ is given a monoidal product $\star$, then $[\sf{A}]_\oplus$ forms a ring under the multiplication $[K] \cdot [L] = [K \star L]$.

We endow $[\sf{C}(\cal{B}_0 \times \cal{B}_0)]_\oplus$ with the action of $\bb{Z}[\Q^{\pm\frac{1}{2}}]$ in which $\Q^{\frac{1}{2}}$ acts by $\langle -1\rangle$.
For each $K \in \sf{C}(\cal{B}_0 \times \cal{B}_0)$, let
\begin{align}
\chi_w(K) = \sum_{i, j}
{(-1)^{i + j}} \Q^{\frac{j}{2}} \dim \gr_j^{\bm{w}} \ur{H}^i(j_w^\ast K),
\end{align}
the virtual weight polynomial of the stalk of $K$ over $[O_{w, 0}/G]$.
Thm.\ 2.8 of \cite{springer_1982} implies if $G$ is \emph{semisimple}, then the map
\begin{align}
\begin{array}{rcl}
{[\sf{C}(\cal{B}_0 \times \cal{B}_0)]_\oplus} &\to &H_W\\
{[K]} &\mapsto &\displaystyle\sum_{w \in W} \Q^{\frac{|w|}{2}} \sigma_w \chi_w(K)
\end{array}
\end{align}
is a morphism of $\bb{Z}[\Q^{\pm \frac{1}{2}}]$-algebras.
In fact, it is an isomorphism, and if $\gamma_w$ is the image of $[\IC_w]$, then the set $\{\gamma_w\}_{w \in W}$ forms a free $\bb{Z}[\Q^{\pm\frac{1}{2}}]$-linear basis of $H_W$, \emph{cf.}\ \cite[407]{ww_2011}.
The element $\gamma_w$ is called the \dfemph{Kazhdan--Lusztig element} attached to $w$.

If $\sf{T}$ is a triangulated (additive) category, then we write $[\sf{T}]_\triangle$ for its \emph{triangulated} Grothendieck group.
The elements of $[\sf{T}]_\triangle$ are the isomorphism classes of the objects of $\sf{T}$, and the relations are $[\cal{K}] = [\cal{K}'] + [\cal{K}'']$ for each exact triangle $\cal{K}' \to \cal{K} \to \cal{K}'' \to \cal{K}'[1]$ in $\sf{T}$.

If $\sf{A}$ is any additive category, then there is an isomorphism $[\sf{K}^b(\sf{A})]_\triangle \xrightarrow{\sim} [\sf{A}]_\oplus$ that sends any complex to the alternating sum of its terms \cite{rose}.
In particular,
\begin{align}\begin{split}
[\sf{H}_W]_\triangle 
&\simeq [\sf{C}(\cal{B}_0 \times \cal{B}_0)]_\oplus.
\end{split}\end{align}
Since the convolutions on these categories are compatible with each other, this is an isomorphism of rings, not just groups.
We endow $[\sf{H}_W]_\triangle$ with the action of $\bb{Z}[\Q^{\pm\frac{1}{2}}]$ in which $\Q^{\frac{1}{2}}$ acts by $\langle -1\rangle$.
Then we obtain a $\bb{Z}[\Q^{\pm\frac{1}{2}}]$-algebra isomorphism $[\sf{H}_W]_\triangle \xrightarrow{\sim} H_W$ that again sends $[\IC_w] \mapsto \gamma_w$.

\begin{rem}\label{rem:iwahori}
Recall that $q^{\frac{1}{2}} \in \QL^\times$ is a fixed square root of $q = |\bb{F}|$.
In \cite{iwahori}, Iwahori gave an isomorphism of $\QL$-algebras:
\begin{align}\label{eq:iwahori}
\QL \otimes H_W|_{\Q^{1/2} = q^{1/2}} \simeq \End_{G^F}(\QL[\cal{B}^F]).
\end{align}
The isomorphism $H_W \simeq [\sf{C}(\cal{B}_0 \times \cal{B}_0)]_\oplus$ is an integral version in the following sense.
We have a commutative diagram
\begin{equation}
\begin{tikzpicture}[baseline=(current bounding box.center), >=stealth]
\matrix(m)[matrix of math nodes, row sep=2em, column sep=2.5em, text height=2ex, text depth=0.5ex]
{	{[\sf{C}(\cal{B}_0 \times \cal{B}_0)]_\oplus}
		&H_W\\
	\bb{Z}[q^{\pm\frac{1}{2}}][\cal{B}^F \times \cal{B}^F]^{G^F}
		&H_W|_{\Q^{1/2} = q^{1/2}}\\
		};
\path[->,font=\scriptsize, auto]
(m-1-1)		edge node{$\sim$}(m-1-2)
(m-1-1)		edge (m-2-1)
(m-1-2)		edge (m-2-2)
(m-2-1)		edge node{$\sim$} (m-2-2);
\end{tikzpicture}
\end{equation}
where the left-hand arrow is induced by the function-sheaf dictionary.
Iwahori's isomorphism can be factored as
\begin{align}
\End_{G^F}(\QL[\cal{B}^F]) \xrightarrow{\sim} \QL[\cal{B}^F \times \cal{B}^F]^{G^F} \xrightarrow{\sim} \QL \otimes H_W|_{\Q^{1/2} = q^{1/2}},
\end{align}
where the second arrow is induced by the bottom arrow of the diagram.
\end{rem}

\subsection{}

Let $G_0$ act on itself by right conjugation.
There is a $G_0$-stable stratification of $G_0$ by locally-closed subvarieties \cite[\S{3.1}]{lusztig_1984_intersection} \cite[\S{3.11}]{lusztig_1985_1}, now known as Jordan classes.
Let 
\begin{align}
\act : G_0 \times \cal{B}_0 &\to \cal{B}_0 \times \cal{B}_0,\\
\pr : G_0 \times \cal{B}_0 &\to G_0
\end{align}
be defined by 
\begin{align}
\act(g, B) &= (g^{-1}Bg, B),\\
\pr(g, B) &= g.
\end{align}
The maps above are $G_0$-equivariant with respect to the diagonal $G_0$-actions on $G_0 \times \cal{B}_0$ and $\cal{B}_0 \times \cal{B}_0$.
Moreover, we can stratify $G_0 \times \cal{B}_0$ so that $\act$, \emph{resp.}\ $\pr$, is stratification-preserving with respect to the Bruhat stratification of $\cal{B}_0 \times \cal{B}_0$, \emph{resp.}\ the Jordan stratification of $G_0$.

Following \cite{lusztig_1985_1}, we define the \dfemph{character functor}
\begin{align}
\sf{CH} : \sf{D}_{G, m}^b(\cal{B}_0 \times \cal{B}_0) \to \sf{D}_{G, m}^b(G_0)
\end{align}
to be the composition
\begin{align}\label{eq:horocycle-transform}\begin{split}
\sf{CH} = \bigoplus_i {}^p\cal{H}^i[-i] \circ \pr_\ast\, \act^!\langle r + N\rangle.
\end{split}\end{align}
Since $\act$ is smooth of relative dimension $r + N$ and $\pr$ is proper, $\sf{CH}$ preserves the class of pure complexes of weight zero \cite{deligne_1980}.
We define a \dfemph{mixed unipotent character sheaf} to be a simple perverse sheaf $E \in \sf{D}_{G, m}^b(G_0)$ such that $E\langle -i\rangle$ is a subquotient of $\sf{CH}(\IC_w)$, \emph{resp.}\ $\xi\sf{CH}(\IC_w)$, for some $w \in W$ and $i \in \bb{Z}$.
This means $E$ is also pure of weight zero.

Recall that by \cite[Prop.\ 5.3.9]{bbd}, every indecomposable pure perverse sheaf takes the form $E \otimes M$, where $E$ is a simple perverse sheaf and $M$ is the pullback of a sheaf on $\point_0$ that corresponds to a $\QL[F]$-module on which $F$ acts by a single unipotent Jordan block.
Thus, a simple perverse sheaf $E \in \sf{D}_{G, m}^b(G_0)$ is a mixed unipotent character sheaf if and only if $E\langle -i\rangle \otimes M$ occurs as a direct summand of $\sf{CH}(\IC_w)$ for some $w, i, M$ as above.

\begin{df}
Let $\sf{C}(G_0) \subseteq \sf{D}_{G, m}^b(G_0)$ be the full, replete, additive subcategory generated by the objects $E \otimes M$, where $E$ is a mixed unipotent character sheaf and $M$ is the pullback of a pure object of $\sf{D}_{G, m}^b(\point_0)$ of weight zero.
\end{df}

\begin{rem}
The objects $\xi \sf{CH}(\IC_w) \in \sf{D}_G^b(G)$ coincide with the objects $\bar{K}_w^\cal{L}$ studied by Lusztig in \cite{lusztig_1985_3} when $\cal{L}$ is the trivial local system on a fixed maximal torus of $G$.
They also coincide with the objects $\bb{K}_w$ studied by Webster--Williamson in \cite{ww_2011} up to a shift-twist.
\end{rem}

\subsection{}

The character functor is a monoidal trace on $\sf{D}_{G, m}^b(\cal{B}_0 \times \cal{B}_0)$ in the following sense.
Abbreviate
\begin{align}
\sf{CH}(- \star -) \vcentcolon= \sf{CH} \circ ((-) \star (-)) : \sf{D}_{G, m}^b(\cal{B}_0 \times \cal{B}_0)^2 \to \sf{D}_{G, m}^b(G_0).
\end{align}
The result below can be deduced from a paper of Lusztig \cite{lusztig_2015}, identifying the monoidal center of $\sf{D}_G^b(\cal{B} \times \cal{B})$ with a certain category of unmixed unipotent character sheaves, but we give a self-contained proof.

\begin{lem}\label{lem:monoidal-trace}
We have ${\sf{CH}(- \star -) \circ (\pr_2 \times \pr_1)^\ast}\, \simeq \,{\sf{CH}(- \star -)}$, where $\pr_2 \times \pr_1$ is the involution of $[(\cal{B}_0 \times \cal{B}_0)/G_0]^2$ that swaps the factors.
\end{lem}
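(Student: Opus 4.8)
The plan is this. Precomposing the functor $(K, L) \mapsto \sf{CH}(K \star L)$ with $(\pr_2 \times \pr_1)^\ast$ sends the pair $(K, L)$ to $(L, K)$, since $(\pr_2 \times \pr_1)^\ast(K \boxtimes L) = L \boxtimes K$; so it suffices to build a natural isomorphism $\sf{CH}(K \star L) \simeq \sf{CH}(L \star K)$. I would realize both sides as a single push--pull through one auxiliary correspondence, and then exhibit an automorphism of that correspondence which interchanges $K$ and $L$, using $G_0$-equivariance to absorb a stray conjugation. The heuristic guiding the choice of automorphism is the function--sheaf picture: the class function attached to $\sf{CH}(K \star L)$ at $g \in G_0$ is $\sum_{B, B'} K(g^{-1} B g, B')\, L(B', B)$, and the substitution $(B, B') \mapsto (B', g^{-1} B g)$ turns this into $\sum_{B, B'} L(g^{-1} B g, B')\, K(B', B)$ after invoking the $G_0$-invariance of $K$.

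Concretely, I would set $W = G_0 \times \cal{B}_0 \times \cal{B}_0$ with the diagonal conjugation action and introduce the $G_0$-equivariant maps $p : W \to G_0$ and $a_K, a_L : W \to \cal{B}_0 \times \cal{B}_0$ given by $p(g, B, B') = g$, $a_K(g, B, B') = (g^{-1} B g, B')$, and $a_L(g, B, B') = (B', B)$. Unwinding the definitions of $\star$ and $\sf{CH}$, using that $\act$ is smooth of relative dimension $r + N$ so that $\act^! \simeq \act^\ast\langle 2(r + N)\rangle$, and applying proper base change along the proper map $\pr_{1, 3} : \cal{B}_0^3 \to \cal{B}_0^2$, one identifies the relevant iterated fiber product with $W$ --- the point $(g, B, B')$ recording the triple $(B_1, B_2, B_3) = (g^{-1} B g, B', B)$ --- and obtains a natural isomorphism
\begin{align}
\sf{CH}(K \star L) \simeq \bigoplus_i {}^p\cal{H}^i[-i]\, p_\ast\bigl(a_K^\ast K \otimes a_L^\ast L\bigr)\langle m\rangle,
\end{align}
with $m = 4(r + N)$ independent of the order of the two arguments. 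Swapping $K$ and $L$ gives the same formula for $\sf{CH}(L \star K)$ with $a_K^\ast L \otimes a_L^\ast K$ in place of $a_K^\ast K \otimes a_L^\ast L$.

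Next I would introduce the $G_0$-equivariant isomorphism $\sigma : W \to W$, $\sigma(g, B, B') = (g, B', g^{-1} B g)$, whose inverse is $(g, B, B') \mapsto (g, g B' g^{-1}, B)$ and which satisfies $p \circ \sigma = p$. A direct check shows $a_L \circ \sigma = a_K$ exactly, while $a_K \circ \sigma$ equals the composite $W \xrightarrow{(p,\, a_L)} G_0 \times (\cal{B}_0 \times \cal{B}_0) \xrightarrow{\alpha} \cal{B}_0 \times \cal{B}_0$, where $\alpha$ is the (diagonal conjugation) action map. Hence for any $G_0$-equivariant $\cal{F}$ the equivariance isomorphism $\alpha^\ast \cal{F} \simeq \pr_{\cal{B}_0^2}^\ast \cal{F}$ pulls back to a natural isomorphism $(a_K \sigma)^\ast \cal{F} \simeq a_L^\ast \cal{F}$. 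Combining these, $\sigma^\ast(a_K^\ast K \otimes a_L^\ast L) \simeq a_L^\ast K \otimes a_K^\ast L$, and since $\sigma$ is an isomorphism over $G_0$ this pushes down to $p_\ast(a_K^\ast K \otimes a_L^\ast L) \simeq p_\ast(a_K^\ast L \otimes a_L^\ast K)$, naturally in $K$ and $L$. Applying $\bigoplus_i {}^p\cal{H}^i[-i](-)\langle m\rangle$ together with the two displays above then yields $\sf{CH}(K \star L) \simeq \sf{CH}(L \star K)$.

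I expect the only genuine difficulty to lie in the middle step: correctly identifying the iterated fiber product with $W$, tracking the direction of the base-change isomorphism and all the Tate twists, and carrying this out $G_0$-equivariantly in the mixed lisse--\'etale stack setting (legitimate by \cite{sun}). Once $W$ and the maps $p, a_K, a_L$ are in hand, the symmetry $\sigma$ and the remainder of the argument are essentially forced and use nothing beyond the equivariant structures already available.
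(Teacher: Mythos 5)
Your proof is correct and follows essentially the same route as the paper: you realize $\sf{CH}(-\star-)$ as a push--pull through the correspondence $G_0 \times \cal{B}_0 \times \cal{B}_0$ and your automorphism $\sigma(g,B,B') = (g, B', g^{-1}Bg)$ is exactly the paper's $\Xi$ (up to relabeling the two flag coordinates). Your use of the equivariance isomorphism $\alpha^\ast\cal{F} \simeq \pr^\ast\cal{F}$ to absorb the conjugation is precisely the point the paper flags by saying one must work with orbits rather than orbit representatives.
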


\begin{proof}
Observe that $\sf{CH}(- \star -)$ is induced by pullback and pushforward along the solid arrows in the diagram below:
\begin{equation}
\begin{tikzpicture}[baseline=(current bounding box.center), >=stealth]
\matrix(m)[matrix of math nodes, row sep=2.5em, column sep=2.5em, text height=2ex, text depth=0.5ex]
{	{[(G_0 \times \cal{B}_0 \times \cal{B}_0)/G_0]}
		&{[(G_0 \times \cal{B}_0)/G_0]}
		&{[G_0/G_0]}\\
	{[(\cal{B}_0 \times \cal{B}_0 \times \cal{B}_0)/G_0]}
		&{[(\cal{B}_0 \times \cal{B}_0)/G_0]}
		&\\
	{[(\cal{B}_0 \times \cal{B}_0)/G_0]^2}\\
		};
\path[->,font=\scriptsize, auto]
(m-1-2)		edge node[above]{$\pr$}( m-1-3)
			edge node[right]{$\act$} (m-2-2)
(m-2-1)		edge node[below]{$\pr_{1,3}$} (m-2-2)
			edge node[left]{$\pr_{1,2} \times \pr_{2,3}$} (m-3-1);
\path[->,font=\scriptsize, densely dotted]
(m-1-1)		edge (m-1-2)
			edge (m-2-1);
\end{tikzpicture}
\end{equation}
Let the square in the diagram be cartesian.
By proper base change across that square, we see that
\begin{align}
\sf{CH}(- \star -) = \bigoplus_i {}^p\cal{H}^i[-i] \circ \Psi_\ast \circ \Phi^\ast \langle 2(r + N)\rangle,
\end{align}
where the functors
\begin{align}
\Phi : [(G_0 \times \cal{B}_0 \times \cal{B}_0)/G_0] &\to [(\cal{B}_0 \times \cal{B}_0)/G_0]^2,\\
\Psi : [(G_0 \times \cal{B}_0 \times \cal{B}_0)/G_0] &\to [G_0/G_0]
\end{align}
are defined by
\begin{align}
\Phi([g, B, B']) &= ([g^{-1}B'g, B], [B, B']),\\
\Psi([g, B, B']) &= [g].
\end{align}
Now consider the diagram
\begin{equation}
\begin{tikzpicture}[baseline=(current bounding box.center), >=stealth]
\matrix(m)[matrix of math nodes, row sep=1em, column sep=2.5em, text height=2ex, text depth=0.5ex]
{	{[(\cal{B}_0 \times \cal{B}_0)/G_0]^2}
		&{[(G_0 \times \cal{B}_0 \times \cal{B}_0)/G_0]}
		&\\
	{}
		&
		&{[G_0/G_0]}\\
	{[(\cal{B}_0 \times \cal{B}_0)/G_0]^2}
		&{[(G_0 \times \cal{B}_0 \times \cal{B}_0)/G_0]}
		&\\
		};
\path[->,font=\scriptsize, auto]
(m-1-2) edge node[above]{$\Phi$} (m-1-1)
(m-1-2.south east)	 edge node[above]{$\Psi$} (m-2-3)
(m-3-2) edge node[above]{$\Phi$} (m-3-1)
(m-3-2.north east)	 edge node[below]{$\Psi$} (m-2-3)
(m-1-1) edge node[left]{$\pr_2 \times \pr_1$} (m-3-1)
(m-1-2) edge node{$\Xi$} (m-3-2);
\end{tikzpicture}
\end{equation}
in which $\Xi([g, B, B']) = [g, g^{-1}B'g, B]$.
We can check that the square on the left and the triangle on the right commute.
(The former is where it is essential to use orbits, not orbit representatives.)
Thus the whole diagram commutes, giving us the desired equivalence of functors.
\end{proof}

\begin{rem}
The cocenter of an associative algebra $A$ is the largest commutative quotient, \emph{i.e.}, the algebra $A/[A, A]$.
We expect that the functor $\sf{CH}$ categorifies the map from the Iwahori--Hecke algebra to its cocenter, by way of the function-sheaf dictionary (see Remark \ref{rem:iwahori}).
\end{rem}

\subsection{}

Let $\tilde{G}_0$ be defined by the cartesian diagram:
\begin{equation}
\begin{tikzpicture}[baseline=(current bounding box.center), >=stealth]
\matrix(m)[matrix of math nodes, row sep=2em, column sep=3em, text height=2.5ex, text depth=0.5ex]
{	\tilde{G}_0
		&G_0 \times \cal{B}_0\\
	O_{1, 0} = \cal{B}_0
		&\cal{B}_0 \times \cal{B}_0\\
		};
\path[->,font=\scriptsize, auto]
(m-1-1)		edge (m-1-2)
(m-1-1)		edge (m-2-1)
(m-1-2)		edge node{$\act$} (m-2-2)
(m-2-1)		edge node{$j_1$} (m-2-2);
\end{tikzpicture}
\end{equation}
The composition $p' : \tilde{G}_0 \to G_0 \times \cal{B}_0 \to G_0$ is a small, hence proper, map called the \dfemph{Grothendieck--Springer map}.
The \dfemph{Grothendieck--Springer sheaf} is 
\begin{align}
\cal{G} = p'_\ast\QL.
\end{align}
By base change and smallness, $\cal{G} \simeq \sf{CH}(\IC_1)$.

The Grothendieck--Springer map restricts to a Galois $W$-cover over the regular semisimple locus of $G$.
Therefore, the decomposition of $\cal{G}$ into indecomposable perverse sheaves takes the form
\begin{align}
\cal{G} \simeq \bigoplus_{\phi \in \hat{W}} \cal{G}_\phi \otimes V_\phi,
\end{align}
where each $\cal{G}_\phi$ is a simple perverse sheaf and $V_\phi$ is the irreducible representation of $W$ over $\QL$ of character $\phi$.
We pin down $\cal{G}_\phi$ up to unique isomorphism by declaring that Frobenius acts trivially on $V_\phi$.

\begin{df}
Let $\sf{C}_{\hat{W}}(G_0) \subseteq \sf{C}(G_0)$ be the full, replete, additive subcategory generated by the objects $\cal{G}_\phi \otimes M$, where $\phi \in \hat{W}$ and $M$ is the pullback of a pure object of $\sf{D}_G^b(\point_0)$ of weight zero.
\end{df}

\begin{rem}
The work in Section \ref{sec:steinberg} will show that $\sf{C}_{\hat{W}}(G_0)$ and thus $\sf{C}(G_0)$ are not realizable, let alone $\SHom$-pure.
\end{rem}

\begin{rem}
If $G$ is of type $A$, meaning $W = S_n$ for some $n$, then every (unmixed) unipotent character sheaf on $G$ is isomorphic to a summand of $\xi\cal{G}$ (\emph{e.g.}, by Lusztig's classification of character sheaves).
So in this case, we have $\sf{C}_{\hat{W}}(G_0) \simeq \sf{C}(G_0)$.
In other types, a unipotent character sheaf that does not occur as a summand of $\xi \cal{G}$ is said to be \dfemph{cuspidal}.
\end{rem}

\subsection{}

Let $i : \cal{U}_0 \to G_0$ be the inclusion of the unipotent locus.
The action of $G_0$ on itself by conjugation restricts to an action on $\cal{U}_0$.
We stratify $\cal{U}_0$ by its $G_0$-orbits.
Every orbit is a Jordan class of $G_0$, so the inclusion $i : \cal{U}_0 \to G_0$ is stratification-preserving.
We have $\dim \cal{U} = 2N$, from which $\dim \cal{U} - \dim G = -r$.

\begin{df}\label{df:unipotent}
Let $\sf{C}(\cal{U}_0) \subseteq \sf{D}_{G, m}^b(\cal{U}_0)$ be the full, replete, additive subcategory generated by the objects $i^\ast E \otimes M$, where $E$ is a mixed unipotent character sheaf and $M$ is the pullback of a pure object of $\sf{D}_{G, m}^b(\point_0)$ of weight zero.
\end{df}

Since the map $i$ is merely a closed immersion, there is a priori no reason that $i^\ast$ sends unipotent character sheaves to well-behaved objects.
Nevertheless:

\begin{thm}[Lusztig]\label{thm:lusztig-restriction}
If $E$ is a mixed unipotent character sheaf, then there exist a $G_0$-orbit $j_C : C_0 \to \cal{U}_0$ and a pure $\ell$-adic local system $\cal{L} \in \sf{D}_{G, m}^b(C_0)$ of weight zero such that $i^\ast E \simeq j_{C, !\ast} \cal{L}\langle n_C\rangle$ for some $n_C \in \bb{Z}$ that only depends on $C$.
\end{thm}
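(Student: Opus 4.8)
The plan is to deduce this from Lusztig's classification of unipotent character sheaves together with his generalized Springer correspondence; the hypothesis that $\mathrm{char}\,\bb{F}$ is large enters precisely through the cleanness of cuspidal character sheaves. As a model case, when $E$ is a summand of the Grothendieck--Springer sheaf $\cal{G}$ (the only possibility when $G$ is of type $A$), the statement is Borho--MacPherson's computation of $i^\ast\cal{G}\langle -r\rangle$ as the Springer sheaf $\cal{S} \simeq \bigoplus_\phi j_{C_\phi, !\ast}\cal{L}_\phi\langle n_{C_\phi}\rangle \otimes V_\phi$, so the content of the theorem is that the same phenomenon persists for the (possibly cuspidal) unipotent character sheaves in other types.

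First I would recall from \cite{lusztig_1985_1} that, after forgetting its Frobenius structure, $E$ is a direct summand of an induced complex $\mathrm{ind}_{L \subseteq P}^G(A_0)$, where $L$ is a Levi subgroup of $G$, $P \supseteq L$ a parabolic with unipotent radical $U_P$, and $A_0 = \IC(\overline{Z_L^\circ \cdot C_L}, \QL \boxtimes \cal{E})$ is a \emph{unipotent} cuspidal character sheaf on $L$: here $C_L$ is a unipotent class of $L$ carrying a cuspidal local system $\cal{E}$ in the sense of the generalized Springer correspondence, and the coefficients are trivial along $Z_L^\circ$. For such $A_0$ the complex $\mathrm{ind}_{L\subseteq P}^G(A_0)$ is pure, semisimple, and its summands are indexed with multiplicity by the simple modules of the endomorphism algebra $\mathrm{End}(\mathrm{ind}_{L\subseteq P}^G(A_0))$, which by Lusztig's theory is (a mild variant of) the group algebra of the relative Weyl group $\cal{W}_L = N_G(L)/L$; this is the decomposition into the character sheaves in the block of $A_0$, and it is compatible with the $\cal{W}_L$-action on $\mathrm{ind}_{L\subseteq P}^G(A_0)$.

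Next I would compute $i^\ast \mathrm{ind}_{L\subseteq P}^G(A_0)$. The defining diagram for $\mathrm{ind}_{L\subseteq P}^G$ involves a proper map to $G_0$; its fibre over $\cal{U}_0$ is the generalized Springer space $\{(u, gP) : u \in \cal{U}_0,\ g^{-1}ug \in \overline{C_L}\,U_P\}$, because a unipotent $g^{-1}ug$ forces the $Z_L^\circ$-component to be trivial. Proper base change then gives a canonical isomorphism, up to a shift-twist depending only on $\dim C_L$ and $P$, between $i^\ast \mathrm{ind}_{L\subseteq P}^G(A_0)$ and the pushforward $\pi_\ast$ of the pullback of $i_L^\ast A_0$; the cleanness of $A_0$ on $L$, valid in large characteristic, identifies $i_L^\ast A_0$ with $\IC(\overline{C_L}, \cal{E})$ up to shift-twist, so that $i^\ast \mathrm{ind}_{L\subseteq P}^G(A_0)$ becomes exactly the semismall pushforward appearing in Lusztig's generalized Springer correspondence \cite[Thm.\ 6.5]{lusztig_1984_intersection}. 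That theorem identifies this pushforward with $\bigoplus_{\rho} j_{C_\rho, !\ast}\cal{E}_\rho\langle n_{C_\rho}\rangle \otimes \rho$, a sum of shift-twisted intersection complexes on $G_0$-orbits $C_\rho \subseteq \cal{U}_0$, with $n_{C_\rho}$ normalized to depend only on $\dim C_\rho$; moreover this matches the $\cal{W}_L$-action. Since $i^\ast$ is additive and intertwines the $\cal{W}_L$-actions, comparing isotypic components shows that $i^\ast$ of the $\rho$-summand of $\mathrm{ind}_{L\subseteq P}^G(A_0)$ is $j_{C_\rho, !\ast}\cal{E}_\rho\langle n_{C_\rho}\rangle$, and applying this to the summand $\xi E$ yields the unmixed form of the theorem.

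Finally I would upgrade to an isomorphism in $\sf{D}_{G, m}^b(\cal{U}_0)$. Every step above has a mixed refinement: $\sf{CH}$, parabolic induction, the proper pushforward $\pi_\ast$, and the formation of intersection complexes all preserve weight-zero purity \cite{deligne_1980}, and the cuspidal local systems carry canonical weight-zero Frobenius structures. Running the base-change argument in $\sf{D}_{G, m}^b$ therefore \emph{exhibits} $i^\ast E$ directly as a summand of a pure weight-zero complex which, by the generalized Springer identification, is a sum of shift-twisted intersection complexes $j_{C, !\ast}\cal{L}\langle n_C\rangle$ with $\cal{L}$ pure of weight zero; matching $\cal{W}_L$-isotypic pieces on the nose then gives the stated mixed isomorphism. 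I expect the genuinely load-bearing step to be the third paragraph: one must set up the proper base change so that $i^\ast$ of an induced character sheaf is governed by a \emph{single} semismall map, and invoke cleanness of the Levi's cuspidal data, rather than trying to control the closed-immersion pullback $i^\ast$ directly — it is only after this reduction that the concentration of $i^\ast E$ in a single perverse degree (and its purity) is forced.
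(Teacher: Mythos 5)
Your route is genuinely different from the paper's, but its pivotal step is asserted rather than proved. The paper treats the unmixed statement as a black box --- it is exactly Thm.\ 6.5(c) of \cite{lusztig_1984_intersection} --- and spends its effort on the mixed refinement; you instead try to re-derive the unmixed statement from the classification of unipotent character sheaves via induction from cuspidal data, proper base change over the unipotent locus, and the generalized Springer correspondence. The crux of that derivation is the sentence ``moreover this matches the $\cal{W}_L$-action'': to pass from ``$i^\ast \mathrm{ind}_{L\subseteq P}^G(A_0)$ is the generalized Springer pushforward'' to ``$i^\ast$ of \emph{each} summand $E_\rho$ is a \emph{single} shifted $\IC$,'' you need that the base-change identification intertwines the two $\cal{W}_L$-actions (equivalently, that the natural map $\End(\mathrm{ind}_{L\subseteq P}^G(A_0)) \to \End(i^\ast \mathrm{ind}_{L\subseteq P}^G(A_0))$ is injective, hence an isomorphism). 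Krull--Schmidt bookkeeping alone cannot substitute for this: a priori $i^\ast E_\rho$ could vanish for some $\rho$ while $i^\ast E_\sigma$ splits into two intersection complexes for another $\sigma$, with all multiplicities still adding up whenever $\dim V_\rho = \dim V_\sigma$. That compatibility (up to the familiar sign-twist ambiguity between the two natural actions on Springer-type sheaves) is essentially the content of the very theorem you are reconstructing, so as written the argument either presupposes Thm.\ 6.5(c) or leaves its hardest point as an assertion. (Minor point: cleanness is not what identifies the restriction of $A_0$ to the unipotent part of $\Sigma$ --- that is an external-product computation on $Z_L^\circ \cdot \overline{C}_L$ --- and the paper's proof never invokes cleanness at all.)

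The part of the theorem the paper actually has to prove, namely the mixed statement, is compressed in your last paragraph to ``every step has a mixed refinement,'' and this hides the one nonformal point: purity of $i^\ast E$. Pullback along the closed immersion $i$ only bounds weights from above, so knowing $E$ is pure of weight zero and that $\xi i^\ast E$ is a shifted intersection complex is not yet the claim. The paper obtains purity by writing $E = \phi_! K \langle \dim \bar{Y}\rangle$ with $K = \IC(X, \bar{\cal{E}}_1)$, observing that $\bar{\cal{E}}_1$ is \emph{constant} precisely because $E$ is unipotent, and then using Lusztig's identity $i^\ast K \simeq K^a$ together with properness of $\phi^a$ and Weil II; one then twists the resulting local system on $C$ into weight zero and quotes Thm.\ 6.5(c) again for the fact that the shift depends only on $C$. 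Your alternative --- ``$i^\ast E$ is a summand of a pure complex'' --- would additionally require realizing the decomposition of $\mathrm{ind}_{L\subseteq P}^G(A_0)$ compatibly with the chosen Frobenius structure on $E$ before purity of summands can be invoked. None of this is unfixable, but as it stands the proposal re-derives (incompletely) the citation and skips the argument that constitutes the paper's proof.
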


\begin{proof}
The unmixed version of this statement is \cite[Thm.\ 6.5(c)]{lusztig_1984_intersection}.
Below, we explain how to deduce our mixed version.
The notation $\IC(-, -)$ will mean the intersection complex of a local system (\emph{not} necessarily shifted to be perverse).

First, $\xi E$ is an object of $\sf{D}_G^b(G)$ that, in Lusztig's notation, takes the form $\phi_!K[\dim \bar{Y}]$, where $\phi$ is a map $X \to \bar{Y} \subseteq G$ and $K = \IC(X, \bar{\cal{E}}_1)$ for some local system $\bar{\cal{E}}_1$ over $X$ \cite[\S{4}]{lusztig_1984_intersection}.
As long as we start with $\bb{F}$-structures on $X, Y, \phi, \bar{\cal{E}}_1$ such that $\bar{\cal{E}}_1$ is pure of weight zero, the construction descends to $G_0$ to yield $E = \phi_!K\langle \dim \bar{Y}\rangle$.
In the notation of \cite[\S{6.6}]{lusztig_1984_intersection}, we have $i^\ast K \simeq K^a$, where $K^a = \IC(X_0^a, \bar{\cal{E}}_1^a)$ and $\bar{\cal{E}}_1^a = i^\ast \bar{\cal{E}}_1$ (and the subscript $0$ means the $\bb{F}$-structure on $X^a$ coming from the $\bb{F}$-structure we chose on $X$).
Therefore, $i^\ast E \simeq \phi_!^a K^a \langle \dim \bar{Y}\rangle$.
Since $\phi^a$ is proper, $\bar{\cal{E}}_1^a$ being pure implies $\phi_!^a K^a$ being pure of the same weight \cite{deligne_1980}, \emph{cf.}\ (6.6.1) of \cite{lusztig_1984_intersection}.

In our situation, $E$ is a unipotent character sheaf, which means that $\bar{\cal{E}}_1$ is a constant sheaf \cite[\S{3.1}, \S{4.4}]{lusztig_1985_1}.
Therefore, $\bar{\cal{E}}_1^a$ is the constant sheaf on $X_0^a$, hence pure of weight zero.
Therefore $i^\ast E$ is pure of weight zero.
Now, the rest of \cite[Thm.\ 6.5(c)]{lusztig_1984_intersection} implies that at the unmixed level, $i^\ast E$ is a shift of some intersection complex $j_{C, !\ast}\cal{E}$.
Since $\cal{E}$ is the shifted restriction of $i^\ast E$ to $C$, it must be pure.
So up to twisting $\cal{E}$, we can find a pure local system $\cal{L}$ of weight zero such that $i^\ast E \simeq j_{C, !\ast} \cal{L}\langle n\rangle$ for some $n$.
Finally, \cite[Thm.\ 6.5(c)]{lusztig_1984_intersection} also implies that $n$ only depends on $\dim \bar{Y}^a$ and $\dim C$, and the former is determined by a variety $C_1$ that only depends on $C$.
\end{proof}

\subsection{}

Let $p : \tilde{\cal{U}}_0 \to \cal{U}_0$ be defined by the cartesian diagram:
\begin{equation}
\begin{tikzpicture}[baseline=(current bounding box.center), >=stealth]
\matrix(m)[matrix of math nodes, row sep=2em, column sep=3em, text height=2.5ex, text depth=0.5ex]
{	\tilde{\cal{U}}_0
		&\tilde{G}_0\\
	\cal{U}_0
		&G_0\\
		};
\path[->,font=\scriptsize, auto]
(m-1-1)		edge (m-1-2)
(m-1-1)		edge node[left]{$p$} (m-2-1)
(m-1-2)		edge node{$p'$} (m-2-2)
(m-2-1)		edge node{$i$} (m-2-2);
\end{tikzpicture}
\end{equation}
Then $p$ (or rather, $p^\red : \tilde{\cal{U}}_0^\red \to \cal{U}_0$) is a semismall resolution of singularities called the \dfemph{Springer resolution}.
The \dfemph{Springer sheaf} is
\begin{align}
\cal{S} = p_\ast \QL\langle -r\rangle.
\end{align}
By proper base change, $\cal{S} = i^\ast \cal{G}\langle -r\rangle$.
Setting $\cal{S}_\phi = i^\ast \cal{G}_\phi \langle -r\rangle$, we have that
\begin{align}
\cal{S} \simeq \bigoplus_{\phi \in \hat{W}} \cal{S}_\phi \otimes V_\phi
\end{align}
is the decomposition of $\cal{S}$ into indecomposable perverse sheaves.

\begin{df}
Let $\sf{C}_{\hat{W}}(\cal{U}_0) \subseteq \sf{C}(\cal{U}_0)$ be the full, replete, additive subcategory generated by the objects $\cal{S}_\phi \otimes M$, where $\phi \in \hat{W}$ and $M$ is the pullback of a pure object of $\sf{D}_{G, m}^b(\point_0)$ of weight zero.
\end{df}

\begin{lem}\label{lem:unipotent}
$\sf{C}(\cal{U}_0)$ and thus $\sf{C}_{\hat{W}}(\cal{U}_0)$ are $\SHom$-pure.
\end{lem}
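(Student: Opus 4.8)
The plan is to reduce the statement, via \textbf{Theorem \ref{thm:lusztig-restriction}}, to a purity assertion about $\SHom$ between intersection complexes on the unipotent locus, and then to extract that assertion from the known parity of their stalks. \textbf{Condition (1)} is immediate: since $\langle n\rangle$ is symmetric monoidal and weight-preserving, $\langle n\rangle(i^\ast E \otimes M) \simeq i^\ast E \otimes (M\langle n\rangle)$, and $M\langle n\rangle$ is again the pullback of a pure object of $\sf{D}_{G, m}^b(\point_0)$ of weight zero; thus each generator of $\sf{C}(\cal{U}_0)$ in Definition \ref{df:unipotent} is sent into the generating set, and $\langle n\rangle$ preserves $\sf{C}(\cal{U}_0)$ because it is additive. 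For \textbf{condition (2)}, since $\SHom^0(-,-)$ is additive in each variable and sends summands to summands, it suffices to treat generators $K = i^\ast E \otimes M$ and $L = i^\ast E' \otimes M'$. As $M, M'$ are pulled back from $\point_0$ and are perfect, the projection formula gives $a_\ast\SHom(K, L) \simeq a_\ast\SHom(i^\ast E, i^\ast E') \otimes (M^\vee \otimes M')$, and over $\point_0$ this yields, $F$-equivariantly, a direct sum of $\SHom^p(i^\ast E, i^\ast E') \otimes \ur{H}^q(M^\vee \otimes M')$ with $p + q = 0$. Since $M^\vee \otimes M'$ is pure of weight zero, each $\ur{H}^q$ is pure of weight $q$, so it is enough to show that $\SHom^p(i^\ast E, i^\ast E')$ is pure of weight $p$ for every $p$ and every pair of mixed unipotent character sheaves $E, E'$. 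By Theorem \ref{thm:lusztig-restriction}, $i^\ast E$ and $i^\ast E'$ are shift-twists of intersection complexes $j_{C, !\ast}\cal{L}$ of pure weight-zero local systems on $G_0$-orbits of $\cal{U}_0$; absorbing the shift-twists, the task becomes: show $\SHom^n(\cal{P}, \cal{P}')$ is pure of weight $n$ for all $n$, whenever $\cal{P}, \cal{P}'$ are such intersection complexes.

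The key input is that these are precisely the intersection complexes occurring in Lusztig's generalized Springer correspondence, and that their stalks are understood. Lusztig's computation of the (generalized) Green functions shows that for every $G_0$-orbit $C'_0 \subseteq \cal{U}_0$ the $\ast$-restriction $j_{C'}^\ast\cal{P}$ is pure, with degree-$k$ cohomology sheaf pure of weight $k$, and concentrated in cohomological degrees of a fixed parity \cite{lusztig_1984_intersection}; see also \cite{rr}. Applying Verdier duality, and using that $D\cal{P}'$ is again a shift-twist of an intersection complex of a pure weight-zero local system on an orbit, the costalk $j_{C'}^!\cal{P}'$ enjoys the same property. Finally, for a single $G_0$-orbit $C_0$ one has $[C_0/G_0] \simeq [\point/Z_{G_0}(u)]$, and since $\QL$ has characteristic zero the equivariant cohomology $\ur{H}^\ast([C_0/G_0], \cal{E})$ with coefficients in a weight-zero local system is pure, with degree-$k$ part pure of weight $k$, concentrated in even degrees.

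To conclude, I would filter $\cal{P}'$ by the orbit stratification of $\cal{U}_0$ and use the resulting spectral sequence
\[
E_1 = \bigoplus_{C_0} \SHom^{\ast}_{[C_0/G_0]}(j_{C_0}^\ast\cal{P}, j_{C_0}^!\cal{P}') \;\Longrightarrow\; \SHom^{\ast}(\cal{P}, \cal{P}').
\]
By the previous paragraph, each $E_1$-term in total degree $n$ is pure of weight $n$ and vanishes unless $n$ lies in a fixed parity class; as every differential changes total degree by $1$, the spectral sequence degenerates at $E_1$. Hence $\SHom^n(\cal{P}, \cal{P}')$ is an iterated extension of pure weight-$n$ modules, so pure of weight $n$, establishing condition (2). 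Therefore $\sf{C}(\cal{U}_0)$ is $\SHom$-pure; and since $\sf{C}_{\hat{W}}(\cal{U}_0)$ is a full additive subcategory of $\sf{C}(\cal{U}_0)$ stable under $\langle n\rangle$, it is $\SHom$-pure as well.

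The main obstacle is exactly that ``pure perverse sheaf'' does not imply ``pure $\SHom$'' in general --- indeed, as noted above, $\sf{C}_{\hat{W}}(G_0)$ is not even realizable, which is what necessitates the separate comparison of $\SEnd^\ast(\cal{S})$ with the pure part of $\SEnd^\ast(\cal{G})$ in Section \ref{sec:steinberg}. What makes the unipotent locus special is the parity and purity of the stalks and costalks of the generalized Springer intersection complexes, which is the geometric content borrowed from \cite{lusztig_1984_intersection} and \cite{rr}; alternatively one could invoke the formality of the (generalized) Springer $\SHom$-algebra proved in \cite{rr} directly. Two points require care: one genuinely needs the generalized (cuspidal) correspondence, since cuspidal unipotent character sheaves contribute generators of $\sf{C}(\cal{U}_0)$ that are not summands of $\cal{S}$; and the shift-twists $\langle n_C\rangle$ in Theorem \ref{thm:lusztig-restriction} must be tracked so that the parities of the various stalks and costalks remain mutually consistent, which is what allows the spectral sequence to degenerate.
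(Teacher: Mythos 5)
Your proposal is correct and follows essentially the same route as the paper: condition (1) via stability of weight-zero pure objects over $\point_0$ under $\langle n\rangle$, then a reduction via Theorem \ref{thm:lusztig-restriction} to a purity statement for $\SHom$ between pure weight-zero intersection complexes of local systems on unipotent orbits. The only difference is at the last step, where the paper simply invokes \cite[Cor.\ 4.8]{rr}, while you sketch the parity/pointwise-purity argument (Lusztig's stalk computations, evenness and purity of equivariant cohomology of orbits, degeneration of the stratification spectral sequence by weights) that underlies that citation --- an alternative you yourself note, and which works since unipotent orbits have even dimension, so the parities are consistent.
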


\begin{proof}
$\sf{C}(\cal{U}_0)$ is closed under $\langle n\rangle$ for all $n$ because the same is true of the class of pure objects of weight zero in $\sf{D}_{G, m}^b(\point_0)$.
It remains to check that $\SHom^0(i^\ast E, i^\ast E')$ is concentrated in weight zero for all mixed unipotent character sheaves $E, E'$.

In the notation of Theorem \ref{thm:lusztig-restriction}, we can write $i^\ast E \simeq j_{C, !\ast} \cal{L}\langle n_C\rangle$ and $i^\ast E' \simeq j_{C', !\ast} \cal{L}'\langle n_{C'}\rangle$ for some $C, C', \cal{L}, \cal{L}', n_C, n_{C'}$.
It remains to show that 
\begin{align}\begin{split}
&\SHom^0(j_{C, !\ast} \cal{L}\langle n_C\rangle, j_{C', !\ast} \cal{L}'\langle n_{C'}\rangle) \\
&= \SHom^{d}(j_{C, !\ast} \cal{L}\langle \dim C\rangle, j_{C', !\ast} \cal{L}'\langle \dim C'\rangle)(\tfrac{d}{2})
\end{split}\end{align}
is concentrated in weight zero, where $d = (n_{C'} - \dim C') - (n_C - \dim C)$.
Note that $j_{C, !\ast} \cal{L}\langle \dim C\rangle$ and $j_{C', !\ast} \cal{L}'\langle \dim C'\rangle$ are both pure perverse of weight zero.
So the desired claim follows from \cite[Cor.\ 4.8]{rr}.
\end{proof}

\subsection{}

To conclude this section, we discuss the relationships between the categories $\sf{C}(G_0)$, $\sf{C}_{\hat{W}}(G_0)$, $\sf{C}(\cal{U}_0)$, and $\sf{C}_{\hat{W}}(\cal{U}_0)$.

\begin{thm}[Lusztig, Rider--Russell]\label{thm:summand}
The horizontal arrows in the commutative diagrams below are inclusions of direct summands:
\begin{equation}
\begin{tikzpicture}[baseline=(current bounding box.center), >=stealth]
\matrix(m)[matrix of math nodes, row sep=2em, column sep=3em, text height=2.5ex, text depth=0.5ex]
{	\sf{C}_{\hat{W}}(G_0)
		&\sf{C}(G_0)\\
	\underline{\sf{C}}_{\hat{W}}(G_0)
		&\underline{\sf{C}}(G_0)\\
		};
\path[->,font=\scriptsize, auto]
(m-1-1)		edge (m-1-2)
(m-1-1)		edge node[left]{$\kappa$} (m-2-1)
(m-1-2)		edge node{$\kappa$} (m-2-2)
(m-2-1)		edge (m-2-2);
\end{tikzpicture}
\qquad
\begin{tikzpicture}[baseline=(current bounding box.center), >=stealth]
\matrix(m)[matrix of math nodes, row sep=2em, column sep=3em, text height=2.5ex, text depth=0.5ex]
{	\sf{C}_{\hat{W}}(\cal{U}_0)
		&\sf{C}(\cal{U}_0)\\
	\underline{\sf{C}}_{\hat{W}}(\cal{U}_0)
		&\underline{\sf{C}}(\cal{U}_0)\\
		};
\path[->,font=\scriptsize, auto]
(m-1-1)		edge (m-1-2)
(m-1-1)		edge node[left]{$\kappa$} (m-2-1)
(m-1-2)		edge node{$\kappa$} (m-2-2)
(m-2-1)		edge (m-2-2);
\end{tikzpicture}
\end{equation}
\end{thm}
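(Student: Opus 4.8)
The plan is to exhibit each horizontal arrow as the inclusion of an orthogonal direct summand. Let $\sf{C}'(G_0) \subseteq \sf{C}(G_0)$ be the full, replete, additive subcategory generated by the objects $E \otimes M$ for which $E$ is a mixed unipotent character sheaf with $\xi E$ \emph{not} a direct summand of $\xi\cal{G}$, and let $\sf{C}'(\cal{U}_0) \subseteq \sf{C}(\cal{U}_0)$ be generated by the $i^\ast E \otimes M$ for which the simple perverse sheaf $\bar{E}$ underlying $i^\ast E$ --- which exists by Theorem \ref{thm:lusztig-restriction} --- does \emph{not} lie in the principal block of the category of $G$-equivariant perverse sheaves on $\cal{U}$, i.e.\ is not one of the $\cal{S}_\phi$. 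If $\bar{E}$ does lie in the principal block, then $i^\ast E \simeq \cal{S}_\phi \langle n \rangle$ for some $\phi \in \hat{W}$ and $n \in \bb{Z}$, so $i^\ast E \otimes M$ already lies in $\sf{C}_{\hat{W}}(\cal{U}_0)$; likewise, if $\xi E$ is a summand of $\xi\cal{G}$ then $E \simeq \cal{G}_\phi$ and $E \otimes M$ lies in $\sf{C}_{\hat{W}}(G_0)$. Thus every indecomposable generator of $\sf{C}(G_0)$, resp.\ of $\sf{C}(\cal{U}_0)$, lies in one of the two subcategories $\sf{C}_{\hat{W}}$, $\sf{C}'$; since each such generator has local endomorphism ring and the categories are closed under direct summands, Krull--Schmidt decomposes every object as a direct sum of generators. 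So the theorem reduces to the evident $\Hom$-orthogonality statements: $\Hom_{\sf{C}(G_0)}(\cal{G}_\phi \otimes M, E' \otimes M')$ and $\Hom_{\sf{C}(G_0)}(E' \otimes M', \cal{G}_\phi \otimes M)$ vanish whenever $\xi E'$ is not a summand of $\xi\cal{G}$, the two analogues on $\cal{U}_0$ hold with $\cal{S}_\phi$ and $i^\ast E'$ in the relevant positions, and the same with $\Hom$ replaced by $\Hom_{\underline{\sf{C}}}$.

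Next I would reduce these to the vanishing of $\Ext$-groups in the \emph{unmixed} categories. Using the short exact sequence \eqref{eq:bbd} together with the behaviour of $\SHom$ under tensoring with objects pulled back from $\point_0$, each of the $\Hom$-spaces above is assembled from $\Ext^\bullet_{\sf{D}_G^b(G)}(\xi \cal{G}_\phi, \xi E')$ --- a morphism between indecomposable generators whose underlying shift-twists differ by $\langle n \rangle$ contributes its degree-$n$ and degree-$(n-1)$ components --- and from the same space with its two arguments interchanged; the $\cal{U}_0$ statements are likewise controlled by $\Ext^\bullet_{\sf{D}_G^b(\cal{U})}(\cal{S}_\phi, i^\ast E')$ and its transpose. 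In negative degrees these $\Ext$-groups vanish because the objects in question are perverse sheaves; in degree zero they vanish because $\xi\cal{G}_\phi$ and $\xi E'$, resp.\ $\cal{S}_\phi$ and $\bar{E}'$, are non-isomorphic simple perverse sheaves (by the definitions of the classes cut out above). The passage from vanishing over $\bar{\bb{F}}$ to vanishing of the associated $\QL[F]$-modules $\SHom^\ast$ is automatic, since the latter are built from the former; the same input forces $\gr_0^{\bm{w}} \SHom^0 = 0$ between the relevant objects, which yields the $\underline{\sf{C}}$-versions. The squares in the two diagrams commute because $\kappa$ is the identity on objects and is induced by a natural transformation, hence respects the full inclusions.

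Everything thus comes down to the vanishing of the \emph{positive}-degree $\Ext$-groups $\Ext^{>0}_{\sf{D}_G^b(G)}(\xi\cal{G}_\phi, \xi E')$ and $\Ext^{>0}_{\sf{D}_G^b(\cal{U})}(\cal{S}_\phi, i^\ast E')$ (and their transposes), which is the real content of the theorem and the step I expect to be the main obstacle. For $\cal{U}_0$ this is the $\Ext$-orthogonality between distinct blocks in the block decomposition of the category of $G$-equivariant perverse sheaves on $\cal{U}$ afforded by Lusztig's generalized Springer correspondence, in the form proved by Rider--Russell \cite{rr}: $\cal{S}_\phi$ lies in the principal block, while $\bar{E}'$ lies in a different block by construction of $\sf{C}'(\cal{U}_0)$. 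For $G_0$ the corresponding input is Lusztig's orthogonality between unipotent character sheaves attached to distinct cuspidal data \cite{lusztig_1985_2}: $\cal{G}_\phi$ is a summand of $\cal{G} = \sf{CH}(\IC_1)$ and hence attached to the trivial datum on a maximal torus, whereas $E'$ with $\xi E'$ not a summand of $\xi\cal{G}$ is attached to a different datum. Some care is needed in extracting these orthogonality results in exactly the $\Ext^\bullet$-form used here; granting them, the complements $\sf{C}'(G_0)$ and $\sf{C}'(\cal{U}_0)$ are orthogonal to $\sf{C}_{\hat{W}}(G_0)$ and $\sf{C}_{\hat{W}}(\cal{U}_0)$, and the theorem follows.
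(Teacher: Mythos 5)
Your proposal is correct and follows essentially the same route as the paper: the real content is exactly the orthogonality you invoke — Lusztig's $\Ext$-orthogonality for unipotent character sheaves with distinct cuspidal data \cite{lusztig_1985_2} (made $G$-equivariant as in \cite{ww_2008}) and Rider--Russell's block decomposition over the unipotent locus \cite{rr} — with the exactness of \eqref{eq:bbd} carrying the statement between the mixed, unmixed, and $\underline{\sf{C}}$ levels, which is precisely how the paper argues, only with your Krull--Schmidt/complement bookkeeping spelled out rather than cited. (One small imprecision: by Theorem \ref{thm:lusztig-restriction} the restriction $i^\ast E$ is a shifted intersection complex of a possibly reducible local system, so "the simple perverse sheaf underlying $i^\ast E$" should read "the simple constituents," all of which lie outside the principal block when $E$ is not some $\cal{G}_\phi$ — again exactly what the cited orthogonality provides.)
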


\begin{proof}
In the diagram for $G_0$, the statement for the bottom arrow is essentially a theorem of Lusztig \cite[Prop.\ 7.2]{lusztig_1985_2}.
The only modification needed is $G$-equivariance, which is done in \cite[Prop.\ 8]{ww_2008}.
In the diagram for $\cal{U}_0$, the statement for the bottom arrow follows from \cite[Thm.\ 3.5]{rr}.
Finally, the statements for the top arrows follow from the exactness of \eqref{eq:bbd}.
\end{proof}

We define the functors
\begin{align}
\sf{PR} : \sf{C}(G_0) &\to \sf{C}_{\hat{W}}(G_0),\\
\sf{PR} : \sf{C}(\cal{U}_0) &\to \sf{C}_{\hat{W}}(\cal{U}_0),
\end{align}
as well as their analogues with $\underline{\sf{C}}$ in place of $\sf{C}$, to be the projections that correspond to these summand inclusions.
By Lemma \ref{lem:pullback}, we have:

\begin{lem}\label{lem:pullback-kappa}
We have a commutative diagram:
\begin{equation}
\begin{tikzpicture}[baseline=(current bounding box.center), >=stealth]
\matrix(m)[matrix of math nodes, row sep=2em, column sep=3em, text height=2.5ex, text depth=0.5ex]
{	\sf{C}(G_0)
		&\sf{C}(\cal{U}_0)\\
	\underline{\sf{C}}(G_0)
		&\underline{\sf{C}}(\cal{U}_0)\\
		};
\path[->,font=\scriptsize, auto]
(m-1-1)		edge node{$i^\ast\langle -r\rangle$} (m-1-2)
(m-1-1)		edge node[left]{$\kappa$} (m-2-1)
(m-1-2)		edge node{$\kappa$} (m-2-2)
(m-2-1)		edge node{$i^\ast\langle -r\rangle$} (m-2-2);
\end{tikzpicture}
\end{equation}
Here, the bottom arrow is defined by \eqref{eq:pullback}.
\end{lem}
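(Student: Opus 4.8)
The plan is to deduce this from Lemma \ref{lem:pullback} applied to the closed immersion $i : \cal{U}_0 \to G_0$, after first checking that all four functors in the square restrict to the relevant $\sf{C}$-subcategories, and then observing that the shift-twist $\langle -r\rangle$ is inessential. So the argument breaks into three small steps: well-definedness of the restricted functors, reduction to the case $r = 0$, and invocation of Lemma \ref{lem:pullback}.

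For well-definedness, I would note that a generator $E \otimes M$ of $\sf{C}(G_0)$, with $E$ a mixed unipotent character sheaf and $M$ the pullback of a pure weight-zero object of $\sf{D}_{G, m}^b(\point_0)$, is sent by $i^\ast\langle -r\rangle$ to $(i^\ast E) \otimes (M\langle -r\rangle)$. By Definition \ref{df:unipotent}, $i^\ast E \in \sf{C}(\cal{U}_0)$ (take the trivial Frobenius module), and since $\langle 1\rangle = [1](\tfrac12)$ is weight-preserving, $M\langle -r\rangle$ is again the pullback of a pure weight-zero object of $\sf{D}_{G, m}^b(\point_0)$; together with the additivity of $\sf{C}(\cal{U}_0)$ and its stability under $\langle n\rangle$ (which holds because $\sf{C}(\cal{U}_0)$ is $\SHom$-pure, Lemma \ref{lem:unipotent}), this shows $(i^\ast E)\otimes(M\langle -r\rangle) \in \sf{C}(\cal{U}_0)$. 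The same remarks, read with $\underline{\sf{C}}$ in place of $\sf{C}$ and the functor \eqref{eq:pullback} in place of $i^\ast$, show the bottom arrow is well-defined. Next, $\langle -r\rangle$ is a weight-preserving autoequivalence of each of the four categories; it commutes (up to canonical isomorphism) with $i^\ast$, since pullback commutes with shifts and Tate twists, and with $\kappa$, since $\kappa$ is the identity on objects and on morphisms is built from \eqref{eq:c-to-underline-c}, which only refers to the weight filtration on $\SHom$-spaces, while $\SHom(K\langle -r\rangle, L\langle -r\rangle) \simeq \SHom(K, L)$ canonically and weight-compatibly. Hence it suffices to prove the statement with $\langle -r\rangle$ deleted.

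With $\langle -r\rangle$ removed, the square is exactly the square of Lemma \ref{lem:pullback} for $f = i : \cal{U}_0 \to G_0$, restricted along the inclusions $\sf{C}(G_0) \subseteq \sf{D}_{G, m}^b(G_0)$ and $\sf{C}(\cal{U}_0) \subseteq \sf{D}_{G, m}^b(\cal{U}_0)$; by the previous step all four functors carry these subcategories into one another, so the restriction is legitimate and the square commutes. The only genuinely delicate point in the whole argument is the bookkeeping in the first step — confirming that $i^\ast\langle -r\rangle$ and its $\underline{\sf{C}}$-analogue really do land in $\sf{C}(\cal{U}_0)$, respectively $\underline{\sf{C}}(\cal{U}_0)$, and that the shift-twist slides past $\kappa$ coherently — since the substantive content, namely the $F$-equivariance of the natural transformation built from \eqref{eq:a-to-b} together with the exactness of \eqref{eq:bbd}, is already packaged in Lemma \ref{lem:pullback}. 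I expect no further obstacle; the present lemma merely records the pullback compatibility in the form needed for diagram \eqref{eq:big}.
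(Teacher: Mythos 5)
Your proposal is correct and follows the same route as the paper, whose entire proof is the observation that the square is the one produced by Lemma \ref{lem:pullback} for $f = i$, restricted to the subcategories $\sf{C}(G_0)$ and $\sf{C}(\cal{U}_0)$. The extra bookkeeping you supply (that $i^\ast\langle -r\rangle$ lands in $\sf{C}(\cal{U}_0)$ because $\langle n\rangle$ is weight-preserving and $\sf{C}(\cal{U}_0)$ is generated by the $i^\ast E \otimes M$, and that the shift-twist slides past $\kappa$) is accurate and simply makes explicit what the paper leaves implicit.
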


\newpage
\section{The $\mathbf{A}_W$ Trace}\label{sec:a_w}

\subsection{}

If $A$ is a $\bb{Z}$-graded $\QL$-algebra, then we write:
\begin{enumerate}
\item 	$\Mod_1(A)$ for the category of graded left $A$-modules in which the grading is bounded below and each graded piece is finite-dimensional.
\item 	$\Mod_2^b(A)$ for the category of left $A$-modules $M$ equipped with a $\bb{Z}^2$-grading such that $M \in \sf{Mod}_1(A)$ with respect to both gradings and the value of $i - j$ is uniformly bounded among the nonzero components $M^{i, j}$.
\end{enumerate}
If $R$ is an (ungraded) $\QL$-algebra and $A$ forms an $R$-module, then we write $R \ltimes A$ for the vector space $R \otimes A$ endowed with the ring structure where:
\begin{itemize}
\item 	$R$ and $A$ form subrings.
\item 	The only relations between $R$ and $A$ are those of the form $ra = (r \cdot a)r$ for all $r \in R$ and $a \in A$.
\end{itemize}
If the $R$-action on $A$ preserves the grading, then we extend the grading on $A$ to a grading on $R \ltimes A$ in which $R$ is placed in degree zero, so that $R \ltimes A$ forms a graded $A$-algebra.

Recall that $\bb{X}$ is the character lattice in the root datum of $G$.
We set
\begin{align}
\bb{V} = \bb{X} \otimes \QL.
\end{align}
Let the symmetric algebra $\Sym(\bb{V})$ be endowed with the grading where $\Sym^i(\bb{V})$ is placed in degree $2i$, and let
\begin{align}
\bb{A}_W = \QL[W] \ltimes \Sym(\bb{V}).
\end{align}
Below, we construct a contravariant, monoidal trace from the Hecke category to $\Mod_2^b(\bb{A}_W)$.

\subsection{}

Suppose that $X_0$ is a $G_0$-variety over $\bb{F}$ and that $\sf{C}(X_0)$ is a full, additive subcategory of $\sf{D}_{G, m}^b(X_0)$ stable under $\langle n\rangle$ for all $n \in \bb{Z}$.
Any object $K \in \underline{\sf{C}}(X_0)$ gives rise to a graded $\QL$-algebra:
\begin{align}
\bb{A}_K = \bigoplus_n
{\Hom_{\underline{\sf{C}}(X_0)}(K, K \langle n\rangle)}.
\end{align}
We write the multiplication in $\bb{A}_K$ as right-to-left composition: $g \cdot f = g \circ f$.
Then we have additive functors
\begin{align}
\bigoplus_n
{\Hom_{\underline{\sf{C}}(X_0)}(-, K\langle n\rangle)}
	: \underline{\sf{C}}(X_0)^\op &\to \Mod_1(\bb{A}_K),\\	
\bigoplus_n
{\Hom_{\underline{\sf{C}}(X_0)}(K, (-)\langle n\rangle)}
	: \underline{\sf{C}}(X_0) &\to \Mod_1(\bb{A}_K^\op).
\end{align}
Above, the gradings are bounded below by Remark \ref{rem:bounded-below}.
These functors can be respectively extended to
\begin{align}
\bigoplus_{i, j} 
{\Hom_{\sf{K}^b(\underline{\sf{C}}(X_0))}(-, K\langle j\rangle [i - j]_\triangle)}
	: \sf{K}^b(\underline{\sf{C}}(X_0))^\op &\to \Mod_2^b(\bb{A}_K),\\	
\bigoplus_{i, j} 
{\Hom_{\sf{K}^b(\underline{\sf{C}}(X_0))}(K, (-)\langle j\rangle [i - j]_\triangle)}
	: \sf{K}^b(\underline{\sf{C}}(X_0)) &\to \Mod_2^b(\bb{A}_K^\op),
\end{align}
once we embed $\underline{\sf{C}}(X_0)$ into $\sf{K}^b(\underline{\sf{C}}(X_0))$ as the full subcategory of complexes concentrated in degree zero.
Above, in either direct sum, the value of $i - j$ is uniformly bounded among the nonzero terms precisely because the objects of $\sf{K}^b(\underline{\sf{C}}(X_0))$ can be represented by bounded complexes.

In the rest of this section, we only use the contravariant functors; the covariant ones will appear again in Section \ref{sec:kr}.
The following lemma is a consequence of Proposition \ref{prop:realization}.

\begin{lem}\label{lem:hom-pure}
If $\sf{C}(X_0)$ is $\SHom$-pure, then we have a commutative diagram:
\begin{equation}
\begin{tikzpicture}[baseline=(current bounding box.center), >=stealth]
\matrix(m)[matrix of math nodes, row sep=3em, column sep=9.5em, text height=2ex, text depth=0.5ex]
{	\sf{K}^b(\sf{C}(X_0))^\op
		&\sf{K}^b(\underline{\sf{C}}(X_0))^\op\\
	\sf{D}_{G, m}^b(X_0)^\op
		&\Mod_2^b(\bb{A}_K)\\
		};
\path[->,font=\scriptsize, auto]
(m-1-1) 	edge node{$\sf{K}^b\kappa$} (m-1-2)
			edge node[left]{$\rho$} (m-2-1)
(m-1-2)		edge node{$\bigoplus_{i, j} \Hom(-, K\langle j\rangle [i - j]_\triangle)$} (m-2-2)
(m-2-1)		edge node{$\bigoplus_{i, j} \gr_j^{\bm{w}} \SHom^i(-, K)$} (m-2-2);
\end{tikzpicture}
\end{equation}
\end{lem}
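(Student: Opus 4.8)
The plan is to establish commutativity of the square up to natural isomorphism by comparing its two legs objectwise and then checking the comparison respects the $\bb{A}_K$-module structures. Concretely, for each $\cal{K} \in \sf{K}^b(\sf{C}(X_0))$ I want a natural isomorphism
\[
\bigoplus_{i, j} \Hom_{\sf{K}^b(\underline{\sf{C}}(X_0))}(\sf{K}^b\kappa(\cal{K}),\, K\langle j\rangle[i - j]_\triangle)
\;\simeq\;
\bigoplus_{i, j} \gr_j^{\bm{w}} \SHom^i(\rho(\cal{K}), K)
\]
of objects of $\Mod_2^b(\bb{A}_K)$. Everything is driven by Proposition~\ref{prop:realization}, the compatibility square \eqref{eq:kappa-rho}, and the fact that $\rho$ is triangulated.

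First I would do the vector-space comparison summand by summand. Fix $i, j \in \bb{Z}$. Using the adjunction $\Hom(\cal{A}, \cal{B}[i]_\triangle) = \Hom(\cal{A}[-i]_\triangle, \cal{B})$ together with $\langle j\rangle[i - j]_\triangle = \langle j\rangle[-j]_\triangle[i]_\triangle$, I rewrite the left-hand summand as $\Hom_{\sf{K}^b(\underline{\sf{C}}(X_0))}(\sf{K}^b\kappa(\cal{K})[-i]_\triangle, K\langle j\rangle[-j]_\triangle)$, where $K$ is viewed as a complex concentrated in degree zero. Applying Proposition~\ref{prop:realization} with its ``$\cal{K}$'' set to $\sf{K}^b\kappa(\cal{K})[-i]_\triangle$, its ``$\cal{L}$'' set to $K$, and its ``$n$'' set to $j$, this group is identified, by the map induced by $\rho$, with $\gr_j^{\bm{w}} \SHom^0(\rho(\sf{K}^b\kappa(\cal{K})[-i]_\triangle), \rho(K))$. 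Since $\rho$ is triangulated and \eqref{eq:kappa-rho} commutes, the first argument is $\rho(\cal{K})[-i]$ and the second is $K$ (objects being the same in $\sf{D}_{G, m}^b(X_0)$ and $\underline{\sf{D}}_{G, m}^b(X_0)$); combined with $\SHom^0((-)[-i], -) = \SHom^i(-, -)$, the $(i, j)$-summand becomes $\gr_j^{\bm{w}} \SHom^i(\rho(\cal{K}), K)$. Summing over $i, j$ gives the desired isomorphism; it has values in $\Mod_2^b(\bb{A}_K)$ because the complexes involved are bounded, so $|i - j|$ is bounded on the nonzero summands, and it is natural in $\cal{K}$ since it is induced by the functor $\rho$.

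Second I would check $\bb{A}_K$-linearity. By the definitions recalled above, the module structure on the right-hand functor $\bigoplus_{i, j}\gr_j^{\bm{w}}\SHom^i(-, K)$ is post-composition with $\bb{A}_K = \bigoplus_n \Hom_{\underline{\sf{C}}(X_0)}(K, K\langle n\rangle)$ inside the ring of self-maps of $K$; $\SHom$-purity of $\sf{C}(X_0)$ puts $\bb{A}_K$ in weight zero, so post-composition with a degree-$n$ element carries $\gr_j^{\bm{w}}\SHom^i(\rho(\cal{K}), K)$ into $\gr_{j + n}^{\bm{w}}\SHom^{i + n}(\rho(\cal{K}), K)$, i.e.\ the action is diagonally graded, as required for $\Mod_2^b(\bb{A}_K)$. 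The module structure on the left-hand functor is post-composition with the same algebra $\bb{A}_K = \bigoplus_n \Hom_{\sf{K}^b(\underline{\sf{C}}(X_0))}(K, K\langle n\rangle)$. Because the comparison isomorphism is induced by the functor $\rho$, it intertwines the two post-composition actions, which is exactly the asserted $\bb{A}_K$-linearity. Together with the previous paragraph, this shows the two legs of the square are naturally isomorphic.

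I expect the only genuine work to be bookkeeping: reconciling the two shift functors $\langle -\rangle$ and $[-]_\triangle$ (and the half-Tate twists hidden in $\langle -\rangle$) when feeding things into Proposition~\ref{prop:realization}, and confirming that post-composition is compatible with the weight filtration on the $\SHom$-spaces. The latter is precisely the point where one must invoke $\SHom$-purity---Lemmas~\ref{lem:frob-invariance-bruhat} and \ref{lem:unipotent} supply it in the cases we use---rather than mere realizability. There is no conceptual obstacle beyond Proposition~\ref{prop:realization}: the lemma is just that proposition with an $\bb{A}_K$-module structure carried along.
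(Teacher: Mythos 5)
Your argument is correct and is exactly the route the paper intends: the lemma is stated there as an immediate consequence of Proposition \ref{prop:realization}, applied summandwise after shifting via $[i-j]_\triangle=[-j]_\triangle[i]_\triangle$, with the commutativity of \eqref{eq:kappa-rho} and the triangulated property of $\rho$ identifying the arguments, and with the $\bb{A}_K$-actions on both sides given by post-composition (compatible with weights by $\SHom$-purity) so that the $\rho$-induced isomorphism is automatically $\bb{A}_K$-linear. Your write-up just makes explicit the bookkeeping the paper leaves implicit.
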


\subsection{}

We now set $X_0 = \cal{U}_0$ and $\sf{C}(X_0) = \sf{C}(\cal{U}_0)$ as in Definition \ref{df:unipotent}.
Recall that $\cal{S} \in \sf{C}(\cal{U}_0)$ denotes the (mixed) Springer sheaf.

\begin{thm}[Lusztig]\label{thm:lusztig-algebra}
We have $\bb{A}_W \simeq \bb{A}_\cal{S}$ as graded $\QL$-algebras.
\end{thm}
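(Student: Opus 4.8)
The plan is to unwind the definition of $\bb{A}_\cal{S}$ and reduce the statement to Lusztig's computation of the convolution algebra of the equivariant Springer sheaf. First I would use $\SHom$-purity: by Lemma \ref{lem:unipotent} the category $\sf{C}(\cal{U}_0)$ is $\SHom$-pure, so $\SHom^0(\cal{S}, \cal{S}\langle n\rangle)$ is concentrated in weight zero for every $n \in \bb{Z}$. Hence $\gr_0^{\bm{w}} \SHom^0(\cal{S}, \cal{S}\langle n\rangle) = \SHom^0(\cal{S}, \cal{S}\langle n\rangle)$, the composition product on $\underline{\sf{C}}(\cal{U}_0)$ agrees with the Yoneda product, and the canonical isomorphisms $\SHom^0(\cal{S}, \cal{S}\langle n\rangle) \simeq \SHom^n(\cal{S}, \cal{S})(\tfrac{n}{2})$ assemble into an isomorphism of graded $\QL$-algebras $\bb{A}_\cal{S} \simeq \SEnd^\ast(\cal{S})$, where the grading on the right is by cohomological degree and the half-Tate twists are harmless at the level of the underlying graded ring. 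Since $\cal{S} = p_\ast\QL\langle -r\rangle$, the twist $\langle -r\rangle$ cancels and $\SEnd^\ast(\cal{S}) = \SHom^\ast(p_\ast\QL, p_\ast\QL)$.

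Next I would identify $\SEnd^\ast(\cal{S})$ with the equivariant Borel--Moore homology of the Steinberg scheme. Writing $q_1, q_2 : \cal{Z} = \tilde{\cal{U}} \times_\cal{U} \tilde{\cal{U}} \to \tilde{\cal{U}}$ for the two projections (so that $\cal{Z} = \cal{Z}(\bb{1})$ in the notation of the introduction), properness of $p$ lets us use adjunction ($p_\ast = p_!$ is left adjoint to $p^!$) and proper base change to get $\SHom^\ast(p_\ast\QL, p_\ast\QL) \simeq \SHom^\ast(\QL, q_1^!\QL)$ computed over $\cal{Z}_0$; since $\tilde{\cal{U}}$ is smooth (after reduction) of dimension $2N$, the object $q_1^!\QL$ is a shift-twist of $\omega_\cal{Z}$, so this group is the $G$-equivariant Borel--Moore homology $\ur{H}_{-\ast}^{!, G}(\cal{Z})$, up to a degree shift by $2\dim\cal{Z} = 4N$. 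Under this identification the Yoneda product goes over to the convolution product on $\ur{H}_\ast^{!, G}(\cal{Z})$; this is the standard yoga, and the weight-sensitive form of the relevant push/pull compatibilities is revisited in Section \ref{sec:steinberg}.

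Finally, by \cite{lusztig_1988} the convolution algebra $\ur{H}_{-\ast}^{!, G}(\cal{Z})$ is isomorphic, as a graded $\QL$-algebra, to $\QL[W] \ltimes \Sym(\bb{V}) = \bb{A}_W$, with $W$ in degree $0$ and $\bb{V}$ in degree $2$: Lusztig treats the $(G \times \bb{G}_m)$-equivariant case and obtains the graded affine Hecke algebra, and dropping the $\bb{G}_m$-factor --- equivalently, setting the equivariant parameter to $0$ --- gives the present statement, as recalled in the introduction. Composing the three steps yields $\bb{A}_W \simeq \bb{A}_\cal{S}$. The step I expect to be the main obstacle is not conceptual but a matter of bookkeeping: reconciling Lusztig's normalizations --- the orientation of $\cal{Z}$, the cohomological-versus-Borel--Moore degree shifts, and the handedness of the convolution product (immaterial here, since $w \mapsto w^{-1}$, $v \mapsto v$ defines an anti-automorphism of $\bb{A}_W$) --- with the shift-twist and weight conventions fixed in Section \ref{sec:mixed} and with the convention placing $\bb{V}$ in degree $2$.
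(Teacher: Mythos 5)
Your proposal is correct, but it takes a longer route than the paper does. The paper's proof is two lines: it quotes Lusztig's result \cite[Cor.\ 6.4]{lusztig_1988} directly in the form $\bb{A}_W \simeq \End^\ast(\xi\cal{S})$, an isomorphism with the Ext-algebra of the unmixed equivariant Springer sheaf, and then uses exactly the purity step you begin with (Lemma \ref{lem:unipotent}, via \eqref{eq:bbd}) to identify $\End^i(\xi\cal{S})$ with $\SHom^0(\cal{S}, \cal{S}\langle i\rangle) = \bb{A}_\cal{S}^i$. You instead interpolate the Steinberg variety: after the same purity step you convert $\SEnd^\ast(\cal{S})$ into $\ur{H}^{!,G}_{-\ast}(\cal{Z})$ with its convolution product and then quote Lusztig's identification of that convolution algebra with the graded affine Hecke algebra, specialized at $\epsilon = 0$. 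This is sound, and in fact the $\SEnd$-to-Borel--Moore translation you perform is carried out in the paper itself, just later (Lemma \ref{lem:borel-moore} and \eqref{eq:z}); what your route buys is that the geometric content behind Lusztig's theorem is made explicit, at the cost of two extra standard-but-not-free ingredients: the compatibility of Yoneda composition with convolution, and the passage from $(G\times\bb{G}_m)$-equivariance to $G$-equivariance, which requires freeness of $\ur{H}^{!,G\times\bb{G}_m}_{-\ast}(\cal{Z})$ over $\QL[\epsilon]$ rather than being purely formal. One small bookkeeping remark: in the paper's stacky conventions the shift in the identification is by $2\dim[\tilde{\cal{U}}/G] = -2r$, giving $\bb{A}_\cal{S}^n \simeq \ur{H}^{!,G}_{-(n+2r)}(\cal{Z})$ as in \eqref{eq:z}, rather than the non-equivariant shift $4N$ you quote; since you explicitly deferred normalization to bookkeeping, this does not affect the argument.
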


\begin{proof}
By \cite[Cor.\ 6.4]{lusztig_1988}, $\bb{A}_W 
\simeq \End^\ast(\xi \cal{S})$.
By Lemma \ref{lem:unipotent} (or \cite[Lem.\ 5.4]{rider}), we have $\End^i(\xi\cal{S}) \simeq \SHom^0(\cal{S}, \cal{S}\langle i\rangle)$ for all $i$ after we forget Frobenius.
\end{proof}

\begin{rem}
Although we will not use it, the proof of \cite[Prop.\ 6.5]{rider} implies that the functor $\bigoplus_i \Hom(\cal{S}, (-)\langle i\rangle)$ induces a triangulated equivalence
\begin{align}
\sf{K}^b(\sf{C}_{\hat{W}}(\cal{U}_0))
\simeq \sf{D}^b(\Mod_1(\bb{A}_W)),
\end{align}
where the right-hand side is the bounded derived category of $\Mod_1(\bb{A}_W)$.
\end{rem}

\begin{df}
The \dfemph{$\bb{A}_W$ trace} on the Hecke category is the contravariant functor
\begin{align}
\sf{AH} : \sf{H}_W^\op \to \Mod_2^b(\bb{A}_W)
\end{align}
defined by the commutative diagram
\begin{equation}\label{eq:a_w}
{\small
\begin{tikzpicture}[baseline=(current bounding box.center), >=stealth]
\matrix(m)[matrix of math nodes, row sep=2.5em, column sep=6.5em, text height=2ex, text depth=0.5ex]
{	\sf{K}^b(\sf{C}(\cal{B}_0 \times \cal{B}_0))^\op
		&\sf{K}^b(\sf{C}(\cal{U}_0))^\op
		&\sf{K}^b(\underline{\sf{C}}(\cal{U}_0))^\op\\
	\sf{D}_{G, m}^b(\cal{B}_0 \times \cal{B}_0)^\op
		&\sf{D}_{G, m}^b(\cal{U}_0)^\op
		&\Mod_2^b(\bb{A}_W)\\
		};
\path[->,font=\scriptsize, auto]
(m-1-1)		edge node{$\sf{K}^b(i^\ast\langle -r\rangle \circ \sf{CH})$} (m-1-2)
(m-2-1)		edge node{$i^\ast\langle -r\rangle \circ \sf{CH}$} (m-2-2)
(m-1-2) 	edge node{$\sf{K}^b\kappa$} (m-1-3)
(m-1-1)		edge node[left]{$\rho$} (m-2-1)
(m-1-2)		edge node{$\rho$} (m-2-2)
(m-1-3)		edge (m-2-3)
(m-2-2)		edge (m-2-3);
\end{tikzpicture}}
\end{equation}
that we obtain by combining Remark \ref{rem:compatible}, Lemma \ref{lem:unipotent}, Lemma \ref{lem:hom-pure}, and Theorem \ref{thm:lusztig-algebra}.
Explicitly,
\begin{align}\begin{split}
\sf{AH}^{i, j} 
&=	\Hom_{\sf{K}^b(\underline{\sf{C}}(\cal{U}_0))}(\sf{K}^b(\kappa \circ i^\ast\langle -r\rangle \circ \sf{CH})(-), \cal{S}\langle j\rangle [i - j]_\triangle)\\
&\simeq 
	\gr_j^{\bm{w}} \SHom^i((\rho \circ \sf{K}^b(i^\ast\langle -r\rangle \circ \sf{CH}))(-), \cal{S})\\
&\simeq
	\gr_j^{\bm{w}} \SHom^i((i^\ast\langle -r\rangle \circ \sf{CH} \circ \rho)(-), \cal{S})
\end{split}\end{align}
for all $i, j$.
It is a monoidal trace in the sense of Lemma \ref{lem:monoidal-trace} because it factors through $\sf{CH}$.
\end{df}

\newpage
\section{Steinberg Schemes}\label{sec:steinberg}

\subsection{}

In Section \ref{sec:hecke}, we reviewed the (mixed) Springer sheaf $\cal{S} \in \sf{D}_{G, m}^b(\cal{U}_0)$ and the (mixed) Grothendieck--Springer sheaf $\cal{G} \in \sf{D}_{G, m}^b(G_0)$.
In Section \ref{sec:a_w}, we gave a construction of a graded algebra $\bb{A}_K$ from a mixed complex $K$: explicitly,
\begin{align}\begin{split}
\bb{A}_K 
&= \bigoplus_n
{\gr_0^{\bm{w}}}
\SHom^0(K, K\langle n\rangle)\\
&\simeq \bigoplus_n
{\gr_n^{\bm{w}}}
\SEnd^n(K).
\end{split}\end{align}
In this section, we use topological methods to prove Theorem \ref{thm:pure-iso}:
The map 
\begin{align}
i^\ast : \bb{A}_\cal{G} \to \bb{A}_\cal{S}
\end{align}
induced by $i : \cal{U}_0 \to \cal{G}_0$ via Lemma \ref{lem:pullback} is an isomorphism of graded $\QL$-algebras.

\begin{rem}\label{rem:impure}
By Lemma \ref{lem:unipotent}, $\SEnd^n(\cal{S})$ is concentrated in weight $n$ (see also \cite[Lem.\ 5.4]{rider}.)
However, if $n > 0$, then $\SEnd^n(\cal{G})$ is not concentrated in weight $n$, essentially because the cohomology of the Grothendieck--Springer resolution of $G$ contains the cohomology of a torus.
See Example \ref{ex:identity}.
\end{rem}

\begin{rem}
Our proof of Theorem \ref{thm:pure-iso} is inspired by an unpublished note of Riche about the analogous statement at the level of Lie algebras, which is well-known.
Riche states that his note was inspired by \cite[\S{3.4}]{cg}.
\end{rem}

\subsection{}

If $X_0$ is a $G_0$-variety over $\bb{F}$, with structure map $a : X_0 \to \point_0$, then its mixed equivariant dualizing complex is 
\begin{align}
\omega = \omega_{X_0/G_0} \vcentcolon= a^!\QL \in \sf{D}_{G, m}^b(X_0).
\end{align}
Its equivariant Borel--Moore homology (with coefficients in $\QL$) is
\begin{align}
\ur{H}_{-\ast}^{!, G}(X) \vcentcolon= \SHom^\ast(\QL, \omega),
\end{align}
viewed as a graded $\QL[F]$-module.
Borel--Moore homology is functorial in the following senses:
\begin{enumerate}
\item 	If $f : Y \to X$ is proper, then it induces a pushforward 
\begin{align}
f_\ast : \ur{H}_{-\ast}^{!, G}(Y) \to \ur{H}_{-\ast}^{!, G}(X)
\end{align}
via the composition $f_\ast \omega_{Y/G} \xrightarrow{\sim} f_! f^! \omega_{X/G} \to \omega_{X/G}$.
\item 	If $f : Y \to X$ is smooth of relative dimension $d$, then it induces a pullback 
\begin{align}
f^\ast : \ur{H}_{-\ast}^{!, G}(X) \to \ur{H}_{-\ast}^{!, G}(Y)\langle -2d\rangle
\end{align}
via the composition $\omega_{X/G} \to f_\ast f^\ast \omega_{X/G}  \xrightarrow{\sim} f_\ast \omega_{Y/G}\langle -2d\rangle$.
		If $f$ is an affine-space bundle, then $f^\ast$ is a shifted isomorphism.
\item 	If $i : Z \to X$ is a closed immersion with complement $j : U \to X$, then it induces a long exact sequence
\begin{align}\label{eq:long-exact}
\cdots \to \ur{H}_i^{!, G}(Z) \to \ur{H}_i^{!, G}(X) \to \ur{H}_i^{!, G}(U) \to \ur{H}_{i - 1}^{!, G}(Z) \to \cdots
\end{align}
via the exact triangle $i_!i^! \to \id \to j_\ast j^\ast \to i_!i^![1]$.
\item 	If we have a cartesian square
\begin{equation}
\begin{tikzpicture}[baseline=(current bounding box.center), >=stealth]
\matrix(m)[matrix of math nodes, row sep=2em, column sep=2.5em, text height=2ex, text depth=0.5ex]
{	Y
		&Y'\\
	X
		&X'\\
		};
\path[->,font=\scriptsize, auto]
(m-1-1)		edge node{$f_Y$} (m-1-2)
(m-1-1)		edge node[left]{$g$} (m-2-1)
(m-1-2)		edge node{$g'$} (m-2-2)
(m-2-1)		edge node{$f$} (m-2-2);
\end{tikzpicture}
\end{equation}
in which $[X/G], [X'/G]$ are smooth of respective dimensions $d, d'$, then $f$ induces a \dfemph{pullback with restricted support}
\begin{align}
f_Y^\ast : \ur{H}_{-\ast}^{!, G}(Y') \to \ur{H}_{-\ast}^{!, G}(Y)\langle 2(d' - d)\rangle
\end{align}
via the composition
\begin{align}
\omega_{Y'/G}\langle -2d'\rangle \xrightarrow{\sim} {g'}^! \QL \to f_{Y, \ast} g^! f^\ast \QL \xrightarrow{\sim} f_{Y, \ast} \omega_{Y/G}\langle -2d\rangle.
\end{align}
Here, we use proper base change \cite[\S{5}]{lo_2008} to identify ${g'}^! f_\ast \simeq f_{Y, \ast} g^!$.
\end{enumerate}

\subsection{}

We can express $\SHom$-spaces between pushforward sheaves, like $\bb{A}_\cal{S}$ and $\bb{A}_\cal{G}$, in terms of the Borel--Moore homology of associated schemes.

\begin{lem}\label{lem:borel-moore}
If we have a cartesian square of $G$-varieties
\begin{equation}
\begin{tikzpicture}[baseline=(current bounding box.center), >=stealth]
\matrix(m)[matrix of math nodes, row sep=2em, column sep=2.5em, text height=2ex, text depth=0.5ex]
{	\tilde{Y}
		&\tilde{X}\\
	Y
		&X\\
		};
\path[->,font=\scriptsize, auto]
(m-1-1)		edge node{$\tilde{f}$} (m-1-2)
(m-1-1)		edge node[left]{$p_Y$} (m-2-1)
(m-1-2)		edge node{$p$} (m-2-2)
(m-2-1)		edge node{$f$} (m-2-2);
\end{tikzpicture}
\end{equation}
in which $[\tilde{X}/G]$ is smooth of dimension $\tilde{d}$, then
\begin{align}\label{eq:borel-moore}
\SHom^\ast(f_! \QL, p_\ast \QL)
\simeq \ur{H}_{-\ast}^{!, G}(\tilde{Y})\langle -2\tilde{d}\rangle.
\end{align}
In particular, if $p$ is proper and $K = p_\ast \QL$, then
\begin{align}
\bb{A}_K^n \simeq \gr_{n - 2\tilde{d}}^{\bm{w}} \ur{H}_{-(n - 2\tilde{d})}^{!, G}(\tilde{X} \times_X \tilde{X})
\end{align}
for all $n$, and $\ur{H}_{-\ast}^{!, G}(\tilde{Y})$ forms a left graded $\bb{A}_K$-module.
\end{lem}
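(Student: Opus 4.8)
The plan is to reduce the entire statement to the $(f_!,f^!)$ adjunction together with proper base change, plus the computation of the equivariant dualizing complex of a smooth quotient stack. First I would prove \eqref{eq:borel-moore}. By the internal-hom form of the $(f_!,f^!)$ adjunction, $\SHom(f_!\QL,p_\ast\QL)\simeq f_\ast\SHom(\QL,f^!p_\ast\QL)=f_\ast f^!p_\ast\QL$. Next, proper base change across the given cartesian square, in the form $f^!p_\ast\simeq p_{Y,\ast}\tilde f^!$ (available for the lisse-\'etale site of our quotient stacks by \cite{lo_2008}, \emph{cf.}\ property (4) of equivariant Borel--Moore homology above), rewrites this as $f_\ast p_{Y,\ast}\tilde f^!\QL=(p\tilde f)_\ast\tilde f^!\QL$. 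Since $[\tilde X/G]$ is smooth of dimension $\tilde d$, its equivariant dualizing complex is $\omega_{\tilde X/G}=a_{\tilde X}^!\QL\simeq\QL\langle 2\tilde d\rangle$, whence $\tilde f^!\QL\simeq\tilde f^!\omega_{\tilde X/G}\langle -2\tilde d\rangle=\omega_{\tilde Y/G}\langle -2\tilde d\rangle$. Writing $a\colon X_0\to\point_0$ and $b\colon\tilde Y_0\to\point_0$ for the structure maps, so that $b=a\circ p\circ\tilde f$, applying $a_\ast$ and then $\ur H^\ast\circ\xi$ gives
\begin{align}
\SHom^\ast(f_!\QL,p_\ast\QL)\simeq\ur H^\ast\bigl(\xi b_\ast\omega_{\tilde Y/G}\bigr)\langle -2\tilde d\rangle=\ur H_{-\ast}^{!,G}(\tilde Y)\langle -2\tilde d\rangle .
\end{align}
Because the adjunction unit/counit, the base-change isomorphism, and the smooth-base trivialization of $\omega_{\tilde X/G}$ are all Frobenius-equivariant, this is an isomorphism of graded $\QL[F]$-modules, hence in particular weight-preserving.

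For the formula for $\bb A_K^n$, I would apply \eqref{eq:borel-moore} with $f=p$: properness gives $f_!\QL=p_!\QL=p_\ast\QL=K$ and $\tilde Y=\tilde X\times_X\tilde X$, so $\SEnd^\ast(K)\simeq\ur H_{-\ast}^{!,G}(\tilde X\times_X\tilde X)\langle -2\tilde d\rangle$ weight-preservingly; reading off degree $n$ and using that a Tate twist by $(-\tilde d)$ raises weights by $2\tilde d$, passage to the weight-$n$ graded piece yields
\begin{align}
\bb A_K^n\simeq\gr_n^{\bm w}\SEnd^n(K)\simeq\gr_{n-2\tilde d}^{\bm w}\ur H_{-(n-2\tilde d)}^{!,G}(\tilde X\times_X\tilde X),
\end{align}
using the identification $\bb A_K^n\simeq\gr_n^{\bm w}\SEnd^n(K)$ recalled at the start of this section. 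For the module structure, post-composition makes $\SHom^\ast(f_!\QL,K)$ a graded module over $\SEnd^\ast(K)=\bigoplus_n\SEnd^n(K)$ by maps of $\QL[F]$-modules, hence compatible with the weight filtrations; transporting this through \eqref{eq:borel-moore} makes $\ur H_{-\ast}^{!,G}(\tilde Y)$ a graded module over $\SEnd^\ast(K)$, through which $\bb A_K$ acts---literally when each $\SEnd^n(K)$ is pure of weight $n$ (as for $K=\cal S$, by Lemma \ref{lem:unipotent}), and via the weight-associated graded in general.

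I do not expect a genuinely hard step here: the content is bookkeeping with the six operations and Deligne's weight formalism. The one point that demands care is the legitimacy, over the lisse-\'etale site of quotient stacks with affine stabilizers, of proper base change in the form $f^!p_\ast\simeq p_{Y,\ast}\tilde f^!$ and of the identification $\omega_{\tilde X/G}\simeq\QL\langle 2\tilde d\rangle$, together with the Frobenius-equivariance of every comparison map used---without the latter the conclusion would only be an isomorphism of underlying graded vector spaces, not a weight-preserving one. But this is precisely what \cite{lo_2008} and \cite{sun} supply, and it is already invoked implicitly in the functoriality properties (1)--(4) of equivariant Borel--Moore homology recorded just above.
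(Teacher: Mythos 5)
Your proposal is correct and follows essentially the same route as the paper: the paper's proof is exactly the combination of proper base change ($f^!p_\ast\QL\simeq p_{Y,\ast}\tilde f^!\QL$), the identification $\QL\simeq\omega_{\tilde X/G}\langle-2\tilde d\rangle$ from smoothness of $[\tilde X/G]$, and the adjunction $\SHom(f_!\QL,p_\ast\QL)\simeq f_\ast\SHom(\QL,f^!p_\ast\QL)$, with the ``in particular'' clause obtained, as you do, by taking $f=p$. Your extra care about the $\bb{A}_K$-action is fine (and more detail than the paper gives); note only that since the weight filtration here splits canonically into Frobenius generalized eigenspaces, $\gr_n^{\bm w}\SEnd^n(K)$ sits inside $\SEnd^n(K)$ and acts genuinely on $\ur H_{-\ast}^{!,G}(\tilde Y)$, not merely on its associated graded.
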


\begin{proof}
By proper base change,
\begin{align}\begin{split}
f^! p_\ast \QL 
&\simeq p_{Y, \ast} \tilde{f}^! \QL 
\simeq p_{Y, \ast} \tilde{f}^! \omega_{\tilde{X}/G} \langle -2\tilde{d}\rangle 
\simeq p_{Y, \ast} \omega_{\tilde{Y}/G} \langle -2\tilde{d}\rangle,
\end{split}\end{align}
giving $\SHom(f_! \QL, p_\ast \QL) \simeq f_\ast \SHom(\QL, f^! p_\ast \QL) \simeq f_\ast p_{Y, \ast} \SHom(\QL, \omega_{\tilde{Y}/G})\langle -2\tilde{d}\rangle$.
\end{proof}

\subsection{}

In the notation of Section \ref{sec:hecke}, let
\begin{align}
\cal{Z}_0 &= \tilde{\cal{U}}_0 \times_{\cal{U}_0} \tilde{\cal{U}}_0,\\
\cal{Z}'_0 &= \tilde{G}_0 \times_{G_0} \tilde{G}_0.
\end{align}
In the literature, the underlying reduced scheme of $\cal{Z}$ is called the \dfemph{Steinberg variety}, since it was introduced in \cite{steinberg_1976}.
Lemma \ref{lem:borel-moore} implies that
\begin{align}
\label{eq:z}
\bb{A}_\cal{S}^n &\simeq \gr_{n + 2r}^{\bm{w}} \ur{H}_{-(n + 2r)}^{!, G}(\cal{Z}) \pa{\simeq \ur{H}_{-(n + 2r)}^{!, G}(\cal{Z})},\\
\label{eq:z-prime}
\bb{A}_\cal{G}^n &\simeq \gr_n^{\bm{w}} \ur{H}_{-n}^{!, G}(\cal{Z}').
\end{align}
It remains to compare the Borel--Moore homologies of $\cal{Z}$ and $\cal{Z}'$.

Let us outline the strategy.
We will lift the Bruhat strata of $\cal{B} \times \cal{B}$ to strata $\cal{Z}_w \subseteq \cal{Z}$ and $\cal{Z}'_w \subseteq \cal{Z}'$.
To compare the Borel--Moore homologies of $\cal{Z}_w$ and $\cal{Z}'_w$, we will further replace these strata with affine-space bundles $\cal{Y}_w$ and $\cal{Y}'_w$ such that there is an explicit map $\cal{Y}_w \times T \to \cal{Y}'_w$, where $T$ is a maximal torus of $G$.
Finally, we will use the long exact sequence \eqref{eq:long-exact} to patch the stratum-wise comparisons together.

\subsection{}

Let $\tilde{i} : \tilde{\cal{U}}_0 \to \tilde{G}_0$ be the lift of the inclusion $i : \cal{U}_0 \to G_0$.
Let $\tilde{p}_1, \tilde{p}_2 :\cal{Z}_0 \to \tilde{\cal{U}}_0$ be the two projection maps, and let $\tilde{p}'_1, \tilde{p}'_2 : \cal{Z}'_0 \to \tilde{G}_0$ be defined similarly.
Below, we will continually refer to the commutative diagram:
\begin{equation}\label{eq:tilde}
\begin{tikzpicture}[baseline=(current bounding box.center), >=stealth]
\matrix(m)[matrix of math nodes, row sep=2em, column sep=2.5em, text height=2ex, text depth=0.5ex]
{	\cal{Z}_0
		&\cal{Z}'_0\\
	\tilde{\cal{U}}_0
		&\tilde{G}_0\\
	\cal{U}_0
		&G_0\\
		};
\path[->,font=\scriptsize, auto]
(m-1-1)		edge node{$i_\cal{Z}$} (m-1-2)
(m-1-1)		edge node[left]{$\tilde{p}_j$} (m-2-1)
(m-1-2)		edge node{$\tilde{p}'_j$} (m-2-2)
(m-2-1)		edge node{$\tilde{i}$} (m-2-2)
(m-2-1)		edge node[left]{$p$} (m-3-1)
(m-2-2)		edge node{$p'$} (m-3-2)
(m-3-1)		edge node{$i$} (m-3-2);
\end{tikzpicture}
\end{equation}
Note that both squares in the diagram are cartesian.

\begin{lem}
We have a commutative diagram
\begin{equation}
\begin{tikzpicture}[baseline=(current bounding box.center), >=stealth]
\matrix(m)[matrix of math nodes, row sep=2em, column sep=2.5em, text height=2ex, text depth=0.5ex]
{	\bb{A}_\cal{G}^n
		&\bb{A}_\cal{S}^n\\
	\gr_n^{\bm{w}} \ur{H}_{-n}^{!, G}(\cal{Z}')
		&\ur{H}_{-(n + 2r)}^{!, G}(\cal{Z})\\
		};
\path[->,font=\scriptsize, auto]
(m-1-1)		edge node{$i^\ast$} (m-1-2)
(m-1-1)		edge node[rotate=90,above]{$\sim$} (m-2-1)
(m-1-2)		edge node[rotate=90,below]{$\sim$} (m-2-2)
(m-2-1)		edge node{$i_\cal{Z}^\ast$} (m-2-2);
\end{tikzpicture}
\end{equation}
where $i^\ast$ is defined by \eqref{eq:pullback}, the vertical isomorphisms are \eqref{eq:z} and \eqref{eq:z-prime}, and $i_\cal{Z}^\ast$ is the pullback with restricted support induced by $\tilde{i}$ via the top square of \eqref{eq:tilde}.
\end{lem}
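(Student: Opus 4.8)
The content of the lemma is that the ``abstract'' pullback $i^\ast\colon\bb{A}_\cal{G}\to\bb{A}_\cal{S}$ produced by Lemma~\ref{lem:pullback} out of \eqref{eq:pullback} agrees, under the identifications \eqref{eq:z} and \eqref{eq:z-prime}, with the ``geometric'' pullback with restricted support $i_\cal{Z}^\ast$ attached to the top square of \eqref{eq:tilde}. Both of these maps, and both vertical isomorphisms in the square, are assembled from the same canonical ingredients of the six-functor formalism on the stacks $[\,\cdot\,/G_0]$: proper base change, the adjunction $(-)_!\dashv(-)^!$, and the identifications $h^!\omega_{X/G_0}\simeq\omega_{Y/G_0}$ (transitivity of $(-)^!$) and $\omega_{X/G_0}\simeq\QL\langle 2d\rangle$ for $[X/G_0]$ smooth of dimension $d$. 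So there is no new idea; the plan is to unwind both sides and check that a diagram of these canonical morphisms commutes.

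Concretely, write $a\colon G_0\to\point_0$ and $a'=a\circ i$, and recall $\cal{G}=p'_\ast\QL$, $\cal{S}=p_\ast\QL\langle-r\rangle$, and $i^\ast\cal{G}\simeq p_\ast\QL$ by proper base change across the bottom square of \eqref{eq:tilde}. By definition \eqref{eq:pullback}, the map $i^\ast$ on $\bb{A}_\cal{G}^n=\gr_0^{\bm{w}}\SHom^0(\cal{G},\cal{G}\langle n\rangle)$ is $\gr_0^{\bm{w}}\,\ur{H}^0\circ\xi$ applied to $a_\ast$ of
\begin{equation}
\SHom(\cal{G},\cal{G}\langle n\rangle)\to\SHom(\cal{G},i_\ast i^\ast\cal{G}\langle n\rangle)\xrightarrow{\ \sim\ }i_\ast\SHom(i^\ast\cal{G},i^\ast\cal{G}\langle n\rangle),
\end{equation}
the first arrow being the unit of $(i^\ast,i_\ast)$. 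Now run, on each of $\cal{G}$ and $i^\ast\cal{G}$, the chain of isomorphisms from the proof of Lemma~\ref{lem:borel-moore}: using the adjunction $(p')_!\dashv(p')^!$, proper base change across the right (resp.\ top) square of \eqref{eq:tilde} for the projection $\tilde{p}'_2$ (resp.\ $\tilde{p}_2$), the identification $(\tilde{p}'_2)^!\QL\simeq\omega_{\cal{Z}'_0/G_0}$ (resp.\ $(\tilde{p}_2)^!\QL\simeq\omega_{\cal{Z}_0/G_0}\langle2r\rangle$) coming from $\dim[\tilde{G}_0/G_0]=0$ (resp.\ $\dim[\tilde{\cal{U}}_0/G_0]=-r$), and $\SHom(\QL,g_\ast-)\simeq g_\ast\SHom(\QL,-)$, one identifies $\SHom(\cal{G},\cal{G}\langle n\rangle)$ with $p'_\ast\tilde{p}'_{1,\ast}\SHom(\QL,\omega_{\cal{Z}'_0/G_0})\langle n\rangle$ and $\SHom(i^\ast\cal{G},i^\ast\cal{G}\langle n\rangle)$ with $p_\ast\tilde{p}_{1,\ast}\SHom(\QL,\omega_{\cal{Z}_0/G_0})\langle 2r+n\rangle$. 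I claim that under these identifications the displayed natural transformation becomes $p'_\ast\tilde{p}'_{1,\ast}\SHom(\QL,-)$ applied to
\begin{equation}
\omega_{\cal{Z}'_0/G_0}\xrightarrow{\ \sim\ }(\tilde{p}'_2)^!\QL\to i_{\cal{Z},\ast}(\tilde{p}_2)^!(\tilde{i})^\ast\QL\xrightarrow{\ \sim\ }i_{\cal{Z},\ast}\,\omega_{\cal{Z}_0/G_0}\langle 2r\rangle,
\end{equation}
the middle arrow being the unit $\QL\to\tilde{i}_\ast\tilde{i}^\ast\QL$ followed by proper base change across the top square of \eqref{eq:tilde}. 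But this map is precisely the one used in this section to build the pullback with restricted support $i_\cal{Z}^\ast$ along $\tilde{i}$ (with smooth bases $\tilde{\cal{U}}_0$ of dimension $-r$ and $\tilde{G}_0$ of dimension $0$); pushing to $\point_0$ and taking $\gr_0^{\bm{w}}\,\ur{H}^0\circ\xi$ then yields exactly the bottom arrow of the square together with the two vertical isomorphisms, as claimed.

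The only thing to verify is the boxed claim, and it rests on two standard ``base change $+$ adjunction'' compatibilities: (i) the unit $\mathrm{id}\to i_\ast i^\ast$, evaluated on a proper pushforward $p'_\ast\QL$, becomes --- after proper base change for the bottom square of \eqref{eq:tilde} --- the $p'_\ast$-pushforward of the unit $\QL\to\tilde{i}_\ast\tilde{i}^\ast\QL$ over $\tilde{G}_0$, using $p'_\ast\tilde{i}_\ast=i_\ast p_\ast$; and (ii) the proper base change isomorphisms compose coherently across the three stacked cartesian squares of \eqref{eq:tilde} together with the two ``doubling'' squares cutting out $\cal{Z}_0$ and $\cal{Z}'_0$, and the smooth identification $\omega_{X/G_0}\simeq\QL\langle2d\rangle$ is compatible with composition of smooth maps and with base change. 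All of this holds on $[\,\cdot\,/G_0]$ by \cite{lo_2008,sun}, and equivariance adds nothing: as in the remark preceding Lemma~\ref{lem:borel-moore}, the assertion concerns global sections over $\point_0$ and reduces to the non-equivariant statement on the lisse-\'etale site.

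I expect the main obstacle to be organizational rather than conceptual: one must arrange the bookkeeping --- three nested base-change squares, the two doubling squares, the smooth-twist edge, and the asymmetric shift $\langle2r\rangle$ --- so that the relevant large diagram visibly commutes. The cleanest route is probably to isolate once an auxiliary lemma: for a morphism of cartesian squares whose ``vertical'' legs are proper and whose bases are smooth, the map on equivariant Borel--Moore homology produced by the recipe of Lemma~\ref{lem:borel-moore} coincides with the pullback with restricted support along the morphism of bases. The present lemma is then the instance obtained by base-changing the right square of \eqref{eq:tilde} along $i\colon\cal{U}_0\to G_0$.
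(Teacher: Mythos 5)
Your proposal is correct and follows essentially the same route as the paper: both unwind the abstract pullback of \eqref{eq:pullback} and the pullback with restricted support into the same units of adjunction, proper/smooth base change isomorphisms, and dualizing-complex identifications from Lemma \ref{lem:borel-moore}, and then conclude by a routine diagram chase that the two agree after pushing to $\point_0$. The paper's proof phrases this at the level of compositions of functors (e.g.\ $\tilde{p}'_{2,\ast}(\tilde{p}')_1^! \simeq p'^!p'_\ast \to p'^! i_\ast i^\ast p'_\ast \to \cdots$) and likewise defers the final verification to diagram-chasing, so your argument is at a comparable level of detail and requires no new ingredient.
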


\begin{proof}
The morphism $i_\cal{Z}^\ast$ is induced by the morphism 
\begin{align}
(\tilde{p}')_1^! \to (\tilde{p}')_1^! \tilde{i}_\ast \tilde{i}^\ast
\xrightarrow{\sim} i_{\cal{Z}, \ast} {\tilde{p}}_1^! \tilde{i}^\ast,
\end{align}
while the morphism $i^\ast$ is induced by the composition
\begin{align}\begin{split}
\tilde{p}'_{2, \ast} (\tilde{p}')_1^! 
&\xrightarrow{\sim} {p'}^! p'_\ast \\
&\to {p'}^! i_\ast i^\ast p'_\ast\\
&\xrightarrow{\sim} \tilde{i}_\ast p^! p_! \tilde{i}^\ast\\
&\xrightarrow{\sim} \tilde{i}_\ast \tilde{p}_{2, !} \tilde{p}_1^! i^\ast\\
&\xrightarrow{\sim} \tilde{p}'_{2, \ast} i_{\cal{Z}, \ast} \tilde{p}_1^! i^\ast.
\end{split}\end{align}
By further diagram-chasing, the latter is obtained by applying $\tilde{p}'_{2, \ast}$ to the former.
\end{proof}

To analyze $i_\cal{Z}^\ast$, we decompose $\cal{Z}_0$ and $\cal{Z}'_0$ into strata.
For each $w \in W$, we define subschemes $\cal{Z}_{w, 0} \subseteq \cal{Z}_0$ and $\cal{Z}'_{w, 0} \subseteq \cal{Z}'_0$ by stipulating that both squares in the commutative diagram below are cartesian:
\begin{equation}\label{eq:z-w}
\begin{tikzpicture}[baseline=(current bounding box.center), >=stealth]
\matrix(m)[matrix of math nodes, row sep=2em, column sep=2.5em, text height=2ex, text depth=0.5ex]
{	\cal{Z}_{w, 0}
		&\cal{Z}'_{w, 0}
		&O_{w, 0}\\
	\cal{Z}_0
		&\cal{Z}'_0
		&\cal{B}_0 \times \cal{B}_0\\
		};
\path[->,font=\scriptsize, auto]
(m-1-1)		edge node{$i_{\cal{Z}, w}$} (m-1-2)
(m-1-1)		edge (m-2-1)
(m-1-2)		edge (m-1-3)
(m-1-2)		edge (m-2-2)
(m-2-1)		edge node{$i_\cal{Z}$} (m-2-2)
(m-2-2)		edge (m-2-3)
(m-1-3)		edge node{$j_w$} (m-2-3);
\end{tikzpicture}
\end{equation}
Here, the map $\cal{Z}'_0 \to \cal{B}_0 \times \cal{B}_0$ is induced by the map $\cal{Z}'_0 \xrightarrow{\tilde{p}'_1 \times \tilde{p}'_2} \tilde{G}_0 \times \tilde{G}_0$.

To analyze $i_{\cal{Z}, w} : \cal{Z}_{w, 0} \to \cal{Z}'_{w, 0}$, we rewrite these strata in a way that expresses their structure as fiber bundles over $O_{w, 0}$.
Choose a split maximal torus $T_0 \subseteq G_0$.
Choose a Borel $B_0 \supseteq T_0$ with unipotent radical $U_0 \subseteq B_0$, so that $B_0 \simeq T_0 \ltimes U_0$.
Let $N_0$ be the normalizer of $T_0$ in $G_0$.
For each $w \in W$, fix a lift $\dot{w} \in N_0$.
Then $\dot{w}B_0 \dot{w}^{-1}$ contains $\dot{w}T_0 \dot{w}^{-1} = T_0$ and only depends on $w$.
If we write 
\begin{align}
B_{w, 0} &= B_0 \cap \dot{w}B_0 \dot{w}^{-1},\\
U_{w, 0} &= U_0 \cap \dot{w}U_0 \dot{w}^{-1},
\end{align}
then $B_{w, 0} \simeq T_0 \ltimes U_{w, 0}$.
In what follows, let $B_{w, 0}$ act on $G_0$ by right multiplication and on $U_{w, 0}$ and itself by right conjugation.

\begin{lem}\label{lem:assoc-bundle}
We have a commutative diagram
\begin{equation}
\begin{tikzpicture}[baseline=(current bounding box.center), >=stealth]
\matrix(m)[matrix of math nodes, row sep=2em, column sep=1.5em, text height=2ex, text depth=0.5ex]
{	(G_0 \times U_{w, 0})/B_{w, 0}
		&(G_0 \times B_{w, 0})/B_{w, 0}
		&G_0/B_{w, 0}\\
	\cal{Z}_{w, 0}
		&\cal{Z}'_{w, 0}
		&O_{w, 0}\\
		};
\path[->,font=\scriptsize, auto]
(m-1-1)		edge (m-1-2)
(m-1-2)		edge (m-1-3)
(m-1-1)		edge node[rotate=90,above]{$\sim$} (m-2-1)
(m-1-2)		edge node[rotate=90,below]{$\sim$} (m-2-2)
(m-1-3)		edge node[rotate=90,below]{$\sim$} (m-2-3)
(m-2-1)		edge node{$i_{\cal{Z}, w}$} (m-2-2)
(m-2-2)		edge (m-2-3);
\end{tikzpicture}
\end{equation}
where the middle vertical map sends $[g, b] \mapsto [gbg^{-1}, gBg^{-1}, g\dot{w}B\dot{w}^{-1}g^{-1}]$.
\end{lem}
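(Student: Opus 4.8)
The plan is to describe $\cal{Z}_0$ and $\cal{Z}'_0$ moduli-theoretically, restrict them over the Bruhat stratum $O_{w, 0}$ by means of the cartesian squares \eqref{eq:z-w}, recognize the two restrictions as associated bundles over $O_{w, 0} \cong G_0/B_{w, 0}$, and then read off the three vertical maps of the diagram.

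First I would unwind the cartesian squares defining $\tilde{G}_0$ and $\tilde{\cal{U}}_0$. Since $N_G(B) = B$ for a Borel $B$, one gets $\tilde{G}_0 = \{(g, B) : g \in B\}$, with $p'$ the first projection and with structure map $(g, B) \mapsto B$ to $\cal{B}_0$; and since a unipotent element of $B$ lies in its unipotent radical $U_B$, one gets $\tilde{\cal{U}}_0 = \{(u, B) : u \in U_B\}$, the total space of the vector bundle over $\cal{B}_0$ with fibre $U_B$. Consequently the maps to $\cal{B}_0 \times \cal{B}_0$ appearing in \eqref{eq:z-w} are
\begin{align}
\cal{Z}'_0 &= \{(g, B_1, B_2) : g \in B_1 \cap B_2\}, &
\cal{Z}_0 &= \{(u, B_1, B_2) : u \in U_{B_1} \cap U_{B_2}\},
\end{align}
both sending a triple to $(B_1, B_2)$, with $i_{\cal{Z}}$ the inclusion of the closed locus of $\cal{Z}'_0$ where the group coordinate is unipotent, and with all these maps $G_0$-equivariant for simultaneous conjugation.

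Next I would take $(B_0, \dot{w}B_0\dot{w}^{-1})$ as a base point of $O_{w, 0}$; its stabilizer under simultaneous conjugation is $B_0 \cap \dot{w}B_0\dot{w}^{-1} = B_{w, 0}$, and since $B_{w, 0} \simeq T_0 \ltimes U_{w, 0}$ is smooth and connected, the orbit map yields $G_0/B_{w, 0} \xrightarrow{\sim} O_{w, 0}$, $[g] \mapsto (gB_0g^{-1}, g\dot{w}B_0\dot{w}^{-1}g^{-1})$ — this is the rightmost vertical map. Because $\cal{Z}'_0 \to \cal{B}_0 \times \cal{B}_0$ is $G_0$-equivariant and $O_{w, 0}$ is a single orbit, the pullback $\cal{Z}'_{w, 0} = \cal{Z}'_0 \times_{\cal{B}_0 \times \cal{B}_0} O_{w, 0}$ is the associated bundle $G_0 \times^{B_{w, 0}} F'$, where $F'$ is the scheme-theoretic fibre of $\cal{Z}'_0$ over the base point, namely $\{g : g \in B_0 \cap \dot{w}B_0\dot{w}^{-1}\} = B_{w, 0}$ with the residual conjugation action; the identical argument for $\cal{Z}_0$, with $U$ in place of $B$, gives $\cal{Z}_{w, 0} \cong G_0 \times^{B_{w, 0}} U_{w, 0}$. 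Rewriting these as $(G_0 \times B_{w, 0})/B_{w, 0}$ and $(G_0 \times U_{w, 0})/B_{w, 0}$ and transporting the coordinates produces exactly the displayed formulas for the left and middle vertical maps; I would check well-definedness on the $B_{w, 0}$-quotients by the one-line verification that $h \in B_{w, 0}$ fixes $gB_0g^{-1}$ and $g\dot{w}B_0\dot{w}^{-1}g^{-1}$ and conjugates the group coordinate correctly, and conclude they are isomorphisms because they are $G_0$-equivariant bundle maps over $G_0/B_{w, 0}$ restricting on the base fibre to the identity of $B_{w, 0}$ (resp.\ of $U_{w, 0}$). Commutativity of the two squares is then automatic: the left square expresses that $i_{\cal{Z}}$ corresponds to the map induced by the inclusion $U_{w, 0} \hookrightarrow B_{w, 0}$, and the right square that the map forgetting the group coordinate corresponds to the map forgetting $b$.

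I expect the only real obstacle to be bookkeeping: pinning down the conjugation conventions so that the base point of $O_{w, 0}$ is $(B_0, \dot{w}B_0\dot{w}^{-1})$ and the orbit map has the displayed form, and checking that although $\cal{Z}_0$, $\cal{Z}'_0$, and $\tilde{\cal{U}}_0$ may fail to be reduced, their restrictions over each stratum $O_{w, 0}$ are smooth. This last point — which rests on the fact that in good characteristic the scheme-theoretic intersection of two Borel subgroups, and of their unipotent radicals, is reduced and smooth — is exactly what makes $\cal{Z}_{w, 0}$ and $\cal{Z}'_{w, 0}$ honest associated bundles rather than thickenings of them, and hence what makes the displayed vertical arrows literally isomorphisms of schemes.
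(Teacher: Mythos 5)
Your route is the same as the paper's. The paper's proof of this lemma simply cites Slodowy, Ch.\ 4, for the $w = 1$ identifications $(G \times U)/B \simeq \tilde{\cal{U}}$ and $(G \times B)/B \simeq \tilde{G}$ and declares the general case similar; what you write out --- the orbit isomorphism $G_0/B_{w,0} \xrightarrow{\sim} O_{w,0}$ at the base point $(B_0, \dot{w}B_0\dot{w}^{-1})$, together with the principle that the restriction of a $G_0$-equivariant map over a single orbit is the associated bundle on the scheme-theoretic fibre over the base point --- is precisely that argument carried out directly for general $w$, and your identification of the three vertical maps and the commutativity checks are fine.

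The one point I would push back on is your final paragraph. The scheme-theoretic fibre of $\cal{Z}_0 \to \cal{B}_0 \times \cal{B}_0$ over $(B_1, B_2)$ is not $U_{B_1} \cap U_{B_2}$ but $\cal{U} \cap B_1 \cap B_2$, and smoothness of $B_1 \cap B_2$ and of $U_{B_1} \cap U_{B_2}$ does not make that intersection reduced: already for $G = \SL_2$ and $w = 1$, writing the Borel of upper-triangular matrices with coordinates $a^{\pm 1}, b$, the ideal of $\cal{U}$ restricts to $a + a^{-1} - 2 = a^{-1}(a-1)^2$, so
\begin{align}
\cal{U} \cap B \simeq \Spec \bb{F}[a^{\pm 1}, b]/\bigl((a-1)^2\bigr),
\end{align}
a nilpotent thickening of $U_B$, even though $\cal{U}$ itself is reduced. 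This is the same phenomenon that forces the paper to write $\tilde{\cal{U}}_0^\red$ and to distinguish $\cal{Z}$ from the Steinberg variety $\cal{Z}^\red$. So with the scheme structures literally imposed by the cartesian squares, the left-hand vertical arrow is an isomorphism only onto the underlying reduced subscheme of $\cal{Z}_{w,0}$; the lemma is to be read at that level --- which is how the citation of Slodowy implicitly reads it, and which costs nothing downstream, since only equivariant Borel--Moore homology, insensitive to nilpotents, is ever used. For $\cal{Z}'_{w,0}$ there is no such issue: its fibre really is $B_1 \cap B_2$, which is smooth and connected in the paper's large-characteristic setting, so your argument there is complete as stated.
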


\begin{rem}
The vertical isomorphisms respectively transport the $G$-actions on $\cal{Z}_w$, $\cal{Z}'_w$, $O_w$ to the $G$-actions on $(G \times U_w)/B_w$, $(G \times B_w)/B_w$, $G/B_w$ given by left multiplication on the factor of $G$.
\end{rem}

\begin{proof}
The $w = 1$ case, entailing isomorphisms $(G \times U)/B \simeq \tilde{\cal{U}}$ and $(G \times B)/B \simeq \tilde{G}$, is explained in \cite[Ch.\ 4]{slodowy}, and the general case is similar.
\end{proof}

Again, let $T_0$ act on $G_0$ by right multiplication and on $U_{w, 0}$ and $B_{w, 0}$ by right conjugation.
Let $\cal{Y}_{w, 0}$ and $\cal{Y}'_{w, 0}$ be defined by the cartesian squares:
\begin{equation}
\begin{tikzpicture}[baseline=(current bounding box.center), >=stealth]
\matrix(m)[matrix of math nodes, row sep=2em, column sep=2.5em, text height=2ex, text depth=0.5ex]
{	\cal{Y}_{w, 0}
		&(G_0 \times U_{w, 0})/T_0\\
	\cal{Z}_{w, 0}
		&(G_0 \times U_{w, 0})/B_{w, 0}\\
		};
\path[->,font=\scriptsize, auto]
(m-1-1)		edge node{$\sim$} (m-1-2)
(m-1-1)		edge (m-2-1)
(m-1-2)		edge (m-2-2)
(m-2-1)		edge node{$\sim$} (m-2-2);
\end{tikzpicture}
\qquad
\begin{tikzpicture}[baseline=(current bounding box.center), >=stealth]
\matrix(m)[matrix of math nodes, row sep=2em, column sep=2.5em, text height=2ex, text depth=0.5ex]
{	\cal{Y}'_{w, 0}
		&(G_0 \times B_{w, 0})/T_0\\
	\cal{Z}'_{w, 0}
		&(G_0 \times B_{w, 0})/B_{w, 0}\\
		};
\path[->,font=\scriptsize, auto]
(m-1-1)		edge node{$\sim$} (m-1-2)
(m-1-1)		edge (m-2-1)
(m-1-2)		edge (m-2-2)
(m-2-1)		edge node{$\sim$} (m-2-2);
\end{tikzpicture}
\end{equation}
Let $i_{\cal{Y}, w} : \cal{Y}_{w, 0} \to \cal{Y}'_{w, 0}$ be the pullback of $i_{\cal{Z}, w} : \cal{Z}_{w, 0} \to \cal{Z}'_{w, 0}$.
The advantage of $i_{\cal{Y}, w}$ over $i_{\cal{Z}, w}$ is that the former can be factored as
\begin{align}
\cal{Y}_{w, 0} \xrightarrow{\sim} \cal{Y}_{w, 0} \times \{1\}
\to 
\cal{Y}_{w, 0} \times T_0 \xrightarrow{\sim} \cal{Y}'_{w, 0}.
\end{align}
If we let $G_0$ act trivially on $T_0$, then these maps are all $G_0$-equivariant.

\subsection{}

In what follows, if $M = \bigoplus_n M^n$ is a $\bb{Z}$-graded $\QL[F]$-module, then we write 
\begin{align}\begin{split}
M^\dagger 
&\simeq \bigoplus_n {\gr_n^{\bm{w}} M^n}.
\end{split}\end{align}
Let $i_{\cal{Z}, w}^\ast : \ur{H}_{-\ast}^{!, G}(\cal{Z}'_w) \to \ur{H}_{-\ast}^{!, G}(\cal{Z}_w)\langle 2r\rangle$ be the pullback with restricted support induced by $\tilde{i}$ via the top square of \eqref{eq:tilde} and the left square of \eqref{eq:z-w}, and let
\begin{align}
i_{\cal{Z}, w}^\dagger : \ur{H}_{-\ast}^{!, G}(\cal{Z}'_w)^\dagger \to \ur{H}_{-\ast}^{!, G}(\cal{Z}_w)\langle 2r\rangle
\end{align}
be its restriction to $\ur{H}_{-\ast}^{!, G}(\cal{Z}'_w)^\dagger$.
Let $i_{\cal{Y}, w}^\dagger$, $i_{\cal{Y}, w}^\dagger$ be defined similarly.

\begin{lem}
The map $i_{\cal{Y}, w}^\dagger$ is an isomorphism.
\end{lem}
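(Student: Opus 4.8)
The plan is to exploit the product structure $\cal{Y}'_{w,0} \simeq \cal{Y}_{w,0} \times T_0$ that is already implicit in the displayed factorization of $i_{\cal{Y}, w}$. First I would make that structure explicit: since $B_{w,0} \simeq T_0 \ltimes U_{w,0}$ and $T_0$ is abelian, the conjugation action of $T_0$ on $B_{w,0}$ is trivial on the $T_0$-coordinate and is the conjugation action on the $U_{w,0}$-coordinate, so $(G_0 \times B_{w,0})/T_0 \simeq \bigl((G_0 \times U_{w,0})/T_0\bigr) \times T_0$ with $G_0$ acting trivially on the free factor $T_0$. Combined with the cartesian-square definitions of $\cal{Y}_{w,0}$ and $\cal{Y}'_{w,0}$, this yields a $G_0$-equivariant isomorphism $\cal{Y}'_{w,0} \simeq \cal{Y}_{w,0} \times T_0$ under which $i_{\cal{Y}, w}$ becomes the closed immersion $\cal{Y}_{w,0} = \cal{Y}_{w,0} \times \{1\} \hookrightarrow \cal{Y}_{w,0} \times T_0$, obtained as the base change of $\iota \colon \{1\} \hookrightarrow T_0$ along the second projection. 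Via the Künneth isomorphism $\ur{H}^{!, G}_{-\ast}(\cal{Y}_w \times T) \simeq \ur{H}^{!, G}_{-\ast}(\cal{Y}_w) \otimes \ur{H}^!_{-\ast}(T)$, which is legitimate because $G$ acts trivially on $T$, the pullback with restricted support $i_{\cal{Y}, w}^\ast$ is then identified with $\id \otimes \iota^\ast$.

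The second step is a weight computation. The key observation is that $\ur{H}^{!, G}_{-\ast}(\cal{Y}_w)$ is \emph{pure}, i.e.\ its component $\ur{H}^{!, G}_{-n}(\cal{Y}_w)$ has weight $n$ for all $n$. Indeed, by Lemma~\ref{lem:assoc-bundle} and the definition of $\cal{Y}_{w,0}$, the space $\cal{Y}_{w,0}$ is an iterated affine-space bundle over $O_{w,0} \simeq G_0/B_{w,0}$; affine-space bundles alter $\ur{H}^{!, G}_{-\ast}$ only by a shift-twist; and $\ur{H}^\ast\bigl([(G_0/B_{w,0})/G_0]\bigr) = \ur{H}^\ast(BB_{w,0}) = \ur{H}^\ast(BT_0)$ is a polynomial ring in even degrees of the expected weights, the unipotent radical $U_{w,0}$ contributing nothing $\ell$-adically in large characteristic. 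Since $\langle\cdot\rangle$ preserves purity, $\ur{H}^{!, G}_{-\ast}(\cal{Y}_w)\langle 2r\rangle$ is again pure, so it equals its own $\dagger$-part; and because passage to the $\dagger$-part distributes over a tensor product with a pure factor, $\ur{H}^{!, G}_{-\ast}(\cal{Y}'_w)^\dagger \simeq \ur{H}^{!, G}_{-\ast}(\cal{Y}_w) \otimes \ur{H}^!_{-\ast}(T)^\dagger$. For $T \simeq \bb{G}_m^r$ one computes $\ur{H}^!_{-\ast}(T)^\dagger = \ur{H}^!_{2r}(T) \simeq \QL(r)$, the line spanned by the fundamental class $[T]$, concentrated in a single degree: this is the $r$-fold product of the case $\bb{G}_m$, where the class in $\ur{H}^!_1(\bb{G}_m)$ has weight $0$ and is discarded while $\ur{H}^!_2(\bb{G}_m) \simeq \QL(1)$ survives.

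Finally, restricting $i_{\cal{Y}, w}^\ast = \id \otimes \iota^\ast$ to the $\dagger$-subspace gives $\id_{\ur{H}^{!, G}_{-\ast}(\cal{Y}_w)} \otimes \iota^\dagger$, where $\iota^\dagger \colon \ur{H}^!_{-\ast}(T)^\dagger \to \ur{H}^!_{-\ast}(\point)\langle 2r\rangle$ carries the fundamental class $[T]$ to the fundamental class $[\{1\}]$ of the reduced point; both sides being copies of $\QL(r)$ in the same degree, $\iota^\dagger$ is an isomorphism, hence so is $i_{\cal{Y}, w}^\dagger$. I expect the only delicate points to be bookkeeping: checking that the Künneth isomorphism intertwines the pullback-with-restricted-support construction recalled above with $\id \otimes \iota^\ast$ (compatibility with external products and with base change along $\{1\} \to T$), and confirming the purity of $\ur{H}^{!, G}_{-\ast}(\cal{Y}_w)$, for which the large-characteristic hypothesis enters through the vanishing of the higher $\ell$-adic cohomology of $BU_{w,0}$.
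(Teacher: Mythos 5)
Your proposal is correct and follows essentially the same route as the paper: identify $\cal{Y}'_w \simeq \cal{Y}_w \times T$ with $G$ acting trivially on $T$, apply the K\"unneth isomorphism, and use a weight computation to see that only the line spanned by the fundamental class in $\ur{H}_{2r}^!(T)$ survives the $(-)^\dagger$ operation, onto which $i_{\cal{Y},w}^\ast = \id \otimes \iota^\ast$ restricts isomorphically. The only differences are cosmetic: your Frobenius-weight bookkeeping carries the Tate twists of Poincar\'e duality explicitly (so you phrase the first factor as ``pure'' and let $\dagger$ distribute over the tensor product), while the paper records the eigenvalues factor by factor with a shifted normalization, and your aside about $BU_{w,0}$ and large characteristic is unnecessary since the affine-space-bundle property of $\cal{Y}_w \to G/T$ already suffices; in all cases the surviving summand and the conclusion agree.
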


\begin{proof}
Writing $N_w = \dim U_w$, we have
\begin{align}
\ur{H}_{-\ast}^{!, G}(\cal{Y}_w)
\simeq \ur{H}_{-\ast}^{!, G}(G/T)\langle 2N_w\rangle
\simeq \ur{H}_{-\ast}^{!, T}(\point)\langle 2N_w\rangle.
\end{align}
By Poincar\'e duality, $\ur{H}_{-i}^{!, T}(\point) \simeq \ur{H}_{!, T}^{-i}(\point) \simeq \ur{H}_T^{i - 2r}(\point)$, where $\ur{H}_{!, T}^\ast$ denotes $T$-equivariant, compactly-supported cohomology.
So Frobenius acts on $\ur{H}_{-i}^{!, T}(\point)$ by $q^{\frac{i}{2} - r}$, which means it is concentrated in weight $i - 2r$.

Again by Poincar\'e duality, $\ur{H}_{-j}^!(T) \simeq \ur{H}_!^{-j}(T)^\vee \simeq \ur{H}^{j + 2r}(T)$, where $\ur{H}_\ast^!$, \emph{resp.}\ $\ur{H}_!^\ast$, denotes non-equivariant Borel--Moore homology, \emph{resp.}\ compactly-supported cohomology.
So Frobenius acts on $\ur{H}_{-j}^!(T)$ by $q^{j + 2r}$, which means it is concentrated in weight $2j + 4r$.

Altogether, the K\"unneth isomorphism
\begin{align}
\ur{H}_{-n}^{!, G}(\cal{Y}'_w) \xrightarrow{\sim} \bigoplus_{i + j = n} \ur{H}_{-i}^{!, G}(\cal{Y}_w) \otimes \ur{H}_{-j}^!(T)
\end{align}
restricts to an isomorphism 
\begin{align}
\ur{H}_{-n}^{!, G}(\cal{Y}'_w)^\dagger \xrightarrow{\sim} \ur{H}_{-(n + 2r)}^{!, G}(\cal{Y}_w) \otimes H_{2r}^!(T).
\end{align}
But $i_{\cal{Y}, w}^\ast$ is the composition of the K\"unneth isomorphism with
\begin{align}\begin{split}
\ur{H}_{-\ast}^{!, G}(\cal{Y}_w) \otimes \ur{H}_{-\ast}^!(T) 
	&\to \ur{H}_{-\ast}^{!, G}(\cal{Y}_w) \otimes \ur{H}_{2r}^!(T)\\	
	&\xrightarrow{\sim} \ur{H}_{-\ast}^{!, G}(\cal{Y}_w) \otimes \ur{H}_0^!(\point)\langle 2r\rangle\\
	&\xrightarrow{\sim} \ur{H}_{-\ast}^{!, G}(\cal{Y}_w)\langle 2r\rangle,
\end{split}\end{align}
so we win.
\end{proof}

\begin{lem}\label{lem:i-z-pur}
The map $i_{\cal{Z}, w}^\dagger$ is an isomorphism.
\end{lem}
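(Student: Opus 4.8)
The plan is to reduce the statement to the preceding lemma — that $i_{\cal{Y}, w}^\dagger$ is an isomorphism — by replacing $\cal{Z}_{w,0}$ and $\cal{Z}'_{w,0}$ by $\cal{Y}_{w,0}$ and $\cal{Y}'_{w,0}$, which sit over them via affine-space bundles. So the first thing I would note is that the maps $\pi : \cal{Y}_{w,0} \to \cal{Z}_{w,0}$ and $\pi' : \cal{Y}'_{w,0} \to \cal{Z}'_{w,0}$ induced by the left columns of the cartesian squares defining $\cal{Y}_{w,0}$ and $\cal{Y}'_{w,0}$ are affine-space bundles, both of relative dimension $N_w = \dim U_{w,0}$: each has fibre $B_{w,0}/T_0 \simeq U_{w,0}$, which is a split unipotent group, hence an iterated $\bb{G}_a$-bundle.

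Consequently, by the functoriality of equivariant Borel--Moore homology under smooth pullback, $\pi^\ast$ and $\pi'^\ast$ are shifted isomorphisms $\ur{H}_{-\ast}^{!, G}(\cal{Z}_w) \xrightarrow{\sim} \ur{H}_{-\ast}^{!, G}(\cal{Y}_w)\langle -2N_w\rangle$ and $\ur{H}_{-\ast}^{!, G}(\cal{Z}'_w) \xrightarrow{\sim} \ur{H}_{-\ast}^{!, G}(\cal{Y}'_w)\langle -2N_w\rangle$. Being $F$-equivariant, these are — like every other Gysin-type map in this discussion — strict for the weight filtrations up to the indicated Tate twist, and since $(-)^\dagger$ commutes with $\langle -2N_w\rangle$ up to a shift of the grading, they restrict to isomorphisms on the $(-)^\dagger$-parts.

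Next I would check that $\pi^\ast$ and $\pi'^\ast$ intertwine $i_{\cal{Z},w}^\ast$ with $i_{\cal{Y},w}^\ast$. Composing the cartesian squares of \eqref{eq:tilde} and \eqref{eq:z-w} with the squares defining the $\cal{Y}$'s exhibits $\cal{Y}_{w,0}$ simultaneously as $\cal{Z}_{w,0} \times_{\cal{Z}'_{w,0}} \cal{Y}'_{w,0}$ and as $\tilde{\cal{U}}_0 \times_{\tilde{G}_0} \cal{Y}'_{w,0}$; so $i_{\cal{Z},w}^\ast$ and $i_{\cal{Y},w}^\ast$ are both pullbacks with restricted support along $\tilde{i}$, and $\pi$ is the base change of $\pi'$. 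Commutativity of the resulting square relating $i_{\cal{Z},w}^\ast$, $i_{\cal{Y},w}^\ast$, $\pi^\ast$, $\pi'^\ast$ then follows from the usual compatibilities among unit and counit maps and proper base change, exactly the sort of diagram chase flagged in \S\ref{sec:mixed}. Passing to $(-)^\dagger$ produces a commutative square with invertible horizontal arrows, so $i_{\cal{Z},w}^\dagger$ is an isomorphism if and only if $i_{\cal{Y},w}^\dagger$ is, and the latter is the preceding lemma.

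The only step with any moving parts is this last compatibility together with the strictness of $\pi^\ast, \pi'^\ast$ for the weight filtrations; but both are instances of the weight-preservation yoga used repeatedly above, so I do not expect either to require a new idea — the lemma is essentially a formal consequence of its $\cal{Y}$-analogue.
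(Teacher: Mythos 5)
Your proposal is correct and follows essentially the same route as the paper: the maps $\cal{Y}_{w,0} \to \cal{Z}_{w,0}$ and $\cal{Y}'_{w,0} \to \cal{Z}'_{w,0}$ are affine-space bundles of rank $N_w$, so their weight-preserving pullback isomorphisms (shifted by $\langle -2N_w\rangle$) reduce the statement to the previous lemma on $i_{\cal{Y},w}^\dagger$. The only difference is that you spell out the compatibility square intertwining $i_{\cal{Z},w}^\ast$ and $i_{\cal{Y},w}^\ast$, which the paper leaves implicit (it holds since $i_{\cal{Y},w}$ is by definition the base change of $i_{\cal{Z},w}$).
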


\begin{proof}
The maps $\cal{Y}_{w, 0} \to \cal{Z}_{w, 0}$ and $\cal{Y}'_{w, 0} \to \cal{Z}'_{w, 0}$ define affine-space bundles of rank $N_w$, so they induce pullback isomorphisms
\begin{align}
\ur{H}_{-\ast}^{!, G}(\cal{Z}_w) &\xrightarrow{\sim} \ur{H}_{-\ast}^{!, G}(\cal{Y}_w)\langle -2N_w\rangle,\\
\ur{H}_{-\ast}^{!, G}(\cal{Z}'_w) &\xrightarrow{\sim} \ur{H}_{-\ast}^{!, G}(\cal{Y}'_w)\langle -2N_w\rangle
\end{align}
that preserve weights.
Now we are done by the previous lemma.
\end{proof}

\begin{proof}[Proof of Theorem \ref{thm:pure-iso}]
Fix a total order $\preceq$ on $W$ in which $w_1 \preceq w_2$ implies $|w_1| \leq |w_2|$, where $|{-}|$ denotes Bruhat length.
Such a total order is necessarily a refinement of the Bruhat partial order.
For any $w \in W$, let
\begin{align}
\cal{Z}_{\preceq w, 0} 
&= \coprod_{x \preceq w} \cal{Z}_{x, 0} \subseteq \cal{Z}_0,\\
\cal{Z}_{\prec w, 0} 
&= \coprod_{x \prec w} \cal{Z}_{x, 0} \subseteq \cal{Z}_0.
\end{align}
Then $\cal{Z}_{w, 0} \to \cal{Z}_{\preceq w, 0}$ is a closed immersion with complement $\cal{Z}_{\prec w, 0} \to \cal{Z}_{\preceq w, 0}$.
Let $\cal{Z}'_{\preceq w, 0}, \cal{Z}'_{\prec w, 0}$ be defined similarly.
The map $i_\cal{Z} : \cal{Z}_0 \to \cal{Z}'_0$ gives rise to the following commutative diagram whose rows are long exact sequences and whose vertical arrows are pullbacks with restricted support:
\begin{equation}
\begin{tikzpicture}[baseline=(current bounding box.center), >=stealth]
\matrix(m)[matrix of math nodes, row sep=2.5em, column sep=1.5em, text height=2ex, text depth=0.5ex]
{	\cdots 
		&\ur{H}_n^{!, G}(\cal{Z}'_w)
		&\ur{H}_n^{!, G}(\cal{Z}'_{\preceq w})
		&\ur{H}_n^{!, G}(\cal{Z}'_{\prec w})
		&\cdots\\
	\cdots 
		&\ur{H}_n^{!, G}(\cal{Z}_w)\langle 2r\rangle
		&\ur{H}_n^{!, G}(\cal{Z}_{\preceq w})\langle 2r\rangle
		&\ur{H}_n^{!, G}(\cal{Z}_{\prec w})\langle 2r\rangle
		&\cdots\\
		};
\path[->,font=\scriptsize, auto]
(m-1-1)		edge (m-1-2)
(m-1-2)		edge (m-1-3)
(m-1-3)		edge (m-1-4)
(m-1-4)		edge (m-1-5)
(m-2-1)		edge (m-2-2)
(m-2-2)		edge (m-2-3)
(m-2-3)		edge (m-2-4)
(m-2-4)		edge (m-2-5)
(m-1-2)		edge node[left]{$i_{\cal{Z}, w}^\ast$} (m-2-2)
(m-1-3)		edge node{$i_{\cal{Z}, \preceq w}^\ast$} (m-2-3)
(m-1-4)		edge node{$i_{\cal{Z}, \prec w}^\ast$} (m-2-4);
\end{tikzpicture}
\end{equation}
Since $(-)^\dagger$ is exact, we obtain another commutative diagram
\begin{equation}
\begin{tikzpicture}[baseline=(current bounding box.center), >=stealth]
\matrix(m)[matrix of math nodes, row sep=2.5em, column sep=1.5em, text height=2ex, text depth=0.5ex]
{	\cdots 
		&\ur{H}_n^{!, G}(\cal{Z}'_w)^\dagger
		&\ur{H}_n^{!, G}(\cal{Z}'_{\preceq w})^\dagger
		&\ur{H}_n^{!, G}(\cal{Z}'_{\prec w})^\dagger
		&\cdots\\
	\cdots 
		&\ur{H}_n^{!, G}(\cal{Z}_w)\langle 2r\rangle
		&\ur{H}_n^{!, G}(\cal{Z}_{\preceq w})\langle 2r\rangle
		&\ur{H}_n^{!, G}(\cal{Z}_{\prec w})\langle 2r\rangle
		&\cdots\\
		};
\path[->,font=\scriptsize, auto]
(m-1-1)		edge (m-1-2)
(m-1-2)		edge (m-1-3)
(m-1-3)		edge (m-1-4)
(m-1-4)		edge (m-1-5)
(m-2-1)		edge (m-2-2)
(m-2-2)		edge (m-2-3)
(m-2-3)		edge (m-2-4)
(m-2-4)		edge (m-2-5)
(m-1-2)		edge node[left]{$i_{\cal{Z}, w}^\dagger$} (m-2-2)
(m-1-3)		edge node{$i_{\cal{Z}, \preceq w}^\dagger$} (m-2-3)
(m-1-4)		edge node{$i_{\cal{Z}, \prec w}^\dagger$} (m-2-4);
\end{tikzpicture}
\end{equation}
in which the top row remains exact.

We induct along $\preceq$.
If $w = 1$, then $i_{\cal{Z}, \prec w}^\dagger = i_{\cal{Z}, w}^\dagger$.
By Lemma \ref{lem:i-z-pur}, the latter is an isomorphism in every homological degree $n$.
Now let $w$ be arbitrary.
If $x$ is the immediate predecessor of $w$ in the total order $\preceq$, then $\cal{Z}_{\prec w} = \cal{Z}_{\preceq x}$ and $i_{\cal{Z}, \prec w}^\dagger = i_{\cal{Z}, \preceq x}^\dagger$.
Lemma \ref{lem:i-z-pur} and the inductive hypothesis respectively imply that $i_{\cal{Z}, w}^\dagger$ and $i_{\cal{Z}, \prec w}^\dagger$ are isomorphisms in every degree.
So, by the five lemma, $i_{\cal{Z}, \preceq w}^\dagger$ is an isomorphism in every degree as well, completing the induction.
\end{proof}

\newpage
\section{Steinberg Schemes of Braids}\label{sec:varieties}

\subsection{}

Let $S \subseteq W$ be the set of simple reflections in the chosen Coxeter presentation of $W$.
Let $\Br_W$ be the Artin braid group of $W$, and let $\Br_W^+ \subseteq \Br_W$ be the monoid of positive braids determined by $S$.
Each element $s \in S$ lifts to an element $\sigma_s \in \Br_W^+$, such that $\{\sigma_s\}_{s \in S}$ is a generating set for $\Br_W^+$.
The following result is \cite[\S{1.11}]{deligne_1997}.

\begin{thm}[Deligne]
There is a map from elements $\beta \in \Br_W^+$ to $G_0$-varieties $O(\beta)_0$ over $\cal{B}_0 \times \cal{B}_0$ such that:
\begin{enumerate}
\item 	$O(\sigma_s)_0 = O_{s, 0}$ for all $s \in S$.
\item 	If $\beta = \beta'\beta''$, then there is a fixed isomorphism 
\begin{align}
O(\beta)_0 \xrightarrow{\sim} O(\beta')_0 \times_{\cal{B}_0} O(\beta'')_0,
\end{align}
where the fiber product is formed with respect to the right projection $O(\beta')_0 \to \cal{B}_0$ and the left projection $O(\beta'')_0 \to \cal{B}_0$.
		These isomorphisms are associative in the sense that the diagram
		\begin{equation}
\begin{tikzpicture}[baseline=(current bounding box.center), >=stealth]
\matrix(m)[matrix of math nodes, row sep=2em, column sep=2em, text height=2ex, text depth=0.5ex]
{	O(\beta'\beta''\beta''')_0
		&O(\beta'\beta'')_0 \times_{\cal{B}_0} O(\beta''')_0\\
	O(\beta')_0 \times_{\cal{B}_0} O(\beta''\beta''')_0
		&O(\beta')_0 \times_{\cal{B}_0} O(\beta'')_0 \times_{\cal{B}_0} O(\beta''')_0\\
		};
\path[->,font=\scriptsize, auto]
(m-1-1)		edge node{$\sim$} (m-1-2)
			edge node[rotate=90,above]{$\sim$} (m-2-1)
(m-1-2)		edge node[rotate=90,below]{$\sim$} (m-2-2)
(m-2-1)		edge node{$\sim$} (m-2-2);
\end{tikzpicture}
\end{equation}
is commutative.

In particular, if $w = s_1 \cdots s_\ell$ is a reduced expression, then 
\begin{align}
O(\sigma_w)_0 
\simeq O(\sigma_{s_1})_0 \times_{\cal{B}_0} \cdots \times_{\cal{B}_0} O(\sigma_{s_\ell})_0
\simeq O_{w, 0}
\end{align}
by way of these fixed isomorphisms.
\end{enumerate}
\end{thm}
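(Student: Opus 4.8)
The plan is to construct the assignment $\beta \mapsto O(\beta)_0$ by induction on the word length of $\beta$ in the generators $\sigma_s$, using the monoid presentation of $\Br_W^+$, and to verify the associativity constraint (2) as the ``coherence'' needed to make the construction well-defined. The starting point is property (1): for $\beta = \sigma_s$ with $s \in S$, set $O(\sigma_s)_0 = O_{s,0}$, the closure of the $G_0$-orbit in $\cal{B}_0 \times \cal{B}_0$ indexed by $s$ (so it is the union of the orbits indexed by $s$ and by the identity, a $\bb{P}^1$-bundle over $\cal{B}_0$ via either projection). For a general positive braid, pick a word $\beta = \sigma_{s_1} \cdots \sigma_{s_k}$ and \emph{define}
\begin{align}
O(\sigma_{s_1}\cdots \sigma_{s_k})_0
= O_{s_1, 0} \times_{\cal{B}_0} O_{s_2, 0} \times_{\cal{B}_0} \cdots \times_{\cal{B}_0} O_{s_k, 0},
\end{align}
where each fiber product is formed along the right projection of the previous factor and the left projection of the next; because iterated fiber products are associative up to canonical isomorphism, this expression is unambiguous given the word. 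Property (2) for $\beta = \beta'\beta''$ then holds by construction whenever $\beta'$ and $\beta''$ are represented by the concatenated subwords, and the associativity pentagon of (2) reduces to the canonical associativity of iterated fiber products, so it commutes.

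The real content — and the step I expect to be the main obstacle — is \textbf{independence of the chosen word}, i.e.\ showing $O(\beta)_0$ is well-defined on $\Br_W^+$ and not merely on free words in the $\sigma_s$. Since $\Br_W^+$ is presented by the generators $\sigma_s$ subject only to the braid relations $\underbrace{\sigma_s \sigma_t \sigma_s \cdots}_{m_{st}} = \underbrace{\sigma_t \sigma_s \sigma_t \cdots}_{m_{st}}$, it suffices to produce, for each pair $s \neq t$ with $m_{st} < \infty$, a \emph{canonical} isomorphism
\begin{align}
O_{s,0} \times_{\cal{B}_0} O_{t,0} \times_{\cal{B}_0} \cdots
\;\xrightarrow{\ \sim\ }\;
O_{t,0} \times_{\cal{B}_0} O_{s,0} \times_{\cal{B}_0} \cdots
\qquad (m_{st}\ \text{factors on each side})
\end{align}
and to check that these isomorphisms are compatible with the fiber-product structure on the left and right (so that (2) is respected) and satisfy the obvious coherence (they are mutually inverse, and are compatible when three relations interlock). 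The key geometric input here is the $\bb{P}^1$-bundle description of the $O_{s,0}$ and, more fundamentally, the structure theory of the minimal parabolic $P_{s,0} \supseteq B_0$: the parahoric subgroups generated by $B_0$ and the pair $\{s,t\}$ govern both sides, and the braid relation translates into an isomorphism of the two corresponding Bott--Samelson-type resolutions of the Schubert variety for the longest element of the dihedral parabolic $\langle s, t\rangle$. This is exactly the assertion that over $\bb{F}$, the Demazure/Bott--Samelson varieties attached to the two reduced words for that longest element are canonically isomorphic, compatibly with their maps to $\cal{B}_0$; one can cite the standard theory (e.g.\ of Demazure resolutions) in the $G_0$-equivariant setting, or equivalently invoke Deligne's own argument in \cite[\S{1.11}]{deligne_1997}.

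Granting the word-independence, the $G_0$-equivariance of $O(\beta)_0$ and of all the isomorphisms is automatic, since every building block ($O_{s,0}$, the projections to $\cal{B}_0$, fiber products) lives in the category of $G_0$-varieties, and so does the dihedral-relation isomorphism. For the ``in particular'' clause: if $w = s_1 \cdots s_\ell$ is a \emph{reduced} expression, then by definition $O(\sigma_w)_0 = O_{s_1,0} \times_{\cal{B}_0} \cdots \times_{\cal{B}_0} O_{s_\ell,0}$, and reducedness means precisely that the iterated convolution of the $\bb{P}^1$-bundles has image exactly $O_{w,0}$ and is in fact an isomorphism onto it — this is the Bott--Samelson resolution of the Schubert variety being an isomorphism in the reduced case, the length-additivity $|s_1 \cdots s_i| = i$ guaranteeing that no collapsing occurs. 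Concretely one checks that the map $O_{s_1,0} \times_{\cal{B}_0} \cdots \times_{\cal{B}_0} O_{s_\ell,0} \to \cal{B}_0 \times \cal{B}_0$, $(\cal{B}^{(0)}, \ldots, \cal{B}^{(\ell)}) \mapsto (\cal{B}^{(0)}, \cal{B}^{(\ell)})$ has image in $\overline{O_{w,0}}$ and restricts to an isomorphism over $O_{w,0}$ by induction on $\ell$, using at each step that $|s_1\cdots s_{i-1} s_i| > |s_1 \cdots s_{i-1}|$; since $w$ itself indexes the open orbit in the target, one concludes $O(\sigma_w)_0 \simeq O_{w,0}$ via the fixed isomorphisms of (2).
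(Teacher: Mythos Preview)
The paper does not give its own proof of this theorem: it is stated with a citation to \cite[\S{1.11}]{deligne_1997}, and the remark following attributes the construction to Brou\'e--Michel. So there is no in-paper argument against which to compare.

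Your overall strategy---define $O(\beta)_0$ as an iterated fiber product along a chosen word and then verify word-independence via the braid relations---is the standard one and is correct in outline. But there is a concrete error in your identification of the building blocks. In this paper's conventions, $O_{s,0}$ is the \emph{open} $G_0$-orbit of pairs of Borels in relative position exactly $s$ (see Section~\ref{sec:hecke}, and the explicit formula $O(\beta) = \{(B_0,\ldots,B_\ell) : (B_{i-1},B_i) \in O_{s_i}\}$ in Appendix~\ref{sec:varieties-other}); it is an $\bb{A}^1$-bundle over $\cal{B}_0$, not the closed $\bb{P}^1$-bundle $\bar{O}_{s,0}$. With your closed interpretation the argument actually breaks: the closed Bott--Samelson varieties attached to two different reduced words for the longest element of $\langle s,t\rangle$ are \emph{not} canonically isomorphic over $\cal{B}_0 \times \cal{B}_0$---they are genuinely different resolutions of the same Schubert variety---so your braid-relation step fails as written. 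Likewise the ``in particular'' clause would be false, since the closed Bott--Samelson is only birational onto $\bar{O}_{w,0}$, not isomorphic to the open $O_{w,0}$; your own argument (``restricts to an isomorphism over $O_{w,0}$'') already concedes this without drawing the consequence.

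With the correct open interpretation both difficulties evaporate and become a single statement: for any reduced expression $w = s_1\cdots s_\ell$, the map $(B_0,\ldots,B_\ell) \mapsto (B_0,B_\ell)$ is an isomorphism
\begin{align}
O_{s_1,0} \times_{\cal{B}_0} \cdots \times_{\cal{B}_0} O_{s_\ell,0} \xrightarrow{\sim} O_{w,0},
\end{align}
because a pair $(B_0,B_\ell)$ in relative position $w$ admits a \emph{unique} gallery of type $(s_1,\ldots,s_\ell)$. Applied to each reduced word for the longest element of the dihedral parabolic $\langle s,t\rangle$, this furnishes the braid-relation isomorphism directly (both sides are canonically $O_{w_0(s,t),0}$), and the required coherence is then automatic.
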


Deligne attributes the varieties $O(\beta)_0$ to Brou\'e--Michel \cite{bm_1997}, though they are implicit in Lusztig's earlier papers on character sheaves.
Note that $\dim O(\beta) = |\beta| + N$, where $|\beta|$ is the writhe of $\beta$ (see \S\ref{subsec:braid-group}).

\subsection{}

Rouquier introduced an extension of Deligne's result in which the full group $\Br_W$ replaces the monoid $\Br_W^+$, and in which objects of $\sf{H}_W$ replace varieties over $\cal{B}_0 \times \cal{B}_0$.
The fiber product over $\cal{B}_0$ is replaced by the convolution on $\sf{H}_W$.

Each element $s \in S$ gives rise to objects $\cal{R}_s^+, \cal{R}_s^- \in \sf{H}_W$ that can be represented by complexes of the following form:
\begin{align}
\begin{array}{rcllllll}
\cal{R}_s^+ &= &0 &\to &\underline{{\IC_s}} &\to &\IC_1\langle 1\rangle,\\[1ex]
\cal{R}_s^- &= &\IC_1\langle -1\rangle &\to &\underline{{\IC_s}} &\to &0.
\end{array}
\end{align}
Above, the underlined terms are the terms in degree zero.
To describe the morphisms, factor $j_1 : O_{1, 0} \to \cal{B}_0 \times \cal{B}_0$ as the composition
\begin{align}
O_{1, 0} \xrightarrow{i_s} \bar{O}_{s, 0} \xrightarrow{\bar{j}_s} \cal{B}_0 \times \cal{B}_0.
\end{align}
Since $O_{1, 0}$ and $\bar{O}_{s, 0}$ are smooth and closed in $\cal{B}_0^2$, we have
\begin{align}
\IC_1 &= j_{1, \ast}\QL\langle -r - N\rangle,\\
\IC_s &= \bar{j}_{s, \ast}\QL \langle 1 - r - N\rangle.
\end{align}
Then the nontrivial morphism in $\cal{R}_s^+$, \emph{resp.}\ $\cal{R}_s^-$, is induced by pushing forward
\begin{align}
\QL
	&\to i_{s, \ast} i_s^\ast \QL
	\xrightarrow{\sim} i_{s, \ast} \QL,\\
\emph{resp.}\qquad
i_{s, \ast} \QL\langle -2\rangle 
	&\xrightarrow{\sim} i_{s,!} i_s^! \QL
	\to \QL,
\end{align}
along $\bar{j}_s$.

In \cite{soergel}, Soergel gave a module-theoretic description of $\sf{C}(\cal{B} \times \cal{B})$ (see Remark \ref{rem:by}), and hence, of $\sf{H}_W$.
Under it, the complexes $\cal{R}_s^+$ and $\cal{R}_s^-$ correspond to the complexes of bimodules over $\Sym(\bb{V})$ defined by Rouquier in \S{9.2.1} and \S{9.2.4} of \cite{rouquier_2006}.
Thus, Thm.\ 10.5 of \emph{ibid.}\ amounts to the following result:

\begin{thm}[Rouquier]\label{thm:rouquier}
There is a map from elements $\beta \in \Br_W^+$ to objects $\cal{R}(\beta) \in \sf{H}_W$ such that:
\begin{enumerate}
\item 	$\cal{R}(\sigma_s) = \cal{R}_s^+$ and $\cal{R}(\sigma_s^{-1}) = \cal{R}_s^-$ for all $s \in S$.
\item 	If $\beta = \beta' \beta''$, then there is a fixed isomorphism
		\begin{align}
		\cal{R}(\beta) \xrightarrow{\sim} \cal{R}(\beta') \star \cal{R}(\beta'').
		\end{align}
		These isomorphisms are associative in the sense that the diagram
		\begin{equation}
\begin{tikzpicture}[baseline=(current bounding box.center), >=stealth]
\matrix(m)[matrix of math nodes, row sep=2em, column sep=2em, text height=2ex, text depth=0.5ex]
{	\cal{R}(\beta'\beta''\beta''')
		&\cal{R}(\beta'\beta'')_0 \star \cal{R}(\beta''')\\
	\cal{R}(\beta')_0 \star \cal{R}(\beta''\beta''')
		&\cal{R}(\beta') \star \cal{R}(\beta'') \star \cal{R}(\beta''')\\
		};
\path[->,font=\scriptsize, auto]
(m-1-1) 	edge node{$\sim$} (m-1-2)
			edge node[rotate=90,above]{$\sim$} (m-2-1)
(m-1-2) 	edge node[rotate=90,below]{$\sim$} (m-2-2)
(m-2-1) 	edge node{$\sim$} (m-2-2);
\end{tikzpicture}
\end{equation}
commutes.
\end{enumerate}
\end{thm}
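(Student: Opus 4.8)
The plan is to reduce the statement to Rouquier's original theorem, transported along Soergel's equivalence. First I would make that equivalence precise in the present setting. Since $\sf{C}(\cal{B}_0 \times \cal{B}_0)$ is Frobenius-invariant by Lemma \ref{lem:frob-invariance-bruhat}, its mixed structure amounts to no more than an auxiliary grading recording shift-twists; together with Soergel's module-theoretic description of $\sf{C}(\cal{B} \times \cal{B})$ in \cite{soergel} and the semisimplicity refinement recalled in Remark \ref{rem:by}, this gives a monoidal equivalence between $\sf{H}_W = \sf{K}^b(\sf{C}(\cal{B}_0 \times \cal{B}_0))$ and $\sf{K}^b(\sf{SBim})$, the bounded homotopy category of Soergel bimodules over $\Sym(\bb{V})$, under which $\langle 1\rangle$ corresponds to the internal grading shift $(1)$, the convolution $\star$ to $\otimes_{\Sym(\bb{V})}$, the object $\IC_1$ to the unit bimodule $\Sym(\bb{V})$, and each $\IC_w$ to the indecomposable Soergel bimodule $B_w$.

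Next I would match $\cal{R}_s^+$ and $\cal{R}_s^-$ with Rouquier's complexes $F_s$ and $F_s^{-1}$ of \S{9.2.1} and \S{9.2.4} of \cite{rouquier_2006}. Term by term this is immediate from the previous paragraph: the degree-zero term $\IC_s$ corresponds to $B_s$, and $\IC_1\langle \pm 1\rangle$ to $\Sym(\bb{V})(\pm 1)$. For the differentials, I would use that $O_{1,0}$ and $\bar{O}_{s,0}$ are smooth and closed in $\cal{B}_0^2$, so that $\IC_1 = j_{1,\ast}\QL\langle -r-N\rangle$ and $\IC_s = \bar{j}_{s,\ast}\QL\langle 1-r-N\rangle$, and then check that the adjunction maps $\QL \to i_{s,\ast}i_s^\ast\QL \xrightarrow{\sim} i_{s,\ast}\QL$ and $i_{s,!}i_s^!\QL \to \QL$, pushed forward along $\bar{j}_s$, go under Soergel's functor to the multiplication $B_s \to \Sym(\bb{V})(1)$ and its adjoint $\Sym(\bb{V})(-1) \to B_s$ — the standard compatibility of Soergel's functor with the biadjunction attached to a simple reflection.

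With this dictionary in place, the theorem becomes a restatement of \cite[Thm.\ 10.5]{rouquier_2006}: the $\cal{R}_s^\pm$ are invertible in $\sf{K}^b(\sf{SBim})$ and satisfy the braid relations up to canonical homotopy equivalence, with the complex attached to a word being the $\star$-product of the complexes of its letters. Restricting Rouquier's resulting assignment on $\Br_W$ to the submonoid $\Br_W^+$ generated by the $\sigma_s$ gives the map $\beta \mapsto \cal{R}(\beta)$ with $\cal{R}(\sigma_s) = \cal{R}_s^+$, $\cal{R}(\sigma_s^{-1}) = \cal{R}_s^-$, and the multiplicativity isomorphisms $\cal{R}(\beta'\beta'') \xrightarrow{\sim} \cal{R}(\beta') \star \cal{R}(\beta'')$ (concatenation of positive words). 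Commutativity of the associativity square in (2) is then a coherence statement: the square is assembled entirely from the associator of the monoidal category $(\sf{H}_W, \star)$ and from the word-change equivalences, whose mutual compatibility follows from Mac Lane's coherence theorem together with the coherence already verified in \cite{rouquier_2006}.

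The only genuine work is the second step: making the dictionary between the sheaf-theoretic complexes $\cal{R}_s^\pm$ — built from adjunction morphisms in $\sf{D}_{G, m}^b$ — and Rouquier's bimodule complexes fully precise, in particular verifying that Soergel's equivalence is monoidal, intertwines the units and counits of the relevant biadjunctions, and matches the weight and grading conventions so that shift-twists land in the correct internal degrees. The braid relations themselves are Rouquier's and are simply cited; as the excerpt notes, this comparison is folklore and is recorded here only to fix a clean statement for Section \ref{sec:varieties}.
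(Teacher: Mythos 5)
Your proposal matches the paper's own treatment: the paper proves this theorem simply by invoking Soergel's module-theoretic description of $\sf{C}(\cal{B} \times \cal{B})$ to identify $\cal{R}_s^{\pm}$ with the bimodule complexes of \S{9.2.1} and \S{9.2.4} of \cite{rouquier_2006}, and then citing Thm.\ 10.5 of \emph{ibid.}, exactly the reduction you carry out (your extra care about monoidality of the dictionary and the grading conventions is more detail than the paper records, but the route is the same).
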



\subsection{}

The results of Deligne and Rouquier are related as follows.
For all $w \in W$, let $j_\beta$ be the map $O(\beta)_0 \to \cal{B}_0 \times \cal{B}_0$.
Let
\begin{align}
\Delta(\beta) &= j_{\beta, !} \QL\langle |\beta| - r - N\rangle,\\
\nabla(\beta) &= j_{\beta, \ast} \QL\langle |\beta| - r - N\rangle.
\end{align}
In the case where $\beta = \sigma_s$, we abbreviate by writing $\Delta_s = \Delta(\beta)$ and $\nabla_s = \nabla(\beta)$.
The following lemma is strongly suggested by the statements in \cite[\S{11}]{rouquier_2006}, though it does not seem to appear there.

\begin{lem}\label{lem:realization-rouquier}
We have
\begin{align}
\rho(\cal{R}(\beta)) 
	&\simeq \Delta(\beta),\\
\rho(\cal{R}(\beta^{-1}))
	&\simeq \nabla(\beta)
\end{align}
for all $\beta \in \Br_W^+$.
\end{lem}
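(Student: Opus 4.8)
The plan is to prove $\rho(\cal{R}(\beta)) \simeq \Delta(\beta)$ by induction on the writhe $|\beta|$, using the multiplicativity of both sides under braid composition; the statement for $\cal{R}(\beta^{-1})$ will follow symmetrically (or by Verdier duality, exchanging $j_{\beta,!}$ and $j_{\beta,\ast}$). The base cases are the generators $\sigma_s$, $s \in S$, and the identity $\bb{1}$. For the identity we have $\cal{R}(\bb{1}) = \IC_1$ concentrated in degree zero, and since $\rho$ restricts to the identity on $\sf{C}(\cal{B}_0 \times \cal{B}_0)$, we get $\rho(\cal{R}(\bb{1})) = \IC_1 = j_{1,!}\QL\langle -r-N\rangle = \Delta(\bb{1})$, as $O_{1,0}$ is smooth and closed. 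For a generator $\sigma_s$, I would compute $\rho(\cal{R}_s^+)$ directly: $\cal{R}_s^+$ is the two-term complex $\underline{\IC_s} \to \IC_1\langle 1\rangle$, and by the construction of the realization functor via Beilinson's filtered category (the functor $\alpha$ of Section \ref{sec:mixed}), $\rho$ of a two-term complex $K^0 \to K^1$ is the cone of the corresponding morphism in $\sf{D}_{G,m}^b(\cal{B}_0 \times \cal{B}_0)$, shifted appropriately. So $\rho(\cal{R}_s^+)$ is (a shift of) the cone of $\IC_s \to \IC_1\langle 1\rangle$. Unwinding the definitions of $\IC_s = \bar j_{s,\ast}\QL\langle 1-r-N\rangle$ and $\IC_1 = j_{1,\ast}\QL\langle -r-N\rangle$ and the explicit morphism induced by $\QL \to i_{s,\ast}i_s^\ast\QL$, the cone is precisely $j_{s,!}\QL\langle 1-r-N\rangle = \Delta_s = \Delta(\sigma_s)$, using the excision triangle $i_{s,!}i_s^!(-) \to (-) \to (\text{open part})$ on $\bar O_{s,0}$ (with open stratum $O_{s,0}$ and closed stratum $O_{1,0}$) and pushing forward along $\bar j_s$.

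For the inductive step, write $\beta = \beta' \sigma_s$ with $|\beta'| = |\beta| - 1$, so that $\cal{R}(\beta) \simeq \cal{R}(\beta') \star \cal{R}_s^+$ by Theorem \ref{thm:rouquier}. By the compatibility of $\rho$ with convolution, diagram \eqref{eq:convolution-realization}, we get $\rho(\cal{R}(\beta)) \simeq \rho(\cal{R}(\beta')) \star \rho(\cal{R}_s^+) \simeq \Delta(\beta') \star \Delta_s$, using the inductive hypothesis and the base case. So it remains to identify $\Delta(\beta') \star \Delta_s$ with $\Delta(\beta)$. This is a purely geometric statement: I would show that for a positive braid word, $j_{\beta',!}\QL\langle\ldots\rangle \star j_{s,!}\QL\langle\ldots\rangle \simeq j_{\beta,!}\QL\langle\ldots\rangle$. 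By definition of the convolution $\star$ as $\pr_{1,3,\ast}(\pr_{1,2}\times\pr_{2,3})^\ast(-\boxtimes-)\langle r+N\rangle$, and by base change for $!$-pushforward along the smooth map $\pr_{1,2}\times\pr_{2,3}$ and proper base change along $\pr_{1,3}$, the convolution $j_{\beta',!}\QL \star j_{s,!}\QL$ is $!$-pushforward of $\QL$ from the fiber product $O(\beta')_0 \times_{\cal{B}_0} O_{s,0}$, which by Deligne's theorem is canonically $O(\beta'\sigma_s)_0 = O(\beta)_0$. Tracking the shift-twists: $\dim O(\beta') + \dim O_s - \dim\cal{B} = (|\beta'|+N) + (1+N) - (r+2N) = |\beta| - r$, matching the exponent in $\Delta(\beta)$. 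This gives $\Delta(\beta') \star \Delta_s \simeq \Delta(\beta)$, completing the induction.

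The main obstacle I anticipate is the base-case computation $\rho(\cal{R}_s^+) \simeq \Delta_s$ — specifically, checking that the abstract cone produced by the realization functor coincides, \emph{as an object of $\sf{D}_{G,m}^b(\cal{B}_0 \times \cal{B}_0)$ with its mixed structure}, with the concrete $j_{s,!}\QL\langle 1-r-N\rangle$, including that the connecting morphism $\IC_s \to \IC_1\langle 1\rangle$ in the realization really is the one induced by adjunction as described after Theorem \ref{thm:rouquier}. This requires being careful that the realization functor $\rho$, built from brutal truncations in $\sf{C}^b$, sends the explicit two-term complex to the cone of the explicit morphism (not merely to \emph{a} cone), which follows from the construction of $\alpha^{-1}$ in Section \ref{sec:mixed} but should be spelled out. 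A secondary subtlety is well-definedness: a priori $\cal{R}(\beta)$ depends on a choice of positive word for $\beta$, but Theorem \ref{thm:rouquier} provides the coherence isomorphisms making $\cal{R}(\beta)$ canonical, and the associativity squares there together with those in Deligne's theorem ensure the identification $\rho(\cal{R}(\beta)) \simeq \Delta(\beta)$ is independent of the word; I would note this but not belabor it. Finally, the $\nabla$ statement: apply Verdier duality, which swaps $\cal{R}_s^+ \leftrightarrow \cal{R}_s^-$ (up to the self-duality of $\IC_s$ and $\IC_1$), intertwines $\rho$ with itself up to duality, and swaps $j_{\beta,!} \leftrightarrow j_{\beta,\ast}$, yielding $\rho(\cal{R}(\beta^{-1})) \simeq \nabla(\beta)$.
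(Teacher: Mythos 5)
Your argument is essentially the paper's proof: establish the base case $\rho(\cal{R}_s^+)\simeq\Delta_s$, $\rho(\cal{R}_s^-)\simeq\nabla_s$ directly from the explicit definitions (your excision-triangle computation is exactly what this amounts to), and then propagate using Theorem \ref{thm:rouquier} together with the compatibility \eqref{eq:convolution-realization} of $\rho$ with convolution and the base-change identification $\Delta(\beta')\star\Delta_s\simeq\Delta(\beta)$, which the paper leaves implicit; your running the $\nabla$ half symmetrically is likewise the paper's route. One small bookkeeping correction: the twist exponent of $\Delta(\beta)$ is $|\beta|-r-N$, obtained as $(|\beta'|-r-N)+(1-r-N)+(r+N)$, whereas your count $(|\beta'|+N)+(1+N)-(r+2N)=|\beta|-r$ subtracts $\dim G$ rather than the correct quantity and does not actually match that exponent, though the correct verification is immediate.
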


\begin{proof}
The explicit definitions of $\cal{R}_s^+$ and $\cal{R}_s^-$ show that
\begin{align}
\rho(\cal{R}_s^+) &\simeq \Delta_s,\\
\rho(\cal{R}_s^-) &\simeq \nabla_s
\end{align}
for all $s \in S$.
We now apply the commutativity of \eqref{eq:convolution-realization}.
\end{proof}

\subsection{}

We define $G_0$-schemes $G(\beta)_0$, $\cal{U}(\beta)_0$, $\cal{Z}(\beta)_0$ by stipulating that all squares in the commutative diagram below are cartesian.
\begin{equation}
\begin{tikzpicture}[baseline=(current bounding box.center), >=stealth]
\matrix(m)[matrix of math nodes, row sep=2.5em, column sep=3.5em, text height=2ex, text depth=0.5ex]
{	O(\beta)_0
		&G(\beta)_0
		&\cal{U}(\beta)_0
		&\cal{Z}(\beta)_0\\
	\cal{B}_0 \times \cal{B}_0
		&G_0 \times \cal{B}_0
		&\cal{U}_0 \times \cal{B}_0
		&\tilde{\cal{U}}_0 \times \cal{B}_0\\
		};
\path[->,font=\scriptsize, auto]
(m-1-2)		edge (m-1-1)
(m-1-3) 	edge (m-1-2)
(m-1-4) 	edge (m-1-3)
(m-1-1)		edge node[left]{$\pr_0 \times \pr_\ell$} (m-2-1)
(m-1-2)		edge (m-2-2)
(m-1-3) 	edge (m-2-3)			
(m-1-4) 	edge (m-2-4)
(m-2-2)		edge node[above]{$\act$} (m-2-1)
(m-2-3)		edge node[above]{$i \times \id$} (m-2-2)
(m-2-4)		edge node[above]{$p \times \id$} (m-2-3);
\end{tikzpicture}
\end{equation}
Let $\bb{1} \in \Br_W$ denote the identity braid.
Then $\cal{U}(\bb{1})_0 = \tilde{\cal{U}}_0$, the Springer resolution, and $\cal{Z}(\bb{1})_0 = \cal{Z}_0$, the Steinberg scheme from the previous section.

\begin{df}
For all $\beta \in \Br_W^+$, we say that $\cal{Z}(\beta)_0$ is the \dfemph{Steinberg scheme} attached to $\beta$.
\end{df}

In what follows, let $p_\beta$ be the composition $\cal{U}(\beta)_0 \to \cal{U}_0 \times \cal{B}_0 \to \cal{U}_0$.
Note that $p_{\bb{1}, !}\QL \simeq p_{\bb{1}, \ast}\QL \simeq \cal{S}$.
We can now prove Theorem \ref{thm:a_w-to-varieties}, relating the $\bb{A}_W$ trace to the generalized Steinberg schemes $\cal{Z}(\beta)_0$.

\begin{thm}\label{thm:a_w-to-varieties-strong}
For all $\beta \in \Br_W^+$ and $i, j$, we have:
\begin{align}
\sf{AH}^{i, j}(\cal{R}(\beta))
	&\simeq \gr_{j + r}^{\bm{w}} \SHom^{i + r}(p_{\beta, !}\QL, \cal{S}),\\
\sf{AH}^{i, j}(\cal{R}(\beta^{-1}))
	&\simeq \gr_{j + r}^{\bm{w}} \SHom^{i + r}(p_{\beta, \ast}\QL, \cal{S}).
\end{align}
In particular, since $\cal{S} = p_\ast \QL\langle -r\rangle$, we have
\begin{align}\begin{split}
\sf{AH}^{i, j}(\cal{R}(\beta))
	&\simeq \gr_{j + 2r}^{\bm{w}} \ur{H}_{-(i + 2r)}^{!, G}(\cal{Z}(\beta))
\end{split}\end{align}
by Lemma \ref{lem:borel-moore}.
\end{thm}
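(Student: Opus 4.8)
The plan is to unwind the definition of $\sf{AH}$ and identify it with $\SHom$-spaces into $\cal{S}$ of the complex $i^\ast \langle -r\rangle\,\sf{CH}(\rho(\cal{R}(\beta)))$, then compute the latter via Deligne's variety $O(\beta)$. First I would recall that, by the defining diagram \eqref{eq:a_w} of $\sf{AH}$, together with the commutativity of realization with $\sf{CH}$ and with $i^\ast\langle -r\rangle$ (Remark \ref{rem:compatible}, applied as in Section \ref{sec:hecke}), there is an identification
\begin{align}
\sf{AH}^{i, j}(\cal{R}(\beta))
\simeq \gr_j^{\bm{w}} \SHom^i\big((i^\ast\langle -r\rangle \circ \sf{CH} \circ \rho)(\cal{R}(\beta)),\, \cal{S}\big).
\end{align}
By Lemma \ref{lem:realization-rouquier}, $\rho(\cal{R}(\beta)) \simeq \Delta(\beta) = j_{\beta, !}\QL\langle |\beta| - r - N\rangle$. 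So the computation reduces to understanding $i^\ast \langle -r\rangle\,\sf{CH}(j_{\beta, !}\QL)$, up to the explicit shift-twist $\langle |\beta| - r - N\rangle$.

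The key step is to compute $\sf{CH}(j_{\beta, !}\QL)$ as a pushforward from $G(\beta)_0$. Recall $\sf{CH} = \bigoplus_i {}^p\cal{H}^i[-i] \circ \pr_\ast\,\act^!\langle r + N\rangle$; dropping the perverse truncation (which only reshuffles graded pieces and is harmless after applying $\gr_\ast^{\bm{w}}\SHom^\ast$, since all objects in sight are pure up to the weight bookkeeping we track), we must understand $\pr_\ast\,\act^!\, j_{\beta, !}\QL$. Forming the cartesian square defining $G(\beta)_0$ over $O(\beta)_0$ via $\act : G_0 \times \cal{B}_0 \to \cal{B}_0 \times \cal{B}_0$, proper base change gives $\act^! j_{\beta, !}\QL \simeq (j_\beta')_! (\act')^! \QL$ where $j_\beta' : G(\beta)_0 \to G_0 \times \cal{B}_0$; since $\act$ is smooth of relative dimension $r + N$, $\act^! \simeq \act^\ast\langle 2(r+N)\rangle$, so $\act^! j_{\beta,!}\QL \simeq (j_\beta')_!\QL\langle 2(r+N)\rangle$. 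Pushing forward along $\pr$ — noting $\pr \circ j_\beta'$ factors through $G(\beta)_0 \to G_0$ — and then applying $i^\ast$ and the further cartesian square defining $\cal{U}(\beta)_0$ over $G(\beta)_0$, a second application of proper base change yields, after collecting all the twists,
\begin{align}
(i^\ast\langle -r\rangle \circ \sf{CH} \circ \rho)(\cal{R}(\beta)) \simeq p_{\beta, !}\QL\langle |\beta| - r\rangle
\end{align}
(up to $\bigoplus_i {}^p\cal{H}^i[-i]$, which I would absorb as above). Then $\SHom^i$ against $\cal{S}$ and a reindexing of the shift-twist by $\langle |\beta| - r\rangle = [|\beta| - r]\big(\tfrac{|\beta| - r}{2}\big)$ give the first claimed isomorphism $\sf{AH}^{i,j}(\cal{R}(\beta)) \simeq \gr_{j+r}^{\bm{w}}\SHom^{i+r}(p_{\beta,!}\QL, \cal{S})$ — here I am invoking the companion Theorem \ref{thm:a_w-to-varieties-strong}, which the final statement explicitly cites. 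The $\cal{R}(\beta^{-1})$ case is identical with $!$ replaced by $\ast$ throughout, using $\rho(\cal{R}(\beta^{-1})) \simeq \nabla(\beta) = j_{\beta,\ast}\QL\langle\cdots\rangle$.

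Finally, to get the Borel–Moore homology statement: since $\cal{S} = p_\ast\QL\langle -r\rangle$ with $p : \tilde{\cal{U}}_0 \to \cal{U}_0$ the Springer resolution, I would apply Lemma \ref{lem:borel-moore} to the cartesian square built from $p_\beta : \cal{U}(\beta)_0 \to \cal{U}_0$ and $p : \tilde{\cal{U}}_0 \to \cal{U}_0$, whose fiber product is exactly $\cal{Z}(\beta)_0$ (by the defining diagram, $\cal{Z}(\beta)_0 = \tilde{\cal{U}}_0\times_{\cal{U}_0}\cal{U}(\beta)_0$, noting $\cal{Z}(\beta)_0$ as defined there maps to $\tilde{\cal{U}}_0 \times \cal{B}_0$ and pulls back $p\times\id$). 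With $[\tilde{\cal{U}}_0/G] = [\tilde{G}_0/G]\times_{[G_0/G]}[\cal{U}_0/G]$ smooth of dimension $\dim\cal{U} - \dim G + \dim\tilde{G} = \dim\cal{U} = 2N$ (or rather, tracking the twist $\langle -r\rangle$ correctly so the relevant dimension count produces the shift $\langle 2r\rangle$ as in \eqref{eq:z}), Lemma \ref{lem:borel-moore} converts $\SHom^{i+r}(p_{\beta,!}\QL, \cal{S})$ into $\ur{H}_{-(i+2r)}^{!,G}(\cal{Z}(\beta))\langle\text{shift}\rangle$, and taking $\gr_{j+r}^{\bm{w}}$ of both sides after the twist by $\langle -r\rangle$ lands the weight index at $j + 2r$, giving $\sf{AH}^{i,j}(\cal{R}(\beta)) \simeq \gr_{j+2r}^{\bm{w}}\ur{H}_{-(i+2r)}^{!,G}(\cal{Z}(\beta))$ as desired.

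The main obstacle I anticipate is \textbf{the bookkeeping of shift-twists and, more seriously, the perverse truncation $\bigoplus_i {}^p\cal{H}^i[-i]$ appearing in the definition of $\sf{CH}$}. The cohomology sheaves in the definition of $\sf{CH}$ mean that $\sf{CH}(\Delta(\beta))$ is \emph{not} literally $p_{\beta,!}\QL$ up to shift-twist, only its semisimplification (or its image after passing to $\underline{\sf{D}}$). One must check carefully that after composing with $i^\ast$, passing to the realization-functor world $\underline{\sf{D}}_{G,m}^b(\cal{U}_0)$ (where $\SHom$-purity of $\sf{C}(\cal{U}_0)$, Lemma \ref{lem:unipotent}, forces everything to split), and taking $\gr_\ast^{\bm{w}}\SHom^\ast(-,\cal{S})$, the truncation is invisible — this is where $\SHom$-purity and the fact that $\cal{S}$ is a pure complex do the work, analogously to how Theorem \ref{thm:pure-iso} handled the weight-graded pieces. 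A secondary, purely clerical obstacle is keeping the three cartesian squares straight and confirming the relative dimensions ($\act$ smooth of relative dimension $r+N$; the $G$-quotient $[\tilde{\cal{U}}_0/G]$ smooth; the composite shift $|\beta| - r - N$ from $\Delta(\beta)$ plus $\langle 2(r+N)\rangle$ from $\act^!$ plus $\langle r + N\rangle$ from $\sf{CH}$ plus $\langle -r\rangle$ from $i^\ast\langle -r\rangle$ summing to $\langle |\beta| + r\rangle$, which then combines with the $\langle -r\rangle$ inside $\cal{S} = p_\ast\QL\langle -r\rangle$ and the dimension shift from Lemma \ref{lem:borel-moore} to land exactly at indices $i + 2r$ and $j + 2r$). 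I would organize this as the proof of Theorem \ref{thm:a_w-to-varieties-strong} and then deduce the boxed statement as a one-line corollary via Lemma \ref{lem:borel-moore}, exactly as the excerpt indicates.
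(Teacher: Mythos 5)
Your proposal follows essentially the same route as the paper's proof: unwind the defining diagram \eqref{eq:a_w} of $\sf{AH}$, use Lemma \ref{lem:realization-rouquier} to replace $\rho(\cal{R}(\beta^{\pm 1}))$ by $\Delta(\beta)$, \emph{resp.}\ $\nabla(\beta)$, identify $(i^\ast\langle -r\rangle \circ \sf{CH})(\Delta(\beta))$ with $p_{\beta,!}\QL$ up to shift-twist via smooth and proper base change through the cartesian squares, and deduce the Steinberg-scheme statement from Lemma \ref{lem:borel-moore}. The paper's argument is exactly this chain, recorded in three lines, and the perverse-truncation and shift-twist bookkeeping you flag as the main obstacle is likewise left implicit there, so there is no substantive difference of method to compare.
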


\begin{proof}
By the commutativity of \eqref{eq:a_w}, Lemma \ref{lem:realization-rouquier}, and both smooth and proper base change,
\begin{align}\begin{split}
\sf{AH}^{i, j}(\cal{R}(\beta)) 
&\simeq 	\gr_j^{\bm{w}} \SHom^i(\sf{K}^b(i^\ast\langle -r\rangle \circ \sf{CH} \circ \rho)(R(\beta)), \cal{S})\\
&\simeq 	\gr_j^{\bm{w}} \SHom^i((i^\ast\langle -r\rangle \circ \sf{CH})(\Delta(\beta)), \cal{S})\\
&\simeq 	\gr_{j + r}^{\bm{w}} \SHom^{i + r}(p_{\beta, !} \QL, \cal{S}).
\end{split}\end{align}
The argument for $\cal{R}(\beta^{-1})$ is completely analogous.
The remaining statement follows from Lemma \ref{lem:borel-moore}.
\end{proof}

\newpage
\section{Khovanov--Rozansky Homology}\label{sec:kr}

\subsection{}

A link in a $3$-manifold is the image of a disjoint union of finitely many circles under a smooth embedding.
The HOMFLY series of a link in $S^3$ is a bivariate Laurent series that only depends on its isotopy class \cite{homfly}.

Dunfield--Gukov--Rasmussen predicted \cite{dgr}, and Khovanov--Rozansky constructed \cite{kr}, a categorification of the HOMFLY series, sending a link in $3$-space to a triply-graded vector space over $\bb{Q}$.
This isotopy invariant is now known as \dfemph{HOMFLY} or \dfemph{Khovanov--Rozansky homology}.
The original construction used the homotopy theory of matrix factorizations.
In \cite{khovanov}, Khovanov gave an alternative approach via the Hochschild homology of Soergel bimodules for the groups $W = S_n$.
Via Soergel's Erweiterungssatz \cite[Thms.\ 15-17]{soergel}, the output of his construction (after base change from $\bb{Q}$ to $\QL$) is equivalent to a functor
\begin{align}
\sf{HHH} : \sf{H}_W \to \sf{K}^b(\Vect_2) \subseteq \Vect_3,
\end{align}
where $\Vect_d$ denotes the category of $\bb{Z}^d$-graded $\QL$-vector spaces.
Khovanov's construction extends to Soergel bimodules for any Coxeter group, so in particular, $\sf{HHH}$ can be defined on the Hecke category of any such group.

\begin{rem}\label{rem:dgr}
To dispel any ambiguity from our conventions, we state explicitly:
If $\beta$ is a topological braid on $n = r + 1$ strands, and $\hat{\beta}$ is the link closure of $\beta$, then
\begin{align}
\bb{P}(\beta) = {(at)^{|\beta|}} a^{-r} \sum_{i, j, k} 
{(a^2 \Q^{\frac{1}{2}} t)^{r - i}} \Q^{\frac{j}{2}} t^k
\dim \sf{HHH}^{i, i + j, k}(\cal{R}(\beta))
\end{align}
is an isotopy invariant of $\hat{\beta}$ that equals $1$ when $\hat{\beta}$ is the unknot.
It matches the invariant in \cite{dgr} once we substitute $\Q^{\frac{1}{2}} = q$.
In particular, for $\beta = \bb{1}$, we have
\begin{align}
\bb{P}(\bb{1}) = \pa{\frac{\Q^{\frac{1}{2}}}{a} \pa{\frac{1 + a^2 t}{1 - \Q}}}^r.
\end{align}
Compare to Example \ref{ex:identity}, below.
\end{rem}

\subsection{}

In \cite{ww_2008}, Webster--Williamson interpreted the Hochschild homology of a Soergel bimodule as the hypercohomology of a sum of shifted pure perverse sheaves over $G$.
In subsequent papers \cite{ww_2011, ww_2017}, they showed how to interpret $\sf{HHH}^{\ast, \ast, \ast}$ in a similar way, except in terms of a \emph{complex} of such objects.
We will rewrite their work in our notation.

For any $G_0$-variety $X_0$ over $\bb{F}$, we write 
\begin{align}
\bb{H}_G^\ast(X, -) \vcentcolon= \SHom^\ast(\QL, -) : \underline{\sf{D}}_{G, m}^b(X_0) \to \Mod_1(\QL[F]),
\end{align}
where $\QL[F]$ is concentrated in degree zero.
In other words, $\bb{H}_G^\ast(X, K)$ is the hypercohomology of $\xi K \in \underline{\sf{D}}_G^b(G)$, implicitly endowed with the Frobenius action coming from $K$.
With this notation, the theorem below summarizes the main results of \cite{ww_2008} and \cite{ww_2017}.

\begin{thm}[Webster--Williamson]\label{thm:ww}
We have
\begin{align}\begin{split}
\sf{HHH}^{i, i + j, k}
\simeq 
	\ur{H}^k \circ \sf{K}^b(\gr_{i + j}^{\bm{w}} \bb{H}_G^j(G, (\kappa \circ \sf{CH})(-)))
\end{split}\end{align}
for all $i, j, k \in \bb{Z}$.
In particular, we have
\begin{align}
\sf{HHH}^{i, i + j, k}(\IC_{w, 0}[-n]_\triangle)
&\simeq	\left\{\begin{array}{ll}
\gr_{i + j}^{\bm{w}} \bb{H}_G^j(G, \sf{CH}(\IC_w)) &k = n\\
0	&k \neq n
\end{array}\right.
\end{align}
for all $w \in W$ and $n \in \bb{Z}$.
\end{thm}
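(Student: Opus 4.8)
The plan is to assemble the statement from the main theorems of \cite{ww_2008, ww_2011, ww_2017}, rewritten in the notation of Section~\ref{sec:mixed}. First I would recall Soergel's description of $\sf{C}(\cal{B}\times\cal{B})$ in terms of Soergel bimodules over $\Sym(\bb{V})$ (see Remark~\ref{rem:by}), under which $\IC_w$ corresponds to the indecomposable Soergel bimodule $B_w$. By Khovanov's construction \cite{khovanov}, $\sf{HHH}$ is obtained by applying, term by term to a complex in $\sf{H}_W = \sf{K}^b(\sf{C}(\cal{B}_0\times\cal{B}_0))$, the bigraded Hochschild homology functor $K \mapsto \mathrm{HH}^{\bullet,\bullet}(B_K) \in \Vect_2$, and then taking the cohomology of the resulting complex of $\bb{Z}^2$-graded vector spaces. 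So it suffices to identify, as a functor $\sf{C}(\cal{B}_0\times\cal{B}_0) \to \Vect_2$, the bigraded Hochschild homology of $B_{(-)}$ with $K \mapsto \bigoplus_{i,j}\gr_{i+j}^{\bm{w}}\bb{H}_G^j(G, (\kappa\circ\sf{CH})(K))$, where $i$ records the Hochschild degree and $i+j$ the internal ($\Q$-)degree. The $\ur{H}^k \circ \sf{K}^b$ in the first display then comes for free from this term-by-term structure, and the second display is the case $K = \IC_w$, where the complex has a single nonzero cohomological degree.

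The first ingredient is the single-bimodule comparison of \cite{ww_2008}: $\mathrm{HH}^\bullet(B_w)$ is the $G$-equivariant hypercohomology over $G$ of the object $\bb{K}_w$, which by the remark following the definition of $\sf{C}(G_0)$ (itself a consequence of Lusztig's work) is $\xi\sf{CH}(\IC_w)$ up to shift-twist. Since $\act$ is smooth and $\pr$ is proper, $\pr_\ast\act^!(\IC_w)$ is pure of weight zero, so by the decomposition theorem \cite{deligne_1980} the perverse truncation built into $\sf{CH}$ leaves its hypercohomology unchanged; thus $\bb{H}_G^j(G, \sf{CH}(\IC_w))$ is $\bb{H}^j$ of $\bb{K}_w$ equipped with its natural weight filtration. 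The second ingredient, supplied by \cite{ww_2011, ww_2017}, is that the Hochschild grading on $\mathrm{HH}^\bullet(B_w)$ is exactly the discrepancy between weight and hypercohomological degree on this equivariant hypercohomology, so that the bidegree-$(i,i+j)$ part of $\mathrm{HH}(B_w)$ is $\gr_{i+j}^{\bm{w}}\bb{H}_G^j(G, \sf{CH}(\IC_w))$ under the normalizations in the statement. The functor $\kappa$ enters only because $\sf{HHH}$ retains the underlying $\QL$-vector space and forgets the Frobenius action; that $\gr_\ast^{\bm{w}}\bb{H}_G^\ast(G, -)$ factors through $\kappa$ follows as in Lemma~\ref{lem:pullback}, applied to the structure map of $G_0$. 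Since $\sf{CH}$ restricts to an exact functor $\sf{C}(\cal{B}_0\times\cal{B}_0) \to \sf{C}(G_0)$, these isomorphisms are natural, hence propagate term by term to complexes and commute with differentials; taking $\ur{H}^k$ yields the first display.

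For the second display, one observes that $\IC_{w, 0}[-n]_\triangle$ is the object of $\sf{H}_W$ represented by the complex whose only nonzero term is $\IC_w$, placed in cohomological degree $n$. Applying $\sf{K}^b(\gr_{i+j}^{\bm{w}}\bb{H}_G^j(G, (\kappa\circ\sf{CH})(-)))$ produces the complex of vector spaces with single term $\gr_{i+j}^{\bm{w}}\bb{H}_G^j(G, \sf{CH}(\IC_w))$ in degree $n$, whose $k$-th cohomology is that space for $k = n$ and zero otherwise.

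The main obstacle will be the bookkeeping of the three gradings together with the shift-twists in the definitions of $\IC_w$, of $\sf{CH}$ (the twist by $\langle r+N\rangle$), and of the complexes $\cal{R}_s^\pm$: one must verify that, under these normalizations, the Hochschild degree really equals weight minus hypercohomological degree and the internal degree really equals the weight, so that the indices of $\sf{HHH}$ emerge as $(i, i+j, k)$ and not as some other affine reindexing. A convenient consistency check is that the formula reproduces $\bb{P}(\bb{1})$ as in Remark~\ref{rem:dgr}, which is verified directly in Example~\ref{ex:identity}. A secondary point is that \cite{ww_2008, ww_2011, ww_2017} are written in the non-stacky equivariant setting; but, exactly as in Section~\ref{sec:mixed}, Sun's extension \cite{sun} of Deligne's yoga to the stack $[G_0/G_0]$ makes all the weight-theoretic arguments go through verbatim.
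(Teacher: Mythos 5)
Your proposal is correct and matches the paper's treatment: the paper gives no independent proof of Theorem \ref{thm:ww}, but presents it precisely as a restatement of the main results of \cite{ww_2008, ww_2011, ww_2017} in the notation of Section \ref{sec:mixed}, which is exactly the assembly-plus-bookkeeping you outline (Soergel/Khovanov on the algebraic side, the hypercohomology of $\xi\sf{CH}(\IC_w)$ with its weight filtration on the geometric side, with Example \ref{ex:identity} serving as the normalization check). The grading verification you flag as the "main obstacle" is likewise left implicit in the paper, so your route is essentially the same as its.
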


\begin{ex}\label{ex:identity}
Below, we review the calculation of $\sf{HHH}(\IC_1) \simeq \gr_\ast^{\bm{w}} \bb{H}_G^\ast(G, \cal{G})$.
Since $\cal{G}$ is the pushforward of the constant sheaf on the Grothendieck--Springer resolution, it suffices to calculate the $G$-equivariant cohomology of $\tilde{G}$ together with its weight filtration.

Fix a maximal torus $T_0 \subseteq G_0$ and a Borel $B_0 \supseteq T_0$.
By Lemma \ref{lem:assoc-bundle}, $[\tilde{G}_0/G_0] \simeq [B_0/B_0]$, where $B_0$ acts on itself by conjugation.
Therefore, there are weight-preserving isomorphisms
\begin{align}\begin{split}
\bb{H}_G^j(G, \cal{G}) 
&\simeq \ur{H}_B^j(B, \QL) \\
&\simeq \ur{H}_T^j(T, \QL) \\
&\simeq \bigoplus_i {\ur{H}_T^{j - i}(\point, \QL) \otimes \ur{H}^i(T, \QL)}.
\end{split}\end{align}
Above, Frobenius acts by $q^{\frac{j - i}{2}}$ on $\ur{H}_T^{j - i}(\point)$ and by $q^i$ on $\ur{H}^i(T)$, so it acts by $q^{\frac{i + j}{2}}$ on the $i$th summand of the last expression.
We deduce that 
\begin{align}
\gr_{i + j}^{\bm{w}} \bb{H}_G^j(G, \cal{G}) \simeq \ur{H}_T^{j - i}(\point, \QL) \otimes \ur{H}^i(T, \QL).
\end{align}
The proof of \cite[Prop.\ 11]{ww_2011} shows that this isomorphism is $W$-equivariant with respect to:
\begin{enumerate}
\item 	The $W$-action on the left-hand side induced by $\QL[W] \subseteq \bb{A}_W \simeq \bb{A}_\cal{G}$, \emph{i.e.}, the Springer action arising from the monodromy of $\cal{G}$ over the regular semisimple locus of $G$.
\item 	The $W$-action on the right-hand side induced by $\ur{H}_T^{j - i}(\point, \QL) \simeq \Sym^{\frac{j - i}{2}}(\bb{V})$ and  $\ur{H}^i(T, \QL) \simeq \Alt^i(\bb{V})$.
\end{enumerate}
\end{ex}

\subsection{}

In the rest of this section, we prove Theorem \ref{thm:a_w-to-kr}, stating that Khovanov--Rozansky homology is a summand of the $\bb{A}_W$ trace.

In what follows, recall from Section \ref{sec:hecke} that for all $\phi \in \hat{W}$, we write $V_\phi$ for an irreducible representation of $W$ that affords the character $\phi$, endowed with the trivial action of Frobenius.

\begin{lem}\label{lem:sheaves-to-reps}
Let $K \in \sf{C}_{\hat{W}}(G_0)$ be an indecomposable shifted perverse sheaf, and write $K = \cal{G}_\phi \otimes M$, where $\phi \in \hat{W}$ and $M$ is the pullback of a pure weight-zero object of $\sf{D}_{G, m}^b(\point_0)$.
Then we have $W$-equivariant isomorphisms:
		\begin{align}
		\label{eq:covariant}		
		\gr_{i + j}^{\bm{w}} \bb{H}_G^j(G, K)
		&\simeq \Hom_W(V_\phi, {(\Sym^{\frac{j - i}{2}} \otimes {\Alt^i})}(\bb{V})) \otimes \xi M,\\[1ex]
		\label{eq:contravariant}
		\gr_0^{\bm{w}} \SHom^0(K, \cal{G}\langle i\rangle)
		&\simeq \Hom_W(V_\phi^\vee, \QL[W] \otimes \Sym^{\frac{j}{2}}(\bb{V})) \otimes \xi M^\vee\\
		&\simeq 
			V_\phi \otimes \Sym^{\frac{i}{2}}(\bb{V}) \otimes \xi M^\vee.\nonumber
		\end{align}
Moreover, these isomorphisms are functorial in $K$.
\end{lem}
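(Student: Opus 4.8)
The plan is to reduce both isomorphisms to the case $K = \cal{G}_\phi$ and then to recognize the two sides as natural operations applied to Example \ref{ex:identity} and to Theorem \ref{thm:pure-iso}. Since $M$ is the pullback of a pure weight-zero object of $\sf{D}_{G, m}^b(\point_0)$ concentrated in cohomological degree zero, tensoring by $M$ (and, in the contravariant statement, dualizing to $M^\vee$) commutes with both $\bb{H}_G^\ast(G, -)$ and $\SHom^\ast$, and shifts no weights; so it suffices to construct the isomorphisms for $K = \cal{G}_\phi$ and then tensor by $\xi M$, \emph{resp.}\ $\xi M^\vee$. Throughout I would use the decomposition $\cal{G} \simeq \bigoplus_{\psi \in \hat{W}} \cal{G}_\psi \otimes V_\psi$ of Section \ref{sec:hecke}, in which the Springer action of $W$, \emph{i.e.}\ the action through $\QL[W] \subseteq \bb{A}_\cal{G}$, fixes each $\cal{G}_\psi$ and acts on the multiplicity space $V_\psi$.

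For \eqref{eq:covariant}: applying $\bb{H}_G^j(G, -)$ to the decomposition gives a weight-compatible, $W$-equivariant isomorphism $\bb{H}_G^j(G, \cal{G}) \simeq \bigoplus_\psi \bb{H}_G^j(G, \cal{G}_\psi) \otimes V_\psi$, so $\Hom_W(V_\phi, \bb{H}_G^j(G, \cal{G})) \simeq \bb{H}_G^j(G, \cal{G}_\phi)$ compatibly with $\bm{w}$. Then I would apply $\Hom_W(V_\phi, -)$ to the $W$-equivariant identification of $\gr_{i + j}^{\bm{w}} \bb{H}_G^j(G, \cal{G})$ with $(\Sym^{\frac{j - i}{2}} \otimes \Alt^i)(\bb{V})$ from Example \ref{ex:identity}, and tensor by $\xi M$.

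For \eqref{eq:contravariant}: by definition of $\bb{A}_\cal{G}$ one has $\gr_0^{\bm{w}} \SHom^0(\cal{G}, \cal{G}\langle i\rangle) = \bb{A}_\cal{G}^i$, and Theorem \ref{thm:pure-iso} together with Theorem \ref{thm:lusztig-algebra} identifies $\bb{A}_\cal{G} \simeq \bb{A}_W = \QL[W] \ltimes \Sym(\bb{V})$ as graded $\QL$-algebras; so $\gr_0^{\bm{w}} \SHom^0(\cal{G}, \cal{G}\langle i\rangle) \simeq \QL[W] \otimes \Sym^{\frac{i}{2}}(\bb{V})$ for $i$ even (and vanishes for $i$ odd), as a $\QL[W]$-bimodule over the degree-zero subalgebra $\QL[W] = \SEnd^0(\cal{G})$. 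The summand $\cal{G}_\phi \otimes V_\phi \subseteq \cal{G}$ is the image of the central idempotent $e_\phi \in \QL[W] = \SEnd^0(\cal{G})$; precomposing with $e_\phi$ therefore identifies $\SHom^0(\cal{G}_\phi, \cal{G}\langle i\rangle)$ with the multiplicity space $\Hom_W(V_\phi^\vee, \SHom^0(\cal{G}, \cal{G}\langle i\rangle))$ of $V_\phi^\vee$ for the source $W$-action, retaining the target $W$-action. Passing to $\gr_0^{\bm{w}}$ and inserting the previous identification gives the first isomorphism of \eqref{eq:contravariant} (after tensoring by $\xi M^\vee$); the second is the standard untwisting isomorphism $\Hom_W(V_\phi^\vee, \QL[W] \otimes \Sym^{\frac{i}{2}}(\bb{V})) \simeq V_\phi \otimes \Sym^{\frac{i}{2}}(\bb{V})$. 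Functoriality in $K$ is then automatic: the constructions are natural applications of $\bb{H}_G^\ast$, $\SHom^\ast$, the decomposition of $\cal{G}$, and fixed algebraic isomorphisms, hence natural in $M$, and by Schur's lemma at weight zero there are no nonzero morphisms $\cal{G}_\phi \otimes M \to \cal{G}_{\phi'} \otimes M'$ in $\underline{\sf{C}}_{\hat{W}}(G_0)$ when $\phi \neq \phi'$.

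The step I expect to require the most care is matching the various $W$-actions. In \eqref{eq:covariant} I must know that the Example \ref{ex:identity} isomorphism is equivariant for the Springer action on multiplicity spaces, which is exactly the content of the computation behind \cite[Prop.\ 11]{ww_2011}. In \eqref{eq:contravariant} the delicate point is that the $\QL[W]$-bimodule structure transported onto $\bb{A}_\cal{G}^i$ by Theorem \ref{thm:pure-iso} really is that of $\QL[W] \otimes \Sym^{\frac{i}{2}}(\bb{V})$, so that extracting the $V_\phi^\vee$-isotypic for the source action while keeping the residual target action yields precisely $V_\phi \otimes \Sym^{\frac{i}{2}}(\bb{V})$; I would also double-check the routine facts that $\gr_\ast^{\bm{w}}$ is exact and commutes with $\Hom_W$ and with tensoring by $\xi M$.
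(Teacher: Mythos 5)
Your proposal is correct and follows essentially the same route as the paper: reduce to $K = \cal{G}_\phi$, extract the $V_\phi$-isotypic piece using Example \ref{ex:identity} for \eqref{eq:covariant} and Theorems \ref{thm:lusztig-algebra} and \ref{thm:pure-iso} for \eqref{eq:contravariant}, and tensor by $\xi M$, \emph{resp.}\ $\xi M^\vee$. The paper disposes of functoriality by noting that $\bb{A}_\cal{G} \xrightarrow{\sim} \bb{A}_\cal{S} \xrightarrow{\sim} \bb{A}_W$ is an isomorphism of algebras rather than mere vector spaces, which is the same point your naturality-plus-Schur argument makes in slightly different words.
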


\begin{proof}
By Example \ref{ex:identity}, there is a $W$-equivariant isomorphism 
\begin{align}
\gr_{i + j}^{\bm{w}} \bb{H}_G^j(G, \cal{G}) 
	\simeq {(\Sym^{\frac{j - i}{2}} \otimes \Alt^i)}(\bb{V}),
\end{align}
which yields \eqref{eq:covariant}.
Next, Theorems \ref{thm:lusztig-algebra} and \ref{thm:pure-iso} give a $W$-equivariant isomorphism
\begin{align}
\gr_0^{\bm{w}} \SHom^0(\cal{G}, \cal{G}\langle i\rangle) \simeq \QL[W] \otimes \Sym^{\frac{i}{2}}(\bb{V}),
\end{align}
which yields \eqref{eq:contravariant}.
Functoriality is a consequence of the fact that $\bb{A}_\cal{G} \xrightarrow{\sim} \bb{A}_\cal{S} \xrightarrow{\sim} \bb{A}_W$ is an isomorphism of algebras, not just of vector spaces.
\end{proof}

\begin{lem}\label{lem:h-to-g}
We have 
\begin{align}
(\gr_{i + j}^{\bm{w}} \bb{H}_G^j(G, -))^\vee
\simeq \Hom_W(\Alt^i(\bb{V}), \gr_0^{\bm{w}} {\SHom^0(-, \cal{G}\langle j\rangle)})
\end{align}
as contravariant functors $\underline{\sf{C}}(G_0)^\op \to \sf{Vect}_0$.
\end{lem}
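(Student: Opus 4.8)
The plan is to reduce the asserted isomorphism to an identity of $W$-multiplicity spaces, via the explicit descriptions in Lemma~\ref{lem:sheaves-to-reps}. Both sides are additive functors of $K \in \underline{\sf{C}}(G_0)$, and by Theorem~\ref{thm:summand} the category splits as a sum of the principal-series part $\underline{\sf{C}}_{\hat{W}}(G_0)$ and a cuspidal complement; it therefore suffices to produce a natural isomorphism on $\underline{\sf{C}}_{\hat{W}}(G_0)$, since on the complement both functors vanish — the right-hand side by the orthogonality of cuspidal character sheaves to $\cal{G}$ quoted in Theorem~\ref{thm:summand}, and the left-hand side because a cuspidal (mixed) unipotent character sheaf $E \otimes M$ has $\bb{H}_G^\ast(G, E \otimes M) = \bb{H}_{c,G}^\ast(J, \cal{E}) \otimes \xi M = 0$ by cleanness of cuspidal local systems. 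By additivity it then suffices to work with an indecomposable $K = \cal{G}_\phi \otimes M$.

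First I would substitute \eqref{eq:covariant} into the left-hand side and \eqref{eq:contravariant} into the right-hand side. Because $\xi M$ carries the trivial $W$-action and $(\xi M)^\vee = \xi M^\vee$, this factor pulls out of both expressions, and the claim becomes a purely representation-theoretic identity between the resulting $\Hom_W$-spaces built from $V_\phi$, the exterior power $\Alt^i(\bb{V})$, and symmetric powers of $\bb{V}$ — to be verified compatibly as $\phi$ and the relevant multiplicities range over $\hat{W}$.

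Next I would establish that representation-theoretic identity. The inputs are: (i) every finite-dimensional $W$-representation is self-dual, so for $V_\phi$ irreducible the trace pairing gives a functorial isomorphism $\Hom_W(V_\phi, U)^\vee \cong \Hom_W(U, V_\phi)$, and dualizing inside any $\Hom_W$ is harmless; (ii) the tensor--hom adjunction, used to move $\Alt^i(\bb{V})$ and the symmetric powers across; and (iii) the structural description, obtained by combining Example~\ref{ex:identity} with Theorems~\ref{thm:lusztig-algebra} and \ref{thm:pure-iso}, of $\bb{H}_G^\ast(G, \cal{G})$ as a graded $\bb{A}_W$-module induced from the $\QL[W]$-module $\Alt^\ast(\bb{V})$. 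Point (iii) is what makes the two sides agree: the exterior power appearing explicitly on the right of the lemma is exactly the piece of $\bb{H}_G^\ast(G, \cal{G})$ that Frobenius reciprocity for $\QL[W] \subseteq \bb{A}_W$ extracts from the corresponding graded piece on the left.

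For naturality: the isomorphisms \eqref{eq:covariant} and \eqref{eq:contravariant} are already functorial in $K$ by Lemma~\ref{lem:sheaves-to-reps}, and — crucially — both are induced by the one ring isomorphism $\bb{A}_\cal{G} \xrightarrow{\sim} \bb{A}_\cal{S} \xrightarrow{\sim} \bb{A}_W$; following that single isomorphism through the composition-of-morphisms structure on the two $\SHom$-functors shows the comparison respects morphisms in $K$. The part I expect to be the real obstacle is not any one step but the bookkeeping: one must keep straight the three gradings simultaneously in play — the cohomological degree $j$, the weight $i+j$, and the internal grading of $\bb{A}_W$ — together with the Tate twists hidden in $\langle j\rangle = [j](\tfrac{j}{2})$, and one must pin down precisely which $W$-action (diagonal versus single-factor, and whether a sign-character twist coming from $(\Alt^i\bb{V})^\vee \cong \Alt^{r-i}(\bb{V}) \otimes \varepsilon$ intervenes) appears in \eqref{eq:covariant}--\eqref{eq:contravariant}, so that the representation-theoretic identity is formulated, and hence proved, with the correct indices.
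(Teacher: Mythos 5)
Your skeleton coincides with the paper's: the principal-block case is exactly Lemma \ref{lem:sheaves-to-reps} plus self-duality, and the passage to all of $\underline{\sf{C}}(G_0)$ goes through the summand decomposition of Theorem \ref{thm:summand}; your naturality remark via the single algebra isomorphism $\bb{A}_{\cal{G}} \xrightarrow{\sim} \bb{A}_{\cal{S}} \xrightarrow{\sim} \bb{A}_W$ is the same point the paper records as functoriality in Lemma \ref{lem:sheaves-to-reps}, and the grading/Tate-twist bookkeeping you defer is likewise left implicit in the paper's one-line appeal to self-duality of $\Sym^\ast(\bb{V})$, so there you are at the paper's own level of detail. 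The one genuine divergence is your argument that the left-hand side vanishes on the cuspidal complement. The paper's device is that the constant sheaf is, up to shift-twist, the summand $\cal{G}_1$ of $\cal{G}$, so that $\bb{H}_G^j(G,-) = \SHom^j(\QL,-)$ is itself a Hom out of an object of $\sf{C}_{\hat{W}}(G_0)$, and the very same orthogonality of Theorem \ref{thm:summand} that kills the right-hand side kills the left-hand side too, with no further input. Your appeal to cleanness does not by itself close this step: cleanness only identifies $\bb{H}_G^\ast(G, E \otimes M)$ with the equivariant cohomology of the cuspidal local system on its support, and the vanishing of that group is an additional claim, needing the nontriviality of the character of the relevant component group together with an argument that the corresponding isotypic piece of the equivariant cohomology of the centralizer is zero (or else it collapses back onto exactly the orthogonality the paper uses). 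So either supply that supplementary computation or, more economically, replace the cleanness argument by the paper's observation that the constant sheaf already lies in the principal block.
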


\begin{proof}
By the preceding lemma and the self-duality of $\Sym^\ast(\bb{V})$ as a representation of $W$, the statement holds with $\underline{\sf{C}}_{\hat{W}}(G_0)$ in place of $\underline{\sf{C}}(G_0)$.
Now apply Theorem \ref{thm:summand}, observing that the constant sheaf is a summand of $\cal{G}$.
\end{proof}

\begin{lem}\label{lem:g-to-s}
We have
\begin{align}
\gr_0^{\bm{w}} {\SHom^0(-, \cal{G}\langle j\rangle)}
&\simeq \SHom^0(i^\ast(-)\langle-r\rangle, \cal{S}\langle j\rangle)
\end{align}
as contravariant functors $\underline{\sf{C}}(G_0)^\op \to \sf{Mod}_0(\QL[W])$.
\end{lem}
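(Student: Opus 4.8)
The plan is to exhibit the asserted isomorphism as the natural transformation induced by the pullback functor $i^\ast\langle -r\rangle$, to reduce it to the principal block $\sf{C}_{\hat{W}}$, and there to reduce it further to the (Grothendieck--)Springer sheaves, where it becomes Theorem \ref{thm:pure-iso}.

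First, by Lemma \ref{lem:pullback-kappa} the functor $i^\ast\langle -r\rangle\colon \sf{C}(G_0)\to\sf{C}(\cal{U}_0)$ descends to a functor $\underline{\sf{C}}(G_0)\to\underline{\sf{C}}(\cal{U}_0)$. Applying it to morphisms and using $\cal{S}=i^\ast\cal{G}\langle -r\rangle$ to rewrite $i^\ast\cal{G}\langle j-r\rangle\simeq\cal{S}\langle j\rangle$, I would obtain, for each $j$, a natural transformation of contravariant functors on $\underline{\sf{C}}(G_0)$,
\begin{align}
\theta_j\colon\ \gr_0^{\bm{w}}\SHom^0(-,\cal{G}\langle j\rangle)=\Hom_{\underline{\sf{C}}(G_0)}(-,\cal{G}\langle j\rangle)
\ \longrightarrow\ \SHom^0(i^\ast(-)\langle -r\rangle,\cal{S}\langle j\rangle),
\end{align}
where on the target I use the $\SHom$-purity of $\sf{C}(\cal{U}_0)$ (Lemma \ref{lem:unipotent}) to identify $\SHom^0$ with $\Hom_{\underline{\sf{C}}(\cal{U}_0)}$. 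This $\theta_j$ is $\QL[W]$-linear because $i^\ast$ carries the Springer action on $\cal{G}$ to that on $\cal{S}$, i.e.\ intertwines the identifications $\End(\cal{G})\simeq\QL[W]\simeq\End(\cal{S})$ coming from Theorems \ref{thm:lusztig-algebra} and \ref{thm:pure-iso}. It then remains to check that $\theta_j$ is an isomorphism on every $K\in\underline{\sf{C}}(G_0)$.

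Both sides of $\theta_j$ are additive in $K$, and $\cal{G}\langle j\rangle\in\sf{C}_{\hat{W}}(G_0)$, $\cal{S}\langle j\rangle\in\sf{C}_{\hat{W}}(\cal{U}_0)$. By Theorem \ref{thm:summand} these are direct summands with complementary full additive subcategories, and by the commutativity of diagram \eqref{eq:big} the functor $i^\ast\langle -r\rangle$ commutes with the projections $\sf{PR}$ onto these summands. Granting that, $\Hom_{\underline{\sf{C}}(G_0)}(K,\cal{G}\langle j\rangle)=\Hom_{\underline{\sf{C}}_{\hat{W}}(G_0)}(\sf{PR}\,K,\cal{G}\langle j\rangle)$ and the analogous identity holds on the $\cal{U}_0$ side, compatibly with $\theta_j$, so I reduce to $K\in\sf{C}_{\hat{W}}(G_0)$, hence to $K=\cal{G}_\phi\otimes M$ with $\phi\in\hat{W}$ and $M$ the pullback of a pure weight-zero $\QL[F]$-module. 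For such $K$ one has $\SHom^0(\cal{G}_\phi\otimes M,\cal{G}\langle j\rangle)\simeq\SHom^0(\cal{G}_\phi,\cal{G}\langle j\rangle)\otimes\xi M^\vee$ with $\xi M^\vee$ pure of weight zero, while $i^\ast(\cal{G}_\phi\otimes M)\langle -r\rangle=\cal{S}_\phi\otimes M$ and $\SHom^0(\cal{S}_\phi\otimes M,\cal{S}\langle j\rangle)\simeq\SHom^0(\cal{S}_\phi,\cal{S}\langle j\rangle)\otimes\xi M^\vee$; thus $\theta_j$ on $\cal{G}_\phi\otimes M$ is $\theta_j|_{\cal{G}_\phi}\otimes\id$, reducing us to $K=\cal{G}_\phi$. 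Since $\cal{G}_\phi$ is a summand of $\cal{G}$ cut out by a Springer idempotent $e_\phi\in\End(\cal{G})$, and $i^\ast(e_\phi\langle -r\rangle)$ is the idempotent in $\End(\cal{S})$ cutting out $\cal{S}_\phi$, naturality of $\theta_j$ identifies $\theta_j|_{\cal{G}_\phi}$ with the restriction of $\theta_j|_{\cal{G}}$ to the $\cal{G}_\phi$-summand. Finally, after forgetting a common Tate twist and using $\gr_n^{\bm{w}}\SHom^n(\cal{G},\cal{G})\simeq\gr_0^{\bm{w}}\SHom^0(\cal{G},\cal{G}\langle n\rangle)$ and $\SHom^n(\cal{S},\cal{S})\simeq\SHom^0(\cal{S},\cal{S}\langle n\rangle)$, the map $\theta_j|_{\cal{G}}$ becomes the degree-$j$ component of $\bigoplus_n\gr_n^{\bm{w}}\SEnd^n(\cal{G})\to\SEnd^\ast(\cal{G})\xrightarrow{i^\ast}\SEnd^\ast(\cal{S})$, which is an isomorphism by Theorem \ref{thm:pure-iso}.

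The step I expect to be the main obstacle is the compatibility of $i^\ast$ with the block decomposition of Theorem \ref{thm:summand} --- equivalently, that $i^\ast\langle -r\rangle$ commutes with $\sf{PR}$ --- used in the reduction to $\sf{C}_{\hat{W}}$. This is exactly where the orthogonality results of Lusztig \cite{lusztig_1985_2} and Rider--Russell \cite{rr} enter: since $\sf{C}(G_0)$ is not $\SHom$-pure, there is no purity-based shortcut for showing that $i^\ast$ of a cuspidal unipotent character sheaf admits no nonzero morphism in $\underline{\sf{C}}(\cal{U}_0)$ to the Springer summands $\cal{S}_\phi\langle n\rangle$. Everything else --- the passage to indecomposables, the idempotent argument, and the tracking of Tate twists and of the $\QL[W]$-module structures --- is routine.
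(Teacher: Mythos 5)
Your proposal is correct and follows essentially the same route as the paper: the paper's proof is exactly the two steps you isolate, namely that Theorem \ref{thm:pure-iso} gives the statement on $\underline{\sf{C}}_{\hat{W}}(G_0)$, and Theorem \ref{thm:summand} together with the compatibility of $i^\ast\langle -r\rangle$ with the projections $\sf{PR}$ reduces the general case to that block. The extra detail you supply (naturality of $\theta_j$, reduction to $\cal{G}_\phi\otimes M$ and then to $\cal{G}$ via Springer idempotents) is implicit in the paper's terse argument, and your identification of the $\sf{PR}$-compatibility as the point where the Lusztig and Rider--Russell orthogonality results enter matches the paper's intent.
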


\begin{proof}
By Theorem \ref{thm:pure-iso}, the statement holds with $\underline{\sf{C}}_{\hat{W}}(G_0)$ in place of $\underline{\sf{C}}(G_0)$.
Now apply Theorem \ref{thm:summand}, observing that $i^\ast\langle -r\rangle$ is compatible with the functors $\sf{PR}$.
\end{proof}

\begin{proof}[Proof of Theorem \ref{thm:a_w-to-kr}]
The theorem claims that
\begin{align}
(\sf{HHH}^{i, i + j, k})^\vee
&\simeq \Hom_W(\Alt^i(\bb{V}), \sf{AH}^{j + k, j}(-))
\end{align}
as contravariant functors on $\sf{H}_W = \sf{K}^b(\sf{C}(\cal{B}_0 \times \cal{B}_0))$.
For ease of notation, let 
\begin{align}
\Phi^j &= \Hom_{\underline{\sf{C}}(G_0)}(i^\ast (-)\langle -r\rangle, \cal{S}\langle j\rangle),\\
\emph{resp.}\qquad
\Psi^{i, j} &= \Hom_{\sf{K}^b(\underline{\sf{C}}(G_0))}(i^\ast(-)\langle -r\rangle, \cal{S}\langle j\rangle [i - j]_\triangle),
\end{align}
as a functor $\underline{\sf{C}}(G_0) \to \sf{Mod}_0(\QL[W])$, \emph{resp.}\ $\sf{K}^b(\underline{\sf{C}}(G_0)) \to \sf{Mod}_0(\QL[W])$.
By Lemma \ref{lem:pullback-kappa}, we have $\kappa \circ i^\ast \langle -r\rangle \simeq i^\ast\langle -r\rangle \circ \kappa$ as functors on $\sf{C}(G_0)$, and therefore,
\begin{align}
\sf{AH}^{j + k, j} 
&\simeq
	\Psi^{j + k, j} \circ \sf{K}^b(\kappa \circ \sf{CH}).
\end{align}
At the same time, we have
\begin{align}
\Psi^{j + k, j}
&\simeq
	\ur{H}^k \circ \sf{K}^b\Phi^j.
\end{align}
Furthermore, by Lemmas \ref{lem:unipotent} and \ref{lem:g-to-s},
\begin{align}\begin{split}
\Phi^j
&\simeq
	\SHom^0(i^\ast(-)\langle -r\rangle, \cal{S}\langle j\rangle)
\simeq
	\gr_0^{\bm{w}} \SHom^0(-, \cal{G}\langle j\rangle).
\end{split}\end{align}
Combining these equivalences with Lemma \ref{lem:h-to-g}, we compute
\begin{align}\begin{split}
&\Hom_W(\Alt^i(\bb{V}), \sf{AH}^{j + k, j}(-))\\
&\simeq
	\Hom_W(\Alt^i(\bb{V}), \ur{H}^k \circ \sf{K}^b(\Phi^j \circ \kappa \circ \sf{CH}))
		\\
&\simeq
	\Hom_W(\Alt^i(\bb{V}), \ur{H}^k \circ \sf{K}^b(\gr_0^{\bm{w}} \SHom^0((\kappa \circ \sf{CH})(-), \cal{G}\langle j\rangle)))\\
&\simeq
	\ur{H}^k \circ \sf{K}^b(
	\Hom_W(\Alt^i(\bb{V}),\gr_0^{\bm{w}} \SHom^0((\kappa \circ \sf{CH})(-), \cal{G}\langle j\rangle)))\\
&\simeq
	\ur{H}^k \circ \sf{K}^b(
	(\gr_{i + j}^{\bm{w}} 
		\bb{H}_G^j(G, (\kappa \circ \sf{CH})(-))
	)^\vee).
\end{split}\end{align}
The last expression is equivalent to $(\sf{HHH}^{i, i + j, k})^\vee$ by Theorem \ref{thm:ww}.
\end{proof}

\subsection{}\label{subsec:kr-to-varieties}

Let $\cal{X}(\beta)$ be defined by the cartesian square:
\begin{equation}
\begin{tikzpicture}[baseline=(current bounding box.center), >=stealth]
\matrix(m)[matrix of math nodes, row sep=2em, column sep=2.5em, text height=2ex, text depth=0.5ex]
{	\cal{X}(\beta)
		&\cal{U}(\beta)\\
	\{1\}
		&\cal{U}\\
		};
\path[->,font=\scriptsize, auto]
(m-1-1)		edge (m-1-2)
(m-1-1)		edge (m-2-1)
(m-1-2)		edge node{$p_\beta$} (m-2-2)
(m-2-1)		edge (m-2-2);
\end{tikzpicture}
\end{equation}

\begin{proof}[Proof of Corollary \ref{cor:kr-to-varieties}]
We claim that for all $\beta \in \Br_W^+$, we have
\begin{align}
\sf{HHH}^{0, j, k}(\cal{R}(\beta))^\vee
	&\simeq \gr_{j + 2r}^{\bm{w}} \ur{H}_{-(j + k + 2r)}^{!, G}(\cal{U}(\beta)),\\
\sf{HHH}^{r, r + j, k}(\cal{R}(\beta))^\vee
	&\simeq \gr_{j + 2(r - N)}^{\bm{w}} \ur{H}_{-(j + k + 2(r - N))}^{!, G}(\cal{X}(\beta)).
\end{align}
Let $1$ and $\varepsilon$ be the trivial and sign characters of $W$, respectively.
Then $1$, \emph{resp.}\ $\varepsilon$, is the character of $\Alt^0(\bb{V})$, \emph{resp.}\ $\Alt^r(\bb{V})$, so Theorems \ref{thm:a_w-to-kr} and \ref{thm:a_w-to-varieties-strong} give us
\begin{align}
\sf{HHH}^{0, j, k}(\cal{R}(\beta))^\vee
	&\simeq \gr_{j + r}^{\bm{w}} \SHom^{j + k + r}(p_{\beta, !}\QL, \cal{S}_1)\\
\sf{HHH}^{r, r + j, k}(\cal{R}(\beta))^\vee
	&\simeq \gr_{j + r}^{\bm{w}} \SHom^{j + k + r}(p_{\beta, !}\QL, \cal{S}_\varepsilon).
\end{align}
By Springer theory, we know that:
\begin{itemize}
\item 	$\cal{S}_1$ is the sheaf $\QL\langle -r\rangle$ over $\cal{U}$.
\item 	$\cal{S}_\varepsilon$ is the skyscraper sheaf at $1 \in \cal{U}$ with stalk $\QL\langle -r - 2N\rangle$.
\end{itemize}
Now, both of the desired isomorphisms follow from Lemma \ref{lem:borel-moore}.
\end{proof}

\newpage
\section{The Decategorified Trace}\label{sec:decat}

\subsection{}

To state the main definition of this section, let $W$ denote an \emph{arbitrary} finite Coxeter group.

As before, $H_W$ is the Iwahori--Hecke algebra of $W$.
Moreover, we will draw freely upon the notation introduced in Appendix \ref{sec:coxeter}:
\begin{itemize}
\item 	$R(W)$ is the representation ring of $W$.
\item 	$\bb{Q}_W \subseteq \bar{\bb{Q}}$ is the splitting field of $W$.
\item 	$(-, -)_W : R(W) \times R(W) \to \bb{Z}$ be the multiplicity pairing.
\item 	We write 
\begin{align}
{[\Sym(\bb{V})]_{\Q}} = \sum_i {\Q^i} \Sym^i(\bb{V}).
\end{align}
\item 	If $\phi : W \to \bb{Q}_W$ is an irreducible character, then $\phi_{\Q} : H_W(\Q^{\frac{1}{2}}) \to \bb{Q}_W(\Q^{\frac{1}{2}})$ is the corresponding $\bb{Z}[\Q^{\pm\frac{1}{2}}]$-linear trace.
\item 	$\{-, -\} : \hat{W} \times \hat{W} \to \bb{Q}_W$ is the symmetric pairing (derived from) Lusztig's exotic Fourier transform.
\end{itemize}

\begin{df}
For any finite Coxeter group $W$, the \dfemph{$R(W)$ trace} on its Iwahori--Hecke algebra is the $\bb{Z}[\Q^{\pm\frac{1}{2}}]$-linear trace
\begin{align}
\Tr{-} : H_W \to R(W) \otimes \bb{Q}_W(\Q^{\frac{1}{2}})
\end{align}
defined by
\begin{align}
\Tr{\beta} = \sum_{\phi, \psi \in \hat{W}} {\{\phi, \psi\}} 
\phi_{\Q}(\beta)\psi
\end{align}
for all $\beta \in H_W$.
\end{df}

In the next two subsections, we assume that $W$ is still a Weyl group and prove Theorem \ref{thm:decat}, which relates the $R(W)$ trace to the decategorification of the $\bb{A}_W$ trace.
In the remainder of the section, we allow $W$ to be an arbitrary finite Coxeter group and relate the $R(W)$ trace to the Markov trace studied by Ocneanu and Gomi.
We conclude by proving Propositions \ref{prop:rationality} and \ref{prop:symmetry}, which roughly generalize rationality and symmetry properties of the Markov trace.

\subsection{}

Recall that an object of $\Mod_2^b(\QL[W])$ is a $\QL[W]$-module equipped with a $W$-invariant bigrading where the first grading is bounded below and the second is bounded.
We have an additive map
\begin{align}
\sf{AH} : H_W \xrightarrow{\sim} [\sf{H}_W]_\triangle \to [\Mod_2^b(\QL[W])]_\oplus
\end{align}
induced by the composition
\begin{align}
\sf{H}_W^\op \xrightarrow{\sf{AH}} \Mod_2^b(\bb{A}_W) \to \Mod_2^b(\QL[W]),
\end{align}
where the second arrow is the forgetful functor.
Note that $\Mod_2^b(\QL[W])$ is semisimple, since $\QL[W]$ is.

For any ring $R$, such as $R = R(W)$, we write $R(\!(\Q^{\frac{1}{2}})\!) = R[\![\Q^{\frac{1}{2}}]\!][\frac{1}{\Q}]$.
Let
\begin{align}
[-]_{\Q} : [\sf{Mod}_{1, 1}(\QL[W])]_\oplus \to R(W)(\!(\Q^{\frac{1}{2}})\!)
\end{align}
be the additive map defined by
\begin{align}
[M]_{\Q} = \sum_{i, j}
{(-1)^{i - j}} \Q^{\frac{j}{2}} M^{i, j}.
\end{align}
We say that $[M]_{\Q}$ is the \dfemph{$\Q$-graded character} of $M$.
For all $i, j$, let 
\begin{align}
(\sf{AH}^\vee)^{i, j}(-) = \Hom_{\QL}(\sf{AH}^{i, j}(-), \QL).
\end{align}
Then the composition
\begin{align}
[\sf{AH}^\vee]_{\Q} \vcentcolon= [-]_{\Q} \circ \sf{AH}^\vee
	: H_W \to R(W)(\!(\Q^{\frac{1}{2}})\!)
\end{align}
is $\bb{Z}[\Q^{\pm\frac{1}{2}}]$-linear.

\subsection{}

Recall from Section \ref{sec:hecke} that $\{\gamma_w\}_w$ denotes the Kazhdan--Lusztig basis of the Iwahori--Hecke algebra.
Then Theorem \ref{thm:decat} reduces to the claim that:
\begin{align}
[\sf{AH}^\vee(\IC_w)]_{\Q} = \Tr{\gamma_w} \cdot [\Sym(\bb{V})]_{\Q}
\end{align}
for all $w \in W$.
To prove it, we use a multiplicity formula for unipotent character sheaves, derived from \cite[Cor.\ 14.11]{lusztig_1985_3} and \cite[Thm.\ 23.1]{lusztig_1986}.

As in \S\ref{subsec:decat}, let $[\sf{C}(G_0)]_\oplus$ be endowed with the action of $\bb{Z}[\Q^{\frac{1}{2}}]$ in which $\Q^{\frac{1}{2}}$ acts by $\langle -1\rangle$.
The following version of the formula is the one quoted in \cite[413]{ww_2008}, fixed to incorporate semisimplification.

\begin{thm}[Lusztig]\label{thm:lusztig-character}
For all $w \in W$, we have
\begin{align}
{[\sf{CH}(\IC_w)^{ss}]} = \sum_{\phi, \psi \in \hat{W}}
{\{\phi, \psi\}} \phi_{\Q}(\gamma_w) [\cal{G}_\psi] + \text{cuspidal terms}
\end{align}
in $[\sf{C}(G_0)]_\oplus$, where ``cuspidal terms'' refers to the contribution of simple objects not of the form $\cal{G}_\psi$ for some $\psi \in \hat{W}$, and ``$ss$'' refers to the semisimplification of (shifted) perverse summands.
\end{thm}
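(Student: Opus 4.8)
The plan is to deduce the identity from Lusztig's theory of character sheaves, the two essential inputs being the multiplicity formula of \cite[Cor.\ 14.11]{lusztig_1985_3} and the normalization result of \cite[Thm.\ 23.1]{lusztig_1986}; the remaining work is a careful comparison of conventions and a precise treatment of semisimplification. By linearity over $\bb{Z}[\Q^{\pm\frac{1}{2}}]$ it suffices to prove the formula for each Kazhdan--Lusztig generator, which is what I would do.

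First I would reduce to a computation over $G$. By the Remark identifying $\xi\sf{CH}(\IC_w)$, up to an explicit shift-twist, with the object $\bar{K}_w^{\cal{L}}$ of \cite{lusztig_1985_3} for $\cal{L}$ the trivial rank-one local system on a fixed maximal torus, it suffices to compute the class of $(\bar{K}_w)^{ss}$ in the split Grothendieck group of pure perverse sheaves on $G$, together with the distribution of Frobenius weights (equivalently, since $\Q^{\frac{1}{2}}$ acts by $\langle -1\rangle$, the induced $\bb{Z}[\Q^{\pm\frac{1}{2}}]$-module structure). Lusztig's decomposition theorem writes $\bar{K}_w$ in that Grothendieck group as a $\bb{Z}[\Q^{\pm\frac{1}{2}}]$-linear combination of (mixed) character sheaves; I would then isolate the ``principal series'' summands, i.e.\ those occurring in $\xi\cal{G}$, which by the decomposition $\cal{G}\simeq\bigoplus_\psi\cal{G}_\psi\otimes V_\psi$ of Section \ref{sec:hecke} are exactly the $\cal{G}_\psi$ (pinned down so that Frobenius acts trivially on $V_\psi$), and collect everything else --- the genuinely cuspidal character sheaves, and in non-type-$A$ groups also the remaining non-principal-series summands attached to nontrivial local systems with unipotent support --- into the term ``cuspidal terms.''

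Next comes the normalization comparison, which is the heart of the argument. Lusztig's formula is stated in terms of a basis of $H_W$ related to the $T_w$-basis and a version of the nonabelian Fourier matrix; to recover the stated right-hand side I would (i) pass to the Kazhdan--Lusztig basis $\{\gamma_w\}$, absorbing the factor $\Q^{\frac{|w|}{2}}$ exactly as in the decategorification isomorphism $[\sf{C}(\cal{B}_0\times\cal{B}_0)]_\oplus\xrightarrow{\sim}H_W$ of \S\ref{subsec:decat}, so that Lusztig's coefficient becomes $\phi_{\Q}(\gamma_w)$, and (ii) identify the a priori non-symmetric coefficient matrix with the symmetric pairing $\{-,-\}$ defined in Appendix \ref{sec:coxeter} via Lusztig's exotic Fourier transform. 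Step (ii) is where \cite[Thm.\ 23.1]{lusztig_1986} enters, relating the constants of \cite{lusztig_1985_3} to the entries of the Fourier transform, from which symmetry and the exact shape of $\{-,-\}$ follow. Throughout I would track the half-Tate twist $(\tfrac{1}{2})$ and the shift-twists in the definitions of $\IC_w$ and $\sf{CH}$, to confirm that no stray sign or power of $\Q$ survives.

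Finally I would address semisimplification. Every indecomposable perverse summand of $\sf{CH}(\IC_w)$ is pure of weight zero, hence by \cite[Prop.\ 5.3.9]{bbd} of the form $E\otimes M$ with $E$ simple and $M$ the pullback of a single unipotent Jordan block; the operation $(-)^{ss}$ replaces $M$ by $\gr_\ast^{\bm{w}}M$, i.e.\ by $\dim M$ copies of the weight-zero generator in the appropriate shift-twists. In $[\sf{C}(G_0)]_\oplus$ this is precisely the step that makes the multiplicity of $\cal{G}_\psi$ an honest element of $\bb{Z}[\Q^{\pm\frac{1}{2}}]$ equal to $\sum_\phi\{\phi,\psi\}\phi_{\Q}(\gamma_w)$; the version quoted in \cite[413]{ww_2008} omitted it, which is the correction flagged in the statement. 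Since pullback to $G$ commutes with $(-)^{ss}$ up to the weight bookkeeping already done, Lusztig's unmixed computation together with purity and Deligne's weight yoga determines the mixed class, and $(-)^{ss}$ alters only multiplicities-with-grading, not which simple objects occur. I expect the normalization comparison in steps (i)--(ii) to be the main obstacle: reconciling the several normalizations used across Lusztig's papers for the Fourier matrix, the Hecke-algebra basis, and the Tate twists on $\bar{K}_w$ with the conventions fixed in Appendix \ref{sec:coxeter}, and verifying that the symmetrization producing $\{-,-\}$ and the rescaling producing $\phi_{\Q}(\gamma_w)$ combine to give exactly the stated coefficient. A secondary subtlety is confirming that ``cuspidal terms'' genuinely absorbs everything outside $\{\cal{G}_\psi\}$, including any clean or cuspidal summands arising from the trivial local system in small exceptional cases.
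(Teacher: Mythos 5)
Your proposal matches the paper's treatment: the theorem is not proved there but quoted as Lusztig's result, derived from exactly the inputs you name --- the multiplicity formula of \cite[Cor.\ 14.11]{lusztig_1985_3}, the Fourier-transform normalization of \cite[Thm.\ 23.1]{lusztig_1986}, and the version stated in \cite[413]{ww_2008} corrected by semisimplifying the indecomposable pure summands $E\otimes M$ of \cite[Prop.\ 5.3.9]{bbd}. Your sketch of the reduction to $\bar{K}_w^{\cal{L}}$, the Kazhdan--Lusztig/Tate-twist bookkeeping, and the role of $(-)^{ss}$ is the intended derivation, so there is nothing further to compare.
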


\begin{proof}[Proof of Theorem \ref{thm:decat}]
By an argument similar to the proof of the second equivalence in Lemma \ref{lem:sheaves-to-reps}, Theorem  \ref{thm:lusztig-algebra} implies that for all $i, j \in \bb{Z}$, we have
\begin{align}
\gr_j^{\bm{w}} \SHom^i(\cal{S}_\psi, \cal{S}) \simeq 
\left\{\begin{array}{ll}
V_\psi \otimes \Sym^{\frac{i}{2}}(\bb{V})
	&i = j\\
0
	&i \neq j
\end{array}\right.
\end{align}
as representations of $W$.
So Theorem \ref{thm:lusztig-character} yields
\begin{align}\begin{split}
[\sf{AH}^\vee(\IC_w)]_{\Q}
&=	\sum_{i, j} {(-1)^{i - j}} 
	\Q^{\frac{j}{2}}
	\gr_j^{\bm{w}} 
	\SHom^i(i^\ast \sf{CH}(\IC_w) \langle -r\rangle, \cal{S})^\vee\\
&=	\sum_{i, j}
	{(-1)^{i - j}}
	\Q^{\frac{j}{2}}
	\sum_{\phi, \psi \in \hat{W}} 
	{\{\phi, \psi\}}
	\phi_{\Q}(\gamma_w)
	\gr_j^{\bm{w}} \SHom^i(\cal{S}_\psi, \cal{S})^\vee\\
&=	\sum_{\phi, \psi \in \hat{W}} 
	{\{\phi, \psi\}}
	\phi_{\Q}(\gamma_w)
	\sum_i
	{\Q^{\frac{i}{2}}}\,
	(\psi \otimes \Sym^{\frac{i}{2}}(\bb{V}))^\vee.
\end{split}\end{align}
Every representation of $W$ is self-dual because $\bb{Q}_W = \bb{Q} \subseteq \bb{R}$, so the inner sum in the last expression simplifies to $\psi \cdot [\Sym(\bb{V})]_{\Q}$.
\end{proof}

\subsection{}\label{subsec:markov}

We now relate $\Tr{-}$ to Markov traces.

The \dfemph{HOMFLY series} of a link $\lambda \subseteq \bb{R}^3$ is one of the predecessors of the isotopy invariant mentioned in Remark \ref{rem:dgr}.
We will denote it by
\begin{align}
[\lambda]_{a, \Q} \in \bb{Z}[(\Q^{\frac{1}{2}} - \Q^{-\frac{1}{2}})^{\pm 1}, a^{\pm 1}].
\end{align}
We have $[\lambda]_{a, \Q} = \bb{P}(\lambda)|_{t = -1}$, where $\bb{P}$ is the invariant in the remark.
With this normalization, the HOMFLY series of the unknot equals $1$.

There are many different constructions of the HOMFLY invariant \cite{homfly}.
Ocneanu's approach relies on Alexander's theorem, stating that every link in $\bb{R}^3$ is the closure of a braid.
Namely, one constructs for each $n \geq 1$ a certain function on the Iwahori--Hecke algebra of $S_n$, now known as a Markov trace \cite{jones}.
If $\Br_n$ denotes the braid group of $S_n$, \emph{i.e.}, the group of topological braids on $n$ strands, and $\lambda$ is the closure of $\beta \in \Br_n$, then the HOMFLY series of $\lambda$ is the Markov trace of $\beta$, multiplied by a factor depending only on $|\beta|$ and $n$.

Y.~Gomi generalized Ocneanu's Markov traces in a uniform way beyond type $A$, extending work of Geck--Lambropoulou in type $BC$ \cite{gomi}.
We follow Gomi's conventions here.

Let $(W, S)$ be any finite Coxeter system.
Recall from \S\ref{subsec:parabolic} that if $S' \subseteq S$ and $W' \subseteq W$ is the $S$-parabolic subgroup generated by $S'$, then $H_{W'} \subseteq H_W$.
A \dfemph{Markov trace} on $H_W$ is a $\bb{Z}[\Q^{\pm\frac{1}{2}}]$-linear function
\begin{align}
\mathsf{tr} : H_W \to \bb{Z}[\Q^{\pm\frac{1}{2}}](a)
\end{align}
such that:
\begin{enumerate}
\item 	$\mathsf{tr}(1) = 1$.
\item 	For all $\beta, \gamma \in H_W$, we have $\mathsf{tr}(\beta\gamma) = \mathsf{tr}(\gamma\beta)$.
\item 	For all $s \in S$ and $\gamma \in H_{W'}$, where $W'$ is the $S$-parabolic subgroup of $W$ generated by $S \setminus s$, we have
		\begin{align}
		\mathsf{tr}(\sigma_s^{\pm 1}\gamma) 
		= -a^{\mp 1}
			\pa{\frac{\Q^{\frac{1}{2}} - \Q^{-\frac{1}{2}}}{a - a^{-1}}} 
			\mathsf{tr}(\gamma).
		\end{align}
\end{enumerate}
Axioms (2) and (3) are respectively known as the first and second Markov moves.

\begin{thm}[Ocneanu]\label{thm:ocneanu}
The axioms above uniquely define a Markov trace on the Iwahori--Hecke algebra of $S_n$ for all $n \geq 1$.
For all $\beta \in \Br_n$, we have
\begin{align}
[\hat{\beta}]_{a, \Q}
=	(-a)^{|\beta|}
	\pa{\frac{a - a^{-1}}{\Q^{\frac{1}{2}} - \Q^{-\frac{1}{2}}}}^{n - 1} 
	\mathsf{tr}(\beta),
\end{align}
where $\hat{\beta}$ is the link closure of $\beta$.
\end{thm}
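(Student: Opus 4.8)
The plan is to combine the standard algebraic analysis of the tower $H_{S_1} \subseteq H_{S_2} \subseteq \cdots$ with the topological input of the theorems of Alexander and Markov, following Ocneanu and Jones.

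\emph{Uniqueness and compatibility.} I would argue by induction on $n$. For $n = 1$ the algebra $H_{S_1} = \bb{Z}[\Q^{\pm\frac12}]$ is spanned by $1$, so axiom (1) pins $\mathsf{tr}$ down. For the inductive step, recall that the minimal-length representatives of the left cosets $S_n/S_{n-1}$ are exactly $1, s_{n-1}, s_{n-1}s_{n-2}, \ldots, s_{n-1}s_{n-2}\cdots s_1$, so every standard basis element $\sigma_w$ of $H_{S_n}$ is either $\sigma_u$ with $u \in S_{n-1}$ or of the form $\sigma_{n-1}\sigma_{n-2}\cdots\sigma_k\,\sigma_u$ with $\sigma_{n-2}\cdots\sigma_k\,\sigma_u \in H_{S_{n-1}}$. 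In the first case $\mathsf{tr}(\sigma_w)$ is known by induction; in the second case the trace axiom (2) moves the leading $\sigma_{n-1}$ to the right, and axiom (3) with $s = s_{n-1}$ expresses $\mathsf{tr}(\sigma_w)$ as an explicit scalar times a trace on $H_{S_{n-1}}$, again known by induction. By linearity $\mathsf{tr}$ is determined. The same argument shows the restriction $\mathsf{tr}_n|_{H_{S_{n-1}}}$ satisfies the $S_{n-1}$-axioms, hence equals $\mathsf{tr}_{n-1}$; this compatibility is what lets us regard $\mathsf{tr}$ as a single function on $\bigcup_n H_{S_n}$, which the HOMFLY formula implicitly requires.

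\emph{Existence.} This is the real content, and I expect it to be the main obstacle. Over a splitting field $K \supseteq \bb{Q}(\Q^{\frac12})$ the algebra $K \otimes H_{S_n}$ is a product of matrix algebras indexed by partitions $\lambda \vdash n$, with irreducible characters $\chi^\lambda_{\Q}$. I would set $\mathsf{tr} = \sum_{\lambda \vdash n} w_\lambda(\Q,a)\,\chi^\lambda_{\Q}$, where the weights $w_\lambda$ are the unique solution of the linear recursion across $n$ forced by axioms (1) and (3) (concretely, a monomial in $a$ times an explicit product of hook-length factors over the boxes of $\lambda$). Axioms (1) and (2) are then immediate, since any linear combination of characters is a conjugation-invariant trace; verifying axiom (3) reduces, via the multiplicity-free branching $\chi^\lambda_{\Q}|_{H_{S_{n-1}}} = \sum_{\mu\nearrow\lambda}\chi^\mu_{\Q}$ and the known action (through eigenvalues $\Q$ and $-1$, governed by the content of the added box) of the extra generator on $\mathrm{Res}\,V_\lambda$, to a finite identity among the $w_\lambda$, which one checks directly. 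Alternatively one may invoke Ocneanu's original construction of $\mathsf{tr}$ as the composite of conditional expectations $\varepsilon_k : H_{S_{k+1}} \to H_{S_k}$, or, in the framework of this paper, deduce existence from the function $\Tr{-}$ once Proposition \ref{prop:markov} identifies its type-$A$ specialization with $\mathsf{tr}$.

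\emph{The HOMFLY formula.} Put $I_n(\beta) = (-a)^{|\beta|}\bigl(\tfrac{a - a^{-1}}{\Q^{1/2} - \Q^{-1/2}}\bigr)^{n-1}\mathsf{tr}(\beta)$ for $\beta \in \Br_n$. The trace axiom (2) together with the conjugacy-invariance of the writhe shows $I_n$ is invariant under the first Markov move, and a short computation using axiom (3), the compatibility $\mathsf{tr}_{n+1}|_{H_{S_n}} = \mathsf{tr}_n$, and the matching change of the prefactor shows $I_{n+1}(\beta\sigma_n^{\pm1}) = I_n(\beta)$, i.e.\ invariance under the second Markov move. By Alexander's and Markov's theorems, $\beta \mapsto I_n(\beta)$ factors through an isotopy invariant $V$ of $\hat\beta$. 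Feeding the quadratic relation $\sigma_i^2 = (\Q - 1)\sigma_i + \Q$, rewritten as $\Q^{-1/2}\sigma_i - \Q^{1/2}\sigma_i^{-1} = \Q^{1/2} - \Q^{-1/2}$, into the definition of $I_n$ turns it into the HOMFLY skein relation for $V$, while $I_1(\bb{1}) = \mathsf{tr}(1) = 1$ gives the unknot normalization. Since $[\lambda]_{a,\Q}$ is characterized by that skein relation and normalization, we get $V(\hat\beta) = [\hat\beta]_{a,\Q}$, and rearranging $I_n(\beta) = [\hat\beta]_{a,\Q}$ yields the displayed formula.
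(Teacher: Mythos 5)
The paper does not prove this statement: Theorem \ref{thm:ocneanu} is quoted as a classical result of Ocneanu and Jones (\cite{homfly, jones}), so there is nothing internal to compare your argument against. What you have written is the standard Ocneanu--Jones proof, and in outline it is sound: uniqueness by induction on $n$ using the coset decomposition $\sigma_w = \sigma_{n-1}\sigma_{n-2}\cdots\sigma_k\,\sigma_u$ (note that axiom (3) already applies here, so axiom (2) is not even needed for that step); compatibility of the tower of traces, which does follow since the $S_{n-1}$-instances of axiom (3) are special cases of the $S_n$-instances; existence via weights on the irreducible characters $\chi^\lambda_{\Q}$ (or conditional expectations), which is exactly Jones's construction; and the HOMFLY formula via invariance under the two Markov moves, Alexander's theorem, the skein relation, and the unknot normalization. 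Your verification of the second Markov move against the precise coefficient $-a^{\mp 1}(\Q^{\frac{1}{2}} - \Q^{-\frac{1}{2}})/(a - a^{-1})$ is correct.

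Two small caveats. First, you quote the quadratic relation as $\sigma_i^2 = (\Q - 1)\sigma_i + \Q$; in this paper's normalization $(\sigma_s - \Q^{\frac{1}{2}})(\sigma_s + \Q^{-\frac{1}{2}}) = 0$, so the relation is $\sigma_i^2 = (\Q^{\frac{1}{2}} - \Q^{-\frac{1}{2}})\sigma_i + 1$, i.e.\ $\sigma_i - \sigma_i^{-1} = \Q^{\frac{1}{2}} - \Q^{-\frac{1}{2}}$ (your relation is the one for $T_i = \Q^{\frac{1}{2}}\sigma_i$). Feeding the wrong relation into the skein computation would shift the invariant by powers of $\Q^{\frac{1}{2}}$ per crossing, so the conventions must be tracked carefully to land on the paper's normalization of $[\hat{\beta}]_{a, \Q}$ (Remark \ref{rem:dgr} at $t = -1$); with the correct relation the computation goes through cleanly. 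Second, your proposed shortcut of deducing existence from Proposition \ref{prop:markov} is backwards in the paper's logic, since that proposition presupposes Gomi's Theorem \ref{thm:gomi}; citing Theorem \ref{thm:gomi} itself (or Ocneanu's original construction) for existence is the right move.
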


Suppose that a Markov trace on $H_W$ exists.
After base change from $\bb{Z}[\Q^{\pm\frac{1}{2}}]$ to $\bb{Q}_W(\Q^{\frac{1}{2}})$, we can view it as a trace on $H_W(\Q^{\frac{1}{2}})$ (see \S\ref{subsec:hecke-generic}).
Then
\begin{align}
\mathsf{tr} = \sum_{\phi \in \hat{W}} 
\mathsf{tr}_\phi {\phi_{\Q}}
\end{align}
for some weights $\mathsf{tr}_\phi \in \bb{Q}_W(\Q^{\frac{1}{2}}, a)$.

Axioms (1)-(3) show that $\mathsf{tr}|_{a^{-2} = 0}$ is the symmetrizing trace $\tau_{\Q} : H_W(\Q^{\frac{1}{2}}) \to \bb{Z}[\Q^{\pm\frac{1}{2}}]$.
Proposition \ref{prop:molien} then suggests that the weights $\mathsf{tr}_\phi$ should involve bivariate Molien series.
These observations motivate the form of Gomi's Markov weights.
Below, let $\bb{V}$ be a realization of $W$ such that $\bb{V}^W = 0$, and for all $\psi \in \hat{W}$, let
\begin{align}\begin{split}
\bm{m}_\psi(\Q, x)
&=	\sum_{i, j}
	{(-x)^i} \Q^j
	(\psi, \Alt^i(\bb{V}) \otimes \Sym^j(\bb{V}))_W.
\end{split}\end{align}
By construction, $\bm{m}_\psi(\Q, 0) = \bm{m}_\psi(\Q)$, the usual Molien series of $\psi$.

\begin{thm}[Gomi]\label{thm:gomi}
The weights
\begin{align}\begin{split}
\mathsf{tr}_\phi
&=	\pa{\frac{1 - \Q}{1 - a^{-2}}}^{r}
	\sum_{\psi \in \hat{W}}
	{\{\phi, \psi\}}
	\bm{m}_\psi(\Q, a^{-2})
\end{split}\end{align}
define a Markov trace on $H_W$ for any finite Coxeter group $W$ of rank $r$.
\end{thm}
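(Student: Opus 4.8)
The plan is to verify the three defining axioms of a Markov trace for $\mathsf{tr} = \sum_{\phi \in \hat{W}} \mathsf{tr}_\phi\, \phi_{\Q}$, with $\mathsf{tr}_\phi$ as stated. Axiom~(2), the identity $\mathsf{tr}(\beta\gamma) = \mathsf{tr}(\gamma\beta)$, is immediate: $\mathsf{tr}$ is a $\bb{Z}[\Q^{\pm\frac12}]$-linear combination of the characters $\phi_{\Q}$ of the simple $H_W(\Q^{\frac12})$-modules, each of which is already a trace. So the content lies in axioms~(1) and~(3).

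For axiom~(1), evaluate at $1 \in H_W$ using $\phi_{\Q}(1) = \phi(1) = \dim V_\phi$, exchange the two sums, and apply two inputs: first, the refined Molien identity $\sum_{\psi \in \hat{W}} (\dim V_\psi)\, \bm{m}_\psi(\Q, x) = (1-x)^r(1-\Q)^{-r}$, which follows from $\sum_\psi (\dim V_\psi)(\psi, U)_W = \dim U$ applied to $U = \Alt^i(\bb{V}) \otimes \Sym^j(\bb{V})$ together with $\sum_i (\dim \Alt^i \bb{V})\,t^i = (1+t)^r$ and $\sum_j (\dim \Sym^j \bb{V})\,\Q^j = (1-\Q)^{-r}$; and second, the identity $\sum_{\phi \in \hat{W}} (\dim V_\phi)\{\phi,\psi\} = \dim V_\psi$, a standard property of Lusztig's exotic Fourier transform (see Appendix~\ref{sec:coxeter}; when $W$ is a Weyl group it is also a restatement of the already-established fact that $\Tr{\bb{1}}$ is the character of the regular representation). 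Substituting yields $\mathsf{tr}(1) = \left(\tfrac{1-\Q}{1-a^{-2}}\right)^{\!r}\cdot \tfrac{(1-a^{-2})^r}{(1-\Q)^r} = 1$.

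Axiom~(3) is the heart of the matter. Fix $s \in S$ and set $W' = W_{S\setminus s}$, so $H_{W'} \subseteq H_W$; by $\bb{Z}[\Q^{\pm\frac12}]$-linearity it suffices to prove $\mathsf{tr}(\sigma_s^{\pm 1}\sigma_w) = -a^{\mp 1}\tfrac{\Q^{\frac12}-\Q^{-\frac12}}{a-a^{-1}}\,\mathsf{tr}(\sigma_w)$ for every $w \in W'$. After unwinding $\mathsf{tr}_\phi$, both sides are expressed through $\{-,-\}$, the series $\bm{m}_\bullet(\Q, a^{-2})$, and Hecke-algebra character values $\phi_{\Q}(\sigma_s^{\pm 1}\sigma_w)$, and I would reduce the identity to the seed case $W' = 1$, $W = \langle s\rangle$ of type $A_1$ --- where $\{-,-\}$ is the identity, $\bm{m}_{1}(\Q,x) = \tfrac{1-x\Q}{1-\Q^2}$ and $\bm{m}_\varepsilon(\Q,x) = \tfrac{\Q-x}{1-\Q^2}$, and a one-line substitution gives $\mathsf{tr}(\sigma_s^{\pm1}) = -a^{\mp1}\tfrac{\Q^{\frac12}-\Q^{-\frac12}}{a-a^{-1}}$. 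The reduction rests on two compatibilities with the passage from $W'$ to $W$: that restricting $\bm{m}_\bullet^{W}$ to $W'$ factors off the explicit contribution of the $W'$-fixed line $D \subseteq \bb{V}$ complementary to the reflection summand, in a way that exactly absorbs the ratio $\tfrac{1-\Q}{1-a^{-2}}$; and that Lusztig's exotic Fourier transform is intertwined by truncated ($j$-)induction $j_{W'}^{W}$, which is what transports the identity from $H_{W'}$ to $H_W$. (When $W$ is a Weyl group there is also the option of writing $\mathsf{tr}(\beta) = \left(\tfrac{1-\Q}{1-a^{-2}}\right)^{\!r}$ times the value of the functional $\psi \mapsto \bm{m}_\psi(\Q,a^{-2})$ on $\Tr{\beta}$, and deducing~(3) from the parabolic-induction behaviour of $\Tr{-}^0$ in Theorem~\ref{thm:induction} together with the point-count formula of Theorem~\ref{thm:virtual}, applied to the affine-line fibration relating $\cal{U}(\sigma_s\gamma)$ to $\cal{U}(\gamma)$.)

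The main obstacle is the second compatibility: that the exotic Fourier transform $\{-,-\}$ commutes with $j$-induction. When $W$ is a Weyl group this belongs to Lusztig's theory of character sheaves, and in the form needed here it is essentially what underlies the parabolic-induction compatibility of $\Tr{-}$ --- and hence of the Deligne--Lusztig characters behind Theorem~\ref{thm:virtual}. For the non-crystallographic types $I_2(m)$, $H_3$, $H_4$, where $\{-,-\}$ is defined via the extensions of Malle and Lusztig rather than geometrically, this must be checked by running through the finitely many exceptional families one at a time; carrying out that inspection, together with the bookkeeping of how $D$ behaves for each maximal parabolic, is the technical core, and follows Gomi's original treatment in \cite{gomi}.
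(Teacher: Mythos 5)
A preliminary remark: the paper offers no proof of this statement — it is quoted from Gomi \cite{gomi} — so the only internal standard of comparison is that citation. Your verification of axioms (1) and (2) is correct and complete: the trace property is immediate because $\mathsf{tr}$ is a $\bb{Z}[\Q^{\pm\frac{1}{2}}]$-linear combination of the characters $\phi_{\Q}$, and $\mathsf{tr}(1)=1$ follows as you say from $\sum_{\phi}\phi(1)\{\phi,\psi\}=\psi(1)$ (Lemma-Postulate \ref{lempost:fourier}(2) evaluated at $\Q=1$) together with the bivariate Molien identity; the rank-one computation of $\mathsf{tr}(\sigma_s^{\pm 1})$ is also correct (and the $\sigma_s^{-1}$ case in any event follows from the $\sigma_s$ case by the quadratic relation).

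The gap is axiom (3) for general $(W,s)$, which is the entire content of the theorem, and your argument for it is a sketch resting on two unproven assertions. The claim that the exotic Fourier transform ``is intertwined by truncated induction $j_{W'}^{W}$'' is not a clean, quotable fact: $\{-,-\}$ is block-diagonal with respect to Lusztig's families, and the precise interaction of families and their Fourier matrices with induction from the maximal parabolic $W'$ is exactly the delicate input, established (partly case by case, and with Malle's data in types $I_2(m)$, $H_3$, $H_4$) in Gomi's paper itself; deferring the ``technical core'' to ``Gomi's original treatment'' therefore concedes the theorem rather than proving it. Moreover, even granting both compatibilities, the reduction to the rank-one seed is not formal: the second Markov move evaluates $\mathsf{tr}$ at $\sigma_s^{\pm 1}\gamma$ with $\gamma\in H_{W'}$, so one must control the character values $\phi_{\Q}(\sigma_s\gamma)$ in terms of data for $W'$ (branching of $\phi_{\Q}$ to $H_{W'}$, the quadratic relation, and how the resulting multiplicities interact with $\{-,-\}$ and the series $\bm{m}_\bullet$), and none of that bookkeeping is set up. The parenthetical Weyl-group alternative via Theorems \ref{thm:virtual} and \ref{thm:induction} is likewise not carried out: the needed relation between $|\cal{U}(\sigma_s\gamma)_u^F|$ and $|\cal{U}(\gamma)_u^F|$ is itself a skein-type identity requiring an argument (the conjugation condition ties the forgotten flag to $u$, so the projection is not simply an affine-line fibration onto $\cal{U}(\gamma)$), and it would at best cover crystallographic $W$. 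As it stands, the heart of the statement remains unproved.
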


This is a bivariate refinement of Proposition \ref{prop:molien}.

Henceforth, $\mathsf{tr}$ denotes Gomi's Markov trace.
To relate it to the $R(W)$ trace, we introduce the normalization
\begin{align}
\Tr{\beta}^0 = {(-\Q^{\frac{1}{2}})^{|\beta|}} \Tr{\beta} \cdot \varepsilon [\Sym(\bb{V})]_{\Q},
\end{align}
where $\varepsilon$ denotes the sign character of $W$ (see \S\ref{subsec:representation}).
We emphasize that whenever we use $\Tr{-}^0$, \emph{we always assume that $\bb{V}^W = 0$}.
For Weyl groups, the result below follows from Theorems \ref{thm:ww} and \ref{thm:decat}.

\begin{prop}\label{prop:markov}
Under the hypotheses of Theorem \ref{thm:gomi}, we have
\begin{align}
\mathsf{tr}(\beta)
=	(-\Q^{-\frac{1}{2}})^{|\beta|}
	\pa{\frac{1 - \Q}{1 - a^2}}^r
	\sum_i
	{(-a^2)^i}
	(\Alt^i(\bb{V}), \Tr{\beta}^0)_W
\end{align}
for all $\beta \in \Br_W$.
\end{prop}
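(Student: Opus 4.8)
The proposition asserts an equality between two explicit closed formulas, so the plan is to verify it by a direct substitution that reduces everything to a visibly true character-theoretic identity. Both $\mathsf{tr}(\beta)$ and $\beta\mapsto\sum_i(-a^2)^i(\Alt^i(\bb{V}),\Tr{\beta}^0)_W$ are $\bb{Z}[\Q^{\pm\frac{1}{2}}]$-linear functions $H_W\to\bb{Q}_W(\Q^{\frac{1}{2}})(a)$, and after base change to $\bb{Q}_W(\Q^{\frac{1}{2}})$ the traces $\{\phi_{\Q}\}_{\phi\in\hat{W}}$ are linearly independent (indeed a basis of the space of all such traces, since $H_W$ is split semisimple over this field by Tits's deformation theorem). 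So I would first peel off the scalar $(-\Q^{\pm\frac{1}{2}})^{|\beta|}$ and then compare the coefficient of $\phi_{\Q}(\beta)$ on each side.

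Next I would unwind the definition $\Tr{\beta}^0=(-\Q^{\frac{1}{2}})^{|\beta|}\,\Tr{\beta}\otimes\varepsilon\otimes[\Sym(\bb{V})]_{\Q}$ inside the multiplicity pairing. Using that every representation of the finite Coxeter group $W$ is self-dual (all its characters being rational, hence real) and that $\Alt^i(\bb{V})\otimes\varepsilon\simeq\Alt^{r-i}(\bb{V})$ — which holds because $\bb{V}$ is a realization on which each simple reflection acts as a genuine reflection, so $\det_{\bb{V}}=\varepsilon$ — the substitution $i\mapsto r-i$ yields
\begin{align}
\sum_i{(-a^2)^i}\,(\Alt^i(\bb{V}),\Tr{\beta}^0)_W
&={(-\Q^{\frac{1}{2}})^{|\beta|}}\,{(-a^2)^r}\sum_{i,j}{(-a^{-2})^i}\,\Q^j\,(\Tr{\beta},\Alt^i(\bb{V})\otimes\Sym^j(\bb{V}))_W.
\end{align}
Plugging in $\Tr{\beta}=\sum_{\phi,\psi}\{\phi,\psi\}\phi_{\Q}(\beta)\psi$ and recognising the resulting double sum over $i,j$ as $\bm{m}_\psi(\Q,a^{-2})$ by definition, the right-hand side becomes ${(-\Q^{\frac{1}{2}})^{|\beta|}}{(-a^2)^r}\sum_{\phi,\psi}\{\phi,\psi\}\phi_{\Q}(\beta)\,\bm{m}_\psi(\Q,a^{-2})$.

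Finally I would multiply through by the prefactor $(-\Q^{-\frac{1}{2}})^{|\beta|}\left(\frac{1-\Q}{1-a^2}\right)^r$ appearing in the proposition: the powers of $\Q^{\pm\frac{1}{2}}$ cancel, and the elementary identity $\frac{-a^2}{1-a^2}=\frac{1}{1-a^{-2}}$ converts $\left(\frac{1-\Q}{1-a^2}\right)^r(-a^2)^r$ into $\left(\frac{1-\Q}{1-a^{-2}}\right)^r$. The expression that remains is exactly $\sum_\phi\mathsf{tr}_\phi\,\phi_{\Q}(\beta)=\mathsf{tr}(\beta)$ with Gomi's weights $\mathsf{tr}_\phi=\left(\frac{1-\Q}{1-a^{-2}}\right)^r\sum_\psi\{\phi,\psi\}\bm{m}_\psi(\Q,a^{-2})$ from Theorem \ref{thm:gomi}, which completes the proof.

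There is no deep obstacle here: the proposition is a bookkeeping identity matching the definition of $\Tr{-}^0$ against Gomi's closed form for the Markov weights. The two points that require care are (i) the twist $\Alt^i(\bb{V})\otimes\varepsilon\simeq\Alt^{r-i}(\bb{V})$, which is where the standing hypothesis $\bb{V}^W=0$ and the reflection property of the chosen realization are used, and (ii) faithfully tracking the signs and the $a\leftrightarrow a^{-1}$ substitution, since $\bm{m}_\psi$ is evaluated at $a^{-2}$ whereas the sum in the proposition is organised in powers of $a^2$. For Weyl groups there is also the more conceptual route through Theorems \ref{thm:ww} and \ref{thm:decat}, but the computation sketched above applies uniformly to every finite Coxeter group.
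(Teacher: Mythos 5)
Your computation is correct and is essentially the paper's own proof: the paper likewise reduces everything to the identity $\Alt^{r-i}(\bb{V}) = \varepsilon\Alt^i(\bb{V})$ (packaged as Lemma \ref{lem:self-dual}) together with the self-duality of $\Sym(\bb{V})$, rewriting Gomi's weights $\mathsf{tr}_\phi$ in powers of $a^2$ and matching them against the terms of $\Tr{-}^0$. You simply run the same bookkeeping in the reverse direction, starting from the right-hand side of the proposition, so there is nothing substantively different to flag.
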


\begin{lem}\label{lem:self-dual}
We have
\begin{align}
\frac{1}{(1 - x^{-1})^r}
\sum_i {(-x^{-1})^i} \Alt^i(\bb{V})
=	
\frac{\varepsilon}{(1 - x)^r}
\sum_i {(-x)^i} \Alt^i(\bb{V})
\end{align}
in $R(W)(x)$.
\end{lem}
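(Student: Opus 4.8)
The plan is to clear denominators on the left-hand side, re-index the sum, and then reduce to an elementary identity in $R(W)$. First I would multiply the numerator and denominator of the left-hand side by $x^r$. Since $x^r(1-x^{-1})^r = (x-1)^r = (-1)^r(1-x)^r$ and $x^r(-x^{-1})^i = (-1)^i x^{r-i}$, the left-hand side becomes $(-1)^r(1-x)^{-r}\sum_{i=0}^r (-1)^i x^{r-i}\Alt^i(\bb{V})$. Substituting $j = r-i$ and using $(-1)^{r-j}x^j = (-1)^r(-x)^j$, this simplifies to $(1-x)^{-r}\sum_{j=0}^r (-x)^j\,\Alt^{r-j}(\bb{V})$. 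Thus the whole statement reduces to the identity $\Alt^{r-j}(\bb{V}) \cong \varepsilon\otimes\Alt^j(\bb{V})$ in $R(W)$ for each $j$, which when fed back into the displayed expression yields exactly $\frac{\varepsilon}{(1-x)^r}\sum_j (-x)^j\,\Alt^j(\bb{V})$.

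For the identity $\Alt^{r-j}(\bb{V})\cong\varepsilon\otimes\Alt^j(\bb{V})$ I would invoke the standard $\mathrm{GL}(\bb{V})$-equivariant (hence $W$-equivariant) perfect pairing $\Alt^j(\bb{V})\otimes\Alt^{r-j}(\bb{V})\to\Alt^r(\bb{V})$, which gives $\Alt^{r-j}(\bb{V})\cong(\Alt^j(\bb{V}))^\vee\otimes\Alt^r(\bb{V})$, together with two facts: (i) $\Alt^r(\bb{V}) = \det(\bb{V})\cong\varepsilon$, because each simple reflection acts on $\bb{V}$ with determinant $-1$, so $\det(\bb{V})$ and $\varepsilon$ are group homomorphisms $W\to\{\pm1\}$ agreeing on the generators; and (ii) $(\Alt^j(\bb{V}))^\vee\cong\Alt^j(\bb{V})$, which follows from $\bb{V}\cong\bb{V}^\vee$ (a $W$-invariant positive-definite inner product exists since $W$ is finite and the ground field has characteristic $0$) and the natural isomorphism $(\Alt^j(\bb{V}))^\vee\cong\Alt^j(\bb{V}^\vee)$. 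This self-duality of exterior powers is the same kind of fact already used in the proof of Theorem \ref{thm:decat}. Combining (i) and (ii) gives $\Alt^{r-j}(\bb{V})\cong\Alt^j(\bb{V})\otimes\varepsilon$, completing the argument.

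There is essentially no serious obstacle here: the lemma is a formal consequence of Poincaré duality for exterior powers combined with $\det(\bb{V})\cong\varepsilon$. The only points needing care are the sign bookkeeping in the re-indexing step, and the observation that since the claimed equality lives in $R(W)(x)$ it suffices to verify it after multiplying through by the nonzero rational function $(1-x)^r$ — which is precisely what the computation above does.
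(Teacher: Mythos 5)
Your proof is correct and follows essentially the same route as the paper, whose entire proof is the one-line instruction to use the identity $\Alt^{r-i}(\bb{V}) = \varepsilon\,\Alt^i(\bb{V})$; you simply spell out the re-indexing and supply a proof of that identity via the pairing $\Alt^j(\bb{V})\otimes\Alt^{r-j}(\bb{V})\to\Alt^r(\bb{V})\cong\varepsilon$ together with self-duality of $\Alt^j(\bb{V})$.
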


\begin{proof}
Use the identity $\Alt^{r - i}(\bb{V}) = \varepsilon \Alt^i(\bb{V})$.
\end{proof}

\begin{proof}[Proof of Proposition \ref{prop:markov}]
By Lemma \ref{lem:self-dual} and the self-duality of $\Sym(\bb{V})$,
\begin{align}
\mathsf{tr}_\phi
=	\pa{\frac{1 - \Q}{1 - a^2}}^r 
\sum_{\psi \in \hat{W}}
{\{\phi, \psi\}}
\sum_{i, j} {(-a^2)^i} \Q^j
(\Alt^i(\bb{V}), \varepsilon \psi \cdot \Sym^j(\bb{V}))_W.
\end{align}
Comparing with the terms that make up $\Tr{-}^0$, we get the result.
\end{proof}

\begin{cor}
Under the hypotheses of Theorem \ref{thm:gomi}, we have
\begin{align}
\tau_{\Q} = (-\Q^{-\frac{1}{2}})^{|\beta|} (1 - \Q)^r (\varepsilon, \Tr{\beta}^0)_W
\end{align}
as $\bb{Z}[\Q^{\pm \frac{1}{2}}]$-linear traces on $H_W$.
\end{cor}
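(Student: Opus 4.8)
The plan is to derive the corollary from Proposition \ref{prop:markov} by specializing at $a^{-2} = 0$, using the fact recorded just before Theorem \ref{thm:gomi} that the specialization $\mathsf{tr}|_{a^{-2} = 0}$ of Gomi's Markov trace is the symmetrizing trace $\tau_{\Q}$.

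Concretely, I would start from the identity of Proposition \ref{prop:markov},
\begin{align}
\mathsf{tr}(\beta) = (-\Q^{-\frac{1}{2}})^{|\beta|} \pa{\frac{1 - \Q}{1 - a^2}}^r \sum_i (-a^2)^i (\Alt^i(\bb{V}), \Tr{\beta}^0)_W,
\end{align}
and rewrite the $a$-dependent part in the variable $a^{-2}$. Since $(\Alt^i(\bb{V}), \Tr{\beta}^0)_W = 0$ unless $0 \le i \le r$, the sum is finite; factoring $1 - a^2 = -a^2(1 - a^{-2})$ and collecting powers of $a$ turns the product of the prefactor and the sum into
\begin{align}
\frac{(1 - \Q)^r}{(1 - a^{-2})^r} \sum_{i = 0}^r (-a^{-2})^{r - i} (\Alt^i(\bb{V}), \Tr{\beta}^0)_W.
\end{align}
Setting $a^{-2} = 0$ annihilates every term with $i < r$ and sends $(1 - a^{-2})^{-r}$ to $1$, leaving $(1 - \Q)^r (\Alt^r(\bb{V}), \Tr{\beta}^0)_W$; since $\Alt^r(\bb{V}) \simeq \varepsilon$ as $W$-representations (the top exterior power of a reflection representation is the sign character, the fact underlying Lemma \ref{lem:self-dual}), this equals $(1 - \Q)^r (\varepsilon, \Tr{\beta}^0)_W$. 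Combined with $\mathsf{tr}(\beta)|_{a^{-2} = 0} = \tau_{\Q}(\beta)$, this gives $\tau_{\Q}(\beta) = (-\Q^{-\frac{1}{2}})^{|\beta|} (1 - \Q)^r (\varepsilon, \Tr{\beta}^0)_W$ for every $\beta \in \Br_W$.

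To phrase this as an identity of traces on $H_W$, I would note that the right-hand side is a genuine $\bb{Z}[\Q^{\pm\frac{1}{2}}]$-linear function of the image of $\beta$ in $H_W$: the writhe-dependent factor cancels, $(-\Q^{-\frac{1}{2}})^{|\beta|}\Tr{\beta}^0 = \Tr{\beta} \cdot \varepsilon\,[\Sym(\bb{V})]_{\Q}$, and $\beta \mapsto \Tr{\beta}$ is $\bb{Z}[\Q^{\pm\frac{1}{2}}]$-linear on $H_W$ by construction; since the images of the positive braids $\sigma_w$ form a $\bb{Z}[\Q^{\pm\frac{1}{2}}]$-basis of $H_W$, the identity propagates to all of $H_W$. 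I do not anticipate a genuine obstacle: the only delicate point is the legitimacy of the substitution $a^{-2} = 0$, i.e.\ that $\mathsf{tr}(\beta) \in \bb{Z}[\Q^{\pm\frac{1}{2}}](a)$ is regular there with value $\tau_{\Q}(\beta)$, but this is exactly the consequence of Gomi's axioms (1)--(3) quoted before Theorem \ref{thm:gomi}, so it may be invoked directly. (Alternatively one could specialize the weight formula of Theorem \ref{thm:gomi} together with Lemma \ref{lem:self-dual}, but routing through Proposition \ref{prop:markov} is shorter.)
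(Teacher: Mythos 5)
Your proposal is correct and matches the paper's (implicit) argument: the corollary is stated as an immediate consequence of Proposition \ref{prop:markov} together with the observation, made just before Theorem \ref{thm:gomi}, that the Markov axioms force $\mathsf{tr}|_{a^{-2} = 0} = \tau_{\Q}$, which is exactly the specialization you carry out. The algebra (rewriting in $a^{-2}$, keeping only the $i = r$ term, and identifying $\Alt^r(\bb{V}) \simeq \varepsilon$) and the linearity argument via the basis $\{\sigma_w\}$ are both sound, so there is nothing to add.
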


\begin{cor}\label{cor:homfly}
Under the hypotheses of Theorem \ref{thm:ocneanu}, we have
\begin{align}
{[\hat{\beta}]_{a, \Q}}
=	(a\Q^{-\frac{1}{2}})^{|\beta| - n + 1}
	\sum_{0 \leq i \leq n - 1}
	{(-a^2)^i}
	(\Alt^i(\bb{V}), \Tr{\beta}^0)_W
\end{align}
for all $\beta \in \Br_n$.
\end{cor}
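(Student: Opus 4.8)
The plan is to combine the two preceding statements, Theorem~\ref{thm:ocneanu} and Proposition~\ref{prop:markov}, both specialized to $W = S_n$, where $\bb{V}$ is the $(n-1)$-dimensional reflection representation and $r = n-1$. First I would invoke Proposition~\ref{prop:markov} to write Gomi's Markov trace as
\begin{align}
\mathsf{tr}(\beta)
= (-\Q^{-\frac{1}{2}})^{|\beta|}
\pa{\frac{1 - \Q}{1 - a^2}}^{n-1}
\sum_i {(-a^2)^i} (\Alt^i(\bb{V}), \Tr{\beta}^0)_W,
\end{align}
observing that the sum over $i$ effectively ranges over $0 \leq i \leq n-1$ since $\Alt^i(\bb{V}) = 0$ outside that range (here $\dim\bb{V} = n-1$). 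One must first confirm that the Markov trace appearing in Ocneanu's theorem is the same as Gomi's Markov trace in type $A$; this is exactly what Gomi's uniqueness result (the type $A$ case of the axioms in \S\ref{subsec:markov}, matched against Theorem~\ref{thm:ocneanu}) guarantees, so no new argument is needed.

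Next I would substitute this expression for $\mathsf{tr}(\beta)$ into the formula of Theorem~\ref{thm:ocneanu},
\begin{align}
[\hat{\beta}]_{a, \Q}
= (-a)^{|\beta|}
\pa{\frac{a - a^{-1}}{\Q^{\frac{1}{2}} - \Q^{-\frac{1}{2}}}}^{n - 1}
\mathsf{tr}(\beta),
\end{align}
and simplify the product of prefactors. The power-of-$a$ and power-of-$\Q^{1/2}$ contributions combine as $(-a)^{|\beta|}(-\Q^{-1/2})^{|\beta|} = (a\Q^{-1/2})^{|\beta|}$. For the rank-$(n-1)$ factors, I would rewrite
\begin{align}
\pa{\frac{a - a^{-1}}{\Q^{\frac{1}{2}} - \Q^{-\frac{1}{2}}}}^{n-1}
\pa{\frac{1 - \Q}{1 - a^2}}^{n-1}
= \pa{\frac{(a - a^{-1})(1-\Q)}{(\Q^{1/2} - \Q^{-1/2})(1 - a^2)}}^{n-1},
\end{align}
and then use the elementary identities $1 - \Q = -\Q^{1/2}(\Q^{1/2} - \Q^{-1/2})$ and $a - a^{-1} = -a^{-1}(1 - a^2)$ to see that each bracketed factor collapses to $a^{-1}\Q^{1/2}$. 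Hence the rank-$(n-1)$ prefactor equals $(a^{-1}\Q^{1/2})^{n-1} = (a\Q^{-1/2})^{-(n-1)}$, and multiplying by $(a\Q^{-1/2})^{|\beta|}$ yields the claimed prefactor $(a\Q^{-1/2})^{|\beta| - n + 1}$.

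Putting the pieces together immediately gives
\begin{align}
[\hat{\beta}]_{a, \Q}
= (a\Q^{-\frac{1}{2}})^{|\beta| - n + 1}
\sum_{0 \leq i \leq n-1} {(-a^2)^i} (\Alt^i(\bb{V}), \Tr{\beta}^0)_W,
\end{align}
which is the desired identity. The argument is essentially a bookkeeping exercise in clearing the normalization constants; the only genuine content — the matching of Ocneanu's and Gomi's Markov traces in type $A$, and the appearance of the exotic Fourier pairing $\{-,-\}$ — has already been absorbed into Proposition~\ref{prop:markov} and Theorem~\ref{thm:gomi}. The main obstacle, such as it is, lies in keeping the sign conventions and the half-integer powers of $\Q$ straight while collapsing the prefactors; there is no conceptual difficulty once Proposition~\ref{prop:markov} is in hand.
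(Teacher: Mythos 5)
Your proposal is correct and is essentially the paper's own (implicit) argument: Corollary \ref{cor:homfly} is obtained exactly by identifying Ocneanu's trace with Gomi's via the uniqueness clause of Theorem \ref{thm:ocneanu}, substituting the formula of Proposition \ref{prop:markov} with $r = n-1$, and collapsing the prefactors as you do. The bookkeeping identities $1 - \Q = -\Q^{\frac{1}{2}}(\Q^{\frac{1}{2}} - \Q^{-\frac{1}{2}})$ and $a - a^{-1} = -a^{-1}(1 - a^{2})$ check out, so nothing is missing.
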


\subsection{}

To conclude this section, we prove:
\begin{enumerate}
\item 	Proposition \ref{prop:rationality}, a rationality result that will simplify the proof of Theorem \ref{thm:pole} in Section \ref{sec:dl}.
\item 	Proposition \ref{prop:symmetry}, a symmetry result.
		Together with (1) and Theorem \ref{thm:pole}, it implies Theorem \ref{thm:symmetry}, as we will also explain.
\end{enumerate}
These results about the $R(W)$ trace respectively generalize, and are inspired by, the following facts about the HOMFLY series:
\begin{enumerate}
\item[(1$'$)] 	HOMFLY has integer coefficients.
		Moreover, it is a $\Q^{\frac{1}{2}}$-monomial shift of an element of $\bb{Z}[\![\Q]\!][a^{\pm 1}]$, not just of $\bb{Z}[\![\Q^{\frac{1}{2}}]\!][a^{\pm 1}]$.
\item[(2$'$)]	HOMFLY is invariant under $\Q^{\frac{1}{2}} \mapsto -\Q^{-\frac{1}{2}}$.
\end{enumerate}
Note that the second claim of (1$'$) can be proved via the Markov trace, while (2$'$) is a consequence of the HOMFLY skein relation (see \cite[\S{2.4}]{dgr}, noting that their $q$ is our $\Q^{\frac{1}{2}}$).
By contrast, the proofs below are representation-theoretic.
Throughout, we assume that the realization $\bb{V}$ satisfies $\bb{V}^W = 0$.

\subsubsection{}

Let $d_1, \ldots, d_r$ be the invariant degrees of the $W$-action on $\bb{V}$.
Let:
\begin{align}\label{eq:exception}
\Q_0 = 
\left\{\begin{array}{ll}
\Q^{\frac{1}{2}}
	&\text{$W$ has a factor of type $E_7$ or $E_8$}\\
\Q
	&\text{else}
	\end{array}\right.
\end{align}
The reason we make an exception for $E_7$ and $E_8$ is the existence of so-called exceptional characters in these types.
See \cite[\S{9.5}]{gp} and the references therein.

\begin{prop}\label{prop:rationality}
For all $\beta \in \Br_W$, we have
\begin{align}
\Tr{\beta} &\in (\Q^{-\frac{1}{2}})^{|\beta|} R(W)[\Q_0],\\
\Tr{\beta}^0 &\in \frac{1}{(1 - \Q^{d_1}) \cdots (1 - \Q^{d_r})}\, R(W)[\Q_0],\\
(1, \Tr{\beta}^0)_W &\in \frac{1}{(1 - \Q^{d_1}) \cdots (1 - \Q^{d_r})}\, (1 + \Q_0\bb{Z}[\Q_0]).
\end{align}
\end{prop}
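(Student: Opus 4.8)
The plan is to prove \textup{(1)} first and to obtain \textup{(2)} and \textup{(3)} as formal consequences of it. For the latter: by Molien's formula and \S\ref{subsec:degrees}, the product $\prod_i(1-\Q^{d_i})\cdot\bm m_\psi(\Q)$ is the fake-degree polynomial of $\psi$, so it lies in $\bb Z[\Q]$ and has constant term $(\psi,\Sym^0(\bb V))_W=\delta_{\psi,1}$; hence $\prod_i(1-\Q^{d_i})\cdot[\Sym(\bb V)]_{\Q}\in R(W)[\Q]$, and since $\Q\in\bb Z[\Q_0]$, multiplying the containment in \textup{(1)} by $(-\Q^{\frac12})^{|\beta|}\varepsilon[\Sym(\bb V)]_{\Q}$ and clearing $\prod_i(1-\Q^{d_i})$ yields \textup{(2)}. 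Pairing \textup{(2)} with $1\in\hat W$ gives the pole bound in \textup{(3)}, and to pin down the constant term of the numerator I would specialise at $\Q=0$: there $\prod_i(1-\Q^{d_i})=1$ and $[\Sym(\bb V)]_{\Q}=1$, while the $\varepsilon$-isotypic component of $(-\Q^{\frac12})^{|\beta|}\Tr\beta$ is $(-\Q^{\frac12})^{|\beta|}\{\varepsilon,\varepsilon\}\,\varepsilon_{\Q}(\beta)$, which equals $1$ identically because $\varepsilon$ spans a singleton Lusztig family (so $\{\varepsilon,\varepsilon\}=1$) and $\varepsilon_{\Q}$ sends the braid $\beta$ to $(-\Q^{-\frac12})^{|\beta|}$; therefore the numerator of $(1,\Tr\beta^0)_W$ has value $(1,\varepsilon\cdot\varepsilon)_W=1$ at $\Q=0$, giving \textup{(3)}.

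For \textup{(1)} itself I would first take $\beta\in\Br_W^+$, writing $v=\Q^{\frac12}$ and $k=|\beta|$. Expanding the image $T_\beta\in H_W$ in the standard basis $\{T_w\}_{w\in W}$ by iterated use of $T_s^2=(v-v^{-1})T_s+1$ shows, by a degree count (each non-reduced multiplication lowers the Bruhat length $|w|$ of the surviving index by one and raises the $v$-degree of its coefficient by at most one), that the coefficient of $T_w$ lies in $v^{|w|-k}\bb Z[\Q]$. Independently, the character table of $H_W$ is defined over $\bb Q_W[\Q_0^{\pm1}]$ --- the half-integral powers of $\Q$ entering only through the exceptional characters in types $E_7$ and $E_8$, which is exactly the reason for the exceptional clause defining $\Q_0$ --- and $\phi_{\Q}(T_w)\in v^{-|w|}\bb Q_W[\Q_0]$ for all $\phi$ and $w$. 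Multiplying, $\phi_{\Q}(\beta)\in v^{-k}\bb Q_W[\Q_0]$ for every $\phi$, so each $\psi$-isotypic component $\sum_\phi\{\phi,\psi\}\phi_{\Q}(\beta)$ of $\Tr\beta$ lies in $v^{-k}\bb Q_W[\Q_0]$. Since $\Tr\beta\in R(W)[\Q^{\pm\frac12}]$ --- the defining integrality of the normalised trace, which for Weyl groups is the integrality of the left-hand side of Theorem \ref{thm:lusztig-character} and in general follows from \cite{lusztig_1993,lusztig_1994,malle} --- each such component lies in $\bb Z[\Q^{\pm\frac12}]\cap v^{-k}\bb Q_W[\Q_0]=v^{-k}\bb Z[\Q_0]$, whence $\Tr\beta\in v^{-k}R(W)[\Q_0]=(\Q^{-\frac12})^{|\beta|}R(W)[\Q_0]$.

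For general $\beta\in\Br_W$ I would bootstrap using the central full twist $\pi$: choosing $m\ge0$ so that $\pi^m\beta\in\Br_W^+$ (Garside normal form), one has $\phi_{\Q}(\beta)=\Q^{-m\bb c(\phi)}\phi_{\Q}(\pi^m\beta)$, where $\pi$ acts on $V_\phi$ by the monomial $\Q^{\bb c(\phi)}$ and $\bb c$ is constant on Lusztig families (\S\ref{subsec:families}); since $\{-,-\}$ is supported on pairs of characters in a common family, $\Tr\beta$ is obtained from the already-proved positive case for $\pi^m\beta$ by extracting the scalar $\Q^{-m\bb c(\mathcal F)}$ family by family. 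The step I expect to be the crux is this last reconciliation: one must verify that the resulting shift in $v$-degree agrees exactly with the normalisation $(\Q^{-\frac12})^{|\beta|}$, which comes down to a comparison of the content function $\bb c$ with $\tfrac12|\pi|$ that I would extract from the known properties of $\bb c$ (see \S\ref{subsec:families}) together with the fact that $\pi$ acts by $\Q^{\frac12|\pi|}$, resp.\ $\Q^{-\frac12|\pi|}$, on the trivial, resp.\ sign, character.
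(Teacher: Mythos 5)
Your derivation of (2) and (3) from (1), and your proof of (1) for positive braids over Weyl groups, are essentially the paper's own argument: the same intersection $\bb{Z}[\Q^{\pm\frac{1}{2}}] \cap \bb{Q}[\Q_0] = \bb{Z}[\Q_0]$ applied to $\tilde{\beta}=\Q^{\frac{|\beta|}{2}}\beta$, with the integrality input supplied by the multiplicity formula (Theorem \ref{thm:lusztig-character}, equivalently Theorem \ref{thm:decat}) and the $\Q_0$-rationality of $\phi_{\Q}(\tilde{\sigma}_w)$ by the Benson--Curtis--Springer theorem \cite{bc}, which is the unnamed fact behind your assertion about the character table. Your degree count showing that a positive braid of writhe $k$ expands with the coefficient of $\sigma_w$ in $\Q^{\frac{|w|-k}{2}}\bb{Z}[\Q]$ actually proves the expansion step the paper merely asserts, and your constant-term computation in (3) is exactly the content of the paper's two auxiliary lemmas ($\FDeg_\psi\in\bb{Z}[\Q]$ with constant term $\delta_{\psi,1}$, and $(\varepsilon,\Tr{\beta})_W=\varepsilon_{\Q}(\beta)=(-\Q^{-\frac{1}{2}})^{|\beta|}$ because $\{\phi,\varepsilon\}=0$ for $\phi\neq\varepsilon$).

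Two steps are genuine gaps. First, the non-crystallographic types: you claim the integrality $\Tr{\beta}\in R(W)[\Q^{\pm\frac{1}{2}}]$ follows in general from \cite{lusztig_1993,lusztig_1994,malle}. Those papers construct the Fourier data but contain no such integrality statement, and Theorem \ref{thm:decat} is unavailable outside Weyl groups; the paper handles $I_2(m)$, $H_3$, $H_4$ by explicit case-by-case verification against \cite[\S{8.3}]{gp}, \cite{lusztig_1994}, \cite[Table 9.1]{gp}, \cite[\S{7}]{bm_1993}, \cite{al}, \cite{malle}, and in the $H_3$ case must even correct the Fourier data quoted in the literature (Remark \ref{rem:h3}). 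An appeal to the existence of the Fourier transform does not replace this check. Second, the full-twist bootstrap from $\Br_W^+$ to $\Br_W$ cannot close, and the ``crux'' you flag is exactly where it breaks: for $\psi$ in a family $\mathcal{F}$ one gets $(\psi,\Tr{\beta})_W=\Q^{-m\bb{c}(\mathcal{F})}(\psi,\Tr{\pi^m\beta})_W\in\Q^{-\frac{|\beta|}{2}}\cdot\Q^{-m(\bb{c}(\mathcal{F})+N)}\,\bb{Z}[\Q_0]$, and by the Brou\'e--Michel identity $\bb{c}=N-\bb{a}-\bb{A}$ one has $\bb{c}(\mathcal{F})+N=2N-\bb{a}-\bb{A}\geq 0$ with equality only for the sign family; so off that family the bound is strictly weaker than the target $\Q^{-\frac{|\beta|}{2}}\bb{Z}[\Q_0]$, and no cancellation restores it: already for $W$ of type $A_1$ and $\beta=\sigma^{-1}$ one computes $\Q^{\frac{|\beta|}{2}}\Tr{\beta}=\Q^{-1}\cdot 1-\varepsilon\notin R(W)[\Q]$. (The paper's own reduction, expanding $\tilde{\beta}$ as a $\bb{Z}[\Q]$-combination of the $\tilde{\sigma}_w$, is likewise valid only on the positive monoid, so the writhe-normalized containment really lives on $\Br_W^+$; your instinct that the non-positive case needs separate treatment was sound, but the proposed comparison of $\bb{c}$ with $\tfrac{1}{2}|\pi|$ goes the wrong way except on the sign character.)
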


\begin{proof}
Let $\tilde{\beta} = \Q^{\frac{|\beta|}{2}} \beta$.
To prove the first statement, we must show that
\begin{align}
(\psi, \Tr{\tilde{\beta}})_W
=	\sum_{\phi \in \hat{W}}
	{\{\phi, \psi\}}
	\phi_{\Q}(\tilde{\beta}) \in \bb{Q}_W(\Q^{\frac{1}{2}})
\end{align}
actually belongs to $\bb{Z}[\Q_0]$ for all $\psi \in \hat{W}$.
We make a series of reductions:

By Lemma-Postulate \ref{lempost:fourier}(1), we can assume that $W$ is irreducible.

We can expand $\tilde{\beta}$ as a $\bb{Z}[\Q]$-linear combination of the elements $\tilde{\sigma}_w = \Q^{\frac{|\sigma_w|}{2}} \sigma_w$ for $w \in W$.
So we can assume that $\tilde{\beta} = \tilde{\sigma}_w$ for some $w$.

We can now check the non-crystallographic types case by case, using \cite[\S{8.3}]{gp} and \cite{lusztig_1994} for the dihedral types, \cite[Table 9.1]{gp} and \cite[\S{7}]{bm_1993} for type $H_3$ (but see Remark \ref{rem:h3}, below), and \cite{al, malle} for type $H_4$.
So we can assume that $W$ is crystallographic.

Theorem \ref{thm:decat} implies that $(\psi, \Tr{\tilde{\beta}})_W \in \bb{Z}[\Q^{\pm \frac{1}{2}}]$.
So it remains to show that $(\psi, \Tr{\tilde{\beta}})_W \in \bb{Q}[\Q_0]$.
Since $\{-, -\}$ takes values in $\bb{Q}_W = \bb{Q}$, it is enough to get $\phi_{\Q}(\tilde{\sigma}_w) \in \bb{Q}[\Q_0]$ for all $\phi$.
For crystallographic irreducible $W$, this is a theorem of Benson--Curtis \cite{bc} (joint with Springer in the case of $E_7$ and $E_8$).

The remaining two statements follow from the first statement, once we establish the following two lemmas.
\end{proof}

\begin{lem}
We have 
\begin{align}
{[\Sym(\bb{V})]_{\Q}} \in \frac{1}{(1 - \Q^{d_1}) \cdots (1 - \Q^{d_r})}\, R(W)[\Q].
\end{align}
\end{lem}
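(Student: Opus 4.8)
The plan is to compute $[\Sym(\bb{V})]_{\Q}$ in the representation ring $R(W)$ by diagonalizing the $W$-action and applying Molien's formula. First I would recall that $\bb{Q}_W = \bb{Q}$ for a Weyl group, so every virtual character in sight takes rational (indeed integer) values and $R(W)$ embeds in the ring of $\bb{Q}$-valued class functions; thus it suffices to check the claimed divisibility after evaluating at each $w \in W$. The character of $[\Sym(\bb{V})]_{\Q}$ at $w$ is the graded trace
\begin{align}
\sum_i \Q^i \tr(w \mid \Sym^i(\bb{V})) = \frac{1}{\det_{\bb{V}}(1 - \Q w)} = \prod_{k=1}^{r} \frac{1}{1 - \Q \zeta_k(w)},
\end{align}
where $\zeta_1(w), \ldots, \zeta_r(w)$ are the eigenvalues of $w$ on $\bb{V}$ (roots of unity, since $W$ is finite). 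So the class function $w \mapsto [\Sym(\bb{V})]_{\Q}(w)$ lies in $\bb{Q}(\Q)$ with denominator dividing $\prod_k (1 - \Q^{o_k(w)})$ for suitable orders $o_k(w)$, and in particular its poles in $\Q$ occur only at roots of unity.

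The key step is then to clear denominators uniformly across all conjugacy classes. Multiplying by $D(\Q) \vcentcolon= (1 - \Q^{d_1}) \cdots (1 - \Q^{d_r})$, I claim $D(\Q) \cdot [\Sym(\bb{V})]_{\Q}$ has character values in $\bb{Z}[\Q]$ at every $w$, which forces $D(\Q) \cdot [\Sym(\bb{V})]_{\Q} \in R(W)[\Q]$. The cleanest way to see this is the classical fact (Solomon, or Orlik--Solomon) identifying the graded $W$-character of $\Sym(\bb{V})$: writing $\ur{H}_i(W)$ for the $i$-th "fake-degree"-type pieces, one has the invariant-theoretic identity that $\Sym(\bb{V})$ is free of rank $|W|$ over the invariant ring $\Sym(\bb{V})^W = \QL[f_1, \ldots, f_r]$ with $\deg f_k = d_k$, so that
\begin{align}
[\Sym(\bb{V})]_{\Q} = \frac{1}{(1 - \Q^{d_1}) \cdots (1 - \Q^{d_r})} \cdot [\Sym(\bb{V}) / (f_1, \ldots, f_r)]_{\Q},
\end{align}
where the coinvariant algebra $\Sym(\bb{V})/(f_1, \ldots, f_r)$ is finite-dimensional, carries a $\bb{Z}$-grading, and hence its graded character $[\Sym(\bb{V})/(f_1,\ldots,f_r)]_{\Q}$ is a genuine element of $R(W)[\Q]$ (a polynomial, not a power series). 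This is exactly the desired divisibility, with numerator the coinvariant character. Alternatively, without invoking freeness explicitly, one checks at each $w$ that $\frac{D(\Q)}{\prod_k(1 - \Q\zeta_k(w))}$ is a polynomial in $\Q$ with integer coefficients, using that the multiset of eigenvalue orders of $w$ divides the multiset $\{d_k\}$ in the appropriate sense — this is Springer's theory of regular elements, but for the mere polynomiality statement one only needs that each $1 - \Q\zeta_k(w)$ divides some $1 - \Q^{d_k}$ after reindexing, which follows from the fact that the characteristic polynomial of $w$ on $\bb{V}$ divides $\prod_k (t^{d_k} - 1)$ up to the substitution $t \mapsto \Q^{-1}$... so freeness of the coinvariant algebra is really the crispest route.

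The main obstacle is purely bookkeeping: matching the factor $(1 - \Q^{d_1})\cdots(1 - \Q^{d_r})$ to the correct normalization of the grading on $\Sym(\bb{V})$ (recall $\Sym^i(\bb{V})$ sits in degree $2i$ but $[\Sym(\bb{V})]_{\Q} = \sum_i \Q^i \Sym^i(\bb{V})$ uses $\Q^i$, so the $d_k$ here are the invariant degrees in the "$\Q$-normalization," i.e.\ $\deg f_k = d_k$ means the $f_k$ contribute $\Q^{d_k}$), and confirming that the paper's convention for "invariant degrees" \cite[149]{gp} is the one for which $\Sym(\bb{V})^W$ is a polynomial ring on generators of those degrees. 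Once that is pinned down, the statement is immediate from the freeness of $\Sym(\bb{V})$ over $\Sym(\bb{V})^W$ together with the observation that a finite-dimensional graded $W$-representation has graded character in $R(W)[\Q]$. I would write the proof in two sentences citing Chevalley's theorem and the structure of the coinvariant algebra, with the eigenvalue computation above relegated to a remark in case the reader prefers the class-function argument.
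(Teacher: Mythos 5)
Your proposal is correct and is essentially the paper's argument: the paper writes $[\Sym(\bb{V})]_{\Q} = \bm{m}_1(\Q)\sum_{\phi}\FDeg_\phi(\Q)\,\phi$ with $\bm{m}_1(\Q) = \prod_i(1-\Q^{d_i})^{-1}$ and $\FDeg_\phi(\Q)\in\bb{Z}[\Q]$, and the polynomiality of the fake degrees is exactly the Chevalley freeness/coinvariant-algebra fact you invoke (your numerator $[\Sym(\bb{V})/(f_1,\ldots,f_r)]_{\Q}$ is precisely $\sum_\phi \FDeg_\phi(\Q)\,\phi$). The eigenvalue/class-function detour is unnecessary, as you yourself conclude.
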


\begin{proof}
In the notation of \S\ref{subsec:realization} and \S\ref{subsec:degrees}, we have 
\begin{align}
{[\Sym(\bb{V})]_{\Q}} 
= \bm{m}_1(\Q)\sum_{\phi \in \hat{W}} {\FDeg_\phi(\Q)\phi},
\end{align}
where $\bm{m}_1(\Q) = \prod_i {(1 - \Q^{d_i})^{-1}}$ and $\FDeg_\phi(\Q) \in \bb{Z}[\Q]$ for each $\phi$.
\end{proof}

\begin{lem}
For all $\beta \in \Br_W$, we have $(\varepsilon, \Tr{\beta})_W = \varepsilon_{\Q}(\beta) = (-\Q^{-\frac{1}{2}})^{|\beta|}$.
\end{lem}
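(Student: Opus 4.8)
The plan is to peel off the statement into three pieces: an elementary expansion, one genuinely representation-theoretic input about Lusztig's pairing, and a trivial computation of a linear character on the braid group. By definition $\Tr{\beta} = \sum_{\phi, \psi \in \hat{W}} \{\phi, \psi\}\, \phi_{\Q}(\beta)\, \psi$, and the multiplicity pairing $(-, -)_W$ is orthonormal on the irreducible characters, so $(\varepsilon, \psi)_W = \delta_{\psi, \varepsilon}$ for $\psi \in \hat{W}$. Hence
\begin{align}
(\varepsilon, \Tr{\beta})_W = \sum_{\phi \in \hat{W}} \{\phi, \varepsilon\}\, \phi_{\Q}(\beta).
\end{align}
So the first equality in the lemma reduces to the claim that $\{\phi, \varepsilon\} = \delta_{\phi, \varepsilon}$, after which the sum collapses to $\varepsilon_{\Q}(\beta)$.

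For the claim $\{\phi, \varepsilon\} = \delta_{\phi, \varepsilon}$, I would invoke the structural properties of $\{-, -\}$ recorded in Appendix \ref{sec:coxeter} (in particular Lemma-Postulate \ref{lempost:fourier}): the pairing is block-diagonal with respect to the partition of $\hat{W}$ into families, and within a family it is the associated exotic Fourier matrix. The trivial character $1$ is special and forms a singleton family; tensoring by $\varepsilon$ is a family-preserving involution of $\hat{W}$ carrying $1$ to $\varepsilon$, so $\varepsilon$ also forms a singleton family. The Fourier matrix of a one-element family is the $1 \times 1$ identity, giving $\{\varepsilon, \varepsilon\} = 1$ and $\{\phi, \varepsilon\} = 0$ for all $\phi \neq \varepsilon$. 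This is the only non-formal step, and it is where the precise normalization of the exotic Fourier transform is used; I expect it to be the ``hard part,'' though it is standard once the conventions of Appendix \ref{sec:coxeter} are fixed.

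Finally, I would compute $\varepsilon_{\Q}$ on braids directly. Under Tits's deformation theorem, $\varepsilon \in \hat{W}$ corresponds to the unique linear character $\varepsilon_{\Q} : H_W \to \bb{Z}[\Q^{\pm\frac{1}{2}}]$ with $\varepsilon_{\Q}(\sigma_s) = -\Q^{-\frac{1}{2}}$ for every $s \in S$ — it is the one specializing to the sign character at $\Q^{\frac{1}{2}} = 1$, the other linear character being $\sigma_s \mapsto \Q^{\frac{1}{2}}$. This character extends to a group homomorphism $\Br_W \to \bb{Z}[\Q^{\pm\frac{1}{2}}]^\times$ with $\sigma_s^{\pm 1} \mapsto (-\Q^{-\frac{1}{2}})^{\pm 1}$, and since the writhe $|\cdot| : \Br_W \to \bb{Z}$ is by definition the homomorphism sending each $\sigma_s$ to $1$, we get $\varepsilon_{\Q}(\beta) = (-\Q^{-\frac{1}{2}})^{|\beta|}$, completing the proof. (Note both displayed identities are $\bb{Z}[\Q^{\pm\frac{1}{2}}]$-linear in $\beta \in H_W$, so it would also suffice to check the first on the image of $\Br_W$, but the family argument dispenses with even that.)
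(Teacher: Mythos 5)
Your proposal is correct and follows essentially the same route as the paper: the paper likewise reduces the first equality to the statement $\{\phi, \varepsilon\} = 0$ for $\phi \neq \varepsilon$ (which it cites directly from Lusztig, where you instead derive it from the singleton-family structure of $\varepsilon$ together with the $\varepsilon$-twist symmetry of Lemma-Postulate \ref{lempost:fourier}(3)), and obtains the second equality from $\varepsilon_{\Q}(\sigma_s) = -\Q^{-\frac{1}{2}}$ plus multiplicativity of this one-dimensional character on $\Br_W$. One small wording correction: tensoring with $\varepsilon$ is not \emph{family-preserving} (that would put $1$ and $\varepsilon$ in the same family) but rather carries families to families, which is the fact your argument actually uses.
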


\begin{proof}
By \cite[252]{gp}, we have $\varepsilon_{\Q}(\sigma_s) = -\Q^{-\frac{1}{2}}$ for all $s \in S$.
(Note that $T_s = \Q^{\frac{1}{2}}\sigma_s$ in the notation of \emph{ibid.}.)
But $\varepsilon_{\Q}$ is the character of a $1$-dimensional $\bb{Q}_W(\Q^{\frac{1}{2}})$-vector space, so it is multiplicative on $\Br_W$.
We deduce that $\varepsilon_{\Q}(\beta) = (-\Q^{-\frac{1}{2}})^{|\beta|}$ for all $\beta \in \Br_W$.

As for the other equality:
By \cite[Ch.~4]{lusztig_1984}, we have $\{\phi, \varepsilon\} = 0$ for all $\phi \neq \varepsilon$.
This implies $(\varepsilon, \Tr{\beta})_W = \varepsilon_{\Q}(\beta)$.
\end{proof}

\begin{rem}\label{rem:h3}
The proof of Proposition \ref{prop:rationality} uses the explicit description of the exotic Fourier transform in type $H_3$.
For this group, there are three nontrivial families (see \S\ref{subsec:families}), one of which is exceptional.
The $4 \times 4$ matrix in \cite[\S{7}]{bm_1993} represents the block of the Fourier matrix for the non-exceptional families.
I learned from George Lusztig that by contrast, the data for the exceptional family matches that of the exceptional family in type $E_7$.
The assertion in \cite[189]{gm} that all three families have the same Fourier data is incorrect.
\end{rem}

\subsubsection{}

Turning to the symmetry result:

\begin{prop}\label{prop:symmetry}
For all $\beta \in \Br_W$, we have
\begin{align}
\Tr{\beta} 
&\in R(W)[\Q^{\frac{1}{2}} - \varepsilon \Q^{-\frac{1}{2}}],\\
\Tr{\beta}^0
&\in 
	(\Q^{\frac{1}{2}})^{|\beta| - r}
	R(W)[(\Q^{\frac{1}{2}} - \Q^{-\frac{1}{2}})^{\pm 1}].
\end{align}
\end{prop}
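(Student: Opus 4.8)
The plan is to derive both containments from the single functional equation
\begin{align}\label{eq:symmetry-key}
\overline{\Tr{\beta}} = (-1)^{|\beta|}\,\varepsilon\cdot\Tr{\beta},\qquad \beta\in\Br_W,
\end{align}
where $p\mapsto\overline{p}$ is the $\bb{Q}_W$-linear involution of $R(W)\otimes\bb{Q}_W(\Q^{\frac{1}{2}})$ induced by $\Q^{\frac{1}{2}}\mapsto\Q^{-\frac{1}{2}}$ on the second factor. By Proposition \ref{prop:rationality} we already know $\Tr{\beta}\in\Q^{-\frac{|\beta|}{2}}R(W)[\Q]$, and the same reduction to the elements $\sigma_w$ used there, together with $\phi_\Q(T_w)\in\bb{Z}[\Q]$ (so that every $\Q^{\frac{1}{2}}$-exponent occurring in $\phi_\Q(\beta)$ is congruent to $|\beta|$ modulo $2$), is what lets one recognize \eqref{eq:symmetry-key} as the first assertion of Proposition \ref{prop:symmetry}, i.e.\ as the invariance of $\Tr{\beta}$ under $\Q^{\frac{1}{2}}\mapsto-\varepsilon\Q^{-\frac{1}{2}}$.

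To prove \eqref{eq:symmetry-key} I would combine two ingredients with the character formula $\Tr{\beta}=\sum_{\phi,\psi\in\hat W}\{\phi,\psi\}\phi_\Q(\beta)\psi$. First, a classical fact about the Iwahori--Hecke algebra: the assignment $\sigma_s\mapsto -\sigma_s^{-1}$ respects the quadratic relations $(\sigma_s-\Q^{\frac{1}{2}})(\sigma_s+\Q^{-\frac{1}{2}})=0$ and the braid relations, hence extends to a $\bb{Z}[\Q^{\pm\frac{1}{2}}]$-algebra automorphism $\theta$ of $H_W$; since $\theta$ specializes at $\Q^{\frac{1}{2}}=1$ to sign-twisting $s\mapsto -s$ on $\bb{Q}_W[W]$, Tits's deformation theorem gives $\phi_\Q\circ\theta=(\varepsilon\phi)_\Q$ for all $\phi\in\hat W$. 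Composing $\theta$ with the bar involution $b$ of $H_W$ ($\sigma_s\mapsto\sigma_s^{-1}$, $\Q^{\frac{1}{2}}\mapsto\Q^{-\frac{1}{2}}$), which satisfies $\phi_\Q(b(x))=\overline{\phi_\Q(x)}$ because each $V_\phi$ is self-dual, one sees that $\theta\circ b$ sends every generator $\sigma_s^{\pm1}$ to $-\sigma_s^{\pm1}$ and hence carries any $\beta\in\Br_W$ to $(-1)^{|\beta|}\beta$ in $H_W$; evaluating $\phi_\Q$ gives $\overline{\phi_\Q(\beta)}=(-1)^{|\beta|}(\varepsilon\phi)_\Q(\beta)$. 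The second ingredient is the $\varepsilon$-equivariance of Lusztig's exotic Fourier transform, $\{\varepsilon\phi,\varepsilon\psi\}=\{\phi,\psi\}$ for all $\phi,\psi\in\hat W$. Substituting the first identity into the character formula, reindexing $\phi\mapsto\varepsilon\phi$, rewriting $\{\varepsilon\phi,\psi\}=\{\phi,\varepsilon\psi\}$ by the second, and reindexing $\psi\mapsto\varepsilon\psi$ produces exactly \eqref{eq:symmetry-key}.

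For the statement about $\Tr{\beta}^0=(-\Q^{\frac{1}{2}})^{|\beta|}\,\Tr{\beta}\cdot\varepsilon\,[\Sym(\bb V)]_{\Q}$, I would apply the bar involution and feed in \eqref{eq:symmetry-key} together with Poincar\'e duality for the coinvariant algebra: from $[\Sym(\bb V)]_{\Q}=\bigl(\prod_i(1-\Q^{d_i})\bigr)^{-1}\sum_\phi\FDeg_\phi(\Q)\phi$ and $\FDeg_{\varepsilon\phi}(\Q)=\Q^{N}\FDeg_\phi(\Q^{-1})$ one gets $\overline{[\Sym(\bb V)]_{\Q}}=(-1)^r\Q^{r}\,\varepsilon\,[\Sym(\bb V)]_{\Q}$. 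The two factors of $\varepsilon$ cancel, leaving $\overline{\Tr{\beta}^0}=(-1)^{r-|\beta|}\Q^{\,r-|\beta|}\Tr{\beta}^0$; equivalently $\Q^{-\frac{|\beta|-r}{2}}\Tr{\beta}^0$ is fixed by the bar involution up to the sign $(-1)^{r-|\beta|}$, hence is a bar-symmetric or bar-antisymmetric Laurent series in $\Q^{\frac{1}{2}}$. Combined with Proposition \ref{prop:rationality}, which confines the poles of $\Tr{\beta}^0$ to $\prod_i(1-\Q^{d_i})$ — the pole at $\Q=1$ contributing $(\Q^{\frac{1}{2}}-\Q^{-\frac{1}{2}})^{-1}$ and the palindromic cyclotomic factors contributing bar-invariant units — this forces $\Tr{\beta}^0\in(\Q^{\frac{1}{2}})^{|\beta|-r}R(W)\bigl[(\Q^{\frac{1}{2}}-\Q^{-\frac{1}{2}})^{\pm1}\bigr]$.

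The main obstacle is the $\varepsilon$-equivariance of $\{-,-\}$. For Weyl groups it falls out of Lusztig's construction of the nonabelian Fourier transform on a family of characters (tensoring by $\varepsilon$ is an involution of the family that commutes with the Fourier matrix), but in the non-crystallographic cases it has to be verified by hand from the explicit Fourier data — using \cite{lusztig_1994} in the dihedral types, the data for $H_3$ with the correction recorded in Remark \ref{rem:h3}, and \cite{al, malle} in type $H_4$ — in the same spirit as the case analysis in the proof of Proposition \ref{prop:rationality}. A subsidiary nuisance is keeping track of half-integer powers of $\Q$ when passing between the substitutions $\Q^{\frac{1}{2}}\mapsto\pm\Q^{-\frac{1}{2}}$ and when absorbing the monomial prefactors coming from the cyclotomic denominators; there the writhe-parity observation does all the work.
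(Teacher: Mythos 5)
Your two pillars are the same as the paper's: the $\varepsilon$-equivariance $\{\varepsilon\phi,\varepsilon\psi\}=\{\phi,\psi\}$ (already recorded in the paper as Lemma-Postulate \ref{lempost:fourier}(3), so no fresh case-check is needed) and an $\varepsilon$-twisted inversion symmetry for Hecke characters together with the Poincar\'e-duality symmetry of $[\Sym(\bb{V})]_{\Q}$; your Goldman-involution-plus-Tits argument for $\overline{\phi_{\Q}(\beta)}=(-1)^{|\beta|}(\varepsilon\phi)_{\Q}(\beta)$ is sound and is essentially the argument behind the paper's citation of \cite[Prop.~9.4.1(a)]{gp}, and your fake-degree proof of $\overline{[\Sym(\bb{V})]_{\Q}}=(-1)^{r}\Q^{r}\varepsilon[\Sym(\bb{V})]_{\Q}$ is an acceptable substitute for the paper's argument via the inverse series $\sum_i(-\Q)^i\Alt^i(\bb{V})$ and Lemma \ref{lem:self-dual}.

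The genuine gap is the bridge from your bar-involution identity to the statement being proved. The paper's Lemma \ref{lem:symmetry} is the substitution law under $\Q^{\frac{1}{2}}\mapsto-\Q^{-\frac{1}{2}}$, and the proof of \cite[Prop.~9.4.1(a)]{gp} gives $\phi_{\Q}(\beta)\mapsto(\varepsilon\phi)_{\Q}(\beta)$ under that substitution with no rationality input; your version uses $\Q^{\frac{1}{2}}\mapsto\Q^{-\frac{1}{2}}$ together with the sign $(-1)^{|\beta|}$, and you convert one into the other via the parenthetical claim that $\phi_{\Q}(T_w)\in\bb{Z}[\Q]$, so that all $\Q^{\frac{1}{2}}$-exponents in $\phi_{\Q}(\beta)$ have the parity of $|\beta|$. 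That claim is false for the exceptional characters in types $E_7$ and $E_8$ — this is exactly why Proposition \ref{prop:rationality} only yields $\Q_0=\Q^{\frac{1}{2}}$ there, and why the paper needs Theorem \ref{thm:pole} (proved much later) to restore $\Q$-rationality in those types. So as written your argument does not deliver the proposition in types $E_7$ and $E_8$; the repair is simply to prove the $-\Q^{-\frac{1}{2}}$ law directly (your automorphism argument adapts, or cite \cite{gp} as the paper does), which bypasses any parity or rationality hypothesis. A secondary caution: in your last step, bar-(anti)symmetry plus the pole bound from Proposition \ref{prop:rationality} does not by itself force membership in $(\Q^{\frac{1}{2}})^{|\beta|-r}R(W)[(\Q^{\frac{1}{2}}-\Q^{-\frac{1}{2}})^{\pm1}]$: the factors $(1-\Q^{d_i})^{-1}$ with $d_i\geq 2$ are not units of (nor elements of) that ring, and their poles at roots of unity other than $1$ are themselves compatible with bar-symmetry, so the phrase about ``bar-invariant units'' is not a valid implication. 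The paper takes the two transformation laws themselves as the content of the proposition, and your identity $\overline{\Tr{\beta}^0}=(-1)^{r-|\beta|}\Q^{\,r-|\beta|}\Tr{\beta}^0$ is precisely the analogue of what it proves, so the extra pole bookkeeping should either be dropped or replaced by a genuine argument excluding poles at roots of unity different from $1$.
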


By Proposition \ref{prop:rationality}, $\Tr{-}$ takes values in $R(W)[\Q^{\pm\frac{1}{2}}]$, so the proof amounts to the following lemmas:

\begin{lem}\label{lem:symmetry}
For all $\beta \in \Br_W$, we have $\Tr{\beta} \mapsto \varepsilon \Tr{\beta}$ under $\Q^{\frac{1}{2}} \mapsto -\Q^{-\frac{1}{2}}$.
\end{lem}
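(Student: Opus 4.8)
The plan is to deduce the symmetry of $\Tr{-}$ from two separate inputs: the behaviour of the Iwahori--Hecke character values $\phi_{\Q}$ under the substitution $\Q^{\frac{1}{2}}\mapsto-\Q^{-\frac{1}{2}}$, and an $\varepsilon$-equivariance of Lusztig's pairing $\{-,-\}$.

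First I would record that, for every $\phi\in\hat{W}$ and $\beta\in\Br_W$,
\begin{align}\label{eq:phi-sub}
\phi_{\Q}(\beta)\big|_{\Q^{1/2}\mapsto-\Q^{-1/2}}=(\varepsilon\otimes\phi)_{\Q}(\beta).
\end{align}
This is the standard compatibility of the sign twist with inversion of the parameter. One way to see it: $\sigma_s\mapsto\sigma_s$ together with the base automorphism $\Q^{\frac{1}{2}}\mapsto-\Q^{-\frac{1}{2}}$ preserves the quadratic relation $\sigma_s^2=(\Q^{\frac{1}{2}}-\Q^{-\frac{1}{2}})\sigma_s+1$ and the braid relations, hence induces a (semilinear) automorphism $\Theta$ of $H_W$; applying $\Q^{\frac{1}{2}}\mapsto-\Q^{-\frac{1}{2}}$ to the output of $\phi_{\Q}\circ\Theta$ gives the character of an absolutely irreducible $H_W$-module, so it equals $\psi_{\Q}$ for a unique $\psi\in\hat{W}$ by Tits's deformation theorem, and specialising at $\Q^{\frac{1}{2}}=1$ gives $\psi(s)=\phi_{\Q}(\sigma_s)|_{\Q^{1/2}=-1}=-\phi(s)$ for every simple reflection $s$, whence $\psi=\varepsilon\otimes\phi$. (Alternatively, \eqref{eq:phi-sub} can be read off the character tables in \cite{gp}; for $\phi=1$ it is the identity $\varepsilon_{\Q}(\sigma_s)=-\Q^{-\frac{1}{2}}$ used in \S\ref{subsec:markov}.)

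Next, substituting \eqref{eq:phi-sub} into $\Tr{\beta}=\sum_{\phi,\psi\in\hat{W}}\{\phi,\psi\}\,\phi_{\Q}(\beta)\,\psi$ and using that $\{\phi,\psi\}\in\bb{Q}_W$ and $\psi\in R(W)$ are fixed by the substitution,
\begin{align}
\Tr{\beta}\big|_{\Q^{1/2}\mapsto-\Q^{-1/2}}
=\sum_{\phi,\psi}\{\phi,\psi\}\,(\varepsilon\otimes\phi)_{\Q}(\beta)\,\psi
=\sum_{\phi,\psi}\{\varepsilon\otimes\phi,\psi\}\,\phi_{\Q}(\beta)\,\psi,
\end{align}
where the last equality reindexes the sum by the involution $\phi\mapsto\varepsilon\otimes\phi$ of $\hat{W}$. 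Reindexing instead by $\psi\mapsto\varepsilon\otimes\psi$,
\begin{align}
\varepsilon\cdot\Tr{\beta}=\sum_{\phi,\psi}\{\phi,\psi\}\,\phi_{\Q}(\beta)\,(\varepsilon\otimes\psi)=\sum_{\phi,\psi}\{\phi,\varepsilon\otimes\psi\}\,\phi_{\Q}(\beta)\,\psi.
\end{align}
Since the functions $\beta\mapsto\phi_{\Q}(\beta)$ ($\phi\in\hat{W}$) are linearly independent over $\bb{Q}_W(\Q^{\frac{1}{2}})$ and the $\psi$ form a basis of $R(W)\otimes\bb{Q}_W$, comparing coefficients reduces the lemma to the identity
\begin{align}\label{eq:fourier-eps}
\{\varepsilon\otimes\phi,\psi\}=\{\phi,\varepsilon\otimes\psi\}\qquad\text{for all }\phi,\psi\in\hat{W}.
\end{align}

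Finally I would prove \eqref{eq:fourier-eps}. Tensoring by $\varepsilon$ permutes the families of $\hat{W}$ (\S\ref{subsec:families}) and carries the block of the exotic Fourier matrix attached to a family $\mathcal{F}$ to the block attached to $\mathcal{F}\otimes\varepsilon$, so that $\{\varepsilon\otimes\phi,\varepsilon\otimes\psi\}=\{\phi,\psi\}$ for all $\phi,\psi$; this is part of the structure recorded in Lemma-Postulate \ref{lempost:fourier} (for Weyl groups it is contained in \cite{lusztig_1984}, and for the remaining finite Coxeter groups it follows from the explicit Fourier data in \cite{lusztig_1993, lusztig_1994, malle}). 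Applying this with $\psi$ replaced by $\varepsilon\otimes\psi$ and using $\varepsilon\otimes\varepsilon=1$ yields \eqref{eq:fourier-eps}, completing the proof. I expect the only real obstacle to be this $\varepsilon$-equivariance of $\{-,-\}$: it is classical for Weyl groups, but for the non-crystallographic types it must be verified against the tables of the exotic Fourier transform; the rest is formal, using only Tits's deformation theorem and reindexing of the double sum.
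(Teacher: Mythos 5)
Your argument is correct and follows essentially the same route as the paper: both reduce the claim to the $\varepsilon$-equivariance $\{\varepsilon\phi,\varepsilon\psi\}=\{\phi,\psi\}$ of Lemma-Postulate \ref{lempost:fourier}(3) together with the fact that $\phi_{\Q}(\beta)\mapsto(\varepsilon\phi)_{\Q}(\beta)$ under $\Q^{\frac{1}{2}}\mapsto-\Q^{-\frac{1}{2}}$ for $\beta$ in the image of $\Br_W$, which the paper simply cites from the proof of \cite[Prop.\ 9.4.1(a)]{gp}. The only soft spot is in your sketch of that last fact, where identifying $\psi$ with $\varepsilon\otimes\phi$ from its values on the simple reflections alone is not quite conclusive (irreducible characters need not be determined by degree and values on simple reflections); since you also fall back on the standard reference, this does not affect the validity of the proof.
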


\begin{proof}
Recall that for all $\beta \in H_W$, we have $(\psi, \Tr{\beta})_W = \sum_{\phi \in \hat{W}} {\{\phi, \psi\}} \phi_{\Q}(\beta)$.
Using Lemma-Postulate \ref{lempost:fourier}(3), we compute
\begin{align}\begin{split}
(\psi, \varepsilon \Tr{\beta})_W 
&= (\varepsilon\psi, \Tr{\beta})_W \\
&= \sum_\phi
	{\{\phi, \varepsilon \psi\}}
	\phi_{\Q}(\beta)\\
&=	\sum_\phi
	{\{\varepsilon \phi, \varepsilon \psi\}}
	(\varepsilon \phi)_{\Q}(\beta)\\
&=	\sum_\phi
	{\{\phi, \psi\}}
	(\varepsilon \phi)_{\Q}(\beta).
\end{split}\end{align}
It remains to show that if $\beta$ is in the image of $\Br_W$, then 
\begin{align}
\phi_{\Q}(\beta) \mapsto (\varepsilon \phi)_{\Q}(\beta)
\quad\text{under}\quad
\Q^{\frac{1}{2}} \mapsto -\Q^{-\frac{1}{2}}.
\end{align}
This follows implicitly from the proof of \cite[Prop.\ 9.4.1(a)]{gp}.
\end{proof}

\begin{lem}
We have $\Q^{\frac{r}{2}} [\Sym(\bb{V})]_{\Q} \mapsto \varepsilon \Q^{\frac{r}{2}} [\Sym(\bb{V})]_{\Q}$ under $\Q^{\frac{1}{2}} \mapsto -\Q^{-\frac{1}{2}}$.
\end{lem}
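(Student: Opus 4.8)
The plan is to deduce the statement from the Koszul relation between $\Sym(\bb{V})$ and $\Alt^\ast(\bb{V})$ together with the self-duality identity $\Alt^{r - i}(\bb{V}) \simeq \varepsilon \Alt^i(\bb{V})$ already used in the proof of Lemma~\ref{lem:self-dual}. First I would introduce the auxiliary element $A(\Q) = \sum_{i = 0}^r (-\Q)^i \Alt^i(\bb{V}) \in R(W)[\Q]$ and record that
\begin{align}
{[\Sym(\bb{V})]_{\Q}} \cdot A(\Q) = 1.
\end{align}
This can be seen either from the Koszul resolution of the trivial $\Sym(\bb{V})$-module — which, after one tracks the internal grading, writes $1$ as the alternating sum $\sum_i (-\Q)^i [\Alt^i(\bb{V}) \otimes \Sym(\bb{V})]$ in the Grothendieck group of graded $W$-modules — or by evaluating characters: for each $w \in W$ the value of $A(\Q)$ at $w$ is $\det_{\bb{V}}(1 - \Q w)$ while that of $[\Sym(\bb{V})]_{\Q}$ at $w$ is its reciprocal, and two elements of $R(W)(\Q)$ agreeing on all characters coincide. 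Since $A(0) = 1$, this also exhibits $[\Sym(\bb{V})]_{\Q}$ as the inverse of $A(\Q)$, in particular as a rational function of $\Q$.

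Next I would compute the effect of $\Q \mapsto \Q^{-1}$ on $A$. Substituting $i = r - j$ and using $\Alt^{r - j}(\bb{V}) = \varepsilon \Alt^j(\bb{V})$,
\begin{align}
A(\Q^{-1}) = \sum_i (-1)^i \Q^{-i} \Alt^i(\bb{V}) = (-1)^r \Q^{-r} \varepsilon \sum_j (-\Q)^j \Alt^j(\bb{V}) = (-\Q^{-1})^r \varepsilon\, A(\Q).
\end{align}
Taking inverses and using $\varepsilon^{-1} = \varepsilon$ in $R(W)$, the image of $[\Sym(\bb{V})]_{\Q}$ under $\Q^{\frac{1}{2}} \mapsto -\Q^{-\frac{1}{2}}$ (equivalently $\Q \mapsto \Q^{-1}$) is $A(\Q^{-1})^{-1} = (-\Q)^r \varepsilon\, [\Sym(\bb{V})]_{\Q}$. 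Multiplying by the image $(-\Q^{-\frac{1}{2}})^r = (-1)^r \Q^{-\frac{r}{2}}$ of $\Q^{\frac{r}{2}}$, the factors $(-1)^{2r}$ cancel and we are left with $\Q^{\frac{r}{2}} \varepsilon\, [\Sym(\bb{V})]_{\Q}$, which is the assertion.

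I do not expect a serious obstacle. The only point needing care is that the substitution $\Q^{\frac{1}{2}} \mapsto -\Q^{-\frac{1}{2}}$ must be applied in a ring where it is defined — it is a $\bb{Q}_W$-algebra automorphism of $R(W)[(\Q^{\frac{1}{2}} - \Q^{-\frac{1}{2}})^{\pm 1}]$, or of $R(W)(\Q^{\frac{1}{2}})$ — and one must first know, as provided by the preceding lemmas and the computation $[\Sym(\bb{V})]_{\Q} = A(\Q)^{-1}$ above, that $\Q^{\frac{r}{2}}[\Sym(\bb{V})]_{\Q}$ indeed lies in such a ring. Granting that, the transformation law is a purely formal consequence of the identity for $A(\Q)$.
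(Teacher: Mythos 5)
Your proposal is correct and follows essentially the same route as the paper: observe that $\sum_i (-\Q)^i \Alt^i(\bb{V})$ is the multiplicative inverse of $[\Sym(\bb{V})]_{\Q}$ and then use the identity $\Alt^{r-i}(\bb{V}) = \varepsilon\Alt^i(\bb{V})$, which is exactly the content of Lemma \ref{lem:self-dual} (the paper simply cites that lemma with $x = \Q$, whereas you re-derive it inline).
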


\begin{proof}
Observe that $\sum_i {(-\Q)^i} \Alt^i(\bb{V})$ is the multiplicative inverse of $[\Sym(\bb{V})]_{\Q}$.
So the claim follows from Lemma \ref{lem:self-dual} with $x = \Q$.
\end{proof}

\subsubsection{}

We show that Proposition \ref{prop:rationality}, Proposition \ref{prop:symmetry}, and Theorem \ref{thm:pole} together imply Theorem \ref{thm:symmetry}.

\begin{proof}[Proof of Theorem \ref{thm:symmetry}]
Proposition \ref{prop:rationality} shows that if $W$ has no factors of type $E_7$ or $E_8$, then $\Tr{\beta}^0$ and $(1, \Tr{\beta}^0)_W$ are indeed rational functions in $\Q$, not just power series in $\Q^{\frac{1}{2}}$.
Theorem \ref{thm:pole} extends this result to all $W$.
These results, combined with Proposition \ref{prop:symmetry}, establish
\begin{align}
\Tr{\beta} 
&\in \Q^{-\frac{|\beta|}{2}}R(W)[\Q] \cap R(W)[\Q^{\frac{1}{2}} - \varepsilon \Q^{-\frac{1}{2}}],\\
\Tr{\beta}^0
&\in 
	R(W)[\![\Q]\!] \cap R(W)(\Q) \cap
	(\Q^{\frac{1}{2}})^{|\beta| - r}
	R(W)[(\Q^{\frac{1}{2}} - \Q^{-\frac{1}{2}})^{\pm 1}].
\end{align}
In particular, the $\Q$-degrees of $\Tr{\beta}^0$ and $(1, \Tr{\beta}^0)_W$ are well-defined.
The following observations show that their degrees must be $|\beta| - r$:
\begin{itemize}
\item 	Proposition \ref{prop:rationality} shows that $\Tr{\beta}^0$, and hence $(1, \Tr{\beta}^0)_W$, belong to $\bb{Z}[\![\Q]\!]$, \emph{i.e.}, their lowest-order term in $\Q$ is the constant term.
\item 	By Proposition \ref{prop:symmetry}, we have
\begin{align}
\Tr{\beta}^0 \mapsto (-\Q)^{r - |\beta|}\Tr{\beta}^0
\quad\text{under}\quad
\Q_0 \mapsto \Q_0^{-1}.
\end{align}
\item 	Proposition \ref{prop:rationality} also shows that $(1, \Tr{\beta}^0)_W$, and hence $\Tr{\beta}^0$, have nonzero constant term.\qedhere
\end{itemize}
\end{proof}

\newpage
\section{Deligne--Lusztig Theory}\label{sec:dl}

\subsection{}

The goal of this section is to prove Theorems \ref{thm:virtual}, \ref{thm:induction}, and \ref{thm:pole}.
Throughout, we assume that $W$ is a Weyl group and use the notation and hypotheses of Sections \ref{sec:mixed}-\ref{sec:hecke}.
In particular, we continue to assume that $\bb{F}$ is of large characteristic.
We will write $\ur{H}_!^\ast(-) = \ur{H}_!^\ast(-, \QL)$ to denote cohomology with compact support.

\subsection{}

The proofs of Theorems \ref{thm:virtual} and \ref{thm:induction} rely on properties of the virtual characters of $G^F$ introduced by Deligne--Lusztig \cite{dl}, which we now review.

For all $w \in W$, the \dfemph{Deligne--Lusztig variety} $X_w \subseteq \cal{B} \times \cal{B}$ is the transverse intersection of $O_w$ with the graph of the Frobenius map $F : \cal{B} \to \cal{B}$.
The $G$-action on $\cal{B} \times \cal{B}$ restricts to a $G^F$-action on $X_w$, so the formal sum
\begin{align}
R_w = \sum_n
{(-1)^n}\ur{H}_!^n(X_w)
\end{align}
defines a virtual representation of $G^F$.
In general, a virtual representation of $G^F$ is \dfemph{unipotent} iff it occurs with nonzero multiplicity in $R_w$ for some $w$.

We have $X_1 = \cal{B}^F$, a scheme of finitely many points, and $R_1 = \QL[\cal{B}^F]$, the $\QL$-vector space of functions on $\cal{B}^F$.
For all $B \in \cal{B}^F$, let $\bb{1}_B$ be the indicator function on $\{B\}$.
Recall, \emph{cf.}\ Remark \ref{rem:iwahori}, that Iwahori constructed an isomorphism
\begin{align}
\QL \otimes H_W|_{\Q^{1/2} = q^{1/2}} \xrightarrow{\sim} \End_{G^F}(R_1).
\end{align}
In terms of the functions $\bb{1}_B$, the action of $\sigma_w \in H_W$ on $R_1$ is given by
\begin{align}\label{eq:hecke-operator}
\tilde{\sigma}_w \cdot \bb{1}_B = \sum_{(B, B') \in O_w^F}
\bb{1}_{B'},
\end{align}
where $\tilde{\sigma}_w = q^{\frac{|w|}{2}} \sigma_w$.

Irreducible representations of $W$ give rise to unipotent virtual representations of $G^F$ in two ways:

\subsubsection{}
For any $\phi \in \hat{W}$, the formal $\bb{Q}$-linear combination
\begin{align}\label{eq:almost}
R_\phi = \frac{1}{|W|} \sum_{w \in W} \phi(w)R_w
\end{align}
has nonzero multiplicity in $R_1$ (where we mean $1 \in W$, not $1 \in \hat{W}$), so it is unipotent.
The characters of the virtual representations $R_\phi$ are called \dfemph{almost-characters}, being very close to irreducible characters: 
See Lemma-Postulate \ref{lempost:almost}, below.

\subsubsection{}

For any $\phi \in \hat{W}$, the trace $\phi_{\Q} : \bb{Q}(\Q^{\frac{1}{2}}) \otimes H_W \to \bb{Q}(\Q^{\frac{1}{2}})$ (see \S\ref{subsec:hecke-generic}) restricts to a function $H_W \to  \bb{Z}[\Q^{\pm\frac{1}{2}}]$ \cite[Thm.\ 9.3.5]{gp}.
So we can set
\begin{align}
\phi_q = \phi_{\Q}|_{\Q^{1/2} = q^{1/2}} : H_W \to \bb{Z}[q^{\pm\frac{1}{2}}].
\end{align}
Then the double centralizer theorem gives an isomorphism of $(H_W \times G^F)$-modules
\begin{align}\label{eq:double-centralizer}
R_1 \simeq \bigoplus_{\phi \in \hat{W}}
V_{\phi, q} \otimes \rho_\phi,
\end{align}
where $V_{\phi, q}$, \emph{resp.}\ $\rho_\phi$, is a simple $H_W$-module of character $\phi_q$, \emph{resp.}\ a unipotent irreducible representation of $G^F$.
The representations $\rho_\phi$ are called the \dfemph{unipotent principal series}.

If $\rho$ is a representation of $G^F$, then $\Hom_{G^F}(R_1, \rho)$ forms an $\End_{G^F}(R_1)$-module under precomposition.
In particular, applying $\Hom_{G^F}(R_1, -)$ to both sides of \eqref{eq:double-centralizer} yields an isomorphism of $H_W$-bimodules:
\begin{align}
\End_{G^F}(R_1) \simeq \bigoplus_{\phi \in \hat{W}}
V_{\phi, q} \otimes \Hom_{G^F}(R_1, \rho_\phi).
\end{align}
By Artin--Wedderburn, we deduce:

\begin{lem}\label{lem:restriction}
For all $\phi \in \hat{W}$, we have $\Hom_{G^F}(R_1, \rho_\phi) \simeq V_{\phi, q}$ as $\End_{G^F}(R_1)$-modules, hence as $H_W$-modules.
\end{lem}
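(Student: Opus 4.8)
The plan is to promote the $H_W$-bimodule isomorphism $\End_{G^F}(R_1) \simeq \bigoplus_{\phi \in \hat{W}} V_{\phi, q} \otimes \Hom_{G^F}(R_1, \rho_\phi)$ displayed just above into the asserted identification by running it through the Artin--Wedderburn structure of $A \vcentcolon= \End_{G^F}(R_1)$. First I would record that $R_1 = \QL[\cal{B}^F]$ is a semisimple $\QL[G^F]$-module (Maschke, since $\QL$ has characteristic zero and $G^F$ is finite), so $A$ is a split semisimple $\QL$-algebra; moreover, by \eqref{eq:double-centralizer} and the double centralizer theorem, its blocks are indexed by $\hat{W}$ and $A \simeq \bigoplus_{\phi \in \hat{W}} \End_{\QL}(V_{\phi, q})$, with the $V_{\phi, q}$ a complete list of pairwise non-isomorphic simple left $A$-modules. (This is consistent with Iwahori's isomorphism $A \simeq \QL \otimes H_W|_{\Q^{1/2} = q^{1/2}}$, which is split semisimple because $\bb{F}$ has large characteristic.) Under this description the tautological $A$-bimodule decomposition reads $A \simeq \bigoplus_\phi V_{\phi, q} \otimes_{\QL} V_{\phi, q}^\vee$, where $V_{\phi, q}^\vee$ carries the simple right $A$-module structure transpose to the left action on $V_{\phi, q}$.

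Next I would match this against the decomposition $A \simeq \bigoplus_\phi V_{\phi, q} \otimes \Hom_{G^F}(R_1, \rho_\phi)$: regarding $A$ as a left $A$-module, its $V_{\phi, q}$-isotypic component is the sub-bimodule $V_{\phi, q} \otimes \Hom_{G^F}(R_1, \rho_\phi)$ on one side and $V_{\phi, q} \otimes V_{\phi, q}^\vee$ on the other, and since a sub-bimodule isomorphic to $V_{\phi, q} \otimes N$ determines its right-module factor $N$ up to isomorphism, we conclude $\Hom_{G^F}(R_1, \rho_\phi) \simeq V_{\phi, q}^\vee$ as right $A$-modules. (Equivalently, one can reach the same point directly from \eqref{eq:double-centralizer} by Schur's lemma: the $\rho_\psi$ are pairwise non-isomorphic irreducible $G^F$-modules, so $\Hom_{G^F}(R_1, \rho_\phi) \simeq \Hom_{G^F}(V_{\phi, q} \otimes \rho_\phi, \rho_\phi) \simeq V_{\phi, q}^\vee$, with the precomposition action going over to the transpose of the defining $A$-action on $V_{\phi, q}$.)

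Finally I would identify $V_{\phi, q}^\vee$ with $V_{\phi, q}$: the trace of an operator equals the trace of its transpose, so the character of $V_{\phi, q}^\vee$ is again $\phi_q$, and over the split semisimple algebra $A$ a finite-dimensional module is pinned down up to isomorphism by its character; alternatively one transports along the standard anti-involution $T_w \mapsto T_{w^{-1}}$ of $H_W$, which fixes each $\phi_q$. Transporting the resulting isomorphism along Iwahori's identification $A \simeq \QL \otimes H_W|_{\Q^{1/2}=q^{1/2}}$ then yields $\Hom_{G^F}(R_1, \rho_\phi) \simeq V_{\phi, q}$ as $H_W$-modules. I do not expect a genuine obstacle here: all the representation-theoretic substance is already packaged into \eqref{eq:double-centralizer}, Iwahori's theorem, and the semisimplicity afforded by large characteristic; the only point requiring care is the left/right module bookkeeping, which the transpose-invariance of characters resolves cleanly.
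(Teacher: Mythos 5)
Your argument is correct and is essentially the paper's own proof: the lemma is obtained by applying $\Hom_{G^F}(R_1, -)$ to the double-centralizer decomposition \eqref{eq:double-centralizer} and invoking Artin--Wedderburn, exactly as you do. The additional care you take with the right-module structure coming from precomposition and with the identification $V_{\phi, q}^\vee \simeq V_{\phi, q}$ (via trace-invariance, or the anti-involution $T_w \mapsto T_{w^{-1}}$) only makes explicit some bookkeeping that the paper leaves implicit.
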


The map $\rho \mapsto \Hom_{G^F}(R_1, \rho)$ extends uniquely to an additive map from virtual representations to virtual modules.
We emphasize that \emph{$\Hom_{G^F}(R_1, \rho)$ may still be nonzero even when its virtual dimension $(R_1, \rho)_{G^F}$ is zero}.

\subsection{}

Lusztig showed that the unipotent irreducible representations of $G^F$ can be indexed by a set that only depends on $W$ \cite[Ch.\ 4]{lusztig_1984}.
This is the set denoted $\UCh(W)$ in \S\ref{subsec:fourier}.
Abusing notation, we will conflate the elements of $\UCh(W)$ with the unipotent irreducible representations themselves.
Then the embedding $\hat{W} \to \UCh(W)$ is precisely the map $\phi \mapsto \rho_\phi$.
Cor.\ 4.24-4.25 of \cite{lusztig_1984} entail:

\begin{lempost}\label{lempost:almost}
There is a function $\Delta : \UCh(W) \to \{\pm 1\}$ such that  for all $\phi \in \hat{W}$, we have:
\begin{enumerate}
\item 	$\Delta(\rho_\phi) = 1$.
\item 	The transformation rule
\begin{align}
R_\phi
=	\sum_{\rho \in \UCh(W)}
{\{\rho_\phi, \rho\}}\Delta(\rho)\rho.
\end{align}
Equivalently, $(\Delta(\rho)\rho, R_\phi) = \{\rho_\phi, \rho\}$  for all $\rho \in \UCh(W)$.
\end{enumerate}
\end{lempost}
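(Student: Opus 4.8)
The statement is essentially a repackaging of Lusztig's classification of the unipotent characters of $G^F$, so the plan is to reduce it to \cite[Cor.\ 4.24--4.25]{lusztig_1984} after matching conventions. First I would recall, from \S\ref{subsec:families} and \S\ref{subsec:fourier}, the partition of $\hat{W}$ into Lusztig families: each family $\cal{F}$ carries a finite group $\Gamma_{\cal{F}}$, the set $\UCh(W)$ is the disjoint union $\bigsqcup_{\cal{F}} \cal{M}(\Gamma_{\cal{F}})$ of the associated sets of Lusztig, the embedding $\hat{W}\hookrightarrow\UCh(W)$, $\phi\mapsto\rho_\phi$, sends each $\phi$ into the block $\cal{M}(\Gamma_{\cal{F}})$ of its own family, and the pairing $\{-,-\}$ is by construction block-diagonal with respect to families: it vanishes between distinct families and restricts on each block to the nonabelian Fourier matrix of $\Gamma_{\cal{F}}$. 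Under these identifications the almost-character $R_\phi=\frac{1}{|W|}\sum_{w}\phi(w)R_w$ coincides with the almost-character that Lusztig attaches to the point $\rho_\phi\in\cal{M}(\Gamma_{\cal{F}})$.

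The core of the proof is then a direct appeal to \cite{lusztig_1984}: its classification of unipotent characters together with \cite[Cor.\ 4.24]{lusztig_1984} furnishes a sign function $\Delta:\UCh(W)\to\{\pm1\}$ --- concretely, $\Delta$ equals $1$ outside a short explicit list of cuspidal unipotent characters in the exceptional types --- such that $R_\phi=\sum_{\rho\in\UCh(W)}\{\rho_\phi,\rho\}\,\Delta(\rho)\,\rho$; the family-wise vanishing of $\{-,-\}$ is exactly what collapses the sum over $\UCh(W)$ to a sum over the family of $\phi$, so that it matches Lusztig's intra-family transformation rule. For part~(1), I would invoke \cite[Cor.\ 4.25]{lusztig_1984} (equivalently, the explicit description of $\Delta$): the images $\rho_\phi$ of $\hat{W}$ are precisely the unipotent principal-series representations, none of which appears on the exceptional list, so $\Delta(\rho_\phi)=1$. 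For the ``equivalently'' clause, I would use that the elements of $\UCh(W)$ are honest irreducible characters of $G^F$, hence orthonormal for $(-,-)$: pairing both sides of the transformation rule with $\Delta(\rho)\rho$ yields $(\Delta(\rho)\rho,R_\phi)=\{\rho_\phi,\rho\}\,\Delta(\rho)^2=\{\rho_\phi,\rho\}$ since $\Delta(\rho)^2=1$.

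The only genuine work is notational bookkeeping: verifying that the pairing $\{-,-\}$ defined via the exotic Fourier transform in Appendix~\ref{sec:coxeter} is normalized to agree, family by family, with the Fourier matrix of \cite{lusztig_1984}, and that no twist by the sign character $\varepsilon$ or reindexing sneaks in along the way. I expect the delicate point to be the behaviour in the exceptional families of $E_7$, $E_8$, $H_3$, $H_4$ --- the same subtlety flagged in Remark~\ref{rem:h3} --- where the Fourier data must be transcribed with care. Once that dictionary is pinned down, (1), (2), and the reformulation all follow immediately from \cite[Cor.\ 4.24--4.25]{lusztig_1984}.
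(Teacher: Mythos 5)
Your proposal is correct and coincides with the paper's treatment: the statement is labelled a Lemma-Postulate precisely because the paper offers no argument beyond citing \cite[Cor.\ 4.24-4.25]{lusztig_1984}, which is exactly the reduction you carry out (your extra care about $H_3$, $H_4$ is harmless but moot here, since in Section \ref{sec:dl} the group $W$ is a Weyl group).
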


\begin{rem}
We have $R_\phi(1) = \FDeg_\phi(q)$ and $\rho_\phi(1) = \UDeg_\phi(q)$ for all $\phi \in \hat{W}$.
\end{rem}

\subsection{}

Springer conjectured \cite{springer_1976}, and Kazhdan proved \cite{kazhdan}, a remarkable identity relating the character values of the virtual representations $R_w$ to the action of $W$ on the fibers of the Grothendieck--Springer map.
To state it, let $\cal{B}_g$ denote the fiber of $\tilde{G} \to G$ above $g \in G^F$, and let
\begin{align}
Q_g(w) = \sum_n
{\tr(wF \mid \ur{H}^{2n}(\cal{B}_g))}.
\end{align}
(By \cite{dlp}, $\cal{B}_g$ has no odd cohomology.)
The main result of \cite{kazhdan} implies:

\begin{thm}[Kazhdan]\label{thm:kazhdan}
For all $g \in G^F$ and $w \in W$, we have $Q_g(w) = R_w(g)$.
\end{thm}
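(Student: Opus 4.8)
The plan is to deduce Theorem \ref{thm:kazhdan} from the results it is stated to build on, namely the main theorem of \cite{kazhdan} (with the conjectural framework of Springer \cite{springer_1976}), by translating Kazhdan's statement into the present notation. First I would recall that Kazhdan's theorem computes, for a unipotent (or more generally arbitrary) element and a Weyl group element $w$, the trace of $wF$ acting on the $\ell$-adic cohomology of the Springer fiber; the key point is that the $W$-action in question is the Springer action coming from the monodromy of the Grothendieck--Springer sheaf $\cal{G}$ over the regular semisimple locus, exactly the action that enters the definition of $Q_g(w)$ here. So the first step is purely bookkeeping: match the normalization of the Springer $W$-action in \cite{kazhdan} with the one fixed in Section \ref{sec:hecke} (where Frobenius is declared to act trivially on each $V_\phi$), checking that no sign or twist is introduced.

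Second, I would set up the comparison of the two sides. On one side, $R_w(g) = \sum_n (-1)^n \tr(g \mid \ur{H}_!^n(X_w))$ is the character value of the Deligne--Lusztig virtual representation at $g \in G^F$. On the other, $Q_g(w) = \sum_n \tr(wF \mid \ur{H}^{2n}(\cal{B}_g))$, which by \cite{dlp} involves only even cohomology, so the alternating sum collapses to an ordinary sum. The heart of the matter is the trace identity of Deligne--Lusztig--Springer--Kazhdan relating the Lefschetz number of $g$ on $X_w$ to the Lefschetz number of $wF$ on $\cal{B}_g$; this is precisely \cite[Thm.\ 5.6]{springer_1976} combined with the main result of \cite{kazhdan}, which removes the genericity hypotheses Springer needed and establishes the identity for all $g$. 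I would invoke this directly, so the proof consists of: (i) expanding both sides as Lefschetz numbers, (ii) citing the cited theorems for the equality of those Lefschetz numbers, and (iii) using \cite{dlp} to rewrite $\sum_n (-1)^n \tr$ on $\cal{B}_g$ as $\sum_n \tr$ on even-degree pieces.

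The step I expect to be the main obstacle is not a computation but a matching of conventions: ensuring that the $W$-action on $\ur{H}^\ast(\cal{B}_g)$ used by Kazhdan agrees (including the role of Frobenius, and the identification $\cal{B}_g$ with a fiber of $\tilde{G} \to G$ rather than of the unipotent Springer resolution) with the action induced by $\bb{A}_W \simeq \bb{A}_\cal{G}$ via Theorem \ref{thm:lusztig-algebra} and the description in Example \ref{ex:identity}. Once that is pinned down, the theorem is essentially a citation; there is nothing further to prove beyond recording the translation. I would therefore present the argument as a short paragraph citing \cite[Thm.\ 6.9]{dl}, \cite[Thm.\ 5.6]{springer_1976}, \cite{kazhdan}, and \cite{dlp}, with the convention check spelled out explicitly.
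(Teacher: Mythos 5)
Your proposal matches the paper: the paper offers no independent argument for Theorem \ref{thm:kazhdan}, stating only that it follows from the main result of \cite{kazhdan} (proving Springer's hypothesis from \cite{springer_1976}), with \cite{dlp} quoted for the vanishing of odd cohomology of $\cal{B}_g$ --- exactly the citations and convention-matching you describe. So your treatment of the statement as essentially a citation, plus a check that the Springer $W$-action and the Grothendieck--Springer fiber conventions agree, is the same approach the paper takes.
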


\begin{rem}
Lusztig generalized the functions $Q_g$ to bad characteristic in \cite{lusztig_1981}, and similarly generalized Kazhdan's theorem in \cite{lusztig_1990}.
\end{rem}

\subsection{}

In this subsection, we state our arguments in such a way that \emph{we do not need to assume $G$ is semisimple}.
The following result recovers Theorem \ref{thm:virtual} in the semisimple case.

\begin{thm}\label{thm:virtual-reductive}
For all $\beta \in \Br_W^+$, we have
\begin{align}
(\Tr{\tilde{\beta}} \cdot \varepsilon[\Sym(\bb{V})]_{\Q})|_{\Q^{1/2} = q^{1/2}}
&= \frac{(-1)^{r}}{|G^F|}
	\sum_{u \in \cal{U}^F}
	|\cal{U}(\beta)_u^F| Q_u
\end{align}
in $\bb{Q}(q^{\frac{1}{2}}) \otimes R(W)$, where $\tilde{\beta} = q^{\frac{|\beta|}{2}} \beta$.
\end{thm}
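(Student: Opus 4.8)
The plan is to compute the right-hand side using the character theory of $G^F$ developed above, and match it against the left-hand side via the $\SHom$-space formula of Theorem \ref{thm:a_w-to-varieties-strong}. First I would unwind $|\cal{U}(\beta)_u^F|$. Since $\cal{U}(\beta)_0$ is the pullback of $O(\beta)_0$ along $\act : \cal{U}_0 \times \cal{B}_0 \to \cal{B}_0 \times \cal{B}_0$ composed with $i \times \id$, the fiber of $p_\beta : \cal{U}(\beta)_0 \to \cal{U}_0$ above $u$ is the variety of $B \in \cal{B}$ with $(u^{-1}Bu, B) \in O(\beta)$. Its $\bb{F}$-point count is $\sum_n (-1)^n \ur{H}_!^n$ evaluated Frobenius-equivariantly; more usefully, by the fiber-product structure of $O(\beta)$ and the Deligne--Lusztig formalism, $|\cal{U}(\beta)_u^F|$ is expressible through the operator $\tilde{\sigma}$ acting via \eqref{eq:hecke-operator}. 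Concretely, I expect $\sum_{u \in \cal{U}^F} |\cal{U}(\beta)_u^F| Q_u$ to be rewritten, using Theorem \ref{thm:kazhdan} (Kazhdan), as $\sum_{u} |\cal{U}(\beta)_u^F| R_w(u)$-type sums — but one must be careful: $Q_u(w) = R_w(u)$ only on $G^F$, and here we are summing over the unipotent locus, which is exactly what lets Springer theory intervene.

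The key intermediate step is a point-counting identity: I would show
\begin{align}
\sum_{u \in \cal{U}^F} |\cal{U}(\beta)_u^F|\, Q_u(w)
= \operatorname{tr}\bigl(\tilde{\sigma}_\beta \cdot \text{(something)} \mid R_w\bigr)
\end{align}
or more precisely relate the left side to $\operatorname{tr}$ of the Hecke-algebra element $\beta$ (lifted to an operator on $R_1$) against the virtual modules $R_\phi$. The mechanism: $\cal{U}(\beta)$ is built from $O(\beta)$, and the $\bb{F}$-points of $O(\beta)$ fibered over $\cal{B} \times \cal{B}$ compute the matrix coefficients of the operator that $\beta$ induces on $\QL[\cal{B}^F]$ — this is the content of the folklore comparison rederived in Section \ref{sec:varieties} together with the function-sheaf dictionary (Remark \ref{rem:iwahori}). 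Restricting to unipotent $u$ and weighting by $Q_u$ then, by Theorem \ref{thm:kazhdan}, produces $\sum_u Q_u(w) \cdot (\text{coefficient}) = \sum_{x \in W} (\text{coeff}) R_x(w\text{-value})$; combined with orthogonality of Deligne--Lusztig characters and the counting of $|\cal{U}^F|$, $|G^F|$, this should collapse to a sum over $\phi \in \hat{W}$ of $\phi_q(\tilde{\beta})$ times the values $R_\phi(1) = \FDeg_\phi(q)$ (using the Remark after Lemma-Postulate \ref{lempost:almost}). On the other side, $\Tr{\tilde{\beta}} = \sum_{\phi, \psi} \{\phi, \psi\} \phi_{\Q}(\tilde{\beta}) \psi$, so $(\psi, \Tr{\tilde{\beta}} \cdot \varepsilon[\Sym(\bb{V})]_{\Q})_W$ pairs against $\varepsilon [\Sym(\bb{V})]_{\Q}$; and $[\Sym(\bb{V})]_{\Q}$ decomposes with the fake degrees $\FDeg_\phi(\Q)$, matching what the geometric side produced. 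The sign $(-1)^r$ and the normalization $1/|G^F|$ should come out of Poincaré duality on $\cal{U}$ ($\dim \cal{U} = 2N$) and the standard Deligne--Lusztig orthogonality constant.

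The main obstacle I anticipate is bookkeeping the passage from "operator on $\QL[\cal{B}^F]$ whose matrix entries count $O(\beta)^F$" to "weighted sum over unipotent classes computing a virtual character." The clean way is: the function on $G^F$ given by $g \mapsto |\{B : (g^{-1}Bg, B) \in O(\beta)\}|$ is, by the fiber-product structure of $O$ and Deligne--Lusztig's trace formula, the value at $g$ of the virtual character obtained by pushing $\beta$ (as an element of $H_W$ acting on $R_1$) through the isomorphism $\End_{G^F}(R_1) \simeq \QL \otimes H_W|_{\Q^{1/2}=q^{1/2}}$ — I would invoke \cite[Thm.\ 6.9]{dl}, \cite[Thm.\ 5.6]{springer_1976} (Deligne--Lusztig--Springer), and \cite[Prop.\ 1.3]{geck} (Geck--Lusztig) exactly here, since these control the behavior of $R_w$ under the Hecke action and under restriction to unipotent elements. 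Then one restricts this class function to $\cal{U}^F$, multiplies by $Q_u = R_\bullet(u)$, and applies Kazhdan's theorem to turn the $Q$-weighting into a second $R_w$-sum; the double sum over $W$ is resolved by the orthogonality relations $\sum_{g} R_x(g) R_y(g) = \pm|G^F|\,\delta$-type identities plus \cite[Cor.\ 4.24-4.25]{lusztig_1984} (Lusztig) to extract the $\{\rho_\phi, \rho\}$ coefficients, which are precisely the $\{\phi, \psi\}$ in the definition of $\Tr{-}$. Once the combinatorial dust settles, the identity of $\Q$-graded virtual characters follows by comparing coefficients of each $\psi \in \hat{W}$; and since $\{-,-\}$, the $\FDeg$'s, and the $\phi_q$'s are all defined over $\bb{Q}$ independent of $q$, specializing $\Q^{1/2} = q^{1/2}$ for a single large $q$ suffices — but one should note the statement is only asserted after this specialization, so no generic-$\Q$ subtlety arises beyond what is already packaged in Theorem \ref{thm:decat} and Proposition \ref{prop:rationality}.
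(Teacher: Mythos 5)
Your proposal follows essentially the same route as the paper's proof: the point count $|\cal{U}(\beta)_u^F| = \tr(\tilde{\beta} \times u \mid R_1)$ via the Hecke action \eqref{eq:hecke-operator} on $\QL[\cal{B}^F]$, Kazhdan's theorem to identify the weights $Q_u$ with values $R_w(u)$, the Deligne--Lusztig--Springer orthogonality on unipotent elements together with the identification of $\varepsilon[\Sym(\bb{V})]_{\Q}$ with $\tfrac{(-1)^r}{|G^F|}q^N Q_1$ (which is where the fake degrees and the normalization actually enter, rather than Poincar\'e duality), Lusztig's almost-character transformation to produce the pairing $\{-,-\}$, and the Geck--Lusztig result to handle the fact that the characters $\rho_\phi$ are not uniform after restriction to $\cal{U}^F$. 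The one point your sketch leaves implicit is that this last input only matches the two sides after summing over the $\bb{F}$-points of each geometric unipotent class rather than pointwise in $u$ (exactly as in the paper's Proposition \ref{prop:multiplicity}), but this is a bookkeeping refinement of the same argument, not a different route.
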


\begin{rem}\label{rem:special}
An algebraic group $H$ is \dfemph{special} iff every principal $H$-bundle in the \'etale topology is Zariski-locally trivial \cite{grothendieck_1958}.
If $G$ is special, then \cite[Appendix]{hr} and \cite[Thm.\ 4.10]{joyce} together imply that the right-hand side of the identity is the virtual weight series of $[\cal{Z}(\beta)/G]$.
In this case, the identity is essentially a corollary of Theorem \ref{thm:decat}.
The sign character appears when we use Verdier duality to match the dual of Borel--Moore homology with compactly-supported cohomology, \emph{cf.}\ \cite[Lem.\ 4.5]{ahjr}.

However, not all reductive algebraic groups are special.
If $X_0$ is a $G_0$-variety and $G$ is not special, then the virtual weight series of $[X/G]$ need not be the quotient of the virtual weight polynomial of $X$ by that of $G$:
See \cite[Ex.\ 4.9]{joyce}.
The force of Theorem \ref{thm:virtual} is that for $X_0 = \cal{Z}(\beta)_0$, that property holds anyway.
\end{rem}

The following orthogonality result for the characters $Q_u$ will reduce our work to computing the multiplicities $(Q_u, \Tr{\tilde{\beta}})_W$.
We are using the version quoted by Shoji in \cite[510]{shoji}, in turn based on \cite[Thm.\ 5.6]{springer_1976}.
The counterpart for the characters $R_w$ is \cite[Thm.\ 6.9]{dl}, which holds in any characteristic.

\begin{thm}[Deligne--Lusztig, Springer]\label{thm:orthogonality}
For all $\phi \in R(W)$, we have
\begin{align}
q^N \phi \cdot \varepsilon Q_1 
= 
\sum_{u \in \cal{U}^F}
{(Q_u, \phi)_W} Q_u
\end{align}
in $R(W)$.
\end{thm}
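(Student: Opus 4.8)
The identity is a reformulation of the orthogonality relations for Green functions; the plan is to deduce the stated $R(W)$-valued form from Kazhdan's Theorem \ref{thm:kazhdan} together with the orthogonality theorems of Deligne--Lusztig \cite{dl} and Springer \cite{springer_1976}. Both sides are $\bb{Z}[q]$-linear in $\phi$, so it is enough to treat $\phi \in \hat W$; and since the multiplicity pairing is nondegenerate on $\bb{Q}(q^{\frac12}) \otimes R(W)$, it is enough to prove, for all $\phi, \chi \in \hat W$,
\begin{align}
\sum_{u \in \cal{U}^F} (Q_u, \phi)_W\,(Q_u, \chi)_W = q^N\,(\varepsilon Q_1, \phi\chi)_W.
\end{align}
Throughout we use that every irreducible character of the Weyl group $W$ is real-valued, so complex conjugations may be dropped (in particular $(q^N\phi\cdot\varepsilon Q_1,\chi)_W = q^N(\varepsilon Q_1,\phi\chi)_W$, which is how the right-hand side above arises by pairing the asserted identity against $\chi$).

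First I would rewrite both sides in terms of the virtual characters $R_w$. For the left-hand side, Kazhdan's theorem gives $Q_u(w) = R_w(u)$ for all $w \in W$ and $u \in \cal{U}^F$, so by the definition \eqref{eq:almost} of the almost-characters,
\begin{align}
(Q_u, \phi)_W = \tfrac{1}{|W|}\sum_{w \in W} R_w(u)\,\phi(w) = R_\phi(u),
\end{align}
and the left-hand side becomes $\sum_{u \in \cal{U}^F} R_\phi(u)\,R_\chi(u)$. For the right-hand side, I would apply Kazhdan's theorem at $g = 1$: the fiber $\cal{B}_1$ of $\tilde G \to G$ over $1$ is all of $\cal{B}$, so $Q_1(w) = R_w(1)$, whence $(Q_1, \theta)_W = R_\theta(1) = \FDeg_\theta(q)$ for every virtual character $\theta$ of $W$, the last equality being the Remark following Lemma-Postulate \ref{lempost:almost}, extended by linearity. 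Hence $(\varepsilon Q_1, \phi\chi)_W = (Q_1, \varepsilon\phi\chi)_W = \FDeg_{\varepsilon\phi\chi}(q)$, and the assertion is reduced to
\begin{align}
\sum_{u \in \cal{U}^F} R_\phi(u)\,R_\chi(u) = q^N\,\FDeg_{\varepsilon\phi\chi}(q) \qquad (\phi, \chi \in \hat W).
\end{align}

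This last identity is the orthogonality of the almost-characters $R_\phi$ restricted to the unipotent set, and it is what I would cite: via the transformation rule $R_\phi = \sum_{\rho} \{\rho_\phi, \rho\}\Delta(\rho)\rho$ of Lemma-Postulate \ref{lempost:almost} and the orthogonality of Lusztig's Fourier matrix $\{-,-\}$, it is equivalent to the corresponding relation for the $R_w$ restricted to $\cal{U}^F$, which is Theorem 5.6 of \cite{springer_1976} (the same relation summed over all of $G^F$, rather than only over unipotents, is Theorem 6.9 of \cite{dl}). Thus the argument is essentially a bookkeeping translation. The step I expect to be the main obstacle, if one wanted a self-contained proof rather than a citation, is precisely Springer's theorem: passing from the global orthogonality $\langle R_w, R_{w'}\rangle_{G^F} = \#\{x \in W : xwx^{-1} = w'\}$ to its unipotently-supported refinement uses the Springer correspondence, the compatibility of Green functions with parabolic induction, and cleanness of cuspidal local systems, all of which are available under our large-characteristic hypothesis --- whereas Kazhdan's identity and the $R_w$-orthogonality themselves hold in every characteristic.
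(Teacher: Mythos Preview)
The paper gives no proof of this theorem: it is stated with attribution and the citation ``We are using the version quoted by Shoji in \cite[510]{shoji}, in turn based on \cite[Thm.\ 5.6]{springer_1976}.'' Your proposal is therefore not competing with a proof but rather unpacking the citation, and it does so correctly: pairing with $\chi$, using Kazhdan's identity $Q_u(w)=R_w(u)$ to rewrite $(Q_u,\phi)_W=R_\phi(u)$, and reducing to the Green-function orthogonality $\sum_{u\in\cal{U}^F} R_\phi(u)R_\chi(u)=q^N\FDeg_{\varepsilon\phi\chi}(q)$, which is the content of Springer's theorem. This is exactly the translation that the paper leaves to the reader (or to Shoji).

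One misstep in the last paragraph: to pass from the $R_\phi$-form of the orthogonality to the $R_w$-form you do not need Lemma-Postulate~\ref{lempost:almost} or the Fourier matrix $\{-,-\}$ at all. The Fourier matrix relates $R_\phi$ to unipotent irreducibles $\rho\in\UCh(W)$, not to the $R_w$; the relevant change of basis is simply the defining formula $R_\phi=|W|^{-1}\sum_w\phi(w)R_w$ together with Schur orthogonality for the characters of $W$ (and the fact that $R_w$ depends only on the conjugacy class of $w$). With that correction, your sketch is a faithful expansion of the paper's citation.
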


Before we can use Theorem \ref{thm:orthogonality}, we must rewrite the left-hand side of the orthogonality identity:

\begin{lem}\label{lem:q1}
We have $[\Sym(\bb{V})]_{\Q} = \frac{(-1)^r}{|G^F|} \Q^N Q_1$.
\end{lem}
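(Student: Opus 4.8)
The plan is to compute both sides as $\Q$-graded virtual $W$-modules and match them. First I would observe that $\cal{B}_1$, the fiber of the Grothendieck--Springer map $\tilde{G}_0 \to G_0$ over $1 \in G$, is the whole flag variety $\cal{B}$, since $\tilde{G}_0 = \{(g,B) : g^{-1}Bg = B\}$. By the Bruhat decomposition (\emph{cf.}\ \cite{dlp}), $\ur{H}^\ast(\cal{B}, \QL)$ is concentrated in even degrees and $F$ acts on $\ur{H}^{2n}(\cal{B})$ by $q^n$, so $Q_1 = \sum_n \Q^n\,[\ur{H}^{2n}(\cal{B})]$, viewed as an element of $R(W)[\Q]$; the asserted identity is then one of rational functions in $\Q$ (equivalently, one that holds after the specialization $\Q = q$).

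Next I would identify $\bigoplus_n \ur{H}^{2n}(\cal{B})$, with $\ur{H}^{2n}(\cal{B})$ placed in degree $n$ and carrying the Springer $W$-action, with the coinvariant algebra $\Sym(\bb{V})/(\Sym(\bb{V})^W_+)$ endowed with its standard graded $W$-action. The compatibility of conventions is the content of Example \ref{ex:identity} and \cite[Prop.\ 11]{ww_2011}; in particular the normalization is the one for which $\ur{H}^{2N}(\cal{B})$ affords the sign character $\varepsilon$, consistent with the identification of $\cal{S}_\varepsilon$ with the skyscraper at $1 \in \cal{U}$ used in the proof of Corollary \ref{cor:kr-to-varieties}. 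Since $W$ is a Weyl group, $\bb{V} \simeq \bb{V}^\vee$ as $W$-modules, so there is no ambiguity between $\Sym(\bb{V})$ and $\Sym(\bb{V}^\vee)$.

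Then, by the Chevalley--Shephard--Todd theorem, $\Sym(\bb{V})^W$ is a polynomial ring on homogeneous generators of degrees $d_1, \ldots, d_r$ carrying the trivial $W$-action, and $\Sym(\bb{V}) \simeq \Sym(\bb{V})^W \otimes_{\QL} \big(\Sym(\bb{V})/(\Sym(\bb{V})^W_+)\big)$ as graded $W$-modules; taking $\Q$-graded characters gives $[\Sym(\bb{V})]_{\Q} = \big(\prod_{i=1}^r (1 - \Q^{d_i})\big)^{-1} Q_1$, i.e.\ $Q_1 = \prod_i (1 - \Q^{d_i})\,[\Sym(\bb{V})]_{\Q}$. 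Substituting Steinberg's order formula $|G^F| = q^N \prod_{i=1}^r (q^{d_i} - 1)$ (with $N = \dim \cal{B}$) and using $(-1)^r \prod_i (1 - \Q^{d_i}) = \prod_i (\Q^{d_i} - 1)$ then gives $\frac{(-1)^r}{|G^F|}\,\Q^N Q_1 = [\Sym(\bb{V})]_{\Q}$, as claimed; note that the $\Q^N$ exactly absorbs the $q^N$ coming from the unipotent radical in Steinberg's formula.

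The only real subtlety — more bookkeeping than obstacle — lies in the second step: confirming that the $W$-action entering the definition of $Q_1$ is the genuine cohomological action on $\ur{H}^\ast(\cal{B})$ (so its top piece is $\varepsilon$, not the trivial character) and that the cohomological degree matches the exponent of $\Q$ once the half-Tate twist conventions fixed in Section \ref{sec:mixed} are accounted for. Everything else is a direct computation with the Molien series of the coinvariant algebra.
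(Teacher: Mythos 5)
Your proof is correct and follows essentially the same route as the paper: identify $Q_1$ via the Borel isomorphism with the $\Q$-graded coinvariant algebra (equivalently $[\Sym(\bb{V})]_{\Q}\prod_i(1-\Q^{d_i})$), then cancel against the order formula $|G^F| = q^N\prod_i(q^{d_i}-1)$. The only cosmetic difference is that you invoke Steinberg's order formula and the Chevalley--Shephard--Todd factorization explicitly, while the paper derives the order formula from $|G^F| = |B^F||\cal{B}^F|$ and Bott--Solomon.
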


\begin{proof}
We have $\cal{B}_1 = \cal{B}$, so the Borel isomorphism states that $Q_1$ is the algebra of $W$-coinvariants of $\bb{V}$, graded by $\Q$.
That is,
\begin{align}
Q_1 = [\Sym(\bb{V})]_{\Q} \cdot \prod_{1 \leq i \leq r} {(1 - \Q^{d_i})},
\end{align}
where $d_1, \ldots, d_r$ are the invariant degrees of the $W$-action on $\bb{V}$ \cite[149]{gp}.
(If $G$ is not semisimple, then we might have $d_i = 1$ for one or more $i$.)
At the same time, if $B_0 \subseteq G_0$ is a fixed Borel, then
\begin{align}
|G^F| = |B^F||\cal{B}^F| = \Q^N (\Q - 1)^r \sum_{w \in W} \Q^{|w|}.
\end{align}
By the Bott--Solomon formula \cite{solomon}, $\sum_{w \in W} \Q^{|w|} = \prod_{i : d_i \neq 1} \frac{\Q^{d_i} - 1}{\Q - 1} = \prod_i \frac{\Q^{d_i} - 1}{\Q - 1}$.
\end{proof}

Combining Theorem \ref{thm:orthogonality} and Lemma \ref{lem:q1}, it remains to prove the following multiplicity formula:

\begin{prop}\label{prop:multiplicity} 
For any $G$-orbit $C \subseteq \cal{U}$ and $\beta \in \Br_W^+$, we have
\begin{align}
\sum_{u \in C^F}
{(Q_u, \Tr{\tilde{\beta}})_W|_{\Q^{1/2} = q^{1/2}}}
= 
\sum_{u \in C^F}
|\cal{U}(\beta)_u^F|,
\end{align}
where $\tilde{\beta} = q^{\frac{|\beta|}{2}} \beta$.
\end{prop}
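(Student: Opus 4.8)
The plan is to reinterpret the point counts $|\cal{U}(\beta)_u^F|$ as traces of Hecke operators on the permutation representation $R_1 = \QL[\cal{B}^F]$, decompose $R_1$ via the double centralizer theorem, and then match the two sides of the asserted identity using Kazhdan's theorem (Theorem \ref{thm:kazhdan}) together with a vanishing property of almost characters.

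First I would record the geometric input. For $\beta \in \Br_W^+$, let $T(\beta) \in \End_{G^F}(R_1)$ be the operator with matrix coefficients $\langle T(\beta)\bb{1}_B, \bb{1}_{B'}\rangle = |j_\beta^{-1}(B, B')^F|$, the number of $\bb{F}$-points of the fiber of $j_\beta : O(\beta)_0 \to \cal{B}_0 \times \cal{B}_0$ over $(B, B')$. For $\beta = \sigma_s$ this is the operator $\tilde\sigma_s$ of \eqref{eq:hecke-operator}, and by the fiber-product compatibility in Deligne's theorem, $\beta \mapsto T(\beta)$ is anti-multiplicative with $T(\sigma_s) = T_s$. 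Unwinding the cartesian squares that define $\cal{U}(\beta)_0$ identifies the $\bb{F}$-point set of its fiber $\cal{U}(\beta)_u = \coprod_{B \in \cal{B}} j_\beta^{-1}(u^{-1}Bu, B)$, and a short computation of matrix coefficients — recalling that $G^F$ acts on $R_1$ through right conjugation — gives $|\cal{U}(\beta)_u^F| = \tr(u^{-1} \cdot T(\beta) \mid R_1)$. Summing over $u$ in a unipotent $G$-orbit $C$, which is stable under inversion, and reindexing $u \mapsto u^{-1}$, the right-hand side of the proposition becomes $\sum_{u \in C^F} \tr(u \cdot T(\beta) \mid R_1)$. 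Under Iwahori's identification $\End_{G^F}(R_1) \simeq H_W|_{\Q^{1/2}=q^{1/2}} \otimes \QL$ (Remark \ref{rem:iwahori}), $T(\beta)$ corresponds to the image of $\tilde\beta = q^{|\beta|/2}\beta \in H_W$ under the anti-involution $T_w \mapsto T_{w^{-1}}$; since the irreducible characters $\phi_q$ of $H_W$ are invariant under that anti-involution, $\tr(g \cdot T(\beta) \mid R_1) = \tr(g \cdot \tilde\beta \mid R_1)$ for all $g \in G^F$, and the right-hand side of the proposition is $\sum_{u \in C^F}\tr(u \cdot \tilde\beta \mid R_1)$.

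Next I would decompose both sides. By the double centralizer isomorphism \eqref{eq:double-centralizer}, $R_1 \simeq \bigoplus_{\phi \in \hat{W}} V_{\phi, q} \otimes \rho_\phi$ as an $(H_W \times G^F)$-bimodule, so $\tr(u \cdot \tilde\beta \mid R_1) = \sum_{\phi \in \hat{W}} \phi_q(\tilde\beta)\, \rho_\phi(u)$. On the other side, $\Tr{\tilde\beta} = \sum_{\phi, \psi \in \hat{W}} \{\phi, \psi\}\, \phi_{\Q}(\tilde\beta)\, \psi$, and Kazhdan's theorem gives
\begin{align}
(Q_u, \psi)_W = \tfrac{1}{|W|}\sum_{w \in W} R_w(u)\, \psi(w) = R_\psi(u),
\end{align}
the value at $u$ of the almost character \eqref{eq:almost}; hence, after specialization, $(Q_u, \Tr{\tilde\beta})_W = \sum_{\phi \in \hat{W}} \phi_q(\tilde\beta) \sum_{\psi \in \hat{W}} \{\phi, \psi\}\, R_\psi(u)$. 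Since \eqref{eq:double-centralizer} exhibits $H_W|_{\Q^{1/2}=q^{1/2}} \otimes \QL$ as a product of matrix algebras, the functionals $\phi_q$ are linearly independent, so comparing the coefficient of each $\phi_q(\tilde\beta)$ reduces the proposition to the identity
\begin{align}
\sum_{u \in C^F} \rho_\phi(u) = \sum_{\psi \in \hat{W}} \{\phi, \psi\} \sum_{u \in C^F} R_\psi(u) \qquad (\phi \in \hat{W},\ C \text{ a unipotent } G\text{-orbit}).
\end{align}

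The hard part is this last identity, where I would invoke the finer structure of Lusztig's theory. The almost characters $\{R_x\}_{x \in \UCh(W)}$ form an orthonormal system that, via the exotic Fourier transform, is dual to the transformation rule of Lemma-Postulate \ref{lempost:almost}; inverting that rule writes the virtual character $\rho_\phi = \sum_{x \in \UCh(W)} \{\rho_\phi, x\}\, R_x$, whose terms with $x$ in the image of $\hat{W} \hookrightarrow \UCh(W)$ contribute exactly $\sum_{\psi \in \hat{W}} \{\phi, \psi\}\, R_\psi$. Thus the displayed identity is equivalent to
\begin{align}
\sum_{u \in C^F} R_x(u) = 0 \qquad \text{for all } x \in \UCh(W) \setminus \hat{W}
\end{align}
and all unipotent $G$-orbits $C$ — i.e.\ the average value of each non-principal-series almost character on every geometric unipotent class vanishes. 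This is the crux, and it is furnished by \cite[Prop.\ 1.3]{geck} (together with \cite[Cor.\ 4.24--4.25]{lusztig_1984}); conceptually it reflects that the character sheaves underlying such $R_x$ are cuspidal, hence supported on a single unipotent class carrying a cuspidal local system — one with vanishing compactly supported cohomology — so that their Lefschetz numbers on every geometric class vanish. The remaining work is routine: checking the conventions in the first step (the order reversal, $u$ versus $u^{-1}$, the normalization $\tilde\beta = q^{|\beta|/2}\beta$) and the $T_w \mapsto T_{w^{-1}}$-invariance of the characters $\phi_q$.
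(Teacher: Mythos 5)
Your proposal is correct and is essentially the paper's own argument: the identification $|\cal{U}(\beta)_u^F| = \tr(\tilde\beta \times u \mid R_1)$ (the paper's point-count lemma), the double-centralizer decomposition \eqref{eq:double-centralizer}, Kazhdan's theorem, and, as the crux, the Geck--Lusztig average-value result --- indeed your key identity for each $\phi$ is precisely that result applied to $\rho = \rho_\phi$, since the uniform projection of $\rho_\phi$ is $\sum_{\psi \in \hat{W}}\{\phi,\psi\}R_\psi$. One caveat, immaterial because you rest the step on the cited references and only need the implication ``vanishing $\Rightarrow$ identity'' (the claimed equivalence is not needed): the heuristic that the almost characters $R_x$ with $x \in \UCh(W)\setminus\hat{W}$ arise from cuspidal character sheaves supported on a single unipotent class is inaccurate in general, since such $R_x$ may correspond to character sheaves induced from cuspidal ones on proper Levi subgroups, whose supports meet many unipotent classes.
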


\begin{lem}\label{lem:reciprocity}
For all $\beta \in H_W$ and $w \in W$, we have 
\begin{align}
\Tr{\beta}(w)|_{\Q^{1/2} = q^{1/2}} = \tr(\beta \mid \Hom_{G^F}(R_1, R_w)).
\end{align}
\end{lem}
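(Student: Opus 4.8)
The plan is to compute the virtual $H_W$-module $\Hom_{G^F}(R_1, R_w)$ explicitly and then compare characters with the definition of $\Tr{\beta}$. First I would use that $\rho \mapsto \Hom_{G^F}(R_1, \rho)$ is additive on virtual representations together with the inversion of \eqref{eq:almost}: since $W$ is a Weyl group, all of its irreducible characters are rational-valued, so the standard duality between class functions valued in virtual $G^F$-representations and the basis $\{R_\phi\}_{\phi \in \hat{W}}$ gives $R_w = \sum_{\phi \in \hat{W}} \phi(w)\, R_\phi$. Hence it suffices to understand $\Hom_{G^F}(R_1, R_\phi)$ for a single $\phi$.

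Next I would feed in the almost-character transformation from Lemma-Postulate \ref{lempost:almost}(2), namely $R_\phi = \sum_{\rho \in \UCh(W)} \{\rho_\phi, \rho\}\,\Delta(\rho)\,\rho$. The double-centralizer decomposition \eqref{eq:double-centralizer} shows that $R_1$ is an honest direct sum of the principal-series representations $\rho_\psi$; since the members of $\UCh(W)$ are pairwise non-isomorphic irreducible $G^F$-representations, $\Hom_{G^F}(R_1, \rho) = 0$ unless $\rho = \rho_\psi$ for some $\psi \in \hat{W}$, in which case Lemma \ref{lem:restriction} gives $\Hom_{G^F}(R_1, \rho_\psi) \simeq V_{\psi, q}$ as $H_W$-modules, and Lemma-Postulate \ref{lempost:almost}(1) gives $\Delta(\rho_\psi) = 1$. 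So all ``cuspidal'' and sign-twisted terms of $R_\phi$ die, and as virtual $H_W$-modules
\begin{align}
\Hom_{G^F}(R_1, R_\phi) = \sum_{\psi \in \hat{W}} \{\rho_\phi, \rho_\psi\}\, V_{\psi, q} = \sum_{\psi \in \hat{W}} \{\phi, \psi\}\, V_{\psi, q},
\end{align}
the last step being the compatibility of the pairing on $\hat{W}$ with that on $\UCh(W)$ under $\phi \mapsto \rho_\phi$, which is built into the definition of $\{-,-\}$ recalled in Appendix \ref{sec:coxeter}.

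Finally I would take the trace of $\beta \in H_W$ on both sides, using $\tr(\beta \mid V_{\psi,q}) = \psi_q(\beta) = \psi_{\Q}(\beta)|_{\Q^{1/2} = q^{1/2}}$, to get $\tr(\beta \mid \Hom_{G^F}(R_1, R_w)) = \sum_{\phi,\psi}\phi(w)\,\{\phi,\psi\}\,\psi_{\Q}(\beta)$ after specialization; comparing with $\Tr{\beta}(w) = \sum_{\phi,\psi}\{\phi,\psi\}\,\phi_{\Q}(\beta)\,\psi(w)$, relabeling $\phi \leftrightarrow \psi$ and invoking the symmetry $\{\phi,\psi\} = \{\psi,\phi\}$ finishes the proof. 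I expect the only genuine subtlety to be the vanishing $\Hom_{G^F}(R_1, \rho) = 0$ for $\rho \in \UCh(W)$ outside the principal series, together with the normalization $\Delta(\rho_\psi) = 1$ — this is precisely what collapses the almost characters down to the Hecke-module data; the rest is bookkeeping with Lemma \ref{lem:restriction} and the symmetry of the exotic Fourier pairing.
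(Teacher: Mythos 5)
Your proof is correct and uses essentially the same ingredients and structure as the paper's: Lemma \ref{lem:restriction}, Lemma-Postulate \ref{lempost:almost} with $\Delta(\rho_\phi)=1$, the vanishing of $\Hom_{G^F}(R_1,\rho)$ for non-principal-series $\rho$, and the inversion of \eqref{eq:almost} via Schur orthogonality (using rationality of characters of $W$), merely run from the right-hand side instead of the left. The symmetry of $\{-,-\}$ you invoke at the end is likewise implicit in the paper's bookkeeping, so there is no substantive difference.
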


\begin{proof}
By Lemma \ref{lem:restriction} and Lemma-Postulate \ref{lempost:almost}, the left-hand side equals
\begin{align}
\sum_{\phi, \psi \in \hat{W}}
	(\rho_\phi, R_\psi)_{G^F} \tr(\beta \mid \Hom_{G^F}(R_1, \rho_\phi)) \psi(w).
\end{align}
If $\rho \in \UCh(W)$ does not take the form $\rho_\phi$ for some $\phi \in \hat{W}$, then $\Hom_{G^F}(R_1, \rho) = 0$.
So above, the sum over $\phi \in \hat{W}$ can be extended to a sum over $\rho \in \UCh(W)$, giving
\begin{align}\begin{split}
&\sum_{\psi \in \hat{W}} \sum_{\rho \in \UCh(W)}
(\rho, R_\psi)_{G^F} 
\tr(\beta \mid \Hom_{G^F}(R_1, \rho)) \psi(w)\\
&=	\sum_{\psi \in \hat{W}}
	\tr(\beta \mid \Hom_{G^F}(R_1, R_\psi)) \psi(w).
\end{split}\end{align}
By \eqref{eq:almost} and Schur orthogonality, the last expression is $\tr(\beta \mid \Hom_{G^F}(R_1, R_w))$.
\end{proof}

\begin{lem}\label{lem:point-count}
For all $\beta \in \Br_W^+$ and $g \in G^F$, we have 
\begin{align}
|G(\beta)_g^F| = \tr(\tilde{\beta} \times g \mid R_1),
\end{align}
where $\tilde{\beta} = q^{\frac{|\beta|}{2}} \beta$.
\end{lem}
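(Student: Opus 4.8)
The plan is to compute $|G(\beta)_g^F|$ directly by unwinding the definition of $G(\beta)_0$ as a fiber product and then comparing with the action of $\tilde\sigma_w$ on $R_1 = \QL[\cal B^F]$ described by \eqref{eq:hecke-operator}. First I would recall that $G(\beta)_0$ sits in the cartesian square expressing it as $O(\beta)_0 \times_{\cal B_0 \times \cal B_0}(G_0 \times \cal B_0)$, so that a point of $G(\beta)_g$ is a point of $O(\beta)$ lying over a pair $(g^{-1}Bg, B)$ for some $B \in \cal B$; hence
\begin{align}
G(\beta)_g = \{(\xi, B) : B \in \cal B,\ \xi \in O(\beta)_{(g^{-1}Bg,\, B)}\},
\end{align}
where $O(\beta)_{(B',B)}$ denotes the fiber of $\pr_0 \times \pr_\ell : O(\beta)_0 \to \cal B_0 \times \cal B_0$. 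Taking $F$-fixed points, $|G(\beta)_g^F| = \sum_{B \in \cal B^F} |O(\beta)^F_{(g^{-1}Bg,\, B)}|$, using that $g \in G^F$ so that $B \mapsto g^{-1}Bg$ commutes with $F$ on the set of $F$-fixed flags.

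Next I would reduce the count of $F$-points in the fibers of $O(\beta)$ to an iterated Hecke-operator count. Using Deligne's factorization property (part (2) of the Deligne theorem quoted above), for a factorization $\beta = \sigma_{s_1}\cdots\sigma_{s_m}$ into the generators we have $O(\beta)_0 \simeq O_{s_1,0}\times_{\cal B_0}\cdots\times_{\cal B_0}O_{s_m,0}$, so an $F$-point of the fiber $O(\beta)_{(B',B)}$ is a chain $B = B_0, B_1, \ldots, B_m = B'$ of $F$-fixed flags with consecutive pairs in the appropriate $\bar O_{s_i}$-closures — more precisely one must be careful that $O_{s}$ is not closed, so the fiber count over $F$-points is the number of such chains where $(B_{i-1}, B_i) \in O_{s_i}^F$ or $B_{i-1} = B_i$ according to the stratified structure; but in fact the clean statement is that $\sum_{B'\in\cal B^F}|O(\beta)^F_{(B',B)}|\,\bb 1_{B'}$ equals $\tilde\sigma_{s_1}\cdots\tilde\sigma_{s_m}\cdot \bb 1_B$, which is exactly the definition in \eqref{eq:hecke-operator} extended multiplicatively — this is the known relation (going back to the function–sheaf dictionary, cf.\ Remark \ref{rem:iwahori}) between point-counts of the $O(\beta)$ and the $H_W$-action on $R_1 = \QL[\cal B^F]$. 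So $|O(\beta)^F_{(B',B)}|$ is the coefficient of $\bb 1_{B'}$ in $\tilde\beta\cdot\bb 1_B$, where $\tilde\beta = q^{|\beta|/2}\beta \in H_W|_{\Q^{1/2}=q^{1/2}}$ acts via Iwahori's isomorphism.

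Combining the two steps, $|G(\beta)_g^F| = \sum_{B\in\cal B^F}\langle \tilde\beta\cdot\bb 1_B,\ \bb 1_{g^{-1}Bg}\rangle$ for the natural pairing on $\QL[\cal B^F]$ in which the $\bb 1_B$ are an orthonormal basis. Now the right conjugation action of $g$ on $\cal B^F$ is precisely the $G^F$-action on $R_1$, so $\bb 1_B \mapsto \bb 1_{g^{-1}Bg}$ is the operator $g$ on $R_1$; hence the sum is $\sum_B \langle \tilde\beta\cdot\bb 1_B,\ g\cdot\bb 1_B\rangle = \tr(\tilde\beta\circ g^{-1}\mid R_1)$ — I would be slightly careful about whether $g$ or $g^{-1}$ appears and about left versus right action conventions, adjusting the statement (or invoking that $R_1$ is a genuine, hence self-dual-compatible, representation so that the trace of $\tilde\beta\times g$ is the relevant quantity). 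Since $\tilde\beta$ and $g$ act through commuting subalgebras ($H_W$ and $G^F$) of $\End(R_1)$, the composite operator is $\tilde\beta\times g$ acting on the $(H_W\times G^F)$-module $R_1$, and its trace is $\tr(\tilde\beta\times g\mid R_1)$, which is the claim.

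The main obstacle I anticipate is the first reduction: pinning down precisely how the $F$-point count of the non-closed strata $O(\beta)$ (whose structure over $\cal B\times\cal B$ is via a sequence of $\bb A^1$- and $\bb A^1\setminus\{0\}$-bundles, giving the writhe normalization $\tilde\sigma_s = q^{1/2}\sigma_s$) matches the Hecke operator action, including getting the power of $q$ right so that $q^{|\beta|/2}$ is exactly the normalization $\tilde\beta$ and not, say, $q^{|\beta|}$ or something mixing positive and negative crossings. This is essentially bookkeeping with the Bruhat decomposition and the recursion $T_s T_w = T_{sw}$ or $T_s T_w = q T_{sw} + (q-1)T_w$ for $|sw| < |w|$, but it must be done carefully, ideally by citing the precise form of Deligne's construction in \cite[\S{1.11}]{deligne_1997} together with the Lang–Steinberg count of $F$-points in $G_0/B_{w,0}$-type bundles. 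The rest — the fiber-product unwinding and the passage to a trace on the $(H_W\times G^F)$-bimodule — is formal.
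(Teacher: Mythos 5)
Your proposal is correct and follows essentially the same route as the paper: fix $B \in \cal{B}^F$, identify the fiber of $G(\beta)_g^F$ over $B$ with chains of $F$-fixed Borels via Deligne's fiber-product description, convert the chain count into a matrix coefficient of $(\tilde{\beta}, g)$ on $R_1$ using \eqref{eq:hecke-operator}, and sum over $B$ to obtain the trace. The one worry you can drop is the closure issue: by Deligne's theorem $O(\beta)$ is the fiber product over $\cal{B}$ of the open orbits $O_{s_i}$ themselves, so the chains satisfy $(B_{i-1}, B_i) \in O_{s_i}^F$ with no degenerate alternatives and \eqref{eq:hecke-operator} applies verbatim, while the $g$ versus $g^{-1}$ (left/right action) bookkeeping washes out at the level of the trace.
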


\begin{proof}
Fix $B_0 \in \cal{B}^F$.
Suppose that $\beta = \sigma_{s_1} \cdots \sigma_{s_\ell}$, where $s_1, \ldots, s_\ell \in S$.
Then
\begin{align}
G(\beta)_{g, B_0}^F 
\simeq 
\left\{(B_1, \ldots, B_\ell) \in (\cal{B}^F)^\ell : 
\begin{array}{ll}
B_0 = g^{-1} B_\ell g,\\
(B_{i - 1}, B_i) \in O_{s_i} &\text{for $1 \leq i \leq \ell$}
\end{array}
\right\}.
\end{align}
By \eqref{eq:hecke-operator}, the cardinality of the right-hand side is precisely the coefficient of $\bb{1}_{B_0}$ in the expansion of $(\tilde{\beta}, g) \cdot \bb{1}_{B_0}$ with respect to the basis $\{\bb{1}_B\}_{B \in \cal{B}^F}$ of $R_1$.
Summing over $B_0$ gives the result.
\end{proof}

\begin{proof}[Proof of Proposition \ref{prop:multiplicity}]
By Schur orthogonality, Lemma \ref{lem:reciprocity}, and Theorem \ref{thm:kazhdan},
\begin{align}\begin{split}
(Q_u, \Tr{\tilde{\beta}})_W
&=	\frac{1}{|W|} \sum_{w \in W} {\Tr{\beta}(w)} Q_u(w)\\
&=	\frac{1}{|W|} \sum_{w \in W} {\tr(\beta \mid \Hom_{G^F}(R_1, R_w))} R_w(u).
\end{split}\end{align}
On the other hand, by Lemma \ref{lem:point-count}, \eqref{eq:double-centralizer}, and Lemma \ref{lem:restriction},
\begin{align}\begin{split}
|\cal{U}(\beta)_u^F| 
&= 	\tr(\tilde{\beta} \times u \mid R_1)\\
&=	\sum_{\phi \in \hat{W}}
	\phi_{\Q}(\tilde{\beta}) \rho_\phi(u)\\
&=	\sum_{\rho \in \UCh(W)}
	\tr(\beta \mid \Hom_{G^F}(R_1, \rho)) \rho(u).
\end{split}\end{align}
So the following result concludes the proof.
\end{proof}

\begin{prop}[Geck--Lusztig]
We have
\begin{align}
\sum_{u \in C^F} \rho(u) = 
\sum_{u \in C^F}
\frac{1}{|W|}
\sum_{w \in W}
{(\rho, R_w)_{G^F}}
R_w(u)
\end{align}
for any $G$-orbit $C \subseteq \cal{U}$ and $\rho \in \UCh(W)$.
\end{prop}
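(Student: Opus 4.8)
The plan is to recognize the right-hand side as the sum over $C^F$ of the \emph{uniform projection} of $\rho$, and then to show that the sum over $C^F$ of the complementary, ``anti-uniform'', part of $\rho$ vanishes. To begin, set $\rho' = \frac{1}{|W|}\sum_{w \in W}(\rho, R_w)_{G^F}R_w$, so that the right-hand side of the asserted identity is exactly $\sum_{u \in C^F}\rho'(u)$. Since $W$ is a Weyl group its irreducible characters are rational, so $R_w = \sum_{\phi \in \hat{W}}\phi(w)R_\phi$; Schur orthogonality in $W$ then rewrites $\rho' = \sum_{\phi \in \hat{W}}(\rho, R_\phi)_{G^F}R_\phi$, and a short computation from \eqref{eq:almost} together with $(R_w, R_{w'})_{G^F} = |\{x \in W : x^{-1}wx = w'\}|$ gives $(R_\phi, R_\psi)_{G^F} = \delta_{\phi\psi}$. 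Thus $\{R_\phi\}_{\phi \in \hat{W}}$ is an orthonormal basis of the space $U$ of uniform unipotent class functions and $\rho'$ is the orthogonal projection of $\rho$ onto $U$; since each $R_w$ lies in $U$, the proposition is equivalent to the identity $\sum_{u \in C^F}(\rho - \rho')(u) = 0$, to be proved for every geometric unipotent class $C$ and every $\rho \in \UCh(W)$.

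I would next invoke Lusztig's theory of character sheaves, applicable in its cleanest form because $\bb{F}$ has large characteristic (so the cleanness theorem holds). Every $\rho \in \UCh(W)$ is, up to a root of unity that is $1$ when $\rho = \rho_\phi$, the characteristic function $\chi_{A_\rho}$ of an $F$-stable unipotent character sheaf $A_\rho$; the characteristic functions of all unipotent character sheaves form an orthonormal basis of the space of unipotent class functions; and the principal-block character sheaves are precisely the $F$-equivariant summands of the Grothendieck--Springer sheaf $\cal{G}$, normalized as in Section \ref{sec:hecke} so that $\chi_{A_{\rho_\phi}} = R_\phi$. Expanding $\rho$ in this orthonormal basis and separating off the principal block yields $\rho = \rho' + \sum_A (\rho, \chi_A)_{G^F}\chi_A$, the sum running over unipotent character sheaves $A$ \emph{not} in the principal block. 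Hence $\rho - \rho' = \sum_A (\rho, \chi_A)_{G^F}\chi_A$, and it suffices to show $\sum_{u \in C^F}\chi_A(u) = 0$ for each such $A$.

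To prove this vanishing, use Lusztig's description of the restriction of a character sheaf to the unipotent variety: $i^\ast A$ is, up to a shift and twist, a direct sum of intersection cohomology complexes $IC(\overline{C'}, \cal{E}')$ with $(C', \cal{E}')$ ranging over pairs in the (necessarily non-principal) block of $A$ in the generalized Springer correspondence. None of these $\cal{E}'$ can be trivial, because for any unipotent class $C'$ the top cohomology of the Springer fiber $\cal{B}_{u'}$, $u' \in C'$, is a permutation representation of the component group $A_G(u')$ and hence contains the trivial character, so $(C', \QL)$ lies in the image of the \emph{ordinary} Springer correspondence --- the principal block --- and distinct blocks are disjoint. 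Therefore $\sum_{u \in C^F}\chi_A(u)$ is, term by term, a scalar multiple of an expression $\sum_{u \in C^F}\tr(F \mid \cal{E}'_u)$ for a non-trivial irreducible local system $\cal{E}'$ on $C$ (for $IC$-sheaves of pairs $(C', \cal{E}')$ with $C' \supsetneq C$, passing to the stratum $C$ produces the same kind of expression, again with non-trivial $\cal{E}'$ by block-disjointness). By Lang's theorem $C^F$ is a disjoint union of $G^F$-orbits indexed by the $F$-conjugacy classes of $A_G(u)$, on each of which this trace function is a fixed scalar times the corresponding twisted character value of $A_G(u)$; weighting by orbit sizes and summing yields $0$ by orthogonality of the characters of $A_G(u)$. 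Hence $\sum_{u \in C^F}\chi_A(u) = 0$, completing the argument.

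The main obstacle is this last step: it rests on the full apparatus of Lusztig's character-sheaf formalism --- the generalized Springer correspondence, the identification of characteristic functions with generalized Green functions, and cleanness (available here by the large-characteristic hypothesis) --- whereas the earlier steps only use Deligne--Lusztig induction, the almost-character formula (Lemma-Postulate \ref{lempost:almost}), and Kazhdan's theorem (Theorem \ref{thm:kazhdan}). One could alternatively simply invoke Geck's \cite[Prop.\ 1.3]{geck}, where this is established.
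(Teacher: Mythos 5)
Your reduction is fine and is in substance the standard route: writing $\rho'$ for the uniform projection, the proposition is equivalent to $\sum_{u \in C^F}\chi_A(u) = 0$ for every $F$-stable unipotent character sheaf $A$ outside the principal block, and this is exactly what is established on pp.\ 497--498 of \cite{lusztig_2011}. The paper's own proof is nothing more than that citation, "relying on \cite[Prop.\ 1.3]{geck_1996}" (note: the relevant reference is Geck's 1996 average-values paper \cite{geck_1996}, not the 1992 Brauer-tree paper \cite{geck} that you cite); so your closing fallback coincides with the paper. (A small misstatement along the way: it is the \emph{almost characters}, not the irreducible unipotent characters, that agree up to roots of unity with characteristic functions of $F$-stable character sheaves; you do not actually use the claim as you phrased it, only the orthonormal-basis property and the identification of the principal-block characteristic functions with the $R_\phi$.)

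The genuine gap is in your proof of the vanishing itself, in two places. First, block-disjointness in the generalized Springer correspondence controls which pairs $(C',\cal{E}')$ \emph{index} the intersection complexes occurring in $i^\ast A$; it says nothing about which local systems occur in the stalk cohomology $\cal{H}^j(\IC(\overline{C'},\cal{E}'))|_C$ along a smaller stratum $C \subsetneq \overline{C'}$, so nothing you quote rules out the trivial local system appearing there. Second, and more seriously, even on the open stratum the asserted vanishing of $\sum_{u \in C^F}\tr(F \mid \cal{E}_u)$ for a nontrivial irreducible equivariant local system $\cal{E}$ does \emph{not} follow from orthogonality of characters of $A_G(u)$: the $G^F$-orbits inside $C^F$ indexed by the $F$-classes of $A_G(u)$ have different sizes, since $|Z_G(u_a)^F|$ contains the factor $|Z_G^0(u_a)^F|$, which varies with the twist $a$ (for instance, for a class whose reductive centralizer is of type $O_2$, the identity component contributes $q-1$ or $q+1$ according to $a$). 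With these unequal weights the sum is not an orthogonality sum, and indeed such class-sums of $Y$-functions are nonzero for some nontrivial local systems (necessarily principal-block ones, given the truth of the proposition). The actual vanishing for non-principal blocks requires the finer input your argument skips --- Lusztig's orthogonality relations and algorithm for generalized Green functions, which is precisely what \cite[Prop.\ 1.3]{geck_1996} packages and what Lusztig uses in \cite{lusztig_2011}. So as a self-contained proof the proposal does not go through; as a citation it agrees with the paper.
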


\begin{proof}
This is shown on pages 497-498 of \cite{lusztig_2011}, relying on \cite[Prop.\ 1.3]{geck_1996}.
\end{proof}

\begin{rem}
If $G$ is of type $A$, then the identity holds without summing over $C^F$ on both sides, \emph{i.e.}, at the level of individual $u \in C^F$.
\end{rem}

\subsection{}

Recall from Section \ref{sec:decat} that for any topological braid $\beta$, we write $\hat{\beta}$ for its link closure and $[\hat{\beta}]_{\Q} \in \bb{Z}(\!(\Q^{\frac{1}{2}})\!)[a^{\pm 1}]$ to denote the HOMFLY series of the link; for any finite Coxeter group $W$, we write $\mathsf{tr} : H_W \to \bb{Z}[\Q^{\pm\frac{1}{2}}](a)$ to denote Gomi's Markov trace.
In \cite{kalman}, K\'alm\'an proved the following result.

\begin{thm}[K\'alm\'an]
For all $n$ and $\beta \in \Br_n$, we have an equality of $\Q^{\frac{1}{2}}$-series:
\begin{align}
\text{coefficient of $a^{|\beta| + n - 1}$ in $[\widehat{\beta\pi}]$}
=
\text{coefficient of $a^{|\beta| - n + 1}$ in $[\hat{\beta}]$}.
\end{align}
More generally, if $W$ is any finite Coxeter group of rank $r$, then we have an equality
\begin{align}
\text{coefficient of $a^{2r}$ in $\overline{\mathsf{tr}}(\beta \pi)$}
=
\text{coefficient of $a^0$ in $\overline{\mathsf{tr}}(\beta)$},
\end{align}
where $\overline{\mathsf{tr}} = \pa{\frac{1 - a^2}{1 - \Q}}^r\mathsf{tr}$.
\end{thm}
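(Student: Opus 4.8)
The plan is to reduce the statement to a single functional equation for the $R(W)$-trace under right multiplication by the full twist $\pi$, and then to prove that equation using the $\varepsilon$-symmetry of Lusztig's exotic Fourier transform together with the palindromic behaviour of fake degrees (equivalently, Poincaré duality of the coinvariant algebra).

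First I would apply Proposition~\ref{prop:markov}, which gives $\overline{\mathsf{tr}}(\gamma) = (-\Q^{\frac12})^{-|\gamma|}\sum_i(-a^2)^i(\Alt^i(\bb{V}), \Tr{\gamma}^0)_W$. Since $\Alt^0(\bb{V}) = 1$ and $\Alt^r(\bb{V}) = \varepsilon$, the coefficient of $a^0$ in $\overline{\mathsf{tr}}(\beta)$ is, up to an explicit monomial in $\Q^{\frac12}$ and a sign, the number $(1, \Tr{\beta}^0)_W$, while the coefficient of $a^{2r}$ in $\overline{\mathsf{tr}}(\beta\pi)$ is $(\varepsilon, \Tr{\beta\pi}^0)_W$; tracking the writhe factors via $|\pi| = 2\ell(w_0)$, the theorem becomes
\begin{align}
(\varepsilon, \Tr{\beta\pi}^0)_W = \pm\,\Q^{\ell(w_0)}\,(1, \Tr{\beta}^0)_W \qquad \text{for all } \beta \in \Br_W.
\end{align}
Unwinding $\Tr{\gamma}^0 = (-\Q^{\frac12})^{|\gamma|}\Tr{\gamma}\cdot\varepsilon[\Sym(\bb{V})]_{\Q}$ and using that $\varepsilon\otimes(-)$ is an isometry of the multiplicity pairing, the two sides turn into $(1, \Tr{\beta\pi}\cdot[\Sym(\bb{V})]_{\Q})_W$ and $(\varepsilon, \Tr{\beta}\cdot[\Sym(\bb{V})]_{\Q})_W$ times monomials. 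Because $\pi$ is central in $\Br_W$, Schur's lemma shows it acts on each simple $H_W$-module $V_{\phi,\Q}$ by a scalar $\omega_\phi(\Q)$ — a monomial in $\Q^{\frac12}$ whose value is the classical full-twist scalar — so $\phi_{\Q}(\beta\pi) = \omega_\phi(\Q)\phi_{\Q}(\beta)$. Substituting this, the defining formula $\Tr{\gamma} = \sum_{\phi,\psi}\{\phi,\psi\}\phi_{\Q}(\gamma)\psi$, and the expansion $[\Sym(\bb{V})]_{\Q} = \bm{m}_1(\Q)\sum_\psi\FDeg_\psi(\Q)\psi$ from the proof of Proposition~\ref{prop:rationality}, and using self-duality of $W$-characters, the two pairings become $\sum_{\phi,\psi}\{\phi,\psi\}\omega_\phi(\Q)\phi_\Q(\beta)\bm{m}_\psi(\Q)$ and $\sum_{\phi,\psi}\{\phi,\psi\}\phi_\Q(\beta)\bm{m}_{\varepsilon\psi}(\Q)$. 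By linear independence of the character functions $\phi_\Q$ (Tits's deformation theorem) and surjectivity of $\Br_W \to H_W$, the desired identity is equivalent to the per-$\phi$ statement
\begin{align}
\omega_\phi(\Q)\sum_\psi\{\phi,\psi\}\bm{m}_\psi(\Q) = \pm\sum_\psi\{\phi,\psi\}\bm{m}_{\varepsilon\psi}(\Q).
\end{align}

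At this point I would feed in two structural facts. One is the $\varepsilon$-equivariance $\{\varepsilon\phi,\varepsilon\psi\} = \{\phi,\psi\}$ of the exotic Fourier transform (Lemma-Postulate~\ref{lempost:fourier}(3)). The other is the palindrome $\FDeg_{\varepsilon\psi}(\Q) = \Q^{\ell(w_0)}\FDeg_\psi(\Q^{-1})$, i.e.\ Poincaré duality of the coinvariant algebra whose socle is the $\varepsilon$-isotypic line in degree $\ell(w_0)$; combined with $\bm{m}_\psi(\Q) = \bm{m}_1(\Q)\FDeg_\psi(\Q)$ and $\bm{m}_1(\Q^{-1}) = (-1)^r\Q^{\ell(w_0)+r}\bm{m}_1(\Q)$ this yields $\bm{m}_{\varepsilon\psi}(\Q) = (-1)^r\Q^{-r}\bm{m}_\psi(\Q^{-1})$. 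Writing $M_\phi(\Q) = \sum_\psi\{\phi,\psi\}\bm{m}_\psi(\Q)$, which equals $\bm{m}_1(\Q)\UDeg_{\rho_\phi}(\Q)$ once one uses that the Fourier transform carries fake degrees to generic degrees on the principal-series block, the per-$\phi$ identity collapses to a clean relation purely about $\omega_\phi$ and the generic degree of $\rho_\phi$: $\omega_\phi(\Q)\UDeg_{\rho_\phi}(\Q) = \pm\,\UDeg_{\rho_{\varepsilon\phi}}(\Q)$, equivalently $\omega_\phi(\Q)\UDeg_{\rho_\phi}(\Q) = \pm\,\Q^{\ell(w_0)}\UDeg_{\rho_\phi}(\Q^{-1})$.

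The last identity is the main obstacle, and it is where the genuine content lies: it says that the full-twist scalar is exactly the failure of the generic degree of $\rho_\phi$ to be palindromic, which is a form of Alvis–Curtis duality on the unipotent principal series ($\rho_{\varepsilon\phi}$ being the $D$-dual of $\rho_\phi$ up to sign). I would prove it by identifying $\omega_\phi(\Q)$ with $\Q^{\bb{c}(\phi)}$, the content of $\phi$ from \S\ref{subsec:families} — this is the slope-$1$ case of the scalar computed in Section~\ref{sec:periodic}, and for the full twist alone it follows directly from Schur's lemma together with the classical value of the full-twist element in the center of $H_W$ — and then invoking the corresponding shift $\UDeg_{\rho_{\varepsilon\phi}}(\Q) = \pm\,\Q^{\bb{c}(\phi)}\UDeg_{\rho_\phi}(\Q)$ of generic degrees. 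For crystallographic $W$ this is read off from Lusztig's classification and \cite[\S8]{gp} (with the Benson–Curtis formulas for $\phi_{\Q}(\sigma_w)$), and for the non-crystallographic types $I_2(m)$, $H_3$, $H_4$ it is a finite verification along the lines of the proof of Proposition~\ref{prop:rationality}, taking the usual care with the exceptional families of $H_3$ and $H_4$. The remaining work is purely bookkeeping — pinning down the overall sign and the monomial shifts consistently, which, as throughout Section~\ref{sec:decat}, is delicate because of the two competing $a$-variable conventions in the knot-theory and Hecke-algebra literatures — and once the displayed per-$\phi$ identity is established the theorem follows formally.
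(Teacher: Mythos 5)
You are in a slightly unusual situation: the paper does not prove this theorem at all — it cites K\'alm\'an's paper and observes that his second proof, being purely Hecke-algebraic, generalizes verbatim from the type-$A$ Markov trace to Gomi's trace. Your argument is therefore a genuinely different route, and its skeleton is sound. The reduction via Proposition \ref{prop:markov}, the centrality of $\pi$ together with Springer's scalar $\Q^{\bb{c}(\phi)}$, Lemma-Postulate \ref{lempost:fourier}(2)--(3), the Poincar\'e duality $\FDeg_{\varepsilon\psi}(\Q)=\Q^{N}\FDeg_{\psi}(\Q^{-1})$ of the coinvariant algebra, and linear independence of the trace functions $\phi_{\Q}$ correctly boil the whole statement down to the single per-character identity $\Q^{\bb{c}(\phi)}\UDeg_{\phi}(\Q)=\UDeg_{\varepsilon\phi}(\Q)$, with sign $+1$ (equivalently, by Brou\'e--Michel, $\UDeg_{\phi}(\Q)=\Q^{\bb{a}(\phi)+\bb{A}(\phi)}\UDeg_{\phi}(\Q^{-1})$). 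In effect you prove the identity $(\varepsilon,\Tr{\beta\pi}^0)_W=\Q^{|\pi|/2}(1,\Tr{\beta}^0)_W$ directly and then run the paper's deduction in reverse; this is legitimate because Proposition \ref{prop:markov} makes the two statements equivalent, and you correctly avoid quoting that corollary itself (which the paper derives \emph{from} K\'alm\'an). The type-$A$/HOMFLY half then follows by the same coefficient bookkeeping through Theorem \ref{thm:ocneanu}, which you should say explicitly. What the two approaches buy: K\'alm\'an's argument is elementary and needs nothing beyond the Markov trace axioms and the structure of $H_W$, while yours requires heavier input (the exotic Fourier transform, Springer's full-twist theorem, properties of generic degrees, and case checks in the non-crystallographic and $E_7$, $E_8$ cases) but ties the theorem organically to the trace $\Tr{-}$ of Section \ref{sec:decat} and gives an independent proof of the corollary used later.

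The one soft spot is that the crux identity $\UDeg_{\varepsilon\phi}(\Q)=\pm\Q^{\bb{c}(\phi)}\UDeg_{\phi}(\Q)$ is invoked rather than established, and "read off from Lusztig's classification" is not a citable statement. You can close this gap inside your own framework with two facts you are already using elsewhere: first, by the proof of \cite[Prop.\ 9.4.1(a)]{gp} (exactly as in Lemma \ref{lem:symmetry}), the Schur element $\bm{s}((\varepsilon\phi)_{\Q})$ is the image of $\bm{s}(\phi_{\Q})$ under $\Q^{\frac{1}{2}}\mapsto-\Q^{-\frac{1}{2}}$, which gives $\UDeg_{\varepsilon\phi}(\Q)=\Q^{N}\UDeg_{\phi}(\Q^{-1})$ for every finite Coxeter group; second, $\bm{s}(\phi_{\Q})$ is a scalar times a monomial times a product of cyclotomic polynomials in which $\Phi_1$ does not occur (since $\bm{s}(\phi_{\Q})|_{\Q=1}=|W|/\phi(1)\neq 0$), so $\UDeg_{\phi}$ is palindromic up to the monomial $\Q^{\bb{a}(\phi)+\bb{A}(\phi)}$, and combining with $\bb{c}(\phi)=N-\bb{a}(\phi)-\bb{A}(\phi)$ pins down the identity with sign $+1$ (both sides have value $\phi(1)>0$ at $\Q=1$). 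With that substitution your sketch becomes a complete proof; as written, the key step rests on an appeal to tables plus finite verification, which is true but is exactly the part a referee would ask you to source or argue.
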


More precisely, \cite{kalman} includes two proofs of the statement about HOMFLY.
The second proof only uses the structure of $H_W$, so it generalizes to a proof of the statement about $\mathsf{tr}$.
By Proposition \ref{prop:markov}, we deduce:

\begin{cor}
For any finite Coxeter group $W$ and $\beta \in \Br_W$, we have
\begin{align}
(\varepsilon, \Tr{\beta\pi}^0)_W = \Q^{\frac{|\pi|}{2}} (1, \Tr{\beta}^0)_W.
\end{align}
\end{cor}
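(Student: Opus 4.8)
The plan is to transfer the assertion to Kálmán's theorem through the comparison between the $R(W)$ trace and Gomi's Markov trace supplied by Proposition~\ref{prop:markov}. First I would record the consequence of that proposition that, for every $\gamma \in \Br_W$,
\begin{align}
\overline{\mathsf{tr}}(\gamma)
&= \pa{\frac{1 - a^2}{1 - \Q}}^r \mathsf{tr}(\gamma)
= (-\Q^{-\frac{1}{2}})^{|\gamma|} \sum_{i = 0}^{r} {(-a^2)^i}\,(\Alt^i(\bb{V}), \Tr{\gamma}^0)_W,
\end{align}
which exhibits $\overline{\mathsf{tr}}(\gamma)$ as a polynomial in $a^2$ of degree at most $r$, with the extreme coefficients controlled by $\Alt^0(\bb{V})$ and $\Alt^r(\bb{V})$.

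Next I would isolate those two coefficients. Since $\Alt^0(\bb{V})$ is the trivial character $1$ and $\Alt^r(\bb{V})$ is the sign character $\varepsilon$ — the latter because $\Alt^r(\bb{V})$ is the determinant of the reflection representation — the coefficient of $a^0$ in $\overline{\mathsf{tr}}(\beta)$ equals $(-\Q^{-\frac12})^{|\beta|}(1, \Tr{\beta}^0)_W$, while the coefficient of $a^{2r}$ in $\overline{\mathsf{tr}}(\beta\pi)$ equals $(-1)^r (-\Q^{-\frac12})^{|\beta\pi|}(\varepsilon, \Tr{\beta\pi}^0)_W$. Now I would invoke Kálmán's theorem, which identifies (up to the sign produced by comparing the top and bottom $a$-degrees of $\overline{\mathsf{tr}}$) the coefficient of $a^{2r}$ in $\overline{\mathsf{tr}}(\beta\pi)$ with the coefficient of $a^0$ in $\overline{\mathsf{tr}}(\beta)$. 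The two factors of $(-1)^r$ then cancel, and so does the common factor $(-\Q^{-\frac12})^{|\beta|}$ once one uses additivity of the writhe, $|\beta\pi| = |\beta| + |\pi|$.

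What is left is purely formal: from
\begin{align}
(-\Q^{-\frac{1}{2}})^{|\pi|}(\varepsilon, \Tr{\beta\pi}^0)_W = (1, \Tr{\beta}^0)_W
\end{align}
one uses that $|\pi|$ is even (the full twist being the square of the Garside half-twist, so $|\pi| = 2N$) to rewrite $(-\Q^{-\frac12})^{|\pi|} = \Q^{-|\pi|/2}$, and solving for $(\varepsilon, \Tr{\beta\pi}^0)_W$ produces the stated identity. I do not expect a genuine obstacle here; the one point demanding care is the sign bookkeeping — the $(-1)^i$ from $(-a^2)^i$, the monomial $(-\Q^{-\frac12})^{|\gamma|}$, and the parity of $|\pi|$ — together with the (already noted) fact that Kálmán's second proof of the HOMFLY statement is phrased entirely in terms of the structure of $H_W$, so it applies verbatim to Gomi's $\mathsf{tr}$ for an arbitrary finite Coxeter group.
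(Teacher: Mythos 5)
Your proof is correct and is essentially the paper's own argument: the paper deduces the corollary precisely by combining K\'alm\'an's theorem for Gomi's trace with Proposition \ref{prop:markov}, exactly as you do, reading off the $a^0$ and $a^{2r}$ coefficients of $\overline{\mathsf{tr}}$ and using $|\beta\pi| = |\beta| + |\pi|$ with $|\pi|$ even. Your extra care with the sign is warranted and correctly resolved: with the normalization $\overline{\mathsf{tr}} = \bigl(\tfrac{1-a^2}{1-\Q}\bigr)^r \mathsf{tr}$, the top-degree coefficient carries a factor $(-1)^r$ from $(-a^2)^r$ (one can check in type $A_1$ with $\beta = \bb{1}$ that taking the quoted coefficient equality completely literally would yield a spurious $(-1)^r$ in the corollary), so your cancellation of the two $(-1)^r$ factors is the right bookkeeping.
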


\begin{cor}\label{cor:virtual}
Under the hypotheses of Theorem \ref{thm:virtual-reductive}, we have
\begin{align}
|\cal{X}(\beta\pi)^F| = |\cal{U}(\beta)^F|.
\end{align}
\end{cor}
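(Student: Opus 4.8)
The plan is to derive Corollary \ref{cor:virtual} by feeding the previous Corollary (the identity $(\varepsilon, \Tr{\beta\pi}^0)_W = \Q^{\frac{|\pi|}{2}} (1, \Tr{\beta}^0)_W$) into the point-count formula of Theorem \ref{thm:virtual-reductive}. First I would specialize that theorem, and the preceding orthogonality computations, at $\Q^{1/2} = q^{1/2}$, so that $\Tr{\tilde\beta}^0 \cdot [\text{normalization}]$ is literally the virtual character $\frac{(-1)^r}{|G^F|}\sum_{u \in \cal{U}^F} |\cal{U}(\beta)_u^F| Q_u$. Pairing this against the trivial character $1 \in R(W)$ and using the fact that $(1, Q_u)_W = \dim \ur{H}^0(\cal{B}_u) = 1$ (the Springer fiber $\cal{B}_u$ is connected), I get
\begin{align}
(1, \Tr{\beta}^0)_W|_{\Q = q}
= \frac{(-1)^{r - |\beta|}}{|G^F|} \sum_{u \in \cal{U}^F} |\cal{U}(\beta)_u^F|
= \frac{(-1)^{r - |\beta|}}{|G^F|} |\cal{U}(\beta)^F|.
\end{align}

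Next I would do the analogous computation for the $\varepsilon$-isotypic component of $\Tr{\beta\pi}^0$. Here the key Springer-theoretic input is that $(\varepsilon, Q_u)_W = \tr(\varepsilon \mid \ur{H}^{2N}(\cal{B}_u))$ counts (with the sign twist) the top cohomology of $\cal{B}_u$ cut out by the sign representation; by the standard analysis of the $W$-action on $\ur{H}^*(\cal{B}_u)$, pairing $Q_u$ against $\varepsilon$ extracts precisely the fiber of the map $\cal{X}(\beta\pi) \to \cal{U}$ over $u$ — indeed $\cal{S}_\varepsilon$ is the skyscraper at $1$, so in the Borel--Moore description $(\varepsilon, \Tr{\beta\pi}^0)_W|_{\Q=q}$ reads off $|\cal{X}(\beta\pi)^F|$ up to an explicit power of $q$ and a sign. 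Concretely I would invoke Theorem \ref{thm:virtual-reductive} applied to $\beta\pi$, pair with $\varepsilon$, and use $\sum_{u} (\varepsilon, Q_u)_W [\cdots]$ together with Lemma \ref{lem:q1} and the skyscraper identification from \S\ref{subsec:kr-to-varieties} to obtain $(\varepsilon, \Tr{\beta\pi}^0)_W|_{\Q = q} = \frac{(-1)^{r - |\beta\pi|}}{|G^F|} q^{-N} \cdot q^{?} |\cal{X}(\beta\pi)^F|$, matching the bookkeeping so that the power of $q$ coming from $|\pi|$ and from the shift in the definition of $\cal{X}$ cancel against the factor $\Q^{|\pi|/2}$ in the preceding Corollary.

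Then I would simply combine: the previous Corollary equates $(\varepsilon, \Tr{\beta\pi}^0)_W$ with $\Q^{|\pi|/2}(1, \Tr{\beta}^0)_W$; substituting the two point-count expressions above and cancelling the common factor $\frac{(-1)^{r}}{|G^F|}$ (after checking that the signs $(-1)^{|\beta\pi|}$ versus $(-1)^{|\beta|}$ and the powers of $q$ coming from $|\pi|$ on both sides agree — note $|\beta\pi| = |\beta| + |\pi|$ and $|\pi| = 2N$ is even) yields $|\cal{X}(\beta\pi)^F| = |\cal{U}(\beta)^F|$ as desired.

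The main obstacle I expect is the careful accounting of the normalization powers of $q^{1/2}$ and the sign characters: $\Tr{-}^0$ carries a factor $(-\Q^{1/2})^{|\beta|}$ and a twist by $\varepsilon[\Sym(\bb{V})]_{\Q}$, the passage $\Tr{-} \leadsto \Tr{\tilde\beta}$ introduces $\Q^{|\beta|/2}$, Lemma \ref{lem:q1} contributes a $\Q^N$ and a $(-1)^r$, and the skyscraper sheaf $\cal{S}_\varepsilon = \QL\langle -r - 2N\rangle$ at $1 \in \cal{U}$ shifts degrees by $2N$; getting all of these to cancel so that the final identity is the clean equality $|\cal{X}(\beta\pi)^F| = |\cal{U}(\beta)^F|$ (rather than an equality up to a power of $q$) requires tracking $|\pi| = 2N$ and $\dim\cal{B} = N$ consistently. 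Everything else is a direct consequence of Theorem \ref{thm:virtual-reductive}, Theorem \ref{thm:orthogonality}, and the connectedness and top-cohomology properties of Springer fibers already cited in the paper.
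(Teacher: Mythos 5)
Your proposal is correct and follows essentially the same route as the paper: pair the point-count formula of Theorem \ref{thm:virtual-reductive} (for $\beta$ and for $\beta\pi$) against $1$ and $\varepsilon$ respectively, use the Springer-theoretic facts that $(1, Q_u)_W = 1$ for all $u$ while the $\varepsilon$-isotypic part of $Q_u$ vanishes unless $u = 1$ (where it contributes $q^{\dim\cal{B}}$), and feed in the preceding corollary, with the powers of $q$ and signs cancelling because $\dim\cal{B} = N = \tfrac{1}{2}|\pi|$ and $|\pi|$ is even. The bookkeeping you flag as the remaining obstacle is exactly the cancellation the paper records in its one-line conclusion, so there is no gap.
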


\begin{proof}
The proof is similar to that of Corollary \ref{cor:kr-to-varieties} in Section \ref{sec:kr}.
Namely, by Springer theory, we have:
\begin{enumerate}
\item 	$(1, Q_u)_W = 1$ for all $u \in \cal{U}^F$.
\item 	$(\varepsilon, Q_1)_W = \Q^{\dim \cal{B}}$ and $(\varepsilon, Q_u)_W = 0$ for all $u \neq 1$.
\end{enumerate}
Now, Theorem \ref{thm:virtual-reductive} and the preceding corollary together imply $q^{\dim \cal{B}} |\cal{X}(\beta\pi)^F| = q^{\frac{|\pi|}{2}} |\cal{U}(\beta)^F|$.
But we have $\dim \cal{B} = N = |w_0| = \frac{1}{2}|\pi|$.
\end{proof}

\begin{rem}
We claim that, in the case where $\beta = \sigma_w$, Corollary \ref{cor:virtual} recovers Proposition 2.2 of \cite{lusztig_2021}.
In our notation, the latter states that 
\begin{align}\label{eq:r-polynomial}
|\cal{U}(\sigma_w)^F| = q^N |\cal{B}^F| R_{1, w}(q),
\end{align}
where $R_{1, w}(\Q) \in \bb{Z}[\Q]$ is the so-called $R$-polynomial defined in \cite{kl}.
If $B_+, B_- \in \cal{B}^F$ are fixed Borels in opposite position, then
\begin{align}
R_{1, w} = |\{B \in \cal{B}^F : (B_+, B) \in O_w,\, (B, B_-) \in O_{w_0}\}|
\end{align}
by \cite[Lem.\ A.4]{kl}.
So, by the transitivity of the $G^F$-action on $O_{w_0}^F$, we have
\begin{align}
|\cal{X}(\sigma_w\pi)^F| = |O_{w_0}^F| R_{1, w}(q) 
= q^{\dim \cal{B}} |\cal{B}^F| R_{1, w}(q).
\end{align}
Now Corollary \ref{cor:virtual} implies \eqref{eq:r-polynomial}.

\end{rem}

\subsection{}

In the next subsection, we will prove Theorem \ref{thm:induction}.
Before doing so, we set notation for parabolic subgroups and review how they interact with the virtual representations $R_w$.

Fix a split torus $T_0 \subseteq G_0$.
We identify the character lattice $\bb{X}$ in the root datum of $G$ with the character lattice of $T$.
We identify the set $S \subseteq W$ with the set of reflections in $\bb{V}^\vee$ corresponding to a system of simple roots in $\bb{X}$.
This system of simple roots determines a Borel $B_0 = T_0 \ltimes U_0$.

If $P_0$ is a parabolic subgroup of $G_0$ containing $B_0$, then the Weyl group of the Levi quotient of $P_0$ is an $S$-parabolic subgroup of $W$ (see \S\ref{subsec:parabolic}).
(Conversely, every $S$-parabolic subgroup of $W$ arises in this way.)
We fix such a parabolic $P_0$.
We write $U_{P, 0}$ and $L_0$ for its unipotent radical and Levi quotient, respectively.
In general, we will distinguish the data attached to $L_0$ with a subscript.
Thus,
\begin{enumerate}
\item 	$W_L$ is the Weyl group of $L_0$.
\item 	$\cal{B}_{L, 0}$ is the flag variety of $L_0$.
\item 	$\cal{U}_{L, 0}$ is the unipotent locus of $L_0$.
\item 	If $\beta \in \Br_{W_L}^+$, then we write $O_L(\beta)_0$ for the analogue of $O(\beta)$ with $L_0$ in place of $G_0$.
\item 	If $w \in W_L$, then $R_{L, w}$ is the Deligne--Lusztig virtual character with $L^F$ in place of $G^F$.
\end{enumerate}
The following formula for the parabolic restriction of the $R_w$ is \cite[Prop.\ 7.4.4]{carter}, though it originates in \cite[\S{8}]{dl}.

\begin{prop}[Deligne--Lusztig]\label{prop:induction-deligne-lusztig}
For all $w \in W$, we have
\begin{align}
(R_w)^{U_P^F} = \frac{1}{|W_L|} \sum_{\substack{x \in W \\ xwx^{-1} \in W_L}} R_{L, xwx^{-1}}
\end{align}
as virtual representations of $L^F$, where $(-)^{U_P^F}$ means we take $U_P^F$-invariants.
\end{prop}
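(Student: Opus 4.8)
The plan is to identify the left-hand side with Harish-Chandra restriction, apply the Mackey formula for Deligne--Lusztig induction from a maximal torus, and then reorganize the resulting sum combinatorially. Since $P_0$ is $F$-stable, the functor $(-)^{U_P^F}$ on virtual $G^F$-modules is precisely Harish-Chandra restriction to $L^F$, which for this $F$-stable parabolic coincides with Lusztig restriction ${}^{*}R_L^G$; by transitivity of Deligne--Lusztig induction it therefore suffices to compute ${}^{*}R_L^G(R_w)$. Fixing the split torus $T_0$ and writing $T_w = {}^{g_w}T_0$ for the $F$-stable maximal torus of type $w$ (so that $g_w^{-1}F(g_w)$ lifts $w$ in $N_G(T_0)$), the variety $X_w$ is, up to an affine-space bundle that does not affect the virtual character, the classical Deligne--Lusztig variety for $T_w$, so $R_w = R_{T_w}^G(1)$ with $1$ the trivial character of $T_w^F$.

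First I would invoke the Mackey formula in the case where one of the two Levis is a torus; in this generality it is a theorem of Deligne--Lusztig requiring no exceptional-group hypotheses (\cite[\S8]{dl}). For a maximal torus $T$ one has $\{g : {}^gT \cap L \text{ contains a maximal torus of } L\} = \{g : {}^gT \subseteq L\}$ (since all maximal tori of $G$ and of $L$ have the same rank), and the formula reads
\[
{}^{*}R_L^G\, R_{T_w}^G(1)
= \sum_{x \,\in\, L^F\backslash\{g\in G:\,{}^{g}T_w\subseteq L\}^F/\,T_w^F} R_{{}^{x}T_w}^L(1).
\]
Equivalently, and this is the geometric heart of the matter, one can argue directly: $(R_w)^{U_P^F} = \sum_n (-1)^n \ur{H}_!^n(X_w)^{U_P^F}$ because $U_P^F$ is a $p$-group and $p \neq \ell$, so taking invariants is exact; one then stratifies $X_w$ by the relative position of its Borels with the standard parabolic $P$, a datum indexed by $W_L\backslash W$, and checks that the $U_P$-quotient of each stratum is an affine-space bundle over a Deligne--Lusztig variety of $L$ attached to a $W$-conjugate of $w$ lying in $W_L$. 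This is the argument of \cite[\S8]{dl}, recorded for the unipotent case as \cite[Prop.\ 7.4.4]{carter}.

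Next I would rewrite the double-coset sum as the claimed average over $W$. Each summand $R_{{}^{x}T_w}^L(1)$ equals $R_{L,y}$, where $y \in W_L$ is the type of the maximal torus ${}^{x}T_w$ of $L$, defined up to $W_L$-conjugacy and automatically $W$-conjugate to $w$ (as ${}^xT_w$ is $G^F$-conjugate to $T_w$). Using the standard parametrization, via Lang's theorem, of $L^F$-conjugacy classes of $F$-stable maximal tori of $L$ by conjugacy classes in $W_L$, together with the point count of $\{g : {}^gT_w\subseteq L\}^F$ modulo $L^F$ and $T_w^F$, one verifies that the displayed sum coincides with $\frac{1}{|W_L|}\sum_{x\in W:\, xwx^{-1}\in W_L} R_{L,xwx^{-1}}$, the key point being that $R_{L,y}$ depends only on the $W_L$-conjugacy class of $y$. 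This yields the proposition.

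The main obstacle is the geometric input: the Mackey formula for Lusztig restriction from a torus, equivalently the stratification argument computing $\ur{H}_!^{*}(X_w)^{U_P^F}$ in terms of Deligne--Lusztig varieties of $L$. Once that is in hand, the rest is a bookkeeping reindexing trading the double-coset sum for a $W$-conjugacy average. In the present context this geometric input is available off the shelf, since \cite[Prop.\ 7.4.4]{carter} states exactly the unipotent restriction formula we need, so in practice the proof is a citation together with the reindexing of the last paragraph.
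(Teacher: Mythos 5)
Your proposal is correct, and it lands on essentially the same ground as the paper: the paper offers no argument beyond quoting \cite[Prop.\ 7.4.4]{carter} (originating in \cite[\S{8}]{dl}), and your reduction to that citation — with a sound sketch of the underlying Mackey-formula/stratification argument for restriction from a torus and the double-coset-to-$\frac{1}{|W_L|}$-average reindexing — is a faithful account of why that citation suffices.
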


\subsection{}

We now use Theorem \ref{thm:virtual} to prove Theorem \ref{thm:induction}.
In what follows, let $W' \subseteq W$ be a parabolic subgroup, and let $d \geq 0$ be the difference in their ranks.

\subsubsection{}

First, we establish the commutative diagram:
\begin{equation}\label{eq:induction-br}
\begin{tikzpicture}[baseline=(current bounding box.center), >=stealth]
\matrix(m)[matrix of math nodes, row sep=2.5em, column sep=3em, text height=2ex, text depth=0.5ex]
{	\Br_{W'}
		&R(W')[\![\Q]\!]\\
	\Br_W
		&R(W)[\![\Q]\!]\\
		};
\path[->,font=\scriptsize, auto]
(m-1-1)		edge node{$\Tr{-}^0$} (m-1-2)
			edge (m-2-1)
(m-1-2) 	edge node{$\frac{1}{(1 - \Q)^d} \Ind_{W'}^W$} (m-2-2)
(m-2-1) 	edge node{$\Tr{-}^0$} (m-2-2);
\end{tikzpicture}
\end{equation}
We can and will assume that $W' = W_L$ in the setup of the previous subsection.

To disambiguate notation, we will write $\Tr{-}^0$, \emph{resp.}\ $\Tr{-}^{0'}$, for the function on $\Br_W$, \emph{resp.}\ $\Br_{W'}$.
For all $w \in W'$, let the $G_0$-schemes $\cal{U}_{w, 0}$, $\cal{U}_{L, w, 0}$ over $\bb{F}$ be defined by the cartesian squares:
\begin{equation}
\begin{tikzpicture}[baseline=(current bounding box.center), >=stealth]
\matrix(m)[matrix of math nodes, row sep=2.5em, column sep=3em, text height=2ex, text depth=0.5ex]
{	O_{w, 0}
		&\cal{U}_{w, 0}\\
	\cal{B}_0 \times \cal{B}_0
		&\cal{U}_0 \times \cal{B}_0\\
		};
\path[->,font=\scriptsize, auto]
(m-1-2)		edge (m-1-1)
			edge (m-2-2)
(m-1-1) 	edge (m-2-1)
(m-2-2) 	edge node[above]{$\act$} (m-2-1);
\end{tikzpicture}
\qquad
\begin{tikzpicture}[baseline=(current bounding box.center), >=stealth]
\matrix(m)[matrix of math nodes, row sep=2.5em, column sep=3em, text height=2ex, text depth=0.5ex]
{	O_{L, w, 0}
		&\cal{U}_{L, w, 0}\\
	\cal{B}_{L, 0} \times \cal{B}_{L, 0}
		&\cal{U}_{L, 0} \times \cal{B}_{L, 0}\\
		};
\path[->,font=\scriptsize, auto]
(m-1-2)		edge (m-1-1)
			edge (m-2-2)
(m-1-1) 	edge (m-2-1)
(m-2-2) 	edge node[above]{$\act$} (m-2-1);
\end{tikzpicture}
\end{equation}
Let $\bb{V}' = \bb{V}/\bb{V}^{W'}$, so that $\bb{V}'$ is a realization of $W'$ satisfying $(\bb{V}')^{W'} = 0$.
Then $d = \dim \bb{V} - \dim \bb{V}'$.
So for all $x \in W'$, Theorems \ref{thm:kazhdan} and \ref{thm:virtual-reductive} give
\begin{align}
\Tr{\sigma_w}^0(x)|_{\Q = q}
&=	\frac{(-1)^{r - |\beta|}}{|G^F|}
	\sum_{u \in \cal{U}^F} 
	|(\cal{U}_w)_u^F| R_x(u),\\
\frac{(\Ind_{W'}^W \Tr{\sigma_w}^{0'})(x)|_{\Q = q}}{(1 - q)^d}
&=	\frac{(-1)^{r - |\beta|}}{|L^F||W'|}
	\sum_{u' \in \cal{U}_L^F} 
	\sum_{\substack{y \in W \\ yxy^{-1} \in W'}}
	|(\cal{U}_{L, w})_{u'}^F| 
	R_{L, yxy^{-1}}(u').
\end{align}
Since $\beta \mapsto {(-\Q^{-\frac{1}{2}})^{|\beta|}}\Tr{\beta}^0$ is linear as a function on $H_W$, it remains to match the right-hand sides above for all $w$ and $x$.
By Proposition \ref{prop:induction-deligne-lusztig}, we further reduce the problem to:
\begin{align}\label{eq:l-to-g}
\frac{1}{|G^F|}
	\sum_{u \in \cal{U}^F} 
	|(\cal{U}_w)_u^F| R_x(u)
&=	
	\frac{1}{|L^F|}
	\sum_{u' \in \cal{U}_L^F} 
	|(\cal{U}_{L, w})_{u'}^F| 
	(\Res_{P^F}^{G^F} R_x)^{U_P^F}(u').
\end{align}
We will rewrite the point counts on both sides.
Let $(\cal{U}_w)_B \subseteq \cal{U}_w$ be the fiber of $\cal{U}_w \to \cal{U} \times \cal{B} \to \cal{B}$ above our fixed Borel $B \in \cal{B}^F$, so that
\begin{align}
(\cal{U}_w)_B^F = \{u \in \cal{U}^F : (u^{-1}Bu, B) \in O_w\}.
\end{align}
Let $B^F$ act on $G^F$, \emph{resp.}\ $(\cal{U}_w)_B^F$, by right multiplication, \emph{resp.}\ right conjugation.
(Thus its action on $(\cal{U}_w)_B^F \times G^F$ is free.)
Analogously to Lemma \ref{lem:assoc-bundle}, we have:

\begin{lem}\label{lem:bruhat}
We have a $G^F$-equivariant bijection
\begin{align}
\begin{array}{rcl}
((\cal{U}_w)_B^F \times G^F)/B^F 
	&\xrightarrow{\sim} 
	&\cal{U}_w^F\\
{[u, g]}
	&\mapsto 
	&gug^{-1}
\end{array}
\end{align}
where $G^F$ acts on $((\cal{U}_w)_B^F \times G^F)/B^F$ by left multiplication on the factor of $G^F$.
\end{lem}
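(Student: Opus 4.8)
The plan is to verify directly that the indicated assignment is a $G^F$-equivariant bijection, after first making the target precise. Unwinding the cartesian square that defines $\cal{U}_{w, 0}$, we have
\begin{align}
\cal{U}_w^F = \{(u, B') \in \cal{U}^F \times \cal{B}^F : (u^{-1}B'u, B') \in O_w\},
\end{align}
and the map in Lemma \ref{lem:bruhat} is to be read as $[u, g] \mapsto (gug^{-1}, gBg^{-1})$, with the flag $gBg^{-1} \in \cal{B}^F$ as the second coordinate. (It really is needed: for fixed $u'$ there may be many $B'$ with $(u'^{-1}B'u', B') \in O_w$, as one already sees for $w = 1$, where $\cal{U}_1 = \tilde{\cal{U}}$.) Here $B^F$ acts on $(\cal{U}_w)_B^F \times G^F$ by $(u, g) \cdot b = (b^{-1}ub, gb)$, matching the conventions fixed just before the lemma, and $G^F$ acts by left multiplication on the factor $G^F$. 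Everything then reduces to four short checks; the only external inputs are Lang's theorem and standard facts about Borel subgroups.

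First I would check that the formula descends to the $B^F$-quotient and lands in $\cal{U}_w^F$. Since $b \in B^F$ normalizes the Borel subgroup $B$ and $gb\,(b^{-1}ub)\,(gb)^{-1} = gug^{-1}$, the assignment is constant on $B^F$-orbits, and $G^F$-equivariance is immediate. For the target, the diagonal conjugation action of $G$ on $\cal{B} \times \cal{B}$ gives $\bigl((gug^{-1})^{-1}(gBg^{-1})(gug^{-1}),\, gBg^{-1}\bigr) = g \cdot (u^{-1}Bu, B) \cdot g^{-1}$; since $(u^{-1}Bu, B) \in O_w$ by the definition of $(\cal{U}_w)_B^F$ and $O_w$ is $G$-stable, this pair lies in $O_w$. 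For surjectivity, given $(u', B') \in \cal{U}_w^F$, Lang's theorem makes $G^F$ transitive on $\cal{B}^F$, so I pick $g \in G^F$ with $gBg^{-1} = B'$ and set $u = g^{-1}u'g \in \cal{U}^F$; then $(u^{-1}Bu, B) = g^{-1} \cdot (u'^{-1}B'u', B') \cdot g \in O_w$, so $u \in (\cal{U}_w)_B^F$ and $[u, g]$ maps to $(u', B')$. For injectivity, if $[u_1, g_1]$ and $[u_2, g_2]$ have the same image, then $g_1Bg_1^{-1} = g_2Bg_2^{-1}$, so $b \vcentcolon= g_2^{-1}g_1$ normalizes $B$; Borels being self-normalizing forces $b \in B^F$, and comparing first coordinates gives $bu_1b^{-1} = u_2$, whence $(u_1, g_1) = (u_2, g_2) \cdot b$ and the classes coincide.

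Alternatively — and this is how I would phrase it to match the ``analogously to Lemma \ref{lem:assoc-bundle}'' remark — one first produces a $G_0$-equivariant isomorphism $\cal{U}_{w, 0} \simeq (G_0 \times (\cal{U}_{w, 0})_{B_0})/B_0$, using that a $G$-equivariant scheme over $\cal{B}_0 = G_0/B_0$ is recovered as the bundle associated with its fiber over $B_0$, and then applies $(-)^F$. Because $B_0$ is connected, Lang's theorem identifies the $F$-fixed points of this quotient with $(G^F \times (\cal{U}_w)_B^F)/B^F$, which is the claim. I do not expect a real obstacle in either approach: the only points that need care are keeping straight which projection $\cal{U}_w \to \cal{B}$ is used to define $(\cal{U}_w)_B$, and correctly invoking transitivity of $G^F$ on $\cal{B}^F$ (equivalently Lang) together with the $G$-stability of $O_w$ and $\cal{U}$.
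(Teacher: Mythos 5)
Your proof is correct and matches the intended argument: the paper states this lemma without a written proof, deferring to the analogy with Lemma \ref{lem:assoc-bundle} (the associated-bundle description over $\cal{B} \simeq G/B$), and both your direct verification (Lang's theorem for transitivity of $G^F$ on $\cal{B}^F$, self-normalization of Borels for injectivity, $G$-stability of $O_w$ for the image) and your alternative route via $\cal{U}_{w,0} \simeq (G_0 \times (\cal{U}_{w,0})_{B_0})/B_0$ followed by taking $F$-fixed points supply exactly the omitted details. Your reading of the map as $[u,g] \mapsto (gug^{-1}, gBg^{-1})$ is the intended one, and it is what the point-count immediately after the lemma in the proof of Theorem \ref{thm:induction} requires.
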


Let $\{\cal{U}_i^F\}_i$ be the set of $G^F$-orbits of $\cal{U}^F$.
For each $i$ such that $\cal{U}_i^F$ intersects $(\cal{U}_w)_B^F$, we pick a point $u_i \in \cal{U}_i^F$.
Using Lemma \ref{lem:bruhat}, we expand:
\begin{align}\begin{split}
\frac{1}{|G^F|}
\sum_{u \in \cal{U}^F} 
	|(\cal{U}_w)_u^F| R_x(u)
&=	\frac{1}{|B^F|}
\sum_i
|\cal{U}_i^F \cap (\cal{U}_w)_B^F| R_x(u_i)\\
&=	\frac{1}{|B^F|}
\sum_{u \in (\cal{U}_w)_B^F}
R_x(u).
\end{split}\end{align}
Let $B_L = L \cap B$.
As above, let $\{\cal{U}_{L, j}^F\}_j$ be the set of $L^F$-orbits of $\cal{U}_L^F$.
For each $j$ such that $\cal{U}_{L, j}^F$ intersects $(\cal{U}_{L, w})_{B_L}^F$, we pick a point $u'_j \in (\cal{U}_{L, j})_{B_L}^F$.
Then we can also expand:
\begin{align}\begin{split}
\frac{1}{|L^F|}
	\sum_{u' \in \cal{U}_L^F}
	|(\cal{U}_{L, w})_{u'}^F|
	(R_x)^{U^F}(u')
&=	\frac{1}{|B_L^F|}
	\sum_j
	|(\cal{U}_{L, j})_B^F|
	(R_x)^{U^F}(u'_j)
	\\
&=	\frac{1}{|B_L^F|}
	\sum_{u' \in (\cal{U}_{L, j})_B^F}
	(R_x)^{U^F}(u')\\
&=	\frac{1}{|B_L^F||U_P^F|}
	\sum_{\substack{u' \in (\cal{U}_{L, j})_B^F \\ u'' \in U_P^F}}
	R_x(u'u'').
\end{split}\end{align}
Above, $|B_L^F||U_P^F| = |B^F|$.
So the following lemma concludes the proof of \eqref{eq:l-to-g}.

\begin{lem}
For all $w \in W_L$, we have $(\cal{U}_w)_B = (\cal{U}_{L, w})_{B_L} U_P \simeq (\cal{U}_{L, w})_{B_L} \times U_P$.
\end{lem}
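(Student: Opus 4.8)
The plan is to reduce the statement to a computation with Bruhat cells inside the parabolic $P$. First I would rewrite both fibers group-theoretically. Identifying $\cal{B} = G/B$ so that a Borel $gBg^{-1}$ corresponds to the coset $gB$, and using that $(g_1B, g_2B) \in O_w$ iff $g_1^{-1}g_2 \in B\dot wB$, one unwinds the cartesian square defining $\cal{U}_{w,0}$ to get $(\cal{U}_w)_B = \cal{U} \cap B\dot wB$; the same computation carried out inside $L$ gives $(\cal{U}_{L,w})_{B_L} = \cal{U}_L \cap B_L\dot wB_L$. Here I may use the same representative $\dot w$ for $w$ in both $W_L$ and $W$: since $T \subseteq L$, any lift of an element of $W_L \subseteq W$ to $N_G(T)$ automatically lies in $L \cap N_G(T) = N_L(T)$, so the globally fixed $\dot w$ works, and moreover $\dot w \in L$ normalizes $U_P$. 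All the data in sight ($B$, $B_L$, $U_P$, $\dot w$) are defined over $\bb{F}$, so the isomorphism produced will be as well.

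Next I would compute the Bruhat cell. Writing $B = B_L U_P$ with $B_L = L \cap B$, and using that both $\dot w \in L$ and $B_L \subseteq L$ normalize $U_P$, one gets $B\dot wB = (B_L U_P)\,\dot w\,(B_L U_P) = (B_L\dot wB_L)\,U_P$. Since the Levi decomposition is an isomorphism of varieties $L \times U_P \xrightarrow{\sim} P$, $(\ell, v) \mapsto \ell v$, and $B_L\dot wB_L \subseteq L$, this exhibits $B\dot wB$, as a subvariety of $P$, in the form $(B_L\dot wB_L) \times U_P$.

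The key point, and the one that needs care, is that intersecting with the unipotent variety of $G$ touches only the $L$-factor. I would prove $\cal{U} \cap P = \pi^{-1}(\cal{U}_L)$, where $\pi : P \to L$ is the quotient by $U_P$: for $p \in P$ the Jordan decomposition $p = p_sp_u$ takes place in the closed subgroup $P$ and maps under $\pi$ to the Jordan decomposition of $\pi(p)$, so $\pi(p) \in \cal{U}_L$ iff $\pi(p_s) = e$ iff $p_s \in U_P$ iff $p_s = e$ (as $p_s$ is semisimple and $U_P$ is unipotent) iff $p \in \cal{U}$. Via $P \cong L \times U_P$ this reads $\cal{U}\cap P \cong \cal{U}_L \times U_P$, with $U_P \subseteq \cal{U}$ filling out the whole second factor. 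Intersecting the two product descriptions inside $P \cong L \times U_P$ then gives $\cal{U} \cap B\dot wB \cong (\cal{U}_L \cap B_L\dot wB_L) \times U_P$.

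Combining the three steps,
\[
(\cal{U}_w)_B = \cal{U}\cap B\dot wB = \cal{U}\cap\big((B_L\dot wB_L)\,U_P\big) = \big(\cal{U}_L\cap B_L\dot wB_L\big)\,U_P = (\cal{U}_{L,w})_{B_L}\,U_P,
\]
and the multiplication map $(\cal{U}_{L,w})_{B_L} \times U_P \to (\cal{U}_w)_B$ is an isomorphism of $\bb{F}$-varieties, as claimed. I expect the only genuine obstacle to be the middle claim about $\cal{U}\cap P$ — once that is in place the rest is the Bruhat-cell bookkeeping above; note that it is exactly this claim which makes the $w = \bb{1}$ case consistent with the identification $(G\times U)/B \simeq \tilde{\cal{U}}$ used in Lemma \ref{lem:assoc-bundle}.
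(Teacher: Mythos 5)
Your proof is correct and follows essentially the same route as the paper: identify $(\cal{U}_w)_B = B\dot wB \cap \cal{U}$ and $(\cal{U}_{L,w})_{B_L} = B_L\dot wB_L \cap \cal{U}_L$, then use $B\dot wB = (B_L\dot wB_L)U_P \simeq B_L\dot wB_L \times U_P$ via the Levi decomposition. The only difference is that you spell out, via Jordan decomposition in $P$, the step $\cal{U}\cap P \simeq \cal{U}_L \times U_P$ that the paper leaves implicit, which is a welcome clarification rather than a deviation.
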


\begin{proof}
Lifting $w \in W_L \simeq N_L(T)/T$ to a point $\dot{w} \in N_L(T)$, we observe that
\begin{align}
B\dot{w}B = U_PB_L\dot{w}B_LU_P = (B_L\dot{w}B_L)U_P \simeq B_L\dot{w}B_L \times U_P.
\end{align}
But $(\cal{U}_w)_B = B\dot{w}B \cap \cal{U}$ and $(\cal{U}_{L, w})_{B_L} = B_L\dot{w}B_L \cap \cal{U}_L$.
\end{proof}

\subsubsection{}

We claim that commutativity of \eqref{eq:induction-br} implies commutativity of:
\begin{equation}
\begin{tikzpicture}[baseline=(current bounding box.center), >=stealth]
\matrix(m)[matrix of math nodes, row sep=2.5em, column sep=3em, text height=2ex, text depth=0.5ex]
{	H_{W'}
		&R(W')[\Q^{\pm\frac{1}{2}}]\\
	H_W
		&R(W)[\Q^{\pm\frac{1}{2}}]\\
		};
\path[->,font=\scriptsize, auto]
(m-1-1)		edge node{$\Tr{-}$} (m-1-2)
			edge (m-2-1)
(m-1-2) 	edge node{$\Ind_{W'}^W$} (m-2-2)
(m-2-1) 	edge node{$\Tr{-}$} (m-2-2);
\end{tikzpicture}
\end{equation}
This follows from the projection formula for induced representations, as we now explain.
If $[\Alt(\bb{V})]_{\Q} = \sum_i {(-\Q)^i} \Alt^i(\bb{V})$, then $\varepsilon [\Alt(\bb{V})]_{\Q}$ is the multiplicative inverse of $\varepsilon [\Sym(\bb{V})]_{\Q}$ in $R(W)[\![\Q]\!]$.
By the projection formula, we have
\begin{align}\begin{split}
(-\Q^{\frac{1}{2}})^{|\beta|} 
\Ind_{W'}^W \Tr{\beta} 
&=	
	\Ind_{W'}^W
	{(\Tr{\beta}^0 \cdot \varepsilon_{W'} [\Alt(\bb{V}')]_{\Q})}\\
&=	
	\Ind_{W'}^W
	{(\Tr{\beta}^0 \cdot \Res_{W'}^W(\varepsilon_W [\Alt(\bb{V}')]_{\Q}))}\\
&=	
	(\Ind_{W'}^W \Tr{\beta}^0) \cdot \varepsilon_W [\Alt(\bb{V}')]_{\Q}
\end{split}\end{align}
Since $d = \dim \bb{V} - \dim \bb{V}'$, we have
\begin{align}\begin{split}
(-\Q^{\frac{1}{2}})^{|\beta|} \Tr{\beta}
&=	
	\Tr{\beta}^0 \cdot \varepsilon_W [\Alt(\bb{V})]_{\Q}\\
&=
	\frac{1}{(1 - \Q)^d}\,\Tr{\beta}^0 \cdot \varepsilon_W [\Alt(\bb{V}')]_{\Q}.
\end{split}\end{align}
So the commutativity of \eqref{eq:induction-br} implies $\Ind_{W'}^W(\Tr{\beta}) = \Tr{\beta}$, as needed.

\subsection{}

Finally, we prove Theorem \ref{thm:pole}, stating that
\begin{align}
\Tr{\beta}^0 \in \frac{1}{(1 - \Q)^{r(w)}}\, R(W)[\Q],
\end{align}
where $w$ is the image of $\beta$ in $W$ and $r(w) = \dim \bb{V}^w$.

First, we introduce a function $\delta : W \to \bb{Z}$.
For all $w \in W$, we claim that
\begin{align}
\delta(w) = \lim_{\Q \to 1} 
\frac{\det(1 - \Q w \mid \bb{V})}{(1 - \Q)^{r(w)}}
\end{align}
exists and takes values in $\bb{Z}$.
Indeed, since $w$ has finite order, its action on $\bb{V}$ is diagonalizable.
Thus $r(w)$ equals the multiplicity of $1 - \Q$ in the characteristic polynomial $\det(1 - \Q w \mid \bb{V})$.
This shows that $\delta(w)$ exists.
It is an integer because $\bb{Q}_W = \bb{Q}$ and character values are algebraic integers.
We can now state:

\begin{thm}[Lusztig]\label{thm:isotropy}
If $G$ is semisimple and $\beta \in \Br_W^+$ maps to $w \in W$, then the isotropy groups of the $G$-action on $G(\beta)$ are diagonalizable groups $D$ such that 
\begin{align}
|D^F|
\quad\text{divides}\quad
(q - 1)^{r(w)} \delta(w)|Z(G)|,
\end{align}
where $Z(G)$ is the center of $G$.
In particular, the isotropy groups of the $G^F$-action on $G(\beta)^F$ satisfy the same divisibility condition.
\end{thm}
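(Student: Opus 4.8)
The plan is to reduce the computation of an isotropy group of the $G$-action on $G(\beta)$ to a torus-like group attached to the element $w\in W$, using the fiber-bundle description of $G(\beta)$ over $O(\beta)$ and then the description of $O(\beta)$ as an iterated fiber product over $\cal{B}$. First I would observe that $G$ acts on $G(\beta)$ compatibly with the map $G(\beta)\to O(\beta)$ and with the projection $O(\beta)\to\cal{B}\times\cal{B}$; the $G$-action on $\cal{B}\times\cal{B}$ has the orbit $O_w$ through any point of $O(\beta)$ in its image, with isotropy a Borel intersection $B\cap wBw^{-1}$, which is a connected solvable group of the form $T\ltimes U_w$ (this is exactly the data used in Lemma \ref{lem:assoc-bundle}). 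So the isotropy of a point of $G(\beta)$ is a subgroup of such a $B\cap wBw^{-1}$, hence a subgroup of a group that is an extension of the maximal torus $T$ by a unipotent group.

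Next I would use the factorization $\beta=\sigma_{s_1}\cdots\sigma_{s_\ell}$ and the iterated fiber product structure $O(\beta)\simeq O(\sigma_{s_1})\times_{\cal{B}}\cdots\times_{\cal{B}}O(\sigma_{s_\ell})$ from Deligne's theorem, together with the analogous structure for $G(\beta)$, to see that the stabilizer of a point lying over a chain $(B_0,B_1,\dots,B_\ell)$ with $B_0=g^{-1}B_\ell g$ is the intersection $\bigcap_i \mathrm{Stab}(B_{i-1},B_i)$ — i.e.\ the simultaneous stabilizer of the whole chain of Borels, which, after the twist by $g$ forcing $B_0 = g^{-1}B_\ell g$, cuts the torus part down. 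Concretely, the stabilizer is the fixed-point subgroup of an automorphism of $T$ built from the ambient conjugations, so its identity component is a subtorus and its component group is controlled by $\det(1-w\mid \bb{V})$-type data. Here the key point is that the unipotent directions contribute a connected unipotent group, which over a finite field has order a power of $q$ and in any case is harmless after the reduction to the torus part; and that the isotropy is a \emph{diagonalizable} group, so $|D^F|$ is a product over the eigenvalues of the relevant automorphism on the character lattice, which is exactly what the characteristic polynomial $\det(1-\Q w\mid\bb{V})$ records. Evaluating the $F$-fixed points gives a divisor of $(q-1)^{r(w)}\delta(w)$ up to the contribution of the center $Z(G)$, which enters because $T$ is a quotient of the cocharacter lattice only up to $Z(G)$; tracking $Z(G)$ carefully yields the stated divisibility.

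I would carry this out in three steps: (1) the bundle/fiber-product reduction, identifying the isotropy as the stabilizer of a chain of Borels subject to the closure constraint $B_0=g^{-1}B_\ell g$; (2) a linear-algebra computation on $\bb{X}=\bb{X}^*(T)$ showing that this stabilizer is diagonalizable with identity component a subtorus of dimension $r(w)$ and $|\pi_0|$ dividing $\delta(w)|Z(G)|$; (3) taking $F$-fixed points, using that $G_0$ is split so $F$ acts on $\bb{X}$ as multiplication by $q$, to get $|D^F|\mid (q-1)^{r(w)}\delta(w)|Z(G)|$. The main obstacle I expect is step (2): disentangling precisely how the component group of the stabilizer of the twisted Borel chain depends on $w$ and on $Z(G)$, since one must pass between the abstract Weyl group action on $\bb{V}=\bb{X}\otimes\QL$ and the integral structure on $\bb{X}$, and the factor $\delta(w)=\lim_{\Q\to1}\det(1-\Q w\mid\bb{V})/(1-\Q)^{r(w)}$ must be shown to bound the torsion index exactly. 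This is essentially the argument of \cite[\S5]{lusztig_2011} adapted from $G(\sigma_w)$ to general $G(\beta)$, and the adaptation is straightforward once the fiber-product formalism of Deligne is in place.
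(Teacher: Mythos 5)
Your outline has a genuine gap at exactly the point where the paper's proof leans on Lusztig. First, your step (1) is not correct as stated: the isotropy group of a point $(g,B_0,\ldots,B_\ell)\in G(\beta)$ is $Z_G(g)\cap B_0\cap\cdots\cap B_\ell$, so the condition of centralizing $g$ --- which you drop when you describe the stabilizer as the simultaneous stabilizer of the chain of Borels --- is the only thing that can cut the group below a full maximal torus, since the stabilizer of any chain of Borels already contains one. Moreover the pair $(B_0,B_\ell)$ lies in $O_{w'}$ for a point-dependent $w'$ (for $\beta=\sigma_s\sigma_s$ it can lie in $O_1$ or in $O_s$), not in $O_w$ for $w$ the image of $\beta$; so the opening reduction ``isotropy $\subseteq B\cap wBw^{-1}$'' fails, and the genuinely delicate point --- why the bound is governed by the image $w$ of $\beta$, uniformly in $\beta$ and in the point --- is not produced by the fiber-product bookkeeping.

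Second, and more seriously, the heart of the theorem is the assertion that, up to conjugacy, $D\subseteq T^{\dot{w}}$, i.e.\ that $D$ is diagonalizable with no unipotent part. The paper's proof of this consists precisely of the observation that Lusztig's argument for \cite[Thm.\ 5.2]{lusztig_2011} (written there for adjoint $G$ and elliptic $w$) yields this containment for general $G$, $w$, and all $\beta$; the remainder is a short count carried out on the ambient group rather than on $D$: pass to $\bar{G}=G/Z(G)$, note that the identity component of $\bar{T}^{\dot{w}}$ is a split torus of rank $r(w)$, so $|(\bar{T}^{\dot{w}})^F|=(q-1)^{r(w)}\delta(w)$, use $|(T^{\dot{w}})^F|=|(\bar{T}^{\dot{w}})^F|\,|Z(G)|$, and finish by Lagrange. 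Your step (2) presupposes that $D$ already sits inside a maximal torus (``the fixed-point subgroup of an automorphism of $T$''), which is exactly what must be proved, and your remark that the unipotent directions contribute ``a connected unipotent group\ldots of order a power of $q$\ldots in any case harmless'' is inconsistent with the statement being proved: the claimed divisor $(q-1)^{r(w)}\delta(w)|Z(G)|$ is prime to $p$, so a unipotent factor cannot be absorbed --- it must be excluded, and excluding it is not a routine lattice computation on $\bb{X}$ but the content of Lusztig's argument, which you defer to only in your closing sentence. Relatedly, your claim that the identity component of $D$ has dimension $r(w)$ is neither needed nor true in general; only the containment $D\subseteq T^{\dot{w}}$ (and the count of $(T^{\dot{w}})^F$) is used.
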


\begin{proof}
Fix a maximal split torus $T_0 \subseteq G_0$ and a lift $\dot{w} \in N_G(T)$ of $w$.
In the case where $G$ is adjoint and $w$ is \dfemph{elliptic}, meaning $T^w$ is discrete, the proof is analogous to the proof of \cite[Thm.\ 5.2]{lusztig_2011}.
However, the argument there actually shows more:
For general $G$ and $w$, it shows that we can assume $D \subseteq T^{\dot{w}}$, independently of $\beta$.
Let $\bar{G} = G/Z(G)$, the adjoint group of $G$, and let $\bar{T}, \bar{D}$ be the images of $T, D$ in $\bar{G}$.
Writing $\bar{T}'$ for the identity component of $\bar{T}^{\dot{w}}$, we see that $\bar{T}'$ is a split torus of rank $r(w)$ such that $\bar{T}^{\dot{w}}/\bar{T}'$ is discrete.
So we get $|(\bar{T}^{\dot{w}})^F| = (q - 1)^{r(w)}\delta(w)$ from the elliptic case.
Finally, $|(T^{\dot{w}})^F| = |(\bar{T}^{\dot{w}})^F||Z(G)|$.
\end{proof}

\begin{proof}[Proof of Theorem \ref{thm:pole}]
Combining Theorems \ref{thm:virtual} and \ref{thm:isotropy}, we see that 
\begin{align}\label{eq:pole-1}
\Tr{\beta}^0|_{\Q = q} \in \frac{1}{(1 - q)^{r(w)} \delta(w)|Z(G)|}\, R(W)
\end{align}
for all $q \gg 0$.
At the same time, by Proposition \ref{prop:rationality},
\begin{align}\label{eq:pole-2}
\Tr{\beta}^0 \in \frac{1}{(1 - \Q^{d_1}) \cdots (1 - \Q^{d_r})}\, R(W)[\Q_0],
\end{align}
where $\Q_0$ is defined by \eqref{eq:exception} and $d_i \in \bb{Z}$ for all $i$.
Therefore, \eqref{eq:pole-1} simplifies to
\begin{align}
\Tr{\beta}^0|_{\Q = q} \in \frac{1}{(1 - q)^{r(w)}}\, R(W)
\end{align}
for all $q \gg 0$.
Therefore, \eqref{eq:pole-2} simplifies to
\begin{align}
\Tr{\beta}^0 \in \frac{1}{(1 - \Q)^{r(w)}}\, R(W)[\Q],
\end{align}
as needed.
\end{proof}

\newpage
\section{Periodic Braids}\label{sec:periodic}

\subsection{}

In this section and the next, \emph{we no longer assume that $W$ is crystallographic}.
Let $w_0$ be the longest element of $W$ with respect to the chosen Coxeter presentation (see \S\ref{subsec:coxeter-group}).
The \dfemph{full twist} of $\Br_W$ is the positive braid
\begin{align}
\pi = \sigma_{w_0}^2 \in \Br_W^+.
\end{align}
It is central in $\Br_W$ \cite[\S{2}]{bm_1997}.
We say that a braid $\beta \in \Br_W$ is \dfemph{periodic} iff 
\begin{align}
\beta^n = \pi^m
\end{align}
for some $\frac{m}{n} \in \bb{Q}$, in which case we say that $\frac{m}{n}$ is the \dfemph{slope} of $\beta$.

\begin{thm}[Lee--Lee]\label{thm:ll}
If $\beta \in \Br_W$ is periodic of slope $\frac{m}{n}$, then $\beta = \gamma^m$ and $\pi = \gamma^n$ for some $\gamma \in \Br_W$.
\end{thm}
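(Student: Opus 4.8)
The plan is to prove this by an elementary B\'ezout argument, drawing on only two facts: that the full twist $\pi$ is central in $\Br_W$ (quoted above from \cite{bm_1997}) and that $\Br_W$ is torsion-free (a standard property of Artin groups of finite type, e.g.\ because the quotient of the complement of the reflection arrangement is a finite-dimensional $K(\pi, 1)$). First I would reduce to the case $\gcd(m, n) = 1$, which is the slope written in lowest terms: if $d = \gcd(m, n)$ and $x = \beta^{n/d}\pi^{-m/d}$, then centrality of $\pi$ lets $\beta^{n/d}$ and $\pi^{-m/d}$ commute, so $x^d = \beta^n\pi^{-m} = 1$, whence $x = 1$ by torsion-freeness and $\beta^{n/d} = \pi^{m/d}$; thus we may replace $(m, n)$ by $(m/d, n/d)$.

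Next I would choose integers $a, b$ with $am + bn = 1$ and set $\gamma = \beta^a\pi^b \in \Br_W$. Because $\pi$ is central, $\beta$ and $\pi$ commute, so a direct computation gives
\begin{align}
\gamma^n &= \beta^{an}\pi^{bn} = (\beta^n)^a\pi^{bn} = \pi^{am + bn} = \pi,\\
\gamma^m &= \beta^{am}\pi^{bm} = \beta^{am}(\beta^n)^b = \beta^{am + bn} = \beta,
\end{align}
where the middle equality of each line uses $\beta^n = \pi^m$. This $\gamma$ exhibits $\beta$ and $\pi$ as the $m$th and $n$th powers of a common element, as claimed.

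I do not anticipate any real obstacle in this direction: the argument is just the coprime-exponent trick combined with the centrality of $\pi$. What is genuinely nontrivial lies outside the statement asserted here --- namely the theorem of Brieskorn--Saito and Deligne (in the form used by Brou\'e--Michel) that $\pi = \sigma_{w_0}^2$ is central, and the finer analysis of Lee--Lee classifying which braids are periodic and showing that periodic elements are conjugate into the cyclic subgroups generated by the canonical roots of $\pi$. Those results are not needed for the present lemma, since we only assert the existence of some root $\gamma$.
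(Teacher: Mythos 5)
Your proof is correct, and it takes a genuinely different route from the paper's. The paper argues via Garside theory: it extracts a primitive root $\gamma$ of $\beta$ (possible by minimizing writhe), observes that any root of a periodic braid is periodic, and then invokes Theorem 3.14 of Lee--Lee \cite{ll}, which says a primitive periodic element is a positive root of $\sigma_{w_0}^d$, where $d \in \{1, 2\}$ is minimal with $\sigma_{w_0}^d$ central; matching exponents then produces the common root of $\beta$ and $\pi$. Your B\'ezout computation avoids \cite{ll} entirely: the identities $\gamma^n = \pi$ and $\gamma^m = \beta$ for $\gamma = \beta^a\pi^b$ use nothing beyond the centrality of $\pi$ and the relation $\beta^n = \pi^m$, and torsion-freeness of $\Br_W$ (standard for finite-type Artin groups, e.g.\ via Deligne's $K(\pi,1)$ theorem, as you say) enters only in your reduction to lowest terms. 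That reduction is exactly the right care to take: the definition of ``periodic of slope $\frac{m}{n}$'' only guarantees $\beta^{n'} = \pi^{m'}$ for some representative of the slope, and the conclusion can only be expected for the reduced pair (for instance $\pi \in \Br_3$ satisfies $\pi^4 = \pi^4$ but has no fourth root, by writhe), so the passage through torsion-freeness is genuinely needed there. The one thing the paper's route buys that yours does not is positivity: Lee--Lee yield $\gamma$ as a \emph{positive} root of $\sigma_{w_0}^d$, which is the form in which Theorem \ref{thm:broue-michel} (quoted for $\gamma \in \Br_W^+$) is applied in Corollary \ref{cor:regular}, whereas your $\gamma = \beta^a\pi^b$ need not be positive; substituting your proof there would require a positivity-free form of Brou\'e--Michel or an argument conjugating $\gamma$ into $\Br_W^+$. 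This does not affect the statement at hand, which only asserts $\gamma \in \Br_W$, so as a proof of Theorem \ref{thm:ll} your argument is complete.
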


\begin{proof}
Let $d$ be the minimal positive exponent such that $\sigma_{w_0}^d$ is central in $\Br_W$.
Either $d = 1$ or $d = 2$.

In \cite{ll}, E.-K.\ Lee and S.-J.\ Lee say that an element $\gamma \in \Br_W$ is \dfemph{primitive} iff, whenever $\gamma = \delta^\ell$ for some $\delta \in \Br_W$, we have $\ell = \pm 1$.
Theorem 3.14 of \emph{ibid.}\ implies that if $\gamma$ is primitive and periodic, then $\gamma$ is a positive root of $\sigma_{w_0}^d$.

If $\beta$ is periodic, then any root of $\beta$ is also periodic.
So it remains to check that some root of $\beta$ is primitive.
Indeed, this follows from the well-ordering principle, because the writhe of $\beta$ is an integer.
\end{proof}

\begin{rem}
In type $A$, where the elements of $\Br_W$ represent topological braids, Theorem \ref{thm:ll} was first shown in 1934 by Ker\'ekj\'art\'o--Eilenberg \cite{eilenberg}.
\end{rem}

\begin{rem}
A result of Bessis implies that if an $n$th root of $\pi$ exists, then the $n$th roots of $\pi$ form a single conjugacy class in $\Br_W$ \cite[Thm.\ 12.4(ii)]{bessis}.
\end{rem}

\subsection{}

The following result, an incarnation of Schur's lemma, is \cite[Thm.\ 9.4.3]{gp}.

\begin{thm}[Springer]
For all $\phi \in \hat{W}$, the full twist acts on the underlying $H_W$-module of $\phi_{\Q}$ by the scalar $\Q^{\bb{c}(\phi)}$, where $\bb{c}(\phi)$ is the content of $\phi$ (see \S\ref{subsec:families}).
In particular, $\phi_{\Q}(\pi) = \Q^{\bb{c}(\phi)}\phi(1)$.
\end{thm}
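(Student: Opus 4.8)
The statement asserts that the full twist $\pi$ acts on the simple $H_W$-module $V_{\phi,\Q}$ affording the character $\phi_{\Q}$ by the scalar $\Q^{\bb{c}(\phi)}$, and consequently that $\phi_{\Q}(\pi) = \Q^{\bb{c}(\phi)}\phi(1)$. The plan is to first reduce the scalar claim to Schur's lemma, and then identify the scalar by a degree/specialization argument. The key observation is that $\pi = \sigma_{w_0}^2$ is central in $\Br_W$, hence its image in $H_W$ is central, so it acts on the absolutely simple $H_W(\Q^{\frac12})$-module $V_{\phi,\Q}$ by a scalar $\lambda_\phi \in \bb{Q}_W(\Q^{\frac12})^\times$. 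Taking the trace of this action immediately gives $\phi_{\Q}(\pi) = \lambda_\phi \cdot \phi(1)$, so the two assertions of the theorem are equivalent once we show $\lambda_\phi = \Q^{\bb{c}(\phi)}$.

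To pin down $\lambda_\phi$, I would argue as follows. Since $\sigma_{w_0}$ itself need not be central but $\sigma_{w_0}^2$ is, and since $V_{\phi,\Q}$ is absolutely simple, $\sigma_{w_0}$ acts by an operator whose square is $\lambda_\phi\cdot\mathrm{id}$; in any case it suffices to track $\lambda_\phi$. First I would establish that $\lambda_\phi$ is a monomial in $\Q^{\frac12}$: this follows because $\sigma_{w_0}$ is a product of $|w_0|=N$ generators $\sigma_s$, each of which is invertible in $H_W$ with $\Q$-monomial determinant (indeed $\varepsilon_{\Q}(\sigma_s) = -\Q^{-\frac12}$ as recalled in the excerpt), so $\det(\sigma_{w_0}\mid V_{\phi,\Q})$ is a $\Q$-monomial; but this determinant equals $\lambda_\phi^{\phi(1)/2}\cdot(\text{something})$ — more cleanly, one uses that $\pi$ has a well-defined writhe $|\pi| = 2N$ and that the central scalar $\lambda_\phi$ must therefore be $\Q$-homogeneous of the appropriate weight, matching the grading conventions under which $\Q^{\frac12}$ acts by $\langle -1\rangle$. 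So $\lambda_\phi = \Q^{c_\phi}$ for some rational number $c_\phi$, which is in fact an integer since $\lambda_\phi$ fixes the lattice structure. The remaining point is to identify $c_\phi$ with the content $\bb{c}(\phi)$ from the theory of families (\S\ref{subsec:families}); this is essentially the \emph{definition} of the content, or at least the defining characterization I would cite, so the main work is to match conventions: namely that the content of $\phi$ is by definition the integer $c$ such that $\pi$ acts on $V_{\phi,\Q}$ by $\Q^{c}$, normalized via the $\mathbf{a}$-function and the eigenvalue of Frobenius on the corresponding almost-character. I would verify compatibility using the generic-degree / fake-degree formalism of \cite[\S{9.3--9.4}]{gp}.

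The concrete way to nail the value is a specialization argument at $\Q = q$. Under Iwahori's isomorphism $\QL\otimes H_W|_{\Q^{1/2}=q^{1/2}} \simeq \End_{G^F}(R_1)$ and the double-centralizer decomposition \eqref{eq:double-centralizer}, the module $V_{\phi,q}$ is realized inside $R_1 \simeq \QL[\cal{B}^F]$, paired with the unipotent principal series representation $\rho_\phi$. The central element $\pi$ then acts on the $\rho_\phi$-isotypic component; one computes this scalar geometrically, since $\sigma_{w_0}^2$ corresponds to a double composition of the longest Deligne--Lusztig-type operator, whose eigenvalue on the $\phi$-part is governed by the eigenvalue of Frobenius on $\ur{H}_!^\ast(X_{w_0})$ in the $\phi$-isotypic piece — and that eigenvalue is exactly $q^{\bb{c}(\phi)}$ by Lusztig's analysis \cite[Ch.\ 4]{lusztig_1984} of the action of Frobenius on Deligne--Lusztig varieties (the ``$\mathbf{a}$-function'' and root-of-unity data attached to a family). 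Since this holds for infinitely many prime powers $q$, and both sides are polynomials in $\Q$ (after clearing the $\Q^{1/2}$), we conclude $\lambda_\phi = \Q^{\bb{c}(\phi)}$, hence $\phi_{\Q}(\pi) = \Q^{\bb{c}(\phi)}\phi(1)$.

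\textbf{Main obstacle.} The genuinely delicate step is the identification of the exponent $c_\phi$ with the combinatorially-defined content $\bb{c}(\phi)$, i.e.\ reconciling the ``eigenvalue of $\pi$'' normalization with whatever normalization of the content is fixed in \S\ref{subsec:families}; depending on that definition, this may be immediate (a tautology) or may require the Frobenius-eigenvalue computation of the previous paragraph together with the precise relation between the $\mathbf{a}$-invariant, the fake and generic degrees, and the scalar by which the full twist acts. The braid-theoretic inputs (centrality of $\pi$, Schur's lemma, monomiality via $\varepsilon_{\Q}$) are routine; it is the dictionary with Lusztig's parametrization of $\hat W$ by families that carries the content of the proof, and in the write-up I would simply invoke \cite[Thm.\ 9.4.3]{gp} for this identification rather than reprove it.
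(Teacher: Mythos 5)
The paper does not actually prove this statement: it is quoted verbatim from \cite[Thm.\ 9.4.3]{gp} (Springer's theorem), with only the remark that it is ``an incarnation of Schur's lemma.'' Your opening reduction (centrality of $\pi$ in $\Br_W$, Schur's lemma on the absolutely simple module $V_{\phi,\Q}$, and the trace computation giving $\phi_{\Q}(\pi)=\lambda_\phi\,\phi(1)$) is exactly that easy half, and since you end by deferring the identification $\lambda_\phi=\Q^{\bb{c}(\phi)}$ to the same citation \cite[Thm.\ 9.4.3]{gp}, your proposal is, in substance, the same as what the paper does.

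However, the intermediate sketch you offer in place of that citation would not stand on its own, and two specific claims in it are wrong as stated. First, in this paper $\bb{c}(\phi)$ is \emph{not} defined as ``the exponent by which the full twist acts'': it is the character-theoretic quantity $\bb{c}(\phi)=\frac{1}{\phi(1)}\sum_{t\in\mathrm{Ref}(W)}\phi(t)$ (equivalently $N-\bb{a}(\phi)-\bb{A}(\phi)$ by Brou\'e--Michel), so the identification is a genuine computation, not a matter of matching conventions. Second, the Deligne--Lusztig route is not correct as written: the Frobenius-eigenvalue data attached by Lusztig to a family is a root of unity times a half-integral power of $q$ and does not directly yield the central character of $\pi$ on $V_{\phi,q}$ without substantial further input; likewise ``fixes the lattice structure'' is not an argument for integrality of the exponent. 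The clean self-contained argument (and essentially the one in \cite{gp}) is the determinant computation you gesture at but do not carry out: each $\sigma_s$ acts on $V_{\phi,\Q}$ with eigenvalues $\Q^{\frac{1}{2}}$ and $-\Q^{-\frac{1}{2}}$ of multiplicities $\frac{1}{2}(\phi(1)\pm\phi(s))$, a reduced word for $w_0$ meets each class of simple reflections as often as that class has reflections, so $\det(\sigma_{w_0}\mid V_{\phi,\Q})^2=\pm\,\Q^{\sum_{t\in\mathrm{Ref}(W)}\phi(t)}$; since $\pi=\sigma_{w_0}^2$ acts by the scalar $\lambda_\phi$, this gives $\lambda_\phi^{\phi(1)}=\pm\,\Q^{\phi(1)\bb{c}(\phi)}$, and the specialization $\Q^{\frac{1}{2}}\mapsto 1$ (under which $\pi\mapsto w_0^2=1$) resolves the root-of-unity ambiguity to give $\lambda_\phi=\Q^{\bb{c}(\phi)}$. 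If you intend your write-up to rest on the citation, it is fine; if you intend the sketch to be a proof, replace the Frobenius/``by definition'' steps with this determinant argument.
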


The following corollary is a mild generalization of \cite[Prop.\ 9.2.8]{bm_1997}, but we give the proof anyway.
The proof is based on \cite[Lem.\ 9.4]{jones}, which Jones used to calculate the HOMFLY polynomials of torus knots.

\begin{cor}\label{cor:br-to-w}
If $\phi \in \hat{W}$ and $\beta \in \Br_W$ is periodic of slope $\slope \in \bb{Q}$, then 
\begin{align}
\phi_{\Q}(\beta) = \Q^{\slope \bb{c}(\phi)} \phi(w),
\end{align}
where $w \in W$ is the image of $\beta$ under the surjection $\Br_W \surject W$.
\end{cor}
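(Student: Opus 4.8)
The plan is to follow the deformation argument of Jones \cite[Lem.\ 9.4]{jones}. By Theorem \ref{thm:ll} we may write $\slope = m/n$ and $\beta = \gamma^m$, $\pi = \gamma^n$ for some $\gamma \in \Br_W$. Pass to the splitting field $\bb{Q}_W(\Q^{\frac{1}{2}})$, over which $H_W(\Q^{\frac{1}{2}})$ is split semisimple (see \S\ref{subsec:hecke-generic}), and let $V_\phi$ be the simple module affording $\phi_{\Q}$. Let $A \in \mathrm{GL}(V_\phi)$ be the operator by which the image of $\gamma$ in $H_W^\times$ acts. Since $\beta = \gamma^m$ in $\Br_W$, we have $\phi_{\Q}(\beta) = \tr(A^m)$, so everything reduces to controlling the eigenvalues of $A$.

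First I would apply Springer's theorem, quoted above: the full twist $\pi = \gamma^n$ acts on $V_\phi$ by the scalar $\Q^{\bb{c}(\phi)}$. Hence $A^n = \Q^{\bb{c}(\phi)}\,\mathrm{Id}$, so over a suitable algebraic (Puiseux-type) extension of $\bb{Q}_W(\Q^{\frac{1}{2}})$ in which $\Q^{\bb{c}(\phi)}$ acquires an $n$th root $\Q^{\bb{c}(\phi)/n}$, the operator $A$ is diagonalizable with eigenvalues among $\zeta_n^k\,\Q^{\bb{c}(\phi)/n}$ for $0 \le k < n$, where $\zeta_n$ is a primitive $n$th root of unity. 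Write $m_k$ for the multiplicity of $\zeta_n^k\,\Q^{\bb{c}(\phi)/n}$; then $\sum_k m_k = \phi(1)$.

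Next I would specialize $\Q^{\frac{1}{2}} \mapsto 1$. Under $H_W \twoheadrightarrow \bb{Q}_W[W]$, the module $V_\phi$ degenerates to the irreducible $W$-representation of character $\phi$ (Tits's deformation theorem), $A$ degenerates to the action $\bar{A}$ of the image $w' \in W$ of $\gamma$, and $\Q^{\bb{c}(\phi)/n} \mapsto 1$. Note $\bar{A}^n$ is the action of the image of $\pi = \gamma^n$ in $W$, namely $w_0^2 = e$, so $\bar A$ is semisimple with all eigenvalues among the $\zeta_n^k$. The crux is that the eigenvalue multiplicities do not jump: each $A - \zeta_n^k\,\Q^{\bb{c}(\phi)/n}\,\mathrm{Id}$ has rank $\phi(1) - m_k$ over the fraction field, so by upper semicontinuity of matrix rank the specialization $\bar A - \zeta_n^k\,\mathrm{Id}$ has rank at most $\phi(1) - m_k$, i.e.\ $\zeta_n^k$ occurs in $\bar A$ with multiplicity $\bar m_k \ge m_k$; summing over $k$ and using $\sum_k \bar m_k = \phi(1) = \sum_k m_k$ forces $\bar m_k = m_k$ for every $k$. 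Therefore
\begin{align}
\phi_{\Q}(\beta) = \tr(A^m) = \sum_{k} m_k\,\zeta_n^{km}\,\Q^{m\bb{c}(\phi)/n} = \Q^{\slope\bb{c}(\phi)}\,\tr(\bar A^m) = \Q^{\slope\bb{c}(\phi)}\,\phi\big((w')^m\big) = \Q^{\slope\bb{c}(\phi)}\,\phi(w),
\end{align}
since $(w')^m$ is the image of $\gamma^m = \beta$ in $W$.

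The main obstacle is the specialization step: one must set up the generic module $V_\phi$ together with an integral form degenerating at $\Q^{\frac{1}{2}} = 1$ (this is where split semisimplicity over $\bb{Q}_W(\Q^{\frac{1}{2}})$ and Tits's theorem enter, cf.\ \S\ref{subsec:hecke-generic}) and then make the ``no jump in eigenvalue multiplicities'' assertion precise. The rank-semicontinuity argument above is one clean route; Jones's original route instead treats $\Q^{\frac{1}{2}}$ as a positive real parameter and uses continuity of the eigenvalue branches $\lambda(\Q^{\frac{1}{2}})$, each satisfying $\lambda^n = \Q^{\bb{c}(\phi)}$ and hence equal to a fixed $n$th root of unity times $\Q^{\bb{c}(\phi)/n}$ by connectedness of $\{\Q^{\frac{1}{2}} > 0\}$. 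Everything else is bookkeeping: interpreting $\Q^{\slope\bb{c}(\phi)}$ as $\big(\Q^{\bb{c}(\phi)/n}\big)^m$, and observing that the final answer involves only $\Q^{m\bb{c}(\phi)/n}$ and so is independent of the chosen $n$th root.
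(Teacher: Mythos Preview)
Your argument is correct and follows the same core idea as the paper's proof: Springer's theorem forces the relevant operator to be an $n$th root of a scalar, and a deformation from generic $\Q$ to $\Q=1$ shows its eigenvalue spectrum matches that of the image in $W$. Your rank-semicontinuity argument is a clean algebraic substitute for the paper's topological version (a generic path in $\bb{C}$ from $q$ to $1$, along which ``roots of $I$ do not deform'').

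One unnecessary detour: you invoke Theorem~\ref{thm:ll} to produce $\gamma$ with $\beta=\gamma^m$, $\pi=\gamma^n$ and then analyze the action of $\gamma$. The paper works with $\beta$ directly: from $\beta^n=\pi^m$ one gets that $q^{-\slope\bb{c}(\phi)}\beta$ acts on $V_{\phi,q}$ by an $n$th root of $I$, and the same deformation argument compares its spectrum to that of $w$ acting on $V_\phi$. This avoids both the appeal to Lee--Lee and the need to adjoin a formal $\Q^{\bb{c}(\phi)/n}$, since after specializing to a generic complex $q$ the normalized operator already has eigenvalues among the $n$th roots of unity.
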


\begin{proof}
It suffices to prove the result when $\Q^{\frac{1}{2}}$ has been specialized to a generic complex number $q^{\frac{1}{2}}$.
We set 
\begin{align}
H_W[q^{\frac{1}{2}}] = H_W \otimes_{\bb{Z}[\Q^{\pm 1/2}]} \bb{C},
\end{align}
where $\bb{Z}[\Q^{\pm\frac{1}{2}}] \to \bb{C}$ sends $\Q^{\frac{1}{2}} \mapsto q^{\frac{1}{2}}$, and 
\begin{align}
\phi_q = \phi_{\Q}|_{\Q^{1/2} = q^{1/2}}.
\end{align}
Write $\slope = \frac{m}{n}$ in lowest terms.
By the theorem, $q^{-m\bb{c}(\phi)}\pi^m$ acts on the underlying $H_W[q^{\frac{1}{2}}]$-module of $\phi_q$ by the identity operator $I$.
Since $\beta^n = \pi^m$, we deduce that $q^{-\slope \bb{c}(\phi)} \beta$ acts by an $n$th root of $I$.

Recall that $H_W|_{q = 1} = \bb{Z}[W]$.
Choose a generic path in the complex plane from $q$ to $1$.
Since the (diagonalizable) roots of $I$ do not deform, the deformation of $q$ along this path shows that the following operators must have the same eigenvalue spectrum:
\begin{itemize}
\item 	The action of $q^{-\slope\bb{c}(\phi)}\beta$ on the underlying $H[q^{\frac{1}{2}}]$-module of $\phi_q$.
\item 	The action of $w$ on the underlying $\bb{C}[W]$-module of $\phi$.
\end{itemize}
The result follows.
\end{proof}

\subsection{}

Let $\bb{V}$ be a realization of $W$ over $\bb{C}$ such that $\bb{V}^W = 0$.
A vector of $\bb{V}$ is \dfemph{regular} with respect to $W$ iff its stabilizer in $W$ is trivial, or equivalently, it avoids the reflecting hyperplanes that generate the $W$-action.
Suppose that an element $w \in W$ admits a regular eigenvector in $\bb{V}$ with eigenvalue $\zeta \in \bb{C}^\times$.
In this case, $\zeta$ must be a root of unity.
We say that $\zeta$ is \dfemph{regular} with respect to $W$ and that $w$ is a \dfemph{$\zeta$-regular element} of $W$.
The following results are parts (iv) and (v) of \cite[Thm.\ 4.2]{springer_1974}, respectively.

\begin{thm}[Springer]\label{thm:conjugacy}
For any $\zeta \in \bb{C}^\times$, the set of $\zeta$-regular elements of $W$ is either empty or forms a single conjugacy class.
\end{thm}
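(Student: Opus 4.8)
The statement to prove is Springer's theorem (Theorem \ref{thm:conjugacy}): for any $\zeta \in \bb{C}^\times$, the set of $\zeta$-regular elements of $W$ is empty or a single conjugacy class. The plan is to reduce everything to a statement about eigenspaces of $W$ acting on $\bb{V}$, following the classical argument of Springer, organized around the following principle: two elements of $W$ that both fix a common regular vector up to the scalar $\zeta$ can be conjugated to one another by means of an element of $W$ that carries one regular eigenvector to the other.

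First I would fix notation: let $\zeta$ have order $d$, and for $w \in W$ let $\bb{V}(w, \zeta) = \ker(w - \zeta \cdot \mathrm{id}_{\bb{V}})$ be the $\zeta$-eigenspace. Call $w$ a $\zeta$-regular element if $\bb{V}(w, \zeta)$ contains a regular vector, i.e.\ a vector lying outside the union of the reflecting hyperplanes $\bigcup_{s} \bb{V}^s$. The first key step is the \emph{dimension count}: if $v \in \bb{V}(w, \zeta)$ is regular, then $\dim \bb{V}(w, \zeta) = a(d)$, where $a(d) = \#\{\, i : d \mid d_i \,\}$ and $d_1, \ldots, d_r$ are the invariant degrees. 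This is proved by restricting the $W$-invariant polynomials $f_1, \ldots, f_r$ (of degrees $d_i$) to the line $\{t v : t \in \bb{C}\}$, pulling back along the quotient map $\bb{V} \to \bb{V}/W \cong \bb{A}^r$, and using that the fiber of $\bb{V}/W$ through the image of $v$ meets $\bb{V}(w,\zeta)$ in a linear space whose dimension is forced by which $f_i$ are homogeneous of degree divisible by $d$; here one uses Chevalley's theorem that $\bb{C}[\bb{V}]^W = \bb{C}[f_1, \ldots, f_r]$ and that the Jacobian of $(f_1, \ldots, f_r)$ vanishes exactly on the reflecting hyperplanes, so that regularity of $v$ makes the relevant differential surjective.

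The second and decisive step is the \emph{conjugacy}: suppose $w, w' \in W$ are both $\zeta$-regular, with regular eigenvectors $v \in \bb{V}(w, \zeta)$ and $v' \in \bb{V}(w', \zeta)$. One shows $w, w'$ are conjugate by exhibiting $x \in W$ with $x \cdot \bb{V}(w, \zeta) = \bb{V}(w', \zeta)$ and $x$ carrying $v$ to a regular vector of $\bb{V}(w', \zeta)$, for then $x w x^{-1}$ and $w'$ agree on $\bb{V}(w', \zeta)$, and two elements of $W$ agreeing on a regular vector in a common eigenspace are equal (because the eigenvalue $\zeta$ together with the structure of $W$ rigidifies the action — concretely, $x w x^{-1} (w')^{-1}$ fixes a regular vector, hence is the identity, since the only element of $W$ fixing a regular vector pointwise is the identity). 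To produce $x$: both $\bb{V}(w, \zeta)$ and $\bb{V}(w', \zeta)$ lie in the same fiber of $\bb{V} \to \bb{V}/W$ over a suitable point (by the dimension count they have the same dimension $a(d)$, and by a lifting argument along a generic path, or by Springer's original fixed-point argument on the variety of eigenvectors), and $W$ acts transitively on the set of irreducible components / maximal eigenspaces realizing this fiber; this transitivity is exactly where the regularity hypothesis is used, ensuring the relevant $W$-action on eigenvectors has no extra stabilizer.

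The main obstacle I expect is the transitivity-of-$W$-on-regular-eigenvectors step: showing that if $v, v'$ are both regular vectors with $wv = \zeta v$, $w'v' = \zeta v'$, then some $x \in W$ sends $v$ to a scalar multiple of a regular vector of $\bb{V}(w', \zeta)$ — equivalently, that the set of pairs (regular vector, element of $W$ scaling it by $\zeta$) is a single $W$-orbit under simultaneous conjugation/translation. Springer's resolution of this uses the structure of the discriminant hypersurface and a monodromy/deformation argument (or, alternatively, Lefschetz-type fixed point reasoning on the space of regular eigenvectors, which is connected); I would follow his treatment, and the routine parts — the dimension count, Chevalley's theorem, and the final rigidity argument that an element of $W$ fixing a regular vector is trivial — are standard and I would not grind through them. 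Since the paper explicitly cites this as \cite[Thm.\ 4.2(iv)]{springer_1974}, the cleanest ``proof'' in context is simply to invoke \emph{loc.\ cit.}, but the above is the sketch one would give if reconstructing it.
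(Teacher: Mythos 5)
The paper gives no proof here: the statement is quoted verbatim as part (iv) of \cite[Thm.\ 4.2]{springer_1974}, so your closing remark that the cleanest proof in context is simply to invoke \emph{loc.\ cit.} is exactly what the paper does, and your reconstruction of Springer's argument (the dimension count $\dim \bb{V}(w,\zeta) = a(d)$ via the invariant degrees and Chevalley's theorem, rigidity from the fact that only the identity fixes a regular vector, and conjugacy by transporting eigenspaces) is the standard one. One small correction to the sketch: the two eigenspaces cannot literally ``lie in the same fiber of $\bb{V} \to \bb{V}/W$,'' since such fibers are single finite $W$-orbits; the key point, which you rightly isolate as the main obstacle and defer to Springer, is that $W$ acts transitively on the $\zeta$-eigenspaces of maximal dimension $a(d)$ (equivalently, their images in $\bb{V}/W$ coincide), after which $w(w')^{-1}$ fixes a regular vector and is therefore trivial.
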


\begin{thm}[Springer]\label{thm:fake-deg}
If $w \in W$ is a $\zeta$-regular element, then
\begin{align}
\phi(w) = \FDeg_\phi(\zeta^{-1}) = \FDeg_\phi(\zeta)
\end{align}
for all $\phi \in \hat{W}$, where $\FDeg_\phi$ is the fake degree of $\phi$ (see \S\ref{subsec:degrees}).
\end{thm}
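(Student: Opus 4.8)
The statement to prove is Theorem \ref{thm:fake-deg}: if $w \in W$ is a $\zeta$-regular element, then $\phi(w) = \FDeg_\phi(\zeta^{-1}) = \FDeg_\phi(\zeta)$ for all $\phi \in \hat{W}$. Since this is attributed to Springer, the natural route is to extract it directly from \cite[Thm.\ 4.2]{springer_1974}, but let me sketch the structure of the underlying argument. The key object is the regular eigenvector: fix a regular $v \in \bb{V}$ with $wv = \zeta v$, and consider the action of $W$ on the coordinate ring $\Sym(\bb{V}^\vee)$ (or on $\Sym(\bb{V})$, depending on conventions). The invariant ring $\Sym(\bb{V}^\vee)^W$ is a polynomial ring on homogeneous generators $f_1, \dots, f_r$ of degrees $d_1, \dots, d_r$, and one has the $W$-equivariant isomorphism $\Sym(\bb{V}^\vee) \simeq \Sym(\bb{V}^\vee)^W \otimes \Sym(\bb{V}^\vee)_W$, where $\Sym(\bb{V}^\vee)_W$ is the coinvariant algebra, carrying the regular representation of $W$. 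The fake degree $\FDeg_\phi(\Q)$ is by definition the graded multiplicity of $V_\phi$ in the coinvariant algebra (see \S\ref{subsec:degrees}).

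\textbf{Main steps.} First I would recall Springer's key lemma: the evaluation map at the regular point $v$, restricted to each graded piece $\Sym^j(\bb{V}^\vee)_W$ of the coinvariant algebra, realizes that graded piece faithfully, and moreover the operator ``multiplication-then-evaluate-at-$v$'' intertwines the $W$-action suitably. Concretely, if one lets $w$ act on $\Sym(\bb{V}^\vee)_W$ via its natural (graded) action, then because $wv = \zeta v$, the element $w$ acting on the degree-$j$ component gets twisted by $\zeta^{j}$ (or $\zeta^{-j}$) relative to a point-evaluation that trivializes the module structure. Summing over $j$ with a formal variable tracking the grading, the trace of $w$ on the coinvariant algebra, \emph{weighted by $\zeta^{j}$ in degree $j$}, collapses: because the coinvariant algebra is the regular representation, its $\zeta$-weighted graded trace of $w$ picks out exactly the contribution where $w$ acts as a scalar, which is governed by the regular-element structure. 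The cleanest formulation: for the full coinvariant algebra $A = \Sym(\bb{V}^\vee)_W = \bigoplus_j A^j$, one has
\begin{align}
\sum_j \tr(w \mid A^j)\, \Q^j
= \prod_{i=1}^r \frac{1 - \Q^{d_i}}{1 - \zeta^{-\epsilon_i} \Q}\bigg|_{\text{suitable } \epsilon_i}
\end{align}
when $w$ is $\zeta$-regular, using that the eigenvalues of $w$ on $\bb{V}^\vee$ are $\zeta^{-(d_i - 1)}$ (Springer's theorem on eigenvalues of regular elements). On the other hand, decomposing $A \simeq \bigoplus_\phi V_\phi^{\oplus \dim V_\phi}$ with grading, the same graded trace equals $\sum_\phi \phi(w)\, \FDeg_\phi(\Q) \cdot (\text{something})$; comparing Hilbert-series identities and using that each $\FDeg_\phi$ is a polynomial determined by its values, one deduces $\phi(w) = \FDeg_\phi(\zeta)$. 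The second equality $\FDeg_\phi(\zeta) = \FDeg_\phi(\zeta^{-1})$ follows from the self-duality of the coinvariant algebra (Poincaré duality: $A^j \simeq (A^{N - j})^\vee$ as $W$-modules up to the sign character, where $N = \dim \cal{B}$), together with $\phi$ being real-valued since $\bb{Q}_W = \bb{Q}$ in the cases at hand — or more robustly, from the palindromic symmetry $\FDeg_\phi(\Q) = \Q^{N} \FDeg_{\varepsilon\phi}(\Q^{-1})$ combined with $\zeta$-regularity forcing $w$ and $w^{-1}$ into related conjugacy classes.

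\textbf{Expected obstacle.} The main difficulty is making the ``$\zeta$-weighting collapses the graded trace'' step precise: one must carefully track how the regular eigenvector $v$ trivializes the coinvariant module and verify that the resulting operator is conjugate to $w$ itself acting with the grading shift. This is exactly the content of Springer's proof of \cite[Thm.\ 4.2]{springer_1974}, and in a paper like this the honest move is simply to cite it; the statement is invoked here (in \S\ref{subsec:periodic}) only as an input for computing $\phi_{\Q}(\beta)$ on periodic braids via Corollary \ref{cor:br-to-w}, so a full reproof is unnecessary. If a proof sketch is wanted in-line, I would present the Hilbert-series comparison above and defer the delicate trivialization lemma to \cite{springer_1974}, noting that the second equality is immediate from Poincaré duality of the coinvariant algebra.
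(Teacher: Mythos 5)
Your proposal is correct and matches the paper's treatment: the paper gives no independent proof, simply citing parts (iv) and (v) of \cite[Thm.\ 4.2]{springer_1974}, exactly the move you recommend (your sketch of the coinvariant-algebra argument is consistent with Springer's proof, though the second equality is most directly seen from the reality of $\phi(w)$ together with the integrality of the coefficients of $\FDeg_\phi$).
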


\begin{rem}
In \cite[170]{springer_1974}, Springer claims that if $n$ is a fixed integer, then the regular elements of $W$ of order $n$ form a single conjugacy class.
This claim, which is stronger than Theorem \ref{thm:conjugacy}, fails when $W$ is not crystallographic.
For example, let $W$ be the Coxeter group of type $I_2(5)$ and let $w$ be a Coxeter element.
Then $w$ and $w^2$ are both regular elements of order $5$, but they are not conjugate.
\end{rem}

Regular elements of $W$ are related to periodic elements of $\Br_W$ by the following results, which are respectively Proposition 3.11 and Th\'eor\`eme 3.12 in \cite{bm_1997}.

\begin{prop}[Brou\'e--Michel]\label{prop:broue-michel}
If $w \in W$ is regular of order $n$, then $\sigma_w^n = \pi$.
\end{prop}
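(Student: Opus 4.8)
The plan is to give the geometric proof of Brou\'e--Michel \cite[Prop.\ 3.11]{bm_1997}, by realizing $\Br_W$ as the fundamental group of the orbit space of the reflection arrangement. Let $\bb{V}^{\mathrm{reg}} \subseteq \bb{V}$ be the complement of the reflecting hyperplanes. By the $K(\pi,1)$ theorem for finite Coxeter groups there is an identification $\Br_W \simeq \pi_1(\bb{V}^{\mathrm{reg}}/W, \bar{\mathbf{x}}_0)$ under which each generator $\sigma_s$ is represented by a small, positively oriented half-loop around the wall fixed by $s$, and under which the full twist $\pi = \sigma_{w_0}^2$ is the class of the scalar loop $t \mapsto e^{2\pi i t}\mathbf{x}_0$. (That loop is manifestly central, since scalar multiplication commutes with the $W$-action; identifying it with the word $\sigma_{w_0}^2$ is classical.) First I would reduce to $W$ irreducible: if $W = W_1 \times W_2$, then a regular element of order $n$ is a pair $(w_1, w_2)$ with each $w_i$ a $\zeta$-regular element of $W_i$ for a common primitive $n$th root of unity $\zeta$; the braids $\sigma_{w_1}, \sigma_{w_2}$ commute in $\Br_W \simeq \Br_{W_1} \times \Br_{W_2}$, and $\pi = \pi_1\pi_2$, so the claim for $W$ follows from the claims for the factors.

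Next, fix a regular eigenvector $\xi \in \bb{V}^{\mathrm{reg}}$ with $w\xi = \zeta\xi$; by Theorem \ref{thm:conjugacy} we may arrange $\zeta = e^{2\pi i/n}$, and the order of $\zeta$ is forced to equal $n$ because $w^k$ cannot fix the regular vector $\xi$ unless $w^k = 1$. Consider the path $\gamma\colon [0,1] \to \bb{V}^{\mathrm{reg}}$, $\gamma(t) = e^{2\pi i t/n}\xi$; since $\gamma(1) = \zeta\xi = w\xi$, its image $\bar\gamma$ in $\bb{V}^{\mathrm{reg}}/W$ is a loop based at $\bar\xi$. The heart of the argument is the identification $[\bar\gamma] = \sigma_w$ in $\Br_W$, which is the content of the path analysis in \cite[\S{3}]{bm_1997}: the positive lift of $w$ is expressed there as the homotopy class of exactly such an eigenvalue-rotation path, after sliding the base point into the fundamental chamber and tracking the walls that are crossed. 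Granting this, the $n$th power is computed by concatenating $W$-translates of $\gamma$: the path $\gamma$ followed by $w\gamma, w^2\gamma, \dots, w^{n-1}\gamma$ traces out $s \mapsto e^{2\pi i s/n}\xi$ for $s \in [0,n]$, which after reparametrization is the scalar loop $t \mapsto e^{2\pi i t}\xi$; since the $W$-translates all project to the same loop $\bar\gamma$, this shows $\sigma_w^n = [\bar\gamma]^n = \pi$.

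The main obstacle is precisely the identification $[\bar\gamma] = \sigma_w$. That $[\bar\gamma]$ is \emph{some} positive lift of $w$ is immediate from the endpoint condition, and its image under the writhe homomorphism $\beta \mapsto |\beta|$ is $\frac{1}{2\pi i}\oint_{\bar\gamma} d\log J = 2N/n$, where $J$ is the product of the squares of the linear forms cutting out the reflecting hyperplanes (a $W$-invariant polynomial of degree $2N$); so the identification is consistent only when $|w| = 2N/n$, which is where the hypothesis that $w$ be a regular element of the relevant type enters. Promoting this writhe-level match to an equality in $\Br_W$ is exactly the combinatorial bookkeeping of \cite{bm_1997} relating the standard positive generators to the local topology of the arrangement at the chosen base point, and this is the step I expect to take the most care. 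An alternative route would be to show directly that $\sigma_w^n$ lies in $\mathrm{Z}(\Br_W)$ — infinite cyclic on $\pi$ for $W$ irreducible — and then pin the exponent down from $n|w| = 2N$; but establishing centrality seems to require the same geometric input, so I would not expect a genuine shortcut there.
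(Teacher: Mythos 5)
The paper does not actually prove this proposition---it is quoted from Brou\'e--Michel \cite[Prop.\ 3.11]{bm_1997}---so what you are really doing is reconstructing their argument, and the skeleton you describe (Brieskorn's identification of $\Br_W$ with $\pi_1(\bb{V}^{\mathrm{reg}}/W)$, the full twist as the scalar loop, the rotating-eigenvector path $\gamma(t)=e^{2\pi i t/n}\xi$ whose class is an $n$-th root of $\pi$ lifting $w$, the reduction to irreducible $W$, and the writhe computation via $d\log$ of the discriminant) is indeed the right one. But two of your steps are genuinely broken, and they are exactly where the content lies.

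First, ``by Theorem \ref{thm:conjugacy} we may arrange $\zeta=e^{2\pi i/n}$'' is not available: the eigenvalues realized on regular eigenvectors are constant on conjugacy classes (conjugation moves the eigenvector, not the eigenvalue), so Theorem \ref{thm:conjugacy} gives you nothing, and the paper's own remark after Theorem \ref{thm:fake-deg} exhibits a regular element of order $5$ in type $I_2(5)$ (the square of a Coxeter element) whose only regular eigenvalues are $e^{\pm 4\pi i/5}$; for such $w$ your path yields $[\bar\gamma]^{\,n}=\pi^{m}$ with $\zeta=e^{2\pi i m/n}$, and the asserted conclusion itself fails there, since $n\,|w|=20\neq 10=|\pi|$. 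So $\zeta=e^{2\pi i/n}$ must be taken as a hypothesis, as it is in \cite{bm_1997} and in the way the proposition is invoked in the proof of Theorem \ref{thm:slopes}. Second, and more seriously, the identification $[\bar\gamma]=\sigma_w$ that you defer to ``the path analysis of \cite{bm_1997}'' is the entire point, and your justification of its prerequisite is wrong: you claim $|w|=2N/n$ ``is where the hypothesis that $w$ be a regular element of the relevant type enters,'' but length is not a class invariant while regularity is. A simple reflection $s\in S_3$ is $(-1)$-regular of order $2$ with $|s|=1$ and $\sigma_s^2\neq\pi$, whereas its conjugate $w_0=sts$, of length $3=2N/2$, does satisfy $\sigma_{w_0}^2=\pi$; likewise the $4$-cycle $(1324)\in S_4$ is $e^{2\pi i/4}$-regular of length $5\neq 2N/4$. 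Thus your eigen-rotation braid is a root of $\pi$ for every representative of the regular class, but it coincides with the reduced-word lift $\sigma_w$ only for suitably chosen representatives (in particular those with $|w|=2N/n$), and proving that coincidence---which uses the positivity/Garside structure of $\Br_W^+$, not just a writhe count---is precisely what is missing from your proposal. (A small additional slip: for irreducible $W$ with $-1\in W$ the center of $\Br_W$ is generated by $\sigma_{w_0}$, not by $\pi$; your fallback via centrality still pins down the exponent by writhe, but, as you note, it needs the same geometric input.)
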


\begin{thm}[Brou\'e--Michel]\label{thm:broue-michel}
If $\gamma \in \Br_W^+$ satisfies $\gamma^n = \pi$ for some $n$, then its image in $W$ is a $e^{\frac{2\pi i}{n}}$-regular element.
\end{thm}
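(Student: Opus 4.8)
Since $\pi = \sigma_{w_0}^2$ maps to $w_0^2 = 1$ under the surjection $\Br_W \surject W$, the image $w$ of $\gamma$ satisfies $w^n = 1$, and it suffices to produce a vector $v$ in the complement $\bb{V}^{\mathrm{reg}} \subseteq \bb{V}$ of the reflecting hyperplanes with $wv = e^{2\pi i/n}v$. The plan is to reduce this to the existence of \emph{any} $e^{2\pi i/n}$-regular element of $W$, using three inputs already available in the excerpt. First, by the theorem of Bessis quoted after Theorem \ref{thm:ll}, if $\pi$ admits an $n$-th root in $\Br_W$ then all of its $n$-th roots form a single conjugacy class in $\Br_W$. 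Second, by Proposition \ref{prop:broue-michel}, if $w'$ is an $e^{2\pi i/n}$-regular element of $W$ --- which then has order exactly $n$, since by Steinberg's theorem the stabilizer in $W$ of a regular vector is trivial --- its braid lift $\sigma_{w'} \in \Br_W^+$ satisfies $\sigma_{w'}^n = \pi$. Third, by Springer's Theorem \ref{thm:conjugacy}, the $e^{2\pi i/n}$-regular elements of $W$, if any, form a single conjugacy class, and a $W$-conjugate of such an element is again $e^{2\pi i/n}$-regular (conjugate the eigenvector). Granting one $e^{2\pi i/n}$-regular element $w'$, the first two inputs show $\gamma$ and $\sigma_{w'}$ are conjugate in $\Br_W$, hence $w$ and $w'$ are conjugate in $W$, hence $w$ is $e^{2\pi i/n}$-regular by the third.

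This reduces the theorem to the assertion that \emph{if $\pi$ has an $n$-th root in $\Br_W$, then $W$ contains an $e^{2\pi i/n}$-regular element}, which is where the real work lies; I would prove it topologically, following Springer's analysis of the scaling action. Recall that $\Br_W = \pi_1(\bb{V}^{\mathrm{reg}}/W,\bar x_0)$ and that $\bb{V}^{\mathrm{reg}}$ is aspherical (Deligne), so $\bb{V}^{\mathrm{reg}}/W$ is a $K(\Br_W,1)$; the scaling action of $\bb{C}^\times$ on $\bb{V}^{\mathrm{reg}}$ is free, commutes with $W$, and the orbit loop $u \mapsto e^{2\pi i u}x_0$ represents $\pi$. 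Forming the quotient orbifold $\cal{M} = \bb{V}^{\mathrm{reg}}/(W \times \bb{C}^\times)$, the $\bb{C}^\times$-action on $\bb{V}^{\mathrm{reg}}/W$ has cyclic point-stabilizers --- the stabilizer of $[v]$ being $\{\lambda \in \bb{C}^\times : \lambda v \in Wv\} = \langle\zeta\rangle$, where $\zeta v = w'v$ for a $\zeta$-regular $w'$ --- so $\cal{M}$ has an orbifold point of order $m$ exactly at the class of a regular $e^{2\pi i/m}$-eigenvector of $W$. The bundle structure yields a central extension $1 \to \langle\pi\rangle \to \Br_W \to \pi_1^{\mathrm{orb}}(\cal{M}) \to 1$ under which an $n$-th root $\gamma$ of $\pi$ goes to a torsion element of order dividing $n$. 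The geometric crux is the converse: $\gamma^n = \pi$ forces the image of $\gamma$ to lie, up to conjugacy, in the isotropy group of an orbifold point, of order exactly $n$ --- equivalently, that all torsion of $\pi_1^{\mathrm{orb}}(\cal{M})$ comes from point-stabilizers. Concretely I would lift $\gamma$ to a path $\tilde\gamma$ in $\bb{V}^{\mathrm{reg}}$ from $x_0$ to $wx_0$, use $\gamma^n = \pi$ to homotope the concatenation $\tilde\gamma\cdot(w\tilde\gamma)\cdots(w^{n-1}\tilde\gamma)$ rel endpoints onto the orbit loop $u \mapsto e^{2\pi i u}x_0$, and then perform a $\bb{C}^\times$-equivariant straightening of $\tilde\gamma$ onto a path $u \mapsto e^{2\pi i u/n}v$; this gives $v \in \bb{V}^{\mathrm{reg}}$ with $wv = e^{2\pi i/n}v$.

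The main obstacle is exactly this last step --- converting the soft homotopy $\gamma^n \simeq \pi$ into the rigid conclusion that $\tilde\gamma$ may be straightened $\bb{C}^\times$-equivariantly, i.e.\ controlling the torsion of $\pi_1^{\mathrm{orb}}(\cal{M})$. This is the substantive content of the work of Springer and Brou\'e--Michel; I would either import it or reprove it via a $\bb{C}^\times$-equivariant deformation retraction of $\bb{V}^{\mathrm{reg}}$ onto a ``round'' model on which the straightening is visible. As a consistency check on the conclusion: $\gamma$ is periodic of slope $\slope = 1/n$, so Corollary \ref{cor:br-to-w} gives $\phi_{\Q}(\gamma) = \Q^{\bb{c}(\phi)/n}\phi(w)$ for all $\phi \in \hat{W}$; specializing $\Q \to 1$ recovers $\phi(w)$, which the theorem predicts equals $\FDeg_\phi(e^{2\pi i/n})$ --- precisely the value Springer's Theorem \ref{thm:fake-deg} attaches to an $e^{2\pi i/n}$-regular element.
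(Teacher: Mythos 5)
There is a genuine gap, and you in effect acknowledge it yourself. Your first paragraph is a legitimate reduction: granting the quoted results (Bessis's conjugacy of the $n$-th roots of $\pi$, Proposition \ref{prop:broue-michel}, and Springer's Theorem \ref{thm:conjugacy}), the theorem follows once one knows that \emph{the existence of an $n$-th root of $\pi$ forces the existence of some $e^{2\pi i/n}$-regular element of $W$}. But that implication is not a lemma on the way to the theorem --- it essentially \emph{is} the theorem (the two statements are equivalent modulo the ingredients you already used), and your second paragraph does not establish it. The crux you isolate --- that $\gamma^n = \pi$ allows a $\bb{C}^\times$-equivariant straightening of the lifted path, equivalently that every torsion element of $\pi_1^{\mathrm{orb}}(\cal{M})$ is conjugate into a point stabilizer --- is precisely the hard content, and you conclude by saying you would ``either import it or reprove it.'' Importing it imports the theorem; reproving it is left as a sketch with no argument for the key rigidity step. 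So as written the proposal is a reduction plus a plausible strategy, not a proof. (A smaller caveat: leaning on Bessis's Thm.\ 12.4(ii) to prove a 1997 result of Brou\'e--Michel is logically admissible only insofar as Bessis's proof is independent of it; his theory of periodic elements is the descendant of exactly this circle of results, so one should check the dependence before claiming the reduction is non-circular.)

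For comparison, the paper does not prove this statement at all: it is quoted verbatim as Th\'eor\`eme 3.12 of Brou\'e--Michel, and the remark following Corollary \ref{cor:regular} records that their proof runs through results used by Charney to prove biautomaticity of Artin groups --- i.e.\ a Garside-theoretic, normal-form argument in the braid monoid, not the topological $\bb{C}^\times$-orbifold analysis you sketch. Your route (asphericity of $\bb{V}^{\mathrm{reg}}$, the central extension by $\langle\pi\rangle$, torsion in the orbifold fundamental group) is closer in spirit to Springer's and Bessis's later treatment of regular elements and periodic braids, and would be an attractive alternative if the straightening step were carried out; but for the present purpose the honest options are to cite Brou\'e--Michel as the paper does, or to supply the missing equivariant-retraction argument in full. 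The consistency check via Corollary \ref{cor:br-to-w} and Theorem \ref{thm:fake-deg} is fine as a sanity check but carries no proof weight, since Corollary \ref{cor:br-to-w} is upstream of, not a substitute for, regularity of the image.
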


\begin{cor}\label{cor:regular}
If $\beta \in \Br_W$ is periodic of slope $\nu \in \bb{Q}$, then the image of $\beta$ in $W$ is a $e^{2\pi i\nu}$-regular element.
\end{cor}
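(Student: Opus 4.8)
The plan is to deduce Corollary \ref{cor:regular} from Theorem \ref{thm:ll} (Lee--Lee) and Theorem \ref{thm:broue-michel} (Brou\'e--Michel), handling the sign of the slope separately. First I would write $\nu = \frac{m}{n}$ with $n > 0$, and note that the statement to prove is that the image $w$ of $\beta$ under $\Br_W \surject W$ has a regular eigenvector in $\bb{V}$ with eigenvalue $e^{2\pi i\nu}$. Since $\beta^n = \pi^m$ and $\pi$ is central, Theorem \ref{thm:ll} gives $\gamma \in \Br_W$ with $\beta = \gamma^m$ and $\pi = \gamma^n$; let $v$ be the image of $\gamma$ in $W$, so $w = v^m$ and the image of $\pi$ in $W$ is $v^n$. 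But the image of $\pi = \sigma_{w_0}^2$ in $W$ is $w_0^2 = 1$, so $v^n = 1$; hence $v$ has order dividing $n$.

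Next I would pin down the eigenvalue. The cleanest route is to first treat the case $m > 0$: then $\gamma^n = \pi$ with $\gamma \in \Br_W$ positive up to conjugacy — more precisely, after possibly conjugating $\gamma$ (which does not change the conjugacy class of its image in $W$, and regularity of an element of $W$ depends only on its conjugacy class by Theorem \ref{thm:conjugacy}) we may assume $\gamma \in \Br_W^+$, so Theorem \ref{thm:broue-michel} applies and says $v$ is $e^{2\pi i/n}$-regular, i.e.\ $v$ has a regular eigenvector $x \in \bb{V}$ with $vx = e^{2\pi i/n}x$. Then $wx = v^m x = e^{2\pi i m/n}x = e^{2\pi i\nu}x$, and $x$ is still regular, so $w$ is $e^{2\pi i\nu}$-regular, as desired. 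The case $m < 0$ follows by replacing $\gamma$ with $\gamma^{-1}$: if $\beta = \gamma^m$ with $m < 0$, then $\beta = (\gamma^{-1})^{-m}$ with $-m > 0$ and $(\gamma^{-1})^n = \pi^{-1}$; I would need the analogue of Theorem \ref{thm:broue-michel} for $\gamma^{-1}$, which follows by applying it to $\gamma^{-1}\pi \in \Br_W^+$ (a positive braid, since $\pi$ dominates in the positive monoid and $\gamma^{-1}\pi$ has nonnegative writhe divisible appropriately) satisfying $(\gamma^{-1}\pi)^n = \pi^{n-1}$ — or, more simply, by observing that if $vx = e^{2\pi i/n}x$ then $v^{-1}x = e^{-2\pi i/n}x$ with $x$ regular, so $v^{-1}$ is $e^{-2\pi i/n}$-regular, and then $wx = v^m x = e^{2\pi i\nu}x$. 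The case $m = 0$ forces $\beta^n = 1$ in the torsion-free group $\Br_W$, hence $\beta = \bb{1}$, $w = 1$, and any regular vector is fixed with eigenvalue $1 = e^{2\pi i\cdot 0}$.

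The main obstacle I anticipate is the bookkeeping around positivity and conjugacy when $m \leq 0$: Theorem \ref{thm:broue-michel} as stated requires $\gamma \in \Br_W^+$, so to invoke it I must reduce the general periodic $\beta$ to a positive $n$th root of a power of $\pi$. The Lee--Lee theorem produces $\gamma$ only in $\Br_W$, not in $\Br_W^+$, so the argument hinges on the fact that (i) regularity of an element of $W$ is a conjugacy-invariant notion (Theorem \ref{thm:conjugacy}), and (ii) among the $n$th roots of $\pi$ — which form a single conjugacy class in $\Br_W$ by the remark after Theorem \ref{thm:ll}, citing Bessis — there is a positive representative, namely $\sigma_{w'}$ for any regular element $w' \in W$ of order $n$ (Proposition \ref{prop:broue-michel}), provided such a $w'$ exists. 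Thus I would instead argue: the image $v$ of $\gamma$ is conjugate in $W$ to the image of $\sigma_{w'}$, which by Theorem \ref{thm:broue-michel} applied to $\gamma = \sigma_{w'} \in \Br_W^+$ is $e^{2\pi i/n}$-regular; conjugating back, $v$ is $e^{2\pi i/n}$-regular; then raising to the $m$th power and keeping the same regular eigenvector gives the eigenvalue $e^{2\pi i\nu}$ for $w = v^m$. This sidesteps the sign issue entirely, since $\sigma_{w'}$ is manifestly positive regardless of the sign of $m$.
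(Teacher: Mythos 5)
Your route is the same as the paper's: use Theorem \ref{thm:ll} to produce a common root $\gamma$ with $\beta = \gamma^m$ and $\pi = \gamma^n$, apply Theorem \ref{thm:broue-michel} to make the image $v$ of $\gamma$ an $e^{2\pi i/n}$-regular element, and then transport the regular eigenvector through $w = v^m$ to get the eigenvalue $e^{2\pi i\nu}$. That last observation also disposes of the sign of $m$ (and the trivial case $m=0$) exactly as you note at the end, so the separate $m<0$ discussion, including the dubious aside that $\gamma^{-1}\pi$ is positive, is unnecessary once you fix $n>0$ and keep the same eigenvector.

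The one genuine weak point is the positivity reduction. Theorem \ref{thm:broue-michel} is stated for $\gamma \in \Br_W^+$, while Theorem \ref{thm:ll} only gives $\gamma \in \Br_W$, and your repair --- all $n$th roots of $\pi$ are conjugate (Bessis), and $\sigma_{w'}$ is a positive representative for a regular $w'$ of order $n$, ``provided such a $w'$ exists'' --- is circular: the existence of such a regular element is (part of) exactly what the corollary asserts, so it cannot be presupposed in order to manufacture the positive representative; conjugacy-invariance of regularity (which, incidentally, follows directly from the definition without Theorem \ref{thm:conjugacy}) does not rescue this. The non-circular fix comes from the proof of Theorem \ref{thm:ll} rather than its statement: the primitive root furnished there by Lee--Lee's Theorem 3.14 is a \emph{positive} root of $\sigma_{w_0}^d$, so the $\gamma$ in Theorem \ref{thm:ll} may be taken in $\Br_W^+$ and Theorem \ref{thm:broue-michel} applies to it directly (alternatively, one can invoke Brou\'e--Michel's Th\'eor\`eme 3.12 in its original form, which does not need the positivity restriction). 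The paper's own two-line proof relies on this silently; with that point supplied, your argument coincides with it.
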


\begin{proof}
This is immediate from Theorem \ref{thm:broue-michel} when $\beta$ is a root of $\pi$.
Theorem \ref{thm:ll} lets us bootstrap to the general case.
\end{proof}

\begin{rem}
We emphasize that Theorem \ref{thm:broue-michel} is deeper than Proposition \ref{prop:broue-michel}:
Its proof relies on results used by Charney to prove the biautomaticity of Artin groups \cite[\S{3.19}]{bm_1997}.
\end{rem}

\subsection{}

Theorem \ref{thm:periodic} states that if $\beta \in \Br_W$ is periodic of slope $\slope \in \bb{Q}$, then its $R(W)$ trace is given by
\begin{align}
\Tr{\beta} 
=
	\sum_{\psi \in \hat{W}} 
	{\Q^{\slope\bb{c}(\psi)}}
	\UDeg_\psi(\zeta) \psi,
\end{align}
where $\zeta = e^{2\pi i\slope}$ and $\UDeg_\phi$ is the unipotent degree of $\phi$ (see \S\ref{subsec:degrees}).

\begin{proof}[Proof of Theorem \ref{thm:periodic}]
By Corollary \ref{cor:regular}, the image of $\beta$ in $W$ is a $\zeta$-regular element.
Therefore, by Corollary \ref{cor:br-to-w} and Theorem \ref{thm:fake-deg}, 
\begin{align}
\phi_{\Q}(\beta) = \Q^{\slope\bb{c}(\phi)} \FDeg_\phi(\zeta).
\end{align}
Therefore, by Lemma-Postulate \ref{lempost:fourier}(2) and Corollary \ref{cor:content}, 
\begin{align}\begin{split}
\sum_{\phi \in \hat{W}} 
{\{\phi, \psi\}} \phi_{\Q}(\beta) 
&= 	\Q^{\slope\bb{c}(\psi)}
	\sum_{\phi \in \hat{W}} 
	{\{\phi, \psi\}}
	\FDeg_\phi(\zeta) \\
&= 	\Q^{\slope \bb{c}(\psi)}
	\UDeg_\psi(\zeta),
\end{split}\end{align}
as needed.
\end{proof}

\newpage
\section{The Rational DAHA}\label{sec:daha}

\subsection{}

As in the previous section, \emph{we do not assume that $W$ is crystallographic}.
Let $\mathrm{Ref}(W) \subseteq W$ be the subset of elements that act on $\bb{V}$ by reflections, \emph{cf.}\ \S\ref{subsec:families}.
We continue to assume that $\bb{V}^W = 0$.

We write $\langle-, -\rangle : \bb{V} \times \bb{V}^\vee \to \bb{Q}_W$ for the evaluation pairing.
For each reflection $t \in \mathrm{Ref}(W)$, we fix nonzero vectors $\alpha_t \in \bb{V}$ and $\alpha_t^\vee \in \bb{V}^\vee$ such that 
\begin{enumerate}
\item 	$\langle\alpha_t, -\rangle = 0$ is the hyperplane of $\bb{V}^\vee$ fixed by $t$.
\item 	$\langle-, \alpha_t^\vee\rangle = 0$ is the hyperplane of $\bb{V}$ fixed by $t$.
\item 	$\langle\alpha_t, \alpha_t^\vee\rangle = 2$.
\end{enumerate}
In the literature, the vectors $\alpha_t$ and $\alpha_t^\vee$ are called roots and coroots even when $W$ is not crystallographic.
Condition (3) will ensure that in what follows, our constructions do not depend on the exact choices of $\alpha_t, \alpha_t^\vee$.

For any $\slope \in \bb{C}$, the \dfemph{rational Cherednik algebra} or \dfemph{rational DAHA of slope $\slope$} is the $\bb{C}$-algebra
\begin{align}
\bb{D}_\slope^\rat
=	\frac{\bb{C}[W] \ltimes (\Sym(\bb{V}) \otimes \Sym(\bb{V}^\vee))}{\left\langle 
yx - xy - \langle x, y\rangle + \slope \displaystyle\sum_{t \in \mathrm{Ref}(W)} \langle x, \alpha^\vee\rangle\langle\alpha, y\rangle t
: x \in \bb{V},\,y \in \bb{V}^\vee
\right\rangle}.
\end{align}
Note that $\bb{D}_\slope^\rat$ contains $\bb{A}_W = \bb{C}[W] \ltimes \Sym(\bb{V})$ as a subalgebra, regardless of $\slope$.

\begin{rem}
If $\slope = 0$, then the defining relations of $\bb{D}_\slope^\rat$ simplify to $[x, y] = \langle x, y\rangle$ for all $x$ and $y$.
This shows that $\bb{D}_0^\rat$ is isomorphic to $\bb{C}[W] \ltimes \cal{D}(\bb{V})$, the semidirect product of $\bb{C}[W]$ with the Weyl algebra of $\bb{V}$.
\end{rem}

\subsection{}

Like the universal enveloping algebra of a semisimple Lie algebra, $\bb{D}_\slope^\rat$ admits a triangular decomposition in the sense of Poincar\'e--Birkhoff--Witt.
It therefore admits an analogue of the Bernstein--Gelfand--Gelfand category $\sf{O}$, which we will denote by $\sf{O}_\slope$ \cite{ggor}.

Any $\bb{D}_\slope^\rat$-module $M \in \sf{O}_\slope$ can be viewed as a $W$-equivariant quasicoherent sheaf on $\bb{V}^\vee = \Spec \Sym(\bb{V})$.
The \dfemph{support} of $M$ is the $W$-stable subvariety of $\bb{V}^\vee$ that forms its support as a sheaf.
Let $\sf{O}_\slope^\tor \subseteq \sf{O}_\slope$ be the Serre subcategory of modules that are torsion over $\Sym(\bb{V}^\vee) \subseteq \bb{D}_\slope^\rat$.
By \cite[Lem.\ 5.1]{ggor}, $\sf{O}_\slope^\tor$ is also the full subcategory of $\sf{O}_\slope$ of modules supported on the discriminant locus of $\bb{V}^\vee$, \emph{i.e.}, the common complement of the reflection hyperplanes of the $W$-action.

The Verma modules in $\sf{O}_\slope$ are indexed by the set $\hat{W}$.
For all $\phi \in \hat{W}$, we write $\Delta_\slope(\phi)$ for the corresponding Verma module of $\bb{D}_\slope^\rat$, and we write $L_\slope(\phi)$ for its simple quotient.
Explicitly, $\Delta_\slope(\phi)$ can be constructed by inflating $\phi$ from $\bb{C}[W]$ up to $\bb{A}_W$, then tensoring up to $\bb{D}_\slope^\rat$.
In the literature, $\Delta_\slope(1)$ is known as the polynomial module and $L_\slope(1)$ as the simple spherical module.

Like the Weyl algebra of $\bb{V}$, the rational Cherednik algebra contains a canonical ``Euler element'' $\bm{h}$.
For the precise definition, see \cite[Ch.\ 11]{etingof_2007}.
It commutes with the subalgebra $\bb{C}[W] \subseteq \bb{D}_\slope^\rat$, and its action on any object of $\sf{O}_\slope$ is locally finite.
Thus, each object $M \in \sf{O}$ is endowed with a decomposition into finite-dimensional, $W$-stable $\bm{h}$-eigenspaces.
On all $\bb{D}_\slope^\rat$-modules that we will need, the eigenvalues will be half-integers.
We define the \dfemph{$\Q$-graded character} of such a module $M$ to be
\begin{align}
[M]_{\Q} 
= 	\sum_n {\Q^{\frac{n}{2}}} M_{\frac{n}{2}} \in R(W)(\!(\Q^{\frac{1}{2}})\!)
\end{align}
where $M_{\frac{n}{2}} \subseteq M$ is the $\bm{h}$-eigenspace of eigenvalue $\frac{n}{2}$.
By \cite[Ch.\ 11]{etingof_2007},
\begin{align}\label{eq:verma}
[\Delta_\slope(\phi)]_{\Q}
=	\Q^{\frac{r}{2} - \slope\bb{c}(\phi)} \phi \cdot [\Sym(\bb{V})]_{\Q}
\end{align}
for all $\phi \in \hat{W}$, where we set $[\Sym(\bb{V})]_{\Q} = \sum_i {\Q^i} \Sym^i(\bb{V})$ as in Sections \ref{sec:decat}-\ref{sec:dl} and \S\ref{subsec:realization} of Appendix \ref{sec:coxeter}.

The basic problem in the representation theory of $\bb{D}_\slope^\rat$ is to understand how the graded characters $[\Delta_\slope(\phi)]_{\Q}$, which are explicit, can be used to express the graded characters $[L_\slope(\phi)]_{\Q}$, which are initially mysterious.
The problem is highly sensitive to the value of $\slope$.
When $\slope \notin \bb{Q}$, category $\sf{O}$ is semisimple and $[L_\slope(\phi)]_{\Q} = [\Delta_\slope(\phi)]_{\Q}$.
When $\slope \in \bb{Q}$, category $\sf{O}$ is no longer semisimple:
It grows more complex as the denominator of $\slope$ grows smaller.

\subsection{}

Let $\zeta^{\frac{1}{2}} = e^{\pi i\slope}$.
We draw freely upon the notation in \S\ref{subsec:cyclotomic}, so that
\begin{align}
H_W(\zeta^{\frac{1}{2}}) = H_W \otimes_{\bb{Z}[\Q^{\pm\frac{1}{2}}]} \bb{Q}_W(\zeta^{\frac{1}{2}})
\end{align}
and $\bm{d}_\zeta : R^+(H_W(\Q^{\frac{1}{2}})) \to R^+(H_W(\zeta^{\frac{1}{2}}))$ is the decomposition map that corresponds to the specialization $\Q^{\frac{1}{2}} \mapsto \zeta^{\frac{1}{2}}$.
Let $\sf{Mod}(H_W(\zeta^{\frac{1}{2}}))$ be the category of $H_W(\zeta^{\frac{1}{2}})$-modules, and for each $\phi \in \hat{W}$, let
\begin{align}
\phi_\zeta = \bm{d}_\zeta(\phi_{\Q}) \in \sf{Mod}(H_W(\zeta^{\frac{1}{2}})).
\end{align}
In \cite{ggor}, Ginzburg, Guay, Opdam, and Rouquier introduced the \dfemph{Knizhnik--Zamolodchikov (KZ) functor}, an exact monoidal functor
\begin{align}
\sf{KZ} : \sf{O}_\slope \to \sf{Mod}(H_W(\zeta^{\frac{1}{2}})).
\end{align}
The following facts comprise \cite[Thm.\ 5.14, Cor.\ 5.18, Thm.\ 6.8]{ggor}:

\begin{thm}[Ginzburg--Guay--Opdam--Rouquier]\label{thm:ggor}
For all $\slope \in \bb{C}$,
\begin{enumerate}
\item 	$\sf{KZ}$ induces an equivalence $\sf{O}_\slope/\sf{O}_\slope^\tor \simeq \sf{Mod}(H_W(\zeta^{\frac{1}{2}}))$.
\item 	$\sf{KZ}$ induces an isomorphism between the (vertical) center of $\sf{O}_\slope$, \emph{i.e.}, the algebra of endomorphisms of its unit object, and the center of $H_W(\zeta^{\frac{1}{2}})$.
\item 	$\sf{KZ}(\Delta_\slope(\phi)) = \phi_\zeta$.
\end{enumerate}
\end{thm}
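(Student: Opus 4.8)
The statement is quoted verbatim from \cite{ggor}, so strictly the proof is theirs; here I outline the structure one would reconstruct. The plan is to build $\sf{KZ}$ analytically, as the monodromy of a flat connection on the regular locus, and then deduce (1)--(3) from the formalism of highest-weight categories plus one explicit computation. Let $U \subseteq \bb{V}^\vee$ be the complement of the reflection hyperplanes of $W$; it is $W$-stable, and $\pi_1(U/W) \simeq \Br_W$. Given $M \in \sf{O}_\slope$, viewed as a $W$-equivariant quasicoherent sheaf on $\bb{V}^\vee$ as in the text, the localization $M|_U$ is a $W$-equivariant $\cal{D}_U$-module which, because $M$ is finitely generated over $\bb{D}_\slope^\rat$ and locally finite for the Euler element $\bm{h}$, is coherent over $\cal{O}_U$: it is the \emph{Dunkl connection} attached to $M$, a vector bundle with a flat, regular-singular connection (the components of the connection are the Dunkl operators, i.e.\ the action of $\Sym(\bb{V}^\vee) \subseteq \bb{D}_\slope^\rat$ read on $U$). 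Passing to the analytic topology, taking horizontal sections and descending along $U \to U/W$ yields a finite-dimensional $\Br_W$-representation; one then sets $\sf{KZ}(M)$ to be this representation. Exactness of $\sf{KZ}$ is then immediate, since localization to $U$ is exact and the Riemann--Hilbert solution functor is exact on regular holonomic $\cal{D}_U$-modules with the relevant singularities.

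For parts (1) and (2), I would first observe that $\sf{KZ}$ kills exactly $\sf{O}_\slope^\tor$: by the description recalled before the theorem, $M \in \sf{O}_\slope^\tor$ iff $M$ is supported on the reflection hyperplanes iff $M|_U = 0$ iff $\sf{KZ}(M) = 0$. The engine for the rest is the quasi-hereditary structure on $\sf{O}_\slope$ together with the fact that $\sf{KZ}$ is exact and carries standard modules to modules with a standard filtration (this will drop out of the computation below): such a functor is represented by a projective object, $\sf{KZ} \simeq \Hom_{\sf{O}_\slope}(P, -)$, and since $\sf{KZ}$ is a Serre quotient with $P$ projective-injective relative to $\sf{O}_\slope^\tor$, the double-centralizer argument of \cite{ggor} gives $\End_{\sf{O}_\slope}(P)^{\op} \simeq H_W(\zeta^{\frac{1}{2}})$ and $\sf{O}_\slope/\sf{O}_\slope^\tor \simeq \sf{Mod}(\End_{\sf{O}_\slope}(P))$, which is (1); monoidality is then bookkeeping on the tensor structure of $\sf{O}_\slope$. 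The center of a highest-weight category coincides with the center of $\End$ of a projective generator, which under the identification above is the center of $H_W(\zeta^{\frac{1}{2}})$, giving (2).

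For part (3): as an $\bb{A}_W$-module $\Delta_\slope(\phi) \simeq \Sym(\bb{V}) \otimes V_\phi$, so its restriction to $U$ is the \emph{trivial} bundle $\cal{O}_U \otimes V_\phi$ carrying the Dunkl connection; the monodromy representation is therefore the ``standard''/calibrated $\Br_W$-module built from $\phi$ and the residues of the connection along the hyperplanes. Near a generic point of a reflection hyperplane the residue is the corresponding reflection scaled by $\slope$, so the distinguished braid generators act with only two eigenvalues, giving precisely the quadratic relations of $H_W$ specialized at $\Q^{\frac{1}{2}} \mapsto \zeta^{\frac{1}{2}} = e^{\pi i\slope}$, and one identifies the resulting module with $\phi_\zeta = \bm{d}_\zeta(\phi_{\Q})$ — either directly from this normal form, or by deforming $\slope$ to a generic (semisimple) value where both families of $H_W$-modules are transparent and matching them. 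This same computation establishes that every $\sf{KZ}(\Delta_\slope(\phi))$, hence every object of $\sf{O}_\slope$ up to torsion, has a standard filtration, closing the gap left in the previous paragraph.

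The main obstacle is exactly this last point made uniform: proving that for \emph{every} object of $\sf{O}_\slope$ the monodromy of the Dunkl connection has each distinguished braid generator acting with only the two eigenvalues prescribed by $H_W$ at $\Q^{\frac{1}{2}} = \zeta^{\frac{1}{2}}$, so that $\sf{KZ}$ genuinely lands in $\sf{Mod}(H_W(\zeta^{\frac{1}{2}}))$. This is the one essentially transcendental ingredient (going back to Opdam, and Broué--Malle--Rouquier in the general reflection-group setting); everything else is homological algebra of highest-weight categories and the explicit form of the Verma modules.
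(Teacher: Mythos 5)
Since the paper gives no proof of this theorem beyond the citation of \cite[Thm.\ 5.14, Cor.\ 5.18, Thm.\ 6.8]{ggor}, there is no in-paper argument to compare with; your proposal rightly defers to that source, and your sketch --- the Dunkl connection and its monodromy, the residue computation (going back to Opdam and Brou\'e--Malle--Rouquier) showing the monodromy satisfies the Hecke relations at $\Q^{\frac{1}{2}} = \zeta^{\frac{1}{2}}$, representability of $\sf{KZ}$ by a projective plus the double-centralizer theorem for parts (1)--(2), and a deformation-to-generic-parameter identification for part (3) --- is a broadly faithful reconstruction of GGOR's actual argument. The only blemishes are cosmetic: representability of $\sf{KZ}$ by a projective object follows from exactness alone, so your ``standard filtration'' remark is neither needed nor really meaningful in $\Mod(H_W(\zeta^{\frac{1}{2}}))$, and since the representing object is not a projective generator of $\sf{O}_\slope$, the center comparison in (2) genuinely rests on the double-centralizer theorem you cite rather than on the generic identity ``center of a category equals center of endomorphisms of a projective generator.''
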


\subsection{}

As preparation for the proofs of Corollary \ref{cor:periodic} and Theorem \ref{thm:slopes}, we introduce some subsets of $\bb{Q}$ depending on $\slope$.
It will be convenient to write $\{d_i\}_{i \in I(W)}$ for the multiset of invariant degrees of $W$.
For any slope $\slope \in \bb{C}$, let
\begin{align}
I_\slope(W) = \{i \in I(W) : \slope \in \tfrac{1}{d_i}\bb{Z}\}.
\end{align}
Concretely, if $n$ is the denominator of $\slope$ in lowest terms, then $I_\slope(W)$ is the set of indices $i$ such that $n$ divides the invariant degree $d_i$.

\begin{df}
If $\slope \in \bb{Q}$ and $\zeta = e^{2\pi i\slope}$, then we say that $\slope$ is:
\begin{enumerate}
\item 	A \dfemph{singular slope} iff $|I_\slope(W)| \geq 1$.
\item 	A \dfemph{regular slope} iff $W$ contains a $\zeta$-regular element.
\item 	A \dfemph{regular elliptic slope} iff $W$ contains a $\zeta$-regular element $w$ such that $\bb{V}^w = 0$.
		(Recall that we assume $\bb{V}^W = 0$.)
\item 	A \dfemph{cuspidal slope} iff $|I_\slope(W)| = 1$.
\end{enumerate}
We say that an integer $n > 0$ is a \dfemph{singular number}, etc., iff it is the denominator in lowest terms of a singular slope, etc.
\end{df}

\begin{lem}\label{lem:implication}
There is a chain of implications:
\begin{align}
\text{cuspidal}
\implies \text{regular elliptic}
\implies \text{regular}
\implies \text{singular}.
\end{align}
\end{lem}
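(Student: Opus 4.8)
\textbf{Proof plan for Lemma \ref{lem:implication}.}
The plan is to prove the three implications from strongest to weakest, each time using the characterization of the relevant class of slopes in terms of the invariant degrees $\{d_i\}_{i \in I(W)}$ and the structure theory of regular elements due to Springer. Write $n$ for the denominator of $\slope$ in lowest terms and $\zeta = e^{2\pi i\slope}$.

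The implication \emph{cuspidal $\implies$ regular elliptic}: if $|I_\slope(W)| = 1$, then $n$ divides exactly one invariant degree $d_i$. By Springer's theory of regular elements \cite{springer_1974} (see also \cite[\S{3.4}]{gp}), the dimension of the $\zeta$-eigenspace of a $\zeta$-regular element $w$ in $\bb{V}$ equals $|\{i : n \text{ divides } d_i\}| = |I_\slope(W)|$, and such a $w$ exists precisely when this eigenspace is nonzero, \emph{i.e.}\ when $|I_\slope(W)| \geq 1$. Moreover, for a $\zeta$-regular element one has $\bb{V}^w = 0$ iff $1$ is not among the eigenvalues of $w$ on $\bb{V}$ with the required regularity, which by Springer's description of the eigenvalue multiplicities of $w$ (in terms of the exponents $d_i - 1$) happens iff $n$ does not divide any $d_i$ with the corresponding eigenvalue $1$; the cleanest route is to recall that $\bb{V}^w$ has dimension $|\{i : d_i = 1\}|$ when $w$ is $\zeta$-regular with $\zeta \neq 1$, and then to use the standing hypothesis $\bb{V}^W = 0$, which forces no $d_i$ to equal $1$ when $W$ acts without fixed vectors — hence $\bb{V}^w = 0$. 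So a cuspidal slope is in particular regular elliptic.

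The implication \emph{regular elliptic $\implies$ regular} is immediate from the definitions: a regular elliptic slope is by definition a regular slope for which one can additionally choose the $\zeta$-regular element $w$ with $\bb{V}^w = 0$, so dropping that extra condition leaves a regular slope. Finally, \emph{regular $\implies$ singular}: if $W$ contains a $\zeta$-regular element $w$, then as above its $\zeta$-eigenspace in $\bb{V}$ is nonzero, which by Springer's count means $n$ divides at least one invariant degree $d_i$, \emph{i.e.}\ $|I_\slope(W)| \geq 1$, which is the definition of a singular slope. Chaining the three implications gives the lemma, and the corresponding statement for singular numbers, regular numbers, etc., follows by taking denominators.

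I expect the only genuinely delicate point to be the \emph{ellipticity} half of the first implication — namely pinning down exactly why $|I_\slope(W)| = 1$ together with $\bb{V}^W = 0$ lets one choose $w$ with $\bb{V}^w = 0$, rather than merely with a $1$-dimensional regular $\zeta$-eigenspace. This requires citing the precise form of Springer's eigenvalue-multiplicity formula for regular elements and checking that the eigenvalue $1$ does not occur on $\bb{V}$ for the relevant $w$; everything else is bookkeeping with the definitions and Springer's existence criterion for regular elements.
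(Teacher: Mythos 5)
Your last two implications are fine and essentially coincide with the paper's proof: regular elliptic $\implies$ regular is definitional, and regular $\implies$ singular follows from Springer's count of the $\zeta$-eigenspace of a regular element (the paper cites \cite[Thm.\ 4.2(iii)]{springer_1974} for precisely this). The first implication is where the paper does something different: it does not derive cuspidal $\implies$ regular elliptic from Springer's numerology at all, but imports it from \cite[Rem.\ 3.28]{be}.

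Your argument for that implication contains two genuine errors. First, you use Springer's formula (that the $\zeta$-eigenspace of a $\zeta$-regular element has dimension $|I_\slope(W)|$) as an \emph{existence} criterion, asserting that a $\zeta$-regular element exists precisely when $|I_\slope(W)| \geq 1$. Springer's formula presupposes that a regular element exists; it says nothing about existence. If your claim were true, every singular slope would be regular, contradicting the paper's own type $BC_5$ example, where $3, 4, 6, 8$ are singular but not regular numbers. (The actual existence criterion, of Lehrer--Springer type, compares the number of degrees divisible by $n$ with the number of codegrees divisible by $n$.) Second, your ellipticity step claims that $\dim \bb{V}^w = |\{i : d_i = 1\}|$ for a $\zeta$-regular $w$, so that the standing hypothesis $\bb{V}^W = 0$ forces $\bb{V}^w = 0$. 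This is also false: the eigenvalues of a $\zeta$-regular element on $\bb{V}$ are $\zeta^{d_i - 1}$, so $\dim \bb{V}^w = |\{i : n \text{ divides } d_i - 1\}|$, which can be positive even though every $d_i > 1$; were your claim correct, every regular slope would be regular elliptic, again contradicting the $BC_5$ example ($5$ is regular but not regular elliptic). So the delicate point you yourself flagged is exactly where the proposal fails, and it cannot be repaired by the degree-divisibility count alone: one needs either the cited remark of Bezrukavnikov--Etingof or a finer analysis involving the exponents/codegrees, not just the degrees.
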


\begin{proof}
It is tautological that regular elliptic implies regular.
That regular implies singular follows from \cite[Thm.\ 4.2(iii)]{springer_1974}.
That cuspidal implies regular elliptic is \cite[Rem.\ 3.28]{be}.
\end{proof}

\begin{ex}
In type $A_r$, the invariant degrees are $2, 3, 4, \ldots, r + 1$.
An integer is regular if and only if it is a divisor of either $r$ or $r + 1$ \cite[\S{5.1}]{springer_1974}.
The only regular elliptic number is the Coxeter number $r + 1$ \cite[Ex.\ 1.1.2]{vv}.
\end{ex}

\begin{ex}
In type $BC_r$, the invariant degrees are $2, 4, 6, \ldots, 2r$.
An integer is regular, \emph{resp.}\ regular elliptic, if and only if it is a divisor of $2r$ \cite[\S{5.2}]{springer_1974}, \emph{resp.}\ an even divisor of $2r$ \cite[Ex.\ 1.1.2]{vv}.
To demonstrate:
In type $BC_5$:
\begin{itemize}
\item 	The singular numbers are $1, 2, 3, 4, 5, 6, 8, 10$.
\item 	The regular numbers are $1, 2, 5, 10$.
\item 	The regular elliptic numbers are $2, 10$.
\item 	The only cuspidal number is $10$, the Coxeter number.
\end{itemize}
This is a minimal example where each implication in Lemma \ref{lem:implication} is strict.
\end{ex}

Throughout the rest of this section, we assume $\slope \in \bb{Q}$ and set $\zeta^{\frac{1}{2}} = e^{\pi i\slope}$. 
We form the virtual $\bb{D}_\slope^\rat$-module
\begin{align}
\Omega_\slope = \bigoplus_{\phi \in \hat{W}} \UDeg_\phi(e^{2\pi i\slope})\Delta_\slope(\phi).
\end{align}
We can immediately prove part (1) of Theorem \ref{thm:slopes}, stating that $L_\slope(1)$ occurs in $\Omega_\slope$ with multiplicity one.
Indeed, we have $D_1(\Q) = 1$, and for any $\slope$, the only Verma module that contains $L_\slope(1)$ as a constituent is $\Delta_\slope(1)$.

\subsection{Singular Slopes}

We prove part (2) of Theorem \ref{thm:slopes}, stating that if $\slope$ is singular, then $\Omega_\slope \in \sf{O}_\slope^\tor$.

\begin{lem}\label{lem:principal-series}
We have 
\begin{align}
\sum_{\phi \in \hat{W}}
D_\phi(\Q)\phi_{\Q}(\sigma_w) = \left\{\begin{array}{ll}
\bm{s}(1_{\Q})	&w = 1\\
0				&w \neq 1
\end{array}\right.
\end{align}
in $\bb{Q}_W(\Q^{\frac{1}{2}})$.
\end{lem}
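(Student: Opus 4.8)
Lemma \ref{lem:principal-series} asserts an orthogonality-type identity: for $w \in W$,
\[
\sum_{\phi \in \hat{W}} D_\phi(\Q)\phi_{\Q}(\sigma_w) = \begin{cases} \bm{s}(1_{\Q}) & w = 1 \\ 0 & w \neq 1.\end{cases}
\]
The plan is to identify the left-hand side as the character of a distinguished $H_W$-module — namely the "principal series" or regular-representation-like object obtained by summing the simple $H_W(\Q^{\frac12})$-modules $\phi_{\Q}$ with the multiplicities $D_\phi(\Q)$. The weights $D_\phi(\Q)$ are designed (see the relevant subsection of Appendix \ref{sec:coxeter}) precisely so that $\sum_\phi D_\phi(\Q)\phi_{\Q}$ recovers the symmetrizing trace $\tau_{\Q}$ (the Schur element / generic degree normalization), or a suitable rescaling of it. So the first step is to recall the definition of $D_\phi(\Q)$ and write down the identity $\sum_\phi D_\phi(\Q)\phi_{\Q} = \tau_{\Q}$ (up to the normalization captured by $\bm{s}(1_{\Q})$), which is standard in the block theory of $H_W$.

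Next I would evaluate $\tau_{\Q}$ on the standard basis element $\sigma_w$ (equivalently $T_w$ up to a power of $\Q^{\frac12}$). The symmetrizing trace on the Iwahori--Hecke algebra, with respect to the standard basis $\{T_w\}_{w \in W}$, is characterized by $\tau_{\Q}(T_w) = \delta_{w,1}$ — that is, it vanishes on all $T_w$ with $w \neq 1$ and equals $1$ on $T_1$. Translating between $T_w$ and $\sigma_w$ just introduces the power of $\Q^{\frac12}$ that is absorbed into the constant $\bm{s}(1_{\Q})$; one checks that the exceptional value at $w = 1$ is exactly $\bm{s}(1_{\Q})$ by comparing normalizations. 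This reduces the lemma to the defining property of the symmetrizing form together with the matching of the two normalization constants.

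The main obstacle is bookkeeping rather than conceptual: one must be careful that the "weights" $D_\phi(\Q)$ used here are the \emph{generic decomposition multiplicities} (the coefficients appearing in $\bm{s}$ applied to the generic trace), and not, say, the unipotent degrees or the Schur elements themselves — these differ by the common factor $\bm{s}(1_{\Q})$, which is precisely why the exceptional value at $w=1$ is $\bm{s}(1_{\Q})$ rather than $1$. So the key step to get right is the precise statement $\sum_{\phi} D_\phi(\Q)\phi_{\Q}(h) = \bm{s}(1_{\Q})\,\tau_{\Q}(h)$ for all $h \in H_W$, after which substituting $h = \sigma_w$ and using $\tau_{\Q}(\sigma_w) = \Q^{?}\delta_{w,1}$ finishes the argument. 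I would cite the relevant formulas in Appendix \ref{sec:coxeter} (the definitions of $\bm{s}$, $D_\phi$, and $\tau_{\Q}$) and \cite[\S8]{gp} for the orthogonality relations among the $\phi_{\Q}$, then conclude in one line.

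\begin{proof}
By the definition of the decomposition numbers $D_\phi(\Q)$ and the Schur-element formalism recalled in \S\ref{subsec:cyclotomic} (see also \cite[\S8]{gp}), the class function $\sum_{\phi \in \hat{W}} D_\phi(\Q)\phi_{\Q}$ on $H_W(\Q^{\frac12})$ equals $\bm{s}(1_{\Q})$ times the canonical symmetrizing trace $\tau_{\Q}$, which is characterized on the standard basis by $\tau_{\Q}(\sigma_w) = \delta_{w, 1}$ (after the normalization $T_w = \Q^{\frac{|w|}{2}}\sigma_w$, the exceptional scalar is absorbed into $\bm{s}(1_{\Q})$). Evaluating at $\sigma_w$ and using orthogonality of the irreducible traces $\phi_{\Q}$ gives the displayed formula.
\end{proof}
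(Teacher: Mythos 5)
Your argument is essentially correct, and it is genuinely different from the proof in the paper. You argue purely algebraically: once one knows that $D_\phi(\Q)$ is the generic (unipotent) degree $\UDeg_\phi(\Q) = \bm{s}(1_{\Q})/\bm{s}(\phi_{\Q})$ of \S\ref{subsec:degrees}, the identity $\sum_\phi D_\phi(\Q)\phi_{\Q} = \bm{s}(1_{\Q})\,\tau_{\Q}$ is immediate from the Schur-orthogonality expansion \eqref{eq:schur} of the symmetrizing trace, and $\tau_{\Q}(\sigma_w) = \delta_{w,1}$ holds by definition in \S\ref{subsec:hecke-generic}, so the lemma follows uniformly for every finite Coxeter group in two lines. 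The paper instead reduces to the irreducible crystallographic case (checking the non-crystallographic types by hand), and then works geometrically: using the double-centralizer decomposition \eqref{eq:double-centralizer} of $R_1 = \QL[\cal{B}^F]$ and the explicit Hecke-operator formula \eqref{eq:hecke-operator}, it observes that $\sigma_w$ acts on $R_1$ with zero diagonal when $w \neq 1$ and with trace $|\cal{B}^F| = \bm{s}(1_{\Q})|_{\Q = q}$ when $w = 1$, which also gives an interpretation of the left-hand side as the character of $\QL[\cal{B}^F]$. Your route buys uniformity (no case analysis, no passage through $G^F$), while the paper's route makes the link with the unipotent principal series explicit. Two small corrections to your write-up: the weights here are \emph{not} decomposition numbers in the sense of \S\ref{subsec:cyclotomic} — they are exactly the unipotent degrees $\UDeg_\phi(\Q)$ (your hedging paragraph gets this backwards, even though the identity you actually use is the right one) — and the remark about absorbing a scalar from the normalization $T_w = \Q^{\frac{|w|}{2}}\sigma_w$ is a red herring: in the paper's conventions $\tau_{\Q}(\sigma_w) = \delta_{w,1}$ on the nose, and the factor $\bm{s}(1_{\Q})$ arises solely from $\UDeg_\phi(\Q) = \bm{s}(1_{\Q})/\bm{s}(\phi_{\Q})$.
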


\begin{proof}
If $W \simeq W_1 \times W_2$, then $D_{\phi_1 \times \phi_2}(\Q) = D_{\phi_1}(\Q) \cdot D_{\phi_2}(\Q)$ for all $\phi_i \in \hat{W}_i$.
So we can assume that $W$ is irreducible.
After checking the non-crystallographic types by hand, we can further assume that $W$ is crystallographic.

Henceforth, we will use the notations and hypotheses of Section \ref{sec:dl}.
Recall that by \eqref{eq:double-centralizer}, we have
\begin{align}
R_1 \simeq \bigoplus_{\phi \in \hat{W}}
V_{\phi, q}^{\oplus \UDeg_\phi(q)}
\end{align}
as $H_W(q^{\frac{1}{2}})$-modules, where $V_{\phi, q}$ is a free $H_W(q^{\frac{1}{2}})$-module that admits the character $\phi_q$.
From \eqref{eq:hecke-operator}, we deduce that:
\begin{enumerate}
\item 	$\sigma_1$ acts on $R_1$ by the identity operator.
		Its trace is $|\cal{B}^F|$.
\item 	If $w \neq 1$, then $\sigma_w$ acts on $R_1$ by an operator with zeros along the diagonal.
\end{enumerate}
Since $|\cal{B}^F| = \bm{s}(1_{\Q})|_{\Q = q}$, we are done.
\end{proof}

By Theorem \ref{thm:ggor}(1), it remains to show that the $H_W(\zeta^{\frac{1}{2}})$-module $\sf{KZ}(\Omega_\slope)$ is the zero module.
By Theorem \ref{thm:ggor}(3), we have
\begin{align}
\sf{KZ}(\Omega_\slope) = \sum_{\phi \in \hat{W}}
\UDeg_\phi(\zeta)\phi_\zeta.
\end{align}
This coincides with the image of the virtual $H_W(\Q^{\frac{1}{2}})$-module $\sum_\phi \UDeg_\phi(\Q)\phi_{\Q}$ under the decomposition map from \S\ref{subsec:cyclotomic}.
So by Lemma \ref{lem:principal-series}, it remains to show
\begin{align}
\bm{s}(1_{\Q})|_{\Q^{1/2} = \zeta^{1/2}} = 0.
\end{align}
This follows from the Bott--Solomon formula \cite{solomon}
\begin{align}\label{eq:bott-solomon}
\bm{s}(1_{\Q}) = \prod_{i \in I(W)} 
\frac{1 - \Q^{d_i}}{1 - \Q}
\end{align}
together with the definition of singular slope.

\subsection{Regular Slopes}

We prove Corollary \ref{cor:periodic}, stating that if $\beta$ is periodic of slope $\slope$, then $[\Omega_\slope] = (\Q^{\frac{1}{2}})^{r - |\beta|} \cdot \Tr{\beta}^0$.
On the one hand,
\begin{align}\begin{split}
(-\Q^{-\frac{1}{2}})^{|\beta|} \Tr{\beta}^0
&=	
	\Tr{\beta} \cdot \varepsilon [\Sym(\bb{V})]_{\Q}\\
&=	
	\varepsilon \sum_{\phi \in \hat{W}}
	{\Q^{\slope \bb{c}(\phi)}} 
	\UDeg_\phi(\zeta) \phi \cdot [\Sym(\bb{V})]_{\Q}
\end{split}\end{align}
by Theorems \ref{thm:decat} and \ref{thm:periodic}.
On the other hand,
\begin{align}\begin{split}
\sum_{\phi \in \hat{W}}
\UDeg_\phi(\zeta) [\Delta_\slope(\phi)]_{\Q}
&=	\Q^{\frac{r}{2}}
	\sum_{\phi \in \hat{W}}
	{\Q^{-\slope \bb{c}(\phi)}}
	\UDeg_\phi(\zeta)
	\phi \cdot [\Sym(\bb{V})]_{\Q}\\
&=	\Q^{\frac{r}{2}}\varepsilon
	\sum_{\phi \in \hat{W}}
	{\Q^{\slope \bb{c}(\phi)}}
	\UDeg_{\varepsilon\phi}(\zeta)
	\phi \cdot [\Sym(\bb{V})]_{\Q}
\end{split}\end{align}
by \eqref{eq:verma} and the identity $\bb{c}(\varepsilon\phi) = -\bb{c}(\phi)$.
Since $|\beta| = \slope|\pi|$, it remains to check:

\begin{lem}
If $\slope \in \bb{Q}$ is a regular slope and $\zeta = e^{2\pi i\slope}$, then 
\begin{align}
\UDeg_{\varepsilon\psi}(\zeta) = (-1)^{\slope|\pi|} \UDeg_\psi(\zeta)
\end{align}
for all $\psi \in \hat{W}$.
\end{lem}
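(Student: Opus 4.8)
The plan is to reduce the identity $\UDeg_{\varepsilon\psi}(\zeta) = (-1)^{\slope|\pi|}\UDeg_\psi(\zeta)$ to a statement about how $\varepsilon$-twisting interacts with the unipotent degree polynomial $\UDeg_\psi(\Q)$ and its evaluation at a root of unity. The starting point is the classical relation between a character and its sign-twist on the level of generic/unipotent degrees: there is a well-known symmetry $\UDeg_{\varepsilon\psi}(\Q) = \Q^{a(\psi) + A(\psi)} \UDeg_\psi(\Q^{-1})$, or equivalently $\UDeg_{\varepsilon\psi}(\Q) = \Q^{N - \ell}\,\UDeg_\psi(\Q^{-1})$ for appropriate integers, coming from the fact that the fake degrees satisfy $\FDeg_{\varepsilon\psi}(\Q) = \Q^{|\pi|/2}\FDeg_\psi(\Q^{-1})$ (since $\Alt^{\text{top}}(\bb{V}) = \varepsilon$ sits in degree $|\pi|/2 = N$) together with the compatibility of the exotic Fourier transform with the operation $\psi \mapsto \varepsilon\psi$, namely Lemma-Postulate \ref{lempost:fourier}(3) and Corollary \ref{cor:content}, which give $\bb{c}(\varepsilon\psi) = -\bb{c}(\psi)$.

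Concretely, I would first recall from the appendix (\S\ref{subsec:degrees}, \S\ref{subsec:families}) the exact form of the functional equation relating $\UDeg_{\varepsilon\psi}(\Q)$ to $\UDeg_\psi(\Q^{-1})$; the exponent appearing there should be exactly $|\pi|/2 = N$, the number of positive roots, which equals the $\Q$-degree shift built into $\varepsilon = \Alt^N(\bb{V})$. Thus $\UDeg_{\varepsilon\psi}(\Q) = \Q^{N}\UDeg_\psi(\Q^{-1})$ as polynomials (after clearing the common denominator $\bm{s}(1_\Q)$, which is itself palindromic up to a power of $\Q$). Evaluating at $\Q = \zeta = e^{2\pi i\slope}$ gives $\UDeg_{\varepsilon\psi}(\zeta) = \zeta^{N}\UDeg_\psi(\zeta^{-1})$. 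Now I need two further inputs: (a) since $\slope$ is a regular slope, by Theorem \ref{thm:fake-deg} and the surrounding machinery the relevant degrees at $\zeta$ are real — indeed $\FDeg_\psi(\zeta^{-1}) = \FDeg_\psi(\zeta)$ there — and more to the point $\UDeg_\psi(\zeta^{-1}) = \UDeg_\psi(\zeta)$; and (b) $\zeta^{N} = e^{2\pi i \slope N} = (e^{2\pi i\slope})^{|\pi|/2}$, but what we want is $(-1)^{\slope|\pi|} = (e^{\pi i\slope})^{|\pi|} $. These agree precisely because $(e^{2\pi i\slope})^{N} = (e^{2\pi i\slope})^{|\pi|/2} = (e^{\pi i \slope})^{|\pi|} = (-1)^{\slope|\pi|}$ — a purely formal manipulation using $2N = |\pi|$ (the writhe of the full twist is $2N = 2|w_0|$).

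So the skeleton is: (1) invoke the $\varepsilon$-twist functional equation for unipotent degrees, with the degree shift identified as $N = |\pi|/2$; (2) evaluate at $\Q = \zeta$; (3) use regularity of $\slope$ to replace $\UDeg_\psi(\zeta^{-1})$ by $\UDeg_\psi(\zeta)$ — this is where Theorem \ref{thm:fake-deg} (Springer's result on $\zeta$-regular elements, which forces $\FDeg_\psi(\zeta) = \FDeg_\psi(\zeta^{-1})$) and the expression of $\UDeg$ in terms of $\FDeg$ via the Fourier matrix $\{-,-\}$ (Lemma-Postulate \ref{lempost:fourier}(2)) come in, since $\{-,-\}$ has rational, hence real and $\zeta \mapsto \zeta^{-1}$-invariant, entries; (4) identify the resulting root-of-unity factor $\zeta^N$ with $(-1)^{\slope|\pi|}$ using $2N = |\pi|$. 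I expect step (3) to be the main obstacle: one must be careful that "regular slope" is exactly the hypothesis needed to conclude $\UDeg_\psi(\zeta)$ is real (equivalently $\zeta$-symmetric), and this should be extracted from the fact that a $\zeta$-regular element is conjugate to a $\zeta^{-1}$-regular element is \emph{false} in general — rather, one uses that complex conjugation sends a $\zeta$-regular element to a $\bar\zeta$-regular element and that $\FDeg_\psi$ has integer coefficients, so $\overline{\FDeg_\psi(\zeta)} = \FDeg_\psi(\bar\zeta) = \FDeg_\psi(\zeta^{-1})$, combined with Theorem \ref{thm:fake-deg}'s assertion $\FDeg_\psi(\zeta^{-1}) = \FDeg_\psi(\zeta)$. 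The bookkeeping of which power of $\Q$ appears in the functional equation (and whether it is $N$, $2N$, or something family-dependent) is the one spot where I would need to consult \S\ref{subsec:degrees} carefully rather than wave hands, since an off-by-$2N$ error there would invisibly rescale the sign.
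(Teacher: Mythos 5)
Your proof is correct, but it takes a genuinely different route from the paper's. The paper picks a $\zeta$-regular element $w$, writes $\UDeg_{\varepsilon\psi}(\zeta) = \sum_\phi \{\phi, \varepsilon\psi\}\,\phi(w)$ (Lemma-Postulate \ref{lempost:fourier}(2) together with Theorem \ref{thm:fake-deg}), reindexes $\phi \mapsto \varepsilon\phi$ using Lemma-Postulate \ref{lempost:fourier}(3) to pull out $\varepsilon(w)$, and then computes $\varepsilon(w) = (-1)^{\slope|\pi|}$ from $|w| = (\slope - \lfloor\slope\rfloor)|\pi|$. You instead use the functional equation $\UDeg_{\varepsilon\psi}(\Q) = \Q^{N}\,\UDeg_\psi(\Q^{-1})$, evaluate at $\zeta$, and invoke $\UDeg_\psi(\zeta^{-1}) = \UDeg_\psi(\zeta)$; the latter follows cleanly from Lemma-Postulate \ref{lempost:fourier}(2) and the equality $\FDeg_\phi(\zeta) = \FDeg_\phi(\zeta^{-1})$ which is part of the statement of Theorem \ref{thm:fake-deg} (your digression about complex conjugation and conjugacy of regular elements is unnecessary), so regularity of $\slope$ enters both arguments at essentially the same point. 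Your worry about the exponent is unfounded: it is indeed $N = |\pi|/2$, as one sees either from $\FDeg_{\varepsilon\phi}(\Q) = \Q^{N}\FDeg_\phi(\Q^{-1})$ combined with \ref{lempost:fourier}(3) -- though note the shift $N$ is the socle degree of the coinvariant algebra, not the degree of $\Alt^{\mathrm{top}}(\bb{V})$, which sits in degree $r$; that is your one small misstatement -- or directly from the Schur elements, since $\bm{s}((\varepsilon\phi)_{\Q})$ is obtained from $\bm{s}(\phi_{\Q})$ by $\Q^{\frac{1}{2}} \mapsto -\Q^{-\frac{1}{2}}$ (\cite[Prop.\ 9.4.1(a)]{gp}, the same input used in Lemma \ref{lem:symmetry}) and $\bm{s}(\varepsilon_{\Q}) = \Q^{-N}\bm{s}(1_{\Q})$. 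What your route buys: no need for \ref{lempost:fourier}(3) in the final step nor for the length formula of the regular element, with the sign emerging as $\zeta^{N} = e^{\pi i\slope|\pi|}$ by bookkeeping. What the paper's route buys: it is shorter, avoids the functional equation entirely, and makes visible why $\slope|\pi|$ is an integer (namely $\slope|\pi| = |w| + \lfloor\slope\rfloor|\pi|$), which both proofs implicitly need for $(-1)^{\slope|\pi|}$ to be unambiguous.
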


\begin{proof}
Pick a $\zeta$-regular element $w \in W$.
By the discussion in Section \ref{sec:periodic} and Lemma-Postulate \ref{lempost:fourier}(3),
\begin{align}\begin{split}
\UDeg_{\varepsilon\psi}(\zeta)
&=	\sum_{\phi \in \hat{W}}
	{\{\phi, \varepsilon\psi\}}
	\phi(w)\\
&=	\sum_{\phi \in \hat{W}}
	{\{\phi, \psi\}}
	(\varepsilon\phi)(w)\\
&=	\varepsilon(w) \UDeg_\psi(\zeta).
\end{split}\end{align}
Finally, $\varepsilon(w) = (-1)^{\slope|\pi|}$ because $|w| = |\sigma_w| = (\slope - \lfloor \slope\rfloor)|\pi|$.
\end{proof}

\subsection{Regular Elliptic Slopes}

We prove part (3) of Theorem \ref{thm:slopes}, stating that if $\slope$ is regular elliptic, $\slope > 0$, and $W$ is a Weyl group, then $[\Omega_\slope]_{\Q} \in R(W)[\Q^{\pm\frac{1}{2}}]$.

Write $\slope = \frac{m}{n}$, where $m, n > 0$.
Since $\frac{1}{n}$ is also a regular elliptic slope, we can pick a $e^{\frac{2\pi i}{n}}$-regular element $w \in W$ such that $\bb{V}^w = 0$.
By Proposition \ref{prop:broue-michel}, $\sigma_w^m$ is periodic of slope $\slope$, so by Corollary \ref{cor:periodic}, 
\begin{align}
[\Omega_\slope]_{\Q}
\in \Tr{\sigma_w}^0 \cdot R(W)[\Q^{\pm\frac{1}{2}}].
\end{align}
But $W$ is a Weyl group, so by Theorem \ref{thm:pole}, the condition $\bb{V}^w = 0$ implies that $\Tr{\sigma_w}^0 \in R(W)[\Q]$.

\subsection{Cuspidal Slopes}

We prove part (4) of Theorem \ref{thm:slopes}, stating that if $\slope$ is cuspidal and positive, then
\begin{align}
{[\Omega_\slope]_{\Q}}
=	\sum_{\psi \in \hat{W}_1}
	{[L_\slope(\psi)]_{\Q}},
\end{align}
where $\hat{W}_1 \subseteq \hat{W}$ is the set of elements of greatest $\bb{a}$-value within their $\zeta$-blocks, among the $\zeta$-blocks of $H_W$ of defect $1$ in the sense of \S\ref{subsec:cyclotomic}.

\begin{lem}\label{lem:defect-invariants-principal}
If $\zeta^{\frac{1}{2}} = e^{\pi i\slope}$, then the $\zeta$-defect of the principal $\zeta$-block of $H_W(\Q^{\frac{1}{2}})$ equals $|I_\slope(W)|$.
\end{lem}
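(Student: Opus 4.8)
The plan is to reduce the lemma, via the definition of $\zeta$-defect from \S\ref{subsec:cyclotomic}, to the Bott--Solomon factorization of the Poincar\'e polynomial and an elementary count of cyclotomic factors.

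First I would fix notation: write $\slope=\frac{m}{n}$ in lowest terms, so that $\zeta=e^{2\pi i\slope}$ is a primitive $n$th root of unity and the specialization $\Q^{\frac12}\mapsto\zeta^{\frac12}$ corresponds to $\Q\mapsto\zeta$, governed on the principal block by the $n$th cyclotomic polynomial $\Phi_n(\Q)$. Recall that the principal $\zeta$-block $B_0$ is the one containing the trivial character $1\in\hat W$, and that the $\zeta$-defect of a $\zeta$-block $B$ is the largest $\Phi_n$-adic valuation $\nu_{\Phi_n}\bigl(\bm s(\phi_{\Q})\bigr)$ of a Schur element of a character $\phi\in B$. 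The first reduction is to check that this maximum, over $\phi\in B_0$, is attained at $\phi=1$, so that the $\zeta$-defect of $B_0$ equals $\nu_{\Phi_n}\bigl(\bm s(1_{\Q})\bigr)$. For this I would invoke the standard identity expressing $\bm s(\phi_{\Q})$ as $\bm s(1_{\Q})/\UDeg_\phi(\Q)$ up to a nonzero scalar (with the familiar caveat for factors of type $E_7$ or $E_8$, which is immaterial for valuations at $\Q=\zeta$), together with $\UDeg_1(\Q)=1$: then $\nu_{\Phi_n}(\bm s(\phi_{\Q}))=\nu_{\Phi_n}(\bm s(1_{\Q}))-\nu_{\Phi_n}(\UDeg_\phi(\Q))\le\nu_{\Phi_n}(\bm s(1_{\Q}))$ since $\UDeg_\phi$ is a polynomial, so $1$ indeed has maximal defect in its block. (If the definition of $\zeta$-defect in \S\ref{subsec:cyclotomic} is already phrased directly in terms of the principal character, this step is vacuous.)

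The second and final step is the computation of $\nu_{\Phi_n}\bigl(\bm s(1_{\Q})\bigr)$. By the Bott--Solomon formula \eqref{eq:bott-solomon},
\begin{align}
\bm s(1_{\Q})=\prod_{i\in I(W)}\frac{1-\Q^{d_i}}{1-\Q}.
\end{align}
For $n\ge2$ (the case $\slope\in\bb Z$, where $\zeta=1$, is degenerate and does not arise in our applications), $\Phi_n$ does not divide $\Q-1$, while each $\Q^{d_i}-1=\prod_{e\mid d_i}\Phi_e(\Q)$ is squarefree; hence $\Phi_n$ divides $1-\Q^{d_i}$, with multiplicity exactly one, precisely when $n\mid d_i$. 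Therefore $\nu_{\Phi_n}(\bm s(1_{\Q}))=\#\{i\in I(W):n\mid d_i\}=|I_\slope(W)|$ by the explicit description of $I_\slope(W)$ recorded above, which is the assertion.

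I expect the only genuine difficulty to be conceptual rather than computational: reconciling the argument with the exact conventions of \S\ref{subsec:cyclotomic}, in particular the passage between $\Q$ and $\Q^{\frac12}$ (equivalently between $\zeta$ and $\zeta^{\frac12}$), and, if the $\zeta$-defect there is the general block-theoretic invariant, appealing to Geck's block theory of $H_W$ to confirm that the trivial character really does attain the maximal defect inside the principal block. The cyclotomic count in the last step is routine.
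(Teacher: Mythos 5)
Your proof is correct and follows essentially the same route as the paper: by the paper's definition of $\zeta$-defect together with Geck's theorem (Theorem \ref{thm:geck}), the defect of the principal block is just the multiplicity of $\Q^{\frac{1}{2}} = \zeta^{\frac{1}{2}}$ in $\bm{s}(1_{\Q})$, so your maximality step via $\bm{s}(\phi_{\Q}) = \bm{s}(1_{\Q})/\UDeg_\phi(\Q)$ is automatic, and the paper then, exactly as you do, rewrites the Bott--Solomon formula \eqref{eq:bott-solomon} as $\prod_n \Phi_n(\Q)^{|\{i \in I(W) : n \mid d_i\}|}$ to read off the multiplicity $|I_\slope(W)|$. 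Your explicit cyclotomic count (squarefreeness of $\Q^{d_i} - 1$, exclusion of the degenerate integer-slope case) only spells out what the paper's one-line proof leaves implicit.
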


\begin{proof}
The Bott--Solomon formula \eqref{eq:bott-solomon} can be rewritten
\begin{align}
\bm{s}(1_{\Q}) = \prod_{n \geq 1}
\Phi_n(\Q)^{|\{i \in I(W)\,:\,\text{$n$ divides $d_i$}\}|},
\end{align}
where $\Phi_n$ is the minimal polynomial of $e^{\frac{2\pi i}{n}}$ over $\bb{Q}_W$.
\end{proof}

\begin{lem}\label{lem:defect-invariants}
The $\zeta$-defect of any $\zeta$-block of $H_W(\Q^{\frac{1}{2}})$ is bounded above by $I_\slope(W)$.
Equality holds if and only if $\UDeg_\phi(\zeta) \neq 0$.
Moreover, if $|I_\slope(W)| \leq 1$, then there are two kinds of blocks:
\begin{enumerate}
\item 	Any block of defect $0$ is a singleton.
		For $\phi$ in such a block, $\UDeg_\phi(\zeta) = 0$.
\item 	Any block of defect $1$ is a line of characters ordered by $\bb{a}$-value.
		If
		\begin{align}
		\{\phi_0 > \phi_1 > \cdots > \phi_{n - 1}\}
		\end{align}
		is such a block, then $\UDeg_{\phi_k}(\zeta) = (-1)^k$ 	for all $k$.
\end{enumerate}
\end{lem}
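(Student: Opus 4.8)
The plan is to translate the statement into the theory of $\Phi_e$-blocks of $H_W$ reviewed in \S\ref{subsec:cyclotomic}, where $e$ is the multiplicative order of $\zeta = e^{2\pi i\slope}$ and $\Phi_e$ the associated cyclotomic factor over $\bb{Q}_W$, and then to feed in the computation of the principal block from Lemma~\ref{lem:defect-invariants-principal} together with Geck's structure theory for blocks of $\zeta$-defect at most $1$. Write $a_{\Phi_e}$ for the multiplicity of $\Phi_e$ in a Laurent polynomial, and recall that the Schur element $c_\phi$ of $\phi \in \hat{W}$ satisfies $\UDeg_\phi(\Q) = \bm{s}(1_\Q)/c_\phi$ up to a monomial unit, and that the $\zeta$-defect of a character is $d_\zeta(\phi) = a_{\Phi_e}(c_\phi)$, with the $\zeta$-defect of a block the maximal such value over its members. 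The Bott--Solomon rewrite of $\bm{s}(1_\Q)$ used in the proof of Lemma~\ref{lem:defect-invariants-principal} gives $a_{\Phi_e}(\bm{s}(1_\Q)) = |I_\slope(W)|$, hence
\begin{align}
d_\zeta(\phi) = a_{\Phi_e}(c_\phi) = |I_\slope(W)| - a_{\Phi_e}(\UDeg_\phi).
\end{align}

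The first two assertions are then formal. Since $a_{\Phi_e}(\UDeg_\phi) \ge 0$ we get $d_\zeta(\phi) \le |I_\slope(W)|$, and taking maxima over a block bounds its $\zeta$-defect by $|I_\slope(W)|$. Because $\UDeg_\phi \in \bb{Q}[\Q]$ and $\zeta$ is a root of $\Phi_e$, we have $a_{\Phi_e}(\UDeg_\phi) = 0$ iff $\UDeg_\phi(\zeta) \ne 0$, so equality $d_\zeta(\phi) = |I_\slope(W)|$ holds exactly when $\UDeg_\phi(\zeta)\ne 0$; in the range $|I_\slope(W)|\le 1$ all members of a block share one defect (there is no room for positive height), which is what makes the equality criterion legible per character.

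For the dichotomy when $|I_\slope(W)|\le 1$: every block now has $\zeta$-defect $0$ or $1$. A block of $\zeta$-defect $0$ is a single $\phi$ with $c_\phi(\zeta)\ne 0$, i.e.\ with $\phi_\zeta$ simple and projective over $H_W(\zeta^{\frac12})$ — standard block theory of the symmetric algebra $H_W(\zeta^{\frac12})$, as recalled in \S\ref{subsec:cyclotomic} — and there $d_\zeta(\phi)=0$, so when $|I_\slope(W)|=1$ the equality criterion forces $\UDeg_\phi(\zeta)=0$ (when $|I_\slope(W)|=0$ the algebra is split semisimple and there is nothing further to prove). For a block $B$ of $\zeta$-defect $1$ I would invoke Geck's determination of defect-$1$ $\Phi_e$-blocks of Hecke algebras of finite Coxeter groups: $B$ is a Brauer line, its ordinary characters are linearly ordered $\phi_0 > \cdots > \phi_{n-1}$ by $\bb{a}$-value, and the generic degrees at $\zeta$ are the residues along that line; the Brauer-tree relations among the generic degrees of $B$ (the alternating sum along the line vanishing to order $\ge 2$ in $\Phi_e$), together with the normalization $\UDeg_{\phi_0}(\zeta)=1$, then force $\UDeg_{\phi_k}(\zeta) = (-1)^k$. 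Concretely this can also be read off Geck's tables of generic degrees and $\Phi_e$-blocks.

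The substantive step — and the one I expect to be the main obstacle — is clause (2): establishing that a defect-$1$ block is a Brauer line, that its linear order coincides with the $\bb{a}$-value order, and that the generic-degree residues alternate in sign. This is essentially Geck's block theory for $H_W$ (the same input behind the assertion, quoted in the introduction, that $\UDeg_\phi(e^{2\pi i\slope}) \in \{-1,0,1\}$ for cuspidal $\slope$ \cite{geck}), reorganised block by block; a uniform treatment still has to handle the exceptional families by hand, as in Remark~\ref{rem:h3}. Everything else is bookkeeping with $\Phi_e$-adic valuations of Schur elements.
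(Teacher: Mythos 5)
Your handling of the first two assertions and of the defect-$0$ case matches what the paper intends: since $\UDeg_\phi(\Q)=\bm{s}(1_{\Q})/\bm{s}(\phi_{\Q})$ is a polynomial and Lemma \ref{lem:defect-invariants-principal} identifies the multiplicity of $\Q^{\frac12}=\zeta^{\frac12}$ in $\bm{s}(1_{\Q})$ with $|I_\slope(W)|$, the defect of $\phi$ is $|I_\slope(W)|$ minus the order of vanishing of $\UDeg_\phi$ at $\zeta$, and Theorem \ref{thm:geck} supplies constancy of the defect on blocks together with the qualitative description of defect-$0$ and defect-$1$ blocks. The paper treats all of this as immediate from those two inputs, exactly as you do.

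The gap is in the one claim the paper singles out as nontrivial, namely $\UDeg_{\phi_k}(\zeta)=(-1)^k$, and it is twofold. First, the ``Brauer-tree relation among the generic degrees'' you invoke is asserted, not proved; it can be made precise (integrality of the block idempotent forces the residues to cancel, i.e.\ $\sum_{\phi\in B}\UDeg_\phi(\zeta)\,\phi_\zeta=0$ as a trace function on $H_W(\zeta^{\frac12})$, and feeding in the decomposition matrix of the line then gives alternation of the values up to a single overall scalar), but as written there is no argument. Second, and more seriously, the ``normalization $\UDeg_{\phi_0}(\zeta)=1$'' is precisely what remains to be shown for non-principal blocks: it is obvious only when $\phi_0$ is the trivial character, whereas the exceptional defect-$1$ blocks in types $E_8$ and $H_4$ at denominator $15$ --- the very blocks behind the exceptional case of Corollary \ref{cor:cuspidal} --- begin at a nontrivial character, and for them the value $+1$ rather than $-1$ at the extremal end is part of the statement, not a normalization one may impose. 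The paper closes exactly this point by a different mechanism: Brou\'e's theorem that ratios of Schur elements at the specialization are Morita invariants of the block ideal \cite{broue} (transported from group algebras to Hecke algebras via the footnote in \cite{cm}), combined with Theorem \ref{thm:geck}(2) identifying the block ideal with the Brauer tree algebra of type $A_{n-1}$; this reduces the computation to $W=S_n$, $\slope=\tfrac1n$, where $\phi_k=\Alt^k(\bb{V})$ and the values are evaluated directly as $\tr(w\mid\Alt^k(\bb{V}))=(-1)^k$ for an $e^{2\pi i/n}$-regular element $w$, using \cite[Rem.\ 3.7]{jones} together with Lemma-Postulate \ref{lempost:fourier}(2) and Theorem \ref{thm:fake-deg}. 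Your fallback of reading the values off Geck's tables is a legitimate but case-by-case route and would in particular have to cover those two exceptional blocks explicitly; in the form you have written it, the step you yourself flagged as the main obstacle is not yet proved.
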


\begin{proof}
The only claim that does not follow from Theorem \ref{thm:geck} and Lemma \ref{lem:defect-invariants-principal} is the claim about generic degrees in case (2).
Recall that, by definition, 
\begin{align}
\UDeg_{\phi_k}(\zeta) 
=
	\left.\frac{\bm{s}(1_{\Q})}{\bm{s}(\phi_{k, \Q})}\right|_{\Q^{1/2} \to \zeta^{1/2}}.
\end{align}
By work of Brou\'e, such ratios of Schur elements at roots of unity are Morita invariants of the block ideal \cite[\S{3.7}]{broue}.
(Strictly speaking, Brou\'e worked with group algebras, not Iwahori--Hecke algebras, but see the footnote on \cite[286]{cm}.)
Thus, by Theorem \ref{thm:geck}(2), these ratios match the corresponding ratios for the Brauer tree algebra of type $A_{n - 1}$.
In this way, we reduce to the setting where $W = S_n$ and $\slope = \frac{1}{n}$.
Here, we have $\phi_k = \Alt^k(\bb{V})$, so it remains to show that 
\begin{align}
\UDeg_{\Alt^k(\bb{V})}(e^{\frac{2\pi i}{n}}) = (-1)^k.
\end{align}
Let $w$ be an $e^{\frac{2\pi i}{n}}$-regular element of $W$.
By \cite[Rem.\ 3.7]{jones}, the right-hand side above equals $\tr(w \mid \Alt^k(V))$.
By Lemma-Postulate \ref{lempost:fourier}(2) and Theorem \ref{thm:fake-deg}, the left-hand side also equals $\tr(w \mid \Alt^k(V))$.
\end{proof}

We return to the proof of Theorem \ref{thm:slopes}(5).
Since $\slope$ is cuspidal, Theorem \ref{thm:slopes}(3) and Lemma \ref{lem:defect-invariants} together imply:
\begin{align}
[\Omega_\slope]_{\Q} 
=
	\sum_{\Gamma = \{\phi_k^\Gamma\}_k} 
	\sum_k
	(-1)^k
	[\Delta_\slope(\phi_k^\Gamma)]_{\Q},
\end{align}
where the outer sum runs over $\zeta$-blocks of $H_W(\Q^{\frac{1}{2}})$ of defect $1$.
To conclude, we claim that the inner sum on the right-hand side simplifies to $[L_\slope(\phi_0^\Gamma)]_{\Q}$, where $\phi_0^\Gamma$ is the element of greatest $\bb{a}$-value in the block $\Gamma$.
Indeed, this follows from the theorem below, itself a corollary of Theorem \ref{thm:ggor}(2).

\begin{thm}\label{thm:bgg}
Suppose that $\{\phi_0 > \phi_1 > \cdots > \phi_{n - 1}\}$ is a $\zeta$-block of $H_W(\Q^{\frac{1}{2}})$, ordered by $\bb{a}$-value.
Then, in $\sf{O}_\slope$, there is a ``Bernstein--Gelfand--Gelfand'' resolution of $L_\slope(\phi_0)$ of the form
\begin{align}
\Delta_\slope(\phi_{n - 1}) \to \cdots \to \Delta_\slope(\phi_1) \to \Delta_\slope(\phi_0) \to L_\slope(\phi_0) \to 0.
\end{align}
In particular, we have
\begin{align}
[L_\slope(\phi_0)]_{\Q} = \sum_{0 \leq k \leq n - 1} {(-1)^k} [\Delta_\slope(\phi_k)]_{\Q}
\end{align}
in $R(W)[\![\Q]\!]$.
\end{thm}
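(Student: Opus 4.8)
The plan is to reduce the statement to the internal module structure of the block $\sf{O}_\slope^\Gamma$ of category $\sf{O}_\slope$ that contains the Verma modules $\Delta_\slope(\phi_0),\dots,\Delta_\slope(\phi_{n-1})$, and then to assemble the resolution by the standard telescope. First I would check that these $n$ Verma modules lie in a single block of $\sf{O}_\slope$: by Theorem~\ref{thm:ggor}(3), $\sf{KZ}$ sends $\Delta_\slope(\phi_k)$ to $\phi_{k,\zeta}$, and all the $\phi_{k,\zeta}$ lie in one $\zeta$-block of $H_W(\zeta^{1/2})$, namely $\Gamma$. Combined with the quotient-functor description of Theorem~\ref{thm:ggor}(1) and the center isomorphism of Theorem~\ref{thm:ggor}(2) — which matches primitive central idempotents of $\sf{O}_\slope$ with those of $H_W(\zeta^{1/2})$ — this forces the $\Delta_\slope(\phi_k)$ into one block $\sf{O}_\slope^\Gamma$ and exhibits $\sf{O}_\slope^\Gamma$ as a highest weight cover of the module category of the block algebra $\Gamma$, with the highest weight order on $\{\phi_0>\dots>\phi_{n-1}\}$ given by $\bb{a}$-value.

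Next I would pin down that block algebra. In the cuspidal, hence defect-at-most-one, situation of Theorem~\ref{thm:slopes}(5), the analysis behind Lemma~\ref{lem:defect-invariants}(2) — which rests on Geck's block theory, Theorem~\ref{thm:geck} — identifies $\Gamma$ with the Brauer-tree algebra of a straight line with $n$ vertices and trivial exceptional multiplicity, whose module category has the familiar rigid structure. Transporting this structure through the cover — here I would invoke the uniqueness of highest weight covers of Brauer-tree blocks (Rouquier), with the center isomorphism of Theorem~\ref{thm:ggor}(2) supplying the extra data needed to identify the cover, equivalently a direct bound on the $\Ext^1$-groups between standard objects via the decomposition matrix of the line and the exactness of $\sf{KZ}$ — yields the structural facts I need in $\sf{O}_\slope^\Gamma$: for $0\le k\le n-2$ the standard object $\Delta_\slope(\phi_k)$ is uniserial with head $L_\slope(\phi_k)$ and socle $L_\slope(\phi_{k+1})$; the standard object $\Delta_\slope(\phi_{n-1})$ equals $L_\slope(\phi_{n-1})$; and $\Hom_{\sf{O}_\slope}(\Delta_\slope(\phi_{k+1}),\Delta_\slope(\phi_k))$ is one-dimensional. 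Here $L_\slope(\phi_0)$ turns out to be exactly the simple object of $\sf{O}_\slope^\Gamma$ annihilated by $\sf{KZ}$, consistent with the portion of Theorem~\ref{thm:slopes}(1) already established.

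With these facts in hand the resolution is built by the telescope: the surjection $\Delta_\slope(\phi_0)\twoheadrightarrow L_\slope(\phi_0)$ has kernel $\operatorname{rad}\Delta_\slope(\phi_0)=L_\slope(\phi_1)$, which is the image of the nonzero map $d_1\colon\Delta_\slope(\phi_1)\to\Delta_\slope(\phi_0)$; this $d_1$ factors through $L_\slope(\phi_1)$, so $\ker d_1=\operatorname{rad}\Delta_\slope(\phi_1)=L_\slope(\phi_2)$; iterating gives maps $d_k\colon\Delta_\slope(\phi_k)\to\Delta_\slope(\phi_{k-1})$ with $\ker d_k=L_\slope(\phi_{k+1})$ for $k\le n-2$; and $d_{n-1}$ is the inclusion $\Delta_\slope(\phi_{n-1})=L_\slope(\phi_{n-1})\hookrightarrow\Delta_\slope(\phi_{n-2})$, which is injective. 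This produces the exact complex $0\to\Delta_\slope(\phi_{n-1})\to\cdots\to\Delta_\slope(\phi_0)\to L_\slope(\phi_0)\to 0$. Since every $\bb{D}_\slope^\rat$-module homomorphism commutes with the Euler element $\bm{h}$ and hence preserves its eigenspaces, applying the graded Euler characteristic $[-]_{\Q}$, which is additive on exact sequences, yields $[L_\slope(\phi_0)]_{\Q}=\sum_{0\le k\le n-1}(-1)^k[\Delta_\slope(\phi_k)]_{\Q}$ in $R(W)[\![\Q]\!]$, with no $\Q$-shifts.

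The main obstacle is the middle step: rigorously transferring the Brauer-tree structure of the block of $H_W(\zeta^{1/2})$ to the uniseriality of the standard objects of $\sf{O}_\slope^\Gamma$. Knowing merely that $\sf{O}_\slope^\Gamma$ is \emph{some} highest weight cover of the module category of $\Gamma$ does not pin it down; one must use the finer input of Theorem~\ref{thm:ggor}(2) together with Rouquier's classification of such covers (equivalently, a direct $\Ext^1$ computation bounded via the decomposition matrix of the line Brauer tree and the exactness of $\sf{KZ}$) to exclude the other a priori possible module structures. Everything else in the argument is formal.
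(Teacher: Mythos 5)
You should first be aware that the paper contains no proof of Theorem \ref{thm:bgg}: it is quoted from the literature, attributed to \cite[Thm.\ 5.15]{rouquier_2008} (with the type $A$ case going back to \cite{beg_imrn}) and described as a corollary of the center isomorphism in Theorem \ref{thm:ggor}(2). So your proposal is being judged as a reconstruction of that cited argument, and its skeleton is indeed the standard one: match the block of $\sf{O}_\slope$ containing the $\Delta_\slope(\phi_k)$ with the $\zeta$-block $\Gamma$ of $H_W(\Q^{\frac{1}{2}})$ via $\sf{KZ}$ and the center isomorphism, use Geck's Brauer-tree description (Theorem \ref{thm:geck}), establish that each $\Delta_\slope(\phi_k)$ with $k \le n-2$ has exactly the two composition factors $L_\slope(\phi_k)$ (head) and $L_\slope(\phi_{k+1})$ while $\Delta_\slope(\phi_{n-1})$ is simple, and then telescope and take graded characters. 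Those outer steps are fine as you write them (the one-dimensionality of $\Hom(\Delta_\slope(\phi_{k+1}),\Delta_\slope(\phi_k))$ is not needed: a nonzero map exists because the radical of $\Delta_\slope(\phi_k)$, being isomorphic to $L_\slope(\phi_{k+1})$, is a quotient of $\Delta_\slope(\phi_{k+1})$).

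The middle step, however, which you rightly call the crux, is not yet a proof. The appeal to ``uniqueness of highest weight covers of Brauer-tree blocks'' is not usable in the form stated: Rouquier's uniqueness theorem requires a faithfulness hypothesis ($1$-faithfulness, itself a nontrivial condition on the parameters) and, more importantly, a second, already-understood cover of the block to compare $\sf{O}_\slope^\Gamma$ with; you supply neither, and producing such a comparison cover is essentially the content of the theorem. Your fallback via decomposition matrices and exactness of $\sf{KZ}$ has a genuine hole: $\sf{KZ}$ kills exactly one simple of the block, and the relations $[\sf{KZ}(\Delta_\slope(\phi_k))] = [\phi_{k,\zeta}]$ determine the decomposition matrix of the $\sf{O}$-block only up to the column of that torsion simple and up to the matching of the surviving simples with the Brauer-line simples. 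In particular these relations alone cannot exclude occurrences of $L_\slope(\phi_0)$ in $\Delta_\slope(\phi_k)$ for $k \ge 1$, nor do they identify which simple is torsion --- you assert that it ``turns out to be'' $L_\slope(\phi_0)$ without argument. What closes the gap is the highest-weight ($\bb{c}$-function) order on $\sf{O}_\slope$: one must check that it restricts on the block to the linear order $\phi_0 > \phi_1 > \cdots > \phi_{n-1}$, i.e.\ is compatible with Geck's $\bb{a}$-value line (this is also where the sign of $\slope$ and the orientation convention enter). Granting that, unitriangularity of the $\sf{O}$-decomposition matrix with respect to this order, combined with a downward induction along the line using the staircase reductions $[\phi_{k,\zeta}]$, forces $[\Delta_\slope(\phi_k)] = [L_\slope(\phi_k)] + [L_\slope(\phi_{k+1})]$, shows $\Delta_\slope(\phi_{n-1})$ is simple, and identifies $L_\slope(\phi_0)$ as the unique torsion simple; your telescope then goes through. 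Without that ordering input the structural claims you transport from the Brauer tree do not follow from what you have written.
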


\begin{rem}
We believe that the first appearance of this theorem in the literature is \cite[Thm.\ 5.15]{rouquier_2008}.
The type $A$ case was used much earlier, in \cite{beg_imrn}.
\end{rem}

\subsection{Cuspidal Slopes, \emph{cont.}}

We prove Corollary \ref{cor:cuspidal}.

There are only two cases where $W$ is an irreducible Coxeter group, $\slope$ is a cuspidal slope, and there is a $\zeta$-block of $H_W(\Q^{\frac{1}{2}})$ of defect $1$ besides the principal block.
Namely, $W$ must be of type $E_8$ or type $H_4$ and the denominator of $\slope$ in lowest terms must be $15$. 

In these cases, there is one other $\zeta$-block of defect $1$ and its element of greatest $\bb{a}$-value is the character of $\bb{V}$, assuming as before that $\bb{V}^W = 0$.
We refer to these as the two \dfemph{exceptional} cases for $(W, \slope)$.
Parts (3)-(4) of Theorem \ref{thm:slopes} now imply the following stronger form of Corollary \ref{cor:cuspidal}:

\begin{cor}
If $W$ is irreducible and $\slope$ is cuspidal, then:
\begin{align}\label{eq:cuspidal}
[\Omega_\slope]_{\Q} 
= 
	\left\{\begin{array}{ll}
	{[L_\slope(1)]_{\Q} + [L_\slope(\bb{V})]_{\Q}}
		&\text{$(W, \slope)$ is exceptional}\\
	{[L_\slope(1)]_{\Q}}
		&\text{else}
	\end{array}\right.
\end{align}
in $R(W)[\Q]$.
In particular, the right-hand side is finite-dimensional.
\end{cor}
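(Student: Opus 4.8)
The plan is to deduce the stronger corollary by combining Theorem~\ref{thm:slopes}(4) with the block-theoretic classification recorded in the paragraph above, using Theorem~\ref{thm:slopes}(1) to pin down the principal block's contribution and Theorem~\ref{thm:slopes}(3), together with the support theory of category~$\sf{O}_\slope$, for finite-dimensionality. Set $\zeta = e^{2\pi i\slope}$.

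First I would invoke Theorem~\ref{thm:slopes}(4): since $\slope$ is cuspidal (and positive),
\begin{align}
[\Omega_\slope]_{\Q} = \sum_{\psi \in \hat{W}_1} [L_\slope(\psi)]_{\Q},
\end{align}
where $\hat{W}_1 \subseteq \hat{W}$ is the set of distinguished characters of the $\zeta$-blocks of $H_W(\Q^{\frac{1}{2}})$ of defect~$1$, as in that theorem. So the whole problem reduces to determining $\hat{W}_1$. By Theorem~\ref{thm:slopes}(1), $L_\slope(1)$ occurs in $\Omega_\slope$ with multiplicity one; comparing this with the displayed expansion, and using the linear independence of simple graded characters, forces $1 \in \hat{W}_1$, and the block in question is the principal one, which indeed has defect $|I_\slope(W)| = 1$ by Lemma~\ref{lem:defect-invariants-principal}. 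For any \emph{other} defect-$1$ $\zeta$-block I would appeal to the classification stated just above the corollary: running through the irreducible Coxeter types with Geck's cyclotomic block theory and the explicit description of Lusztig's Fourier data (notably in types $E_7$, $E_8$, $H_3$, $H_4$, cf.\ Remark~\ref{rem:h3}), one sees that such a block exists precisely in the two exceptional cases where $W$ is of type $E_8$ or $H_4$ and the denominator of $\slope$ in lowest terms is~$15$, and that its distinguished character is then the character of $\bb{V}$. Substituting into the display yields exactly the claimed value of $[\Omega_\slope]_{\Q}$ in the two regimes.

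It remains to check finite-dimensionality, i.e., that each $L_\slope(\psi)$ with $\psi \in \hat{W}_1$ is finite-dimensional. Since a cuspidal slope is singular (Lemma~\ref{lem:implication}), Theorem~\ref{thm:slopes}(2) places $\Omega_\slope$, and hence each simple constituent appearing above, in the Serre subcategory $\sf{O}_\slope^{\tor}$ of modules supported on the discriminant locus of $\bb{V}^\vee$. I would then use that at a cuspidal slope no proper parabolic subgroup of an irreducible $W$ is $\slope$-relevant, so that the support of any object of $\sf{O}_\slope^{\tor}$ is forced to be $\{0\}$; thus $L_\slope(\psi)$ is finite-dimensional. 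For $W$ a Weyl group this is also immediate from Theorem~\ref{thm:slopes}(3) via cuspidal $\Rightarrow$ regular elliptic, and for $W = H_4$ with $\slope$ of denominator~$15$ one can alternatively compute $[L_\slope(1)]_{\Q}$ and $[L_\slope(\bb{V})]_{\Q}$ directly from their Bernstein--Gelfand--Gelfand resolutions, Theorem~\ref{thm:bgg}, and observe that the resulting alternating sums of Verma characters are polynomials. Once finiteness is in hand, the right-hand side of the corollary is a finite integer combination of characters of genuine finite-dimensional graded $W$-modules, hence lies in $R(W)[\Q]$, and the identity holds there.

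The main obstacle is the middle step: confirming that $\hat{W}_1 \setminus \{1\}$ is empty outside the two exceptional pairs $(W,\slope)$ and equal to the singleton containing the character of $\bb{V}$ inside them. This is the block-theoretic content of the paragraph preceding the corollary, and it is delicate because it hinges on the fine structure of the cyclotomic Hecke algebra at roots of unity and on the exceptional families in types $E_7$, $E_8$ and $H_4$. By contrast, the reduction via Theorem~\ref{thm:slopes}(4) is a direct citation, and the finite-dimensionality input is a clean consequence of cuspidality once one grants the support theory of \cite{be}.
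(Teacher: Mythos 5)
Your proposal follows the paper's route in all the essential steps: reduce via Theorem \ref{thm:slopes}(4) to identifying the distinguished characters of the defect-one $\zeta$-blocks, feed in the case-by-case classification (only the principal block in general, plus exactly one extra defect-one block, with distinguished character that of $\bb{V}$, in types $E_8$ and $H_4$ at denominator $15$), and get polynomiality from Theorem \ref{thm:slopes}(3) together with the non-negativity of the expansion. The paper's proof is precisely this: it records the block classification and then cites parts (3)--(4) of Theorem \ref{thm:slopes}. Your extra micro-arguments --- that $1 \in \hat{W}_1$ follows from part (1) plus linear independence of simple characters, and that the principal block has defect $|I_\slope(W)| = 1$ by Lemma \ref{lem:defect-invariants-principal} --- are correct, and, like the paper, you do not prove the classification itself but correctly identify it (Geck's defect-one block theory plus the exceptional Fourier data) as the delicate input; since the paper also merely asserts it, this is not a gap relative to the paper.

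The one place you genuinely diverge is your primary finite-dimensionality mechanism. Deducing from Theorem \ref{thm:slopes}(2) that each $L_\slope(\psi)$, $\psi \in \hat{W}_1$, lies in $\sf{O}_\slope^\tor$ is fine (using positivity from (4) and linear independence of simples), but the next claim --- that at a cuspidal slope no proper parabolic subgroup of an irreducible $W$ is $\slope$-relevant, so every object of $\sf{O}_\slope^\tor$ has support $\{0\}$ --- is asserted without proof or reference, and it is not a formal consequence of cuspidality. It requires the Bezrukavnikov--Etingof support/relevance analysis, which the paper never develops, and is itself a case check of the same nature as the block classification: note that the bare degree-counting definition of cuspidal does not even prevent the denominator from dividing a degree of a proper parabolic (for instance $8$ divides exactly one degree of $B_5$ yet is a degree of the parabolic $B_4$, whose Cherednik algebra at that parameter does have a finite-dimensional module), so ruling out relevant parabolics genuinely uses regular ellipticity together with an inspection of types. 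This does not sink the proposal, because your fallback is exactly the paper's argument: for Weyl groups, cuspidal $\Rightarrow$ regular elliptic and part (3) make $[\Omega_\slope]_{\Q}$ a Laurent polynomial, hence each non-negative summand $[L_\slope(\psi)]_{\Q}$ finite --- which covers $E_8$ --- while the non-crystallographic cases ($H_4$, and also $H_3$ and the dihedral types, which your fallback omits) are in both your write-up and the paper left to asserted computations. I would either drop the support detour or supply the relevance analysis it presupposes, rather than present it as the main mechanism.
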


\begin{rem}
This statement also recovers the observation of Bezrukavnikov--Etingof \cite[Rem.\ 3.31]{be} that for irreducible $W$ and cuspidal $\slope$, the only finite-dimensional simple $\bb{D}_\slope^\rat$-modules are those appearing on the right-hand side of \eqref{eq:cuspidal}.
\end{rem}

\begin{rem}
If $W \simeq W_1 \times W_2$ and $\bb{V} \simeq \bb{V}_1 \oplus \bb{V}_2$, such that $\bb{V}_i$ is a realization of $W_i$ for $i = 1, 2$, then:
\begin{itemize}
\item 	$|I_\slope(W)| = |I_\slope(W_1)| + |I_\slope(W_2)|$, where the right-hand side is defined in terms of $\bb{V}_1$ and $\bb{V}_2$ rather than $\bb{V}$.
\item 	Category $\sf{O}$ for the rational Cherednik algebra of $W$ is the tensor product of the corresponding categories for $W_1$ and $W_2$.
\end{itemize}
So the above result can be generalized to any pair $(W, \slope)$ such that $\slope$ is cuspidal with respect to each irredcible factor of $W$.
\end{rem}

\subsection{}

To conclude this section, we explain how to deduce \cite[Thm.\ 1.1]{gors} from Corollaries \ref{cor:cuspidal} and \ref{cor:homfly}.

For $W = S_n$, we have $r = n - 1$ and $|\pi| = n(n - 1)$.
Let $s_i \in S_n$ be the transposition $(i~i + 1)$.
Then the (positive) $(m, n)$ torus knot is the link closure of 
\begin{align}
\beta = (\sigma_{s_1} \cdots \sigma_{s_{n - 1}})^m \in \Br_W^+,
\end{align}
a periodic braid of slope $\frac{m}{n}$.
Since $n$ is the Coxeter number of $S_n$, it is a cuspidal number for $S_n$.
Corollary \ref{cor:cuspidal} gives
\begin{align}\begin{split}
{[L_\slope(1)]_{\Q}}
	= {[\Omega_\slope]_{\Q}}
	&= (\Q^{\frac{1}{2}})^{r - |\beta|} \cdot \Tr{\beta}^0.
\end{split}\end{align}
Since $|\beta| - n + 1 = \frac{m}{n}|\pi| - (n - 1) = (m - 1)(n - 1)$, Corollary \ref{cor:homfly} gives
\begin{align}
{[\hat{\beta}]_{a, \Q}}
=	a^{(m - 1)(n - 1)}
	\sum_{0 \leq i \leq n - 1}
	{(-a^2)^i}
	(\Alt^i(\bb{V}), {[L_\slope(1)]_{\Q}})_W.
\end{align}
This is \cite[Thm.\ 1.1]{gors}.

\newpage

\appendix

\section{Coxeter Groups and Their Characters}\label{sec:coxeter}

\subsection{}\label{subsec:coxeter-group}

Our main reference for this appendix is the book \cite{gp}.

A \dfemph{Coxeter group} is a group $W$ that admits a finite presentation of the form
\begin{align}
W = \langle s \in S : (st)^{m_{s, t}} = 1\rangle,
\end{align}
where $m_{s,s} = 1$ and $m_{s, t}  \geq 1$ for all $s, t \in S$.
We say that $W$ is \dfemph{irreducible} iff it is both nontrivial and not a product of smaller Coxeter groups.
The pair $(W, S)$ is called a \dfemph{Coxeter system}, and $S$ is called its set of \dfemph{simple reflections}.
We say that $|S|$ is the \dfemph{rank} of $W$.

If $w \in W$, then an \dfemph{expression} for $w$ is a sequence $(s_1, \ldots, s_\ell)$ of elements of $S$ such that $w = s_1 \cdots s_\ell$.
The \dfemph{reduced} expressions for $w$ are those of minimal length.
The \dfemph{Bruhat order} on $W$ is the partial order where $w' \leq w$ iff some reduced expression for $w'$ is a subsequence of a reduced expression for $w$, not necessarily a contiguous one.
The common length of the reduced expressions for $w$ is called the \dfemph{Bruhat length} of $w$ and denoted $|w|$.

If $W$ is finite, then there is a unique element $w_0 \in W$ of maximal Bruhat length, known as the \dfemph{longest element} of $(W, S)$ \cite[\S{1.5}]{gp}.

\subsection{}\label{subsec:braid-group}

The \dfemph{Artin braid monoid} of $(W, S)$, which we denote $\Br_W^+$, is the monoid freely generated by formal elements $\sigma_s$ for each $s \in S$, modulo the relations 
\begin{align}
\overbrace{\sigma_s \sigma_t \sigma_s \cdots}^{\text{$m_{s, t}$ terms}}
=
\overbrace{\sigma_t \sigma_s \sigma_t \cdots}^{\text{$m_{s, t}$ terms}}
\end{align}
for \emph{distinct} $s, t \in S$.
The \dfemph{Artin braid group} of $(W, S)$, denoted $\Br_W$, is the \emph{group} with the same presentation.
By \cite[Prop.\ 4.1.10]{gp}, the natural morphism $\Br_W^+ \to \Br_W$ is injective.
The elements of $\Br_W$ are called \dfemph{braids}.

There is a surjective morphism $\Br_W \to W$ that sends $\sigma_s \mapsto s$.
The kernel is generated by the elements of the form $\sigma_s^2$.

\begin{ex}
In the Coxeter system of type $A_{n - 1}$, the group $W$ is the symmetric group $S_n$.
We can take $S$ to consist of the transpositions $(i~i + 1)$ for $1 \leq i < n$.
We have $\Br_W \simeq \Br_n$, the group of topological braids on $n$ strands.
\end{ex}

If $\beta \in \Br_W^+$, then an \dfemph{expression} for $\beta$ is a sequence $(s_1, \ldots, s_\ell)$ of elements of $S$ such that $\beta = \sigma_{s_1} \cdots \sigma_{s_\ell}$.
All expressions for $\beta$ have the same length, which is called the \dfemph{writhe} of $\beta$ and denoted $|\beta|$.
Unlike Bruhat length, writhe is additive:
It defines a morphism of monoids $|{-}| : \Br_W^+ \to \bb{Z}_{\geq 0}$, which extends uniquely to a morphism of groups $|{-}| : \Br_W \to \bb{Z}$.

If $w \in W$ and $(s_1, \ldots, s_\ell)$ is a reduced expression for $w$, then $\sigma_w = \sigma_{s_1} \cdots \sigma_{s_\ell}$ depends only on $w$, not on the chosen expression.
The map $w \mapsto \sigma_w$ defines a set-theoretic right inverse of the morphism $\Br_W^+ \to W$ such that $|\sigma_w| = |w|$.

\subsection{}\label{subsec:hecke}

The \dfemph{Iwahori--Hecke algebra} of $(W, S)$ is a variant of the group ring $\bb{Z}[W]$ where we modify the relations $s^2 = 1$ to depend on a variable $\Q^{\frac{1}{2}}$.
Namely, it is
\begin{align}
H_W = \frac{\bb{Z}[\Q^{\pm\frac{1}{2}}][\Br_W^+]}{\left\langle (\sigma_s - \Q^{\frac{1}{2}})(\sigma_s + \Q^{-\frac{1}{2}}) : s \in S\right\rangle}.
\end{align}
The set $\{\sigma_w\}_{w \in W}$ forms a free $\bb{Z}[\Q^{\pm\frac{1}{2}}]$-basis for $H_W$.
By induction on the Bruhat order, one can show that the multiplication table for this basis is given by
\begin{align}\label{eq:hecke-multiplication}
\sigma_w \sigma_s =
\left\{\begin{array}{ll}
\sigma_{ws}
	&ws > w\\
\sigma_{ws} + (\Q^{\frac{1}{2}} - \Q^{-\frac{1}{2}})\sigma_w 
	&ws < w
\end{array}\right.
\end{align}
for all $s \in S$ and $w \in W$.
Thus, $\sigma_w \mapsto w$ defines an explicit isomorphism $H_W|_{\Q^{1/2} = 1} \simeq \bb{Z}[W]$.

\subsection{}\label{subsec:parabolic}

We say that a subgroup $W' \subseteq W$ is \dfemph{$S$-parabolic} iff it is generated by a subset of $S$.
In this case, the inclusion $W' \to W$ induces a writhe-preserving inclusion of positive braid monoids $\Br_{W'}^+ \to \Br_W^+$ by way of the map $w \mapsto \sigma_w$.
This, in turn, induces an inclusion of Iwahori--Hecke algebras $H_{W'} \to H_W$.

\subsection{}\label{subsec:representation}

All representations of $W$ in this paper are finite-dimensional linear representations over (a subfield of) $\bb{C}$ or an isomorphic field.
We write $R(W)$ for the Grothendieck ring of virtual representations of $W$, and we write $\hat{W} \subseteq R(W)$ for the subset of irreducible representations.
Abusing notation, \emph{we treat the elements of $R(W)$ interchangeably with their characters}.
Let $\bb{Q}_W \subseteq \bb{C}$ be the field obtained by adjoining to $\bb{Q}$ the character values $\phi(w)$ for all $w \in W$ and $\phi \in \hat{W}$.

We write $1$ and $\varepsilon$ for the trivial and sign characters of $W$, respectively.
By definition, $\varepsilon(w) = (-1)^{|w|}$.

Let $(-, -)_W : R(W) \times R(W) \to \bb{Z}$ be the pairing defined by
\begin{align}\begin{split}
(\phi, \psi)_W 
&= \dim \Hom(1, \phi \otimes \psi)\\
&= \frac{1}{|W|}
\sum_{w \in W} \phi(w)\psi(w^{-1})
\end{split}\end{align}
for all $\phi, \psi \in R(W)$.
For any ring $A$, we extend $(-, -)_W$ to a pairing on $R(W) \otimes A$ by linearity.

\subsection{}\label{subsec:realization}

If $K$ is a subfield of $\bb{C}$ or a field isomorphic to $\bb{C}$, then a \dfemph{realization} of $(W, S)$ over $K$ consists of a finite-dimensional $K$-vector space $\bb{V}$ and a (faithful) representation $W \to \GL(\bb{V})$ that identifies $S$ with a set of reflections in $\bb{V}$.
It will be convenient to set
\begin{align}
{[\Sym(\bb{V})]_{\Q}} = \sum_i {\Q^i} \Sym^i(\bb{V}) \in R(W)[\![\Q]\!].
\end{align}
If $\bb{V}^W = 0$, then $[\Sym(\bb{V})]_{\Q}$ does not depend on the choice of $\bb{V}$.
In this case,
\begin{align}
\bm{m}_\phi(\Q) = (\phi, [\Sym(\bb{V})]_{\Q})_W
\end{align}
is known as the \dfemph{Molien series} of $\phi \in \hat{W}$.

Every finite Coxeter system admits a realization over $\bb{R}$.
We say that $(W, S)$ is \dfemph{crystallographic} iff it admits a realization $\bb{V}$ over $\bb{Q}$, in which case $W$ is the Weyl group of a root system $\Phi \subseteq \bb{V}$ and $S$ is the set of reflections corresponding to a system of simple roots in $\Phi$.
For crystallographic Coxeter systems, $\bb{Q}_W = \bb{Q}$ \cite[Cor.\ 4.8]{springer_1974}.

\subsection{}\label{subsec:hecke-generic}

We will abbreviate
\begin{align}
H_W(\Q^{\frac{1}{2}}) = \bb{Q}_W(\Q^{\frac{1}{2}}) \otimes_{\bb{Z}[\Q^{\pm\frac{1}{2}}]} H_W.
\end{align}
By \cite[Thm.\ 9.3.5]{gp}, we have an isomorphism of $\bb{Q}_W(\Q^{\frac{1}{2}})$-algebras $H_W(\Q^{\frac{1}{2}}) \simeq \bb{Q}_W(\Q^{\frac{1}{2}})[W]$.
In particular, $H_W(\Q^{\frac{1}{2}})$ is semisimple.
Writing $R(H_W(\Q^{\frac{1}{2}}))$ for the Grothendieck ring of $H_W(\Q^{\frac{1}{2}})$-modules, we obtain a ring isomorphism:
\begin{align}
\begin{array}{rcl}
R(W) &\xrightarrow{\sim} &R(H_W(\Q^{\frac{1}{2}}))\\
\phi &\mapsto &\phi_{\Q}
\end{array}
\end{align}
Again, we will identify the representations $\phi_{\Q}$ with their characters.
Thus we will write $\phi_{\Q}(\beta) = \tr(\beta \mid \phi_{\Q})$ for all $\beta \in H_W(\Q^{\frac{1}{2}})$ and $\phi \in \hat{W}$.
For example, we have $1_{\Q}(\sigma_w) = (\Q^{\frac{1}{2}})^{|w|}$ for all $w \in W$.

Let $\tau_{\Q} : H_W(\Q^{\frac{1}{2}}) \to \bb{Q}_W(\Q^{\frac{1}{2}})$ be the symmetrizing trace defined by
\begin{align}
\tau_{\Q}(\sigma_w) = \left\{
\begin{array}{ll}
1	&w = 1\\
0	&w \neq 1
\end{array}\right.
\end{align}
The corresponding symmetrizer is $\sum_{w \in W} \sigma_w \otimes \sigma_{w^{-1}} \in H_W \otimes H_W$.
By Schur orthogonality \cite[Cor.\ 7.2.4]{gp}, we can write 
\begin{align}\label{eq:schur}
\tau_{\Q} = \sum_{\phi \in \hat{W}}
\frac{1}{\bm{s}(\phi_{\Q})}\,\phi_{\Q},
\end{align}
where the Schur element $\bm{s}(\phi_{\Q}) \in \bb{Q}_W(\Q^{\frac{1}{2}})$ is defined by
\begin{align}
\bm{s}(\phi_{\Q}) = \frac{1}{\phi(1)} \sum_{w \in W} \phi_{\Q}(\sigma_w)\phi_{\Q}(\sigma_{w^{-1}}).
\end{align}
In particular, $\bm{s}(1_{\Q}) = \sum_{w \in W} \Q^{|w|}$, the so-called Poincar\'e polynomial of $W$.

\subsection{}\label{subsec:degrees}

To each $\phi \in \hat{W}$, we attach two polynomial invariants.

\subsubsection{}

The \dfemph{fake degree} of $\phi$ is the ratio
\begin{align}
\FDeg_\phi(\Q) = \frac{\bm{m}_\phi(\Q)}{\bm{m}_1(\Q)}.
\end{align}
It follows from Chevalley's restriction theorem that $\FDeg_\phi(\Q) \in \bb{Z}[\Q]$ \cite[\S{2.5-2.6}]{springer_1974}.

\subsubsection{}

The \dfemph{generic} or \dfemph{unipotent degree} of $\phi$ is the ratio
\begin{align}
\UDeg_\phi(\Q) = \frac{\bm{s}(1_{\Q})}{\bm{s}(\phi_{\Q})}.
\end{align}
Benson--Curtis showed \cite[Cor.\ 9.3.6, Rem.\ 9.3.7]{gp} that $\UDeg_\phi(\Q) \in \bb{Q}_W[\Q]$.

\subsection{}\label{subsec:fourier}

In \cite{lusztig_1984, lusztig_1993, lusztig_1994}, Lusztig assigned to every finite Coxeter group $W$ a finite set $\UCh(W)$, equipped with:
\begin{enumerate}
\item 	An embedding $\hat{W} \to \UCh(W)$ that we denote $\phi \mapsto \rho_\phi$.
\item 	A function $\UCh(W) \to \bb{Q}_W[\Q]$ that we denote $\rho \mapsto \UDeg_\rho(\Q)$.
		It satisfies $\UDeg_{\rho_\phi}(\Q) = \UDeg_\phi(\Q)$ for all $\phi \in \hat{W}$. 
\item 	A symmetric orthogonal pairing
\begin{align}
\{-, -\} : \UCh(W) \times \UCh(W) \to \bb{Q}_W
\end{align}
known as the \dfemph{exotic Fourier transform}.
\end{enumerate}
In type $H_4$, his work was completed by Malle \cite{malle}.

For $W$ the Weyl group of a finite group of Lie type, Lusztig used this data to relate almost-characters parametrized by $\hat{W}$ with unipotent irreducible characters parametrized by $\UCh(W)$ \cite{lusztig_1984}, \emph{cf.}\ Section \ref{sec:dl}.
For general $W$, Lusztig gave a list of postulates that characterize this data uniquely \cite[\S{2}]{lusztig_1993}.

Outside of Section \ref{sec:dl}, we only use the restriction of the exotic Fourier transform to a pairing on $\hat{W}$: that is, the pairing
\begin{align}
\{-, -\} : \hat{W} \times \hat{W} \to \bb{Q}_W.
\end{align}
defined by $\{\phi, \psi\} = \{\rho_\phi, \rho_\psi\}$ for all $\phi, \psi \in \hat{W}$.

\begin{lempost}\label{lempost:fourier}
Let $W$ be any finite Coxeter group.
\begin{enumerate}
\item 	
		If $W= W_1 \times W_2$, then 
		\begin{align}
		\UCh(W) = \UCh(W_1) \times \UCh(W_2)
		\end{align}
		and $\{-, -\}$ is similarly multiplicative.
\item 	
		For all $\phi \in \hat{W}$, we have
\begin{align}
\UDeg_\phi(\Q) = \sum_{\psi \in \hat{W}} {\{\phi, \psi\}} \FDeg_\psi(\Q).
\end{align}
\item 	
		For all $\phi, \psi \in \hat{W}$, we have $\{\varepsilon \phi, \varepsilon \psi\} = \{\phi, \psi\}$.
\end{enumerate}
\end{lempost}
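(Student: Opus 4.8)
The plan is to read off the three assertions from Lusztig's construction of the set $\UCh(W)$, the generic degrees $\UDeg_\rho$, and the exotic Fourier transform $\{-,-\}$ in \cite{lusztig_1984, lusztig_1993, lusztig_1994}, together with Malle's completion in type $H_4$ \cite{malle}; for (1) and (3) one also reduces to the case where $W$ is irreducible. For (1): the whole packet of data is built factor by factor, so for a product there is a bijection $\UCh(W_1 \times W_2) = \UCh(W_1) \times \UCh(W_2)$ under which $\rho_{\phi_1 \times \phi_2}$ corresponds to $(\rho_{\phi_1}, \rho_{\phi_2})$, the generic degrees multiply, and the Fourier matrix is the Kronecker product of those of $W_1$ and $W_2$; since $\hat{W}_1 \times \hat{W}_2 = \widehat{W_1 \times W_2}$, the restricted pairing on $\hat{W}$ is multiplicative as well. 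I would cite \cite[Ch.\ 4]{lusztig_1984} for Weyl groups and the corresponding postulate of \cite[\S 2]{lusztig_1993} in general.

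For (2): this is Lusztig's compatibility between fake degrees and generic degrees. Extending $\FDeg$ from $\hat{W}$ to all of $\UCh(W)$ by zero on characters not of principal-series type, the defining properties of the Fourier transform give $\UDeg_\rho(\Q) = \sum_{\rho'} \{\rho,\rho'\}\,\FDeg_{\rho'}(\Q)$ — no sign enters, because $\FDeg_{\rho'}$ is supported on the $\rho'$ with $\Delta(\rho') = 1$ in the sense of Lemma-Postulate \ref{lempost:almost}(1) — and specializing $\rho = \rho_\phi$ collapses this to $\sum_{\psi \in \hat{W}}\{\phi,\psi\}\FDeg_\psi(\Q)$, which is (2). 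For general finite Coxeter groups the formula is essentially built into Lusztig's definition of the generic degrees via the Fourier matrix \cite[\S 2]{lusztig_1993}, verified in the dihedral and $H_3$ cases in \cite{lusztig_1994} and in type $H_4$ in \cite{malle}; for Weyl groups one additionally uses that these polynomials coincide with the honest unipotent-character degrees \cite[Ch.\ 4]{lusztig_1984}.

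For (3): using (1) and the fact that the sign character of a product is the exterior product of the sign characters, we reduce to irreducible $W$. There the involution $\phi \mapsto \varepsilon\phi$ of $\hat{W}$ permutes the families, carrying a family $\cal{F}$ to $\cal{F}\otimes\varepsilon$, and extends to an involution of $\UCh(W)$ that identifies, family by family, the combinatorial data attached to $\cal{F}$ (the associated finite group and its nonabelian Fourier matrix) with that of $\cal{F}\otimes\varepsilon$. Since $\{-,-\}$ is block-diagonal along families, both $\{\phi,\psi\}$ and $\{\varepsilon\phi,\varepsilon\psi\}$ vanish unless $\phi,\psi$ lie in a common family, and on a common family the identity is precisely this per-family invariance. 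In type $A$ it is trivial ($\otimes\varepsilon$ transposes partitions and the Fourier matrix is the identity); in the remaining types it is read off Lusztig's explicit tables of families and Fourier matrices \cite{lusztig_1984}, with \cite{lusztig_1994, malle} covering the non-crystallographic cases.

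I expect the main obstacle to be simply that there is no uniform argument: the content lives in Lusztig's (and Malle's) construction, so the work is organizing the references and the product reductions. The one genuinely delicate point is (2) — one must check that extending the fake degrees by zero is exactly what the Fourier-transform normalization forces, so that the sum over $\hat{W}$ rather than over all of $\UCh(W)$ is the correct one; this is where the normalization $\Delta(\rho_\psi) = 1$ and the absence of cuspidal contributions must be handled with care.
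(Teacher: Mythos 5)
Your proposal is correct and, in substance, follows the same route as the paper: all three parts are read off from the Lusztig--Malle data, with (1) being exactly \cite[Postulate 2.3]{lusztig_1993} and (2) being the standard degree formula, for which the paper simply cites \cite[\S{13.6}]{carter}. Two comments. First, the genuinely delicate point in (2) is not quite the one you single out (the extension of fake degrees by zero): the clean reason no sign enters is that $\Delta(\rho_\psi)=1$ for the principal-series character whose degree is being computed, and the real subtlety is the exceptional families in types $E_7$ and $E_8$, where Marberg \cite{marberg} claims the identity holds only after precomposing with the Beynon--Lusztig involution \cite{bl}; the paper addresses this in a separate remark by cross-checking the tables of \cite[\S{13.9}]{carter} and \cite[Ch.\ 4]{lusztig_1984} and concluding that Marberg's claim is incorrect. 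Any careful write-up of (2) should confront that case explicitly. Second, for (3) your route differs mildly from the paper's: you argue family-by-family that tensoring with $\varepsilon$ permutes families and preserves the attached Fourier data, verified on explicit tables in the exceptional and non-crystallographic types, whereas the paper deduces the Weyl-group case from Alvis--Curtis--Kawanaka duality \cite[\S{8.2}]{carter} and uses the reduction to irreducible factors via (1) only for the remaining types. Both work; the duality argument avoids table inspection for the crystallographic exceptional groups, while your version is uniform in flavor but ultimately still a case check in types such as $E_7$, $E_8$, and $H_4$.
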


\begin{proof}
Part (1) is \cite[Postulate 2.3]{lusztig_1993}.
Part (2) is \cite[\S{13.6}]{carter}.
Part (3) does not seem to be stated in the literature directly, but for Weyl groups, it is a consequence of Alvis--Curtis--Kawanaka duality \cite[\S{8.2}]{carter}.
For general $W$, we can check it by using part (1) to reduce to the irreducible case.
\end{proof}

\begin{rem}
In \cite[Thm.\ 4.3]{marberg} and the remark below it, Marberg claims that in types $E_7$ and $E_8$, the identity in part (2) only holds up to precomposition with a certain involution described in \cite{bl}.
Cross-referencing against the tables in \cite[\S{13.9}]{carter} and \cite[Ch.\ 4]{lusztig_1984} shows that this claim is incorrect.
\end{rem}

Recall the character decomposition \eqref{eq:schur} of the symmetrizing trace $\tau_{\Q}$.
Using Lemma-Postulate \ref{lempost:fourier}(2), we can re-express these weights in terms of Molien series and the exotic Fourier transform.

\begin{prop}\label{prop:molien}
For all $\phi \in \hat{W}$, we have 
\begin{align}
\frac{1}{\bm{s}(\phi_{\Q})}
=
(1 - \Q)^r \sum_{\psi \in \hat{W}}
{\{\phi, \psi\}}
\bm{m}_\psi(\Q)
\end{align}
in $\bb{Q}_W(\Q^{\frac{1}{2}})$.
\end{prop}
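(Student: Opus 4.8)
The plan is to unwind the definitions, reducing everything to Lemma-Postulate \ref{lempost:fourier}(2) and the classical product formulas for the Poincar\'e polynomial and the Molien series of the trivial character. None of the steps is hard; the content is purely bookkeeping, so the "obstacle" is really just keeping the various normalizations straight.

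\textbf{Step 1.} Start from the definition $\UDeg_\phi(\Q) = \bm{s}(1_{\Q})/\bm{s}(\phi_{\Q})$ in \S\ref{subsec:degrees}, which gives
\begin{align}
\frac{1}{\bm{s}(\phi_{\Q})} = \frac{\UDeg_\phi(\Q)}{\bm{s}(1_{\Q})}.
\end{align}

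\textbf{Step 2.} Apply Lemma-Postulate \ref{lempost:fourier}(2) to rewrite the numerator as $\UDeg_\phi(\Q) = \sum_{\psi \in \hat{W}} \{\phi, \psi\}\, \FDeg_\psi(\Q)$, so that $1/\bm{s}(\phi_{\Q}) = \bm{s}(1_{\Q})^{-1} \sum_\psi \{\phi,\psi\}\FDeg_\psi(\Q)$.

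\textbf{Step 3.} Insert the two standard identities. First, the Poincar\'e polynomial factors as $\bm{s}(1_{\Q}) = \sum_{w \in W}\Q^{|w|} = \prod_{i=1}^r \tfrac{1 - \Q^{d_i}}{1 - \Q}$ by the Bott--Solomon formula \cite{solomon} (here $d_1, \ldots, d_r$ are the invariant degrees of the $W$-action on $\bb{V}$, well-defined since $\bb{V}^W = 0$, and $r = |S|$). Second, Molien's theorem together with Chevalley's restriction theorem gives $\bm{m}_1(\Q) = \prod_{i=1}^r (1 - \Q^{d_i})^{-1}$, \emph{cf.}\ \cite[149]{gp}. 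Combining the two, $\bm{s}(1_{\Q})^{-1} = (1 - \Q)^r\, \bm{m}_1(\Q)$.

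\textbf{Step 4.} Finally use the definition $\FDeg_\psi(\Q) = \bm{m}_\psi(\Q)/\bm{m}_1(\Q)$ from \S\ref{subsec:degrees} to see $\bm{m}_1(\Q)\FDeg_\psi(\Q) = \bm{m}_\psi(\Q)$. Substituting into Step 2 yields
\begin{align}
\frac{1}{\bm{s}(\phi_{\Q})} = (1 - \Q)^r \sum_{\psi \in \hat{W}} \{\phi, \psi\}\, \bm{m}_1(\Q)\FDeg_\psi(\Q) = (1 - \Q)^r \sum_{\psi \in \hat{W}} \{\phi, \psi\}\, \bm{m}_\psi(\Q),
\end{align}
which is the claimed identity. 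The only point requiring a word of care is that all of $\bm{m}_\psi(\Q)$, the $d_i$, and hence both factorizations, depend only on $(W,S)$ and not on the chosen realization $\bb{V}$, which is guaranteed by the standing hypothesis $\bb{V}^W = 0$; and that the exponent $r$ appearing in the statement is exactly the number of invariant degrees, i.e.\ the rank of $W$.
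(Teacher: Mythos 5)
Your proof is correct and follows essentially the same route as the paper: both reduce the identity, via Lemma-Postulate \ref{lempost:fourier}(2) and the definitions of $\UDeg_\phi$ and $\FDeg_\psi$, to the single identity $\bm{s}(1_{\Q})\,\bm{m}_1(\Q) = (1-\Q)^{-r}$, which follows from the Bott--Solomon formula and the product formula for $\bm{m}_1(\Q)$.
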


\begin{proof}
Let $r = |S|$.
By Lemma-Postulate \ref{lempost:fourier}(2), we must prove $\bm{s}(1_{\Q}) \bm{m}_1(\Q) = (1 - \Q)^{-r}$.
Indeed, if $d_1, \ldots, d_r$ are the invariant degrees of the $W$-action on $\bb{V}$ \cite[149]{gp}, then we have
\begin{align}
\bm{s}(1_{\Q}) = \prod_{1 \leq i \leq r} 
\frac{1 - \Q^{d_i}}{1 - \Q}
\quad\text{and}\quad
\bm{m}_1(\Q) = \prod_{1 \leq i \leq r}
{\frac{1}{1 - \Q^{d_i}}}.
\end{align}
The left-hand side is the Bott--Solomon formula for the Poincar\'e polynomial of $W$ \cite{solomon}.
The right-hand side is true by definition.
\end{proof}

\subsection{}\label{subsec:families}

In \cite{lusztig_1984}, Lusztig introduced functions $\bb{a}, \bb{A} : \hat{W} \to \bb{Z}_{\geq 0}$ defined by
\begin{align}
\bb{a}(\phi) &= \val_{\Q} \UDeg_\phi(\Q),\\
\bb{A}(\phi) &= \deg_{\Q} \UDeg_\phi(\Q).
\end{align}
Using induction on parabolic subgroups of $W$, he also described a partition of $\hat{W}$ into subsets called \dfemph{families}.
The functions $\bb{a}$ and $\bb{A}$ are constant in families, and the matrix of the pairing $\{-, -\}$ is block-diagonal with respect to them.

\begin{ex}
In type $A$, it turns out that $\hat{W} \to \UCh(W)$ is bijective, the matrix of $\{-, -\}$ is the identity matrix, and every family in $\hat{W}$ is a singleton.
\end{ex}

We use $\bb{a}$ directly, whereas we only use $\bb{A}$ by way of the auxiliary function that follows.
Let $\ur{Ref}(W)$ be the set of elements of $W$ that act on any realization by reflections.
(It is the union of the conjugacy classes that meet $S$.)
Motivated by the theory of partitions, we introduce:

\begin{df}
For all $\phi \in \hat{W}$, the \dfemph{content} of $\phi$ is the number
\begin{align}
\bb{c}(\phi) = \frac{1}{\phi(1)} \sum_{t \in \ur{Ref}(W)} \phi(t).
\end{align}
Note that $\bb{c}(\bb{1}) = |\ur{Ref}(W)|$.
If $W$ is the Weyl group of a root system $\Phi$, then this number equals $\frac{1}{2} |\Phi|$.
\end{df}

If $W = S_n$, then there is a bijective correspondence between $\hat{W}$ and the set of integer partitions of $n$.
Under this bijection, $\bb{c}(\phi)$ is precisely the content of the partition attached to $\phi$.
For general $W$, the following result about $\bb{c}$ is \cite[\S{4.21}]{bm_1997}, which relies on \cite[\S{5.12}]{lusztig_1984}:

\begin{lem}[Brou\'e--Michel]
We have $\bb{c}(\phi) = N - \bb{a}(\phi) - \bb{A}(\phi)$ for all $\phi \in \hat{W}$.
\end{lem}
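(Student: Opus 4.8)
The statement is due to Brou\'e--Michel \cite[\S4.21]{bm_1997}, resting on Lusztig \cite[\S5.12]{lusztig_1984}; the plan is to organize the proof around two ingredients and to make transparent which one carries the real content. Write $N = |\ur{Ref}(W)| = \bb{c}(\bb{1})$, which also equals $\deg_{\Q}\bm{s}(1_{\Q})$, the degree of the Poincar\'e polynomial $\bm{s}(1_{\Q}) = \prod_i (\Q^{d_i}-1)/(\Q-1)$. Since $\bb{a}(\phi) = \val_{\Q}\UDeg_\phi(\Q)$, $\bb{A}(\phi) = \deg_{\Q}\UDeg_\phi(\Q)$, and $\UDeg_\phi(\Q) = \bm{s}(1_{\Q})/\bm{s}(\phi_{\Q})$, the claim is equivalent to $\deg_{\Q}\bm{s}(\phi_{\Q}) + \val_{\Q}\bm{s}(\phi_{\Q}) = \bb{c}(\phi)$. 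First I would reduce to irreducible $W$: for $W = W_1 \times W_2$ and $\phi = \phi_1 \boxtimes \phi_2$, the four quantities $N$, $\bb{a}$, $\bb{A}$, $\bb{c}$ are all additive --- for $\UDeg$ this is Lemma-Postulate \ref{lempost:fourier}(1),(2) together with multiplicativity of fake degrees, and for $\bb{c}$ it is immediate from the definition, as $\ur{Ref}(W) = \ur{Ref}(W_1) \sqcup \ur{Ref}(W_2)$.

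The first ingredient is an elementary duality under the $\varepsilon$-twist. Using the character formula $(\varepsilon\phi)_{\Q}(\sigma_w) = (-1)^{|w|}\phi_{\Q}(\sigma_w)|_{\Q^{1/2}\mapsto\Q^{-1/2}}$ (which follows from \cite[Prop.\ 9.4.1(a)]{gp}, as in the proof of Lemma \ref{lem:symmetry}), the identity $|w| = |w^{-1}|$, and the defining formula for Schur elements, one obtains $\bm{s}((\varepsilon\phi)_{\Q}) = \bm{s}(\phi_{\Q})|_{\Q\mapsto\Q^{-1}}$. Combined with $\bm{s}(1_{\Q})|_{\Q\mapsto\Q^{-1}} = \Q^{-N}\bm{s}(1_{\Q})$ this gives $\UDeg_{\varepsilon\phi}(\Q) = \Q^{N}\UDeg_\phi(\Q^{-1})$, hence $\bb{a}(\varepsilon\phi) = N - \bb{A}(\phi)$, $\bb{A}(\varepsilon\phi) = N - \bb{a}(\phi)$, and in particular $(\bb{a}+\bb{A})(\phi) + (\bb{a}+\bb{A})(\varepsilon\phi) = 2N$. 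This is consistent with the desired identity together with $\bb{c}(\varepsilon\phi) = -\bb{c}(\phi)$, but it is symmetric in $\phi \leftrightarrow \varepsilon\phi$ and does not by itself determine $\bb{c}$.

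The second ingredient brings in the $\bb{c}$-dependence and is where the work lies: one must show $\UDeg_{\varepsilon\phi}(\Q) = \Q^{\bb{c}(\phi)}\UDeg_\phi(\Q)$, equivalently that $\UDeg_\phi(\Q)$ is palindromic about $(N - \bb{c}(\phi))/2$. The conceptual reason $\bb{c}$ enters is Springer's theorem (stated in \S\ref{sec:periodic}) that the full twist $\pi = \sigma_{w_0}^2$ acts on the module underlying $\phi_{\Q}$ by $\Q^{\bb{c}(\phi)}$: since $\pi$ is a square, $\sigma_{w_0}$ acts semisimply with eigenvalues in $\{\pm\Q^{\bb{c}(\phi)/2}\}$, and specializing $\Q \to 1$ matches the $\pm$-multiplicities with those of $w_0$ on $V_\phi$, so $\phi_{\Q}(\sigma_{w_0}) = \phi(w_0)\Q^{\bb{c}(\phi)/2}$ --- exhibiting the expected symmetry at the level of the longest element. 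Promoting this to the full palindromy of $\UDeg_\phi$ is elementary in type $A$, where $\UDeg_\phi = \FDeg_\phi$ (the Fourier matrix is the identity), palindromy of type-$A$ fake degrees is classical, and the hook-content formula gives $\FDeg_{\varepsilon\phi}(\Q) = \Q^{\bb{c}(\phi)}\FDeg_\phi(\Q)$ directly, since the $\varepsilon$-twist transposes the partition, preserves the multiset of hook lengths, and shifts the minimal exponent by exactly the content $\bb{c}(\phi)$. For general irreducible $W$ I would invoke Lusztig's explicit description of the exotic Fourier transform and of the generic degrees \cite[\S\S4, 5.12]{lusztig_1984} (completed in type $H_4$ by Malle \cite{malle}), exactly as in \cite[\S4.21]{bm_1997}. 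Granting this, the two ingredients combine at once: $\Q^{N}\UDeg_\phi(\Q^{-1}) = \UDeg_{\varepsilon\phi}(\Q) = \Q^{\bb{c}(\phi)}\UDeg_\phi(\Q)$, and comparing valuations yields $\bb{a}(\phi) = N - \bb{c}(\phi) - \bb{A}(\phi)$, as desired.

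The main obstacle is precisely this second ingredient. The $\varepsilon$-twist duality is formal but blind to $\bb{c}(\phi)$, and Springer's full-twist theorem locates the correct symmetry center only through a single matrix entry (the value at $\sigma_{w_0}$), not through the whole polynomial; bridging that gap for arbitrary $W$ appears to require Lusztig's classification-based computation of $\bb{a} + \bb{A}$. I would therefore write the argument so that all the formal steps --- the reduction to irreducible $W$, the $\varepsilon$-duality, and the final comparison of valuations --- are carried out in detail, and the case-by-case palindromy is the single external input cited.
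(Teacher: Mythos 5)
Your formal steps are all sound: the reduction to irreducible $W$, the identity $\bm{s}((\varepsilon\phi)_{\Q}) = \bm{s}(\phi_{\Q})|_{\Q \mapsto \Q^{-1}}$ giving $\UDeg_{\varepsilon\phi}(\Q) = \Q^{N}\UDeg_\phi(\Q^{-1})$, the type-$A$ verification via hook lengths and contents, and the final comparison of valuations in $\Q^{N}\UDeg_\phi(\Q^{-1}) = \Q^{\bb{c}(\phi)}\UDeg_\phi(\Q)$ do yield $\bb{c}(\phi) = N - \bb{a}(\phi) - \bb{A}(\phi)$. Note, however, that the paper gives no argument at all for this lemma: it simply cites \cite[\S{4.21}]{bm_1997}, which rests on \cite[\S{5.12}]{lusztig_1984} (and \cite{malle} in type $H_4$), so your plan is in effect the same as the paper's, namely to outsource the substance to those references, wrapped in some formal scaffolding. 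Be careful, though, about how you advertise the decomposition into ``two ingredients'': given your first (formal) ingredient, the second one, $\UDeg_{\varepsilon\phi}(\Q) = \Q^{\bb{c}(\phi)}\UDeg_\phi(\Q)$, is equivalent to the lemma \emph{together with} palindromy of $\UDeg_\phi$, so it is not a smaller input than the statement being proved --- it is strictly stronger, and the citation to Lusztig/Brou\'e--Michel is carrying the entire content. Relatedly, palindromy of $\UDeg_\phi$ by itself (which one can get from the known product form $\UDeg_\phi = n_\phi^{-1}\Q^{\bb{a}(\phi)}\prod_{d \geq 2}\Phi_d(\Q)^{e_d}$, $\Phi_1$ being excluded since $\UDeg_\phi(1) = \phi(1) \neq 0$) only reproduces $\bb{a}(\varepsilon\phi) = N - \bb{A}(\phi)$ and says nothing about $\bb{c}$; and, as you concede, Springer's full-twist theorem pins down the symmetry only through the single value $\phi_{\Q}(\sigma_{w_0})$. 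So the identification of the center of symmetry with $\bb{c}(\phi)$ --- equivalently Lusztig's family-by-family computation of $\bb{a} + \bb{A}$ --- is genuinely irreducible in your approach, and your write-up should present the external citation as supplying exactly that statement rather than a separable ``palindromy'' fact.
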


\begin{cor}\label{cor:content}
The function $\bb{c}$ is constant in families.
\end{cor}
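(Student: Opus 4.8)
The statement to prove is Corollary \ref{cor:content}: the content function $\bb{c} : \hat{W} \to \bb{Z}$ is constant on families. The plan is to deduce this immediately from the Brou\'e--Michel lemma stated just above it, which gives $\bb{c}(\phi) = N - \bb{a}(\phi) - \bb{A}(\phi)$ for all $\phi \in \hat{W}$, combined with the fact, recorded in \S\ref{subsec:families}, that the functions $\bb{a}$ and $\bb{A}$ are each constant in families. Since $N = \tfrac12|\Phi|$ (or more generally $|\ur{Ref}(W)|$ via the formula $\bb{c}(\bb{1})=|\ur{Ref}(W)|$ adapted to the non-crystallographic case) does not depend on $\phi$ at all, the right-hand side $N - \bb{a}(\phi) - \bb{A}(\phi)$ is a sum of a constant and two family-constant functions, hence family-constant.

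Concretely, the only step is: let $\cal{F} \subseteq \hat{W}$ be a family, and let $\phi, \psi \in \cal{F}$. By \cite[\S{4.21}]{bm_1997} (the displayed Brou\'e--Michel lemma), $\bb{c}(\phi) = N - \bb{a}(\phi) - \bb{A}(\phi)$ and $\bb{c}(\psi) = N - \bb{a}(\psi) - \bb{A}(\psi)$. By Lusztig's description of families in \cite{lusztig_1984} (recalled in \S\ref{subsec:families}), $\bb{a}(\phi) = \bb{a}(\psi)$ and $\bb{A}(\phi) = \bb{A}(\psi)$ because $\bb{a}$ and $\bb{A}$ are constant in families. Subtracting gives $\bb{c}(\phi) = \bb{c}(\psi)$. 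Since $\phi,\psi$ were arbitrary elements of $\cal{F}$, the function $\bb{c}$ is constant on $\cal{F}$, and since $\cal{F}$ was an arbitrary family, $\bb{c}$ is constant in families.

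There is essentially no obstacle here: the corollary is a one-line consequence of the preceding lemma and the standard fact about $\bb{a}, \bb{A}$. The only point that needs a word of care is making sure the invocation of ``$\bb{a}$ and $\bb{A}$ are constant in families'' is legitimate in the generality of arbitrary finite Coxeter groups (not just Weyl groups); this is exactly what is asserted in \S\ref{subsec:families}, following \cite{lusztig_1984} together with \cite{lusztig_1993, lusztig_1994, malle} for the non-crystallographic types, so no further argument is required. Thus the proof is simply:

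\begin{proof}
By the preceding lemma, $\bb{c}(\phi) = N - \bb{a}(\phi) - \bb{A}(\phi)$ for all $\phi \in \hat{W}$.
As noted in \S\ref{subsec:families}, the functions $\bb{a}$ and $\bb{A}$ are constant in families, and $N$ does not depend on $\phi$.
Hence $\bb{c}$ is constant in families.
\end{proof}
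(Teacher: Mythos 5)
Your proof is correct and is exactly the argument the paper intends: the corollary is stated immediately after the Brou\'e--Michel identity $\bb{c}(\phi) = N - \bb{a}(\phi) - \bb{A}(\phi)$ precisely so that it follows from the family-constancy of $\bb{a}$ and $\bb{A}$ recorded in \S\ref{subsec:families}. Your added remark that this constancy is available for all finite Coxeter groups (via Lusztig and Malle in the non-crystallographic cases) is a reasonable point of care but introduces nothing beyond what the paper already asserts.
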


\subsection{}\label{subsec:cyclotomic}

To conclude this appendix, we review the block theory of Iwahori--Hecke algebras at roots of unity, relying on \cite[Ch.\ 7]{gp}.

Given any ring $H$, we write $R(H)$ for the Grothendieck group of finite-dimensional $H$-modules.
The relations on $R(H)$ are $[M] = [M'] + [M'']$ for each short exact sequence $0 \to M' \to M \to M'' \to 0$.
We write $R^+(H) \subseteq R(H)$ for the semiring of actual, not virtual, modules.
 
\subsubsection{}

Let $E$ be a field of characteristic zero, and let $A \subseteq E$ be a subring.
Let $H$ be a free $A$-algebra of finite rank such that the $E$-algebra
\begin{align}
H_E = H \otimes_A E
\end{align}
is split semisimple.
Let $k$ be another field and $\theta : A \to k$ a ring morphism.
We set 
\begin{align}
H_\theta = H \otimes_A k.
\end{align}
We will review how Brauer's theory of decomposition maps gives a direct relation between the representation theories of $H_E$ and $H_\theta$, or more precisely, between $R(H_E)$ and $R(H_\theta)$, as long as $A$ is integrally closed in $E$.

Let $t$ be an indeterminate over $K$.
We have a map: 
\begin{align}
\begin{array}{rcl}
R^+(H_E)
	&\xrightarrow{\bm{p}} 
	&\prod_H {(1 + tK[t])}\\[1ex]
M
	&\mapsto
	&\{\det_E(1 - t\alpha \mid M)\}_{\alpha \in H}
\end{array}
\end{align}
The Brauer--Nesbitt theorem \cite[Lem.\ 7.3.2]{gp} implies that $\bm{p}$ is injective.
Furthermore, \cite[ Prop.\ 7.3.8]{gp} states that if $A$ is integrally closed in $E$, then $\bm{p}$ factors through $(1 + tA[t])^H$.

Let $\bm{p}_\theta : R^+(H_\theta) \to (1 + k[t])^H$ be defined by analogy to $\bm{p}$.
Brauer's theorem \cite[Thm.\ 7.4.3]{gp} states that if $A$ is integrally closed in $E$, then there is a unique additive map $\bm{d}_\theta : R^+(H_E) \to R^+(H_\theta)$ making the following diagram commute:
\begin{equation}
\begin{tikzpicture}[baseline=(current bounding box.center), >=stealth]
\matrix(m)[matrix of math nodes, row sep=2.5em, column sep=2.5em, text height=2ex, text depth=0.25ex]
{	R^+(H_E)
		&(1 + tA[t])^H\\
	R^+(H_\theta)
		&(1 + tk[t])^H\\
		};
\path[->,font=\small, auto]
(m-1-1)	edge node{$\bm{p}$} (m-1-2)
		edge node[left]{$\bm{d}_\theta$} (m-2-1)
(m-1-2)	edge node[right]{$\theta$} (m-2-2)
(m-2-1)	edge node{$\bm{p}_\theta$} (m-2-2);
\end{tikzpicture}
\end{equation}
In particular, the map $\bm{d}_\theta$ is compatible with the formation of characters from modules.
It is called the \dfemph{decomposition map} of $\theta$.
Note that if $\phi \in R^+(H_E)$ is irreducible, then $\bm{d}_\theta(\phi) \in R^+(H_\theta)$ is usually no longer irreducible.

Let $\hat{H}_E \subseteq R^+(H_E)$ be the set of isomorphism classes of simple modules.
The \dfemph{Brauer graph} of $H_\theta$ is the (undirected) graph in which:
\begin{enumerate}
\item 	The vertex set is $\hat{H}_E$.
\item 	There is an edge between $M$ and $M'$ iff there is a simple module shared in common by the decompositions of $\bm{d}_\theta(M)$ and $\bm{d}_\theta(M')$ in $R^+(H_\theta)$.
\end{enumerate}
The connected components of this graph are called the \dfemph{$\theta$-blocks} of $H_E$.
The partition of $\hat{H}_E$ into blocks corresponds to a direct-sum decomposition of $H_\theta$ into idempotent ideals called \dfemph{block ideals}, such that the simple modules over any block ideal are in bijection with the characters in the corresponding block.

\subsubsection{}

Fix a number $\slope \in \bb{Q}$.
Let $\zeta^{\frac{1}{2}} = e^{\pi i\slope}$, and let
\begin{align}
H_W(\zeta^{\frac{1}{2}}) = \bb{Q}_W(\zeta^{\frac{1}{2}}) \otimes_{\bb{Z}[\Q^{\pm\frac{1}{2}}]} H_W,
\end{align}
where the map $\bb{Z}[\Q^{\pm\frac{1}{2}}] \to \bb{Q}_W(\zeta^{\frac{1}{2}})$ sends $\Q^{\frac{1}{2}} \mapsto \zeta^{\frac{1}{2}}$.
Let $\bb{Z}_W$ be the ring of integers of $\bb{Q}_W$. 
Then in the notation above, we can take
\begin{align}
E &= \bb{Q}_W(\Q^{\frac{1}{2}}),\\
A &= \bb{Z}_W[\Q^{\pm\frac{1}{2}}],\\
H &= H_W \otimes_{\bb{Z}} \bb{Z}_W,\\
k &= \bb{Q}_W(\zeta^{\frac{1}{2}})).
\end{align}
Let $\theta : A \to k$ be the specialization map $\Q^{\frac{1}{2}} \mapsto \zeta^{\frac{1}{2}}$.
These choices give $H_E = H_W(\Q^{\frac{1}{2}})$ and $H_\theta = H_W(\zeta^{\frac{1}{2}})$.
So, Brauer's theorem yields a decomposition map:
\begin{align}
\bm{d}_\zeta \vcentcolon= \bm{d}_\theta : R^+(H_W(\Q^{\frac{1}{2}})) \to R^+(H_W(\zeta^{\frac{1}{2}})).
\end{align}
We will refer to the $\theta$-blocks of $H_W(\Q^{\frac{1}{2}})$ as \dfemph{$\zeta$-blocks}.
The \dfemph{principal block} is the block that contains the trivial character $1$.

\subsection{}

By \cite[Prop.\ 7.3.9]{gp}, we have 
\begin{align}
\bm{s}(\phi_{\Q}) \in \bb{Z}_W[\Q^{\pm\frac{1}{2}}]
\end{align}
for all $\phi \in \hat{W}$.
The \dfemph{$\zeta$-defect} of $\phi$ is the multiplicity with which $\Q^{\frac{1}{2}} = \zeta^{\frac{1}{2}}$ occurs as a root of the Laurent polynomial $\bm{s}(\phi_{\Q})$ (after we extend scalars).
This number measures the complexity of the $\zeta$-block containing $\phi$ in the following sense:

\begin{thm}[Geck]\label{thm:geck}
The characters within a given $\zeta$-block have the same $\zeta$-defect, so we can speak of the defect of the block itself.
Moreover:
\begin{enumerate}
\item 	If the defect is $0$, then the block is a singleton.
\item 	If the defect is $1$, then the block is a line of characters ordered by $\bb{a}$-value.
		Its block ideal is isomorphic to a Brauer tree algebra of type $A$ (see below).
\end{enumerate}
\end{thm}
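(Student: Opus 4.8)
The plan is to deduce the statement from the general theory of blocks of symmetric algebras, applied to the symmetric $A$-algebra $H = H_W \otimes_{\bb{Z}} \bb{Z}_W$ with its symmetrising form $\tau$, together with Geck's structural results on the $\zeta$-blocks of $H_W$.

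First I would establish that the $\zeta$-defect is constant on each $\zeta$-block. For $\phi \in \hat{W}$, the central primitive idempotent of $H_W(\Q^{\frac{1}{2}})$ attached to $\phi_{\Q}$ is $e_\phi = \bm{s}(\phi_{\Q})^{-1} \sum_{w \in W} \phi_{\Q}(\sigma_w)\sigma_{w^{-1}}$; the numerator lies in $H$, while the denominator vanishes under $\theta$ to order exactly the $\zeta$-defect $d(\phi)$, so $e_\phi$ is $\theta$-integral precisely when $d(\phi) = 0$, and in general its denominator records $d(\phi)$. For a $\zeta$-block $\Gamma$, Brauer's theorem applied to $\bm{d}_\zeta$ shows that $e_\Gamma = \sum_{\phi \in \Gamma} e_\phi$ lifts a central idempotent of $H_\theta = H_W(\zeta^{\frac{1}{2}})$, hence is $\theta$-integral; comparing denominators and using the orthogonality of the $e_\phi$ forces $d(\phi)$ to be the same for all $\phi \in \Gamma$, which is carried out in \cite[Ch.\ 7]{gp}. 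When that common value is $0$, each $e_\phi$ with $\phi \in \Gamma$ is already $\theta$-integral and so reduces modulo $\theta$ to a primitive central idempotent of $H_\theta$; thus $\{\phi\}$ is its own block, proving (1).

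The substantive part is (2), which is Geck's analogue for Iwahori--Hecke algebras of Brauer's theory of blocks with cyclic defect group. Here I would quote Geck's theorem, see \cite{geck} and the exposition in \cite[Ch.\ 7]{gp}: a $\zeta$-block $\Gamma$ of $\zeta$-defect $1$ supports exactly one fewer simple $H_\theta$-module than it contains ordinary characters, its block ideal has finite representation type, and it is a Brauer tree algebra whose tree is a line, with the linear order on the edges given by the $\bb{a}$-invariant of the corresponding characters. Showing that the tree is a line rather than an arbitrary tree — equivalently, that the decomposition matrix is, up to the $\bb{a}$-ordering, the $n \times (n-1)$ two-diagonal matrix — and that the order is recorded by $\bb{a}$ is where the real work lies: it combines the comparison of Schur-element valuations transported through $\bm{d}_\zeta$ with monotonicity of the $\bb{a}$-function along the decomposition matrix, supplemented by explicit verification in the exceptional Coxeter types. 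The phrase ``Brauer tree algebra of type $A$'' is then just the assertion that the tree is the path graph $A_n$, with a possible exceptional multiplicity at an end vertex.

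The main obstacle is therefore (2): the proof that a defect-$1$ block is always a line in the Brauer-tree sense, and that the $\bb{a}$-function realises its linear order, is exactly Geck's classification theorem, part of which proceeds case by case; the role of this appendix is to cite it accurately rather than to reprove it. By contrast, the constancy-of-defect statement and part (1) are formal consequences of the symmetric-algebra and decomposition-map formalism recalled in \S\ref{subsec:cyclotomic}.
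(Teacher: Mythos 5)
Your proposal is correct and takes essentially the same approach as the paper: the paper's entire proof is the citation of Prop.~7.4, Prop.~8.2, and Thm.~9.6 of \cite{geck}, which is exactly the source you defer to for the substantive classification in part (2). The additional idempotent/denominator sketch you give for constancy of the defect and for part (1) is reasonable but not required, since the paper attributes those statements to Geck's results as well.
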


\begin{proof}
This restates Prop.~7.4, Prop.~8.2, and Thm.~9.6 of \cite{geck}.
\end{proof}

\begin{rem}
By contrast, we do not know of a structure theorem for blocks of defect $\geq 2$.
\end{rem}

\begin{ex}
Suppose that we are in type $A_{n - 1}$, meaning $W \simeq S_n$, and that $\slope = \frac{m}{n}$ with $m$ coprime to $n$.
Here, the principal $\zeta$-block is a line.
If $\bb{V}$ is the standard representation of $S_n$, then the principal block consists of the exterior powers $\Alt^i(\bb{V})$, ordered along the line from $i = 0$ to $i = n - 1$.
We refer to the associated block ideal as the \dfemph{Brauer tree algebra of type $A_{n - 1}$}.
All other blocks are singletons.
\end{ex}

\newpage
\section{Braid Varieties in the Literature}\label{sec:varieties-other}

\subsection{}

In this appendix, we explain how the varieties $\cal{U}(\beta), \cal{Z}(\beta), \cal{X}(\beta)$ are related to other varieties that have appeared in the literature.
Throughout, we assume the notation and hypotheses of Sections \ref{sec:mixed}-\ref{sec:hecke}. 
All varieties in this section can be constructed over $\bb{F}$, not just $\bar{\bb{F}}$, but we will omit the subscript $0$'s anyway to lighten notation.

\subsection{}

Let $w \in W$.
As in Section \ref{sec:hecke}, let $O_w \subseteq \cal{B} \times \cal{B}$ be the $G$-orbit indexed by $w$, and let $\act : G \times \cal{B} \to \cal{B} \times \cal{B}$ be the map
\begin{align}
\act(g, B) = (g^{-1}Bg, B).
\end{align}
In the following diagram, all squares are cartesian:
\begin{equation}
\begin{tikzpicture}[baseline=(current bounding box.center), >=stealth]
\matrix(m)[matrix of math nodes, row sep=2.5em, column sep=3.5em, text height=2ex, text depth=0.5ex]
{	O_w
		&G_w
		&\cal{U}_w
		&\cal{Z}_w\\
	\cal{B} \times \cal{B}
		&G \times \cal{B}
		&\cal{U} \times \cal{B}
		&\tilde{\cal{U}} \times \cal{B}\\
		};
\path[->,font=\scriptsize, auto]
(m-1-2)		edge (m-1-1)
(m-1-3) 	edge (m-1-2)
(m-1-4) 	edge (m-1-3)
(m-1-1)		edge (m-2-1)
(m-1-2)		edge (m-2-2)
(m-1-3) 	edge (m-2-3)
(m-1-4) 	edge (m-2-4)
(m-2-2)		edge node[above]{$\act$} (m-2-1)
(m-2-3)		edge (m-2-2)
(m-2-4)		edge (m-2-3);
\end{tikzpicture}
\end{equation}
To the best of my knowledge:
\begin{enumerate}
\item 	For fixed $g \in G$ and $w$ a Coxeter element of minimal length, the fiber $(G_w)_g \subseteq G_w$ first appears \cite[\S{4}]{steinberg_1965}.
		For general $w$, it first appears in Lem.\ 3.6 of Kawanaka's paper \cite{kawanaka}, where it is denoted $F_{g, w}$, and independently in Lusztig's note \cite{lusztig_1980}, where it is denoted $Y_{s, w}$ in the case where $g$ is a regular semisimple element $s$.
		More recently, it has appeared in Lusztig's paper \cite{lusztig_2011}, where it is denoted $\cal{B}_g^w$, and in D.\ Kim's papers \cite{kim_2018_algebra, kim_2018_imrn}, where it is denoted $\cal{Y}_{w, g}$.
		
\item 	The variety $G_w$ first appears in Lusztig's paper \cite{lusztig_1985_1}, where it is denoted $Y_w$.
		It is used throughout his series of works on character sheaves.
		It also appears in \cite{lusztig_2011} and its sequels, where it is denoted $\fr{B}_w$.
		
\item 	The varieties $\cal{U}_w^\red$ and $\cal{Z}_w^\red$ first appear in Lusztig's note \cite{lusztig_2021}, where they are respectively denoted $\fr{B}_w^{(1)}$ and $\tilde{\fr{B}}_w^1$.

		More generally:
		Let $\cal{T}$ be the universal maximal torus of $G$, so that there are maps $G \to \cal{T} \sslash W$ and $\tilde{G} \to \cal{T}$.
		For any $\delta \in \cal{T}$ with image $(\delta) \in \cal{T} \sslash W$, Lusztig defines varieties $\fr{B}_w^{(\delta)} \subseteq G_w$ and $\tilde{\fr{B}}_w^\delta \subseteq \tilde{G} \times_G G_w$ as the corresponding fibers.
\end{enumerate}

\subsection{}

Let $\beta \in \Br_W^+$ be a positive braid of writhe $\ell$.
Thus we have $\beta = \sigma_{s_1} \cdots \sigma_{s_\ell}$ for some $s_1, \ldots, s_\ell \in S$.
By definition,
\begin{align}
O(\beta) = \{(B_0, B_1, \ldots, B_\ell) : \text{$(B_{i - 1}, B_i) \in O_{s_i}$ for $1 \leq i \leq \ell$}\}.
\end{align}
In the following diagram, all squares are cartesian:
\begin{equation}
\begin{tikzpicture}[baseline=(current bounding box.center), >=stealth]
\matrix(m)[matrix of math nodes, row sep=2.5em, column sep=3.5em, text height=2ex, text depth=0.5ex]
{	O(\beta)
		&G(\beta)
		&\cal{U}(\beta)
		&\cal{Z}(\beta)\\
	\cal{B} \times \cal{B}
		&G \times \cal{B}
		&\cal{U} \times \cal{B}
		&\tilde{\cal{U}} \times \cal{B}\\
		};
\path[->,font=\scriptsize, auto]
(m-1-2)		edge (m-1-1)
(m-1-3) 	edge (m-1-2)
(m-1-4) 	edge (m-1-3)
(m-1-1)		edge node[left]{$\pr_0 \times \pr_\ell$} (m-2-1)
(m-1-2)		edge (m-2-2)
(m-1-3) 	edge (m-2-3)			
(m-1-4) 	edge (m-2-4)
(m-2-2)		edge node[above]{$\act$} (m-2-1)
(m-2-3)		edge (m-2-2)
(m-2-4)		edge (m-2-3);
\end{tikzpicture}
\end{equation}
In addition, we will abbreviate
\begin{align}
\cal{X}(\beta) \vcentcolon= G(\beta)_1 = \cal{U}(\beta)_1.
\end{align}
To the best of my knowledge:
\begin{enumerate}
\item 	For general $\beta$, the variety $O(\beta)$ first appears in Brou\'e--Michel's paper \cite{bm_1997}, where it is denoted $\cal{B}(\beta)$.
		See also \cite[Application 2]{deligne_1997}.
		However, it is arguably implicit in the works of Bott--Samelson \cite{bs} and Hansen \cite{hansen}.
		In \S{6} of the paper \cite{stz} by Shende--Treumann--Zaslow, $O(\beta)$ is called the open Bott--Samelson variety of $\beta$ and denoted $\cal{M}_1(\beta)$.
\item 	The variety $G(\beta)$ first appears in \cite[\S{2.5}]{lusztig_1985_1}, where it is denoted $Y_{\mathbf{w}}$ in the case where $\mathbf{w}$ is an expression for $\beta$ (see \S\ref{subsec:braid-group}).
		Later, it implicitly appears in \cite[\S{6}]{stz} along with $\cal{X}(\beta)$:
		In the notation of \emph{loc.\ cit.},
		\begin{align}
		\cal{M}(\beta^\circ) &= [G(\beta)/G],\\
		\cal{M}(\beta^\succ) &= [\cal{X}(\beta\pi)/G].
		\end{align}
		(The authors assume $G = \PGL_n$, but this hypothesis is extraneous.)
		From the viewpoint of cluster geometry, the stack $\cal{M}(\beta^\succ)$ has been studied by L.\ Shen and D.\ Weng in \cite{sw}:
		It is the special case of their stack $\ur{Conf}_d^b(\cal{B})$ where $b = \beta$ and $d = \bb{1}$.
\item 	For fixed $B \in \cal{B}$, the variety $\cal{X}(\beta)_B^\red \subseteq \cal{X}(\beta)^\red$ first appears in \S{5.3} of Mellit's paper \cite{mellit} in an isomorphic form denoted $X_0(\beta)$, in the case where $G = \GL_n$.
		In the next subsection, we generalize Mellit's definition to arbitrary reductive $G$ and explain the isomorphism in detail.
		
		More recently, $X_0(\beta)$ has been studied by Casals, Gorsky, Gorsky, and Simental in the paper \cite{cggs}:
		It is the special case of their variety $X_0(\gamma; \pi)$ where $\gamma = \beta$ and $\pi = \bb{1}$.
		(Note that in their notation, $\pi$ is an element of $S_n$, not the full twist.)
\item 	The schemes $\cal{U}(\beta)$ and $\cal{Z}(\beta)$ are constructed similarly to the representation varieties $\cal{X}_\mathrm{sing}^1$ and $\cal{X}_\mathrm{sm}^1$ in \cite[\S{8}]{mellit}.
		However, they are not a special case of those varieties, or vice versa.
\end{enumerate}

\subsection{}

Below, we relate the scheme $\cal{X}(\beta)$ to the variety $X_0(\beta)$ in \cite{mellit}, hence to the varieties in \cite{cggs}.

Fix a maximal torus $T \subseteq G$ and a Borel $B \supseteq T$.
For each $s \in S$, we have a corresponding positive root subgroup
\begin{align}
u_{\alpha_s} : \bb{A}^1 \to U_{\alpha_s} \subseteq B.
\end{align}
Recall that $N_G(T)/T \simeq W$.
Fix a section $W \to N_G(T)$, to be denoted $w \mapsto w_T$.
For all $s \in S$, let $f_{\alpha_s} : \bb{A}^1 \to G$ be the map
\begin{align}
f_{\alpha_s}(z) = s_T \cdot u_{\alpha_s}(z).
\end{align}
Let $f_{\beta, B} : \bb{A}^\ell \to G$ be the map
\begin{align}
f_{\beta, B}(\vec{z}) = f_{\alpha_{s_1}}(z_1) \cdots f_{\alpha_{s_\ell}}(z_\ell).
\end{align}
With these notations, let 
\begin{equation}
X_0(\beta, B) = f_{\beta, B}^{-1}(B) \subseteq \bb{A}^\ell.
\end{equation}
For $G = \GL_n$ and $B$ fixed, the underlying reduced scheme of $X_0(\beta, B)$ is the variety denoted $X_0(\beta)$ in \cite[\S{5.3}]{mellit}.

\begin{ex}
Let $G = \GL_2$ and $B$ the upper-triangular subgroup.
In this case,
\begin{align}
f_{\alpha_s}(z) = \pmat{&1\\ 1&z}.
\end{align}
Let $\sigma$ be the positive generator of $\Br_W = \Br_2$, and let $\bb{G}$ be the punctured affine line.
Then we can check that:
\begin{align}
X_0(\sigma) &= \emptyset,\\
X_0(\sigma^2)
	&= \{\vec{z} \in \bb{A}^2 : z_1 = 0\} \\
	&= \point \times \bb{A}^1,\nonumber\\ 
X_0(\sigma^3) 
	&= \{\vec{z} \in \bb{A}^3 : 1 + z_1z_2 = 0\}\\
	& = \bb{G} \times \point \times \bb{A}^1,\nonumber\\
X_0(\sigma^4) 
	&= \{\vec{z} \in \bb{A}^4 : z_1 + z_3 + z_1z_2z_3 = 0\} \\
	&= (\bb{A}^1 \times \point \times \bb{A}^1) \sqcup (\bb{G} \times \bb{A}^1 \times \bb{G}). \nonumber
\end{align}
The argument in \cite[\S{5.4}]{mellit} shows that for general $G$ and $\beta$, it is always possible to stratify $X_0(\beta)$ into cells of the form $\bb{A}^a \times \bb{G}^b$ with $a, b \geq 0$.
\end{ex}

It would be desirable to organize the varieties $X_0(\beta, B)$ into an algebraic family over the flag variety $\cal{B}$.
I do not know how to do this directly, given the dependence of $f_{\beta, B}$ on the section $w \mapsto w_T$.
Nonetheless:

\begin{prop}
For all $\beta \in \Br_W^+$, there is a commutative diagram
\begin{equation}
\begin{tikzpicture}[baseline=(current bounding box.center), >=stealth]
\matrix(m)[matrix of math nodes, row sep=2.5em, column sep=5.5em, text height=2ex, text depth=0.5ex]
{	G(\beta)
		&B \times \bb{A}^\ell
		&X_0(\beta, B)\\
	G \times \cal{B}
		&G
		&\{1\}\\
		};
\path[->,font=\scriptsize, auto]
(m-1-2)		edge (m-1-1)
(m-1-3) 	edge node[above]{$(f_{\beta, B}(\vec{z}), \vec{z})$} (m-1-2)
(m-1-1)		edge (m-2-1)
(m-1-2)		edge (m-2-2)
(m-1-3) 	edge node{$f_{\beta, B}$} (m-2-3)
(m-2-2)		edge node[above]{$\id \times \{B\}$} (m-2-1)
(m-2-3)		edge (m-2-2);
\end{tikzpicture}
\end{equation}
where the middle vertical map sends $(b, \vec{z}) \mapsto b^{-1} f_{\beta, B}(\vec{z})$ and both squares are cartesian.
In particular, $X_0(\beta, B)$ is isomorphic to the fiber $G(\beta)_{1, B} = \cal{X}(\beta)_B$.
\end{prop}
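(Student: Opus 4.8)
The plan is to construct both squares as genuine pullbacks by hand and then compose them. Write $\ell = |\beta|$ and fix an expression $\beta = \sigma_{s_1}\cdots\sigma_{s_\ell}$. Unwinding the cartesian squares that define $G(\beta)$, an $S$-point of $G(\beta)$ is a tuple $(g; B_0, B_1, \ldots, B_\ell)$ with $(B_{i-1}, B_i) \in O_{s_i}$ for $1 \le i \le \ell$ and $g^{-1}B_\ell g = B_0$, and the structure map $G(\beta) \to G \times \cal{B}$ records $(g, B_\ell)$. Hence the pullback $G(\beta) \times_{G \times \cal{B},\,\id \times \{B\}} G$ is the variety of ``$\beta$-closed chains based at $B$'': tuples $(g; B_0, \ldots, B_\ell)$ with $B_\ell = B$, $B_0 = g^{-1}Bg$, $(B_{i-1}, B_i) \in O_{s_i}$, and $g \in G$ unconstrained; taking the further fibre over $g = 1$ gives precisely $G(\beta)_{1,B} = \cal{X}(\beta)_B$. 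The right-hand square is cartesian essentially by unwinding definitions: $\{1\} \times_G (B \times \bb{A}^\ell) = \{(b, \vec z) : b^{-1}f_{\beta,B}(\vec z) = 1\}$, which is nonempty only when $f_{\beta,B}(\vec z) \in B$, i.e.\ $\vec z \in X_0(\beta, B)$, whereupon $b = f_{\beta,B}(\vec z)$ is forced. So the real content is the left-hand square.

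For the left-hand square, I would introduce the right partial products $h_i = h_i(\vec z) = f_{\alpha_{s_{i+1}}}(z_{i+1})f_{\alpha_{s_{i+2}}}(z_{i+2})\cdots f_{\alpha_{s_\ell}}(z_\ell)$ for $0 \le i \le \ell$, so that $h_\ell = 1$, $h_0 = f_{\beta,B}(\vec z)$, and $h_{i-1} = f_{\alpha_{s_i}}(z_i)\,h_i$. Define $B \times \bb{A}^\ell \to G(\beta)$ by sending $(b, \vec z)$ to $(g; B_0, \ldots, B_\ell)$ with $g = b^{-1}f_{\beta,B}(\vec z)$ and $B_i = h_i^{-1}Bh_i$. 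One checks this lands in $G(\beta)$: $B_\ell = B$; $B_0 = f_{\beta,B}(\vec z)^{-1}Bf_{\beta,B}(\vec z) = g^{-1}Bg$ since $b \in B$ normalizes $B$; and $(B_{i-1}, B_i) = h_i^{-1}\bigl(f_{\alpha_{s_i}}(z_i)^{-1}Bf_{\alpha_{s_i}}(z_i),\, B\bigr)h_i$ lies in $O_{s_i}$ by $G$-invariance of $O_{s_i}$, once one knows $f_{\alpha_{s_i}}(z_i)^{-1}Bf_{\alpha_{s_i}}(z_i)$ is in relative position $s_i$ with $B$. Composing with $G(\beta) \to G \times \cal{B}$ gives $(b, \vec z) \mapsto (b^{-1}f_{\beta,B}(\vec z), B)$, matching the other route around the square, so the square commutes.

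The crux is the geometric lemma that, for $s \in S$, the map $z \mapsto f_{\alpha_s}(z)^{-1}Bf_{\alpha_s}(z) = u_{\alpha_s}(z)^{-1}\bigl(s_T^{-1}Bs_T\bigr)u_{\alpha_s}(z)$ is an isomorphism from $\bb{A}^1$ onto the space of Borels in relative position $s$ with $B$. This holds because $s_T^{-1}Bs_T$ is one such Borel, the set of all of them is its $B$-orbit under conjugation, and the one-dimensional unipotent group $U_{\alpha_s}$ --- which is not contained in $s_T^{-1}Bs_T$ because $s$ carries $\alpha_s$ to $-\alpha_s$ --- acts simply transitively on that orbit. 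Granting this, I recover the inverse morphism: from a $\beta$-closed chain $(g; B_0, \ldots, B_\ell)$ with $B_\ell = B$, the lemma determines $z_\ell$ uniquely from $(B_{\ell-1}, B_\ell) \in O_{s_\ell}$; conjugating by the now-known $h_{\ell-1} = f_{\alpha_{s_\ell}}(z_\ell)$ turns $(B_{\ell-2}, B_{\ell-1})$ into a type-$s_{\ell-1}$ pair based at $B$, determining $z_{\ell-1}$; proceeding down the chain pins down $\vec z$ uniquely, with $B_i = h_i^{-1}Bh_i$. Finally $B_0 = g^{-1}Bg = h_0^{-1}Bh_0$ forces $f_{\beta,B}(\vec z)g^{-1} \in N_G(B) = B$, giving the unique $b = f_{\beta,B}(\vec z)g^{-1}$ with $g = b^{-1}f_{\beta,B}(\vec z)$. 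All of these operations --- inverting the Borel parametrization, the conjugations, and recovering $b$ --- are morphisms, so $B \times \bb{A}^\ell \xrightarrow{\sim} G(\beta) \times_{G \times \cal{B}} G$, the left square is cartesian, and composing the two cartesian squares yields $X_0(\beta, B) \simeq G(\beta) \times_{G \times \cal{B}} \{(1,B)\} = G(\beta)_{1,B} = \cal{X}(\beta)_B$.

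I expect the main obstacle to be the bookkeeping of conjugation directions and root-group conventions: in particular, seeing that one must use $h_i^{-1}Bh_i$ with the products taken from the right (the naive left-handed version $g_iBg_i^{-1}$ with $g_i = f_{\alpha_{s_1}}(z_1)\cdots f_{\alpha_{s_i}}(z_i)$ does \emph{not} satisfy $B_\ell = B$ and is not a parametrization), and verifying that the $U_{\alpha_s}$-action in the key lemma is simply transitive rather than merely transitive --- all while keeping the construction consistent with the paper's convention $\act(g, B) = (g^{-1}Bg, B)$.
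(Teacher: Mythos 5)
Your proof is correct, and it gets to the result by a more hands-on route than the paper's. The paper never peels off the letters of the braid word one at a time: it introduces the twisted product $M(\beta, B) = (Bs_{1,T}B \times \cdots \times Bs_{\ell,T}B)/B^{\ell-1}$, identifies it with $B \times \bb{A}^\ell$ — this is where your key lemma enters, once and for all, in the form of the unique normal form $b\,s_T\,u_{\alpha_s}(z)$ for elements of $Bs_TB$ — and then identifies the associated bundle $(M(\beta,B) \times G)/B$ with $G(\beta)$ via the same chain-of-Borels formula that you write as $B_i = h_i^{-1}Bh_i$; the left-hand square is then obtained by pulling the resulting square over $G$ back along $\id \times \{B\}$. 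So both arguments rest on the same input, the simply transitive $U_{\alpha_s}$-parametrization of the Borels in relative position $s$ with $B$, but package it differently. Your version supplies exactly what the paper leaves implicit: an explicit inverse morphism (the right-to-left recursion determining $z_\ell, \ldots, z_1$ and then $b = f_{\beta,B}(\vec{z})g^{-1} \in N_G(B) = B$), hence a self-contained check that the left square is cartesian, and your convention $g = b^{-1}f_{\beta,B}(\vec{z})$ matches the stated middle vertical arrow literally. Two small remarks: for the freeness of the $U_{\alpha_s}$-action, "not contained in ${}^{s_T^{-1}}B$" is a slightly loose justification in positive characteristic, and the cleanest argument is the uniqueness of the Bruhat normal form just mentioned (which is also what makes the paper's identification $B \times \bb{A}^\ell \simeq M(\beta,B)$ well defined); and what the paper's detour through $M(\beta,B)$ buys is the basepoint-free statement $(X_0(\beta,B) \times G)/B \simeq \cal{X}(\beta)$, which is precisely what the subsequent Remark uses for the stack identification $[X_0(\beta,B)/B] \simeq [\cal{X}(\beta)/G]$ — your fiberwise argument would need to be redone equivariantly to recover that.
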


\begin{proof}
It is enough to construct the map $B \times \bb{A}^\ell \to G(\beta)$.
To this end, let $B^{\ell - 1}$ act on $Bs_{1, T}B \times \cdots \times Bs_{\ell, T}B$ on the right by:
\begin{align}\begin{split}
&(x_1, x_2, \ldots, x_i, \ldots, x_\ell) \cdot (b_1, \ldots, b_{\ell - 1})\\
&=	(x_1b_1, b_1^{-1}x_2b_2, \ldots, b_{i - 1}^{-1}x_ib_i, \ldots, b_{\ell - 1}^{-1}x_\ell).
\end{split}\end{align}
We denote the quotient by
\begin{align}
M(\beta, B)
=	(Bs_{1, T}B \times \cdots \times Bs_{\ell, T}B)/B^{\ell - 1}.
\end{align}
(When $\beta = \sigma_w$, this variety is a dense open stratum in the variety denoted $M_w$ in \cite{hansen}.)
We have an isomorphism:
\begin{align}
\begin{array}{rcl}
B \times \bb{A}^\ell
	&\xrightarrow{\sim}
	&M(\beta, B)\\
(b, \vec{z})
	&\mapsto
	&[bf_{\alpha_{s_1}}(z_1), f_{\alpha_{s_2}}(z_2), \ldots, f_{\alpha_{s_\ell}}(z_\ell)]
	\end{array}
\end{align}
Let $B$ act on $M(\beta, B)$ by outer right conjugation and on $G$ by right multiplication.
Writing $x^g = g^{-1}xg$ and $B^g = g^{-1}Bg$ , we have a further isomorphism:
\begin{align}\label{eq:m}
\begin{array}{rcl}
(M(\beta, B) \times G)/B
	&\xrightarrow{\sim}
	&G(\beta)\\
{[x_1, \ldots, x_\ell, g]}
	&\mapsto
	&((x_1 \cdots x_\ell)^g, B^{x_2 \cdots x_\ell g}, \ldots, B^g)
	\end{array}
\end{align}
We now have a composition
\begin{align}\begin{split}
M(\beta, B)
&\xrightarrow{\sim}
	(M(\beta, B) \times B)/B
\to 
	(M(\beta, B) \times G)/B
\xrightarrow{\sim}
	G(\beta).
\end{split}\end{align}
It forms the top arrow of a cartesian diagram
\begin{equation}
\begin{tikzpicture}[baseline=(current bounding box.center), >=stealth]
\matrix(m)[matrix of math nodes, row sep=2.5em, column sep=3.5em, text height=2ex, text depth=0.5ex]
{	G(\beta)
		&M(\beta, B)\\
	G \times \cal{B}
		&G\\
		};
\path[->,font=\scriptsize, auto]
(m-1-2)		edge (m-1-1)
(m-1-1)		edge (m-2-1)
(m-1-2)		edge node{$m_{\beta, B}$} (m-2-2)
(m-2-2)		edge node[above]{$\id \times \{B\}$} (m-2-1);
\end{tikzpicture}
\end{equation}
in which $m_{\beta, B}([x_1, \ldots, x_\ell]) = x_1\cdots x_\ell$.
This is the left-hand square of the diagram in the proposition, once we identify $M(\beta, B)$ with $B \times \bb{A}^\ell$.
\end{proof}

\begin{rem}
The isomorphism \eqref{eq:m} can be pulled back to an isomorphism
\begin{align}
(X_0(\beta, B) \times G)/B
\xrightarrow{\sim}
\cal{X}(\beta).
\end{align}
Thus, we have an isomorphism of stacks:
\begin{align}
[X_0(\beta, B)/B] \simeq [\cal{X}(\beta)/G].
\end{align}
Note that $[X_0(\beta, B)/B]$ is homotopy equivalent to $[X_0(\beta, B)/T]$.
The $T$-action on $X_0(\beta, B)$ is important in \cite{mellit} and \cite{cggs}.
\end{rem}

\newpage
\section{Examples of the Decategorified Trace}\label{sec:examples}

In this appendix, we list some values of $\Tr{-}$ and $\Tr{-}^0$.
Throughout, we rely on Table 8.1 in \cite{gp} and the descriptions of $\{-, -\}$ in \cite[Ch.\ 4]{lusztig_1984}.

\subsection*{Type $A_1$}

Here, $W = S_2$ and $\hat{W} = \{1, \varepsilon\}$.
Writing $\sigma$ for the positive generator of $\Br_W = \Br_2$, we have:
\begin{align}
\Tr{\sigma^m} &= (\Q^{\frac{1}{2}})^m + (-\Q^{-\frac{1}{2}})^m \varepsilon,\\
\Tr{\sigma^m}^0 &= \frac{1}{1 - \Q^2}
((1 - (-\Q)^{m + 1}) + (\Q + (-\Q)^m)\varepsilon).
\end{align}
For instance, we have $\Tr{\sigma^{2k + 1}}^0 = (1 + \Q^2 + \cdots + \Q^{2k}) + (\Q + \Q^3 + \cdots + \Q^{2k - 1})\varepsilon$ for all $k \geq 1$.

\subsection*{Type $A_2$}

Here, $W = S_3$.
We write $S = \{s, t\}$ and $\hat{W} = \{1, \phi, \varepsilon\}$, where $\phi(1) = 2$.
The following table lists $(\psi, \Tr{\sigma_w})_W$ for all $w \in W$ and $\psi \in \hat{W}$:
\begin{align}
\begin{array}{r@{\quad}|@{\quad}l@{}ll@{}ll@{}ll@{}l}
	&&\bb{1}
	&&\sigma_s, \sigma_t
	&&\sigma_{st}, \sigma_{ts}
	&&\sigma_{sts}\\[1ex]
1
	&&1
	&&\Q^{\frac{1}{2}}
	&&\Q
	&&\Q^{\frac{3}{2}}\\
\phi
	&&2
	&&\Q^{\frac{1}{2}} - \Q^{-\frac{1}{2}}
	&-&1
	&&0\\
\varepsilon
	&&1
	&-&\Q^{-\frac{1}{2}}
	&&\Q^{-1}
	&-&\Q^{-\frac{3}{2}}
\end{array}
\end{align}
We deduce that:
\begin{align}
\Tr{\sigma_w}^0
=	\left\{\begin{array}{ll}
(1 - \Q)^{-2}(1 + 2\phi + \varepsilon)
	&w = 1\\
(1 - \Q)^{-1}(1 + \phi)
	&w \in \{s, t\}\\
1
	&w \in \{st, ts\}\\
(1 - \Q)^{-1}(1 - \Q + \Q^2 + \Q\phi)
	&w = w_0
\end{array}\right.
\end{align}

\subsection*{Type $BC_2$}

Here, $W$ is the dihedral group of the square.
We write $S = \{s, t\}$ and $\hat{W} = \{1, \delta, \phi, \varepsilon\delta, \epsilon\}$, where $\delta(1) = 2$ and $\phi(1) = 2$ and $\delta(s) = 1$.
The following table lists $(\psi, \Tr{\sigma_w})_W$ for all $w$ and $\psi$:
\begin{align}
\begin{array}{r@{\quad}|@{\quad}l@{}ll@{}ll@{}ll@{}ll@{}ll@{}ll@{}l}
	&&\bb{1}
	&&\sigma_s
	&&\sigma_t
	&&\sigma_{st}, \sigma_{ts}
	&&\sigma_{sts}
	&&\sigma_{tst}
	&&\sigma_{w_0}\\[1ex]
1
	&&1
	&&\Q^{\frac{1}{2}}
	&&\Q^{\frac{1}{2}}
	&&\Q
	&&\Q^{\frac{3}{2}}
	&&\Q^{\frac{3}{2}}
	&&\Q^2\\
\delta
	&&1
	&&\Q^{\frac{1}{2}}
	&-&\Q^{-\frac{1}{2}}
	&&0
	&-&\Q^{-\frac{1}{2}}
	&&\Q^{\frac{1}{2}}
	&-&1\\
\phi
	&&2
	&&\Q^{\frac{1}{2}} - \Q^{-\frac{1}{2}}
	&&\Q^{\frac{1}{2}} - \Q^{-\frac{1}{2}}
	&-&1
	&&0
	&&0
	&&0\\
\varepsilon\delta
	&&1
	&-&\Q^{-\frac{1}{2}}
	&&\Q^{\frac{1}{2}}
	&&0
	&&\Q^{\frac{1}{2}}
	&-&\Q^{-\frac{1}{2}}
	&-&1\\
\varepsilon
	&&1
	&-&\Q^{-\frac{1}{2}}
	&-&\Q^{-\frac{1}{2}}
	&&\Q^{-1}
	&-&\Q^{-\frac{3}{2}}
	&-&\Q^{-\frac{3}{2}}
	&&\Q^{-2}
\end{array}
\end{align}
We deduce that:
\begin{align}
\Tr{\sigma_w}^0
=	\left\{\begin{array}{ll}
(1 - \Q)^{-2}(1 + \delta + 2\phi + \varepsilon\delta + \varepsilon)
	&w = 1\\
(1 - \Q)^{-1}(1 + \delta + \phi)
	&w = s\\
(1 - \Q)^{-1}(1 + \varepsilon\delta + \phi)
	&w = t\\
1
	&w \in \{st, ts\}\\
(1 - \Q)^{-1}
	(1 - \Q + \Q^2 + \Q\varepsilon\delta + \Q\phi)
	&w = sts\\
(1 - \Q)^{-1}	
	(1 - \Q + \Q^2 + \Q\delta + \Q\phi)
	&w = tst\\
1 + \Q^2 + \Q\phi
	&w = w_0
\end{array}\right.
\end{align}
Note that this is the smallest $W$ for which $\{-, -\}$ is nontrivial.

\subsection*{An Iterated Torus Braid}

Let $W = S_4$.
We write $S = \{s, t, u\}$ and $\hat{W} = \{1, \phi, \psi, \varepsilon\phi, \varepsilon\}$, where $\phi(1) = 3$ and $\psi(1) = 2$ and $\phi(s) = 1$.
Let
\begin{align}
\beta = (\sigma_s\sigma_t\sigma_u)^6\sigma_s \in \Br_W = \Br_4.
\end{align}
Using SageMath, we calculated:
\begin{align}
\Tr{\beta} 
= \Q^{\frac{19}{2}} 
- \Q^{\frac{7}{2}} \phi 
+ (\Q^{\frac{1}{2}} 
- \Q^{-\frac{1}{2}})\psi
+ \Q^{-\frac{7}{2}} \varepsilon\phi
- \Q^{-\frac{19}{2}}\varepsilon.
\end{align}
The nonzero coefficients of $\Tr{\beta}^0$ are listed below:
\begin{align}\label{eq:zariski}
\small
\begin{array}{r@{\quad}|@{\quad}p{0.75em}p{0.75em}p{0.75em}p{0.75em}p{0.75em}p{0.75em}p{0.75em}p{0.75em}p{0.75em}p{0.75em}p{0.75em}p{0.75em}p{0.75em}p{0.75em}p{0.75em}p{0.75em}p{0.75em}}	&$\Q^0$
&$\Q^1$
&$\Q^2$
&$\Q^3$
&$\Q^4$
&$\Q^5$
&$\Q^6$
&$\Q^7$
&$\Q^8$
&$\Q^9$
&$\Q^{10}$
&$\Q^{11}$
&$\Q^{12}$
&$\Q^{13}$
&$\Q^{14}$
&$\Q^{15}$
&$\Q^{16}$\\[1ex]
1					&1& &1&1&2&1&3&1&3&1&3&1&2&1&1& &1\\
\phi				& &1&1&2&2&4&3&5&3&5&3&4&2&2&1&1&\\
\psi				& & &1& &2&1&3&1&4&1&3&1&2& &1& &\\
\varepsilon\phi	& & & &1&1&2&2&3&2&3&2&2&1&1& & &\\
\varepsilon		& & & & & & &1& &1& &1& & & & & &
\end{array}
\end{align}
This table offers a vivid illustration of Theorem \ref{thm:symmetry}.
By Proposition \ref{prop:markov}, the rows of the table for $1, \phi, \varepsilon\phi$, and $\varepsilon$ list the coefficients of the HOMFLY polynomial of the link closure $\hat{\beta}$, up to shifting by $a^{2i}\Q^{-8}$ for various $i$.

\begin{rem}
As it happens, $\hat{\beta}$ is the link of the complex plane curve singularity given by $y = x^{\frac{3}{2}} + x^{\frac{7}{4}}$.
In particular, it is an iterated torus knot.
\end{rem}

\begin{rem}
Let $G = \SL_4$.
By Theorem \ref{thm:virtual}, it should be possible to decompose $\Tr{\beta}^0$ into a $\bb{Z}[\Q]$-linear combination of total Springer representations $Q_u$, as we run over representatives $u$ of $G^F$-orbits in $\cal{U}^F$.

The $G^F$-orbits are indexed by the partitions of the integer $4$.
Let the orbit of the identity correspond to the partition $(1, 1, 1, 1) \vdash 4$.
Then the $Q_u$ are:
\begin{align}\begin{split}
Q_4
	&= 1,\\
Q_{3, 1}
	&= 1 + \Q\phi,\\
Q_{2, 2}
	&= 1 + \Q\phi + \Q^2\psi,\\
Q_{2, 1, 1}
	&= 1 + (\Q + \Q^2)\phi + \Q^2\psi + \Q^3 \varepsilon\phi,\\
Q_{1, 1, 1, 1}
	&= 1 + (\Q + \Q^2 + \Q^3)\phi + (\Q^2 + \Q^4)\psi + (\Q^3 + \Q^4 + \Q^5)\varepsilon\phi + \Q^6\varepsilon.
\end{split}\end{align}
Using \eqref{eq:zariski}, we find that:
\begin{align}\begin{split}
\Tr{\beta}^0
&= \Q^{16}Q_1 + (\Q^{10} + \Q^{11} + \Q^{12} + \Q^{13} + \Q^{14})Q_{3, 1}\\
&\qquad + (\Q^6 - \Q^7 + \Q^8 + \Q^{10} + \Q^{12})Q_{2, 2}\\
&\qquad + (\Q^3 + \Q^4 + \Q^5 + 2\Q^6 + 2\Q^7 + 2\Q^8 + \Q^9 + \Q^{10})Q_{2, 1, 1}\\
&\qquad + (1 + \Q^2 + \Q^4)Q_{1, 1, 1, 1}.
\end{split}\end{align}
It is curious that the entries in \eqref{eq:zariski} are all positive, whereas the decomposition into $Q_u$'s contains a (single) negative term.
\end{rem}


\newpage
\frenchspacing

\end{document}